\newtheorem*{thm*}{Theorem}
\newtheorem{theorem}{Theorem}[section]
\newtheorem{corollary}[theorem]{Corollary}
\newtheorem{lemma}[theorem]{Lemma}
\newtheorem*{fact*}{Fact}
\newtheorem{proposition}[theorem]{Proposition}
\newtheorem{claim}[theorem]{Claim}
\newcounter{theoremalph}
\newtheorem{thmAlph}[theoremalph]{Theorem}
\theoremstyle{definition}
\newtheorem{question}[theorem]{Question}
\newtheorem{definition}[theorem]{Definition}
\theoremstyle{remark}
\newtheorem{remark}[theorem]{Remark}
\newtheorem{example}[theorem]{Example}
\newtheorem{construction}[theorem]{Construction}
\renewcommand{\ll}{\left\langle}
\newcommand{\rr}{\right\rangle}
\newcommand{\ls}{\left\{}
\newcommand{\rs}{\right\}}
\newcommand{\mcA}{\ensuremath{\mathcal{A}}}
\newcommand{\mcM}{\ensuremath{\mathcal{M}}}
\newcommand{\mbQ}{\ensuremath{\mathbb{Q}}}
\newcommand{\mbR}{\ensuremath{\mathbb{R}}}
\newcommand{\mbZ}{\ensuremath{\mathbb{Z}}}
\newcommand{\mbN}{\ensuremath{\mathbb{N}}}
\newcommand{\on}[1]{\operatorname{#1}}
\newcommand{\overbar}[1]{\mkern 2mu\overline{\mkern-2mu#1\mkern-2mu}\mkern 2mu}
\newcommand{\Sym}{\operatorname{Sym}}
\newcommand{\St}{\operatorname{St}}
\newcommand{\Stsymp}{\operatorname{St}^\omega}
\newcommand{\im}{\operatorname{im}}
\newcommand{\rk}{\operatorname{rk}}
\newcommand{\Ind}{\operatorname{Ind}}
\newcommand{\SL}[2]{\ensuremath{\operatorname{SL}_{#1}(#2)}}
\newcommand{\Sp}[2]{\ensuremath{\operatorname{Sp}_{#1}(#2)}}
\newcommand{\vcd}{\operatorname{vcd}}
\newcommand{\I}[1][n]{\mathcal{I}_{#1}}
\NewDocumentCommand {\Irel} { O{n} O{m} }{\mathcal{I}^{#2}_{#1}}
\newcommand{\Idel}[1][n]{\mathcal{I}_{#1}^{\delta}}
\NewDocumentCommand {\Idelrel} { O{n} O{m} }{\mathcal{I}^{\delta,#2}_{#1}}
\newcommand{\Isigdel}[1][n]{\mathcal{I}_{#1}^{\sigma,\delta}}
\NewDocumentCommand {\Isigdelrel} { O{n} O{m} }{\mathcal{I}^{\sigma,\delta,#2}_{#1}}
\newcommand{\IA}[1][n]{\mathcal{IA}_{#1}}
\NewDocumentCommand {\IArel} { O{n} O{m} }{\mathcal{IA}^{#2}_{#1}}
\newcommand{\IAA}[1][n]{\mathcal{IAA}_{#1}}
\NewDocumentCommand {\IAArel} { O{n} O{m} }{\mathcal{IAA}^{#2}_{#1}}
\newcommand{\IAAst}[1][n]{\mathcal{IAA}^*_{#1}}
\NewDocumentCommand {\IAAstrel} { O{n} O{m} }{\mathcal{IAA}^{*,#2}_{#1}}
\newcommand{\B}{\mathcal{B}}
\newcommand{\BA}{\mathcal{BA}}
\newcommand{\BAA}{\mathcal{BAA}}
\providecommand{\Link}{\ensuremath\mathsf{Link}}
\providecommand{\Linkhat}{\ensuremath\widehat{ \mathsf{Link}}}
\providecommand{\Star}{\ensuremath\mathsf{Star}}
\newcommand{\Bv}{\mathcal{BAA}}
\newcommand{\baseB}{\mathcal{B}(V')}
\newcommand{\baseBA}{\mathcal{BA}(V')}
\newcommand{\newproofphi}{\tilde{\phi}}
\newcommand{\newproofsphere}{\tilde{S}}
\newcommand{\newlemmaphi}{\phi'}   
\newcommand{\newlemmasphere}{S'}         
\newcommand{\lastindexexamples}{l}
\newcommand{\retraction}{\rho}
\newcommand{\rkfn}{\operatorname{rk}}
\newcommand{\badvertex}{\mathbf{s}}
\newcommand{\vertexvar}{\textbf{v}}
\newcommand{\sd}{\operatorname{sd}}
\newcommand{\Simp}{\operatorname{Simp}}
\newcommand{\new}{\operatorname{new}}
\newcommand{\skewname}{\operatorname{skew}}
\newcommand{\addname}{\operatorname{add}}
\providecommand{\Vr}{\ensuremath\mathsf{Vert}}
\providecommand{\Z}{\ensuremath\mathbb Z}
\providecommand{\Q}{\ensuremath\mathbb Q}
\providecommand{\OK}{\ensuremath\mathcal O_K}
\newcommand{\ApartmentModule}{A}
\newcommand{\SigmaAdditiveApartmentModule}{A^{\addname}}
\newcommand{\SkewApartmentModule}{A^{\skewname}}
\newcommand{\ApartmentMap}{\alpha}
\newcommand{\AdditiveMap}{\beta}
\newcommand{\SkewMap}{\gamma}
\newcommand{\sym}[1]{[\![ #1 ]\!]}
\newcommand{\SymplecticSummand}{U}
\newcommand{\CoxeterGroupTypeB}{\mathcal{W}}
\newcommand{\CoxeterGenerators}{\mathcal{S}}
\newcommand{\GroupElement}{\phi}
\newcommand{\VertexSet}[1]{\mathcal{V}_{#1}}
\title{A presentation of symplectic Steinberg modules and cohomology of $\Sp{2n}{\mbZ}$}
\author{Benjamin Br\"uck, Peter Patzt, Robin J. Sroka}
\date{}
\begin{document}
\maketitle

\begin{abstract}
Borel--Serre proved that the integral symplectic group $\Sp{2n}\Z$ is a virtual duality group of dimension $n^2$ and that the symplectic Steinberg module $\St^\omega_n(\Q)$ is its dualising module. This module is the top-dimensional homology of the Tits building associated to $\Sp{2n}\Q$.
We find a presentation of this Steinberg module and use it to show that the codimension-1 rational cohomology of $\Sp{2n}\Z$ vanishes for $n \geq 2$, $H^{n^2 -1}(\Sp{2n}\Z;\Q) \cong 0$. 
Equivalently, the rational cohomology of the moduli stack $\mathcal A_n$ of principally polarised abelian varieties of dimension $2n$ vanishes in the same degree.
Our findings suggest a vanishing pattern for high-dimensional cohomology in degree $n^2-i$, similar to the one conjectured by Church--Farb--Putman for special linear groups.
\end{abstract}

\tableofcontents

\section{Introduction} 

Let $R$ be a commutative ring and $(\vec e_1,\vec f_1, \dots, \vec e_n,\vec f_n)$ an ordered basis of $R^{2n}$. The \emph{(standard) symplectic form} $\omega$ on $R^{2n}$ is the antisymmetric bilinear form given by
\[ \omega(\vec e_i,\vec e_j) = \omega(\vec f_i,\vec f_j) = 0\quad\text{and}\quad \omega(\vec e_i,\vec f_j) = \delta_{ij},\]
where $\delta_{ij}$ is the Kronecker delta. The group of invertible $2n\times 2n$ matrices $A$ with entries in $R$ with the property
\[ \omega(A\vec v,A\vec w) = \omega(\vec v,\vec w)\]
for all $\vec v,\vec w \in R^{2n}$ is called the \emph{symplectic group of $R$} and is denoted by $\Sp{2n}{R}$.

These groups show up in many areas of mathematics. In particular, the simple Lie group $\Sp{2n}{\mathbb R}$ and its lattice $\Sp{2n}{\mbZ}$ are ubiquitous in geometry and topology. In this paper, we consider the group cohomology of $\Sp{2n}\Z$ with trivial rational coefficients.

Borel--Serre \cite{BS} proved that $\Sp{2n}\Z$ has virtual cohomological 
dimension $\vcd(\Sp{2n}{\mbZ}) = n^2$, which implies that 
\begin{equation}
	\label{eq:borel-serre-vcd}
	H^j(\Sp{2n}\Z; \mbQ) \cong 0\quad \text{for $j>n^2$.}
\end{equation}		
It follows from results of Gunnells \cite{gun2000} (see Brück--Patzt--Sroka \cite[Chapter 5]{Sroka2021} and Br\"uck--Santo Rego--Sroka \cite{Brueck2022c}) that the ``top-dimensional'' rational cohomology of $\Sp{2n}{\mbZ}$ is trivial,
\begin{equation}
	\label{eq_codim0_vanishing}
	H^{n^2}(\Sp{2n}{\mbZ}; \mbQ) \cong 0\quad \text{for $n\ge1$.}
\end{equation}
In this work, we show a vanishing result in ``codimension-1'':

\begin{thmAlph}\label{thmA}
The rational cohomology of $\Sp{2n}{\mbZ}$ vanishes in degree $n^2-1$,
\[ H^{n^2-1}(\Sp{2n}{\mbZ}; \mbQ) \cong 0 \text{, if }n\ge 2.\]
\end{thmAlph}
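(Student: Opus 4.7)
The plan is to combine Borel--Serre duality with the presentation of $\St^\omega_n(\mbQ)$ announced in the abstract. Since $\Sp{2n}\mbZ$ is a virtual duality group of dimension $n^2$ with dualising module $\St^\omega_n(\mbQ)$, Borel--Serre duality gives a canonical isomorphism
\[
H^{n^2-1}(\Sp{2n}\mbZ; \mbQ) \;\cong\; H_1\bigl(\Sp{2n}\mbZ;\, \St^\omega_n(\mbQ)\bigr),
\]
so \cref{thmA} reduces to showing that this first homology group vanishes.

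The presentation of $\St^\omega_n(\mbQ)$ amounts to the beginning of a free resolution by induced $\Sp{2n}\mbZ$-modules,
\[
\bigoplus_r \Ind_{R_r}^{\Sp{2n}\mbZ} \mbQ \;\longrightarrow\; \bigoplus_g \Ind_{G_g}^{\Sp{2n}\mbZ} \mbQ \;\longrightarrow\; \St^\omega_n(\mbQ) \;\longrightarrow\; 0,
\]
where the sums are over $\Sp{2n}\mbZ$-orbits of generator and relation symbols, and the stabilisers $G_g$, $R_r$ are commensurable with products of the form $\Sp{2m}\mbZ \times \GL{k}\mbZ$ with $m, k < n$, coming from Levi subgroups of parabolics of $\Sp{2n}\mbQ$. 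Feeding this partial resolution into the five-term exact sequence of the short exact sequence $0 \to K \to \bigoplus_g \Ind_{G_g}^{\Sp{2n}\mbZ} \mbQ \to \St^\omega_n(\mbQ) \to 0$ and applying Shapiro's lemma, $H_1(\Sp{2n}\mbZ; \St^\omega_n(\mbQ))$ is sandwiched between $\bigoplus_g H_1(G_g; \mbQ)$ and the failure of exactness at $\bigoplus_g \mbQ$ of the coinvariant complex $\bigoplus_r \mbQ \to \bigoplus_g \mbQ \to H_0(\Sp{2n}\mbZ; \St^\omega_n(\mbQ)) \to 0$. The first term vanishes rationally essentially for free: $\Sp{2m}\mbZ$ is perfect for $m \geq 2$, $\Sp{2}{\mbZ} \cong \SL{2}{\mbZ}$ has finite abelianisation, and every $\GL{k}{\mbZ}$ has finite abelianisation, so by the Künneth formula each $H_1(G_g; \mbQ) = 0$. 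The argument thus collapses to a combinatorial orbit-level check on the presentation.

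The main obstacle I foresee is this last combinatorial step, which is the symplectic analogue of the orbit-counting arguments used by Church--Putman in their codimension-one vanishing theorem for $\SL_n(\mbZ)$. One must identify representatives for each $\Sp{2n}\mbZ$-orbit of generators and relations, track the boundary map through coinvariants, and verify that the image of the relation coinvariants fills the entire kernel of the augmentation on $\bigoplus_g \mbQ$ --- a statement that is sensitive both to the precise form of the presentation and to the richer orbit combinatorics introduced by the symplectic form (for instance, the distinction between isotropic and non-isotropic subspaces, which has no counterpart in the $\SL_n$ case). The hypothesis $n \geq 2$ enters here to ensure that the inductive smaller groups $\Sp{2m}\mbZ$ appearing in the stabilisers are not all in the degenerate rank-$1$ regime and that nontrivial relations are actually available in the presentation.
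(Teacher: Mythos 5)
Your overall strategy (Borel--Serre duality plus the presentation of $\Stsymp_n$ viewed as a partial resolution by induced modules) is the same as the paper's, but the reduction you propose has a genuine gap in its homological-algebra skeleton. Writing $0 \to K \to P \to \Stsymp_n\otimes\mbQ \to 0$ with $P$ the generator module and $Q \twoheadrightarrow K$ the relation module, the five-term sequence gives $0 \to H_1(\Sp{2n}{\mbZ};\Stsymp_n\otimes\mbQ) \to H_0(\Sp{2n}{\mbZ};K) \to H_0(\Sp{2n}{\mbZ};P)$ once $H_1(\Sp{2n}{\mbZ};P)=0$. Your proposed upper bound --- ``the failure of exactness at $\bigoplus_g\mbQ$ of the coinvariant complex'' --- is vacuous: coinvariants are right exact, so the complex $\bigoplus_r\mbQ\to\bigoplus_g\mbQ\to H_0(\Stsymp_n\otimes\mbQ)\to 0$ is exact at the middle spot no matter what, and it carries no information about $H_1$. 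The combinatorial ``orbit-level check'' you defer as the main obstacle is therefore not the statement you need; what you actually need is that $H_0(\Sp{2n}{\mbZ};K)$ injects into $H_0(\Sp{2n}{\mbZ};P)$, and the clean way to get it is to show that the \emph{coinvariants of the relation module itself} vanish rationally (then $H_0(Q)=0$ forces $H_0(K)=0$). This is exactly what the paper does, and it requires no orbit combinatorics at all: for each of $\ApartmentModule_n$, $\SigmaAdditiveApartmentModule_n$, $\SkewApartmentModule_n$ one exhibits a symplectic element (rotating one hyperbolic pair, e.g.\ $\vec e_i\mapsto \vec f_i$, $\vec f_i\mapsto-\vec e_i$) that sends the cyclic generator to its negative, so the rational coinvariants are zero (Lemmas \ref{lem:Acoinv}--\ref{lem:Askewcoinv}).

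A second, related inaccuracy: the stabilisers of the generators are not ``commensurable with $\Sp{2m}{\mbZ}\times\GL{k}{\mbZ}$ coming from Levi subgroups''. The generators of \cref{thmB} are integral symplectic \emph{apartments}, i.e.\ full symplectic bases up to signs and signed permutations, and their stabilisers are finite groups; the Levi-type stabilisers you describe belong to simplices of the Tits building, not to apartment classes. This matters twice: finiteness is what makes $\ApartmentModule_n\otimes\mbQ$ etc.\ flat (equivalently $H_i(\Sp{2n}{\mbZ};P)=0$ for $i>0$ via Shapiro), which is Lemma \ref{lem:Aflatness} in the paper, and it also makes your appeal to perfectness of $\Sp{2m}{\mbZ}$ beside the point. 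Finally, note that the coinvariant argument for $\SkewApartmentModule_n$ needs a spare hyperbolic pair and hence $n\geq 3$; the case $n=2$ is not covered by this method and is quoted from Igusa's computation, whereas your remark about why $n\geq 2$ enters does not address this.
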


Previously, this was only known for $n\leq 4$ by work of Igusa \cite{igusa1962}, Hain \cite{hain2002} and Hulek-Tommasi \cite{ht2012} (see \cref{tab:cohomologySp}); our proof applies for $n\geq 3$. 

Our findings suggest a more general vanishing pattern in the high-dimensional rational cohomology of symplectic groups, see \cref{sec_CFPconjecture}. 
Since the rational cohomology of $\Sp{2n}{\mbZ}$ is isomorphic to that of the \emph{moduli stack $\mathcal A_n$ of principally polarised abelian varieties of dimension $2n$}, \autoref{thmA} furthermore implies that 
\begin{equation*}
	H^{n^2-1}(\mcA_n; \mbQ) \cong H^{n^2-1}(\Sp{2n}{\mbZ}; \mbQ) \cong 0 \text{ if } n \geq 2.
\end{equation*}
This connection to algebraic geometry and number theory is discussed in \cref{sec_Ag_connection}. The next two subsections explain how \autoref{thmA} is related to a question of Putman \cite{PutmanQ}, which our main result answers (see \autoref{thmB}).

\subsection{Borel--Serre duality}
The method we use to get a handle on this high-dimensional cohomology of $\Sp{2n}{\mbZ}$ is Borel--Serre duality.
Borel--Serre \cite{BS} proved that
\[ H^{n^2 -i}( \Sp{2n}\Z; M) \cong H_i(\Sp{2n}\Z; \St^\omega_n(\Q)\otimes M)\]
for all rational coefficient modules $M$ and all codimensions $i$. Here $\St^\omega_n(\Q)$ is the 
\emph{symplectic Steinberg module} that is defined in 
the following way.

Let $T^\omega_n = T^\omega_n(\mbQ)$ be the simplicial complex whose vertices are the nonzero isotropic subspaces $V$ of $\mbQ^{2n}$ (the subspaces on which the form $\omega$ vanishes), and whose $k$-simplices are flags $ V_0 \subsetneq V_1 \subsetneq \dots \subsetneq V_k.$
This is the rational \emph{Tits building} of type $\mathtt{C}_n$. 
It was proved to be $(n-1)$-spherical by Solomon--Tits \cite{ST},
so in particular, its reduced homology is concentrated in dimension $n-1$. This homology group is what we call the symplectic Steinberg module
\[ \St^\omega_n = \St^\omega_n(\mbQ) \coloneqq \widetilde H_{n-1}(T^\omega_n;\Z).\]

The result of Borel--Serre allows one to compute the codimension-1 cohomology of $\Sp{2n}{\mbZ}$ by using the isomorphism
\begin{equation}
\label{eq_Borel_Serre}
	H^{n^2-1}(\Sp{2n}\Z;\Q) \cong H_1(\Sp{2n}\Z; \St^\omega_n\otimes\mbQ).
\end{equation}
In order to compute the right hand side of this equation, one needs a good understanding of the $\Sp{2n}{\mbZ}$-module $\St^\omega_n$. We obtain this by finding a \emph{presentation of this module}.

\subsection{Presentation of \texorpdfstring{$\St^\omega_n$}{St\unichar{"005E}\unichar{"1D714}}}
\begin{figure}
\begin{center}
\includegraphics{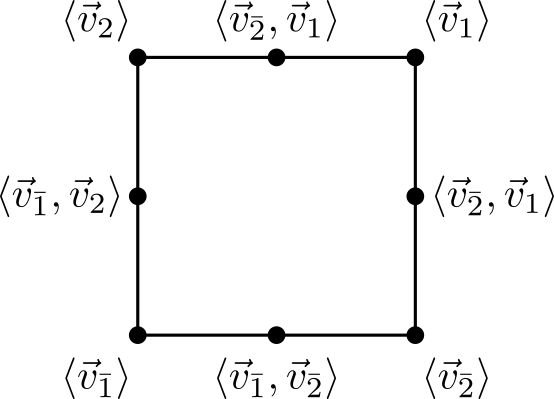}
\end{center} 
\caption{An apartment in $T^\omega_2$ corresponding to a symplectic basis $\vec v_1,\vec v_{\bar{1}},\vec v_2,\vec v_{\bar{2}}\in \mbQ^4$.} \label{fig:symplecticapartment}
\end{figure}

Let $(\vec v_1,\vec v_{\bar{1}}, \dots, \vec v_n,\vec v_{\bar{n}})$ be an ordered 
symplectic basis of $\mbQ^{2n}$, i.e.~the sequence of columns of a matrix in $\Sp{2n}{\mbQ}$.\footnote{This is equivalent to saying that $\vec v_1,\vec v_{\bar{1}}, \dots, \vec v_n,\vec v_{\bar{n}}$ form a basis of $\mbQ^{2n}$ and $\omega(\vec v_i,\vec v_{\bar{i}}) = 1$ for 
		all $i$ and all other pairings are $0$.}
Consider the full subcomplex of $T^\omega_n$ on the isotropic subspaces 
spanned by nonempty subsets of $\{\vec v_1,\vec v_{\bar{1}}, \dots, \vec v_n,\vec v_{\bar{n}}\}$ as illustrated in \autoref{fig:symplecticapartment} for $n = 2$.
This subcomplex is isomorphic to the barycentric subdivision of the boundary of an $n$-dimensional cross-polytope and called a (symplectic) \emph{apartment} of the building $T^\omega_n$.
It induces a nonzero homology class in $\St^\omega_n = \widetilde H_n(T^\omega_n;\Z)$ that we denote by $[v_1, v_{\bar{1}}, \dots, v_n,v_{\bar{n}}]$ and call a \emph{symplectic apartment class}. Solomon--Tits \cite{ST} proved that the apartment classes generate $\St^\omega_n$.

We call an apartment that comes from an \emph{integral} basis $(\vec v_1,\vec v_{\bar{1}}, \dots, \vec v_n,\vec v_{\bar{n}})$, i.e.~the columns of a matrix in $\Sp{2n}{\mbZ}$, an \emph{integral symplectic apartment}. Gunnells \cite{gun2000} showed that $\St^\omega_n$ is generated by its integral symplectic apartment classes\footnote{In fact, Gunnells shows more generally that $\St^\omega_n(K)$ is generated by integral apartments, where $K$ is a number field with Euclidean ring of integers $\OK$.}; see \cite{bruecksroka2023} for an alternative proof.
Our following \autoref{thmB} gives a presentation of $\St^\omega_n$ in terms of this generating set.

To state it, let $\sym{n} \coloneqq \{ 1, \bar{1}, \dots, n, \bar{n}\}$, where $\bar{\bar{a}} = a$. We denote by $\CoxeterGroupTypeB_n$ the group of all bijections $\pi\colon  \sym{n} \to \sym{n}$ such that $\pi(\bar{a})= \overline{\pi(a)}$. This is the group of signed permutations, the Weyl group associated to $\on{Sp}_{2n}$. It comes with a standard generating set of simple reflections $\CoxeterGenerators$ and we write $\on{len}_\CoxeterGenerators(\pi)$ for the word length of $\pi\in \CoxeterGroupTypeB_n$ with respect to $\CoxeterGenerators$ (for more details, see \cref{eq:coxeter_details} et seq.).

\begin{thmAlph}
	\label{thmB}
	The symplectic Steinberg module $\Stsymp_n(\Q)$ has 
	the following presentation as an abelian group:
	\begin{enumerate}[leftmargin=*]
		\item[] \textbf{Generators:} Formal symbols $[v_1, v_{\bar{1}}, \dots, v_n,v_{\bar{n}}]$, where $(v_1, v_{\bar{1}}, \dots, v_n,v_{\bar{n}})$ is an ordered set of lines in 
		$\mbZ^{2n}$ such that, for some choice of $\vec v_i\in \mbZ^{2n}$ with $v_i = \langle \vec v_i \rangle$, $i \in \sym{n}$, the tuple
		$(\vec v_1,\vec v_{\bar{1}}, \dots, \vec v_n, \vec v_{\bar{n}})$ is a symplectic basis of $\mbZ^{2n}$.
		\item[] \textbf{Relations:} For each symplectic basis $(\vec v_1, \vec v_{\bar{1}}, \dots, \vec v_n, \vec v_{\bar{n}})$  of $\mbZ^{2n}$:
		\begin{enumerate}[leftmargin=*]
			\item $[v_1, v_{\bar{1}}, \dots, v_n, v_{\bar{n}}] = 
			(-1)^{\on{len}_{\CoxeterGenerators}(\pi)}\cdot 
			[v_{\pi(1)},v_{\pi(\bar{1})},\dots, v_{\pi(n)}, v_{\pi(\bar{n})}] \,\forall \pi \in \CoxeterGroupTypeB_n$;
			\item $[v_1, v_{\bar{1}}, \dots, v_n, v_{\bar{n}}] = [v_1, \langle\vec v_1 +\vec 
			v_{\bar{1}} \rangle, \dots, v_n, v_{\bar{n}}]+[\langle \vec v_1+\vec v_{\bar{1}} \rangle, 
			v_{\bar{1}}, \dots, v_n, v_{\bar{n}}]$;
			\item $[v_1, v_{\bar{1}}, \dots, v_n, v_{\bar{n}}] = [v_1,\langle \vec v_{\bar{1}} - 
			\vec v_{\bar{2}} \rangle, \langle \vec v_1 + \vec v_2 \rangle, v_{\bar{2}}, \dots, 
			v_n, v_{\bar{n}}]$\\
			$\phantom{[v_1, v_{\bar{1}}, \dots, v_n, v_{\bar{n}}] =}+ [\langle \vec v_{\bar{1}}-\vec 
			v_{\bar{2}} \rangle , v_2, \langle\vec v_1 + \vec v_2\rangle, v_{\bar{1}}, \dots, 
			v_n, v_{\bar{n}}]$.
		\end{enumerate}
	\end{enumerate}
	The action of $\Sp{2n}{\mbZ}$ is given by $\GroupElement \cdot [v_1, v_{\bar{1}}, \dots, v_n, v_{\bar{n}}] = [\GroupElement(v_1), 
		\GroupElement(v_{\bar{1}}), \dots, \GroupElement(v_n), \GroupElement(v_{\bar{n}})].$
\end{thmAlph}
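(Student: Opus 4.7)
The overall strategy is to construct an auxiliary simplicial complex $\Bv$ carrying an $\Sp{2n}{\mbZ}$-action whose top-dimensional chains surject onto $\Stsymp_n(\Q)$, and then to use high connectivity of $\Bv$ to translate the boundary map on codimension-$1$ chains into the three families of relations (1)--(3).

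First I would define $\Bv$ so that its top simplices are in bijection with the generating symbols $[v_1,v_{\bar 1},\dots,v_n,v_{\bar n}]$: vertices are isotropic lines in $\mbZ^{2n}$ carrying enough additional ``additive/apartment'' data to keep track of symplectic-basis expressions, and a top simplex records an ordered symplectic basis of $\mbZ^{2n}$ together with this combinatorial structure. The signed augmentation $C_{2n-1}(\Bv)\twoheadrightarrow \Stsymp_n(\Q)$ sending a top simplex to its apartment class is surjective by the theorem of Gunnells recalled above.

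Next I would show that $\Bv$ is Cohen--Macaulay of its top dimension, i.e.~$(2n-1)$-connected with links of comparable connectivity. Granting this, the augmented chain complex of $\Bv$ yields an exact sequence in low codimension, and the kernel of the surjection onto $\Stsymp_n(\Q)$ is generated by (i) the identifications coming from reordering vertices within a top simplex and (ii) the boundaries of codimension-$1$ simplices. A case analysis of the $\Sp{2n}{\mbZ}$-orbits of codimension-$1$ simplices would produce exactly three orbit classes, whose resulting relations match (1)--(3): signed permutations of an ordered symplectic basis (giving relation (1), with the sign $(-1)^{\on{len}_{\CoxeterGenerators}(\pi)}$ read off from the orientation behavior of the Coxeter length on the cross-polytope apartment); a codimension-$1$ face involving the extra line $\langle \vec v_1+\vec v_{\bar 1}\rangle$ whose boundary splits the first pair (relation (2)); and a codimension-$1$ face involving the additional lines $\langle \vec v_{\bar 1}-\vec v_{\bar 2}\rangle$ and $\langle \vec v_1+\vec v_2\rangle$ (relation (3)). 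Checking that these are all the orbits, and that their boundary relations match the statement on the nose up to sign, is a finite combinatorial verification once $\Bv$ is set up correctly.

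The main obstacle is establishing the required high connectivity of $\Bv$. I expect this to proceed by induction on $n$, combining a link analysis that decomposes, up to connectivity issues, the link of a vertex in $\Bv$ as a join of a smaller instance of $\Bv$ with an auxiliary complex of isotropic flags, together with a Hatcher--Vogtmann--Wahl style bad-simplex argument to reduce to known connectivity statements for the symplectic Tits building $T^\omega_n$ and related complexes of (partial) symplectic frames. The symplectic case is strictly harder than the $\operatorname{SL}_n$-analogue handled by Church--Putman and Brück--Miller--Patzt--Sroka--Wilson: the cross-polytope combinatorics of a symplectic apartment is more intricate than the simplex combinatorics of linear apartments, and not every partial isotropic frame extends to an integral symplectic basis of $\mbZ^{2n}$, which complicates both the link decomposition and the bad-simplex reductions. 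This is where the bulk of the technical work lies, and it is the step whose failure would be most difficult to repair.
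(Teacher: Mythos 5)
Your outline transplants the Church--Putman strategy for $\SL{n}{\mbZ}$ essentially unchanged, but the two pillars it rests on do not survive the passage to the symplectic setting, and the paper's proof is structured differently precisely for this reason. First, the connectivity you require is out of reach: a complex whose top simplices are full symplectic bases has dimension $2n-1$, so Cohen--Macaulayness of top dimension asks for roughly $(2n-2)$-connectivity. All known results for complexes built from (partial) symplectic bases give connectivity on the order of $n$; the main technical theorem of the paper, which occupies most of the article, is that $\IAA$ --- a complex of dimension $n+1$ --- is $n$-connected, and nothing remotely close to $(2n-2)$-connectivity is known, needed, or plausible. Second, the mechanism by which you extract the relations is not supplied and the bookkeeping is off. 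The module $\Stsymp_n$ is $\widetilde H_{n-1}$ of the building, and a symplectic basis does not span a cell there: its apartment is an $(n-1)$-sphere (the barycentric subdivision of the boundary of the cross-polytope), not a top simplex. Relations (b) and (c) identify one generator with a sum of two, so in any cellular picture they must be carried by cells lying \emph{above} the generator cells (the $2$-simplex in $\IAA[1]$ and the prism of three $3$-simplices in $\IAA[2]$ shown in \cref{fig_relation_polytopes}), whereas "boundaries of codimension-$1$ simplices" of a top-dimensional complex cannot produce relations among its top cells at all. A concrete symptom is relation (a): if a generator were an oriented $(2n-1)$-simplex, reordering its vertices by $\pi\in\CoxeterGroupTypeB_n$ would contribute the sign of $\pi$ as a permutation of $2n$ letters, which equals $+1$ for the length-one generators that swap two consecutive symplectic pairs, not $(-1)^{\on{len}_{\CoxeterGenerators}(\pi)}$; the correct sign is the orientation character of the signed-permutation action on the apartment sphere, and it only appears once generators are treated as apartment classes rather than simplices.

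What the paper actually does, and what your proposal has no counterpart to, is an induction on the genus. A generator is realised as a relative class: the apartment sphere lies in $\IAA^{(0)}$ and is filled in by $\sigma$-simplices inside $\IAA^{(1)}$, so generators live in $H_n(\IAA^{(1)},\IAA^{(0)})$, while relations come from $H_{n+1}(\IAA^{(2)},\IAA^{(1)})$, whose new cells are the $\sigma^2$, skew-$\sigma^2$ and $\sigma$-additive simplices. Excision identifies both relative groups with direct sums of smaller Steinberg modules $\St^\omega(\ll\Delta\rr^\perp)$, because the links of the relevant minimal simplices are copies of $\IAA[k]^{(0)}$ with $k<n$. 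Turning this into the stated presentation therefore requires knowing the presentation (in the form of the exact sequence $\SigmaAdditiveApartmentModule_n\oplus\SkewApartmentModule_n\to\ApartmentModule_n\to\Stsymp_n\to 0$) for smaller genus, and the proof proceeds by induction on $n$, with base case $\Sp{2}{\mbZ}=\SL{2}{\mbZ}$, combined with a diagram chase relating the long exact sequence of the triple to the apartment modules. Without this inductive mechanism --- or the unattainable connectivity your route demands --- the passage from the connectivity theorem to the presentation does not go through.
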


\autoref{thmB} is the main result of this article and, using Borel--Serre duality, \autoref{thmA} is a rather direct consequence of it. \autoref{thmB} answers a question of Putman \cite{PutmanQ} by establishing a ``Bykovski\u{\i} presentation'' for the symplectic Steinberg module: Relation (a) and (b) of the presentation are visibly analogous to the relations in Bykovski\u{\i}'s presentation for the Steinberg module of $\SL{n}{\mbQ}$ \cite{byk2003} (see also \cite[Theorem B]{CP}). However, a new type of relation -- Relation (c) -- shows up in the symplectic setting. Ongoing work of Br\"uck--Santos Rego--Sroka investigates these similarities and differences in the broader framework of Chevalley groups.

\subsection{Connection to other work}
\label{sec_intro_connection_to_others}

\subsubsection{Church--Farb--Putman Conjecture for \texorpdfstring{$\SL{n}{\mbZ}$}{SL(\unichar{"2124})}}
\label{sec_CFPconjecture}

Church--Farb--Putman \cite{cfp2014} conjectured that
\begin{equation}
\label{eq_CFP_conjecture}
H^{\binom n2-i}(\SL n\Z;\Q) \cong 0 \quad \text{for $n\ge i+2$,}
\end{equation}
where $\binom n2 = \vcd(\SL{n}{\mbZ})$.
This conjecture proposes a generalisation of a result by Lee--Szczarba \cite{ls1976} that states
\[ H^{\binom n2}(\SL n\Z;\Q) \cong 0 \quad \text{for $n\ge 2$.}\]
Later the conjecture was also proved for $i=1$ by Church--Putman \cite{CP} and for $i=2$ by Br\"uck--Miller--Patzt--Sroka--Wilson \cite{Brueck2022}. 
Motivated by this conjecture, there have been numerous results that have significantly improved the understanding of the high-dimensional cohomology of $\SL{n}{R}$ for different rings $R$ \cite{cfp2019,CP, MPWY:Nonintegralitysome,Kupers2022, Brueck2022}.  The motivation for this article is to investigate the following $\Sp{2n}{\mbZ}$-analogue of Church--Farb--Putman's conjecture.

\begin{question}
	\label{q:vanishing-question}
	Does it hold that $H^{n^2-i}(\Sp{2n}{\mbZ}; \mbQ) \cong 0 \text{, if } n \geq i + 1$?
\end{question}

In a lot of ways, the story for $\SL n\Z$ is parallel to the one of $\Sp{2n}\Z$: The duality result of Borel--Serre also applies to this group and yields an isomorphism
\begin{equation*}
	H^{\binom n2 - i}(\SL n\Z;\Q) \cong H_i(\SL n\Z; \St_n(\Q)\otimes \mbQ).
\end{equation*}
This gives access to high-dimensional cohomology via a partial resolution of the Steinberg module $\St_n(\Q)$ associated to the special linear group, that is the top reduced homology of the corresponding Tits building of type $\mathtt{A}_{n-1}$ over $\mbQ$. 
Lee--Szczarba's result \cite{ls1976} about the vanishing of $H^{\binom n2}(\SL n\Z;\Q)$ can also be deduced from the fact that 
$\St_n(\Q)$ is generated by integral apartments, which was proved by 
Ash--Rudolph \cite{ar1979}. 
Church--Putman \cite{CP} used a particularly nice presentation of $\St_n(\Q)$ 
to show that $H^{\binom n2-1}(\SL n\Z;\Q) \cong 0$. This presentation was first established by Bykovski\u{\i} \cite{byk2003}; Church--Putman \cite{CP} gave a new proof using simplicial complexes. Similarly, Br\"uck--Miller--Patzt--Sroka--Wilson \cite{Brueck2022} obtained a three term resolution of $\St_n(\mbQ)$ to show the vanishing result in codimension $i = 2$.

In that sense, our result can be seen as an $\on{Sp}_{2n}$-counterpart of \cite{CP}.
In technical terms however, the different combinatorics of special linear and symplectic groups lead to a significantly higher complexity in the present article (see \cref{sec_intro_relation_other_complexes}).

\subsubsection{Computations for small \texorpdfstring{$n$}{n}, and the moduli spaces \texorpdfstring{$\mcA_n$}{A\unichar{"005F}n} and \texorpdfstring{$\mcM_n$}{M\unichar{"005F}n}}
\label{sec_Ag_connection}
For small $n$, the rational cohomology of $\Sp{2n}\Z$ was previously computed. We summarise in \autoref{tab:cohomologySp} what is known for $n\le 4$.
In particular, the answer to \autoref{q:vanishing-question} is known to be affirmative in this case,
\[ H^{n^2-i}(\Sp{2n}\Z;\Q) \cong 0 \quad \text{for $n\ge i+1$ if $n\le 4$}.\]

\begin{table}
\begin{center}
\begin{tabular}{c|rrrrrrrrrrrrrrr}
$n\backslash j$ & 0&1&2&3&4&5&6&7&8&9&10&11&12&$\ge13$\\
\hline
1&1&0\\
2&1&0&1&0&0\\
3&1&0&1&0&1&0&2&0&0&0\\
4&1&0&1&$\ge0$&$\ge1$&$\ge0$&$\ge2$&$\ge0$&$\ge1$&$\ge0$&$\ge1$&0&2&0 \\
\end{tabular}
\end{center}
\caption{The $j$-th Betti numbers of $\Sp{2n}\Z$ for $n\le 4$. For $n=1$, $\Sp2\Z = \SL2\Z$. The $n = 2$ row follows from work of Igusa \cite{igusa1962}, see also Lee--Weintraub \cite[Corollary 5.2.3 et seq.]{LW}. The $n = 3$ row is due to Hain \cite[Theorem 1]{hain2002}, and Hulek--Tommasi \cite[Corollary 3]{ht2012} derive the $n=4$ row.}
\label{tab:cohomologySp}
\end{table}

Using methods different from the ones used in this work, the rational cohomology groups $H^{*}(\Sp{2n}\Z;\Q)$ can be studied by recognising them as the  rational cohomology groups of a finite dimensional quasiprojective variety: The \emph{coarse moduli space of principally polarised abelian varieties of dimension $2n$} is the quotient $\mathcal A_n^{\operatorname{coarse}} :=\Sp{2n}\Z \backslash \mathbb H^n$
of the Siegel upper half plane
\[ \mathbb H^n = \{ A \in \operatorname{Mat}_{n\times n}(\mathbb C) \mid A^T = A \text{ and } \im A >0\}\]
of symmetric complex $n\times n$ matrices whose imaginary part is positive definite. 
As $\Sp{2n}\Z$ acts on $\mathbb H^n$ properly discontinuously with finite stabiliser and $\mathbb H^n$ 
is contractible, there is an isomorphism
\[ H^j(\mathcal A_n^{\operatorname{coarse}} ; \Q) \cong H^j(\Sp{2n}\Z;\Q) \cong H^j( \mathcal A_n; \Q).\]
The moduli space $\mathcal A_n^{\operatorname{coarse}}$ admits a weight filtration.
 Brandt--Bruce--Chan--Melo--Moreland--Wolfe \cite[Theorem A]{BBC:TopWeightRational} computed the top-weight rational cohomology for $n\le 7$. In particular, they showed that
\[ \operatorname{Gr}^W_{n^2+n}H^{n^2-i}(\mathcal A_n;\Q) \cong 0 \quad \text{for $n\ge i+1$ if $n\le 7$},\]
which provides some evidence for \autoref{q:vanishing-question}, and that
\begin{equation*}
H^{n^2-n}(\mathcal A_n;\Q) \cong H^{n^2 - n}(\Sp{2n}\Z;\Q) \not\cong 0 \quad\text{if $n\le 7$}.
\end{equation*}
This leads us to the following refinement of \autoref{q:vanishing-question}.
\begin{question}
\label{question_cohomology_vanishing_codimensions}
Is $j = n^2-n$ the largest degree for which $H^j(\Sp{2n}\Z;\Q) = H^j(\mcA_n;\Q)$ is nonzero?
\end{question}

\autoref{question_cohomology_vanishing_codimensions} is a variation of \cite[Question 7.1.1 and 7.1.3]{BBC:TopWeightRational}, which asks about triviality and non-triviality of certain graded pieces of these cohomology groups and relates this to stable cohomology classes in the Satake or Baily--Borel compactification of $\mcA_n^{\operatorname{coarse}}$. 

If \autoref{question_cohomology_vanishing_codimensions} had a positive answer, it would show a stark difference between the high-dimensional cohomology of $\mcA_n$
and $\mcM_n$, the moduli space of genus $n$ curves: The rational cohomology of $\mcM_n$ is the same as that of the mapping class group $\on{MCG}(\Sigma_n)$ of a genus-$n$ surface and Harer \cite{harer1986} showed that its virtual cohomological dimension is given by $\vcd(\on{MCG}(\Sigma_n)) = 4n-5$. Church--Farb--Putman \cite{cfp2014} conjectured a vanishing pattern similar to \cref{eq_CFP_conjecture} for these cohomology groups. While Church--Farb--Putman \cite{CFP:rationalcohomologymapping} and Morita--Sakasai--Suzuki \cite{moritasakasaisuzuki2013} proved that the rational cohomology of $\mcM_n$ vanishes in its virtual cohomological dimension, Chan--Galatius--Payne \cite{Chan2021} and subsequently Payne--Willwacher \cite{Payne2021} found many non-trivial classes in degrees close to the vcd\footnote{Their results show that $H^{4n-5-i}(\mcM_n;\mbQ)$ is non-trivial for $i\in \ls 1,3,4,6,7,9,10,11,13,14 \rs$ and $n$ sufficiently large. See \cite[Table 1]{Payne2021}.}, thereby disproving the conjecture. 

From a group theoretic perspective, i.e.~comparing $\Sp{2n}{\mbZ}$ to $\on{MCG}(\Sigma_n)$, we note that the virtual cohomological dimension of $\Sp{2n}{\mbZ}$ grows quadratically, $\vcd(\Sp{2n}{\mbZ}) = n^2$ \cite{BS}, while that of $\on{MCG}(\Sigma_n)$, $\vcd(\on{MCG}(\Sigma_n)) = 4n-5$ \cite{harer1986}, grows linearly in $n$. The difference in the behaviour of the codimension-1 cohomology, i.e.~comparing \autoref{thmA} and the non-vanishing result of \cite{Chan2021}, suggests that cohomologically $\Sp{2n}{\mbZ}$ is ``closer'' to the arithmetic group $\SL{n}{\mbZ}$ than to the mapping class group $\on{MCG}(\Sigma_n)$. This might not be as surprising, given that $\SL{n}{\mbZ}$ and $\Sp{2n}{\mbZ}$ are both examples of Chevalley groups over Euclidean rings of integers. Br\"uck--Santo Rego--Sroka 
studied the top-dimensional rational cohomology of such groups. See 
\cite[Question 1.2]{Brueck2022c} for how both the conjecture of 
Church--Farb--Putman \cref{eq_CFP_conjecture} and 
\autoref{q:vanishing-question} might fit into a more 
general framework. In codimension-1, this framework is currently investigated by Br\"uck--Santos Rego--Sroka.

\subsection{Simplicial complexes}
\label{sec_intro_simplicial_complexes}
The key step in our proof of \autoref{thmB} is to consider certain simplicial complexes and show that they are highly connected. These complexes encode the structure of integral apartments in the building $T^\omega_n$ and the relations between them. The proof of the following connectivity result takes up a majority of the paper and constitutes its main technical achievement.

\begin{thmAlph}\label{thm_connectivity_IAA}
	For all $n \geq 1$, the simplicial complex $\IAA$ is $n$-connected.
\end{thmAlph}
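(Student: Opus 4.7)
The plan is to prove \autoref{thm_connectivity_IAA} by induction on $n$, following the general template used for analogous connectivity results for complexes of partial bases in the linear setting, most notably the proof of Church--Putman for $\BA_n$ in \cite{CP}. The extra structure imposed by the symplectic form, however, forces the induction to pass through several auxiliary complexes rather than proceeding in a single step, and this is why I expect the argument to take up a large portion of the paper.

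The base case $n = 1$ should be checked directly from the definition of $\IAA$ in rank one. For the inductive step, I would use a \emph{bad simplex} argument in the spirit of Hatcher--Vogtmann: first identify a combinatorial notion of ``badness'' for simplices in $\IAA$ (for instance, containing an augmentation vertex of a specific type, or a vertex not lying in a designated ``good'' flag) and show via a link-of-bad-simplex deformation argument that it suffices to establish the desired connectivity for the subcomplex of good simplices. Then, for each bad simplex $\sigma$, analyze $\Link(\sigma)$ in $\IAA$ and bound its connectivity from below; gluing these bounds by the standard nerve/link lemma reduces the statement to a simpler complex more amenable to induction.

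The crucial geometric input is the passage to the symplectic quotient: for an isotropic line $v = \langle \vec{v}\rangle$, the quotient $v^\perp / v$ is a symplectic space of rank $2(n-1)$, and the link of $v$ in $\IAA$ should surject onto (or contain as a retract of) an $\IAA$-style complex for $v^\perp/v$. By the inductive hypothesis, this is $(n-1)$-connected, and the augmentation data should boost this to $n$-connectivity after accounting for the join with vertices supported on $v$ itself. The presentation given in \autoref{thmB} suggests which augmentation vertices must be included: Relation (b) corresponds to an ``additive'' augmentation vertex $\langle \vec v_1 + \vec v_{\bar 1}\rangle$, while Relation (c) introduces a genuinely new ``skew'' augmentation involving $\langle \vec v_{\bar 1} - \vec v_{\bar 2}\rangle$ that has no counterpart in the special linear Steinberg setting.

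The main obstacle---and what I expect accounts for the technical heft of this theorem---is analyzing the links of the \emph{skew} augmentation vertices, not merely the additive ones. In the linear analogue of Church--Putman, only additive augmentations appear, and their links decompose cleanly as joins of complexes of partial bases. In the symplectic case, the skew vertices coming from Relation (c) entangle basis lines from two different symplectic pairs, so their links do not split as clean joins and instead require a further round of auxiliary complexes (plausibly intermediate variants related to $\IAAst$), each with its own connectivity statement proved by a further bad simplex argument and ultimately bottoming out in the inductive hypothesis on $\IAA$, $\IA$, or $\I$ at rank $n - 1$. Carefully orchestrating this nested hierarchy of reductions, so that the connectivity bounds compose correctly at each level and the overall induction closes with the sharp bound $n$, will be the principal technical challenge.
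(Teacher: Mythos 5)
There is a genuine gap: your inductive mechanism cannot produce the extra degree of connectivity that makes this theorem hard. A Hatcher--Vogtmann bad-simplex/link argument of the kind you describe only transfers connectivity from a ``good'' subcomplex $Y$ to $\IAA$ (or shows the inclusion $Y \hookrightarrow \IAA$ is $n$-connected, as in \cref{lem:standard-link-argument}), so you would need an $n$-connected subcomplex to anchor it --- and none exists among the natural candidates: $\I$ is only $(n-2)$-connected and $\IA$, $\IAAst$ are only $(n-1)$-connected (\cref{lem_connectivity_IAAstar}). The paper does carry out exactly the link argument you propose, but it only yields that $\IAAst \hookrightarrow \IAA$ is $n$-connected (\cref{lem_inclusion_surjective_pik}), i.e.\ surjectivity of $\pi_n(\IAAst) \to \pi_n(\IAA)$; the real content is showing this map is \emph{zero}, i.e.\ that every $k$-sphere in $\IAAst$, $k \le n$, bounds in $\IAA$ (\cref{thm_inclusion_zero_pik_inductive}). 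Your alternative of coning bad vertices off through their links also does not close: the link of a vertex $v$ in $\IAA$ is not an $\IAA$-type complex of genus $n-1$ (it contains $\sigma$-edges and augmentations involving $v$), only the much smaller $\Linkhat$ is, the image of the sphere's link need not land there, and --- decisively --- coning off one vertex introduces new vertices, so without a complexity that strictly decreases the process never terminates.

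What is missing is the descent mechanism the paper actually runs on: the rank function $\rkfn(v) = |\omega(\vec e_{m+n}, \vec v)|$ (\cref{def_rank_and_ranked_complexes}), Euclidean-algorithm-style simplicial retractions onto $\Linkhat^{<R}$ extended across all the symplectic augmentation types (\cref{IAA-retraction}), and a normal form for spheres ($\sigma$-regularity, $\sigma$-smallness, removal of edgy simplices, the Isolation property of \cref{prop_isolating-rank-r-vertices}) so that maximal-rank vertices can be cut out by a double induction on genus and rank, all phrased in terms of cross maps and (weakly) regular homotopies to keep control of the $\sigma^2$, prism and $\sigma$-additive cells. Two further points of divergence: the induction must be run over the \emph{relative} complexes $\IAArel$ (links of $\{e_1,\dots,e_m\}$), not over symplectic quotients $v^\perp/v$ --- passing to the quotient destroys the external/$e_i$-augmentation structure that the retractions and the simplex-type bookkeeping rely on --- and the base case $n=1$ is not a direct check but uses $\Sp{2}{\mbZ} = \SL{2}{\mbZ}$ together with the known connectivity of $\B_n$, $\BA_n$ and $\BAA_n$. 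Your instinct that the skew/prism augmentations are the new source of difficulty is right, but the difficulty manifests in preserving regularity and extending the rank-reducing retraction over them (the ``inessential'' 2-skew-additive simplices), not in a failure of links to split as joins.
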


We extract the presentation of $\Stsymp_n(\mbQ)$ from this connectivity result.
The construction of $\IAA$ is rather involved, but the intuition is that $\IAA$ contains ``polyhedral cells'' that encode the relations in \autoref{thmB}. 
These are depicted in \autoref{fig_relation_polytopes}. To prove \autoref{thm_connectivity_IAA}, we build on connectivity results about similar complexes that already appeared in the literature \cite{put2009, CP, Brueck2022, bruecksroka2023} (see \cref{table_overview_all_complexes} for a detailed overview).

\begin{figure}
	\begin{center}
		\includegraphics{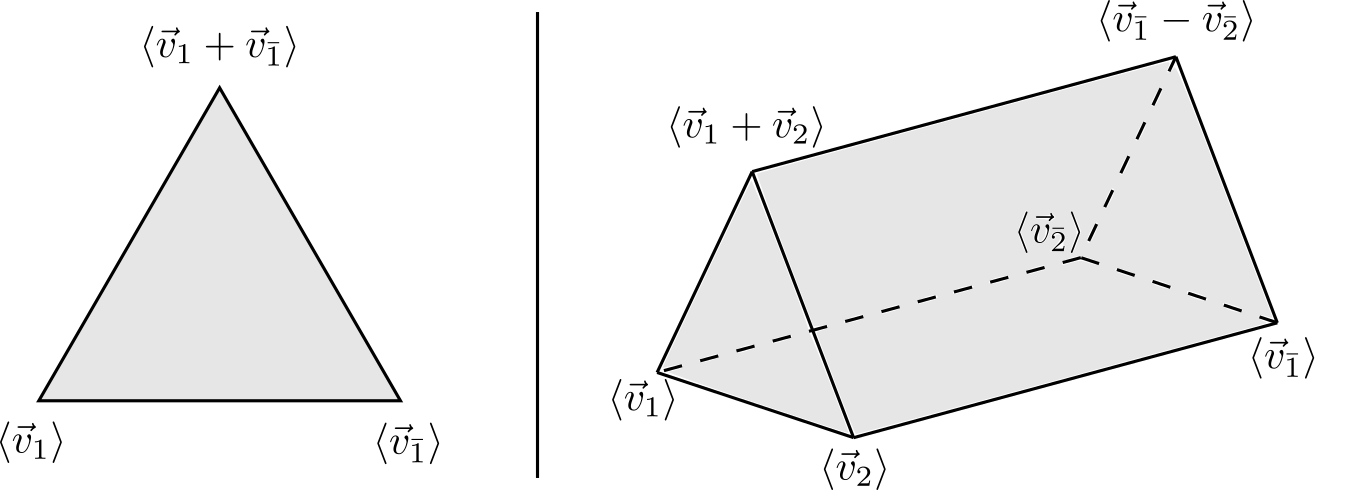}
	\end{center} 
	\caption{Subcomplexes of $\IAA$ encoding relations between integral apartment classes. The left is a $2$-simplex in $\IAA[1]$ corresponding to relation (b) in \autoref{thmB}. The right is a union of $3$-simplices in $\IAA[2]$ and corresponds to relation (c). Relation (a) comes from symmetries of the underlying apartment (see \cref{figure_apartment_intro}).} \label{fig_relation_polytopes}
\end{figure}

\subsubsection{Comparison with previously studied complexes and related works}
\label{sec_intro_relation_other_complexes}
The idea of using highly connected simplicial complexes to study Steinberg modules is due to Church--Farb--Putman \cite{cfp2019}. We draw inspiration from this and, in particular, from Church--Putman's approach \cite{CP} to Bykovski\u{\i}'s presentation for the Steinberg modules of special linear groups. 
Nevertheless, obtaining the presentation in \autoref{thmB} from the connectivity result in \autoref{thm_connectivity_IAA} is more difficult here than in the setting of special linear groups and uses inductive methods (see \cref{sec:presentation}).

Our proof of \autoref{thm_connectivity_IAA} relies on connectivity results that have been established in the special linear group setting \cite{cfp2019, CP, Brueck2022}. We use these results by considering embeddings of $\SL{n}{\mbZ}$ and $\SL{2}{\mbZ}$ in $\Sp{2n}{\mbZ}$ and of $\Sp{2n}{\mbZ}$ into $\SL{2n}{\mbZ}$.\footnote{These embeddings e.g.~show up in \cref{sec_retraction}, \cref{sec_subcomplex_W}, \cref{lem_induction_beginning}, \cref{prop:spanmap}, \autoref{claim:induction-beginning}.} 
But in comparison to previous works, several new difficulties arise in the context of  the present article:

The first difficulty is the shear complexity of our complex $\IAA$. Similar to $\IAA$, the complex $\BA_n$ studied in \cite{CP} is a complex whose simplices are given by certain admissible sets of lines in $\mbZ^n$ (\cref{def:B-and-BA}). However, while $\BA_n$ has only \emph{two} types of such admissible sets, the complex $\IAA$ has \emph{twelve} (\cref{def:IAA}). 
One reason for this is that $\IAA$ contains the complex $\BAA_n$ as subcomplexes. $\BAA_n$ was used in \cite{Brueck2022} to study the \emph{codimension-2} cohomology of $\SL{n}{\mbZ}$; nevertheless $\IAA$ only gives access to the \emph{codimension-1} cohomology of the symplectic group $\Sp{2n}\Z$.

Another difference to the setting of $\on{SL}_n$ is that while \emph{simplicial} structures show up naturally in the type-$\mathtt{A}$ combinatorics of the special linear group, this is less true for the type-$\mathtt{C}$ combinatorics of the symplectic group. While we decided to still work with simplicial complexes in this text, other \emph{polyhedral} cell types show up naturally in our complexes (see e.g.\ the ``prism'' on the right of \cref{fig_relation_polytopes} and \cref{rem_polyhedral_complexes}). Taking this into account leads to another increase in complexity.

A conceptual difficulty for the symplectic group is also the question of how to find suitable highly connected $\Sp{2n}{\mbZ}$-complexes:
The simplicial complexes $\BA_n$ and $\BAA_n$ used in \cite{CP} and \cite{Brueck2022} to study the codimension-1 and -2 cohomology of $\SL n\mbZ$ are very similar to Voronoi tesselations of symmetric spaces (see Elbaz-Vincent--Gangle--Soulé \cite{EGS13}). 
Such tesselations are in general not available for the symplectic group. In low rank however, MacPherson--McConnell \cite{MM93} gave a CW-structure for an equivariant retract of the symmetric space $\mathbb H_2 = \Sp{4}{\mbR}/U(2)$.
Our simplicial complex $\IAA[2]$ has some similarity to their complex.\footnote{In particular, our minimal $\sigma^2$ simplex (see \cref{def:IAA-simplices} and \cref{fig:IA2apartment}) corresponds to their ``red square'' and our prism (see \cref{def_prism} and right-hand side \cref{fig_relation_polytopes}) made from three individual simplices corresponds to their ``hexagon''.}

\subsubsection{Overview: Constructing simplicial complexes for studying \texorpdfstring{$\St_n^\omega$}{St\unichar{"005E}\unichar{"1D714}}}
\label{sec_intro_construction_simplicial_complexes}
This subsection gives a rough overview of how the simplicial complexes that appear in this article are constructed.

\begin{figure}
	\begin{center}
		\includegraphics{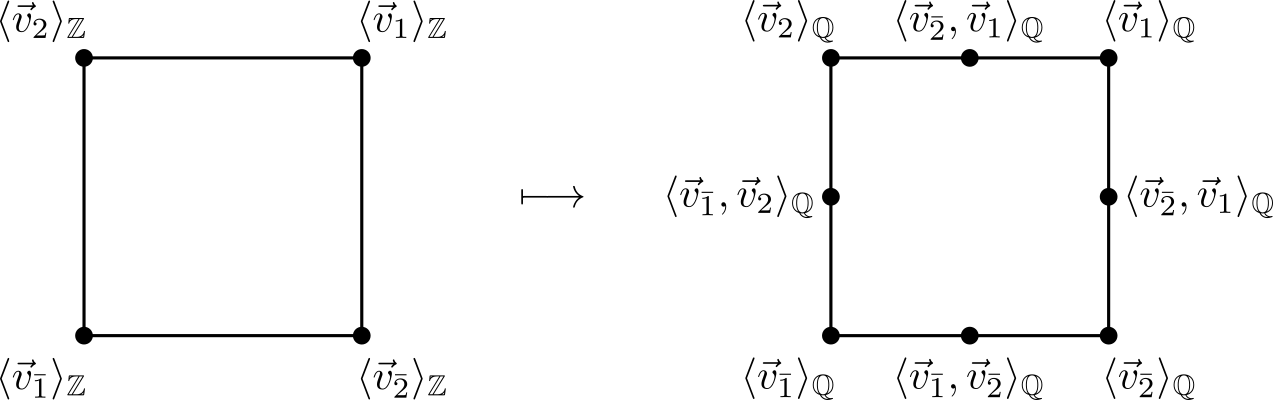}
	\end{center} 
	\caption{The right-hand side shows an (integral) apartment in $T^\omega_2$, the left-hand side a corresponding part of $\I[2]$. Here, $\vec v_1, \vec v_{\bar{1}}, \vec v_2, \vec v_{\bar{2}}$ denotes an integral symplectic basis of $\mbQ^4$.}
	\label{figure_apartment_intro}
\end{figure}

We begin with the following simplicial complex $\I$ that is defined in detail in \autoref{sec_defIA}. Let the vertices of $\I$ be the lines in $\mbZ^{2n}$ that are spanned by primitive vectors $\vec v\in \Z^{2n}$ (i.e.~the greatest common divisor of the entries of $\vec v$ is $1$). A $k$-simplex is formed by $(k+1)$ such lines if they span a rank-$(k+1)$ direct summand of $\Z^{2n}$ that is isotropic with respect to $\omega$. This simplicial complex was first considered by Putman \cite{put2009} and closely related complexes play a key role in work of van der Kallen--Looijenga  \cite{vanderkallenlooijenga2011sphericalcomplexesattachedtosymplecticlattices}.

The barycentric subdivision of $\I$ maps simplicially to $T^\omega_n$ by taking the $\mbQ$-spans of the involved summands of $\mbZ^{2n}$. This  induces a surjective map 
\begin{equation*}
	\widetilde H_{n-1}(\I) \twoheadrightarrow \widetilde H_{n-1}(T^\omega_n) = \St^\omega_n
\end{equation*}
because every integral apartment has a preimage in $\I$ (see \cref{figure_apartment_intro}). But $\I$ has certain $(n-1)$-dimensional homology classes that are sent to zero in $\St^\omega_n$. These actually come from apartment classes of the Steinberg module of the integral special linear group. We fill these homology classes by augmenting $\I$ with additional simplices to obtain a simplicial complex $\IAA^{(0)}$ (see \autoref{sec_int_cpx}) whose $(n-1)$-dimensional homology is isomorphic to $\St^\omega_n$. The construction of $\IAA^{(0)}$ is related to certain complexes $\Idel$, $\BA_n$ and $\BAA_n$ that were introduced by Putman \cite{put2009}, Church--Putman \cite{CP}, and Brück--Miller--Patzt--Sroka--Wilson \cite{Brueck2022} respectively, see \cref{sec_definitions}.

\begin{figure}
	\begin{center}
		\includegraphics{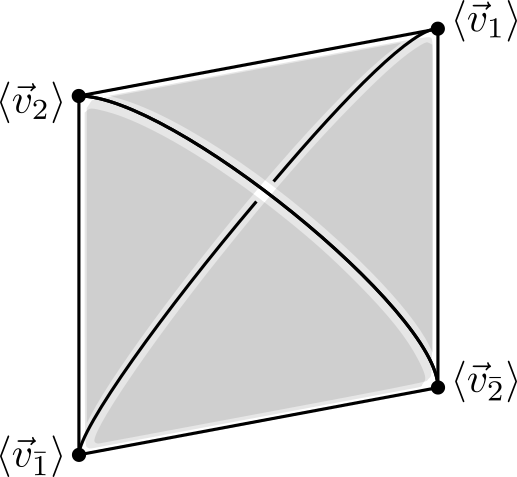}
	\end{center} 
	\caption{Four $2$-simplices in $\IAA[2]^{(1)}$ that ``fill in'' the square spanned by an integral symplectic basis. The square is filled in twice, once by the two triangles in the back and once by the two in the front. The result is a pillow-shaped 2-sphere. It is filled in by a 3-simplex in $\IAA[2]^{(2)}$.} \label{fig:IA2apartment}
\end{figure}

We then glue ``non-isotropic'' simplices to $\IAA^{(0)}$ that are allowed to contain one symplectic pair each. This yields a simplicial complex $\IAA^{(1)}$ (see \autoref{sec_int_cpx}) where all of the apartments from $\I$ are filled in (see \autoref{fig:IA2apartment}). In fact, the relative homology $H_n(\IAA^{(1)},\IAA^{(0)})$ surjects onto $\St^\omega_n$, which (re)proves that the integral symplectic apartment classes form a generating set. This uses that $\IAA^{(1)}$ is $(n-1)$-connected, whereas $\IAA^{(0)}$ and $\I$ are only $(n-2)$-connected. The complex $\IAA^{(1)}$ shares many features with a simplicial complex $\Isigdel$ defined by Putman \cite{put2009} and a related complex $\IA$ studied in recent work by Brück--Sroka \cite{bruecksroka2023}, see \cref{sec_definitions}.

To now get a presentation of $\St^\omega_n$, we have to understand the different ways in which we filled the apartments (see \cref{fig:IA2apartment}) and add additional simplices to $\IAA^{(1)}$ that encode the relations between them (see \cref{fig_relation_polytopes}). This finally leads us to the definition of a complex $\IAA^{(2)}$ (see \autoref{sec_int_cpx}), which we prove to be $n$-connected. \autoref{thmB} is then extracted from the nested triple of highly connected complexes $(\IAA^{(2)}, \IAA^{(1)}, \IAA^{(0)})$ by an induction argument.

The complex $\IAA$ appearing in \cref{thm_connectivity_IAA} (see \cref{sec_definition_IAA}) is a subcomplex of $\IAA^{(2)}$ that has only those augmentations that we need to prove its high connectivity. We use \cref{thm_connectivity_IAA} to deduce the above connectivity results.

\subsection{Structure of the paper}

We begin the article with preliminaries on symplectic linear algebra and simplicial complexes in \cref{sec_prelims}.

\cref{sec_definitions} to  \cref{sec_normal_form_spheres} essentially all aim at proving \autoref{thm_connectivity_IAA}.
This starts in \cref{sec_definitions}, where we define $\IAA$ and several other simplicial complexes.  
In \cref{sec_structure_of_links}, we describe the links, i.e.~the local structure of these complexes. 
\cref{sec:regular-maps} first gives background material on combinatorial manifolds and then describes the combinatorial structure of the maps that we use to study the connectivity properties of the complexes in this paper.
In \cref{sec_retraction}, we explain how to reduce the complexity (measured by a natural number that we call the rank) of maps $\phi\colon  S\to \IAA$ from combinatorial spheres to the complex $\IAA$. This involves constructing retractions on certain well-behaved subcomplexes of links of vertices in these complexes. \cref{sec_highly_connected_subcomplexes} is a collection of results that show that many subcomplexes of $\IAA$ are highly connected. These are mostly drawn from existing literature or straight-forward consequences of such.
Finally, \cref{sec:thm_connectivity_IAA} describes an induction that is the core of the proof of \autoref{thm_connectivity_IAA}. It relies on an assumption on the link structure in maps $\phi\colon  S\to \IAA$ from combinatorial spheres to $\IAA$. 
\cref{sec_normal_form_spheres} shows that this assumption can always be made. It is rather technical and concerned with manipulations of the combinatorial structure of such maps.

In \cref{sec:presentation}, we infer further connectivity results from \autoref{thm_connectivity_IAA}. These are then used to prove \autoref{thmB} by obtaining a partial resolution of the symplectic Steinberg module $\Stsymp_n$.
\cref{sec_cohomology_vanishing} shows that, using Borel--Serre duality, \autoref{thmA} is a straight-forward consequence of the partial resolution given by \autoref{thmB}.

The appendix contains \cref{table_overview_all_complexes}, which gives an overview of the different complexes in this work.

\subsubsection{Your personal reading guide}
Admittedly, this paper is long. The following suggestions provide shortcuts to the main results.

\paragraph{The codimension-0 case (16 pages)}
The short article \cite{bruecksroka2023} gives a geometric proof of Gunnells' result that the symplectic Steinberg module is generated by integral apartment classes. It showcases many of the techniques used in the present article and could be helpful to get a first impression.

\paragraph{Fast track to \autoref{thmA} ($\sim$6 pages)}
If you would like to see as fast as possible why the codimension-1 cohomology of $\Sp{2n}{\mbZ}$, or equivalently $\mcA_n$, vanishes, you should first glance at the symplectic linear algebra notation in \cref{sec_prelim_symplectic_LA}. Then look at \cref{def_building_over_Z}, which defines the symplectic building $T^\omega_n$. This should already allow you to understand the partial resolution of the Steinberg module $\Stsymp_n = \widetilde H_{n-1}(T^\omega_n;\mbZ)$ obtained in \cref{thm:stpres} (the relevant modules and morphisms are defined in the text starting after \cref{thm:stpres} and ending with \cref{def:skew-apartment-module}). Accepting that the sequence from \cref{thm:stpres} is exact, you can proceed to \cref{sec_cohomology_vanishing}, where we show that this partial resolution together with Borel--Serre duality imply the cohomology vanishing. This is a standard homological algebra argument.

\paragraph{Fast track to \autoref{thmB} ($\sim$23 pages)}
This could be the right option if you would like to understand how we obtain a presentation of the Steinberg module $\Stsymp_n$ but avoid the explicit combinatorial topology in the proof of \autoref{thm_connectivity_IAA}.
Start by reading the basics about symplectic linear algebra in \cref{sec_prelim_symplectic_LA} and about simplicial complexes in \cref{sec_prelims_simplicial_complexes}. Then jump to \cref{sec:presentation}, which contains the proof of \autoref{thmB}. In \cref{subsec:highly-connected-complex-IAA-zero-one-two}, the simplicial complexes $\IAA^{(i)}$ that were already mentioned in \cref{sec_intro_simplicial_complexes} above are defined. For these definitions, you will need to have a look at the simplex types given in \cref{def:IA-simplices}, \cref{def:BAA-simplices} and \cref{def:IAA-simplices}, but not more from \cref{sec_definitions}. In \cref{sec_connectivity_intermediate}, we show that the complexes $\IAA^{(i)}$ are highly connected. This is where we use the work from \hyperref[sec_structure_of_links]{Sections \ref{sec_structure_of_links}} to \ref{sec_normal_form_spheres}, in particular \autoref{thm_connectivity_IAA}. But if you take these on faith, the rest of \cref{sec:presentation} is mostly algebraic.

\paragraph{Fast track to \autoref{thm_connectivity_IAA} ($\sim$27 pages)}
If you would like to get an idea of what the simplicial complexes in this article look like and why they are highly connected, you could do the following.
Start with the preliminaries on symplectic linear algebra in \cref{sec_prelim_symplectic_LA} and have a look at the conventions about simplicial complexes in \cref{sec_prelims_simplicial_complexes}.
Continue with the definition of the simplicial complexes in \cref{sec_definitions}; there are quite a few types of augmentations in these, \cref{table_overview_complexes_definition} and \cref{table_overview_all_complexes} might help you to keep an overview.
Then skim through \cref{sec:regular-maps} and at the very least notice that we define a notion of regularity for maps (\cref{def_regular_maps}, \cref{def_reg_homotopies}).
In \cref{sec_retraction}, it is sufficient for now to read the introduction and in particular look at \cref{IAA-retraction}.
From there, proceed to \cref{sec:thm_connectivity_IAA}, which contains an induction that is the heart of the connectivity argument proving \autoref{thm_connectivity_IAA}. The induction beginning is in \cref{sec_induction_beginning}. It uses the fact that $\SL{2}{\mbZ} = \Sp{2}{\mbZ}$ and relies on the connectivity of complexes $\B_n$, $\BA_n$ and $\BAA_n$ that were studied in the context of the special linear group. The induction step is in \cref{sec_induction_step} and is a ``cut out'' argument for bad simplices. The regularity notions are crucial here, so you might need to go back to \cref{sec:regular-maps} to get a better understanding of this regularity. 
The induction step also relies on the fact that the relevant homotopy classes can be represented by maps having an ``isolation'' property (\cref{prop_isolating-rank-r-vertices}). The proof that this is true requires some technical work in \cref{sec_normal_form_spheres}, but if you are willing to take it on faith, you have already reached your goal.

\subsection{Acknowledgements}
We would like to thank Paul Gunnells, Jeremy Miller, Andrew Putman and Dan Yasaki for helpful conversations.
We are grateful about comments on a preliminary version by Jeremy Miller. 
We thank Matthew  Cordes for language advise and Samir Canning for comments about the connection to $\mcA_g$. Robin J.\ Sroka would like to thank his PhD advisor Nathalie Wahl for many fruitful and clarifying conversations.

All three authors were supported by the Danish National Research Foundation (DNRF92,  DNRF151).
Peter Patzt was supported in part by a Simons collaboration grant and the European Research Council under the European Union’s Seventh Framework Programme ERC Grant agreement ERC StG 716424 - CASe, PI Karim Adiprasito.
Robin J.\ Sroka was supported by the European Research Council (ERC grant agreement No.772960), by NSERC Discovery Grant A4000 as a Postdoctoral Fellow at McMaster University, and by the Swedish Research Council under grant no.\ 2016-06596 while the author was in residence at Institut Mittag-Leffler in Djursholm, Sweden during the semester \emph{Higher algebraic structures in algebra, topology and geometry}.

\section{Preliminaries}
\label{sec_prelims}

\subsection{Symplectic linear algebra}
\label{sec_prelim_symplectic_LA}
We begin with setting up the notation for linear algebra over the integers and recalling some basic facts about symplectic forms in this context. More details can be found in textbooks such as \cite{Milnor1973}.

Let $\mbZ^{2n}$ be equipped with the standard symplectic form $\omega = \omega_n$. We denote its standard symplectic basis by $\{\vec e_1, \ldots , \vec e_{n}, \vec f_{1}, \ldots, \vec f_n\} = \{\vec e_1, \vec f_1,\ldots , \vec e_{n},\vec f_{n}\}$. That is, $\mbZ^{2n} = \langle \vec e_1, \vec f_1,\ldots , \vec e_{n},\vec f_{n} \rangle$ and for $i,j \in \{1, \dots, n\}$, we have 
\begin{equation*}
	\omega(\vec e_i, \vec e_j) = \omega(\vec f_i, \vec f_j) = 0 , \quad
	\omega(\vec e_i, \vec f_j) =\omega(\vec f_j, \vec e_i) = 0  \text{ for } i \neq j,\quad
	\omega(\vec e_i, \vec f_i) = - \omega(\vec f_i, \vec e_i) = 1.
\end{equation*}

More generally, we call a set $\{\vec v_1, \vec w_1,\ldots , \vec v_{n},\vec w_{n}\}\subset \mbZ^{2n}$ a \emph{symplectic basis} if there is a symplectic (i.e.~form-preserving) automorphism $\mbZ^{2n} \to \mbZ^{2n}$ that sends $\vec v_i$ to $\vec e_i$ and $\vec w_i$ to $\vec f_i$ for all $1\leq i \leq n$. 

If $V \subseteq \mbZ^{2n}$ is a submodule, we define the \emph{symplectic complement} of $V$ as
\begin{equation*}
	V^{\perp} = \ls \vec v' \in \mbZ^{2n} \mid \omega(\vec v', \vec v) = 0 \text{ for all } v\in V  \rs.
\end{equation*}

Note that $\mbZ^{2n}$ is canonically embedded in $\mbQ^{2n}$ and that the form $\omega$ on $\mbZ^{2n}$ is the restriction of a corresponding symplectic form on $\mbQ^{2n}$. Almost all notions defined in this section have counterparts in the rational setting. However, we mostly restrict ourselves to the setting of $\mbZ^{2n}$ in this article. In particular, if we talk about the ``span'' $\langle \vec v_1, \ldots, \vec v_m\rangle$  of a set of vectors, we mean its $\mbZ$-span if we do not explicitly say something else.

\begin{definition}
Let $V \subseteq \Z^{2n}$ be a submodule. Then we call $V$
\begin{itemize}
\item a \emph{(direct) summand} if $\mbZ^{2n} = V \oplus W $ for some submodule $W\subseteq \mbZ^{2n}$;
\item an \emph{isotropic summand} if $V$ is a direct summand and $V\subseteq V^{\perp}$, that is $\omega(\vec v_1, \vec v_2) = 0$ for all $\vec v_1, \vec v_2 \in V$;
\item a \emph{symplectic summand} if $V$ is a direct summand and there is a form-preserving isomorphism $(V,\omega|_{V})\cong (\mbZ^{2k},\omega_k)$, where $\omega_k$ is the standard symplectic form on $\mbZ^{2k}$. The number $k$ is then called the \emph{genus} of the symplectic summand.
\end{itemize}
\end{definition}

The following two lemmas contain standard facts about direct summands. Their (elementary) proofs can e.g.~be found in \cite[Section 2.2]{CP}.

\begin{lemma}
\label{lem_direct_summands_basics}
If $V,W\subseteq \mbZ^{2n}$ are direct summands and $V\subseteq W$, then $V$ is a direct summand of $W$.
\end{lemma}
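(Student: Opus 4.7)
The plan is to produce an explicit complement of $V$ inside $W$ by intersecting $W$ with a chosen complement of $V$ in $\mbZ^{2n}$. This is the most direct route and avoids any structure-theorem machinery; the key observation is just that the hypothesis $V\subseteq W$ forces the ``error term'' in such a decomposition to lie in $W$.

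First, since $V$ is a direct summand of $\mbZ^{2n}$, pick a submodule $V'\subseteq \mbZ^{2n}$ with $\mbZ^{2n} = V\oplus V'$. I will show that the candidate complement $W' \coloneqq V'\cap W$ works, i.e.\ $W = V\oplus W'$. For the sum, take any $\vec w\in W$ and decompose it uniquely as $\vec w = \vec v + \vec v'$ with $\vec v\in V$ and $\vec v'\in V'$. Since $V\subseteq W$ by hypothesis, $\vec v\in W$, hence $\vec v' = \vec w - \vec v\in W$, which places $\vec v'\in V'\cap W = W'$. Therefore $W = V + W'$. For the intersection, any element of $V\cap W'$ lies in $V\cap V' = 0$, so $V\cap W' = 0$ and the sum is direct.

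The only thing to verify is that nothing in this argument required $W$ itself to be a summand of $\mbZ^{2n}$; indeed, the hypothesis on $W$ is not actually used here, so the lemma holds under the weaker assumption that $V$ is a summand of $\mbZ^{2n}$ and $V\subseteq W$. There is no real obstacle to this proof: it is essentially a one-line modular-arithmetic check once the complement $V'$ has been fixed, and it is the standard reason why ``being a direct summand'' is a transitive-like property in this setting.
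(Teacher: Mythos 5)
Your proof is correct: taking a complement $V'$ of $V$ in $\mbZ^{2n}$ and checking that $W = V \oplus (V'\cap W)$ is exactly the standard elementary argument, and the paper itself gives no proof but simply refers to \cite[Section 2.2]{CP} for this fact. Your observation that the hypothesis that $W$ be a summand of $\mbZ^{2n}$ is never used is also accurate (the paper states the lemma in the form it is applied, where both are summands).
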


\begin{lemma}
\label{lem_subspace_summand}
Let $V$ be an isotropic subspace of $\mbQ^{2n}$. Then $V \cap \mbZ^{2n} \subseteq \mbZ^{2n}$ is an isotropic summand of $\mbZ^{2n}$.
\end{lemma}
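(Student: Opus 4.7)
The plan is to verify the two defining properties of an isotropic summand separately: isotropy of $V \cap \mbZ^{2n}$ as a submodule of $\mbZ^{2n}$, and the fact that it is a direct summand.

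Isotropy is immediate. Since the form $\omega$ on $\mbZ^{2n}$ is the restriction of the form on $\mbQ^{2n}$, and any two vectors $\vec v_1, \vec v_2 \in V \cap \mbZ^{2n}$ lie in particular in the $\mbQ$-isotropic subspace $V$, we have $\omega(\vec v_1, \vec v_2) = 0$. So $V \cap \mbZ^{2n} \subseteq (V \cap \mbZ^{2n})^\perp$.

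For the direct summand property, I would invoke the standard characterisation: a submodule $M \subseteq \mbZ^{2n}$ is a direct summand if and only if it is \emph{saturated}, meaning that whenever $k \vec v \in M$ for some nonzero $k \in \mbZ$ and some $\vec v \in \mbZ^{2n}$, we already have $\vec v \in M$. This equivalence follows because saturation of $M$ is equivalent to $\mbZ^{2n}/M$ being torsion-free; since it is also finitely generated, the structure theorem implies it is free, and hence the short exact sequence $0 \to M \to \mbZ^{2n} \to \mbZ^{2n}/M \to 0$ splits.

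Saturation of $V \cap \mbZ^{2n}$ is straightforward to check: suppose $k \vec v \in V \cap \mbZ^{2n}$ with $k \in \mbZ \setminus \{0\}$ and $\vec v \in \mbZ^{2n}$. Since $V$ is a $\mbQ$-subspace of $\mbQ^{2n}$, we may scale by $\frac{1}{k}$ to get $\vec v = \frac{1}{k}(k\vec v) \in V$, and combining this with $\vec v \in \mbZ^{2n}$ yields $\vec v \in V \cap \mbZ^{2n}$. There is no real obstacle here; the argument is pure elementary module theory over the PID $\mbZ$, and the whole proof fits in a few lines.
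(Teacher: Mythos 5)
Your proof is correct. The paper does not actually prove this lemma itself — it records it as a standard fact and refers to \cite[Section 2.2]{CP} — and your argument (isotropy is inherited because $\omega$ on $\mbZ^{2n}$ restricts the form on $\mbQ^{2n}$, and the direct-summand property follows from saturation of $V \cap \mbZ^{2n}$, which makes the quotient a finitely generated torsion-free, hence free, $\mbZ$-module so that the sequence splits) is precisely the standard elementary proof that such a reference supplies.
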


In fact, in the definition of symplectic summands, the condition that $V$ be a direct summand is redundant. This follows from the following lemma, which is a consequence of \cite[Lemma I.3.1]{Milnor1973}.

\begin{lemma}
\label{lem_split_off_symplectic_summand}
Let $V\subseteq\mbZ^{2n}$ be a submodule such that $(V,\omega|_{V})\cong (\mbZ^{2k},\omega_k)$. Then $V$ is a (symplectic) summand. Furthermore, $V^\perp$ is a symplectic summand of genus $n-k$ and there is a symplectic isomorphism
\begin{equation*}
	(V\oplus V^\perp, \omega|_{V} \oplus \omega|_{V^\perp}) \to (\mbZ^{2n}, \omega).
\end{equation*}
\end{lemma}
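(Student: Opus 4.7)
The plan is to show that $V$ splits off as a direct summand by constructing an explicit complement, namely $V^{\perp}$, and then invoke the classification of unimodular skew-symmetric forms for the remaining clause.

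First I would fix a symplectic basis $\vec v_1, \vec w_1, \dots, \vec v_k, \vec w_k$ of $V$, using the hypothesis that $(V, \omega|_V) \cong (\mbZ^{2k}, \omega_k)$. For an arbitrary $\vec x \in \mbZ^{2n}$, I would define
\[
	\vec v \coloneqq \sum_{i=1}^{k} \omega(\vec x, \vec w_i)\,\vec v_i + \sum_{i=1}^{k} \omega(\vec v_i, \vec x)\,\vec w_i \in V,
\]
and then verify, by pairing with each $\vec v_j$ and $\vec w_j$ in turn, that $\vec x - \vec v \in V^{\perp}$. Because $\omega$ takes integer values, the coefficients are integers and this really lies in $V$, so $V + V^{\perp} = \mbZ^{2n}$. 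Independence is immediate: any $\vec y \in V \cap V^{\perp}$ pairs trivially with every element of $V$, and since $\omega|_V$ is non-degenerate by hypothesis, $\vec y = 0$. Together this gives $\mbZ^{2n} = V \oplus V^{\perp}$, so by \cref{lem_direct_summands_basics} both $V$ and $V^{\perp}$ are direct summands of $\mbZ^{2n}$; in particular $V$ is a symplectic summand.

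To identify $V^{\perp}$ symplectically, I would note that $V^{\perp}$ is free of rank $2n - 2k$ and that the restriction $\omega|_{V^{\perp}}$ is unimodular: under the decomposition $\mbZ^{2n} = V \oplus V^{\perp}$ the unimodularity isomorphism $\mbZ^{2n} \xrightarrow{\sim} \on{Hom}(\mbZ^{2n},\mbZ)$ given by $\omega$ restricts to $V^{\perp} \xrightarrow{\sim} \on{Hom}(V^{\perp},\mbZ)$, because the cross pairings between $V$ and $V^{\perp}$ vanish by definition. Milnor's Lemma I.3.1 (the classification of unimodular alternating forms over $\mbZ$) then provides a symplectic isomorphism $(V^{\perp}, \omega|_{V^{\perp}}) \cong (\mbZ^{2(n-k)}, \omega_{n-k})$, so $V^{\perp}$ is a symplectic summand of genus $n-k$.

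Finally, the inclusion $V \oplus V^{\perp} \hookrightarrow \mbZ^{2n}$ is the desired symplectic isomorphism: bijectivity was established above, and the orthogonality $V \perp V^{\perp}$ makes the cross terms in
\[
	\omega(\vec v_1 + \vec v_1', \vec v_2 + \vec v_2') = \omega(\vec v_1,\vec v_2) + \omega(\vec v_1', \vec v_2') + \omega(\vec v_1, \vec v_2') + \omega(\vec v_1', \vec v_2)
\]
vanish for $\vec v_i \in V$, $\vec v_i' \in V^{\perp}$, so the form pulls back to exactly $\omega|_V \oplus \omega|_{V^{\perp}}$.

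There is no real obstacle here; the only non-trivial ingredient is the classification statement cited from Milnor, which I am allowed to take as input. The slightly delicate part is making sure the splitting formula for $\vec v$ produces an element of $V$ with \emph{integer} coefficients — but this is automatic because $\omega$ takes integer values on $\mbZ^{2n} \times \mbZ^{2n}$, and it is exactly the step that would fail if one only assumed $\omega|_V$ were non-degenerate over $\mbQ$ rather than isomorphic to the standard symplectic form over $\mbZ$.
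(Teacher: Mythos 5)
Your argument is correct. Note, though, that the paper does not prove \cref{lem_split_off_symplectic_summand} at all: it simply records the statement as ``a consequence of [Lemma I.3.1]{Milnor1973}'', so there is no in-paper proof to match step by step. What you have done is supply a self-contained argument whose only external input is the classification of unimodular alternating forms over $\mbZ$, which is essentially the same input the paper delegates to Milnor. Your route -- writing down the explicit projection $\vec v = \sum_i \omega(\vec x, \vec w_i)\vec v_i + \sum_i \omega(\vec v_i,\vec x)\vec w_i$ to get $\mbZ^{2n} = V + V^{\perp}$, using non-degeneracy of $\omega|_V$ for $V \cap V^{\perp} = 0$, and then restricting the adjoint isomorphism to see that $\omega|_{V^{\perp}}$ is unimodular -- is the standard ``orthogonal splitting'' proof, and it correctly isolates where integrality of $\omega|_V$ being the \emph{standard} form (rather than merely non-degenerate over $\mbQ$) is used, namely in the integrality of the projection coefficients. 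The one place where a reader might want a little more is the claim that the adjoint isomorphism ``restricts to $V^{\perp}\xrightarrow{\sim}\on{Hom}(V^{\perp},\mbZ)$'': vanishing of the cross pairings shows the adjoint is block diagonal with respect to $\mbZ^{2n}=V\oplus V^{\perp}$ and $\on{Hom}(\mbZ^{2n},\mbZ)=\on{Hom}(V,\mbZ)\oplus\on{Hom}(V^{\perp},\mbZ)$, and one should then say (in one line) that a block-diagonal isomorphism has invertible diagonal blocks, or equivalently extend a functional on $V^{\perp}$ by zero and use non-degeneracy of $\omega|_V$ to see its preimage lies in $V^{\perp}$. With that sentence added, the proof is complete and arguably more informative than the paper's bare citation.
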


In this article, rank-1 summands of $\mbZ^{2n}$, which we sometimes refer to as \emph{lines}, are especially important. They form the vertices of the simplicial complexes that we consider later on.
A vector $\vec v \in \mbZ^{2n}$ is called \emph{primitive} if its $\mbZ$-span is a rank-1 summand of $\mbZ^{2n}$.\footnote{If we express $\vec v$ as a row vector $\vec v = (x_1, \ldots, x_{2n})$, this is equivalent to saying that $\on{gcd}(x_1, \ldots, x_{2n} ) = 1$.} In this case, we write  $v = \ll \vec v\rr$ for this summand. Similarly, given a rank-1 summand $v$ of $\mbZ^{2n}$, we write $\vec v$ for some choice of primitive vector in $v$. There are only two choices for this. The other choice is $-\vec v$.
Using this notation, we write 
\begin{equation*}
\omega(v_1,v_2) = \pm r
\end{equation*}
if $r = \omega(\vec v_1, \vec v_2)$. If $r$ is $0$ here, we also drop the $\pm$-sign.

The following is a consequence of \cref{lem_split_off_symplectic_summand}, see \cite[Theorem I.3.2]{Milnor1973}.
\begin{lemma}
\label{lem_symplectic_pair}
Let $\vec v, \vec w \in \mbZ^{2n}$. If $\omega(\vec v, \vec w)\in \ls -1, 1\rs$, then both $\vec v$ and $\vec w$ are primitive and $\langle \vec v, \vec w \rangle$ is a symplectic summand of $\mbZ^{2n}$.
\end{lemma}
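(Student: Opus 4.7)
The plan is to check the two claims separately: first that $\vec v$ and $\vec w$ must be primitive, and then that $\langle \vec v,\vec w\rangle$ is a rank-$2$ submodule symplectically isomorphic to $(\mbZ^2,\omega_1)$, after which \cref{lem_split_off_symplectic_summand} immediately gives that it is a symplectic summand.

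For primitivity, I would argue by divisibility. Any $\vec v\in \mbZ^{2n}$ can be written as $\vec v = d\,\vec v'$ where $\vec v'$ is primitive and $d\in\mbZ_{\ge 0}$ is the gcd of the coordinates of $\vec v$; the case $\vec v = 0$ is immediately ruled out since then $\omega(\vec v,\vec w) = 0$. Bilinearity of $\omega$ gives $\omega(\vec v,\vec w) = d\cdot\omega(\vec v',\vec w)$, and since this product equals $\pm 1$ we must have $d=1$, i.e.\ $\vec v$ is primitive. The same argument applied to $\vec w$ gives primitivity of $\vec w$.

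Next I would show $V \coloneqq \langle \vec v,\vec w\rangle$ is free of rank $2$: if $a\vec v + b\vec w = 0$ in $\mbZ^{2n}$, pairing with $\vec v$ under $\omega$ yields $b\cdot\omega(\vec v,\vec w) = \pm b = 0$, so $b=0$, and symmetrically $a=0$. In the basis $(\vec v,\vec w)$ the form $\omega|_V$ is represented by the matrix $\bigl(\begin{smallmatrix} 0 & \varepsilon \\ -\varepsilon & 0\end{smallmatrix}\bigr)$ where $\varepsilon = \omega(\vec v,\vec w)\in\{\pm 1\}$. Sending $(\vec v,\vec w)\mapsto (\vec e_1,\varepsilon\vec f_1)$ then defines a $\mbZ$-module isomorphism $V \to \mbZ^2$ that intertwines $\omega|_V$ with the standard form $\omega_1$. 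Hence $(V,\omega|_V)\cong (\mbZ^2,\omega_1)$ and \cref{lem_split_off_symplectic_summand} finishes the argument.

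There is no real obstacle here; the statement is essentially bookkeeping once \cref{lem_split_off_symplectic_summand} is available. The only care needed is to verify rank-$2$ freeness before invoking that lemma, and to note that the case $\omega(\vec v,\vec w)=-1$ requires the harmless sign adjustment $\vec w\mapsto -\vec f_1$ (or equivalently using $\vec w\mapsto \vec f_1$ together with the automorphism swapping signs) so that the isomorphism is genuinely form-preserving.
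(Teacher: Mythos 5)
Your proof is correct and follows essentially the same route as the paper, which simply records the lemma as a consequence of \cref{lem_split_off_symplectic_summand} (citing Milnor, Theorem I.3.2); you have merely filled in the routine verifications (primitivity via divisibility, rank-two freeness, and the explicit form-preserving isomorphism onto $(\mbZ^2,\omega_1)$) needed to apply that lemma.
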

We call vectors $\vec v, \vec w$ satisfying the conditions of \cref{lem_symplectic_pair} a \emph{symplectic pair}. We also use this formulation for the pair of corresponding lines $v$, $w$.

Roughly speaking, the next lemma says that a set of vectors that spans a direct summand and looks like a partial symplectic basis can actually be extended to a symplectic basis. It is a standard fact that can e.g.~be proved using \cite[Lemma I.2.6]{Milnor1973}.
\begin{lemma}
\label{lem_extend_to_symplectic_basis}
Let $V\subseteq \mbZ^{2n}$ be a direct summand and 
\begin{equation*}
	B = \ls \vec v_1, \ldots, \vec v_m, \vec v_{m+1}, \vec w_{m+1}, \ldots, \vec v_{m+k}, \vec w_{m+k} \rs
\end{equation*}
a basis of $V$ such that for all $i,j\in \ls 1, \ldots, n \rs$
\begin{equation*}
	\omega(\vec v_i, \vec v_j) = \omega(\vec w_i, \vec w_j) = 0 , \quad
	\omega(\vec v_i, \vec w_j) =\omega(\vec w_j, \vec v_i) = 0  \text{ for } i \neq j,\quad
	\omega(\vec v_i, \vec w_i) = - \omega(\vec w_i, \vec v_i) =1.
\end{equation*}
Then $B$ can be extended to a symplectic basis of $\mbZ^{2n}$.
\end{lemma}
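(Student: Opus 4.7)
My plan is to reduce the problem to two well-separated cases and then handle each by splitting off symplectic summands one at a time.

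First, I would handle the part of $B$ consisting of symplectic pairs. Each pair $(\vec v_{m+i},\vec w_{m+i})$ has $\omega(\vec v_{m+i},\vec w_{m+i})=1$, so by \cref{lem_symplectic_pair} it spans a symplectic summand. Because different pairs pair trivially under $\omega$, the restriction of $\omega$ to their combined span $S := \langle \vec v_{m+1},\vec w_{m+1},\ldots,\vec v_{m+k},\vec w_{m+k}\rangle$ is the standard form $\omega_k$. Hence by \cref{lem_split_off_symplectic_summand}, $S$ is a symplectic summand and $\mbZ^{2n}=S\oplus S^\perp$ with $S^\perp$ a symplectic summand of genus $n-k$.

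Next, I would isolate the isotropic part in $S^\perp$. Each $\vec v_i$ with $i\le m$ is symplectically orthogonal to every basis vector of $S$, so $W:=\langle \vec v_1,\ldots,\vec v_m\rangle\subseteq S^\perp$. Since $V=S\oplus W$ as modules and $V$ is a direct summand of $\mbZ^{2n}$, $W$ is a direct summand of $\mbZ^{2n}$ (any complement of $V$ together with $S$ gives a complement of $W$). Because $W\subseteq S^\perp$, \cref{lem_direct_summands_basics} promotes $W$ to a direct summand of $S^\perp$. Thus in $S^\perp\cong \mbZ^{2(n-k)}$ I have an isotropic direct summand $W$ with the prescribed basis $\vec v_1,\ldots,\vec v_m$, and it suffices to extend this to a symplectic basis of $S^\perp$; concatenating with the given pairs in $S$ then yields the desired symplectic basis of $\mbZ^{2n}$.

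The remaining task is the purely isotropic case, which I would prove by induction on $m$. The base case $m=0$ is trivial. For the inductive step, the key input is that $\vec v_1$ is primitive (being part of a basis of a summand), so by the non-degeneracy of $\omega$ combined with Milnor's \cite[Lemma I.2.6]{Milnor1973} there exists $\vec w_1\in S^\perp$ with $\omega(\vec v_1,\vec w_1)=1$. Then \cref{lem_symplectic_pair} makes $\langle \vec v_1,\vec w_1\rangle$ a symplectic summand of $S^\perp$, and \cref{lem_split_off_symplectic_summand} yields an orthogonal splitting $S^\perp=\langle \vec v_1,\vec w_1\rangle\oplus \langle \vec v_1,\vec w_1\rangle^\perp$. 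To push the remaining $\vec v_2,\ldots,\vec v_m$ into this smaller symplectic summand, I replace each $\vec v_i$ by $\vec v_i':=\vec v_i-\omega(\vec v_i,\vec w_1)\vec v_1$; since $\omega(\vec v_i,\vec v_1)=0$ the modified vectors are still mutually isotropic, are now orthogonal to both $\vec v_1$ and $\vec w_1$, and $\langle \vec v_1,\vec w_1,\vec v_2',\ldots,\vec v_m'\rangle=\langle \vec v_1,\vec w_1,\vec v_2,\ldots,\vec v_m\rangle$ so the new collection still spans an isotropic-plus-pair direct summand of $S^\perp$. Applying \cref{lem_direct_summands_basics} again, $\langle \vec v_2',\ldots,\vec v_m'\rangle$ is a direct summand of $\langle \vec v_1,\vec w_1\rangle^\perp$, and the inductive hypothesis finishes the argument.

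The step I expect to require the most care is the bookkeeping in the inductive step: ensuring that after modifying the $\vec v_i$ to lie in $\langle \vec v_1,\vec w_1\rangle^\perp$, the modified vectors still form a basis of a direct summand of that smaller symplectic lattice. This is where the repeated application of \cref{lem_direct_summands_basics}, together with the fact that the modification is an elementary change-of-basis preserving spans, is essential.
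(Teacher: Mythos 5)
Your overall route -- splitting off the span $S$ of the symplectic pairs via \cref{lem_symplectic_pair} and \cref{lem_split_off_symplectic_summand}, reducing to an isotropic direct summand of $S^\perp$, and inducting on $m$ using primitivity and unimodularity to produce a dual vector -- is the standard argument the paper alludes to (the paper gives no details beyond citing Milnor), and the reduction steps are sound. The genuine gap is in the inductive step: after you replace $\vec v_i$ by $\vec v_i' = \vec v_i - \omega(\vec v_i,\vec w_1)\vec v_1$, the inductive hypothesis produces a symplectic basis of $\langle \vec v_1,\vec w_1\rangle^\perp$ extending $\vec v_2',\ldots,\vec v_m'$, hence a symplectic basis of $S^\perp$ containing $\vec v_1,\vec w_1,\vec v_2',\ldots,\vec v_m'$ -- which is \emph{not} an extension of $B$ unless all coefficients $\omega(\vec v_i,\vec w_1)$ vanish. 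So ``the inductive hypothesis finishes the argument'' does not yet prove the statement; you must still undo the shear. This can be repaired: writing $\vec u_i$ for the symplectic partner of $\vec v_i'$ in the basis you obtained and $c_i=\omega(\vec v_i,\vec w_1)$, replace each $\vec v_i'$ back by $\vec v_i=\vec v_i'+c_i\vec v_1$ and simultaneously replace $\vec w_1$ by $\vec w_1-\sum_{i\ge 2}c_i\vec u_i$; a direct check shows the result is again a symplectic basis, now containing $B$. A cleaner fix avoids the modification entirely: since $W=\langle\vec v_1,\ldots,\vec v_m\rangle$ is a direct summand of $S^\perp$ and $\omega|_{S^\perp}$ is unimodular, the restriction map $S^\perp\to\operatorname{Hom}(W,\mbZ)$, $x\mapsto\omega(\,\cdot\,,x)|_W$, is surjective, so you may choose $\vec w_1$ with $\omega(\vec v_i,\vec w_1)=\delta_{i1}$ from the outset (the same mechanism as in \cref{lem_primitive_for_form}); then $\vec v_2,\ldots,\vec v_m$ already lie in $\langle\vec v_1,\vec w_1\rangle^\perp$ and the induction extends the original vectors.

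A secondary soft spot: you assert that $\langle\vec v_1,\vec w_1,\vec v_2,\ldots,\vec v_m\rangle$ is a direct summand of $S^\perp$, but \cref{lem_direct_summands_basics} alone does not yield this (it only passes summand-ness down along an inclusion of summands). It is true, and one quick justification is that the image of $\vec w_1$ in the free module $S^\perp/W$ is primitive: any relation $\vec w_1=d\vec u+\vec x$ with $\vec x\in W$ gives $1=\omega(\vec v_1,\vec w_1)=d\,\omega(\vec v_1,\vec u)$, forcing $d=\pm1$; hence the quotient by the total span is free and the span is a summand. With the better choice of $\vec w_1$ above this point also evaporates, since then you only need that $\langle\vec v_2,\ldots,\vec v_m\rangle$ (spanned by part of a basis of the summand $W$) is a summand of $S^\perp$ contained in the summand $\langle\vec v_1,\vec w_1\rangle^\perp$, where \cref{lem_direct_summands_basics} applies directly.
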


We in particular use \cref{lem_extend_to_symplectic_basis} for the case where $B$ is either a symplectic pair or the basis of an isotropic summand $V$. We also need the following strengthening of \cref{lem_extend_to_symplectic_basis}.

\begin{lemma}
\label{lem_primitive_for_form}
Let $\vec e\in \mbZ^{2(m+n)}$ such that $\vec e \not\in \langle \vec e_1, \ldots, \vec e_m \rangle$ and $\omega(\vec e,\vec e_i) = 0$ for all $1\leq i \leq m$. Then there are $\vec e' \in \mbZ^{2(m+n)}$ and $a \in \mbZ$ such that 
\begin{itemize}
\item $\omega(\vec e',\vec e_i) = 0$ for all $1\leq i \leq m$,
\item $\ls \vec e_1, \ldots, \vec e_m, \vec e' \rs$ can be extended to a symplectic basis of $\mbZ^{2(m+n)}$ and
\item for all $\vec v\in \mbZ^{2(m+n)}$ such that $\omega(\vec v, \vec e_i) = 0$ for all $1\leq i \leq m$, we have $\omega(\vec e,\vec v) = a \omega(\vec e', \vec v)$.
\end{itemize}
\end{lemma}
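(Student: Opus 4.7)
The plan is to reduce the problem to the standard rank-$2n$ symplectic lattice by passing to the quotient of $W := \langle \vec e_1,\ldots,\vec e_m\rangle^{\perp}$ by its radical.

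First I would observe that, because $\vec e_1,\ldots,\vec e_m$ are pairwise $\omega$-orthogonal, the submodule $\langle \vec e_1,\ldots,\vec e_m\rangle$ is an isotropic summand contained in $W$ and coincides with the radical of $\omega|_W$; the hypothesis $\omega(\vec e,\vec e_i)=0$ says exactly that $\vec e\in W$. Consequently $\omega$ descends to a non-degenerate symplectic form $\bar\omega$ on the free $\mbZ$-module $\bar W := W/\langle \vec e_1,\ldots,\vec e_m\rangle$; using \cref{lem_extend_to_symplectic_basis} to extend the $\vec e_i$ to a symplectic basis of $\mbZ^{2(m+n)}$ shows that $(\bar W,\bar\omega)\cong (\mbZ^{2n},\omega_n)$.

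Next I would produce the pair $(\vec e',a)$. Let $\bar{\vec e}\in\bar W$ be the image of $\vec e$; it is nonzero by the first hypothesis, so I can write $\bar{\vec e}=a\,\bar{\vec e'}$ where $a\in\mbZ$ is the content and $\bar{\vec e'}\in\bar W$ is primitive, and choose any lift $\vec e'\in W$. Then $\omega(\vec e',\vec e_i)=0$ is automatic since $\vec e'\in W$. For the compatibility clause, write $\vec e=a\vec e'+\sum_i c_i\vec e_i$ in $W$; then for any $\vec v$ with $\omega(\vec v,\vec e_i)=0$ we also have $\vec v\in W$, so $\omega(\vec e_i,\vec v)=0$, and therefore
\[ \omega(\vec e,\vec v) = a\,\omega(\vec e',\vec v) + \sum_i c_i\,\omega(\vec e_i,\vec v) = a\,\omega(\vec e',\vec v). \]

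Finally I would verify that $\{\vec e_1,\ldots,\vec e_m,\vec e'\}$ extends to a symplectic basis of $\mbZ^{2(m+n)}$. By \cref{lem_extend_to_symplectic_basis} it is enough to check that the pairings among these vectors all vanish (which they do, by isotropy of the $\vec e_i$ and the fact that $\vec e'\in W$) and that $V:=\langle \vec e_1,\ldots,\vec e_m,\vec e'\rangle$ is a direct summand. Since $\bar{\vec e'}$ is primitive in $\bar W$, the submodule $\langle\bar{\vec e'}\rangle$ is a direct summand of $\bar W$, and $\bar W$ is a direct summand of $\mbZ^{2(m+n)}/\langle \vec e_1,\ldots,\vec e_m\rangle$; composing and lifting through the split quotient by the isotropic summand $\langle \vec e_1,\ldots,\vec e_m\rangle$ shows that $V$ is a direct summand of $\mbZ^{2(m+n)}$. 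The only mildly delicate point is this last lifting argument for direct summands under a split quotient; everything else is either immediate or a direct invocation of the lemmas in \cref{sec_prelim_symplectic_LA}.
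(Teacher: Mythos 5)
Your proof is correct, but it reaches the conclusion by a different route than the paper. The paper works with the rational span: it observes that $\langle \vec e_1,\ldots,\vec e_m,\vec e\rangle_{\mbQ}$ is an $(m+1)$-dimensional isotropic subspace, intersects it with $\mbZ^{2(m+n)}$ to get (via \cref{lem_subspace_summand}) a rank-$(m+1)$ isotropic direct summand $V$ containing $\langle \vec e_1,\ldots,\vec e_m\rangle$ as a summand, chooses $\vec e'$ so that $\{\vec e_1,\ldots,\vec e_m,\vec e'\}$ is a $\mbZ$-basis of $V$, and then invokes \cref{lem_extend_to_symplectic_basis} and the expansion $\vec e=\sum a_i\vec e_i+a\vec e'$ exactly as you do in your final computation. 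You instead pass to the quotient $\bar W=W/\langle \vec e_1,\ldots,\vec e_m\rangle$ with $W=\langle\vec e_1,\ldots,\vec e_m\rangle^\perp$, extract $a$ as the content of $\bar{\vec e}$ and $\vec e'$ as a lift of its primitive part, and establish the summand property by the splitting argument (the quotient of $\mbZ^{2(m+n)}$ by $\langle\vec e_1,\ldots,\vec e_m,\vec e'\rangle$ is free, being isomorphic to a complement of a summand of the free module $\mbZ^{2(m+n)}/\langle\vec e_1,\ldots,\vec e_m\rangle$). In effect both constructions produce the saturation of $\langle\vec e_1,\ldots,\vec e_m,\vec e\rangle$; the paper's version gets isotropy and the summand property in one stroke from \cref{lem_subspace_summand}, while yours replaces that lemma by the content/primitivity picture in $\bar W$, which makes the integer $a$ conceptually transparent but requires the separate (and, as you note, correct) lifting argument, plus the observation that $W$ itself is a summand so that $\bar W$ sits as a summand of $\mbZ^{2(m+n)}/\langle\vec e_1,\ldots,\vec e_m\rangle$. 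The only points worth making explicit in your write-up are the linear independence of $\{\vec e_1,\ldots,\vec e_m,\vec e'\}$ (immediate since $\bar{\vec e'}\neq 0$), so that \cref{lem_extend_to_symplectic_basis} applies to a genuine basis of the summand; also note that the identification $(\bar W,\bar\omega)\cong(\mbZ^{2n},\omega_n)$, while true, is not actually needed for the argument.
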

\begin{proof}
As $\vec e \not\in \langle \vec e_1, \ldots, \vec e_m \rangle$ and $\omega(e,e_i) = 0$ for all $1\leq i \leq m$, the $\mbQ$-span $\langle \vec e_1, \ldots,\vec  e_m, \vec e \rangle_{\mbQ}$ is an $(m+1)$-dimensional isotropic subspace of $\mbQ^{2(m+n)}$. It contains $\langle \vec e_1, \ldots,\vec  e_m \rangle_{\mbQ}$ as an $m$-dimensional subspace. This implies that 
\begin{equation*}
	V \coloneqq \langle \vec e_1, \ldots,\vec  e_m, \vec e \rangle_{\mbQ} \cap \mbZ^{2(m+n)}
\end{equation*}
is a rank-$(m+1)$ isotropic direct summand of $\mbZ^{2(m+n)}$ that contains 
\begin{equation*}
	\langle \vec e_1, \ldots, \vec e_m \rangle_{\mbQ}\cap \mbZ^{2(m+n)} = \langle \vec e_1, \ldots, \vec e_m \rangle \subseteq V
\end{equation*}
as a direct summand of rank $m$.
Hence, we can choose $\vec e' \in V$ such that $\ls \vec e_1, \ldots,\vec  e_m, \vec e'  \rs$ is basis (over $\mbZ$) for $V$. We claim that $\vec e'$ satisfies the desired properties.
To see this, first observe that $\ls \vec e_1, \ldots,\vec  e_m, \vec e'  \rs$ is a basis for the isotropic direct summand $V\subseteq \mbZ^{2(m+n)}$. Hence by \cref{lem_extend_to_symplectic_basis}, it can be extended to a symplectic basis of $\mbZ^{2(m+n)}$.
Now let $\vec v\in \mbZ^{2(m+n)}$ such that $\omega(\vec v, \vec e_i) = 0$ for all $1\leq i \leq m$.
As $\vec e$ is contained in $V$, we can write it as
\begin{equation*}
	\vec e = \sum_{i=1}^m a_i \vec e_i +a \vec e'.
\end{equation*}
Using that $\omega(\vec v, \vec e_i) = 0$ for all $1\leq i \leq m$, we have
\begin{equation*}
	\omega(\vec e, \vec v)  = \omega\left(\sum_{i=1}^m a_i \vec e_i +a \vec e', \vec v \right) = \sum_{i=1}^m a_i \omega(\vec e_i, \vec v) + a \omega(\vec e', \vec v) = a \omega(\vec e', \vec v). \qedhere
\end{equation*}
\end{proof}

\subsection{Simplicial complexes}
\label{sec_prelims_simplicial_complexes}
Most of this article is concerned with studying connectivity properties of simplicial complexes. 
Usually, we consider a simplicial complex $X$ as a collection of subsets, the set of simplices, of a set $\Vr(X)$, the vertex set of $X$, such that the set of simplices is closed under passing to subsets.
However, we often do not distinguish between $X$ and its geometric realisation $|X|$ if what is meant seems clear from the context. In particular, we associate topological properties to simplicial complexes and e.g.~say that $X$ is \emph{$n$-connected} if $|X|$ is, i.e.~if $\pi_k(|X|)$ is trivial for all $k\leq n$. We use the convention that $(-1)$-connected means that $X$ is non-empty.

If $X$ and $Y$ are simplicial complexes, we denote their simplicial join by $X\ast Y$.
If $\Delta$ is a simplex of $X$, we write $\Link_X(\Delta)$ for the \emph{link} of $\Delta$ in $X$, i.e.~the complex consisting of all simplices $\Theta$ in $X$ such that $\Theta \cap \Delta = \emptyset$ and $\Theta \cup \Delta$ is a simplex in $X$. We write $\Star_X(\Delta) = \Delta \ast \Link_X(\Delta)$ for the \emph{star} of $\Delta$.
We say that $\Theta$ is a \emph{face} of the simplex $\Delta$ if $\Theta \subseteq \Delta$; note that this containment need not be proper, so in particular every simplex is a face of itself.

If $X$ is a simplicial complex and $Y\subseteq X$ is a subcomplex, then we denote by $X\setminus Y$ the \emph{set} of simplices of $X$ that are not contained in $Y$. Note that this set is a priori not a simplicial complex.

A simplicial complex $X$ is (homotopy) \emph{Cohen--Macaulay of dimension $n$} if it has dimension $n$, is $(n-1)$-connected and for every simplex $\Theta$, the link $\Link_X(\Theta)$ is $(n-\dim(\Theta)-2)$-connected.

\subsubsection{Standard link argument} \label{subsec:standard-link-argument}
Recall that a map $f\colon  X \to Y$ between topological spaces is \emph{$n$-connected} if the induced map on homotopy groups $\pi_k (f)\colon  \pi_k(X) \to \pi_k(Y)$
is an isomorphism for all $k< n$ and a surjection for $k=n$.

The following results are contained in \cite[Section 2.1]{hatchervogtmann2017tethers} and are frequently used in this article. Let $X$ be a simplicial complex and let $Y \subseteq X$ be a subcomplex.

\begin{definition}\label{def:standard-link-argument-bad-simplices}
  A set of simplices $B \subset X\setminus Y$ is a set of \emph{bad simplices} if the following two conditions hold for any simplex $\Delta$ of $X$:
  \begin{enumerate}
  \item If no face of $\Delta$ is contained in $B$, then $\Delta$ is contained in $Y$.
  \item If two faces $\Theta_1$ and $\Theta_2$ of $\Delta$ are in $B$, then $\Theta_1 \cup \Theta_2$ is also in $B$.
  \end{enumerate}
\end{definition}

Recall that also in the above definition, a ``face'' need not be proper.

\begin{definition}\label{def:standard-link-argument-good-link}
  Let $B$ be a set of bad simplices and assume that $\Delta \in B$ is a bad simplex. Then we let $\Link^{\mathrm{good}}_X(\Delta)$ denote the subcomplex of $\Link_X(\Delta)$ containing all simplices $\Theta \in \Link_X(\Delta)$ that satisfy the following condition:
  $$\text{Any bad face of } \Theta \ast \Delta \text{ is contained in } \Delta.$$
\end{definition}

\begin{lemma}
\label{lem:standard-link-argument}
  Let $X$ be a simplicial complex and let $Y \subseteq X$ be a subcomplex. Assume that $X\setminus Y$ has a set of bad simplices $B$. Then:
  \begin{enumerate}
  \item \label{it_link_argument_Y_to_X}If $Y$ is n-connected and $\Link^{\mathrm{good}}_X(\Delta)$ is $(n-\dim(\Delta)-1)$-connected for all $\Delta \in B$, then $X$ is n-connected.
  \item \label{it_link_argument_connected_map}If $\Link^{\mathrm{good}}_X(\Delta)$ is $(n-\dim(\Delta)-1)$-connected for all $\Delta \in B$, then the inclusion $Y\hookrightarrow X$ is $n$-connected.
  \end{enumerate}
\end{lemma}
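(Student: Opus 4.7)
The plan is to reduce part (1) to part (2) and then prove part (2) by the standard simplicial ``bad simplex pushing'' argument due to Hatcher--Vogtmann. If the inclusion $Y \hookrightarrow X$ is $n$-connected and $Y$ is itself $n$-connected, then unpacking the definition of an $n$-connected map gives $\pi_k(X) \cong \pi_k(Y) = 0$ for $k < n$ and a surjection $\pi_n(Y) \twoheadrightarrow \pi_n(X)$ whose source is trivial, so $X$ is $n$-connected. Hence everything is concentrated in showing that any map of pairs $\phi \colon (D^{k+1}, S^k) \to (X, Y)$ with $k \le n$ is homotopic rel $S^k$ to a map into $Y$.

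By simplicial approximation I may assume $\phi$ is simplicial with respect to some triangulation $T$ of $D^{k+1}$ such that $\phi|_{S^k}$ lands in $Y$; because $B \subseteq X \setminus Y$, this automatically forces every bad simplex of $T$ (i.e.\ every $\sigma \subseteq T$ with $\phi(\sigma) \in B$) to lie in the interior of $D^{k+1}$. By condition (1) of \cref{def:standard-link-argument-bad-simplices}, if $T$ contains no bad simplex then the whole image of $\phi$ lies in $Y$ and we are done. Otherwise, choose a bad simplex $\sigma \subseteq T$ whose image $\Delta = \phi(\sigma) \in B$ is of \emph{maximal} dimension among images of bad simplices. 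Using condition (2) of \cref{def:standard-link-argument-bad-simplices} I enlarge $\sigma$ to the unique maximal bad simplex $\tilde\sigma$ of $T$ that still has image $\Delta$. Then for any $\theta \in \Link_T(\tilde\sigma)$, any bad face of $\tilde\sigma \ast \theta$ has image in $B$, of dimension at most $\dim\Delta$ by maximality; combining this with condition (2) and the maximality of $\tilde\sigma$ forces that bad face to be contained in $\tilde\sigma$. Hence $\phi|_{\Link_T(\tilde\sigma)}$ lands in $\Link^{\mathrm{good}}_X(\Delta)$.

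Since $\tilde\sigma$ lies in the interior of the PL ball $D^{k+1}$, $\Link_T(\tilde\sigma)$ is a PL sphere of dimension $k - \dim(\tilde\sigma) \le n - \dim(\Delta)$, using that a simplicial map cannot increase dimension, so $\dim(\tilde\sigma) \ge \dim(\Delta)$. The hypothesis that $\Link^{\mathrm{good}}_X(\Delta)$ is $(n - \dim(\Delta) - 1)$-connected then supplies an extension $\psi \colon D \to \Link^{\mathrm{good}}_X(\Delta)$ of $\phi|_{\Link_T(\tilde\sigma)}$ over a PL ball $D$ bounded by $\Link_T(\tilde\sigma)$. I re-triangulate $\Star_T(\tilde\sigma) = \tilde\sigma \ast \Link_T(\tilde\sigma)$ as $\partial \tilde\sigma \ast D$, which is again a PL ball with the same boundary, and redefine $\phi$ on this region by using $\phi$ on $\partial\tilde\sigma$ and $\psi$ on $D$. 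The resulting simplicial map $\phi'$ agrees with $\phi$ outside $\Star_T(\tilde\sigma)$ and, by construction, has strictly fewer bad simplices whose image is of maximal dimension.

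The main obstacle I expect is organising this into a genuine induction that terminates: one has to set up a complexity measure, for example the lexicographically ordered tuple counting bad simplices of each image dimension in $T$, and verify that the replacement step strictly decreases it. The key point to check is that introducing the cone on $\psi$ cannot create new bad faces whose image has dimension $\ge \dim\Delta$: the new simplices are contained in $\partial\tilde\sigma \ast D$, so their images lie in $\partial\Delta \ast \Link^{\mathrm{good}}_X(\Delta)$, and condition (2) together with the definition of $\Link^{\mathrm{good}}_X(\Delta)$ ensures that any bad face of such an image is strictly contained in $\Delta$. After this bookkeeping the process terminates, yielding a simplicial map into $Y$, which completes the proof.
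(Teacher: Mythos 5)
Your route is genuinely different from the paper's: the paper proves nothing here itself, quoting part (1) verbatim as Corollary 2.2(a) of Hatcher--Vogtmann \cite{hatchervogtmann2017tethers} and deducing part (2) from their Proposition 2.1 (vanishing of the relative homotopy groups) plus the long exact sequence of the pair, whereas you reprove the link argument from scratch and obtain (1) from (2) together with the $n$-connectivity of $Y$. That reduction is fine, and the core of your compression argument is the right one: taking $\Delta$ of maximal dimension among images of bad simplices and $\tilde\sigma$ maximal in $T$ among simplices with image $\Delta$ is exactly what forces $\phi(\Link_T(\tilde\sigma))\subseteq\Link_X(\Delta)$ (a link vertex mapping into $\Delta$ would enlarge $\tilde\sigma$ without changing its image) and, via condition (2) of \cref{def:standard-link-argument-bad-simplices}, lands it in $\Link^{\mathrm{good}}_X(\Delta)$. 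Moreover, the defining property of the good link gives you more than you claim: any new simplex involving an interior vertex of $D$ has image a face of $\Delta\ast\psi(\delta)$ with $\psi(\delta)\neq\emptyset$ disjoint from $\Delta$, so it cannot be bad at all; hence the plain number of bad simplices of the domain strictly drops and no lexicographic bookkeeping is needed. Three small inaccuracies: ``the unique maximal bad simplex $\tilde\sigma$'' need not be unique (two enlargements of $\sigma$ with image $\Delta$ need not span a simplex of $T$), so just pick any maximal one; when $\phi|_{\tilde\sigma}$ is not injective, $\phi(\partial\tilde\sigma)$ can be all of $\Delta$, so the new images lie in $\Delta\ast\Link^{\mathrm{good}}_X(\Delta)$ rather than $\partial\Delta\ast\Link^{\mathrm{good}}_X(\Delta)$ (harmless, by the observation just made); and ``redefine $\phi$ on this region'' should be justified by noting that both the old and the new map send the ball $\Star_T(\tilde\sigma)$ into the cone $\Star_X(\Delta)$, which is why they are homotopic rel its boundary and the modification is a homotopy rel $S^k$.

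There is one genuine slip to fix, an off-by-one in the range of disc dimensions. To show $Y\hookrightarrow X$ is $n$-connected you need $\pi_j(X,Y)=0$ only for $j\le n$, i.e.\ compression of maps $(D^j,S^{j-1})\to(X,Y)$ for $j\le n$; you instead take $(D^{k+1},S^k)$ with $k\le n$, i.e.\ discs of dimension up to $n+1$. With that range your own estimate gives $\dim\Link_T(\tilde\sigma)=k-\dim(\tilde\sigma)\le n-\dim(\Delta)$, and an $(n-\dim(\Delta)-1)$-connected good link does not allow you to fill a sphere of dimension $n-\dim(\Delta)$; this is exactly the borderline case $k=n$ with $\phi|_{\tilde\sigma}$ injective, so the claimed extension $\psi$ does not follow from the hypotheses as you have set things up. The hypotheses are calibrated precisely for discs of dimension at most $n$: for $j\le n$ the link sphere has dimension $j-\dim(\tilde\sigma)-1\le n-\dim(\Delta)-1$ and the extension exists. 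Restrict the range accordingly (you are currently trying to prove the stronger statement $\pi_j(X,Y)=0$ for $j\le n+1$, which the hypotheses do not give) and the argument closes up.
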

\begin{proof}
\cref{it_link_argument_Y_to_X} is exactly \cite[Corollary 2.2(a)]{hatchervogtmann2017tethers}.

\cref{it_link_argument_connected_map} is just a slightly stronger version of \cite[Corollary 2.2]{hatchervogtmann2017tethers} and also quickly follows from the results of Hatcher and Vogtmann: By \cite[Proposition 2.1]{hatchervogtmann2017tethers}, the assumptions here imply that the relative homotopy groups $\pi_k(X,Y)$ vanish for all $k\leq n$. The claim then follows from the long exact sequence of homotopy groups.
\end{proof}

\subsubsection{Zeeman's relative simplicial approximation theorem}

We use the following version of simplicial approximation, which is due to Zeeman.

\begin{theorem}[\cite{Zeeman1964}] \label{zeeman-relative-simplicial-approximation}
	Let $K, M$ be finite simplicial complexes and $L$ a subcomplex of $K$. Let $f\colon  |K| \to |M|$ be a continuous map such that the restriction $f|_L$ is a simplicial map from $L$ to $M$. Then there exists a subdivision $K'$ of $K$ containing $L$ as a subcomplex and a simplicial map $g\colon  K' \to M$ such that $g|_L = f|_L$ and $g$ is homotopic to $f$ keeping $L$ fixed.
\end{theorem}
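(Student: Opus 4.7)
The plan is to bootstrap the absolute simplicial approximation theorem by first homotoping $f$ relative to $L$ into a form compatible with a subdivision of $K$ that keeps $L$ intact, and then applying the classical approximation theorem only on the complement of a neighborhood of $L$. The core technical ingredient is a family of subdivisions of $K$ that leave $L$ as a subcomplex while being arbitrarily fine on $K\setminus L$; such subdivisions can be produced by a derived subdivision construction that barycentrically subdivides only simplices meeting $K\setminus L$ and fills the remaining polyhedral cells by coning from interior vertices.

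First, I would pass to an iterated derived subdivision in which $L$ admits a regular simplicial neighborhood $N$ together with a canonical simplicial strong deformation retraction $r\colon N\to L$ (the simplicial mapping cylinder of the identity of $L$). Using straight-line homotopies inside the open simplices of $|M|$ that contain the relevant images, one then homotopes $f|_N$ to the composition $(f|_L)\circ r$ through a homotopy that is constant on $L$. This produces a map $f_1$ homotopic to $f$ rel $L$ that agrees with $f$ on $L$ and is already simplicial on $N$.

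Next, on $\overbar{K\setminus N}$ --- a finite simplicial complex whose boundary $\partial N$ carries the already-simplicial restriction $f_1|_{\partial N}$ --- I would perform further barycentric subdivisions that do not alter $L$ (indeed, do not alter $N$) so that the open-star cover of $\overbar{K\setminus N}$ refines the pullback cover $\bigl\{f_1^{-1}(\on{star}_M(w))\bigr\}_{w\in\Vr(M)}$. Choosing vertex values compatible with those already prescribed on $\partial N$ and extending linearly over simplices yields a simplicial map $g$ homotopic to $f_1$ rel $\partial N$. Gluing $g$ to $f_1|_N$ produces the desired simplicial map on a subdivision $K'$ of $K$ that contains $L$ as a subcomplex, homotopic to $f$ rel $L$.

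The hard part will be arranging the subdivision of $\overbar{K\setminus N}$ so that its restriction to $\partial N$ is exactly the triangulation inherited from $N$ while still being fine enough for the standard star-cover argument to apply. This is where Zeeman's contribution is essential: one needs a relative version of simplicial approximation with a prescribed simplicial structure on a codimension-zero boundary subcomplex, which is proved by an induction over simplices of $\partial N$ --- barycentrically subdividing only simplices disjoint from $L$ and then coning the resulting subdivided boundary faces inward to fill each ambient simplex. Verifying that this coning scheme produces a bona fide simplicial subdivision of $K$ with $L$ intact as a subcomplex, and that the linear extension to vertex choices remains a valid simplicial approximation of $f_1$, is the principal technical verification in the argument.
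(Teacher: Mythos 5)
The paper does not actually prove this statement: it is Zeeman's relative simplicial approximation theorem, imported wholesale with a citation to \cite{Zeeman1964} and used as a black box. So your sketch has to stand on its own as a proof of Zeeman's theorem, and as written it does not.

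Two concrete problems. First, the homotopy from $f|_N$ to $(f|_L)\circ r$ by ``straight-line homotopies inside the open simplices of $|M|$'' is not well defined: $f(x)$ and $f(r(x))$ need not lie in a common simplex of $M$, no matter how thin you make $N$ (small diameter in $|M|$ only forces carriers to intersect --- think of two points near a shared vertex on two different edges whose span is not a simplex of $M$; the segment between them leaves $|M|$). This step is fixable, but by a different mechanism: compose $f$ with the simplicial deformation retraction $d_t$ of $N$ onto $L$ to get a homotopy $f\circ d_t$ that automatically stays in $|M|$, and extend it over $K$ rel $L$ by the homotopy extension property.

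Second, and more seriously, the reduction to ``standard simplicial approximation on $\overline{K\setminus N}$ rel $\partial N$'' is circular. To glue with the simplicial map $(f|_L)\circ r$ on $N$ you must leave the triangulation of $N$, hence of $\partial N$, untouched (subdividing $N$ destroys simpliciality of $(f|_L)\circ r$). But a subdivision of $\overline{K\setminus N}$ keeping $\partial N$ unsubdivided cannot be made fine near $\partial N$: every simplex with a face in $\partial N$ has diameter bounded below, so the pullback-of-stars refinement fails for exactly those simplices, and the vertex-assignment/linear-extension step breaks for them. This is the original difficulty with $L$ replaced by $\partial N$, i.e.\ the whole content of Zeeman's theorem. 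Your proposed resolution --- barycentrically subdividing simplices disjoint from $L$ and coning the subdivided boundary faces inward --- produces precisely such long cone simplices whose images are not contained in any star, and you give no argument (e.g.\ exploiting that the map factors through $r$ on a collar) that their vertex images span simplices of $M$. Until that interpolation step is carried out, the heart of the theorem remains unproved; the surrounding reductions are fine but they only repackage the problem.
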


\section{Definition of complexes and augmentations}
\label{sec_definitions}
The goal of this section is to define the complex $\IAA$ appearing in the main technical result of this work, \autoref{thm_connectivity_IAA}. We start by collecting the definitions of the simplicial complexes $\B_n$, $\BA_n$ and $\BAA_n$, that play an important role in recent work on special linear groups \cite{CP, Brueck2022}, as well as the simplicial complexes $\I$, $\Isigdel$ and $\IA$, that have been used to study Torelli and symplectic groups \cite{put2009, bruecksroka2023}. We then introduce the new complex $\IAA$. Lastly, we define relative versions of all these spaces, e.g.\ $\BA_n^m$, $\Irel$, and $\IAArel$, that depend on two non-negative integers $m$ and $n$.

\cref{table_overview_complexes_definition} gives an overview of the definitions and \cref{table_overview_all_complexes} shows the connectivity properties of these complexes that are studied in later sections.

\subsection{The complexes \texorpdfstring{$\I$, $\IA$, $\B_n$ and $\BA_n$}{I, IA, B, and BA}}\label{sec_defIA}
Let $\VertexSet{n}$ be the set 
\begin{equation}
\label{eq_V_n}
\VertexSet{n} \coloneqq \ls v \subseteq \mbZ^{2n} \,\middle|\, v \text{ is a rank-1 summand of } \mbZ^{2n}\rs.
\end{equation}
This is already the vertex set of the complex $\IAA$ that we want to show to be highly connected in \autoref{thm_connectivity_IAA}. We now start working towards its definition.

Given a line $v \in \VertexSet{n}$, recall that we denote by $\vec v$ the choice of one of the two primitive vectors $\{\vec v, -\vec v\}$ contained in $v$.

\begin{definition} \label{def:IA-simplices}
  Let $\VertexSet{n}$ be as in \cref{eq_V_n}. A subset $\Delta = \{v_0, \dots, v_k\} \subset \VertexSet{n}$
  of $k+1$ lines\footnote{Note that $\Delta$ is a set without a preferred ordering, so all the statements here as well as in \cref{def:BAA-simplices} and \cref{def:IAA-simplices} are to be understood for an appropriate choice of indices for the $v_i$.} $v_i = \langle \vec v_i \rangle$ is called
  \begin{enumerate}
  \item a \emph{standard} simplex, if $\langle \vec v_i \mid 0 \leq i \leq k \rangle$ is an isotropic rank-$(k+1)$ summand of $\mbZ^{2n}$;
  \item a \emph{2-additive} simplex, if $k \geq 2$, $	\vec v_0 = \pm \vec v_1 \pm  \vec v_2$,\footnote{Here as well as in \cref{def:BAA-simplices} and \cref{def:IAA-simplices}, the $\pm$ in these equations are to be understood as ``for some choice of signs''.}
  and $\Delta \setminus \{v_0\}$ is a standard simplex;
  \item a $\sigma$ simplex, if $k \geq 1$, $\omega(v_{k}, v_{k-1}) = \pm 1$, $\omega(v_{k}, v_i) = 0$ for $i \not= k-1$
  and $\Delta \setminus \{v_k\}$ is a standard simplex;
  \item a \emph{mixed} simplex, if $k \geq 3$, $\Delta \setminus \{v_0\}$ is a $\sigma$ simplex and $\Delta \setminus \{v_k\}$ is a 2-additive simplex.
  \end{enumerate}
\end{definition}

\begin{example}
Let $\{ \vec e_1, \ldots, \vec e_n, \vec f_1, \ldots, \vec f_n \}$ be the standard symplectic basis of $\mbZ^{2n}$ and let $1\leq {\lastindexexamples} \leq n$.
\begin{enumerate} 
\item $\ls e_1,\ldots, e_{\lastindexexamples} \rs$ is a standard simplex;
\item $\ls \langle \vec e_1 + \vec e_2 \rangle, e_1, e_2, \ldots, e_{\lastindexexamples}  \rs$ is a 2-additive simplex if ${\lastindexexamples} \geq 2$;
\item $\ls e_1, \ldots, e_{\lastindexexamples}, f_{\lastindexexamples}  \rs$ is a $\sigma$ simplex;
\item $\ls \langle \vec e_1 + \vec e_2 \rangle, e_1, e_2, \ldots, e_{\lastindexexamples}, f_{\lastindexexamples}  \rs$ is a mixed simplex if ${\lastindexexamples} \geq 3$.
\end{enumerate}
\end{example}

We are now ready to introduce the simplicial complexes $\I, \Idel, \Isigdel$ and $\IA$. The first three of these were used by Putman to study the Torelli group \cite{put2009}. A complex that is closely related to $\I$ also appears in work of van der Kallen--Looijenga \cite{vanderkallenlooijenga2011sphericalcomplexesattachedtosymplecticlattices}. The complex $\IA$ was used in \cite{bruecksroka2023} and \cite[Chapter 5]{Sroka2021} to study the cohomology of symplectic groups.

\begin{definition} \label{def:I-and-IA}
  The simplicial complexes $\I, \Idel,\Isigdel$ and $\IA$ have $\VertexSet{n}$ as their vertex set and
  \begin{enumerate}
  \item the simplices of $\I$ are all standard;
  \item the simplices of $\Idel$ are all either standard or 2-additive;
  \item the simplices of $\Isigdel$ are all either standard, 2-additive or $\sigma$;
  \item the simplices of $\IA$ are all either standard, 2-additive, $\sigma$ or mixed.
  \end{enumerate}
	If $\SymplecticSummand \subseteq \mbZ^{2n}$ is a symplectic summand and $X \in \{\I[], \Idel[], \Isigdel[], \IA[]\}$, we let $X(\SymplecticSummand)$ be the full subcomplex of $X_n$ on the set of rank-$1$ summands of $\SymplecticSummand$ that we denote by $\VertexSet{n} \cap \SymplecticSummand$.
\end{definition}

Next, we introduce the complexes $\B_n$ and $\BA_n$. These complexes were defined and studied by Church--Putman in \cite{CP}. Complexes that are closely related to $\B_n$ appear in \cite{Maazen79, vanderkallen1980homologystabilityforlineargroups, cfp2019}.

\begin{definition} \label{def:B-and-BA}
	Let $V$ be an isotropic summand of $\mbZ^{2n}$.
  \begin{enumerate}
  \item Let $\B(V)$ be the simplicial complex with vertex set 
  $$\VertexSet{n} \cap V = \ls v \subseteq V \,\middle|\, v \text{ is a rank-1 summand of } V \rs$$
   and in which all simplices are standard in the sense of \cref{def:IA-simplices}.
  \item Let $\BA(V)$ be the simplicial complex on the same vertex set as $\B(V)$ and in which all simplices are standard or 2-additive in the sense of \cref{def:IA-simplices}.
  \end{enumerate}
	If $V = \langle \vec e_1, \dots, \vec e_n \rangle \subseteq \mbZ^{2n}$, we write $\B_n \coloneqq \B(V)$ and $\BA_n \coloneqq \BA(V)$.
\end{definition}

\subsection{The complex \texorpdfstring{$\BAA_n$}{BAA}}
The complex $\BAA_n$, defined below, was introduced by Brück--Miller--Patzt--Sroka--Wilson \cite{Brueck2022} and used to study the codimension-2 cohomology of $\SL{n}{\mbZ}$.

\begin{definition} \label{def:BAA-simplices}
	Let $\VertexSet{n}$ be as in \cref{eq_V_n}. A subset $\Delta = \{v_0, \dots, v_k\} \subset \VertexSet{n}$ of $k+1$ lines is called
  \begin{enumerate}
  \item a \emph{3-additive} simplex, if $k \geq 3$,
    $\vec v_0 = \pm \vec v_1 \pm \vec v_2 \pm \vec v_3$
    and $\Delta \setminus \{v_0\}$ is a standard simplex;           
  \item a \emph{double-triple} simplex, if $k \geq 4$, $\vec v_0 = \pm \vec v_2 \pm \vec v_3$, $\vec v_1 = \pm \vec v_2 \pm \vec v_4$
    and $\Delta \setminus \{v_0, v_1\}$ is a standard simplex;
  \item a \emph{double-double} simplex, if $k \geq 5$, $\vec v_0 = \pm \vec v_2 \pm \vec v_3$, $\vec v_1 = \pm \vec v_4 \pm \vec v_5$
     and $\Delta \setminus \{v_0, v_1\}$ is a standard simplex.
  \end{enumerate}
\end{definition}

\begin{example}
Let $\{ \vec e_1, \ldots, \vec e_n, \vec f_1, \ldots, \vec f_n \}$ be the standard symplectic basis of $\mbZ^{2n}$ and $1 \leq {\lastindexexamples} \leq n$.
\begin{enumerate} 
\item $\ls \langle \vec e_1 + \vec e_2 +\vec e_3 \rangle, e_1, e_2, e_3, \ldots, e_{\lastindexexamples}  \rs$ is a 3-additive simplex if ${\lastindexexamples} \geq 3$;
\item $\ls \langle \vec e_1 + \vec e_2 \rangle, \langle \vec e_1 + \vec e_3 \rangle, e_1, e_2, e_3, \ldots, e_{\lastindexexamples}  \rs$ is a double-triple simplex if ${\lastindexexamples} \geq 3$;
\item $\ls \langle \vec e_1 + \vec e_2 \rangle, \langle \vec e_3 + \vec e_4 \rangle, e_1, \dots , e_4, \ldots, e_{\lastindexexamples}  \rs$ is a double-double simplex if ${\lastindexexamples} \geq 4$.
\end{enumerate}
\end{example}

\begin{definition} \label{def:BAA}
  Let $V$ be an isotropic summand of $\mbZ^{2n}$. We define $\BAA(V)$ to be the simplicial complex with vertex set 
  $$\VertexSet{n} \cap V = \{v \subseteq V  \mid v \text{ is a rank-1 summand of } V \}$$
  and in which all simplices are either
  \begin{enumerate}
  \item standard or 2-additive simplices in the sense of \cref{def:IA-simplices} or
  \item 3-additive, double-triple or double-double simplices in the sense of \cref{def:BAA-simplices}.
  \end{enumerate}
	If $V = \langle \vec e_1, \dots, \vec e_n \rangle \subseteq \mbZ^{2n}$, we write $\BAA_n \coloneqq \BAA(V)$.
\end{definition}

\subsection{The complex \texorpdfstring{$\IAA$}{IAA}}
\label{sec_definition_IAA}
We now turn to the definition of the new complex $\IAA$ that we study in this work and show to be $n$-connected (\autoref{thm_connectivity_IAA}). The next definition describes the new simplices appearing in $\IAA$.
 
\begin{definition} \label{def:IAA-simplices}
	Let $\VertexSet{n}$ be as in \cref{eq_V_n}.
  A subset $\Delta = \{v_0, \dots, v_k\} \subset \VertexSet{n}$ of $k+1$ lines is called
  \begin{enumerate}
   \item a $\sigma^2$ simplex, if $k\geq 3$,  $\omega(v_{k-1}, v_{k-3})  = \omega(v_{k}, v_{k-2}) = \pm 1$, $\omega(v_{i}, v_{j}) = 0$ otherwise
  and $\Delta \setminus \{v_{k-1}, v_k\}$ is a standard simplex;
  \item a \emph{skew-additive} simplex, if $k\geq 2$, $\omega(v_{k}, v_{0}) = \omega(v_{k}, v_{1}) = \pm 1$, $\omega(v_{i}, v_{j}) = 0$ otherwise
 	  and $\Delta \setminus \{v_k\}$ is a standard simplex;
 	\item a \emph{2-skew-additive} simplex, if $k\geq 3$, $\vec v_0 = \pm \vec v_1 \pm \vec v_2$, 
 	\begin{equation*}
 	\omega(v_{k}, v_{0}) = \omega(v_{k}, v_{1}) = \pm 1,
 	\end{equation*}
 	$\omega(v_{i}, v_{j}) = 0$ otherwise
  	and $\Delta \setminus \{v_0, v_k\}$ is a standard simplex;
  \item a \emph{skew-$\sigma^2$} simplex if $k\geq 3$,
  \begin{equation*}
  	\omega(v_{k-1}, v_{k-3}) = \omega(v_{k}, v_{k-3}) = \omega(v_{k}, v_{k-2}) = \pm 1,
  \end{equation*}
	$\omega(v_{i}, v_{j}) = 0$ otherwise
  	and $\Delta \setminus \{v_{k-1}, v_k\}$ is a standard simplex;
  	\item a \emph{$\sigma$-additive} simplex, if $k\geq 2$,  $\vec v_k = \pm \vec v_{k-1} \pm \vec v_{k-2}$,
  	\begin{equation*}
  		\omega(v_{k-2}, v_{k-1}) = \omega(v_{k-2}, v_{k}) = \omega(v_{k-1}, v_{k}) = \pm 1,
  	\end{equation*}
	$\omega(v_{i}, v_{j}) = 0$ otherwise
 	  and $\Delta \setminus \{v_{k-1}, v_k\}$ is a standard simplex.
  \end{enumerate}
\end{definition}

\begin{example}
	\label{expl:simplices-iaa}
Let $\{ \vec e_1, \ldots, \vec e_n, \vec f_1, \ldots, \vec f_n \}$ be the standard symplectic basis of $\mbZ^{2n}$ and let $1\leq {\lastindexexamples} \leq n$.
\begin{enumerate} 
\item $\{e_{\lastindexexamples}, \ldots, e_2, e_1, f_2, f_1\}$ is a $\sigma^2$ simplex if ${\lastindexexamples} \geq 2$;
\item $\{e_1, e_2, \ldots, e_{\lastindexexamples}, \langle \vec f_1- \vec f_2\rangle \}$ is a skew-additive simplex if ${\lastindexexamples} \geq 2$;
\item $\{\langle \vec e_1+ \vec e_2 \rangle, e_1, e_2, \ldots, e_{\lastindexexamples}, \langle \vec f_1- \vec f_2 \rangle\}$ is a 2-skew-additive simplex if ${\lastindexexamples} \geq 2$;
\item $\{e_{\lastindexexamples}, \ldots, e_2, e_1, f_2, \langle \vec f_1 - \vec f_2 \rangle\}$ is a skew-$\sigma^2$ simplex if ${\lastindexexamples} \geq 2$;
\item $\{e_1, \ldots, e_{{\lastindexexamples}}, f_{\lastindexexamples}, \langle \vec e_{\lastindexexamples}+ \vec f_{\lastindexexamples} \rangle\}$ is a $\sigma$-additive simplex if ${\lastindexexamples} \geq 1$.
\end{enumerate}
\end{example}

\begin{definition} \label{def:IAA} \label{def:IAAst}
  The simplicial complexes $\IAAst$ and $\IAA$ have $\VertexSet{n}$ as their vertex sets. The simplices of $\IAAst$ are the ones introduced in \cref{def:IA-simplices} and \cref{def:BAA-simplices}, the simplices of $\IAA$ are the ones introduced in \cref{def:IA-simplices}, \cref{def:BAA-simplices} and \cref{def:IAA-simplices}. 
  If $\SymplecticSummand \subseteq \mbZ^{2n}$ is a symplectic summand, we let  $\IAAst[](\SymplecticSummand)$ and $\IAA[](\SymplecticSummand)$ denote the full subcomplexes of $\IAAst$ and $\IAA$, respectively, on the set $\VertexSet{n} \cap \SymplecticSummand$ of rank-$1$ summands of $\SymplecticSummand$.
\end{definition}

The definitions of the complexes introduced in this and the previous two subsections are summarised in \cref{table_overview_complexes_definition}.	

\begin{table}
\begin{center}
\begin{tabular}{l|p{9.8cm}}
Complex & Simplex types \\
\hline
$\I$ 	& standard \\
$\Idel$ 	& standard, 2-additive \\
$\Isigdel$ 	& standard, 2-additive, $\sigma$ \\
$\IA$ 	& standard, 2-additive, $\sigma$, mixed \\
$\IAAst$ 	& standard, 2-additive, $\sigma$, mixed, 3-additive, double-triple, double-double \\
$\IAA$ 	& standard, 2-additive, $\sigma$, mixed, 3-additive, double-triple, double-double, $\sigma^2$, skew-additive, 2-skew-additive, skew-$\sigma^2$, $\sigma$-additive  \\
$\B_n$ 	& standard \\
$\BA_n$ 	& standard, 2-additive\\
$\BAA_n$ 	& standard, 2-additive, 3-additive, double-triple, double-double
\end{tabular}
\end{center}
\caption{The different complexes and their simplex types. The complexes starting with $\mathcal{I}$ are defined over $(\mbZ^{2n}, \omega)$, the ones starting with $\mathcal{B}$ over an isotropic summand of rank $n$. \cref{table_overview_all_complexes} also shows their connectivity properties.}
\label{table_overview_complexes_definition}
\end{table}

\subsection{Relative complexes}

In this final subsection, we introduce relative versions of the complexes introduced above (and listed in \cref{table_overview_complexes_definition}). These relative versions depend on two non-negative integers $m$ and $n$, e.g.\ $\BA_n^m$, $\Irel$, and $\IAArel$ and will be used to inductively prove that $\IAA$ is $n$-connected. 

Throughout this subsection we let 
\begin{equation}
	n,m \geq 0
	\tag{Standing assumption}
\end{equation}
be non-negative integers and consider the symplectic module $\mbZ^{2(m+n)}$ of genus $m+n$ equipped with its symplectic standard basis $\{\vec e_1, \ldots, \vec e_{m+n}, \vec f_1, \ldots, \vec f_{m+n}\}$.

\begin{definition}
\label{def_linkhat}
Let $X_{m+n}$ denote one of the complexes defined above,
$$X_{m+n} \in \{\B_{m+n}, \BA_{m+n}, \BAA_{m+n}, \I[m+n], \Idel[m+n], \Isigdel[m+n], \IA[m+n], \IAAst[m+n], \IAA[m+n]\}.$$
\begin{enumerate}
\item \label{def_Xmn}We denote by $X_n^m\subseteq \Link_{X_{m+n}}(\ls e_1, \ldots, e_m \rs)$
the full subcomplex on the set of vertices $v$ satisfying the following.
  \begin{enumerate}
  \item \label{condition_one} $\vec v \notin \langle \vec e_1, \dots, \vec e_m \rangle$.
  \item \label{condition_two} For $1\leq i \leq m$, we have $\omega(e_i , v) = 0$, i.e.~there is no $\sigma$ edge between $v$ and one of the vertices of $\{e_1, \ldots, e_m\}$.
  \end{enumerate}
  Note that $X^0_n = X_n$.
\item Let $\Delta = \{v_0, \dots, v_k\}$ be a simplex of $X_n^m$. We denote by $
	\Linkhat_{X_n^m}(\Delta) \subseteq \Link_{X_n^m}(\Delta)$
 the full subcomplex  on the set of vertices $v$ satisfying the following.
  \begin{enumerate}
  \item $\vec v \notin \langle \vec e_1, \ldots, \vec e_m, \vec v_0, \dots, \vec v_k \rangle$
  \item For $0\leq i \leq k$, we have $\omega(v_i , v) = 0$.
  \end{enumerate}
  \item If $\SymplecticSummand$ is a symplectic summand of $\mbZ^{2(m+n)}$ that contains $\langle \vec e_1, \dots, \vec e_m \rangle$ and $X \not\in \{\B, \BA, \BAA\}$, we denote by 
  $ 	X^m(\SymplecticSummand)\subseteq \Link_{X(\SymplecticSummand)}(\ls e_1, \ldots, e_m \rs)$
    the full subcomplex on the set of vertices satisfying \cref{condition_one} and \cref{condition_two}. 
  \item If $V$ is an isotropic summand of $\mbZ^{2(m+n)}$ that contains $\langle \vec e_1, \dots, \vec e_m \rangle$ and $X \in \{\B, \BA, \BAA\}$, we denote by
$X^m(V)\subseteq \Link_{X(V)}(\ls e_1, \ldots, e_m \rs)$
  the full subcomplex on the set of vertices satisfying \cref{condition_one} and \cref{condition_two}.
\end{enumerate}

\end{definition}
\begin{remark}
	\label{rem:about-relative-bbabaa-complexes}
	If $X \in \{\B, \BA, \BAA\}$, then the second condition in the first two items above is trivially satisfied. Indeed, by \cref{def:B-and-BA} and \cref{def:BAA}, any two vertices $v, v' \in X(V)$ satisfy $\omega (\vec v, \vec v') = 0$ because these are lines that are contained in the isotropic summand $V$ of $\mbZ^{2(m+n)}$.
\end{remark}

\begin{example}
 Let $m+n \geq 4$, $n\geq 1$, $\{ \vec e_1, \ldots, \vec e_{m+n}, \vec f_1, \ldots, \vec f_{m+n}\}$ be the standard symplectic basis of $\mbZ^{2(m+n)}$ and $X_{m+n}$ as in \cref{def_linkhat}.
	\begin{enumerate}
		\item $\langle \vec e_2 + \vec e_3 \rangle$ is a vertex in $X^m_n$ if $0 \leq m \leq 2$ but not if $m \geq 3$.
		\item  Let $X \notin \{\B, \BA, \BAA\}$. Then $\langle \vec f_2 + \vec f_3 \rangle$ is a vertex in $X^m_n$ if $0 \leq m \leq 1$ but not if $m \geq 2$.
		\item Let $X \notin \{\B, \I[]\}$. Then $\langle \vec e_3 + \vec e_{4} \rangle$ is a vertex in $\Link_{X^m_n}(e_{4})$ and $\Linkhat_{X^m_n}(e_{4})$ if $0 \leq m \leq 2$. However, for $m = 3$ it is a vertex in the complex $\Link_{X^m_n}(e_{4})$ but not in $\Linkhat_{X^m_n}(e_{4})$.
		\item Let $X \notin \{\I[], \Idel[], \B, \BA, \BAA\}$. Then $f_{m+1}$ is a vertex in $\Link_{X^m_n}(e_{m+1})$, but not a vertex in $\Linkhat_{X^m_n}(e_{m+1})$.
	\end{enumerate}
\end{example}

\subsubsection{Simplex types in the relative complexes}
\label{subsec:simplex-types-in-relative-complexes}

Let $\Delta$ be a simplex in $\IAArel$. In this subsection, we introduce naming conventions that we use throughout this work to refer to simplices in e.g.\ $\IAArel$, $\Link_{\IAArel}(\Delta)$, $\Linkhat_{\IAArel}(\Delta)$ and all other relative complexes.

\begin{definition}
\label{def_relative_simplex_types}
Let $\Delta'$ be a simplex of $\Link_{\IAArel}(\Delta)$ and let 
$$\tau \in 
\begin{rcases}
\begin{dcases}
	\text{standard, 2-additive, 3-additive, double-triple, double-double,}\\
	\text{mixed, } \sigma, \sigma^2, \text{ skew-additive, 2-skew-additive, skew-}\sigma^2, \sigma\text{-additive}
\end{dcases}
\end{rcases}
$$
 be one of the simplex types defined the previous subsections. We say that $\Delta'$ is a \emph{simplex of type $\tau$} or a \emph{$\tau$ simplex} in $\Link_{\IAArel}(\Delta)$ if the \emph{underlying simplex} $	\{e_1, \dots, e_m\} \cup \Delta \cup \Delta'$
in $\IAA[m+n]$ is a simplex of type $\tau$.
\end{definition}

\begin{remark}
	\label{rem:relative-simplex-types-other-complexes}
	\cref{def_relative_simplex_types} also makes sense for simplices $\Delta'$ in
	\begin{itemize}
		\item $\IAArel$ using $\Delta = \emptyset$ and the convention that $\Link_{\IAArel}(\emptyset) = \IAArel$;
		\item $\Linkhat_{\IAArel}(\Delta)$ and any other subcomplex $X \subset \IAArel$;
		\item $\B^m_n, \BA^m_n$ and $\BAA^m_n$ by the previous item. However, we note that in these three complexes, we can only talk about simplices of type 
		$$\tau \in \{\text{standard, 2-additive, 3-additive, double-triple, double-double}\}.$$
	\end{itemize}
\end{remark}

\begin{remark}
	\label{rem:forgetting_symplectic_information}
	``Forgetting'' the symplectic form $\omega$ and considering $\mbZ^{2(m+n)}$ as an abelian group with trivial form, \cref{def:B-and-BA}, \cref{def:BAA} and \cref{def_linkhat} make sense for the isotropic summand $V = \mbZ^{2(m+n)}$ and yield complexes $\B^m_{2n+m}, \BA^m_{2n+m}$ and $\BAA^m_{2n+m}$ containing e.g.\ $\Irel, \IArel$ and $\IAArel$ as subcomplexes. Using the naming convention above, ``forgetting'' the symplectic form via these inclusions of subcomplexes, e.g.\ 
	$\IAArel \hookrightarrow \BAA^m_{2n+m},$
	changes the simplex types as shown in \cref{table_forgetting_symplectic_info}.
\end{remark}
\begin{table}
\begin{center}
\begin{tabular}{c|c}
Simplex type in $\IAArel$ & Simplex type in $\BAA^m_{2n+m}$ \\ \hline
standard & standard\\
2-additive&  2-additive\\
3-additive& 3-additive\\
double-triple& double-triple\\
double-double&  double-double\\
mixed& 2-additive\\
$\sigma$ & standard\\
$\sigma^2$& standard\\
skew-additive&standard\\
2-skew-additive& 2-additive\\
skew-$\sigma^2$ & standard\\
$\sigma$-additive & 2-additive
\end{tabular}
\end{center}
\caption{Types of simplices in $\IAArel$ and the types of their images in $\BAA^m_{2n+m}$ under the inclusion $\IAArel \hookrightarrow \BAA^m_{2n+m}$ that ``forgets'' the symplectic form.}
\label{table_forgetting_symplectic_info}
\end{table}

\begin{example}
	\label{expl:relative-simplex-types}
	This example illustrates \cref{def_relative_simplex_types} and \cref{rem:relative-simplex-types-other-complexes}. Let $n,m \geq 3$ and $\{ \vec e_1, \ldots, \vec e_{m+n}, \vec f_1, \ldots, \vec f_{m+n}\}$ be the standard symplectic basis of $\mbZ^{2(m+n)}$.
	\begin{enumerate} 
		\item $\{e_{m+3}, \langle \vec e_{1} + \vec e_{2} + \vec e_{m+3} \rangle\}$ is $3$-additive in $\IAArel$, and $\{e_{m+3}, \langle \vec e_{m+1} + \vec e_{m+2} + \vec e_{m+3} \rangle\}$ is $3$-additive in $\Linkhat_{\IAArel}(\{e_{m+1}, e_{m+2}\})$;
		\item $\ls \langle \vec e_{1} + \vec e_{m+2} \rangle, \langle \vec e_{1} + \vec e_{m+3} \rangle, e_{m+2}, e_{m+3}\rs$ is a double-triple simplex in $\IAArel$, and\\ $\ls \langle \vec e_{m+1} + \vec e_{m+2} \rangle, \langle \vec e_{m+1} + \vec e_{m+3} \rangle, e_{m+2}, e_{m+3} \rs$ is a double-triple simplex in $\Linkhat_{\IAArel}(e_{m+1})$;
		\item $\{e_{m+1}, f_{m+1}, e_{m+2}, f_{m+2}\}$ is a $\sigma^2$ simplex in $\IAArel$, and $\{e_{m+2}, f_{m+2}\}$ is a $\sigma^2$ simplex in $\Linkhat_{\IAArel}(\{e_{m+1}, f_{m+1}\})$;
		\item $\{\langle \vec e_{m+1} + \vec e_{m+2} \rangle, e_{m+1}, e_{m+2}, \langle \vec f_{m+1} - \vec f_{m+2} \rangle\}$ is a 2-skew-additive simplex in $\IAArel$, and $\{e_{m+1}, e_{m+2}, \langle \vec f_{m+1} - \vec f_{m+2} \rangle\}$ is a 2-skew-additive simplex in $\Linkhat_{\IAArel}(\langle \vec e_{m+1} + \vec e_{m+2} \rangle)$.
	\end{enumerate}
\end{example}

\begin{definition} 
\label{def_minimal_simplices_additive_core}
Let $\Delta'$ be a simplex in $\Link_{\IAArel}(\Delta)$.
\begin{enumerate}
  \item The simplex $\Delta'$ is called a \emph{minimal} simplex of type $\tau$ if $\Delta'$ is a simplex of type $\tau$ in the sense of \cref{def_relative_simplex_types} and if it does not contain a proper face that also is of this type.
  \item The \emph{augmentation core} of $\Delta'$ is the (possibly empty) unique minimal face of $\Delta'$ that is of the same type as $\Delta'$.
  \end{enumerate}
\end{definition}

\begin{remark}
	\label{rem:minimal-simplices-additive-core}
	As in \cref{rem:relative-simplex-types-other-complexes}, this defines the notion of minimal simplex and augmentation core for simplices in $\IAArel$, $\Linkhat_{\IAArel}(\Delta)$ and any other subcomplex $X \subset \IAArel$.
\end{remark}

\begin{example}
	Any simplex in \cref{expl:relative-simplex-types} is minimal and hence its own augmentation core. None of the simplices in \cref{expl:simplices-iaa} are minimal if ${\lastindexexamples} > 2$, their augmentation cores are $\{e_2, e_1, f_2, f_1\}$, $\{e_1, e_2, \langle \vec f_1- \vec f_2\rangle\}$, $\{\langle \vec e_1+ \vec e_2 \rangle, e_1, e_2, \langle \vec f_1- \vec f_2 \rangle\}$, $\{e_2, e_1, f_2, \langle \vec f_1 - \vec f_2 \rangle\}$ and $\{e_{{\lastindexexamples}}, f_{\lastindexexamples}, \langle \vec e_{\lastindexexamples}+ \vec f_{\lastindexexamples} \rangle\}$, respectively.
\end{example}

We note that any simplex in the non-relative complex $\IAA[m+n]$ has an augmentation core in the sense of \cref{def_minimal_simplices_additive_core}, and that it is the empty set if and only if the simplex is standard.

\begin{definition}
	\label{def:internal-external-s-related-simplices}
	Let $\Delta'$ be a simplex in $\Link_{\IAArel}(\Delta)$. Let $\Theta \in \IAA[m+n]$ denote the augmentation core (\cref{def_relative_simplex_types}) of the underlying simplex $\{e_1, \dots, e_m\} \cup \Delta \cup \Delta'$
	of $\Delta'$ in the non-relative complex $\IAA[m+n]$.
	\begin{enumerate}
		\item $\Delta'$ is called an \emph{external} simplex if $\Theta \cap (\{e_1, \dots, e_m\} \cup \Delta) \neq \emptyset$.
		\item $\Delta'$ is called a \emph{$\Delta$-related} simplex if $\Theta \cap \Delta \neq \emptyset$.
		\item $\Delta'$ is called an \emph{internal} simplex if $\Theta \subseteq \Delta'$, i.e.\ $\Theta \cap (\{e_1, \dots, e_m\} \cup \Delta) = \emptyset$.
	\end{enumerate}
	Note that in particular, every $\Delta$-related simplex is external.
\end{definition}

\begin{remark}
\label{rem:internal-external-s-related-simplices}
As in \cref{rem:relative-simplex-types-other-complexes}, there is an obvious way in which the notion of internal, external and (for subcomplexes of links) $\Delta$-related simplices can be used to refer to simplices of $\IAArel$, $\Linkhat_{\IAArel}(\Delta)$ and other subcomplexes $X \subset \IAArel$.
\end{remark}

\begin{example}
	This example illustrates \cref{def:internal-external-s-related-simplices} by classifying the simplices discussed in \cref{expl:relative-simplex-types}. 
	\begin{enumerate}
		\item The simplex $\{e_{m+3}, \langle \vec e_{1} + \vec e_{2} + \vec e_{m+3} \rangle\}$ in $\IAArel$ is external (but not $\Delta$-related). The simplex $\{e_{m+3}, \langle \vec e_{m+1} + \vec e_{m+2} + \vec e_{m+3} \rangle\}$ in the complex $\Linkhat_{\IAArel}(\{e_{m+1}, e_{m+2}\})$ is $\{e_{m+1}, e_{m+2}\}$-related (and external).
		\item The simplex $\ls \langle \vec e_{1} + \vec e_{m+2} \rangle, \langle \vec e_{1} + \vec e_{m+3} \rangle, e_{m+2}, e_{m+3}\rs$ in $\IAArel$ is an external (but not $\Delta$-related) simplex. $\ls \langle \vec e_{m+1} + \vec e_{m+2} \rangle, \langle \vec e_{m+1} + \vec e_{m+3} \rangle, e_{m+2}, e_{m+3} \rs$ is an $e_{m-1}$-related (and external) simplex in the complex $\Linkhat_{\IAArel}(e_{m+1})$.
		\item The simplex $\{e_{m+1}, f_{m+1}, e_{m+2}, f_{m+2}\}$ in $\IAArel$ is internal. The simplex $\{e_{m+2}, f_{m+2}\}$ in $\Linkhat_{\IAArel}(\{e_{m+1}, f_{m+1}\})$ is $\{e_{m+1}, f_{m+1}\}$-related (and external).
		\item The simplex $\{\langle \vec e_{m+1} + \vec e_{m+2} \rangle, e_{m+1}, e_{m+2}, \langle \vec f_{m+1} - \vec f_{m+2} \rangle\}$ in $\IAArel$ is internal. The simplex $\{e_{m+1}, e_{m+2}, \langle \vec f_{m+1} - \vec f_{m+2} \rangle\}$ in $\Linkhat_{\IAArel}(\langle \vec e_{m+1} + \vec e_{m+2} \rangle)$ is $\langle \vec e_{m+1} + \vec e_{m+2} \rangle$-related (and external).
	\end{enumerate}
\end{example}

\section{Structure of links}
\label{sec_structure_of_links}

In this section we describe and study the links of various simplices in the complexes $\Irel$, $\IArel$ and $\IAArel$, which we defined in the previous section.
Throughout, $n$ and $m$ are natural numbers such that $n\geq 1$ and $m\geq 0$.

\subsection{Links in \texorpdfstring{$\Irel$ and $\Idelrel$}{I and I\unichar{"005E}\unichar{"03B4}}}

We start by describing the links of simplices in $\Irel$.

\begin{lemma}
\label{link_Irel_standard}
Let $\Delta$ be a standard simplex in $\Irel$. Then
$\Link_{\Irel}(\Delta) \cong \Irel[n-(\dim(\Delta)+1)][m+(\dim(\Delta) + 1)].$
\end{lemma}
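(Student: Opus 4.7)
The plan is to construct a symplectic automorphism of $\mbZ^{2(m+n)}$ that carries $\Delta$ to an initial piece of the standard symplectic basis, and then read the isomorphism off from the definitions. Write $\Delta = \{v_0, \ldots, v_k\}$ with $k = \dim(\Delta)$. Since $\Delta$ is standard in $\Irel$, \cref{def_relative_simplex_types} tells us that the underlying simplex $\{e_1, \ldots, e_m\} \cup \Delta$ is standard in $\I[m+n]$, so $\langle \vec e_1, \ldots, \vec e_m, \vec v_0, \ldots, \vec v_k \rangle$ is an isotropic rank-$(m+k+1)$ summand of $\mbZ^{2(m+n)}$.

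First I would apply \cref{lem_extend_to_symplectic_basis} to extend $\{\vec e_1, \ldots, \vec e_m, \vec v_0, \ldots, \vec v_k\}$ to a symplectic basis of $\mbZ^{2(m+n)}$. Comparing this extended basis with the standard symplectic basis yields a symplectic automorphism $\phi$ of $\mbZ^{2(m+n)}$ that fixes each $\vec e_i$ for $1 \leq i \leq m$ and sends $\vec v_i$ to $\vec e_{m+i+1}$ for $0 \leq i \leq k$. Because $\phi$ preserves the symplectic form and fixes $\{e_1, \ldots, e_m\}$ setwise, it induces a simplicial automorphism of $\I[m+n]$ that restricts to an automorphism of $\Irel$. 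Applying this to the link of $\Delta$ gives an isomorphism $\Link_{\Irel}(\Delta) \cong \Link_{\Irel}(\{e_{m+1}, \ldots, e_{m+k+1}\})$.

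Finally, I would identify the right-hand side with $\Irel[n-k-1][m+k+1]$ by unwinding definitions. A vertex $w$ of $\Link_{\Irel}(\{e_{m+1}, \ldots, e_{m+k+1}\})$ is a vertex of $\Irel$ for which $\{e_1, \ldots, e_{m+k+1}, w\}$ is a standard simplex of $\I[m+n]$; hence $\omega(e_i, w) = 0$ for $1 \leq i \leq m+k+1$, and the fact that this set spans a rank-$(m+k+2)$ summand forces $\vec w \notin \langle \vec e_1, \ldots, \vec e_{m+k+1}\rangle$. These are exactly the vertex conditions for $\Irel[n-k-1][m+k+1]$ from \cref{def_linkhat}, and higher-dimensional simplices match because both sides are full subcomplexes of $\I[m+n]$ determined by the same isotropy/summand conditions. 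No step presents a real obstacle; the only thing requiring a bit of care is noting that the standard-simplex condition already encodes the ``not in the span'' requirement appearing in \cref{def_linkhat}.
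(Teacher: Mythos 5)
Your proof is correct and follows essentially the same route as the paper: extend $\{\vec e_1,\ldots,\vec e_m,\vec v_0,\ldots,\vec v_k\}$ to a symplectic basis via \cref{lem_extend_to_symplectic_basis}, use the resulting symplectic automorphism fixing $e_1,\ldots,e_m$ and sending the $v_i$ to standard basis lines, and read off the identification with $\Irel[n-(k+1)][m+(k+1)]$ from \cref{def_linkhat}. The only difference is that you spell out the final vertex- and simplex-matching step that the paper compresses into "it is easy to check".
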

\begin{proof}
If $\Delta = \ls v_0, \ldots, v_k \rs$ is a standard simplex, then $\ls\vec e_1,\ldots, \vec e_m,  \vec v_0, \ldots, \vec v_k \rs$ spans an isotropic summand of rank $m+k+1$ of $\mbZ^{2(m+n)}$.
Hence by \cref{lem_extend_to_symplectic_basis}, it can be extended to a  symplectic basis $\ls \vec e_1,\ldots, \vec e_m, \vec v_0, \ldots, \vec v_k \rs \cup B$
of $\mbZ^{2(m+n)}$.
Let $\phi\colon  \mbZ^{2(m+n)} \to \mbZ^{2(m+n)}$ be a symplectic isomorphism such that $\phi(\vec e_i) = \vec e_i$ for all $1\leq i \leq m$ and $\phi(\vec v_j) = \vec e_{m+j}$ for all $0\leq i \leq k$.
It is easy to check that $\phi$ induces the isomorphism $\Link_{\Irel}(\Delta) \cong \Irel[n-(k+1)][m+(k + 1)]$.
\end{proof}

The following lemma is an immediate consequence of the definition of 2-additive simplices (see \cref{def:IA-simplices}).
\begin{lemma}
\label{link_Idelrel_2add}
Let $\Delta = \ls v_0, \ldots, v_k \rs$ be a 2-additive simplex in $\Idelrel$ such that $v_0 = \langle \vec v_1 + \vec v_2 \rangle$ or $v_0 = \langle \vec e_i + \vec v_1 \rangle$ for some $1 \leq i \leq m$. Then $\Delta' = \ls v_1, \ldots, v_k \rs$ is a standard simplex in $\Irel$ and 
$$\Link_{\Idelrel}(\Delta) = \Link_{\Irel}(\Delta').$$
\end{lemma}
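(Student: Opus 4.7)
The plan is to address the two claims separately. For the first claim, that $\Delta'$ is standard in $\Irel$, I would simply unpack the definition of a 2-additive simplex in the relative setting. Since $\Delta$ is 2-additive in $\Idelrel$ with $v_0$ being the vertex realising the additive relation, the underlying simplex $\{e_1,\dots,e_m\}\cup\Delta$ in $\Idel[m+n]$ is 2-additive in the sense of \cref{def:IA-simplices}. Removing $v_0$ yields a standard simplex of $\I[m+n]$, and translating back to the relative complex shows that $\Delta' = \Delta\setminus\{v_0\}$ is standard in $\Irel$.

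For the link equality, I would verify the two inclusions separately. The direction $\Link_{\Irel}(\Delta')\subseteq \Link_{\Idelrel}(\Delta)$ is the more straightforward one. Given $\Theta$ in the left-hand link, the underlying simplex $\{e_1,\dots,e_m\}\cup\Theta\cup\Delta'$ is standard in $\I[m+n]$. Adjoining $v_0$, which satisfies $\vec v_0 = \pm\vec v_1\pm\vec v_2$ (or $\vec v_0 = \pm\vec e_i\pm\vec v_1$), produces a simplex in $\Idel[m+n]$ that is 2-additive with $v_0$ as its additive vertex. I also need to check that $v_0\notin\Theta$ so that $\Theta\cap\Delta=\emptyset$; this holds because a standard simplex containing $v_0$ together with the two summands in its defining relation would contradict the linear independence of the vertices of a standard simplex.

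The reverse inclusion $\Link_{\Idelrel}(\Delta)\subseteq \Link_{\Irel}(\Delta')$ is the key content. The main observation I would exploit is that a 2-additive simplex in $\Idel[m+n]$ has an unambiguous augmentation core (\cref{def_minimal_simplices_additive_core}), consisting of exactly three vertices among which the additive relation is expressed. Given $\Theta\in\Link_{\Idelrel}(\Delta)$, the underlying simplex $S = \{e_1,\dots,e_m\}\cup\Theta\cup\Delta$ is a simplex of $\Idel[m+n]$. It cannot be standard, because it already contains the linearly dependent triple from the augmentation core of $\Delta$, so it must be 2-additive. Since the allowed simplex types in $\Idelrel$ permit only one additive relation per simplex, the augmentation core of $S$ must coincide with that of $\Delta$; in particular, $v_0$ is still the additive vertex of $S$, and hence $S\setminus\{v_0\}= \{e_1,\dots,e_m\}\cup\Theta\cup\Delta'$ is standard in $\I[m+n]$. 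This gives $\Theta\in\Link_{\Irel}(\Delta')$.

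I do not anticipate any real obstacle here: the proof is simply a careful unpacking of the two allowed simplex types in $\Idelrel$, matching the ``immediate'' character announced by the authors. The only point that requires any thought at all is the uniqueness of the additive relation in a 2-additive simplex, which is built into the definition via the augmentation core.
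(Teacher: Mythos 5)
Your argument is correct and is precisely the unpacking of the definition of 2-additive simplices (\cref{def:IA-simplices}) that the paper leaves implicit, since the lemma is stated there without proof as an immediate consequence of that definition. The one step carrying real content — that the linear dependence among the vertices of a 2-additive simplex is unique up to sign, so the augmentation core of the underlying simplex of $\Theta \cup \Delta$ coincides with that of $\Delta$ and removing $v_0$ (whose defining relation has unit coefficients) leaves a standard simplex — is exactly the point you identify and handle correctly.
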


\subsection{Links in \texorpdfstring{$\IAAstrel$ and $\IAArel$}{IAA* and IAA}}
We describe the link of some minimal simplices in $\IAAstrel$ and $\IAArel$.
 
\begin{lemma}
\label{lem_link_of_vertex}
Let $v$ be a vertex of $\IAAstrel$.
Then there is a symplectic isomorphism $\phi\colon  \mbZ^{2(m+n)}\to \mbZ^{2(m+n)}$ fixing $e_1, \dots, e_m$ and mapping $v$ to $e_{m+1}$ that induces the following commuting diagram.
\begin{equation}
\label{eq_isos_link_vertex}
\begin{aligned}
\xymatrix{
	\Linkhat_{\IAAstrel}( v) \ar[r]^{\cong} \ar@{^{(}->}[d] &  \ar@{^{(}->}[d] \IAAstrel[n-1][m+1] \\
	\Linkhat_{\IAArel}(v)  \ar[r]^{\cong} & \IAArel[n-1][m+1]
}
\end{aligned}
\end{equation}
Furthermore, these isomorphisms preserve the type of every simplex, i.e.~they send standard simplices to standard simplices, 2-additive simplices to 2-additive simplices etc.
\end{lemma}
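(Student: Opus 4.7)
The proof reduces to three steps. The first and most involved step is to observe that $\{e_1, \dots, e_m, v\}$ must itself be a \emph{standard} simplex in $\IAAst[m+n]$ (and, in particular, in $\IAA[m+n]$). Any two vertices $w, w'$ of this set satisfy $\omega(\vec w, \vec w') = 0$: indeed $\omega(\vec e_i, \vec e_j) = 0$, and by the definition of $\IAAstrel$ we have $\omega(\vec e_i, \vec v) = 0$. This single observation rules out every non-standard simplex type from \cref{def:IA-simplices}, \cref{def:BAA-simplices} and \cref{def:IAA-simplices} that involves a pairing $\pm 1$, namely $\sigma$, mixed, $\sigma^2$, skew-additive, 2-skew-additive, skew-$\sigma^2$, and $\sigma$-additive. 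For the remaining non-standard types (2-additive, 3-additive, double-triple, double-double), some vertex must be an explicit integer combination of two or three of the others; a short case distinction on which of the $m+1$ vertices plays the role of this ``additive'' vertex shows in every case that $\vec v$ would have to lie in $\langle \vec e_1, \dots, \vec e_m\rangle$, contradicting \cref{def_linkhat}. Hence the simplex $\{e_1, \dots, e_m, v\}$ is standard, so $\langle \vec e_1, \dots, \vec e_m, \vec v\rangle$ is an isotropic direct summand of $\mbZ^{2(m+n)}$ of rank $m+1$.

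The second step is to construct $\phi$. By \cref{lem_extend_to_symplectic_basis}, the basis $\{\vec e_1, \dots, \vec e_m, \vec v\}$ of this isotropic summand extends to a symplectic basis of $\mbZ^{2(m+n)}$. Let $\phi\colon \mbZ^{2(m+n)} \to \mbZ^{2(m+n)}$ be the unique symplectic automorphism sending this extended basis to the standard symplectic basis in such a way that $\phi(\vec e_i) = \vec e_i$ for $1 \le i \le m$ and $\phi(\vec v) = \vec e_{m+1}$.

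The third step is to check that $\phi$ induces the claimed isomorphisms of simplicial complexes. Since $\phi$ preserves both $\omega$ and the $\mbZ$-module structure, and since each simplex type is formulated purely in terms of these, $\phi$ acts as a type-preserving simplicial automorphism of $\IAAst[m+n]$ and of $\IAA[m+n]$. Comparing vertex conditions directly, the extra $\Linkhat$-requirement $\vec v' \notin \langle \vec e_1, \dots, \vec e_m, \vec v\rangle$ is translated by $\phi$ into $\phi(\vec v') \notin \langle \vec e_1, \dots, \vec e_{m+1}\rangle$, and the requirement $\omega(v, v') = 0$ becomes $\omega(e_{m+1}, \phi(v')) = 0$; combined with the conditions already inherited from $\IAAstrel$ (resp.\ $\IAArel$), these are precisely the defining conditions of the vertex sets of $\IAAstrel[n-1][m+1]$ (resp.\ $\IAArel[n-1][m+1]$). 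Since the two horizontal maps in \cref{eq_isos_link_vertex} are both restrictions of the same $\phi$, commutativity of the diagram is automatic, and full-subcomplex structure is preserved. I expect the main (and essentially only) obstacle to be the case analysis ruling out the additive types in the first step; everything after that is a direct application of \cref{lem_extend_to_symplectic_basis} followed by routine bookkeeping.
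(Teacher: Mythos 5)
Your proposal is correct and follows essentially the same route as the paper: extend $\{\vec e_1,\dots,\vec e_m,\vec v\}$ to a symplectic basis via \cref{lem_extend_to_symplectic_basis}, define $\phi$ by sending this basis to the standard one, and check that it translates the $\Linkhat$-conditions into the defining conditions of $\IAAstrel[n-1][m+1]$ and $\IAArel[n-1][m+1]$. Your first step, ruling out all non-standard simplex types to see that $\{e_1,\dots,e_m,v\}$ spans an isotropic rank-$(m+1)$ summand, is exactly the justification the paper leaves implicit when it invokes \cref{lem_extend_to_symplectic_basis}.
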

\begin{proof}
Using \cref{lem_extend_to_symplectic_basis}, we can extend $\ls\vec e_1,\ldots, \vec e_m,  \vec v \rs$ to a symplectic basis $\ls\vec e_1,\ldots, \vec e_m,  \vec v \rs \cup B$ of $\mbZ^{2(m+n)}$. 
Let $\phi\colon   \mbZ^{2(m+n)} \to \mbZ^{2(m+n)}$ be a symplectic isomorphism such that $\phi(\vec e_i) = \vec e_i$ for all $1\leq i \leq m$ and $\phi(\vec v) = \vec e_{m+1}$ . Using \cref{def_linkhat}, we find that
\begin{equation*}
	\Linkhat_{\IAAstrel}(v) = \Linkhat_{\Linkhat_{\IAAst[m+n]}(\ls e_1, \ldots, e_m \rs)}(v) = \Linkhat_{\IAAst[m+n]}(\ls e_1, \ldots, e_m,v \rs)
\end{equation*}
and
\begin{equation*}
	\IAAstrel[n-1][m+1]  = \Linkhat_{\IAAst[m+n]}(\ls e_1, \ldots, e_m, e_{m+1} \rs);
\end{equation*}
Similar identifications holds for $\IAArel$. It is easy to see that $\phi$ induces the desired isomorphisms of simplicial complexes.
\end{proof}

We next want to prove a similar, but slightly stronger statement for links of minimal $\sigma$ simplices. This requires the following definition:

\begin{definition}
\label{def_rank_and_ranked_complexes}
Let $m,n \geq 0$ and consider the symplectic module $(\mbZ^{2(m+n)}, \omega)$ with the symplectic basis $\{\vec e_1, \vec f_1, \dots, \vec e_{m+n}, \vec f_{m+n}\}$. 
We denote by
\begin{align*}
	\rkfn(-)\colon   \mbZ^{2(m+n)}&\to \mbZ \\
	\vec v &\mapsto \omega(\vec e_{m+n},\vec v)
\end{align*}
the projection onto the $\vec f_{m+n}$-coordinate.
We denote by $\bar v \in \{\vec v, - \vec v\}$ the choice\footnote{This choice is unique if $|\rkfn(\vec v)|>0$.} of a primitive vector in $v$ whose $\vec f_{m+n}$-coordinate is non-negative, i.e.\ that satisfies $$\rkfn(\bar v) \geq 0.$$

If $X$ is a subcomplex of $\IAArel$, this induces a function
\begin{align*}
	\rkfn(-)\colon   \Vr(X)&\to \mbN \\
	v &\mapsto |\rkfn(\vec v)| = \rk(\bar{v}) = |\omega(e_{m+n},v)|
\end{align*}
that sends every vertex of $X$ to the absolute value of the $\vec f_{m+n}$-coordinate of some (hence any) primitive vector $\vec v \in v$.
\begin{enumerate}
	\item We say that a vertex $v \in \Vr(X)$ has \emph{rank} $\rkfn(v)$.
	\item For $R \in \mbN $, we denote by $X^{< R}$ the full subcomplex of $X$ on all vertices $v \in \Vr(X)$ of rank less than $R$, $\rkfn(v) < R$.
	\item For $R\in \mbN \cup \ls \infty \rs$, we denote by $X^{\leq R}$ the full subcomplex of $X$ on all vertices $v \in \Vr(X)$ of rank less than or equal to $R$, $\rkfn(v) \leq R$. In particular, $X^{\leq \infty} \coloneqq X$.
\end{enumerate}
\end{definition}

\begin{lemma}
\label{lem_isom_link_for_making_regular}
Let $\Delta$ be a $\sigma$ edge in $\IAAstrel$ and $R\in \mbN \cup \ls \infty \rs$. Then, for some $R'\in \mbN \cup\ls \infty \rs$, there is commutative diagram of the following form.
\begin{equation}
\label{eq_isos_link_sigma}
\begin{aligned}
\xymatrix{
	(\Link_{\IAAstrel}(\Delta))^{\leq R} \ar@{^{(}->}[d] \ar[r]^<<<<<{\cong} & (\Idelrel[n-1])^{\leq R'} \ar@{^{(}->}[d]\\
	(\Linkhat_{\IAArel}(\Delta))^{\leq R}  \ar[r]^<<<<<{\cong} & (\Isigdelrel[n-1])^{\leq R'} 
}
\end{aligned}
\end{equation}
The isomorphisms in this diagram send $\sigma$ simplices to standard simplices, mixed simplices to 2-additive simplices and $\sigma^2$ simplices to $\sigma$ simplices.
\end{lemma}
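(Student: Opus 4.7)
My strategy is to construct a symplectic isomorphism $\Phi$ on the orthogonal complement $\langle v, w\rangle^\perp$ of the $\sigma$-edge $\Delta = \{v, w\}$, deduce both horizontal isomorphisms of simplicial complexes from it via a case analysis of simplex types, and finally analyse how $\Phi$ interacts with the rank filtration. By \cref{lem_split_off_symplectic_summand}, there is a symplectic decomposition $\mbZ^{2(m+n)} = \langle v, w\rangle \oplus \langle v, w\rangle^\perp$ into summands of genera $1$ and $m+n-1$. Because $\Delta$ is a $\sigma$-edge in $\IAAstrel$, the underlying simplex $\{e_1, \dots, e_m, v, w\}$ is a $\sigma$-simplex of $\IAAst[m+n]$ with $\{v, w\}$ as its $\sigma$-pair; this forces $\omega(\vec e_i, \vec v) = \omega(\vec e_i, \vec w) = 0$ for each $i \leq m$ and hence $\{\vec e_1, \dots, \vec e_m\} \subset \langle v, w\rangle^\perp$. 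Applying \cref{lem_extend_to_symplectic_basis} inside $\langle v, w\rangle^\perp$, I obtain a symplectic isomorphism $\Phi\colon \langle v, w\rangle^\perp \xrightarrow{\cong} \mbZ^{2(m+n-1)}$ fixing each $\vec e_i$ for $i \leq m$, which I will refine further below.

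The two isomorphisms of simplicial complexes then come out of a case analysis. A vertex $u$ in $\Link_{\IAAstrel}(\Delta)$ requires $\{u, v, w\}$ to be a $2$-simplex of $\IAAst[m+n]$; since $\omega(v, w) = \pm 1$ forbids standard and rules out any $2$-additive arrangement (unpacking the definition forces $\omega(v, w) = 0$ in every case), the only option is a $\sigma$ simplex with $\{v, w\}$ itself as its $\sigma$-pair, which forces $u$ into $\langle v, w\rangle^\perp$. Running the analogous analysis for higher-dimensional simplices containing $\Delta$, the only simplex types from \cref{def:IA-simplices}--\cref{def:BAA-simplices} that can contain $\{v, w\}$ as a $\sigma$-pair are $\sigma$ and mixed; these correspond under $\Phi$ to standard and $2$-additive simplices in $\Idelrel[n-1]$. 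In the larger complex $\IAArel$, the augmented types from \cref{def:IAA-simplices} allow in addition the $\sigma^2$ configuration with $\{v, w\}$ as one skew pair, which corresponds to a $\sigma$-edge in $\Isigdelrel[n-1]$. All remaining augmented types from \cref{def:IAA-simplices} involve vertices not orthogonal to $\{v, w\}$ (e.g.\ combinations such as $v \pm w$ appearing in $\sigma$-additive simplices, or the skew partners of $v$ or $w$ in skew-additive simplices), and these are precisely the vertices removed by the hat condition $\vec u \notin \langle \vec e_1, \dots, \vec e_m, \vec v, \vec w\rangle$ together with $\omega(v_i, u) = 0$ imposed by $\Linkhat$.

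For the rank filtration, I decompose $\vec e_{m+n} = \alpha \vec v + \beta \vec w + \vec y$ with $\vec y \in \langle v, w\rangle^\perp$, so that $\omega(\vec e_{m+n}, u) = \omega(\vec y, u)$ for every link vertex $u$. If $\vec y \in \langle \vec e_1, \dots, \vec e_m\rangle$, every link vertex has rank zero and I take $R' = \infty$. Otherwise, write $\vec y = c \vec y'$ with $\vec y'$ primitive in $\langle v, w\rangle^\perp$; since $\{\vec e_1, \dots, \vec e_m, \vec y'\}$ is then isotropic with $\vec y'$ linearly independent from the $\vec e_i$, I can refine the choice of $\Phi$ via \cref{lem_extend_to_symplectic_basis} so that $\Phi(\vec y') = \vec e_{m+n-1}$. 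This yields $|\omega(\vec e_{m+n}, u)| = c\,|\omega(\vec e_{m+n-1}, \Phi(u))|$ and thus the desired restriction to rank-filtered subcomplexes with $R' = \lfloor R/c \rfloor$. The main obstacle is the combinatorial case analysis in the second step: each of the twelve augmentation types from \cref{def:IA-simplices}--\cref{def:IAA-simplices} must be checked against the rigid constraint $\omega(v, w) = \pm 1$, and those that survive in $\IAAstrel$ must be carefully separated from the additional ones that only appear in $\IAArel$.
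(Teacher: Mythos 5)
Your setup — splitting off the symplectic pair via \cref{lem_split_off_symplectic_summand}, identifying the common vertex set inside $\langle v,w\rangle^\perp$, and matching the simplex types ($\sigma \mapsto$ standard, mixed $\mapsto$ 2-additive, $\sigma^2 \mapsto \sigma$, with all other augmented types excluded by the $\Linkhat$ conditions) — is essentially the paper's argument, and that part is fine. The gap is in the rank-filtration step. You write $\vec y = c\,\vec y'$ with $\vec y'$ primitive in $\langle v,w\rangle^\perp$ and then claim that \cref{lem_extend_to_symplectic_basis} lets you refine $\Phi$ so that $\Phi(\vec y') = \vec e_{m+n-1}$ while fixing $\vec e_1,\dots,\vec e_m$. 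That lemma requires $\{\vec e_1,\dots,\vec e_m,\vec y'\}$ to be a basis of an isotropic \emph{direct summand}, and this can fail: isotropy and primitivity of $\vec y'$ do not make $\langle \vec e_1,\dots,\vec e_m,\vec y'\rangle$ saturated. Concretely, take $m=1$, $n=2$, $\vec v = \vec e_3 - \vec e_1 - 2\vec e_2$, $\vec w = \vec f_3$; then $\Delta=\{v,w\}$ is a $\sigma$ edge in $\IAAstrel[2][1]$ and the projection of $\vec e_3$ to $\langle v,w\rangle^\perp$ is $\vec y = \vec e_1 + 2\vec e_2$, which is primitive (so $c=1$), yet $\langle \vec e_1, \vec y\rangle = \langle \vec e_1, 2\vec e_2\rangle$ is not a direct summand. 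Hence no symplectic isomorphism fixing $\vec e_1$ can send $\vec y'$ to a basis vector, and your formula $R' = \lfloor R/c\rfloor$ is also wrong here: on link vertices $u$ one has $\omega(\vec y, \vec u) = 2\,\omega(\vec e_2,\vec u)$ (since $\omega(\vec e_1,\vec u)=0$), so the correct divisor is $2$, not $c=1$.

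This is exactly the point of \cref{lem_primitive_for_form}, which the paper invokes at this step and which you do not: it replaces $\vec y'$ by a vector $\vec e'$ coming from the saturation $\langle \vec e_1,\dots,\vec e_m,\vec y\rangle_{\mbQ} \cap \mbZ^{2(m+n-1)}$, so that $\{\vec e_1,\dots,\vec e_m,\vec e'\}$ genuinely extends to a symplectic basis, together with an integer $a$ such that $\omega(\vec y,\vec z) = a\,\omega(\vec e',\vec z)$ for all $\vec z$ isotropic to $\vec e_1,\dots,\vec e_m$ (which all link vertices are); one then takes $R' = \lfloor R/a\rfloor$. Note that the identity only holds on such $\vec z$, and that $a$ is in general strictly larger than the content $c$ of $\vec y$, as the example shows. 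With this substitution (and the convention $R'=\infty$ when $R=\infty$ or when $\vec y \in \langle \vec e_1,\dots,\vec e_m\rangle$), your argument goes through and coincides with the paper's proof.
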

\begin{proof}
Let $\Delta = \{v, w\}$ and extend $\{\vec v, \vec w, \vec e_1,\ldots, \vec e_m \}$ to a symplectic basis $\{\vec v, \vec w, \vec e_1,\ldots, \vec e_m \} \cup B$ of $\mbZ^{2(m+n)}$ (as before, this is possible by \cref{lem_extend_to_symplectic_basis}). 
Let $\phi\colon   \mbZ^{2(m+n)} \to \mbZ^{2(m+n)}$ be a symplectic isomorphism fixing $\vec e_1, \dots, \vec e_m$, mapping $\Delta = \{v, w\}$ to $\ls e_{m+n}, f_{m+n} \rs$ and restricting to an isomorphism 
\begin{equation*}
	\bar{\phi}\colon  \langle \vec e_1,\ldots, \vec e_m \cup B \rangle \to \mbZ^{2(m+n-1)} = \langle \vec e_1,\ldots, \vec e_m, \ldots, \vec e_{m+n-1}, \vec f_1, \ldots, \vec f_{m+n-1} \rangle.
\end{equation*}

We claim that $\bar{\phi}$ induces compatible isomorphisms $\Link_{\IAAstrel}(\Delta) \cong \Idelrel[n-1]$ and $\Linkhat_{\IAArel}(\Delta)  \cong \Isigdelrel[n-1]$: Note that $\Link_{\IAAstrel}(\Delta)$ and $\Linkhat_{\IAArel}(\Delta)$ have the same vertex set. It consists of all vertices of $\IAAstrel$ that are contained in the symplectic complement $\langle \Delta \rangle^\perp = \langle \{\vec e_1,\ldots, \vec e_m\} \cup B \rangle$. Using \cref{def_relative_simplex_types} and \cref{rem:relative-simplex-types-other-complexes}, we see that all simplices in  $\Link_{\IAAstrel}(\Delta)$ are either $\sigma$ or mixed simplices and the only additional simplices in $\Linkhat_{\IAArel}(\Delta)$ are of type $\sigma^2$. From this, the isomorphisms $\Link_{\IAAstrel}(\Delta) \cong \Idelrel[n-1]$ and $\Linkhat_{\IAArel}(\Delta)  \cong \Isigdelrel[n-1]$ follow.

We now explain how to determine $R'$, and discuss the two restrictions $(\Link_{\IAAstrel}(\Delta))^{\leq R} \cong (\Idelrel[n-1])^{\leq R'}$ and $(\Linkhat_{\IAArel}(\Delta))^{\leq R}  \cong (\Isigdelrel[n-1])^{\leq R'}$: Let $R \in \mbN \cup \{\infty\}$. Let $z$ be a vertex of one of the complexes on the left-hand side of \cref{eq_isos_link_sigma}. Then it holds that
$$\rkfn(z) = |\omega(\vec e_{m+n},\vec z)| \leq R.$$
The isomorphisms defined in the previous paragraph map the complexes on the left-hand side of \cref{eq_isos_link_sigma} to the full subcomplexes of $\Idelrel[n-1]$ and $\Isigdelrel[n-1]$, respectively, spanned by all vertices $z$ satisfying 
$$|\omega(\phi(\vec e_{m+n}), \vec z)| \leq R.$$
We need to find a suitable $R'$ and show that these subcomplexes are isomorphic to $(\Idelrel[n-1])^{\leq R'}$ and $(\Isigdelrel[n-1])^{\leq R'}$, respectively.

For this, let $\overline{\phi(\vec e_{m+n})}$ be the orthogonal projection of $\phi(\vec e_{m+n})$ to $\mbZ^{2(m+n-1)}$ and let $\omega_{\mbZ^{2(m+n-1)}}$ denote the restriction of the symplectic form to $\mbZ^{2(m+n-1)}$. For any $z \subseteq \mbZ^{2(m+n-1)}$, it holds that 
\begin{equation*}
	|\omega(\phi(\vec e_{m+n}), \vec z)|\leq R  \Longleftrightarrow |\omega_{\mbZ^{2(m+n-1)}}(\overline{\phi(\vec e_{m+n})}, \vec z)|\leq R.
\end{equation*}
However, $\overline{\phi(\vec e_{m+n})}$ need not be primitive in $\mbZ^{2(m+n-1)}$; e.g.\ if $\phi(\vec e_{m+n}) = \vec e_{m+n}$, then $\overline{\phi(\vec e_{m+n})} = \vec 0$.

We consider two cases. Firstly, assume that $\overline{\phi(\vec e_{m+n})}\in \langle \vec e_1, \ldots, \vec e_m \rangle$. 
Then we have
\begin{equation*}
	\omega(\overline{\phi(e_{m+n})}, z)=0 \text{ for all vertices } z \text{ of } \Idelrel[n-1]\text{ and }\Isigdelrel[n-1], 
\end{equation*}
since $\phi(\vec e_i, \vec z) = 0$ for all $1\leq i \leq m$. Hence, the claim is true for $R' \coloneqq \infty$.
If this is not the case, we can apply \cref{lem_primitive_for_form} to $\vec e = \overline{\phi(\vec e_{m+n})}$ because
\begin{equation*}
	\omega_{\mbZ^{2(m+n-1)}}(\overline{\phi(\vec e_{m+n})}, \vec e_i) =  \omega(\phi(\vec e_{m+n}),\vec e_i) = \omega(\phi(\vec e_{m+n}), \phi(\vec e_i))= 0
\end{equation*}
for $1\leq i \leq m$. This yields $\vec e'\in \mbZ^{2(m+n-1)}$ that is isotropic to $\vec e_1, \ldots, \vec e_m $ and such that $\vec e_1, \ldots, \vec e_m, \vec e'$ can be extended to a symplectic basis $B'$ of $\mbZ^{2(m+n-1)}$. Furthermore, there is an $a\in \mbN$ such that for all vertices $z \in \Idelrel[n-1], \, \Isigdelrel[n-1]$, we have 
\begin{equation*}
	\omega_{\mbZ^{2(m+n-1)}}(\overline{\phi(\vec e_{m+n})}, \vec z) = a \omega_{\mbZ^{2(m+n-1)}}(\vec e', \vec z).
\end{equation*}
So we get
\begin{equation*}
	|\omega_{\mbZ^{2(m+n-1)}}(\overline{\phi(\vec e_{m+n})}, \vec z)|\leq R  \Longleftrightarrow |\omega_{\mbZ^{2(m+n-1)}}(\vec e', \vec z)| \leq \lfloor R/a\rfloor \eqqcolon R' .
\end{equation*}
Let $\psi\colon   \mbZ^{2(m+n-1)} \to \mbZ^{2(m+n-1)}$ be an isomorphism that sends $B'$ to the standard basis and such that $\psi(\vec e_i) = \vec e_i$ and $\psi(\vec e) = \vec e_{m+n-1}$. Then $\psi \circ \bar{\phi}$ induces the desired isomorphisms.
\end{proof}

\begin{lemma}
\label{link_IAArel_sigma2}
\label{link_IAArel_skew_sigma2}
\label{link_IAArel_sigma_additive}
Let $\Delta$ be a minimal $\sigma^2$, skew-$\sigma^2$ or $\sigma$-additive simplex in $\IAArel$. Then 
$$\Link_{\IAArel}(\Delta) \cong \Irel[n-(\dim(\Delta)-1)][m].$$
\end{lemma}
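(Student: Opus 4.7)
My plan follows the template of \cref{lem_link_of_vertex} and the first part of \cref{lem_isom_link_for_making_regular}: move $\Delta$ to a standard position by a symplectic automorphism of $\mbZ^{2(m+n)}$ that fixes $\vec e_1, \ldots, \vec e_m$, and then identify the link combinatorially. The first step is to verify that in each of the three cases $\langle \Delta\rangle$ is a symplectic summand of $\mbZ^{2(m+n)}$ of genus $\dim(\Delta)-1$. For a minimal $\sigma^2$ simplex the two $\sigma$-edges yield two disjoint symplectic pairs (by \cref{lem_symplectic_pair}) and hence a genus-$2$ summand; for a minimal $\sigma$-additive simplex the additive relation $\vec v_2 = \pm \vec v_0 \pm \vec v_1$ identifies $\langle\Delta\rangle$ with the symplectic pair $\langle \vec v_0, \vec v_1\rangle$ of genus $1$; for a minimal skew-$\sigma^2$ simplex a direct computation of the Gram matrix of $\omega$ on the rank-$4$ module $\langle\Delta\rangle$ shows it is unimodular, so \cref{lem_split_off_symplectic_summand} yields a genus-$2$ symplectic summand.

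Since $\Delta$ is a simplex of $\IAArel \subseteq \Linkhat_{\IAA[m+n]}(\{e_1,\ldots,e_m\})$, we have $\omega(\vec e_i, \vec v) = 0$ for all $i\leq m$ and $v\in\Delta$; hence $\langle\vec e_1,\ldots,\vec e_m\rangle \subseteq \langle\Delta\rangle^\perp$. Combining \cref{lem_extend_to_symplectic_basis} and \cref{lem_split_off_symplectic_summand}, I then produce a symplectic automorphism $\phi$ of $\mbZ^{2(m+n)}$ that fixes $\vec e_1, \ldots, \vec e_m$ and sends $\Delta$ to a convenient standard representative (for instance $\{e_{m+n-1}, f_{m+n-1}, e_{m+n}, f_{m+n}\}$ in the $\sigma^2$ case; analogous choices work for skew-$\sigma^2$ and $\sigma$-additive). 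Under this identification, $\langle\Delta\rangle^\perp$ becomes the standard rank-$2(m+n-(\dim(\Delta)-1))$ symplectic summand spanned by the remaining basis vectors, which is itself isomorphic to $\mbZ^{2(m+n-(\dim(\Delta)-1))}$ as a symplectic module.

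The main obstacle, and the step I expect to be most technical, is the combinatorial analysis of $\Link_{\IAArel}(\Delta)$. My key claim is that every simplex of $\IAArel$ containing $\Delta$ as a face must itself be of the same type as $\Delta$, with $\Delta$ as its augmentation core. Establishing this requires a type-by-type inspection of the twelve simplex types in \cref{def:IA-simplices}, \cref{def:BAA-simplices} and \cref{def:IAA-simplices}, comparing their $\sigma$-edge patterns and additive relations with the distinctive patterns of minimal $\sigma^2$, skew-$\sigma^2$, and $\sigma$-additive simplices. For example, a minimal $\sigma^2$ simplex has exactly two $\sigma$-edges forming a perfect matching on its four vertices, a pattern not realised inside any other simplex type; the analogous obstructions are three $\sigma$-edges forming a path (for skew-$\sigma^2$) and three $\sigma$-edges forming a triangle together with an additive relation (for $\sigma$-additive). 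Granting this, a simplex $\Theta$ lies in $\Link_{\IAArel}(\Delta)$ precisely when $\Theta$ is $\omega$-orthogonal to $\Delta$ and $\Theta$ extends the ``standard part'' of $\Delta$ (that is, $\{v_0, v_1\}$ for $\sigma^2$ and skew-$\sigma^2$, or $\{v_0\}$ for $\sigma$-additive) to a larger standard simplex in $\IAArel$. Under the standardisation from the previous paragraph these conditions translate verbatim into the defining conditions of a simplex of $\Irel[n-(\dim(\Delta)-1)][m]$, and the reverse direction is immediate; this yields the desired isomorphism.
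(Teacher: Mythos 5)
Your proposal is correct and follows essentially the same route as the paper's proof: observe that a minimal simplex of one of these three types spans a symplectic summand of genus $\dim(\Delta)-1$ containing $e_1,\ldots,e_m$ in its symplectic complement, choose a symplectic identification of $\langle\Delta\rangle^\perp$ with $\mbZ^{2(m+n-(\dim(\Delta)-1))}$ fixing the $e_i$, and conclude that this induces the isomorphism of links. The only difference is that you spell out the type-by-type combinatorial verification (that any simplex containing $\Delta$ has $\Delta$ as its augmentation core, so the link is purely standard), which the paper leaves implicit.
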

\begin{proof}
First observe that these types of simplices can only occur as internal simplices (see \cref{def:internal-external-s-related-simplices} and \cref{def_linkhat}).
Hence, minimal $\sigma^2$ and skew-$\sigma^2$ simplices are 3-dimensional of the form $\ls v_0, v_1, v_2, v_3 \rs$ and minimal $\sigma$-additive are two dimensional of the form $\ls v_0, v_1, v_2\rs$. In either case, such a minimal simplex $\Delta$ determines a symplectic summand $\langle \Delta \rangle$ of $\mbZ^{2(m+n)}$ of genus $\dim(\Delta)-1$. 
The lines $e_1, \ldots, e_m$ are contained in the symplectic complement $\langle \Delta \rangle^\perp \subseteq \mbZ^{2(m+n)}$ of this summand and we can, similarly to the proof of \cref{lem_isom_link_for_making_regular}, find a symplectic isomorphism $\langle \Delta \rangle^\perp \to \mbZ^{2(m+n-(\dim(\Delta)-1))}$ that restricts to the identity on $e_1,\ldots, e_m$. This isomorphism of symplectic spaces induces the desired isomorphism of simplicial complexes. 
\end{proof}

\begin{lemma}
\label{link_IAAstrel_doubletriple_doubledouble}
Let $\Delta$ be a minimal double-triple or double-double simplex in $\IAArel$. Then 
\begin{equation*}
\Link_{\IAArel}(\Delta) = \Link_{\IAAstrel}(\Delta) \cong \Irel[n-(\dim(\Delta)-1)][m+(\dim(\Delta)- 1)].
\end{equation*}
\end{lemma}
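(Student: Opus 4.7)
The plan is to identify the ``standard core'' of $\Delta$ and reduce the link to the link of that core in $\Irel$. If $\Delta$ is a minimal double-triple simplex $\{v_0, v_1, v_2, v_3, v_4\}$ (with $\vec v_0 = \pm \vec v_2 \pm \vec v_3$ and $\vec v_1 = \pm \vec v_2 \pm \vec v_4$), set $\Theta = \{v_2, v_3, v_4\}$; if $\Delta$ is a minimal double-double simplex $\{v_0,\dots,v_5\}$ (with $\vec v_0 = \pm \vec v_2 \pm \vec v_3$ and $\vec v_1 = \pm \vec v_4 \pm \vec v_5$), set $\Theta = \{v_2, v_3, v_4, v_5\}$. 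By minimality, $\Theta$ is a standard simplex in $\Irel$ of dimension $\dim(\Delta)-2$, and $\langle \Delta \rangle = \langle \Theta \rangle$ as a sublattice.

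The key step is the combinatorial claim that every simplex of $\IAArel$ containing $\Delta$ as a face must itself be of the same type (double-triple, respectively double-double). I would verify this by inspecting each simplex type from \cref{sec_definitions}: the types $\sigma^2$, skew-$\sigma^2$, and skew-additive contain no additive vertex; the types $\sigma$, mixed, $\sigma$-additive, 2-additive, 3-additive, and 2-skew-additive each have at most one additive vertex; hence none of these can have $\Delta$ among their faces, since $\Delta$ already has two additive vertices $v_0, v_1$. Finally, in the double-triple type the two additive relations share a common standard vertex, whereas in the double-double type they involve disjoint pairs of standard vertices, so a minimal double-double cannot extend to a double-triple (and vice versa). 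As a byproduct, since double-triple and double-double simplices also belong to $\IAAstrel$, no simplex of $\IAArel \setminus \IAAstrel$ contains $\Delta$, giving $\Link_{\IAArel}(\Delta) = \Link_{\IAAstrel}(\Delta)$.

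Once this is established, the two links coincide with $\Link_{\Irel}(\Theta)$. Indeed, if $\Delta' \in \Link_{\IAArel}(\Delta)$, then $\Delta \cup \Delta'$ is a double-triple or double-double extending $\Delta$, whose standard part must be $\Theta \cup \Delta'$; in particular $\Theta \cup \Delta'$ is a standard simplex in $\Irel$, so $\Delta' \in \Link_{\Irel}(\Theta)$. Conversely, if $\Delta' \in \Link_{\Irel}(\Theta)$ then $\Theta \cup \Delta'$ spans an isotropic summand, the defining additive relations for $v_0$ and $v_1$ still hold in $\Delta \cup \Delta'$, and hence $\Delta \cup \Delta'$ is of the same type as $\Delta$. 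Applying \cref{link_Irel_standard} to the standard simplex $\Theta$ of dimension $\dim(\Theta) = \dim(\Delta)-2$ then yields
\[
\Link_{\IAArel}(\Delta) = \Link_{\Irel}(\Theta) \cong \Irel[n-(\dim(\Theta)+1)][m+(\dim(\Theta)+1)] = \Irel[n-(\dim(\Delta)-1)][m+(\dim(\Delta)-1)].
\]

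The only real obstacle is the case analysis in the second paragraph: one has to be systematic in going through all twelve simplex types, using the fact that the notion of ``additive vertex'' of a simplex is intrinsic (it is a vertex that is a signed sum of other vertices of the simplex) and hence preserved when passing to an overface. Everything else is bookkeeping.
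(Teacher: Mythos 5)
There is a genuine gap: your proof only treats \emph{internal} minimal simplices, but in the relative complex $\IAArel$ (for $m\geq 1$) minimal double-triple and double-double simplices can be \emph{external}, i.e.\ their defining additive relations may involve the vectors $\vec e_1,\dots,\vec e_m$ rather than only vertices of $\Delta$. The paper's proof opens precisely by flagging this. For instance, $\ls \langle \vec e_{1} + \vec e_{m+2} \rangle, \langle \vec e_{1} + \vec e_{m+3} \rangle, e_{m+2}, e_{m+3}\rs$ is a minimal double-triple simplex in $\IAArel$ of dimension $3$ (and dimension-$2$ examples such as $\ls \langle \vec v_2+\vec e_i\rangle, \langle \vec v_2+\vec e_j\rangle, v_2\rs$ also occur). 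Your parametrization ``$\Delta=\{v_0,\dots,v_4\}$ with $\vec v_0=\pm\vec v_2\pm\vec v_3$, $\vec v_1=\pm\vec v_2\pm\vec v_4$'' does not apply to these, and your auxiliary claim $\langle\Delta\rangle=\langle\Theta\rangle$ is false for them (e.g.\ $\langle \vec e_1+\vec e_{m+2}\rangle$ does not lie in the span of $\{e_{m+2},e_{m+3}\}$). So as written the statement is only proved in the internal case, whereas the lemma (and its later uses in the induction over $m$) needs the general case.

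The fix is not hard and keeps your strategy intact: define $\Theta$ to be $\Delta$ minus its two additive vertices (equivalently, the non-external standard vertices of the augmentation core together with any further standard vertices of $\Delta$ -- by minimality there are none of the latter), so that $\{e_1,\dots,e_m\}\cup\Theta$ underlies a standard simplex and $\dim\Theta=\dim\Delta-2$ in \emph{all} cases; the additive relations for $v_0,v_1$ are now allowed to involve $e_i$'s. Your two key steps -- that no simplex type other than double-triple/double-double can contain two distinct vertices each of which is a signed two-term sum of other underlying vertices (whence $\Link_{\IAArel}(\Delta)=\Link_{\IAAstrel}(\Delta)$ and every coface of $\Delta$ has the same type), and the identification of this link with $\Link_{\Irel}(\Theta)$ followed by \cref{link_Irel_standard} -- then go through verbatim, since they only use the underlying simplex $\{e_1,\dots,e_m\}\cup\Delta$. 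With that correction your argument is essentially the paper's: the paper writes $\Delta=\ls v_0,\dots,v_k\rs$ with $\ls v_2,\dots,v_k\rs$ standard (making no internality assumption) and extends $\ls\vec e_1,\dots,\vec e_m,\vec v_2,\dots,\vec v_k\rs$ to a symplectic basis directly, which is exactly the content of invoking \cref{link_Irel_standard} for $\Theta$.
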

\begin{proof}
We start by noting that double-triple or double-double simplices $\Delta$ can also occur as external simplices (see \cref{def:internal-external-s-related-simplices}). We can order the vertices of $\Delta$ such that $\Delta = \ls v_0, \ldots, v_k \rs$ where $k = \dim(\Delta) \geq 2$ and $\ls v_2, \ldots, v_{k}\rs$ is a standard simplex in $\IAArel$. Similar to the proof of \cref{link_Irel_standard}, we can extend $\ls \vec e_1, \ldots, \vec e_m, \vec v_2, \ldots, \vec v_{k}\rs$ to a symplectic basis of $\mbZ^{2(m+n)}$ and define a symplectic isomorphism $\mbZ^{2(m+n)} \to \mbZ^{2(m+n)}$ that maps $\ls \vec e_1, \ldots, \vec e_m, \vec v_2, \ldots, \vec v_{k}\rs$ to $\ls \vec e_1, \ldots, \vec e_{m+(k-1)}\rs$. This map then induces the desired isomorphism of simplicial complexes $\Link_{\IAArel}(\Delta) \cong \Irel[n-(k-1)][m+(k- 1)]$.
\end{proof}

Lastly, we also describe the links of minimal external 2-additive simplices in $\IAAstrel$. We compare them with links in $\IArel$.
\begin{lemma}
\label{lem_compare_linkhatIA_and_linhatIAA}
Let $v \in \IAAstrel$ be a vertex of rank $R = \rkfn(v)>0$ and $1\leq i \leq m$.
Then $\Linkhat_{\IArel}^{<R}(v)$ is a subcomplex of $\Linkhat_{\IAAstrel}^{<R}(\{v,\langle \vec v \pm \vec e_i\rangle\})$
and every simplex of $\Linkhat_{\IAAstrel}^{<R}(\{v,\langle \vec v \pm \vec e_i\rangle\})$ that is not contained in $\Linkhat_{\IArel}^{<R}(v)$ is of type double-triple.
\end{lemma}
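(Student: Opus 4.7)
The plan is to exploit the relation $\vec w = \vec v \pm \vec e_i \in \langle \vec e_1, \dots, \vec e_m, \vec v\rangle$ to first show that both complexes share the same vertex set, and then to compare which vertex sets span simplices in each. For any vertex $u$ of $\Linkhat_{\IArel}^{<R}(v)$, the primitivity constraint $\vec u \notin \langle \vec e_1, \dots, \vec e_m, \vec v, \vec w\rangle$ collapses to $\vec u \notin \langle \vec e_1, \dots, \vec e_m, \vec v\rangle$, while $\omega(w, u) = \omega(v, u) \pm \omega(e_i, u) = 0$ follows automatically from $\omega(v, u) = 0$ and $\omega(e_i, u) = 0$; the rank bounds coincide. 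Hence I only need to compare which vertex subsets form simplices in $\IA[m+n]$ (joined with $\{e_1,\dots,e_m,v\}$) versus in $\IAAst[m+n]$ (joined with $\{e_1,\dots,e_m,v,w\}$).

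For the subcomplex inclusion, I take a simplex $\Delta'$ of $\Linkhat_{\IArel}^{<R}(v)$, so that $\{e_1, \dots, e_m, v\} \cup \Delta'$ is of type standard, 2-additive, $\sigma$, or mixed in $\IA[m+n]$ (see \cref{def:IA-simplices}). Attaching $w$ introduces the single new 2-additive core $\{w, v, e_i\}$, and a case-by-case verification confirms that $\{e_1, \dots, e_m, v, w\} \cup \Delta'$ is a legitimate simplex of $\IAAst[m+n]$: a standard simplex becomes 2-additive; a 2-additive simplex with core $\{a, b, c\}$ becomes double-double when $\{b, c\} \cap \{v, e_i\} = \emptyset$ and double-triple when $|\{b, c\} \cap \{v, e_i\}| = 1$, the case $\{b, c\} = \{v, e_i\}$ being excluded because then $\vec a \in \langle \vec e_1, \dots, \vec e_m, \vec v\rangle$ would contradict the $\Linkhat$ hypothesis on $a$; a $\sigma$ simplex becomes mixed; and the mixed case is reduced to the previous ones by analysing its $\sigma$-face and its 2-additive face separately.

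For the classification, suppose $\Delta'$ is a simplex of $\Linkhat_{\IAAstrel}^{<R}(\{v, w\}) \setminus \Linkhat_{\IArel}^{<R}(v)$. Then $\{e_1, \dots, e_m, v, w\} \cup \Delta'$ carries some type in $\IAAst[m+n]$ and, being forced by $\vec w = \vec v \pm \vec e_i$, necessarily contains the 2-additive core $\{w, v, e_i\}$. Deleting $w$ removes this particular core but leaves any other augmented structure intact. Running through the remaining possible $\IAAst[m+n]$-types (2-additive, mixed, double-double, double-triple) and examining what persists after removing $w$, only the double-triple case --- where a second 2-additive core shares exactly one vertex with $\{v, e_i\}$ --- fails to yield a valid $\IA[m+n]$-simplex; in every other case the residue is standard, 2-additive, $\sigma$, or mixed in $\IA[m+n]$, which would place $\Delta'$ into $\Linkhat_{\IArel}^{<R}(v)$ and contradict the assumption. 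Hence the extras are precisely the double-triple simplices.

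The main technical obstacle is the detailed type-tracking case analysis, particularly the handling of the mixed case in the inclusion step and the careful invocation of the $\Linkhat$ primitivity condition to rule out degenerate overlaps between the core $\{w, v, e_i\}$ and other augmentation cores.
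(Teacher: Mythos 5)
Your overall strategy (identify the vertex sets, then track simplex types when adjoining or deleting $w=\langle \vec v\pm\vec e_i\rangle$) is the "go through all types" argument the paper has in mind, but two steps of the case analysis are wrong as written, and the first is a genuine gap. The mixed case of the inclusion step cannot be "reduced to the previous ones by analysing its $\sigma$-face and its 2-additive face separately": being a simplex of $\IAAst[m+n]$ is not a facewise condition, and in this case the union in fact fails to be a simplex. If $\{e_1,\dots,e_m,v\}\cup\Delta'$ is mixed, it carries one 2-additive core together with a $\sigma$ pair inside $\Delta'$; adjoining $w$ adds the independent relation $\vec w=\vec v\pm\vec e_i$, and a vertex set with two independent additive relations and a $\sigma$ edge matches none of the types of \cref{def:IA-simplices} and \cref{def:BAA-simplices} (mixed allows only one additive core, while double-triple and double-double require the complement of the two summed vertices to be standard, hence isotropic, which the $\sigma$ pair violates). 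Concretely, for $n\geq 5$ take $v=\langle\vec e_{m+1}+R\vec f_{m+n}\rangle$ and $\Delta'=\{\langle\vec e_{m+2}+\vec e_{m+3}\rangle,e_{m+2},e_{m+3},e_{m+4},f_{m+4}\}$: this is a mixed simplex of $\Linkhat_{\IArel}^{<R}(v)$, yet $\{e_1,\dots,e_m,v,w\}\cup\Delta'$ is not a simplex of $\IAAst[m+n]$. So your reduction cannot be repaired as stated; this is exactly the delicate point of the first assertion (note that the later application in \cref{lem_avoid_edgy_2_additive} only uses the second assertion together with the fact that joining a simplex of $\Linkhat_{\IArel}^{<R}(v)$ with a \emph{single} vertex of $\{v,w\}$ stays in $\IAAstrel$).

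In the classification step your identification of the failing double-triple configuration is inverted. If $\Sigma=\{e_1,\dots,e_m,v,w\}\cup\Delta'$ is double-triple, one core is forced to be $\{w,v,e_i\}$ and the other core shares exactly one vertex with it. If that shared vertex is $v$ or $e_i$, deleting $w$ leaves a three-term relation and $\Sigma\setminus\{w\}$ is a 2-additive simplex of $\IA[m+n]$ (standardness of the relevant complement transfers because removing any one vertex of $\{w,v,e_i\}$ leaves the same span), so such $\Delta'$ lie in $\Linkhat_{\IArel}^{<R}(v)$ and are not extra. The genuinely extra simplices are the double-triples whose second core contains $w$: eliminating $w$ there produces a four-term relation, which is not an $\IA[m+n]$-type. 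Your conclusion that the extras are double-triples survives, since both configurations are double-triples, but the asserted step that "in every other case the residue is standard, 2-additive, $\sigma$, or mixed" is false for the second configuration, and your description of which double-triples are extra is backwards. A smaller slip of the same kind occurs in the inclusion step: when the pre-existing core is $\{v,u_1,u_2\}$ or $\{e_i,u_1,u_2\}$ with sum vertex $v$ resp.\ $e_i$, its pair is disjoint from $\{v,e_i\}$ but the union with $w$ is a double-triple (re-choose the sum vertex of that core to be $u_1$), not a double-double, so the double-double conditions you would verify there do not hold; this particular slip does not affect the inclusion conclusion.
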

The above lemma can be proved by going through all types of simplices in $\IAAstrel$ and $\IArel$. This is is not complicated and very similar to \cite[Lemma 5.8]{Brueck2022}, so we omit the proof here.

\subsection{\texorpdfstring{$\Linkhat$s}{Linkhats} in \texorpdfstring{$\IArel$ and $\IAAstrel$}{IA and IAA*}}

The following observation about $\IArel$ can be shown like \cref{lem_link_of_vertex}.

\begin{lemma}
	\label{link_IArel_vertex}
	Let $v$ be a vertex of $\IArel$. Then $\Linkhat_{\IArel}(v)\cong \IArel[n-1][m+1]$.
\end{lemma}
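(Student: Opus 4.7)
The plan is to mimic the proof of \cref{lem_link_of_vertex}, i.e.~to extend $\{\vec e_1,\ldots,\vec e_m,\vec v\}$ to a symplectic basis and use the resulting symplectic isomorphism to identify $\Linkhat_{\IArel}(v)$ with $\IArel[n-1][m+1]$.

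First I would verify that $\langle \vec e_1,\ldots,\vec e_m,\vec v\rangle$ is an isotropic direct summand of $\mbZ^{2(m+n)}$ of rank $m+1$. Since $v$ is a vertex of $\IArel$, \cref{def_linkhat} gives $\vec v\notin\langle \vec e_1,\ldots,\vec e_m\rangle$ and $\omega(e_i,v)=0$ for all $1\leq i\leq m$, so the span is at least isotropic. To see it is a summand, I would examine the possible simplex types of $\{e_1,\ldots,e_m,v\}$ in $\IA[m+n]$: the isotropy condition rules out the $\sigma$ and mixed types (both of which require a symplectic pair among the vertices), and the 2-additive type would force $\vec v\in \langle \vec e_1,\ldots,\vec e_m\rangle$ (either because $\vec v$ itself is an additive vertex $\pm \vec e_i\pm \vec e_j$, or because some $\vec e_i = \pm \vec e_j\pm \vec v$), contradicting the vertex definition. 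Hence $\{e_1,\ldots,e_m,v\}$ is standard, which by \cref{def:IA-simplices} means exactly that $\langle \vec e_1,\ldots,\vec e_m,\vec v\rangle$ is an isotropic summand of rank $m+1$.

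Next, by \cref{lem_extend_to_symplectic_basis} I would extend $\{\vec e_1,\ldots,\vec e_m,\vec v\}$ to a symplectic basis of $\mbZ^{2(m+n)}$ and choose a symplectic automorphism $\phi\colon \mbZ^{2(m+n)}\to \mbZ^{2(m+n)}$ that fixes $\vec e_1,\ldots,\vec e_m$ and sends $\vec v$ to $\vec e_{m+1}$. Since $\phi$ is symplectic, it preserves the standard, 2-additive, $\sigma$ and mixed simplex types; moreover, it transforms the two defining conditions of $\Linkhat_{\IArel}(v)$ (namely $\vec w\notin\langle \vec e_1,\ldots,\vec e_m,\vec v\rangle$ and $\omega(v,w)=0$, on top of the already present $\omega(e_i,w)=0$ and $\vec w\notin\langle \vec e_1,\ldots,\vec e_m\rangle$) into exactly the vertex conditions for $\IArel[n-1][m+1]$ relative to the basis $\vec e_1,\ldots,\vec e_{m+1}$. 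Using the identification
\[
\Linkhat_{\IArel}(v)=\Linkhat_{\IA[m+n]}(\{e_1,\ldots,e_m,v\}),\quad \IArel[n-1][m+1]=\Linkhat_{\IA[m+n]}(\{e_1,\ldots,e_{m+1}\}),
\]
as in the proof of \cref{lem_link_of_vertex}, the induced map of simplicial complexes is the desired isomorphism.

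There is no real obstacle here; the only slightly delicate point is ruling out the 2-additive case for $\{e_1,\ldots,e_m,v\}$, but this is immediate once the vertex conditions of $\IArel$ are unpacked. The argument is entirely parallel to \cref{lem_link_of_vertex} and is in fact a little simpler, because in $\IArel$ we do not need to track the extra simplex types (3-additive, double-triple, double-double, $\sigma^2$, etc.) that appear in $\IAAstrel$ and $\IAArel$.
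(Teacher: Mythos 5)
Your proof is correct and follows exactly the route the paper intends: the paper proves this lemma by remarking it "can be shown like" \cref{lem_link_of_vertex}, i.e.\ by extending $\{\vec e_1,\ldots,\vec e_m,\vec v\}$ to a symplectic basis and letting a symplectic automorphism fixing $e_1,\ldots,e_m$ and sending $v$ to $e_{m+1}$ induce the isomorphism. Your extra care in checking that $\{e_1,\ldots,e_m,v\}$ is a standard simplex (so that \cref{lem_extend_to_symplectic_basis} applies) is a detail the paper leaves implicit, not a deviation in approach.
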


Finally, we need the following lemma about $\IAAstrel$.

\begin{lemma}
	\label{equality_link_linkhat}
	Let $v \in \IAAstrel$ be a vertex of rank $R = \rkfn(v)>0$.
	\begin{enumerate}
		\item \label{it_eq_link_linkhat_IAA_v}If $w\in\Link^{<R}_{\IAAstrel}(v)$, then $\ls v,w \rs$ is a $\sigma$ simplex or $w\in \Linkhat^{<R}_{\IAAstrel}(v) $. 
		\item \label{it_eq_link_linkhat_IAA_2add}$\Link_{\IAAstrel}^{<R}(\ls v,\langle \vec v \pm \vec e_i\rangle\rs) = \Linkhat_{\IAAstrel}^{<R}(\ls v,\langle \vec v \pm \vec e_i\rangle\rs)$ for all $1\leq i \leq m$. 
	\end{enumerate}
\end{lemma}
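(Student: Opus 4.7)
The whole argument will rest on a single rank observation: if $\vec w\in\langle\vec e_1,\ldots,\vec e_m,\vec v\rangle$, write $\vec w=\sum c_j\vec e_j+a\vec v$ with $a,c_j\in\mbZ$; pairing with $\vec e_{m+n}$ and using $\omega(\vec e_{m+n},\vec e_j)=0$ yields $\rkfn(w)=|a|R$. Primitivity of $\vec w$ combined with $\vec w\notin\langle\vec e_1,\ldots,\vec e_m\rangle$ forces $a\ne 0$, so $\rkfn(w)\ge R$; contrapositively, $\rkfn(w)<R$ forces $\vec w\notin\langle\vec e_1,\ldots,\vec e_m,\vec v\rangle$, already securing condition~(a) in the definition of $\Linkhat$ in both statements.

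For~(1), assuming $\{v,w\}$ is not a $\sigma$ simplex, I plan to show $\omega(v,w)=0$ by a case analysis on the type of the underlying simplex $\{e_1,\ldots,e_m,v,w\}$ in $\IAAst_{m+n}$. The standard type gives $\omega(v,w)=0$ tautologically; every other admissible type (2-additive, mixed, 3-additive, double-triple, double-double) carries an additive relation $\vec v_0=\pm\vec v_1\pm\vec v_2$ or one of its higher analogues among the vertices. A case analysis on which vertex plays the role of $v_0$ shows each alternative either forces $\vec v$ or $\vec w$ into $\langle\vec e_1,\ldots,\vec e_m\rangle$ (impossible for vertices of the relative complex), or expresses $\vec w$ as $\pm\vec v$ plus a combination of the $\vec e_j$'s, which entails $\rkfn(w)=R$ and contradicts the hypothesis. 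Hence only the standard case remains, and $\omega(v,w)=0$ as required.

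For~(2), set $u=\langle\vec v\pm\vec e_i\rangle$ and note the four auxiliary facts $\rkfn(u)=R$, $\omega(v,u)=0$, $\langle\vec e_1,\ldots,\vec e_m,\vec v,\vec u\rangle=\langle\vec e_1,\ldots,\vec e_m,\vec v\rangle$, and, crucially, $\omega(u,w)=\omega(v,w)\pm\omega(e_i,w)=\omega(v,w)$ for every vertex $w$ of $\IAAstrel$. The first three reduce condition~(a) for the simplex $\{v,u\}$ to the one for $v$ alone, while the fourth reduces condition~(b) to the single requirement $\omega(v,w)=0$. Applying part~(1), the only remaining obstruction is that $\{v,w\}$ is a $\sigma$ edge, i.e.\ $\omega(v,w)=\pm 1$. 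To exclude this I will repeat the type-by-type analysis on the underlying simplex $\{e_1,\ldots,e_m,v,u,w\}$, using the $2$-additive relation $\vec u=\vec v\pm\vec e_i$ together with $\rkfn(w)<R$ to rule out each admissible type. The most delicate case will be the mixed type, where the additive structure on $\{v,u\}$ is a priori compatible with a $\sigma$-pairing between $v$ and $w$; cleanly excluding it is the main technical obstacle and should exploit the explicit shape of $\vec u$ combined with a refinement of the rank computation above.
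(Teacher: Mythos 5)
Your part (1) is essentially fine, though heavier than it needs to be: once $\ls v,w\rs$ is not a $\sigma$ edge, the underlying simplex $\{e_1,\dots,e_m,v,w\}$ in $\IAAst[m+n]$ cannot be mixed (its 2-additive face would have to live in $\{e_1,\dots,e_m,v\}$ or $\{e_1,\dots,e_m,w\}$ and would force $v$ or $w$ into $\langle\vec e_1,\dots,\vec e_m\rangle$), and all remaining types are pairwise isotropic, so $\omega(v,w)=0$ with no rank input at all; the rank hypothesis is only needed for the span condition, exactly via your opening computation. That is how the paper argues (the span condition is delegated to the proof of Lemma 5.7 of Br\"uck--Miller--Patzt--Sroka--Wilson), whereas you additionally use $\rkfn(w)<R$ to exclude the additive types outright — correct, but unnecessary work.

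The genuine gap is in part (2): the step you defer, excluding $\omega(v,w)=\pm1$ for $w\in\Link^{<R}_{\IAAstrel}(\ls v,u\rs)$ with $u=\langle\vec v\pm\vec e_i\rangle$, is the actual content of the statement, and your proposed remedy (``a refinement of the rank computation'') points in the wrong direction — no rank argument is needed there, and rank alone will not dispose of the mixed case. The case is killed by a purely form-theoretic observation that you have already written down as one of your ``auxiliary facts'': $\omega(u,w)=\omega(v,w)$ for every vertex $w$ of the relative complex. If $\omega(v,w)=\pm1$, then $w$ pairs non-trivially with \emph{both} $v$ and $u$; but every simplex of $\IAAst[m+n]$ contains at most one non-isotropic pair. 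Indeed, in a $\sigma$ or mixed simplex the $\sigma$ vertex pairs non-trivially with exactly one other vertex, and that partner can never lie in a 2-additive augmentation core, because the relation $\vec v_0=\pm\vec v_1\pm\vec v_2$ would propagate the non-trivial pairing to the additive vertex $v_0$ as well; all other simplex types of $\IAAst[m+n]$ are fully isotropic. This is precisely how the paper phrases it: $v$ and $u$ lie in the augmentation core of a 2-additive face of $\ls v,u\rs\cup\ls e_1,\dots,e_m\rs$, hence any line forming a simplex with them in $\IAAst[m+n]$ must be isotropic to them. With this observation your reduction of condition (b) to $\omega(v,w)=0$ closes immediately, condition (a) follows from your rank computation as in part (1), and the equality of complexes follows vertexwise because $\Linkhat^{<R}_{\IAAstrel}(\ls v,u\rs)$ is a full subcomplex of $\Link^{<R}_{\IAAstrel}(\ls v,u\rs)$ — a point worth stating explicitly.
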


\begin{proof}
	We first consider \cref{it_eq_link_linkhat_IAA_2add}.
	Let $w\in \Link_{\IAAstrel}^{<R}(\ls v,\langle \vec v \pm \vec e_i\rangle\rs)$. It suffices to verify that 
	\begin{equation*}
		w\in \Linkhat_{\IAAstrel}^{<R}(\ls v,\langle \vec v \pm \vec e_i\rangle\rs)
	\end{equation*}
	since $\Linkhat_{\IAAstrel}^{<R}(\ls v,\langle \vec v \pm \vec e_i\rangle\rs)$ is a full subcomplex of $\Link_{\IAAstrel}^{<R}(\ls v,\langle \vec v \pm \vec e_i\rangle\rs)$.
	Following \cref{def_linkhat}, there are two things to check: Firstly, that we have $\vec w\not\in \langle \vec e_1, \ldots, \vec e_m, \vec v \rangle $ and secondly, that $\omega(w,v) = \omega(w,\langle \vec v \pm \vec e_i\rangle) = 0$.
	The first condition follows exactly as in \cite[proof of Lemma 5.7]{Brueck2022}.
	The second condition follows because $v$ and $\langle \vec v \pm \vec e_i\rangle$ are contained in the augmentation core of a 2-additive face of $\ls v,\langle \vec v \pm \vec e_i\rangle\rs \cup \ls e_1, \ldots, e_m \rs$. This implies that all lines $w$ that form a simplex with them in $\IAAst[m+n]$ must be isotropic to them.
	
	The proof of \cref{it_eq_link_linkhat_IAA_v} is easier: Assume that $\ls v,w \rs$ is not a $\sigma$ simplex. Then $\omega(v,w) = 0$. So the only thing to check is that $\vec w\not\in \langle \vec e_1, \ldots, \vec e_m, \vec v \rangle $ and this follows again as in \cite[proof of Lemma 5.7]{Brueck2022}.
\end{proof}

\section{Regular maps} \label{sec:regular-maps}
In order to prove \autoref{thm_connectivity_IAA}, i.e.~to show that $\pi_k(\IAA) = 0$ for all $k\leq n$, we need to study maps from spheres into $\IAA$ and its relative versions $\IAArel$ as well as homotopies between such maps.
In order to control the behaviour of these, we will only work with certain simplicial maps $S^k\to \IAArel$ from triangulated spheres into these complexes and we will only allow certain ``regular'' homotopies between such maps. Related ideas have been used by Putman to prove that $\Isigdelrel$ is $(n-1)$-connected (see \cite[Proposition 6.11]{put2009}, and \cite[Section 3]{bruecksroka2023}, which fixes some small gaps in Putman's argument).

The aim of this section is to introduce the necessary definitions and properties of the types of maps that we will be working with.

\subsection{Combinatorial manifolds}
We start by introducing definitions and elementary properties of combinatorial manifolds. We mostly stick to the notation used in  \cite[Section 6.1]{put2009}. For general references about this topic, see \cite{Rourke1982} and \cite{Hudson1969}.

In what follows, we assume that for $k<0$, the empty set is both a $k$-ball and a $k$-sphere.

\begin{definition}
\label{def_combinatorial_manifolds}
Let $k$ be a natural number. We define the notion of a \emph{combinatorial $k$-manifold} inductively as follows:
\begin{itemize}
\item Every $0$-dimensional simplicial complex is a combinatorial $0$-manifold.
\item A $k$-dimensional simplicial complex $M$ is called a \emph{combinatorial $k$-manifold} if for every simplex $\Delta$ of $M$, the link $\Link_M(\Delta)$ is a combinatorial $(k-\dim(\Delta)-1)$-manifold whose geometric realisation is either homeomorphic to a sphere or to a ball.
\end{itemize}

If $M$ is a combinatorial $k$-manifold, we denote by $\partial M$ the subcomplex consisting of all simplices $\Delta$ such that $\dim(\Delta)<k$ and $|\Link_M(\Delta)|$ is a ball.

We say that $M$ is a \emph{combinatorial $k$-sphere} or a \emph{combinatorial $k$-ball} if its geometric realisation is homeomorphic to a $k$-sphere or a $k$-ball.
\end{definition}

The following lemma collects some elementary properties of combinatorial manifolds. For the proofs, see \cite{Rourke1982}, in particular \cite[Exercise 2.21]{Rourke1982} and \cite[Proof of Lemma 1.17]{Hudson1969}.
\begin{lemma}
Let $M$ be a combinatorial $k$-manifold. Then 
\begin{enumerate}
\item the geometric realisation $|M|$ is a topological manifold;
\item the boundary $\partial M$ is a combinatorial $(k-1)$-manifold;
\item the geometric realisation of the boundary is the boundary of the geometric realisation, $|\partial M| = \partial |M|$.
\end{enumerate}
\end{lemma}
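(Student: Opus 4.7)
The plan is to prove all three statements simultaneously by induction on $k$. The base case $k = 0$ is immediate: a combinatorial $0$-manifold is a disjoint collection of points, since each vertex has link equal to the $(-1)$-sphere $\emptyset$. Hence $\partial M = \emptyset$, $|M|$ is obviously a $0$-manifold, and $|\partial M| = \emptyset = \partial |M|$. For the inductive step, fix $k \geq 1$ and assume all three items hold for combinatorial manifolds of dimension strictly less than $k$.

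For item (1), let $p \in |M|$ and let $\Delta$ be the unique open simplex of $M$ with $p \in \mathrm{int}|\Delta|$. A neighbourhood of $p$ is given by the open star, which is canonically homeomorphic to $\mathrm{int}|\Delta| \times c(|\Link_M(\Delta)|)$, where $c$ denotes the open cone. By the definition of a combinatorial $k$-manifold, $\Link_M(\Delta)$ is a combinatorial $(k-\dim\Delta-1)$-manifold whose realisation is a ball or sphere. The open cone on a sphere is an open Euclidean space of one higher dimension, while the open cone on a ball is an open Euclidean half-space. Taking the product with $\mathrm{int}|\Delta| \cong \mathbb{R}^{\dim\Delta}$ produces either $\mathbb{R}^k$ or a Euclidean half-space of dimension $k$, so $|M|$ is locally Euclidean (with boundary) at $p$.

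For item (2), the key identity is $\Link_{\partial M}(\Delta) = \partial\bigl(\Link_M(\Delta)\bigr)$ for every simplex $\Delta \in \partial M$. Indeed, a simplex $\Theta$ disjoint from $\Delta$ satisfies $\Theta \in \Link_{\partial M}(\Delta)$ iff $\Theta \cup \Delta \in \partial M$ iff $\Link_M(\Theta \cup \Delta) = \Link_{\Link_M(\Delta)}(\Theta)$ is a ball, which is precisely the definition of $\Theta \in \partial \Link_M(\Delta)$. Since $\Delta \in \partial M$ means $|\Link_M(\Delta)|$ is a ball, the inductive hypothesis applied to $\Link_M(\Delta)$ gives that $\partial \Link_M(\Delta)$ is a combinatorial manifold whose realisation equals $\partial |\Link_M(\Delta)|$, a sphere of dimension $k-\dim\Delta-2$. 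This verifies the link condition for $\partial M$, and combining these for all $\Delta$ yields that $\partial M$ is a combinatorial $(k-1)$-manifold.

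For item (3), use the local description from (1): a point $p \in |M|$ lying in $\mathrm{int}|\Delta|$ admits a Euclidean neighbourhood (so is an interior point of $|M|$) precisely when $|\Link_M(\Delta)|$ is a sphere, and admits only half-space neighbourhoods (so lies on $\partial|M|$) precisely when $|\Link_M(\Delta)|$ is a ball, that is $\Delta \in \partial M$; by invariance of domain these two local pictures are disjoint. Thus $p \in \partial |M|$ iff $\Delta \in \partial M$ iff $p \in |\partial M|$, completing the induction. The main obstacle lies in part (1): one must verify carefully that the open star really does split as the product $\mathrm{int}|\Delta|\times c(|\Link_M(\Delta)|)$, and that the cone on a PL ball (equivalently, a half-space) is genuinely a topological half-space even though PL balls in dimension $\ge 4$ need not be standard; this is handled by the inductive hypothesis together with the fact that the open cone on \emph{any} compact $n$-dimensional topological ball-or-sphere yields the expected model, which can be checked by a direct coordinate argument.
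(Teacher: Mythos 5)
Your proof is correct in outline and is genuinely more than the paper offers: the paper does not prove this lemma at all, it simply cites Rourke--Sanderson and Hudson's book, whereas you give a self-contained simultaneous induction on $k$. The three ingredients you use -- the local model $\mathrm{int}|\Delta|\times \mathring{c}(|\Link_M(\Delta)|)$ for the open star, the identity $\Link_{\partial M}(\Delta)=\partial(\Link_M(\Delta))$ via $\Link_M(\Theta\cup\Delta)=\Link_{\Link_M(\Delta)}(\Theta)$, and the fact that the open cone only depends on the homeomorphism type of the base (so the non-standardness of PL balls in high dimensions is irrelevant, since the paper's definition only asks for the links to be \emph{topological} balls or spheres) -- are exactly the standard PL-topology arguments the cited references carry out, so what your approach buys is transparency at the cost of re-proving classical material.

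Two points deserve attention. First, in part (2) you silently treat $\partial M$ as a simplicial complex: the backward direction of your first ``iff'' ($\Theta\cup\Delta\in\partial M$ implies $\Theta\in\Link_{\partial M}(\Delta)$) needs $\partial M$ to be closed under passing to faces, and this is not immediate from the defining condition. It does follow inside your induction: if $\Sigma\in\partial M$ and $\Sigma'\subsetneq\Sigma$, then $\Link_{\Link_M(\Sigma')}(\Sigma\setminus\Sigma')=\Link_M(\Sigma)$ is a ball, so by the inductive hypotheses (2) and (3) applied to the lower-dimensional manifold $\Link_M(\Sigma')$ its realisation cannot be a sphere (a sphere has empty boundary), hence it is a ball and $\Sigma'\in\partial M$. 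This half-paragraph should be added; it also yields that $\partial M$ is pure of dimension $k-1$, which is needed for it to qualify as a combinatorial $(k-1)$-manifold. Second, in part (3) you should note explicitly that when $|\Link_M(\Delta)|$ is a ball the point $p$ corresponds to the cone point, which lands on the boundary hyperplane of the half-space model (otherwise ``admits only half-space neighbourhoods'' does not yet place $p$ on $\partial|M|$), and that for top-dimensional simplices the link is empty, which by the paper's convention is both a $(-1)$-ball and a $(-1)$-sphere; there the local model is $\mathbb{R}^k$ and the dimension clause $\dim(\Delta)<k$ in the definition of $\partial M$ is what keeps the two sides of your biconditional consistent. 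With these additions the argument is complete.
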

In particular, these properties yield the following descriptions of the boundaries of combinatorial balls and spheres.
\begin{corollary}
\label{cor_boundaries_of_combinatorial_balls_spheres}
If $B$ is a combinatorial $k$-ball, then $\partial B$ is a combinatorial $(k-1)$-sphere.

If $S$ is a combinatorial $k$-sphere, then $\partial S = \emptyset$, so for all simplices $\Delta$ of $S$, the link $\Link_S(\Delta)$ is a combinatorial $(k-\dim(\Delta)-1)$-sphere.
\end{corollary}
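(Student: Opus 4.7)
The plan is to deduce both statements directly from the preceding lemma, which gives that $\partial M$ is a combinatorial $(k-1)$-manifold and that $|\partial M| = \partial |M|$, and from the definition of ``combinatorial $k$-sphere/ball'' via the geometric realisation.

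First I would handle the ball case. If $B$ is a combinatorial $k$-ball, then $|B|$ is homeomorphic to a topological $k$-ball, so $\partial |B|$ is a topological $(k-1)$-sphere. By the lemma, $|\partial B| = \partial |B|$, so $|\partial B|$ is a $(k-1)$-sphere. Since the lemma also guarantees that $\partial B$ is a combinatorial $(k-1)$-manifold, it is by definition a combinatorial $(k-1)$-sphere.

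Next I would handle the sphere case. If $S$ is a combinatorial $k$-sphere, then $|S|$ is a topological $k$-sphere, which has empty topological boundary, so $\partial |S| = \emptyset$. Using $|\partial S| = \partial |S| = \emptyset$ from the lemma yields $\partial S = \emptyset$ as a simplicial complex. Now fix a simplex $\Delta$ of $S$. If $\dim(\Delta) = k$, then $\Link_S(\Delta) = \emptyset$, which by the stated convention is a combinatorial $(-1)$-sphere, i.e.~a combinatorial $(k-\dim(\Delta)-1)$-sphere. If $\dim(\Delta) < k$, then since $\Delta \notin \partial S$, the realisation $|\Link_S(\Delta)|$ is not a ball. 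By the defining property of a combinatorial $k$-manifold, $\Link_S(\Delta)$ is a combinatorial $(k-\dim(\Delta)-1)$-manifold whose realisation is a sphere or a ball, so it must be a sphere, i.e.~a combinatorial $(k-\dim(\Delta)-1)$-sphere.

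There is no real obstacle here; the statement is essentially a bookkeeping consequence of the definitions together with the lemma. The only mild subtlety is handling the top-dimensional simplices where the link is empty, which is covered by the convention that $\emptyset$ is both a $(-1)$-sphere and a $(-1)$-ball.
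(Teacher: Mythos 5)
Your proposal is correct and follows exactly the route the paper intends: the corollary is stated as an immediate consequence of the preceding lemma ($\partial M$ is a combinatorial $(k-1)$-manifold and $|\partial M| = \partial|M|$) together with the definitions, which is precisely what you unpack, including the convention for the empty link of top-dimensional simplices.
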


The next lemma is an easy consequence of the definition of $\partial M$.

\begin{lemma}
\label{lem_boundary_join}
Let $M_1$ and $M_2$ be combinatorial $k_1$- and $k_2$-balls or -spheres.
\begin{enumerate}
\item \label{it_join_ball}If at least one of $M_1$, $M_2$ is a ball, then the join $M_1 \ast M_2$ is a combinatorial $(k_1+k_2+1)$-ball.
\item \label{it_join_sphere}If both $M_1$ and $M_2$ are spheres, then the join $M_1 \ast M_2$ is a combinatorial $(k_1+k_2+1)$-sphere.
\end{enumerate}
Furthermore, for $B, B'$ combinatorial balls and $S, S'$ combinatorial spheres, we have 
\begin{align*}
	&\partial (B \ast B') = (\partial B \ast B') \cup (B \ast \partial B'),\\
	&\partial (S \ast B) = S \ast \partial B,\\
	&\partial (S \ast S') = \emptyset.
\end{align*}
\end{lemma}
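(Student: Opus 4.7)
The plan is to prove all assertions simultaneously by induction on $k_1+k_2$, relying on the key simplicial identity
\[ \Link_{M_1 \ast M_2}(\Delta_1 \cup \Delta_2) \;=\; \Link_{M_1}(\Delta_1) \,\ast\, \Link_{M_2}(\Delta_2), \]
valid for any simplices $\Delta_i \in M_i$ (with the convention $\Link_{M_i}(\emptyset)=M_i$ and with the empty set regarded as $(-1)$-dimensional, so that the edge case $\Delta_1 = \emptyset$ or $\Delta_2 = \emptyset$ is covered). Topologically I would use the classical homeomorphisms $B^{k_1} \ast B^{k_2} \cong B^{k_1+k_2+1}$, $B^{k_1}\ast S^{k_2}\cong B^{k_1+k_2+1}$, and $S^{k_1}\ast S^{k_2}\cong S^{k_1+k_2+1}$; combined with the fact that $|M_1 \ast M_2| \cong |M_1|*|M_2|$ these immediately settle that $|M_1\ast M_2|$ is a ball or sphere of the asserted dimension. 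So the real content is verifying the combinatorial manifold structure and computing the boundary.

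For the induction, the base case $k_1+k_2=0$ (possibly with one factor empty, i.e.\ of dimension $-1$) can be checked directly from \cref{def_combinatorial_manifolds}. For the inductive step, take any simplex $\Delta = \Delta_1 \cup \Delta_2$ of $M_1\ast M_2$ with $\Delta_i\in M_i$ (either may be empty). By the link identity, $\Link_{M_1\ast M_2}(\Delta)$ is the join of $\Link_{M_i}(\Delta_i)$, each of which is, by the assumption that $M_i$ is a combinatorial manifold, a combinatorial ball or sphere of strictly smaller total dimension. The inductive hypothesis then tells us that this join is a combinatorial ball or sphere, of the correct dimension $k_1+k_2-\dim(\Delta)-1$. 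This both verifies the combinatorial manifold condition and, since the topological type (ball vs.\ sphere) of the link is determined by $\Link_{M_i}(\Delta_i)$ via the join rules stated above, it tells us exactly when that link is a ball: namely iff at least one of $\Link_{M_1}(\Delta_1)$, $\Link_{M_2}(\Delta_2)$ is a ball.

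With that characterisation in hand, the boundary formulas fall out by inspection. A simplex $\Delta = \Delta_1\cup\Delta_2$ of $M_1\ast M_2$ with $\dim(\Delta)<k_1+k_2+1$ lies in $\partial(M_1\ast M_2)$ exactly when $\Link_{M_1}(\Delta_1) \ast \Link_{M_2}(\Delta_2)$ is a ball. Case $(S\ast S')$: neither link can ever be a ball (recall $\partial S = \partial S' = \emptyset$ by \cref{cor_boundaries_of_combinatorial_balls_spheres}), so the join is always a sphere and $\partial(S\ast S') = \emptyset$. Case $(S\ast B)$: $\Link_S(\Delta_1)$ is a sphere, so the join is a ball iff $\Link_B(\Delta_2)$ is a ball, i.e.\ iff $\Delta_2\in \partial B$ (or $\Delta_2$ equals a top-dimensional simplex of $B$, but this is excluded by the dimension constraint on $\Delta$). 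Thus the boundary is precisely $S\ast \partial B$. Case $(B\ast B')$: the join is a ball iff at least one of $\Delta_1\in\partial B$ or $\Delta_2\in\partial B'$, which gives $(\partial B\ast B') \cup (B\ast \partial B')$.

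The only mildly delicate point, and where I expect to have to be careful, is the bookkeeping with the empty simplex and the $(-1)$-dimensional conventions used throughout, since when $\Delta_i = \emptyset$ the corresponding link is the whole $M_i$ and one must ensure the join $\Link_{M_1}(\emptyset)\ast\Link_{M_2}(\Delta_2) = M_1\ast \Link_{M_2}(\Delta_2)$ fits the inductive hypothesis (its total dimension is still strictly less than $k_1+k_2$ provided $\Delta_2\neq\emptyset$). This is the obstacle, but it is purely a matter of setting up the induction on the quantity $\dim(\Delta_1) + \dim(\Delta_2) + 2$ (or equivalently restricting attention to nonempty $\Delta$, since the case $\Delta=\emptyset$ is what the topological statement itself covers).
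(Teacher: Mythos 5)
Your proposal is correct in substance, but it establishes parts (1) and (2) by a genuinely different route than the paper. The paper does not prove these parts from scratch: it cites \cite[Lemma 1.13]{Hudson1969} for the fact that joins of balls and spheres are balls/spheres, together with the observation that its recursive definition of combinatorial manifold agrees with Hudson's PL notions (\cite[Corollary 1.16 and Remark on p.~26]{Hudson1969}), and then deduces the three boundary formulas exactly as you do, from the identity $\Link_{M_1\ast M_2}(\Delta_1\cup\Delta_2)=\Link_{M_1}(\Delta_1)\ast\Link_{M_2}(\Delta_2)$ combined with (1) and (2). You instead re-prove (1) and (2) by induction on $k_1+k_2$: the topological homeomorphisms $|M_1\ast M_2|\cong|M_1|\ast|M_2|$ and the classical join rules for balls and spheres control the realisation, while the link identity plus the inductive hypothesis verify the recursive link condition. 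Since the paper's \cref{def_combinatorial_manifolds} only requires links to have realisations that are \emph{topological} balls or spheres, this is legitimate, and it buys self-containedness (no PL theory, no identification with Hudson's definitions); what it costs is exactly the $(-1)$-dimensional bookkeeping you flag, since the empty complex counts as both a ball and a sphere. One spot to tighten: in your $S\ast B$ case, when $\Delta_2$ is top-dimensional in $B$ (so $\Link_B(\Delta_2)=\emptyset$) but $\Delta_1$ is not top-dimensional in $S$, the simplex is \emph{not} excluded by the dimension constraint; the correct reason it lies outside the boundary is that its link in the join is then $\Link_S(\Delta_1)$, a sphere. With that repaired (and noting that for simplices of dimension less than $k_1+k_2+1$ the link in the join is nonempty, so ``ball'' versus ``sphere'' is unambiguous), your case analysis yields the same boundary formulas as the paper.
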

\begin{proof}
\cref{it_join_ball} and \cref{it_join_sphere} follow from \cite[Lemma 1.13]{Hudson1969} (that our definition of combinatorial balls and spheres agrees with the one in \cite{Hudson1969} follows from \cite[Corollary 1.16 and Remark on p.26]{Hudson1969}).

If $\Delta = \Delta_1 \ast \Delta_2$ is a simplex of $M_1\ast M_2$, where $\Delta_1\in M_1$ and $\Delta_2\in M_2$, we have
\begin{equation*}
	\Link_{M_1\ast M_2}(\Delta) = \Link_{M_1}(\Delta_1)\ast \Link_{M_2}(\Delta_2).
\end{equation*} 
Hence, the statement about the boundaries follows from \cref{it_join_ball} and \cref{it_join_sphere}. (Note that $\partial D^0 = \emptyset = \partial S^0$ and $\emptyset \ast X = X$ for all simplicial complexes $X$.)
\end{proof}

Note that the above statement does not generalise to arbitrary combinatorial manifolds. (E.g.\ the join of three points with a singleton is not a manifold.)

We will frequently use the following consequence of \cref{it_join_ball} of \cref{lem_boundary_join}:
\begin{corollary}
\label{cor_boundary_star}
Let $S$ be a combinatorial $k$-sphere and $\Delta$ a simplex of $S$.
Then $\Star_{S}(\Delta) = \Delta\ast \Link_S(\Delta)$ is a combinatorial $k$-ball and
\begin{equation*}
	\partial \Star_{S}(\Delta) = \partial \Delta\ast \Link_S(\Delta).
\end{equation*}
\end{corollary}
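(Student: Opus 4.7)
The plan is to apply Lemma 4.4 directly, after identifying the pieces. Since $S$ is a combinatorial $k$-sphere and $\Delta$ is a simplex of $S$, Corollary 4.3 tells us that $\Link_S(\Delta)$ is a combinatorial $(k - \dim(\Delta) - 1)$-sphere. The simplex $\Delta$, viewed as a simplicial complex, is a combinatorial $\dim(\Delta)$-ball whose boundary is $\partial \Delta$, a combinatorial $(\dim(\Delta) - 1)$-sphere.

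Next I would combine these via Lemma 4.4. For the first claim, item (i) of that lemma applies: since $\Delta$ is a ball, the join $\Delta \ast \Link_S(\Delta)$ is a combinatorial $(\dim(\Delta) + (k - \dim(\Delta) - 1) + 1)$-ball, i.e.~a combinatorial $k$-ball. By definition this join is exactly $\Star_S(\Delta)$.

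For the boundary statement, I would apply the second boundary formula from Lemma 4.4, namely $\partial(S \ast B) = S \ast \partial B$, with the sphere $\Link_S(\Delta)$ in the role of $S$ and the ball $\Delta$ in the role of $B$. Using commutativity of the join, this yields
\[
\partial \Star_S(\Delta) \;=\; \partial\bigl(\Delta \ast \Link_S(\Delta)\bigr) \;=\; \Link_S(\Delta) \ast \partial \Delta \;=\; \partial \Delta \ast \Link_S(\Delta),
\]
which is the desired identity. There is no real obstacle here; the statement is essentially a repackaging of the join formulas from Lemma 4.4 specialised to the situation where one factor is a single simplex.
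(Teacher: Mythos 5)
Your proof is correct and follows essentially the same route as the paper: both deduce from \cref{cor_boundaries_of_combinatorial_balls_spheres} that $\Link_S(\Delta)$ is a combinatorial sphere and then apply the join and boundary formulas of \cref{lem_boundary_join} with $\Delta$ as the ball factor. The paper's proof is just a terser version of your argument (phrasing the boundary computation via $\partial \Link_S(\Delta) = \emptyset$), so there is nothing to add.
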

\begin{proof}
This immediately follows from \cref{lem_boundary_join} because $\Link_S(\Delta)$ is a $(k-1)$-sphere, therefore $\partial\Link_S(\Delta) = \emptyset$ as observed above.
\end{proof}

The next lemma allows us to restrict ourselves to combinatorial spheres and balls when investigating the homotopy groups of simplicial complexes.
\begin{lemma}
\label{lem_simplicial_approximation_combinatorial}
Let $X$ be a simplicial complex and $k\geq 0$.
\begin{enumerate}
\item \label{it_simplicial_approximation_combinatorial_sphere}Every element of $\pi_k(X)$ can be represented by a simplicial map $\phi\colon   S\to X$, where $S$ is a combinatorial $k$-sphere.
\item \label{it_simplicial_approximation_combinatorial_ball}If $S$ is a combinatorial $k$-sphere and $\phi\colon  S\to X$ is a simplicial map such that $|\phi|$ is nullhomotopic, then there is a combinatorial $(k+1)$-ball $B$ with $\partial B = S$ and a simplicial map $\psi\colon   B\to X$ such that $\psi|_{S} = \phi$.
\end{enumerate}
\end{lemma}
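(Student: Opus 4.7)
The plan is to combine Zeeman's relative simplicial approximation theorem (\autoref{zeeman-relative-simplicial-approximation}) with two basic PL topology facts: the cone $S \ast \{v\}$ over a combinatorial $k$-sphere $S$ is a combinatorial $(k+1)$-ball with boundary $S$ (which follows from \cref{lem_boundary_join}, since a single vertex is a combinatorial $0$-ball with empty boundary), and any subdivision of a combinatorial manifold remains a combinatorial manifold with the same boundary subcomplex. The latter is a standard consequence of the fact that the combinatorial manifold property is a PL invariant, and we will invoke it without proof.

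For part (1), I would begin with an arbitrary continuous representative $f\colon S^k \to X$ of $\alpha\in \pi_k(X)$. Fixing the combinatorial $k$-sphere $S_0 = \partial \Delta^{k+1}$ and a homeomorphism $|S_0|\cong S^k$, we view $f$ as a continuous map $|S_0|\to X$. Since $|S_0|$ is compact, $f(|S_0|)$ is contained in a finite subcomplex $X'\subseteq X$. Applying \autoref{zeeman-relative-simplicial-approximation} with $K = S_0$, $L = \emptyset$ and $M = X'$ then yields a subdivision $S$ of $S_0$ and a simplicial map $\phi\colon S \to X' \subseteq X$ homotopic to $f$. As $S$ is a subdivision of the combinatorial $k$-sphere $S_0$, it is itself a combinatorial $k$-sphere, and $\phi$ represents $\alpha$.

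For part (2), I would introduce a new vertex $v$ and consider the cone $B_0 \coloneqq S \ast \{v\}$. By the observation above, $B_0$ is a combinatorial $(k+1)$-ball with $\partial B_0 = S \ast \emptyset = S$. A nullhomotopy of $|\phi|$ provides a continuous extension $\Phi\colon |B_0|\to X$ with $\Phi|_{|S|} = |\phi|$, and the image of $\Phi$ lies in some finite subcomplex $X'\subseteq X$. Applying \autoref{zeeman-relative-simplicial-approximation} with $K = B_0$, $L = S$ (on which $\Phi$ is already simplicial) and $M = X'$ produces a subdivision $B$ of $B_0$ containing $S$ as a subcomplex, together with a simplicial map $\psi\colon B\to X' \subseteq X$ with $\psi|_S = \phi$. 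Because $B$ is a subdivision of the combinatorial $(k+1)$-ball $B_0$ in which $S = \partial B_0$ persists as a subcomplex, $B$ is a combinatorial $(k+1)$-ball with $\partial B = S$, as required.

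The main obstacle is conceptual rather than technical: one must be comfortable translating between topological and combinatorial PL notions, in particular invoking the fact that subdividing a combinatorial ball preserves both its manifold structure and its boundary subcomplex. Once this is granted, both parts reduce to a direct application of Zeeman's theorem, with the cone construction supplying the required combinatorial extension domain in part (2).
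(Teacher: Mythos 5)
Your argument is correct and is essentially the proof the paper intends: the paper merely cites Putman's Lemma 6.4 and notes that the key inputs are Zeeman's relative simplicial approximation together with the fact that subdivisions of combinatorial manifolds are again combinatorial manifolds, which is exactly the combination you use, with the cone $S \ast \{v\}$ supplying the combinatorial $(k+1)$-ball in part (2). One small wording caveat: in general a subdivision of a combinatorial manifold has as boundary the induced subdivision of the old boundary rather than literally ``the same boundary subcomplex,'' but in your application this is harmless because the relative form of Zeeman's theorem leaves $L = S$ unsubdivided, so $\partial B = S$ holds on the nose.
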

This can be shown using Zeeman's simplicial approximation (\cref{zeeman-relative-simplicial-approximation}), see \cite[Lemma 6.4]{put2009}. The key observation for it is that subdivisions of combinatorial manifolds are again combinatorial manifolds.

Recall that if $M$ is a simplicial complex and $C\subseteq M$ a subcomplex, we write $M\setminus C$ for the subset (not sub\emph{complex}) of $M$ consisting of all simplices  that are not contained in $C$.

\begin{lemma}
\label{lem_combinatorial_ball_replace_same_boundary}
Let $M$ be a combinatorial $k$-manifold and $C\subseteq M$ a subcomplex that is a combinatorial $k$-ball. Let $C'$ be a combinatorial $k$-ball such that 
\begin{equation*}
	C'\cap M = \partial C = \partial C'.
\end{equation*}
Then $M' \coloneqq (M\setminus C) \cup C'$ is a combinatorial $k$-manifold with $|M|\cong |M'|$. (See \cref{figure_replace_ball_same_boundary}.)
\end{lemma}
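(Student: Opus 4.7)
The plan is to prove the lemma in three stages: verify that $M'$ is actually a simplicial complex, then inductively establish the combinatorial manifold structure, and finally construct the topological homeomorphism $|M| \cong |M'|$.

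First I would check that $M'$ is a simplicial complex, i.e.\ closed under taking faces. The only nontrivial case is a simplex $\Delta \in M \setminus C$ with some face $\Theta \subseteq \Delta$ satisfying $\Theta \in C$. The key claim is that any such $\Theta$ must lie in $\partial C$, so that $\Theta \in \partial C = \partial C' \subseteq C' \subseteq M'$. Indeed, if $\Theta \in C$ were ``interior'' to $C$ (not in $\partial C$), then $\Link_C(\Theta)$ would be a combinatorial $(k-\dim(\Theta)-1)$-sphere by the definition of $\partial C$; since $\Link_M(\Theta)$ is itself a combinatorial manifold of the same dimension containing this sphere as a subcomplex, and a top-dimensional sphere inside a connected manifold of the same dimension must be all of it, we get $\Link_M(\Theta) = \Link_C(\Theta)$, contradicting the existence of $\Delta \notin C$ with $\Theta \subseteq \Delta$.

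Next I would prove $M'$ is a combinatorial $k$-manifold by induction on $k$, using a case distinction on the location of a simplex $\Delta \in M'$. If $\Delta \in M \setminus C$, then a short check (using the observation above) shows $\Link_{M'}(\Delta) = \Link_M(\Delta)$, which is already a sphere or ball by hypothesis on $M$. If $\Delta \in C' \setminus \partial C'$ is interior to $C'$, then $\Link_{M'}(\Delta) = \Link_{C'}(\Delta)$, a combinatorial sphere, since any simplex in $M'$ containing $\Delta$ must lie in $C'$. The interesting case is $\Delta \in \partial C = \partial C'$: here $\Link_{M'}(\Delta)$ decomposes as the union of $L_1 := \Link_M(\Delta) \setminus \Link_C(\Delta)$ (viewed as the appropriate subcomplex of $\Link_M(\Delta)$) and $L_2 := \Link_{C'}(\Delta)$, glued along $\partial \Link_C(\Delta) = \partial \Link_{C'}(\Delta) = \Link_{\partial C}(\Delta)$, using the standard identity $\Link_{\partial C}(\Delta) = \partial \Link_C(\Delta)$. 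Since $\Delta \in \partial C$ and $\Delta \in \partial C'$, both $\Link_C(\Delta)$ and $\Link_{C'}(\Delta)$ are combinatorial $(k - \dim(\Delta) - 1)$-balls sharing the same boundary sphere. Thus $\Link_{M'}(\Delta)$ is precisely the result of the same replacement construction applied to the combinatorial manifold $\Link_M(\Delta)$ in one lower dimension, so by the inductive hypothesis it is a combinatorial $(k-\dim(\Delta)-1)$-manifold whose realisation is homeomorphic to $|\Link_M(\Delta)|$, and the latter is a ball or sphere by hypothesis.

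Finally, for the homeomorphism $|M| \cong |M'|$, note that $|C|$ and $|C'|$ are topological $k$-balls sharing the same boundary sphere $|\partial C| = |\partial C'|$. By the Alexander trick, the identity map on $|\partial C|$ extends to a homeomorphism $h \colon |C| \to |C'|$. Defining $H \colon |M| \to |M'|$ by $h$ on $|C|$ and the identity on $|M| \setminus \operatorname{int}|C|$, the two definitions agree on $|\partial C|$ and yield a well-defined continuous bijection with continuous inverse, hence a homeomorphism.

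The main obstacle is the case $\Delta \in \partial C = \partial C'$ in the manifold verification, since the link there is obtained by a nontrivial gluing rather than being a single preexisting link; invoking the induction hypothesis cleanly requires the preliminary identifications $\Link_{\partial C}(\Delta) = \partial \Link_C(\Delta)$ and the careful decomposition of $\Link_{M'}(\Delta)$ described above. Everything else — the simplicial complex check and the Alexander-trick homeomorphism — is standard.
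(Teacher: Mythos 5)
Your proof is correct and follows essentially the same route as the paper: induction on $k$ with the three-way case analysis on the location of $\Delta$, the key case $\Delta\subseteq\partial C=\partial C'$ being handled by recognising $\Link_{M'}(\Delta)$ as the replacement of $\Link_C(\Delta)$ by $\Link_{C'}(\Delta)$ inside $\Link_M(\Delta)$ and invoking the inductive hypothesis. The extra details you supply (checking $M'$ is a simplicial complex, the identity $\Link_{\partial C}(\Delta)=\partial\Link_C(\Delta)$, and the Alexander-trick homeomorphism) are exactly the points the paper dismisses with ``clearly'' or ``it is not hard to see'', and they are all correct.
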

\begin{figure}
\begin{center}
\includegraphics{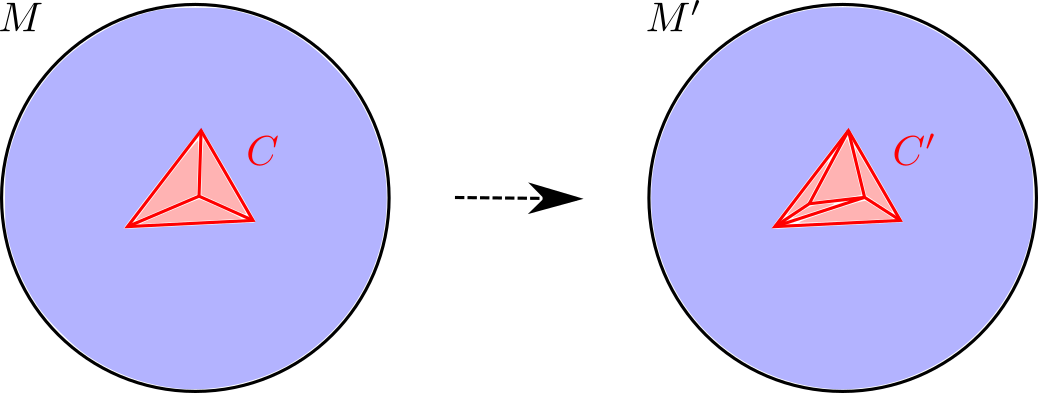}
\end{center}
\caption{Replacing $C$ by $C'$ to obtain $M'$ from $M$ in the setting of \cref{lem_combinatorial_ball_replace_same_boundary}, $k=2$.}
\label{figure_replace_ball_same_boundary}
\end{figure}
\begin{proof}
First observe that topologically, $|M'|$ is obtained from $|M|$ by removing the maximal-dimensional ball $|C|$ and attaching a ball $|C'|$ of the same dimension along the same boundary $\partial |C| = \partial |C'|$. Clearly, this does not change the homeomorphism type of $|M|$, i.e.~we have $|M|\cong |M'|$.

Hence, it suffices to show that $M'$ is indeed a \emph{combinatorial} manifold.
We proof this claim by induction over $k$.
The induction beginning are the cases $k\leq 0$, which are trivial.

Now assume $k>0$ and that the claims holds for all $l<k$.
Let $\Delta$ be a simplex of $M'$. We need to show that $\Link_{M'}(\Delta)$ is a combinatorial $(k-\dim(\Delta)-1)$-sphere or -ball. 
If $\Delta$ is contained in $M'\setminus C'$, then 
\begin{equation}
\label{eq_observation_for_replacement_stars}
	\Link_{M'}(\Delta) = \Link_M(\Delta),
\end{equation}
so the claim follows because $M$ is a combinatorial manifold.
Similarly, if $\Delta$ is contained in $C'\setminus(C'\cap M)$, then $\Link_{M'}(\Delta) = \Link_{C'}(\Delta)$, so the statement follows because $C'$ is a combinatorial ball.
So we can assume that $\Delta\subseteq C' \cap M$, which by assumption means that
$\Delta\subseteq \partial C = \partial C'$.
As $C$ and $C'$ are combinatorial $k$-balls, both $\Link_{C}(\Delta)$ and $\Link_{C'}(\Delta)$ are combinatorial $(k-\dim(\Delta)-1)$-balls.
Furthermore, $\Link_{M}(\Delta)$ is a combinatorial $(k-\dim(\Delta)-1)$-ball or sphere and it is not hard to see that $\Link_{M'}(\Delta)$ is obtained from $\Link_{M}(\Delta)$ by replacing $\Link_{C}(\Delta)$ with $\Link_{C'}(\Delta)$. 
So by the induction hypothesis, $\Link_{M'}(\Delta)$ is again a combinatorial $(k-\dim(\Delta)-1)$-ball or sphere, respectively.
It follows that $M'$ is a combinatorial $k$-manifold.
\end{proof}

\begin{remark}
\label{rem_subdivision_combinatorial_manifolds}
In particular, \cref{lem_combinatorial_ball_replace_same_boundary} implies that subdivisions of combinatorial manifolds are again combinatorial manifolds.
\end{remark}

We also need the following variant of \cref{lem_combinatorial_ball_replace_same_boundary}, which is depicted in \cref{figure_replace_ball_bigger_boundary}. We state it separately to simplify references later on, but omit the proof as it is entirely parallel to the one of \cref{lem_combinatorial_ball_replace_same_boundary}.
\begin{lemma}
\label{lem_combinatorial_ball_replace_bigger_boundary}
Let $M$ be a combinatorial $k$-ball and $C\subseteq M$ a subcomplex that is a combinatorial $k$-ball. Let $C'$ be a combinatorial $k$-ball such that 
\begin{equation*}
	C'\cap M \subseteq \partial C \cap \partial C' \text{ and } \partial C \subseteq \partial M \cup \partial C'.
\end{equation*}
Let $M' \coloneqq (M\setminus C) \cup C'$. Then if $|M'|$ is a $k$-ball, in fact $M'$ is a \emph{combinatorial} $k$-ball.
\end{lemma}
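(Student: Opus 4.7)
The proof will run parallel to that of \cref{lem_combinatorial_ball_replace_same_boundary}, by induction on $k$. The base cases $k \leq 0$ are immediate. For the inductive step, I show that for every simplex $\Delta$ of $M'$ the link $\Link_{M'}(\Delta)$ is a combinatorial $(k-\dim(\Delta)-1)$-ball or sphere; combined with the standing hypothesis that $|M'|$ is a $k$-ball, this yields that $M'$ is a combinatorial $k$-ball.

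\textbf{Case analysis.} The inclusion $C' \cap M \subseteq \partial C \cap \partial C' \subseteq C$ partitions the simplices of $M'$ cleanly into three types. If $\Delta \in M \setminus C$, then by the above inclusion $\Delta \notin C'$, so every coface of $\Delta$ in $M'$ comes from $M \setminus C$ and hence $\Link_{M'}(\Delta) = \Link_M(\Delta)$. If $\Delta \in C' \setminus M$, then symmetrically $\Link_{M'}(\Delta) = \Link_{C'}(\Delta)$. In both cases, the link is a combinatorial ball or sphere by the hypotheses on $M$ and $C'$. The interesting case is $\Delta \in C' \cap M \subseteq \partial C \cap \partial C'$, where a direct bookkeeping gives
\[\Link_{M'}(\Delta) = \bigl(\Link_M(\Delta) \setminus \Link_C(\Delta)\bigr) \cup \Link_{C'}(\Delta),\]
which is a replacement of exactly the same shape as in the statement of the lemma, now carried out inside the lower-dimensional combinatorial manifold $\Link_M(\Delta)$.

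\textbf{Reducing to the induction hypothesis.} The two structural hypotheses on $C, C'$ translate, under $\Delta' \mapsto \Link(\Delta')$, to the corresponding hypotheses for $\Link_C(\Delta), \Link_{C'}(\Delta)$ inside $\Link_M(\Delta)$: this uses the associativity identity $\Link_X(\Theta \cup \Delta) = \Link_{\Link_X(\Delta)}(\Theta)$ applied with $X \in \{M, C, C'\}$, and the fact that a simplex lies on the boundary iff its link is a ball. Whenever $\Link_M(\Delta)$ is a sphere (i.e.~$\Delta \notin \partial M$), the inclusion $\partial C \subseteq \partial M \cup \partial C'$ forces the corresponding link-inclusions to tighten to equalities, so that \cref{lem_combinatorial_ball_replace_same_boundary} can be invoked directly; when $\Link_M(\Delta)$ is a ball ($\Delta \in \partial M$), one invokes the present lemma inductively.

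\textbf{Expected main obstacle.} The delicate step is the inductive invocation in case (iii) when $\Link_M(\Delta)$ is a ball: the recursive application of this lemma needs to know in advance that $|\Link_{M'}(\Delta)|$ is a topological ball. This is precisely where the standing hypothesis ``$|M'|$ is a $k$-ball'' is indispensable, since --- unlike in \cref{lem_combinatorial_ball_replace_same_boundary}, where the global homeomorphism $|M| \cong |M'|$ was free --- here the topological type must be fed in from the assumption. A regular-neighbourhood argument around $|\Delta|$ in the topological manifold-with-boundary $|M'|$ identifies $|\Link_{M'}(\Delta)|$ with the local topological type of $|M'|$ at an interior point of $|\Delta|$, namely a ball when $\Delta$ lies on $\partial |M'|$ and a sphere otherwise, which supplies the missing topological input and closes the induction. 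Beyond this, the argument is boundary-inclusion bookkeeping in the spirit of the proof of \cref{lem_combinatorial_ball_replace_same_boundary}.
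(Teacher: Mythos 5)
You follow exactly the route the paper has in mind: the paper omits its own proof of this lemma, declaring it entirely parallel to that of \cref{lem_combinatorial_ball_replace_same_boundary}, and your induction on $k$ with the threefold case distinction and the identity $\Link_{M'}(\Delta) = (\Link_M(\Delta)\setminus\Link_C(\Delta))\cup\Link_{C'}(\Delta)$ for $\Delta\subseteq C'\cap M$ is precisely that argument. The two easy cases are handled correctly, and your observation that for $\Delta\notin\partial M$ the descended hypotheses tighten to those of \cref{lem_combinatorial_ball_replace_same_boundary} is right and can be justified: since $\Link_{\partial M}(\Delta)=\emptyset$ one gets $\partial\Link_C(\Delta)=\Link_{\partial C}(\Delta)\subseteq\partial\Link_{C'}(\Delta)$, and a combinatorial sphere contained in a combinatorial sphere of the same dimension must equal it, which then forces $\Link_{C'}(\Delta)\cap\Link_M(\Delta)=\partial\Link_C(\Delta)=\partial\Link_{C'}(\Delta)$.

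The gap is in the case $\Delta\in\partial M$, exactly the step you flag as delicate. There you supply the missing hypothesis ``$|\Link_{M'}(\Delta)|$ is a ball'' for the inductive call by asserting that the local topological type of $|M'|$ at an interior point of $|\Delta|$ determines $|\Link_{M'}(\Delta)|$. That principle is false: the homeomorphism type of a link is not an invariant of the underlying topological space. Concretely, by the double suspension theorem $S^0\ast S^0\ast P$ is homeomorphic to $S^5$ for a Poincar\'e homology $3$-sphere $P$, so the cone on this complex triangulates the topological $6$-ball, yet the link of a boundary edge spanned by the two suspension vertices is the cone on $P$, which is not homeomorphic to a ball (it is not even a manifold at the cone point). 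Hence knowing $|M'|\cong D^k$ gives, by itself, no control over $|\Link_{M'}(\Delta)|$, and the regular-neighbourhood step does not close the induction. (Note also that a simplex of $\partial M$ may be required to become \emph{interior} in $M'$, so that its link must be shown to be a sphere, a situation covered by neither lemma as stated.) A correct completion has to extract the topological type of $\Link_{M'}(\Delta)$ from the combinatorial data --- the descended inclusions together with a gluing argument for the combinatorial balls $\Link_{C'}(\Delta)$ and the closure of $\Link_M(\Delta)\setminus\Link_C(\Delta)$, or a suitably strengthened inductive statement --- rather than from the topology of $|M'|$ alone; as written, that content is missing.
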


\begin{figure}
\begin{center}
\includegraphics{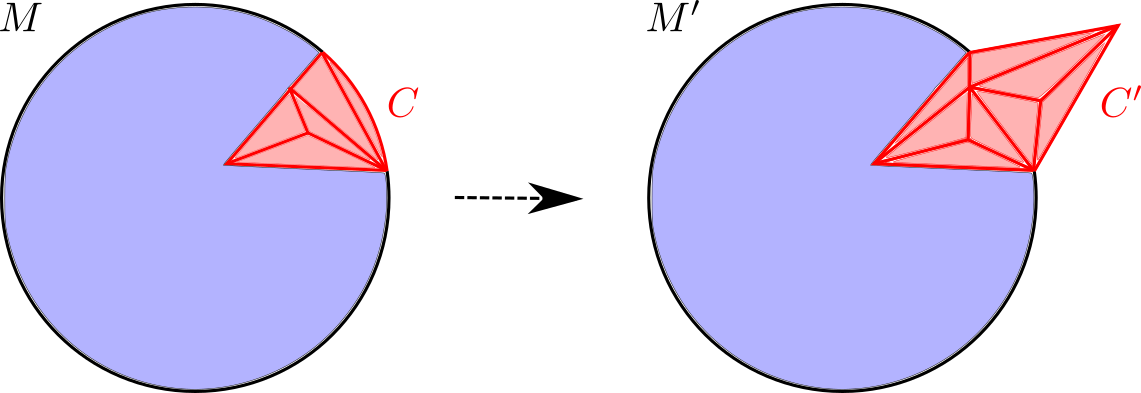}
\end{center}
\caption{Replacing $C$ by $C'$ to obtain $M'$ from $M$ in the setting of \cref{lem_combinatorial_ball_replace_bigger_boundary}, $k=2$.}
\label{figure_replace_ball_bigger_boundary}
\end{figure}

\begin{definition}
If $M'$ is obtained from $M$ as in \cref{lem_combinatorial_ball_replace_same_boundary} or \cref{lem_combinatorial_ball_replace_bigger_boundary}, We say that \emph{$M'$ is obtained from $M$ by replacing $C$ with $C'$}.
\end{definition}

We record the observation from \cref{eq_observation_for_replacement_stars} in the proof of \cref{lem_combinatorial_ball_replace_same_boundary} for later reference:
\begin{lemma}
\label{lem_replacement_stars}
If $M'$ is obtained from $M$ by replacing $C$ with $C'$ and $\Theta$ is a simplex in $M'\setminus C'$, we have $\Star_{M'}(\Theta) = \Star_M(\Theta)$.
\end{lemma}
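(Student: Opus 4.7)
The plan is to reduce the claim to the statement $\Link_{M'}(\Theta) = \Link_M(\Theta)$, and then verify that equality by a short set-theoretic argument using only that $C$ and $C'$ are simplicial complexes (hence closed under taking faces) and the formula $M' = (M\setminus C)\cup C'$. Since $\Star_X(\Theta) = \Theta \ast \Link_X(\Theta)$ by definition, equality of stars follows immediately from equality of links. So the essential content is already present as the identity \cref{eq_observation_for_replacement_stars} recorded (but not separated out) inside the proof of \cref{lem_combinatorial_ball_replace_same_boundary}.

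First I would observe that a simplex $\Theta \in M' \setminus C'$ actually lies in $M \setminus C$, i.e.\ $\Theta \in M$ and $\Theta \notin C$. Indeed $M' = (M\setminus C)\cup C'$ gives $\Theta \in M\setminus C$ or $\Theta \in C'$, and the latter is excluded. Then I would prove $\Link_{M'}(\Theta) = \Link_M(\Theta)$ by double inclusion. A simplex $\Sigma$ on either side satisfies $\Sigma \cap \Theta = \emptyset$ and $\Sigma \cup \Theta$ is a simplex of the relevant ambient complex, so the only point to check is that ``$\Sigma \cup \Theta \in M$'' and ``$\Sigma \cup \Theta \in M'$'' are equivalent. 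Using that $\Theta$ is a face of $\Sigma \cup \Theta$: if $\Sigma \cup \Theta$ lay in $C'$, then the face $\Theta$ would lie in $C'$, contradicting $\Theta \notin C'$; likewise $\Sigma \cup \Theta \notin C$ since $\Theta \notin C$. Thus $\Sigma \cup \Theta$ always ends up in $M \setminus C \subseteq M \cap M'$, which gives both inclusions.

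Combining the two, $\Star_{M'}(\Theta) = \Theta\ast\Link_{M'}(\Theta) = \Theta\ast\Link_M(\Theta) = \Star_M(\Theta)$. There is no real obstacle — the argument is formal, and I would emphasize that it works verbatim in both replacement situations (\cref{lem_combinatorial_ball_replace_same_boundary} and \cref{lem_combinatorial_ball_replace_bigger_boundary}), since the only properties invoked are $M' = (M\setminus C)\cup C'$ together with the face-closedness of $C$ and $C'$.
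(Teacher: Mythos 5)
Your proof is correct and is essentially the paper's own argument: the paper justifies the lemma by pointing to the identity recorded in \cref{eq_observation_for_replacement_stars} inside the proof of \cref{lem_combinatorial_ball_replace_same_boundary}, and your double-inclusion check (using only $M' = (M\setminus C)\cup C'$ and face-closedness of $C$ and $C'$) simply spells out that observation, including why it applies equally in the setting of \cref{lem_combinatorial_ball_replace_bigger_boundary}. No issues.
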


\subsection{Cross maps and regularity}
\label{subsec:cross-maps-and-regularity}

\cref{lem_simplicial_approximation_combinatorial} above allows us to restrict to maps $M\to \IAArel$ from combinatorial manifolds to $\IAArel$ for proving \autoref{thm_connectivity_IAA}. We will further restrict the class of maps by asking that they satisfy certain regularity conditions, which we introduce in this subsection. The notion of regularity presented here extends the concept of $\sigma$-regularity as defined by Putman, see \cref{def_sigma_cross_map}. In \cref{rem_polyhedral_complexes} at the end of this subsection, we briefly describe some intuition for these regularity conditions.

\begin{definition}[Cross polytopes and prisms]
\label{def_prism} \
\begin{enumerate}
\item We denote by $C_k$ the join of $k$ copies of $S^0$. (This is the boundary of the $k$-dimensional cross polytope and a combinatorial $(k-1)$-sphere by \cref{rem_cross_polytope_combinatorial}.). We interpret $C_k$ for $k\leq 0$ as the empty set.
\item We denote by $P_3$ the simplicial complex with the vertices $x_1$, $x_2$, $x_{12}$, $y_1$, $y_2$, $y_{12}$ that is given as the union of the three 3-simplices $\ls x_1, x_2, x_{12}, y_{12} \rs $, $\ls x_1, x_2, y_1, y_{12} \rs $ and $\ls x_1, y_1, y_2, y_{12} \rs $. This complex is the 3-dimensional ``prism'' depicted in \cref{figure_prism}.
\end{enumerate}
\end{definition}

\begin{remark}
\label{rem_cross_polytope_combinatorial}
 \cref{it_join_sphere} of \cref{lem_boundary_join} implies that $C_k$ is a combinatorial $(k-1)$-sphere.
\end{remark}

\begin{figure}
	    \begin{center}\vspace{40px}
    \begin{picture}(200,120)
    \put(0,10){\includegraphics[scale=.8]{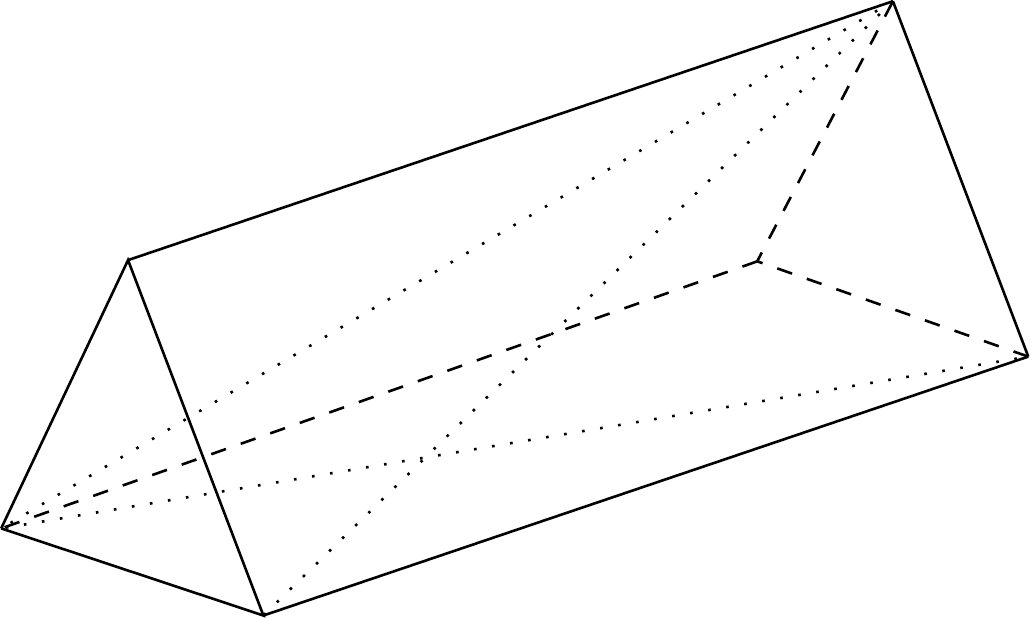}}
    
     \put(18,97){$x_{12}$}
     \put(55,2){$x_2$}
     \put(-12,30){$x_1$}     
    
    \put(200,160){$y_{12}$}
     \put(180,95){$y_2$}
     \put(245,70){$y_1$}        
    
     \end{picture}
	\end{center}
	\caption{$P_3$.}
	\label{figure_prism}
\end{figure}

The following is easy to verify.
\begin{lemma}
\label{lem_prism_combinatorial_manifold}
The complex $P_3$ is a combinatorial manifold.
\end{lemma}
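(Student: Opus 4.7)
The plan is a direct finite verification. Since $P_3$ is $3$-dimensional with only three top-dimensional simplices and six vertices, I would simply compute every link and check, by unwinding \cref{def_combinatorial_manifolds} inductively, that each is a combinatorial ball or sphere of the appropriate dimension. The computation proceeds from top dimension down, so that when we reach the links of vertices (which are $2$-manifolds), we have already verified that the links of edges and $2$-simplices (which appear inside those vertex-links) are indeed balls or spheres.

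Concretely, I would proceed in four steps. First, the link of every $3$-simplex is empty, which is a $(-1)$-sphere/ball by our convention. Second, for each of the ten $2$-simplices in $P_3$ I would count in how many of the three tetrahedra it is contained: by inspection, exactly the two faces $\{x_1,x_2,y_{12}\}$ (shared by the first two tetrahedra) and $\{x_1,y_1,y_{12}\}$ (shared by the last two) lie in two tetrahedra and thus have link equal to a pair of points (a $0$-sphere); every other $2$-simplex lies in a unique tetrahedron and therefore has link a single vertex (a $0$-ball). Third, for each edge I would again count its stars in the three tetrahedra: one sees that each edge belongs to at most two of the tetrahedra, and in each case the resulting link is either a single edge or two edges joined at a vertex, hence a combinatorial $1$-ball. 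Fourth, for each of the six vertices I would write out the link explicitly as a union of triangles. For the two ``interior'' vertices $x_1$ and $y_{12}$, which lie in all three tetrahedra, the link is a chain of three triangles glued along two consecutive edges; for $x_2$ and $y_1$, which lie in two tetrahedra, the link is two triangles glued along an edge; and for $x_{12}$ and $y_2$, which lie in one tetrahedron, the link is a single triangle. In each case I would verify that every edge of the link lies in at most two triangles and every vertex has link a $1$-ball or $1$-sphere, so that the link is indeed a combinatorial $2$-ball.

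There is no real obstacle here beyond the bookkeeping: the argument is entirely a finite case check, and the symmetry swapping $\{x_1,x_2,x_{12}\}\leftrightarrow\{y_{12},y_1,y_2\}$ (together with reversing the order in which the three tetrahedra are listed) cuts the cases roughly in half. An equivalent, slightly slicker alternative would be to observe that the three tetrahedra $\{x_1,x_2,x_{12},y_{12}\}$, $\{x_1,x_2,y_1,y_{12}\}$, $\{x_1,y_1,y_2,y_{12}\}$ form the standard ``three-tet'' triangulation of a triangular prism $\Delta^2\times[0,1]$, which as a PL-manifold is a $3$-ball, and then to invoke the fact (itself a consequence of \cref{lem_combinatorial_ball_replace_same_boundary} and \cref{rem_subdivision_combinatorial_manifolds}) that any such PL-triangulation of a manifold with boundary is a combinatorial manifold. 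Either way, the conclusion is that $|P_3|$ is a $3$-ball and $P_3$ is a combinatorial $3$-manifold, in fact a combinatorial $3$-ball whose boundary is the union of the two ``end'' triangles $\{x_1,x_2,x_{12}\}$, $\{y_1,y_2,y_{12}\}$ and the three ``side'' quadrilaterals each split into two triangles by the diagonals $\{x_1,y_{12}\}$, $\{x_2,y_{12}\}$, $\{x_1,y_1\}$.
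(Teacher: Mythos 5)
Since the paper offers no written proof of this lemma (it simply declares the verification easy), your direct link-by-link check is exactly the intended argument, and your alternative observation is also correct: sending $x_1,x_2,x_{12},y_2,y_1,y_{12}$ to $a_0,b_0,c_0,a_1,b_1,c_1$ identifies $P_3$ with the standard three-tetrahedron ``staircase'' triangulation of $\Delta^2\times[0,1]$, and your description of the boundary, including the three diagonals $\{x_1,y_{12}\}$, $\{x_2,y_{12}\}$, $\{x_1,y_1\}$ of the side quadrilaterals, checks out.

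There is, however, one concrete slip in your third step: it is not true that every edge lies in at most two of the three tetrahedra. The edge $\{x_1,y_{12}\}$ is contained in all three of $\{x_1,x_2,x_{12},y_{12}\}$, $\{x_1,x_2,y_1,y_{12}\}$ and $\{x_1,y_1,y_2,y_{12}\}$ --- as your own fourth step implicitly acknowledges, since you note there that $x_1$ and $y_{12}$ lie in all three tetrahedra. Its link is the path with edges $\{x_{12},x_2\}$, $\{x_2,y_1\}$, $\{y_1,y_2\}$, which is neither ``a single edge'' nor ``two edges joined at a vertex''. A path of three edges is still a combinatorial $1$-ball, so the conclusion of the step, and hence the lemma, is unaffected; but in an argument whose entire content is a finite case check, the enumeration should be corrected to include this case (this edge is precisely the common diagonal of the three tetrahedra, around which the three-triangle fans forming the links of $x_1$ and $y_{12}$ are hinged).
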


In this work, it is helpful to think of prisms $P$ contained in a simplicial complex $M$ as one cell. We use the following notation to describe neighbourhoods of prisms $P$ in $M$.

\begin{definition} \label{def:link-and-star-of-subcomplex}
	If $P$ is a subcomplex of a simplicial complex $M$, then $\dots$
	\begin{enumerate}
		\item $\dots$ the \emph{link of $P$ in $M$} is
		$\Link_M(P) = \bigcap_{\Delta \text{ simplex of } P} \Link_M(\Delta);$
		\item $\dots$ the \emph{star of $P$ in $M$} is
		$\Star_M(P) = P \ast \Link_M(P).$
	\end{enumerate}
\end{definition}

Note that if $P$ is a simplex, this definition agrees with the usual definition of link and star in simplicial complexes.

\begin{definition}[Cross maps]\label{def:cross-maps}
Let $\Delta^1$, $\Delta^2$ denote an abstract 1- and 2-simplex, respectively.

\begin{enumerate}
\item  A \emph{$\sigma^2$ cross map} is a simplicial map $\phi\colon   \Delta^1 \ast \Delta^1 \ast C_{k-2} \to \IAArel$ with the following property: Let  $x_1,y_1 \ldots, x_k,y_k$ be the vertices of $\Delta^1 \ast \Delta^1 \ast C_{k-2}$. Then there is a symplectic summand of $\mbZ^{2(m+n)}$ with a symplectic basis $\ls \vec v_1, \vec w_1, \ldots, \vec v_k, \vec w_k \rs$ such that $\phi(x_i)= v_i$ and $\phi(y_i) = w_i$ for all $i$.
\item A \emph{prism cross map} is a simplicial map $\phi\colon   P_3 \ast C_{k-2} \to \IAArel$ with the following property: Let $\ls x_1, x_2, x_{12}, y_1, y_2, y_{12} \rs $ be the vertices of $P_3$ as in \cref{def_prism} and $x_3, y_3, \ldots, x_k,y_{k}$ be the vertices of $C_{k-2}$. There is a symplectic summand of $\mbZ^{2(m+n)}$ with a symplectic basis $\ls \vec v_1, \vec w_1, \ldots, \vec v_k, \vec w_k \rs$ such that 
\begin{gather*}
\phi(x_1) = v_1,\, \phi(x_2) = v_2,\, \phi(x_{12}) = \langle \vec v_1 + \vec v_2 \rangle,\\
 \phi(y_1) = w_1, \, \phi(y_2) = w_2,\, \phi(y_{12}) = \langle \vec w_1 - \vec w_2 \rangle, \\
\text{and } \phi(x_i)= v_i, \, \phi(y_i)= w_i \text{ for } i\in \ls 3,\ldots, k \rs.
\end{gather*}
\item A \emph{$\sigma$-additive cross map} is a simplicial map $\phi\colon   \Delta^2 \ast C_{k-1} \to \IAArel$ with the following property: Let  $xy_1,x_1,y_1, \ldots, x_k,y_k$ be the vertices of $\Delta^2 \ast C_{k-1}$. Then there exists a symplectic summand of $\mbZ^{2(m+n)}$ with a symplectic basis $\ls \vec v_1, \vec w_1, \ldots, \vec v_k, \vec w_k \rs$ such that $\phi(x_i)= v_i$, $\phi(y_i) = w_i$ for all $i$ and $\phi(xy_1) = \langle \vec v_1 + \vec w_1 \rangle$.
\item An \emph{external 2-skew-additive cross map} is a simplicial map $\phi\colon   \Delta^2 \ast C_{k-1} \to \IAArel$ with the following property: Let $xy_1, x_1, y_1, \dots, x_k, y_k$ be the vertices of $\Delta^2 \ast C_{k-1}$. Then there exists a symplectic summand of $\mbZ^{2(m+n)}$ with a symplectic basis $\{\vec v_1, \vec w_1, \dots, \vec v_k, \vec w_k\}$ such that $\phi(x_i) = v_i$, $\phi(y_i) = w_i$ for all $i$ and
	$$\phi(xy_1) = \langle \vec v_1 \pm \vec \vertexvar \rangle \text{ for some } \vertexvar \in \{e_1, \dots, e_m\}.$$
\end{enumerate}
\end{definition}

We observe the following facts about cross maps:

\begin{remark}
\label{rem_cross_maps_elementary_properties}
\begin{enumerate}
\item All cross maps defined above are isomorphisms onto their images.
\item If $\{\vec v_1, \vec w_1, \dots, \vec v_k, \vec w_k\}$ is the symplectic basis in the image of a cross map, then it is compatible with $\{\vec e_1, \dots, \vec e_m\}$. That is, $\{\vec v_1, \vec w_1, \dots, \vec v_k, \vec w_k, \vec e_1, \dots, \vec e_m\}$
can be extended to a symplectic basis of $\mbZ^{2(m+n)}$.
\item The link of the prism $P_3$ in the domain $P_3 \ast C_{k-2}$ of a prism cross map (as defined in \cref{def:link-and-star-of-subcomplex}) is equal to $C_{k-2}$ and its star is the whole domain,
\begin{equation*}
	\Link_{P_3 \ast C_{k-2}}(P_3) = C_{k-2} \text{ and }\Star_{P_3 \ast C_{k-2}}(P_3) = P_3 \ast C_{k-2}.
\end{equation*}
\item \label{it_prism_cross_image}A prism cross map $\phi$ sends $P_3$ to the union of the following three simplices: 
\begin{itemize}
\item the 2-skew-additive simplex $\ls v_1, v_2, \langle \vec v_1 + \vec v_2 \rangle, \langle \vec w_1 - \vec w_2 \rangle \rs $,
\item the skew-$\sigma^2$ simplex $\ls v_1, v_2, w_1, \langle \vec w_1 - \vec w_2 \rangle \rs$ and
\item the 2-skew-additive simplex $\ls v_1, w_1, w_2, \langle \vec w_1 - \vec w_2 \rangle \rs$.
\end{itemize}
This image contains two skew-additive simplices, namely $\{ v_1, v_2, \langle \vec w_1 -\nolinebreak \vec w_2 \rangle \} $ and $\{ v_1, w_1, \langle \vec w_1 - \vec w_2 \rangle \}$. 
It is a combinatorial 3-ball whose boundary $\partial \phi(P_3)$ is the union the following eight simplices, which are all contained in $\IAAstrel$:
\begin{align*}
	\ls v_1, v_2, \langle \vec v_1 + \vec v_2 \rangle\rs & \text{ --- 2-additive simplex}\\
	\ls v_1, \langle \vec v_1 + \vec v_2 \rangle, \langle  \vec w_1 - \vec w_2 \rangle \rs & \text{ --- $\sigma$ simplex}\\
	\ls v_2, \langle \vec v_1 + \vec v_2 \rangle, \langle  \vec w_1 - \vec w_2 \rangle \rs &\text{ --- $\sigma$ simplex}\\
	\ls v_1, v_2, w_1 \rs &\text{ --- $\sigma$ simplex}\\
	\ls v_2, w_1, \langle \vec w_1 - \vec w_2 \rangle \rs &  \text{ --- $\sigma$ simplex}\\
	\ls v_1, w_1, w_2 \rs & \text{ --- $\sigma$ simplex}\\
	\ls v_1, w_2, \langle \vec w_1 - \vec w_2 \rangle \rs & \text{ --- $\sigma$ simplex}\\
	\ls w_1, w_2, \langle \vec w_1 - \vec w_2 \rangle \rs & \text{ --- 2-additive simplex}
\end{align*}
\end{enumerate}
\end{remark}

Spelling out the definitions, we can describe the simplex types in the images of cross maps:

\begin{lemma}
\label{lem_cross_maps_simplex_types}
Let $\phi\colon   C \to \IAArel$ be a cross map. 
Then $C\cong \phi(C)$ is a combinatorial ball and:
\begin{enumerate}
\item If $\phi$ is a $\sigma^2$ cross map, then 
	\begin{itemize}
		\item $\phi(\partial  C)$ contains only standard and $\sigma$ simplices;
		\item $\phi(C \setminus \partial  C)$ contains only $\sigma^2$ simplices.
	\end{itemize}
\item If $\phi$ is a prism cross map, then 
	\begin{itemize}
		\item $\phi(\partial  C)$ contains only standard, $\sigma$ and (internal) 2-additive simplices;
		\item $\phi(C \setminus \partial  C)$ contains only (internal) skew-additive, (internal) 2-skew-additive and skew-$\sigma^2$ simplices.
	\end{itemize}
\item If $\phi$ is a $\sigma$-additive cross map, then 
	\begin{itemize}
		\item $\phi(\partial  C)$ contains only standard and $\sigma$ simplices;
		\item $\phi(C \setminus \partial  C)$ contains only $\sigma$-additive simplices.
	\end{itemize}
\item If $\phi$ is an external 2-skew-additive cross map, then 
	\begin{itemize}
		\item $\phi(\partial  C)$ contains only standard, $\sigma$ and (external) 2-additive simplices;
		\item $\phi(C \setminus \partial  C)$ contains only (external) 2-skew-additive simplices.
	\end{itemize}
\end{enumerate}
\end{lemma}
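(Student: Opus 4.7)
The plan is to verify the lemma by direct case analysis for each of the four cross-map types listed in \cref{def:cross-maps}. The isomorphism $C \cong \phi(C)$ is immediate from \cref{rem_cross_maps_elementary_properties}(1), so the work reduces to (i) identifying the domain $C$ as a combinatorial ball and (ii) classifying the simplex types in $\phi(\partial C)$ and $\phi(C \setminus \partial C)$.

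For (i), each domain is a join built from the pieces $\Delta^1$, $\Delta^2$, $P_3$ and $C_{k-j}$. The boundary of the cross polytope $C_{k-j}$ is a combinatorial sphere by \cref{rem_cross_polytope_combinatorial}, the $\Delta^i$ are combinatorial balls, and $P_3$ is a combinatorial $3$-ball (this is essentially the content of \cref{lem_prism_combinatorial_manifold} together with direct inspection). Applying \cref{lem_boundary_join} now shows $C$ is a combinatorial ball in each case, and gives an explicit description of $\partial C$: for instance, $\partial(\Delta^1 \ast \Delta^1 \ast C_{k-2}) = \partial(\Delta^1 \ast \Delta^1) \ast C_{k-2}$, $\partial(\Delta^2 \ast C_{k-1}) = \partial \Delta^2 \ast C_{k-1}$, and $\partial(P_3 \ast C_{k-2}) = \partial P_3 \ast C_{k-2}$.

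For (ii), I would work vertex by vertex using the formulas in \cref{def:cross-maps}, and then check the defining conditions of \cref{def:IA-simplices} and \cref{def:IAA-simplices}. In each case a simplex of $C$ lies in $C \setminus \partial C$ exactly when it meets the ``ball factor'' of the join in the interior. In the $\sigma^2$ case this means it must contain all four vertices of $\Delta^1 \ast \Delta^1$, so its image contains the two symplectic pairs $(v_1,w_1)$ and $(v_2,w_2)$, which is precisely the defining data of a $\sigma^2$ simplex, extended by a standard simplex on the $C_{k-2}$ vertices. A boundary simplex of $\Delta^1\ast\Delta^1\ast C_{k-2}$ omits at least one $x_i$ or $y_i$, so its image contains at most one symplectic pair and is either standard or $\sigma$. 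The $\sigma$-additive and external $2$-skew-additive cases follow the same template: the interior is characterised by containing all three vertices of $\Delta^2$, and the arithmetic relation $\phi(xy_1)=\langle\vec v_1+\vec w_1\rangle$ resp.\ $\phi(xy_1)=\langle\vec v_1\pm\vec\vertexvar\rangle$ with $\vertexvar \in \{e_1, \dots, e_m\}$ is precisely the augmentation core of a $\sigma$-additive resp.\ external 2-skew-additive simplex, while omitting one of $xy_1,x_1,y_1$ strictly reduces the image to standard, $\sigma$, or (external) $2$-additive type.

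The main obstacle is the prism case, where $P_3$ is not a single simplex and $C \setminus \partial C$ contains more than the three open top-dimensional cells. Concretely, one has to enumerate: the three $3$-simplices of $P_3$, the two internal $2$-simplices $\{x_1, x_2, y_{12}\}$ and $\{x_1, y_1, y_{12}\}$ shared between adjacent pairs of these, and the eight boundary triangles of $P_3$ listed in \cref{rem_cross_maps_elementary_properties}\ref{it_prism_cross_image}. Applying $\phi$, the three open $3$-simplices map to an internal 2-skew-additive, a skew-$\sigma^2$, and another internal 2-skew-additive simplex; the two internal $2$-simplices map to the two internal skew-additive simplices explicitly identified in \cref{rem_cross_maps_elementary_properties}\ref{it_prism_cross_image}; and the boundary triangles are standard, $\sigma$, or internal $2$-additive. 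Joining with a simplex of $C_{k-2}$, whose image consists of pairwise isotropic basis lines and hence forms a standard configuration, only extends a given simplex by a standard face and does not change the type of its augmentation core, so this classification propagates to all of $\partial C$ and $C \setminus \partial C$.
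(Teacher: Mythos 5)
Your proposal is correct and follows essentially the same route as the paper: write $C=\Sigma\ast C_i$, use \cref{lem_boundary_join} to get $\partial C=\partial\Sigma\ast C_i$, observe that interior simplices are joins of a non-boundary face of $\Sigma$ with a simplex of $C_i$ (whose image only adds a standard, isotropic part and so preserves the type), and then check types case by case, with the prism case settled by the explicit enumeration in \cref{it_prism_cross_image} of \cref{rem_cross_maps_elementary_properties}. Your explicit listing of the non-boundary faces of $P_3$ (the three $3$-simplices and the two shared internal triangles mapping to the skew-additive simplices) just spells out what the paper leaves to the reader.
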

\begin{proof}
We can write $C = \Sigma \ast C_{i}$ as in the definition of the corresponding cross map. By \cref{lem_boundary_join}, this is a combinatorial ball and we have
\begin{equation}
\label{eq_boundary_cross_map_domain}
\partial C = \partial \Sigma \ast C_{i} \cup \Sigma \ast \partial  C_{i} = \partial \Sigma \ast C_{i},
\end{equation}
where we use that $\partial C_{i}$ is a combinatorial sphere and hence has empty boundary $\partial C_{i} = \emptyset$.
Spelling out the definitions of the cross maps, we obtain the claimed descriptions of $\phi(\partial C)$ (for the case of prism cross maps, see \cref{it_prism_cross_image} of \cref{rem_cross_maps_elementary_properties}).

\cref{eq_boundary_cross_map_domain} also implies that if a simplex of $C$ does not lie in $\partial C$, it must be of the form of the form $\Delta \ast \Theta$, where $\Delta$ is a face of $\Sigma$ that is not contained in $\partial \Sigma$ and $\Theta$ is contained in $C_i$. The image $\phi(\Delta \ast \Theta)$ is then a simplex of the same type as $\phi(\Delta)$. Again going through the definitions, the claimed descriptions of $\phi(C \setminus \partial  C)$ follow.
\end{proof}

Noting that $\partial \phi(C) = \phi(\partial  C)$, we get the following corollary.

\begin{corollary}
\label{lem_boundary_cross_map}
Let $\phi\colon   C \to \IAArel$ be a $\sigma^2$-, prism-, $\sigma$-additive or external 2-skew-additive cross map. Then $\partial \phi(C) \subseteq \IAAstrel$.
\end{corollary}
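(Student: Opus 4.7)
The plan is to derive this directly from the preceding \cref{lem_cross_maps_simplex_types}, which already pins down the simplex types appearing in $\phi(\partial C)$. The statement is essentially a bookkeeping consequence: one just has to read off the list of allowed types and compare with the definition of $\IAAstrel$ recorded in \cref{table_overview_complexes_definition}.

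First I would invoke \cref{rem_cross_maps_elementary_properties}, which tells us that every cross map is an isomorphism onto its image. In particular, $\phi\colon C \to \phi(C)$ is a simplicial isomorphism, and by \cref{lem_cross_maps_simplex_types}, $\phi(C)$ is a combinatorial ball. Hence taking boundaries commutes with $\phi$:
\begin{equation*}
\partial \phi(C) = \phi(\partial C).
\end{equation*}
So it suffices to analyse $\phi(\partial C)$.

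Next I would appeal directly to \cref{lem_cross_maps_simplex_types}, which enumerates the possible simplex types in $\phi(\partial C)$ case by case. For $\sigma^2$ and $\sigma$-additive cross maps the boundary contains only standard and $\sigma$ simplices; for prism cross maps it contains only standard, $\sigma$ and (internal) 2-additive simplices; and for external 2-skew-additive cross maps it contains only standard, $\sigma$ and (external) 2-additive simplices. In all four cases the types appearing are among \{standard, $\sigma$, 2-additive\}, which by \cref{def:IAA} (see also \cref{table_overview_complexes_definition}) are all simplex types of $\IAAstrel$. It follows that every simplex of $\phi(\partial C)$ is a simplex of $\IAAstrel$, giving the desired inclusion.

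There is no real obstacle here: the corollary is a direct repackaging of \cref{lem_cross_maps_simplex_types} combined with the observation that cross maps are simplicial isomorphisms onto their images, so that $\partial \phi(C) = \phi(\partial C)$. The only mild subtlety worth mentioning explicitly is to point out that although $\IAArel$ contains simplex types such as $\sigma^2$, skew-additive, 2-skew-additive, skew-$\sigma^2$ and $\sigma$-additive that are \emph{not} in $\IAAstrel$, \cref{lem_cross_maps_simplex_types} confines these ``exotic'' types to the interior $\phi(C\setminus \partial C)$, and therefore they do not appear on $\partial \phi(C)$.
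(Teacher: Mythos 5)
Your argument is correct and matches the paper's: the paper derives the corollary from \cref{lem_cross_maps_simplex_types} together with the observation that $\partial\phi(C)=\phi(\partial C)$, exactly as you do. Your additional remark that the types appearing on the boundary (standard, $\sigma$, 2-additive) are all simplex types of $\IAAstrel$ is the same implicit final step the paper relies on.
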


\begin{definition}[Regular maps]
\label{def_regular_maps}
Let $M$ be a combinatorial manifold. A simplicial map $\phi\colon   M \to \IAArel$ is called $\dots$
  \begin{enumerate}
    \item $\dots$ \emph{$\sigma^2$-regular} if the following holds: If $\Delta$ is simplex of $M$ such that $\phi(\Delta)$ is a minimal (i.e.~3-dimensional) $\sigma^2$ simplex, then $\phi|_{\Star_M({\Delta})}$ is a $\sigma^2$ cross map.
    \item $\dots$ \emph{weakly prism-regular} if the following holds: Let $\Delta$ be a simplex of $M$ such that $\phi(\Delta)$ is a minimal skew-additive, 2-skew-additive or skew-$\sigma^2$ simplex. Then one of the following two cases holds:
    \begin{enumerate}
    	\item \label{wprism-condition-a} There exists a unique subcomplex $P \cong P_3$ of $M$ such that $\Delta \subseteq P$, $\Link_M(P) = \Link_M(\Delta')$ for any of the three maximal simplices\footnote{The intuition behind this condition is that in a (weakly) prism-regular map, a prism $P$, although it is the union of three maximal simplices, should rather be considered as a single -- non-simplicial -- cell.} $\Delta'$ of $P$, and $\phi|_{\Star_M(P)}$ is a prism cross map.
    	\item \label{wprism-condition-b} The simplex $\Delta$ has dimension $2$, 
    	$$\phi(\Delta) = \{v_0, v_1, \langle \vec v_0 \pm \vec \vertexvar \rangle \} \text{ for some } \vertexvar \in \{e_1, \dots, e_m\},$$
    	is an \emph{external} 2-skew-additive simplex with $\omega(\vec v_0, \vec v_1) = \pm 1$, and $\phi|_{\Star_M(\Delta)}$ is an external 2-skew-additive cross map.
    \end{enumerate}
    \item $\dots$ \emph{prism-regular} if the following holds: Let $\Delta$ be a simplex of $M$ such that $\phi(\Delta)$ is a minimal skew-additive, 2-skew-additive or skew-$\sigma^2$ simplex. Then the condition in \cref{wprism-condition-a} is satisfied.
    \item $\dots$ \emph{$\sigma$-additive-regular} if the following holds: If $\Delta$ is a simplex of $M$ such that $\phi(\Delta)$ is a minimal (i.e.~2-dimensional) $\sigma$-additive simplex, then $\phi|_{\Star_M({\Delta})}$ is a $\sigma$-additive cross map.
    \item $\dots$ \emph{weakly regular} if $\phi$ is $\sigma^2$-regular, weakly prism-regular and $\sigma$-additive-regular.
    \item $\dots$ \emph{regular} if $\phi$ is $\sigma^2$-regular, prism-regular and $\sigma$-additive-regular.
  \end{enumerate}
\end{definition}

Note that, compared to the definition of a regular map, the regularity notion for prisms is slightly less restrictive in the definition of a weakly regular map. In particular, regularity implies weak regularity:

\begin{lemma}\label{lem:regular-implies-weakly-regular}
	Every regular map $\phi\colon   M \to \IAArel$ is weakly regular.
\end{lemma}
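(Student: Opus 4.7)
The statement is essentially a direct comparison of the two definitions in \cref{def_regular_maps}, so my plan is to simply unpack them and check that each clause of weak regularity follows from the corresponding clause of regularity. Recall that weak regularity requires $\sigma^2$-regularity, weak prism-regularity, and $\sigma$-additive-regularity, while regularity requires $\sigma^2$-regularity, prism-regularity, and $\sigma$-additive-regularity. Two of the three conditions coincide verbatim, so the only content of the lemma is that prism-regularity implies weak prism-regularity.

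For this remaining implication, I would take an arbitrary simplex $\Delta$ of $M$ whose image $\phi(\Delta)$ is a minimal skew-additive, 2-skew-additive or skew-$\sigma^2$ simplex. Since $\phi$ is prism-regular, condition \cref{wprism-condition-a} of the definition of weak prism-regularity is automatically satisfied for $\Delta$: there exists a unique subcomplex $P\cong P_3$ of $M$ containing $\Delta$, with $\Link_M(P)=\Link_M(\Delta')$ for every maximal simplex $\Delta'$ of $P$, such that $\phi|_{\Star_M(P)}$ is a prism cross map. In particular, the disjunction ``\cref{wprism-condition-a} or \cref{wprism-condition-b}'' holds, which is exactly what weak prism-regularity demands. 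Hence $\phi$ is weakly prism-regular, and combined with the $\sigma^2$- and $\sigma$-additive-regularity inherited from $\phi$ being regular, this shows $\phi$ is weakly regular.

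There is no genuine obstacle here; the lemma is a sanity check recording that the two notions are ordered as expected, so that results established later for weakly regular maps will also apply to regular ones.
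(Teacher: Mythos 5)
Your proposal is correct and matches the paper's reasoning: the paper states this lemma without proof precisely because, as you observe, $\sigma^2$- and $\sigma$-additive-regularity are identical in both definitions, and prism-regularity (condition \cref{wprism-condition-a}) trivially implies the disjunction required by weak prism-regularity. Nothing further is needed.
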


\begin{lemma}
\label{lem_cross_maps_intersections}
Let $\phi\colon   M \to \IAArel$ be a weakly regular map.
Let $C$ and $C'$ be maximal subcomplexes of $M$ such that the restrictions $\phi|_C$ and $\phi|_{C'}$ are $\sigma^2$-, prism-, $\sigma$-additive- or external 2-skew-additive cross maps. 
Then either $C$ and $C'$ coincide or they only intersect in their boundaries,
\begin{equation*}
C=C' \text{ or } C \cap C' \subseteq \partial C \cap \partial C'.
\end{equation*}
\end{lemma}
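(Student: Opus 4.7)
The plan is to derive a contradiction from the assumption that some simplex $\Sigma \in C \cap C'$ fails to lie in $\partial C \cap \partial C'$. Without loss of generality $\Sigma \not\subseteq \partial C$, so $\Sigma$ is an interior simplex of $C$. By \cref{lem_cross_maps_simplex_types}, $\phi(\Sigma)$ has an augmentation core of one of six ``interior'' types ($\sigma^2$, internal skew-additive, internal 2-skew-additive, skew-$\sigma^2$, $\sigma$-additive, or external 2-skew-additive), depending on which kind of cross map $\phi|_C$ is. Crucially, none of these types appears in the boundary of \emph{any} cross map, and each of them appears in the interior of \emph{exactly one} kind of cross map (all again by \cref{lem_cross_maps_simplex_types}). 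Since $\phi|_{C'}$ is injective (\cref{rem_cross_maps_elementary_properties}), it follows that $\Sigma \not\subseteq \partial C'$ and that $C'$ is a cross map of the same type as $C$.

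Let $\Theta \subseteq \phi(\Sigma)$ be the augmentation core and let $\Delta \subseteq \Sigma$ be its unique preimage. In the $\sigma^2$, $\sigma$-additive and external 2-skew-additive cases, the matching regularity hypothesis on $\phi$ (namely $\sigma^2$-regular, $\sigma$-additive-regular, or weakly prism-regular in the sense of \cref{wprism-condition-b}) asserts that $\phi|_{\Star_M(\Delta)}$ is a cross map of the correct type. From the explicit description of such cross maps in \cref{def:cross-maps}, $\Delta$ is the ``central'' subcomplex of $C$, so $C = \Star_C(\Delta) \subseteq \Star_M(\Delta)$; maximality of $C$ then forces $C = \Star_M(\Delta)$, and the same argument yields $C' = \Star_M(\Delta)$. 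Hence $C = C'$, contrary to $C \neq C'$.

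In the remaining case $\Theta$ is skew-additive, 2-skew-additive or skew-$\sigma^2$, so $C$ and $C'$ are both prism cross maps. Here we invoke weak prism regularity in the sense of \cref{wprism-condition-a}: there is a \emph{unique} subcomplex $P \cong P_3$ of $M$ with $\Delta \subseteq P$ and $\phi|_{\Star_M(P)}$ a prism cross map. The ``prism subcomplex'' of $C$ (the preimage of $P_3 \subseteq P_3 \ast C_{k-2}$ under the cross map) is such a $P$ containing $\Delta$, as is the prism subcomplex of $C'$; by uniqueness they coincide, and maximality then gives $C = \Star_M(P) = C'$. No serious obstacle is anticipated: the core of the argument is the observation that each interior augmentation-core type characterises its cross map type, after which the relevant regularity hypothesis combined with maximality of $C$ and $C'$ does all the work.
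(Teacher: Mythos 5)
Your proof is correct and follows essentially the same route as the paper's, which likewise first invokes \cref{lem_cross_maps_simplex_types} to conclude that a simplex of $C \setminus \partial C$ contained in $C'$ forces $\phi|_C$ and $\phi|_{C'}$ to be cross maps of the same type, and then leaves the implication $C = C'$ as an easy verification. Your spelling-out of that verification via the matching regularity clause plus maximality, identifying $C$ and $C'$ with $\Star_M(\Delta)$ (respectively $\Star_M(P)$ in the prism case), is the intended argument; the only implicit point, shared with the paper's terse proof, is that in the prism case the uniqueness in \cref{wprism-condition-a} is applied to the prism subcomplexes of $C$ and $C'$, i.e.\ one reads them as satisfying the star and link conditions stated there.
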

\begin{proof}
It is enough to show that a simplex $\Delta$ in $C \setminus \partial C$ can only be contained in $C'$ if $C=C'$: By symmetry, the same argument then also show that a simplex in $C' \setminus \partial C'$ can only be contained in $C$ if $C=C'$. This is equivalent to the claim.

By \cref{lem_cross_maps_simplex_types}, the simplex $\Delta\in C \setminus \partial C$ can only be contained in $C'$ if $\phi|_C$ and $\phi|_{C'}$ are cross maps of the same type.
It is easy to verify that this implies $C = C'$. 
\end{proof}

\begin{definition}[Regular homotopies]
\label{def_reg_homotopies}
Let $0\leq k \leq n$ and let $X$ be a subcomplex of $\IAArel$.
\begin{enumerate}
\item For $i = 1,2$, let $\phi_i\colon   S_i \to\IAAstrel$ be simplicial maps from combinatorial $k$-spheres $S_i$. We say that $\phi_1$ and $\phi_2$ are \emph{(weakly) regularly homotopic (in $X$)} if there are a combinatorial manifold $M$ homeomorphic to $S^k \times [0,1]$ with $\partial M = S_1 \sqcup S_2$ and a (weakly) regular map $\Psi\colon   M \to X\subseteq \IAArel$ such that $\Psi|_{S_i} = \phi_i$ for $i=1,2$.
\item Let $\phi\colon   S \to\IAAstrel$ be a simplicial map from a combinatorial $k$-sphere $S$. We say that $\phi$ is \emph{(weakly)  regularly nullhomotopic (in $X$)} if there is a combinatorial ball $B$ with $\partial B = S$ and a (weakly)  regular map $\Psi\colon   B \to X\subseteq \IAArel$ such that $\Psi|_{S} = \phi$.
\end{enumerate}
\end{definition}

\begin{remark}
\label{rem_regular_homotopies_notation_combinatorial}
Note that we only define (weak) regular homotopies between maps whose domains are \emph{combinatorial} spheres of the same dimension. So if $S$ is a combinatorial $k$-sphere, $\phi\colon   S \to \IAAstrel$ a simplicial map and we say ``$\phi$ is (weakly) regularly homotopic to $\newlemmaphi\colon   \newlemmasphere \to \IAAstrel$'', then we always assume that $\newlemmasphere$ is also a combinatorial $k$-sphere and $\newlemmaphi$ is a simplicial map.
\end{remark}

The notions of regular homotopies and nullhomotopies are clearly special cases of the corresponding topological notions. I.e.~if $\phi$ and $\newlemmaphi$ are weakly regularly homotopic, then $|\phi|$ and $|\newlemmaphi|$ are homotopic; and if $\phi$ is weakly regularly nullhomotopic, then $|\phi|$ is nullhomotopic.
Furthermore, these notions are compatible with one another (cf.~\cite[Lemma 6.18.2b]{put2009}):
\begin{lemma}
\label{lem_regular_homotopic_and_nullhomotopic}
If $\phi$ and $\phi'$ are (weakly) regularly homotopic and $\phi'$ is (weakly) regularly nullhomotopic, then so is $\phi$.
\end{lemma}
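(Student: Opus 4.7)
The plan is to glue the given homotopy and nullhomotopy along their common boundary. Let $\Psi\colon M \to X \subseteq \IAArel$ be a (weakly) regular map witnessing the homotopy between $\phi$ and $\phi'$, so $\partial M = S \sqcup S'$, $M \cong S^k \times [0,1]$, $\Psi|_S = \phi$ and $\Psi|_{S'} = \phi'$; and let $\Psi'\colon B' \to X$ be a (weakly) regular map witnessing the nullhomotopy of $\phi'$, with $\partial B' = S'$. I would set $B \coloneqq M \cup_{S'} B'$ and define $\Psi''\colon B \to X$ by $\Psi''|_M \coloneqq \Psi$ and $\Psi''|_{B'} \coloneqq \Psi'$. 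This is well-defined and simplicial because $\Psi|_{S'} = \phi' = \Psi'|_{S'}$, and by construction $\Psi''|_S = \phi$.

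Next I would verify that $B$ is a combinatorial $(k+1)$-ball with $\partial B = S$. Topologically, gluing the collar $M \cong S^k \times [0,1]$ to $B' \cong D^{k+1}$ along a boundary sphere $S'$ still yields $D^{k+1}$. For the combinatorial structure, I would check links in $B$. A simplex $\Delta \not\subseteq S'$ lies in exactly one of $M$ or $B'$, and $\Link_B(\Delta)$ coincides with the link computed there, hence has the correct type. For $\Delta \subseteq S'$, both $\Link_M(\Delta)$ and $\Link_{B'}(\Delta)$ are combinatorial balls with common boundary $\Link_{S'}(\Delta)$ (because $S'$ is the boundary of each), so their union $\Link_B(\Delta)$ is a combinatorial sphere. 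Together with $|B| \cong D^{k+1}$, this makes $B$ a combinatorial $(k+1)$-ball with $\partial B = S$.

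The main step is showing that $\Psi''$ is (weakly) regular. The crucial observation is that every minimal simplex type triggering a regularity condition in \cref{def_regular_maps} --- namely $\sigma^2$, skew-additive, 2-skew-additive, skew-$\sigma^2$, $\sigma$-additive, as well as the external 2-skew-additive case appearing in weak prism-regularity --- lies in $\IAArel$ but not in $\IAAstrel$. Since the boundary maps $\phi$ and $\phi'$ of a (weakly) regular (null)homotopy factor through $\IAAstrel$ by \cref{def_reg_homotopies}, no simplex $\Delta \subseteq S'$ can have $\Psi''(\Delta) = \phi'(\Delta)$ of any of these types, so the regularity conditions at $\Delta$ are vacuous. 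For $\Delta \not\subseteq S'$ the link computation above gives $\Star_B(\Delta) = \Star_M(\Delta)$ if $\Delta \in M$, and $\Star_B(\Delta) = \Star_{B'}(\Delta)$ if $\Delta \in B'$, so the cross-map property at $\Delta$ is inherited directly from the (weak) regularity of $\Psi$ or $\Psi'$ respectively.

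The only real obstacle is this last compatibility check, and it is precisely why the definition of (weakly) regular (null)homotopies demands that the boundary components factor through $\IAAstrel$: this restriction guarantees that no new regularity obligations appear along the gluing locus $S'$, and therefore $\Psi''$ witnesses the desired (weakly) regular nullhomotopy of $\phi$.
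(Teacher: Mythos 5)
Your gluing construction is correct and is essentially the intended proof: the paper states this lemma without argument, deferring to Putman's analogous Lemma 6.18.2(b) in \cite{put2009}, whose proof is precisely this concatenation of the homotopy cylinder $M$ with the nullhomotopy ball $B'$ along $S'$, followed by the observation that all regularity-triggering simplex types ($\sigma^2$, skew-additive, 2-skew-additive, skew-$\sigma^2$, $\sigma$-additive) are absent from $\IAAstrel$ and hence from the gluing locus. The one detail you gloss is that (weak) prism-regularity concerns stars and uniqueness of prism subcomplexes $P$, not just simplex stars, in the glued ball; this, however, follows from the same observation you already use, since the maximal simplices of any prism and the $2$-faces they share map to skew-type simplices not contained in $\IAAstrel$, so no prism can straddle $S'$ and its star and uniqueness are inherited from $M$ or $B'$ verbatim.
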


For constructing regular homotopies, we mostly use the following lemma. It can be proved like \cite[Lemma 6.18.3]{put2009}.
\begin{lemma}
\label{lem_regular_homotopy_by_balls}
Let $S$ be a combinatorial $k$-sphere and $\phi\colon   S\to \IAAstrel$ a simplicial map. 
Let $B$ be a combinatorial $(k+1)$-ball and $\Psi\colon   B \to \IAArel$ a (weakly) regular map. 
Assume that $
\partial B = D_1 \cup D_2$
is the union of two combinatorial $k$-balls $D_1$, $D_2$ such that 
\begin{equation*}
	D_1 \cap D_2 = \partial D_1 = \partial D_2.
\end{equation*}
Furthermore, assume that $S \cap B = D_1$ and $\Psi|_{D_1} = \phi|_{D_1}$ (see \cref{figure_replace_map}).

Let $\newlemmasphere$ be the combinatorial $k$-sphere that is obtained from $S$ by replacing $D_1$ with $D_2$ and $\newlemmaphi: \newlemmasphere\to \IAArel$ the simplicial map defined by $\newlemmaphi|_{D_2} = \Psi|_{D_2}$ and $\newlemmaphi|_{\newlemmasphere \setminus D_2} = \phi$. Then $\phi$ is (weakly) regularly homotopic to $\newlemmaphi$.

We say that \emph{$\newlemmaphi$ is obtained from $\phi$ by replacing $\phi|_{D_1}$ by $\Psi|_{D_2}$}.
\end{lemma}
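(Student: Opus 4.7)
The plan is to construct the required combinatorial manifold $M$ as a kind of mapping cylinder and define the regular map $\Phi\colon M \to \IAArel$ piecewise. First I would triangulate the topological cylinder $|S| \times [0,1]$ using the standard prismatic subdivision of each $|\sigma| \times [0,1]$ for $\sigma$ a simplex of $S$, obtaining a simplicial complex $S \times I$ whose boundary consists of two copies of $S$, each carrying the original triangulation. I would then form
\begin{equation*}
  M := (S \times I) \cup_{D_1} B,
\end{equation*}
where the gluing identifies $D_1 \subseteq \partial B$ with $D_1 \times \{1\} \subseteq S \times \{1\}$, and define $\Phi$ by $\Phi(x,t) := \phi(x)$ on $S \times I$ and $\Phi := \Psi$ on $B$. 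These definitions agree on the common locus $D_1$ by hypothesis $\Psi|_{D_1} = \phi|_{D_1}$, each piece is simplicial by construction, and $|M| \cong |S| \times [0,1]$ because attaching a $(k{+}1)$-ball to the side of a cylinder along a $k$-ball in its boundary preserves the PL cylinder structure. The boundary description $\partial M = S \sqcup \newlemmasphere$ is immediate from the replacement procedure: removing the interior of $D_1 \times \{1\}$ from $\partial(S \times I)$ and adjoining $D_2 \subseteq \partial B$ yields $(S \times \{0\}) \sqcup ((S \setminus D_1) \cup D_2)$. The identifications $\Phi|_{S \times \{0\}} = \phi$ and $\Phi|_{\newlemmasphere} = \newlemmaphi$ then follow directly from the piecewise definitions.

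The main content is to verify that $\Phi$ is (weakly) regular. The key observation is that every minimal simplex triggering a regularity condition (minimal $\sigma^2$, skew-additive, 2-skew-additive, skew-$\sigma^2$, or $\sigma$-additive) lies in $\IAArel \setminus \IAAstrel$. Since $\phi$ takes values in $\IAAstrel$, any simplex $\Delta \subseteq S \times I$ satisfies $\Phi(\Delta) = \phi(\text{proj}(\Delta)) \in \IAAstrel$ and hence cannot map to a regularity-triggering simplex. Furthermore, $\Psi(\partial B) \subseteq \IAAstrel$: simplices of $\partial B$ that lie in the domain of some cross map of $\Psi$ are boundary simplices of that cross map and so map to $\IAAstrel$ by \cref{lem_boundary_cross_map}, while simplices of $\partial B$ not lying in any cross-map domain cannot map to a regularity-triggering simplex since this would contradict (weak) regularity of $\Psi$. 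Thus any $\Delta$ with $\Phi(\Delta)$ of a regularity-triggering type must lie in the interior of $B$ relative to $M$. For such $\Delta$ one has $\Star_M(\Delta) = \Star_B(\Delta)$, and the required cross map on $\Star_M(\Delta)$ (or, in the weakly regular case, the prism cross map on a prism $P \subseteq \interior(B)$ containing $\Delta$, or the external 2-skew-additive cross map) is supplied directly by (weak) regularity of $\Psi$.

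The step I expect to require the most care is the combinatorial verification that $M$ is a combinatorial manifold near the gluing locus $D_1 \times \{1\}$. This amounts to a local check: for a simplex $\Delta$ contained in $D_1 \subseteq \partial B$, the link $\Link_M(\Delta)$ is the join of $\Link_B(\Delta)$ with a combinatorial sphere/ball coming from the $[0,1]$-direction in the cylinder, and one must confirm the resulting complex has the correct topological type of ball or sphere. This is a standard consequence of the compatibility of prism subdivisions with joins, but it is where the combinatorial bookkeeping is most intricate. Everything else — the piecewise construction, the simpliciality of $\Phi$, the identification of the boundary, and the reduction of the regularity check to the interior of $B$ — is formal.
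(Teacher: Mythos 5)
Your construction is essentially the argument the paper intends: it gives no proof of this lemma, deferring to \cite[Lemma 6.18.3]{put2009}, and that proof is exactly your model, namely the prismatically triangulated cylinder $S\times[0,1]$ with $B$ glued along $D_1\subseteq\partial B$ identified with $D_1\times\{1\}$, carrying $\phi\circ\mathrm{proj}$ on the cylinder and $\Psi$ on $B$, so the substance of your proposal is correct. Two small corrections: at the gluing locus the link $\Link_M(\Delta)$ is not a join of $\Link_B(\Delta)$ with a cylinder factor but the union $\Link_{S\times[0,1]}(\Delta)\cup\Link_B(\Delta)$ glued along their common subcomplex, and one verifies it is a combinatorial ball or sphere by the same induction used for \cref{lem_combinatorial_ball_replace_same_boundary} and \cref{lem_combinatorial_ball_replace_bigger_boundary}; moreover your detour through the claim $\Psi(\partial B)\subseteq\IAAstrel$ is both under-justified (the prism case needs the sphere-in-ball argument you only gesture at) and unnecessary, since any simplex whose image is of one of the regularity-triggering types has image outside $\IAAstrel$, hence lies in $B$ and is not contained in $D_1$ (where $\Psi=\phi$ lands in $\IAAstrel$), so its star — and, for the prism condition, the prism $P$, the equalities of links, and their uniqueness — taken in $M$ coincide with those taken in $B$, and (weak) regularity transfers directly from $\Psi$.
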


\begin{figure}
\begin{center}
\includegraphics{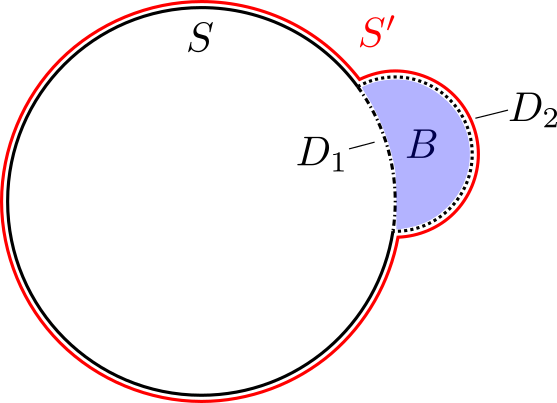}
\end{center}
\caption{Replacing $\phi\colon   S\to \IAAstrel$ by $\phi'\colon  S'\to \IAAstrel$ for $k=1$.}
\label{figure_replace_map}
\end{figure}

Using \cref{lem_replacement_stars}, we obtain the following.
\begin{lemma}
\label{lem_replacement_stars_maps}
If $\newlemmaphi$ is obtained from $\phi$ by replacing $\phi|_{D_1}$ by $\Psi|_{D_2}$ and $\Theta$ is a simplex in $\newlemmasphere\setminus D_2 = S \setminus D_1$, we have
$\newlemmaphi|_{\Star_{\newlemmasphere}}(\Theta) = \phi|_{\Star_{S}}(\Theta)$.
\end{lemma}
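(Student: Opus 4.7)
The plan is to reduce this essentially to the set-theoretic statement \cref{lem_replacement_stars} about combinatorial manifolds and then verify that the two simplicial maps agree on the simplices of the resulting common star.

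First, I would directly invoke \cref{lem_replacement_stars}. Recall that $\newlemmasphere$ is obtained from $S$ by replacing $D_1$ with $D_2$ in the sense of \cref{lem_combinatorial_ball_replace_same_boundary}, so that lemma applies and yields the equality of simplicial complexes $\Star_{\newlemmasphere}(\Theta) = \Star_S(\Theta)$ for any $\Theta \in \newlemmasphere \setminus D_2 = S \setminus D_1$. This already gives that the domains of $\newlemmaphi|_{\Star_{\newlemmasphere}(\Theta)}$ and $\phi|_{\Star_{S}(\Theta)}$ are literally the same subcomplex.

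It then remains to check that $\newlemmaphi$ and $\phi$ agree on each simplex $\Theta'$ of this common star. I would split into two cases according to whether $\Theta' \in D_2$ or not. If $\Theta' \notin D_2$, then $\Theta' \in \newlemmasphere \setminus D_2$ and the definition $\newlemmaphi|_{\newlemmasphere \setminus D_2} = \phi$ from \cref{lem_regular_homotopy_by_balls} gives $\newlemmaphi(\Theta') = \phi(\Theta')$ immediately. If $\Theta' \in D_2$, then since $\Theta' \in \Star_S(\Theta) \subseteq S$, we have $\Theta' \in S \cap D_2$. Using the hypotheses $S \cap B = D_1$ and $D_2 \subseteq B$ from \cref{lem_regular_homotopy_by_balls}, this forces $\Theta' \in S \cap B \cap D_2 = D_1 \cap D_2 = \partial D_1 = \partial D_2 \subseteq D_1$. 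Thus $\newlemmaphi(\Theta') = \Psi(\Theta') = \phi(\Theta')$, where the second equality uses the hypothesis $\Psi|_{D_1} = \phi|_{D_1}$.

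There is no real obstacle here: the geometric content has already been handled by \cref{lem_replacement_stars}, and what remains is a careful but routine unwinding of the definitions of the replacement construction and of $\newlemmaphi$. The only subtlety worth flagging is ensuring that the two possible definitions of $\newlemmaphi$ on $\partial D_1 = \partial D_2$ (via either $\Psi$ or $\phi$) are consistent, which is exactly what the compatibility assumption $\Psi|_{D_1} = \phi|_{D_1}$ provides.
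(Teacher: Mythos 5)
Your proof is correct and follows exactly the route the paper intends: the paper states this lemma as an immediate consequence of \cref{lem_replacement_stars} without further argument, and your write-up simply fills in the routine verification that the maps agree on the common star, using $\Psi|_{D_1} = \phi|_{D_1}$ on $\partial D_1 = \partial D_2$.
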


We finish with a comment about the motivation for introducing the regularity conditions above.

\begin{remark}
\label{rem_polyhedral_complexes}
Much of this work is concerned with showing that the complex $\IAA$ is highly connected, i.e.\ proving \autoref{thm_connectivity_IAA}. When we eventually do this in \cref{sec:thm_connectivity_IAA} and \cref{sec_normal_form_spheres}, we consider $\sigma$-regular maps $\phi\colon   S \to \IAA$, which are introduced in \cref{def_sigma_cross_map}, and regular homotopies $\psi\colon   B \to \IAA$, in the sense of \cref{def_reg_homotopies}, between these. 
Throughout this work and in these arguments in particular, it is useful to think of the cross maps contained in $\phi$ and $\psi$ as being defined on a single polyhedral cell that consists of several simplices. For example, a prism cross map $\psi|_{P_3 \ast C_{k-2}}\colon   P_3 \ast C_{k-2} \to \IAA$ contained in a regular homotopy $\psi$ can be thought of as a map defined on a single polyhedral cell that for $k = 2$ has the shape of the prism depicted in \cref{figure_prism}.
It might be possible to formulate the arguments presented in this work in a category of suitable polyhedral cell complexes. The disadvantage of this might be that the arguments are less parallel to the ones in previous work \cite{cfp2019,CP,Brueck2022,put2009}.
\end{remark}

\section{Reducing the rank}
\label{sec_retraction} 

In this section, we study certain subcomplexes $X$ of $\Irel, \IArel, \IAAstrel$ and $\IAArel$.
We develop tools that are applied in later sections (see \cref{sec_highly_connected_subcomplexes} and \cref{sec:thm_connectivity_IAA}) to prove that various simplicial complexes, including $\IAArel$, are highly connected.

 Recall from \cref{def_rank_and_ranked_complexes} that the rank of a vertex $v$ of $X$ is defined as the absolute value of the $\vec f_{m+n}$-coordinate of some (hence any) primitive vector $\vec v \in v$, $\rkfn(v) = |\omega(\vec e_{m+n}, \vec v)|$, and that we denote by $X^{< R}$ the full subcomplex of $X$ on all vertices $v$ satisfying $\rkfn(v) < R$. This yields a filtration of $X$,
$$X(W) = X^{< 1} \subset \dots \subset X^{< R} \subset \dots \subset X,$$
interpolating between $X(W)$, the full subcomplex of $X$ on all vertices $v$ contained in the submodule
$$W = \ll \vec e_1, \vec f_1, \dots, \vec e_{m+n-1}, \vec f_{m+n-1}, \vec e_{m+n} \rr,$$
and the simplicial complex $X$. Our goal is to develop techniques that allow us to ``reduce the rank'', i.e.~to construct a map $\phi'\colon   M \to X^{< R}$ from a given map $\phi\colon   M \to X$. An additional difficulty is to do this in such a way that desirable properties such as regularity (see \cref{def_regular_maps}) are preserved. More precisely, we will show the following:

\begin{proposition} \label{IAA-retraction}
    Let $n \geq 2$, $m \geq 0$, and $R > 0$. Let $\badvertex$ be a vertex of $\IAArel$ with $\rkfn(\badvertex) = R$. Assume we are given a commutative diagram of simplicial maps
  $$
  \begin{tikzcd}
    S^{k-1} \arrow[r, "\phi|"] \arrow[d, hook] & \Linkhat_{\IAArel}^{<R}(\badvertex) \arrow[d, hook]\\
    D^k \arrow[r, "\phi"] & \Linkhat_{\IAArel}(\badvertex)
  \end{tikzcd}
  $$
  such that $\phi$ is weakly regular\footnote{A precise definition for what this means is given in \cref{def:weak-regularity-in-link}.}. Then there exists a combinatorial ball $D^k_{\new}$ with boundary sphere $S^{k-1}$ and a commutative diagram of simplicial maps
  $$
  \begin{tikzcd}
    S^{k-1} \arrow[r, "\phi|"] \arrow[d, hook] & \Linkhat_{\IAArel}^{<R}(\badvertex)\\
    D^k_{\new} \arrow[ur, "\psi", swap] & 
  \end{tikzcd}
  $$
  such that $\psi$ is weakly regular.
\end{proposition}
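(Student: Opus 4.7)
The plan is to use a rank-reducing simplicial retraction together with a cut-out argument to replace the high-rank parts of $D^k$ with low-rank substitutes, and then to verify that the cross-map structure is preserved.

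The first step is to define a retraction $\retraction$. For any vertex $w$ of $\Linkhat_{\IAArel}(\badvertex)$, the defining conditions of $\Linkhat$ give $\omega(\vec\badvertex, \vec w) = 0$ and $\vec w \notin \langle \vec e_1, \dots, \vec e_m, \vec\badvertex\rangle$. Hence there is a unique $q \in \mbZ$ such that the $\vec f_{m+n}$-coordinate of $\vec w - q\vec\badvertex$ lies in $[0, R)$, and the line $\retraction(w) \coloneqq \langle \vec w - q\vec\badvertex\rangle$ is a well-defined rank-$1$ summand of $\mbZ^{2(m+n)}$ of rank strictly less than $R$. Via the isomorphism $\Linkhat_{\IAArel}(\badvertex)\cong \IAArel[n-1][m+1]$ of \cref{lem_link_of_vertex}, the assignment $\retraction$ corresponds to an elementary symplectic change of basis of $\langle\vec\badvertex\rangle^\perp$ that fixes $\vec e_1, \dots, \vec e_m$, so it extends over standard, $\sigma$, $\sigma^2$, skew-additive, skew-$\sigma^2$, and $\sigma$-additive simplices to a simplicial map of the same simplex type. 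Moreover $\retraction$ restricts to the identity on $\Linkhat^{<R}_{\IAArel}(\badvertex)$.

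Next I would carry out a cut-out argument in the spirit of \cref{lem:standard-link-argument}. Let $B\subseteq D^k$ denote the set of simplices whose $\phi$-image contains a vertex of rank $\geq R$; this is a set of bad simplices in the sense of \cref{def:standard-link-argument-bad-simplices}, since closure under unions of faces is immediate and any simplex of $D^k$ with no bad face has image in $\Linkhat^{<R}_{\IAArel}(\badvertex)$. For each maximal $\Delta \in B$ I would replace $\Star_{D^k}(\Delta)$ (a combinatorial ball by \cref{cor_boundary_star}) with a suitably triangulated combinatorial ball of the form $T \ast \Link_{D^k}(\Delta)$, where $T$ is a combinatorial $(\dim \Delta)$-ball with $\partial T = \partial \Delta$ carrying the map $\retraction \circ \phi$ on its interior vertices. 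Assembling these replacements via \cref{lem_combinatorial_ball_replace_bigger_boundary} and \cref{lem_combinatorial_ball_replace_same_boundary} yields the desired combinatorial ball $D^k_{\new}$ with boundary $S^{k-1}$ and a simplicial map $\psi$ into $\Linkhat^{<R}_{\IAArel}(\badvertex)$ agreeing with $\phi$ on $S^{k-1}$.

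The main obstacle is ensuring that $\psi$ is weakly regular. Two issues arise. First, $\retraction$ need not respect the 2-additive relation: if $\vec v_0 = \vec v_1 + \vec v_2$ with retracting integers $q_0, q_1, q_2$, one may have $q_0 = q_1 + q_2 + 1$, in which case $\vec v_0 - q_0\vec\badvertex = (\vec v_1 - q_1\vec\badvertex) + (\vec v_2 - q_2\vec\badvertex) - \vec\badvertex$ is a \emph{3-additive} configuration. The presence of the 3-additive, double-triple, and double-double simplex types of \cref{def:BAA-simplices} in $\IAArel$ is what allows the cut-out ball $T$ to absorb this extra $\vec\badvertex$ term as a genuine simplex of $\IAArel$; this is why the proposition is formulated for $\IAArel$ rather than for the smaller subcomplex $\IArel$. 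Second, one must check cross-map preservation: since $\retraction$ is induced by a symplectic automorphism relative to $\{\vec e_1, \dots, \vec e_m\}$, the symplectic basis $\{\vec v_i, \vec w_i\}$ witnessing a $\sigma^2$, prism, or $\sigma$-additive cross map (\cref{def:cross-maps}) is replaced by $\{\vec v_i - q_i\vec\badvertex,\, \vec w_i - q'_i\vec\badvertex\}$, which is again a symplectic basis compatible with $\{\vec e_1, \dots, \vec e_m\}$, so the post-composition with $\retraction$ remains a cross map of the same type. External 2-skew-additive cross maps are handled similarly, using that the distinguished vertex $\vertexvar \in \{e_1, \dots, e_m\}$ has rank $0$ and is fixed by $\retraction$. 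Verifying these assertions case by case, together with the link-structure identifications of \cref{link_IAArel_sigma2}, \cref{link_IAArel_sigma_additive} and \cref{link_IAAstrel_doubletriple_doubledouble} to rule out spurious minimal augmentations introduced by the cut-out, is the bulk of the work.
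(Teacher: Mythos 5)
Your proposal has two genuine gaps, and the first infects most of the later verification. The claim that $\retraction$ ``corresponds to an elementary symplectic change of basis of $\langle\vec\badvertex\rangle^\perp$'' cannot be right: the integer $q$ subtracted depends on the vertex (it is chosen by the Euclidean algorithm so that $\rkfn(\vec w - q\vec\badvertex)\in[0,R)$), so $\retraction$ is not induced by any single automorphism — indeed no automorphism-induced map can be the identity on the full subcomplex $\Linkhat^{<R}_{\IAArel}(\badvertex)$ without being the identity everywhere. Type preservation therefore has to be checked relation by relation, and it genuinely fails for the \emph{additive} types: exactly as in Church--Putman, a $2$-additive triple can be ``carrying'', in which case $\retraction$ is only simplicial after subdividing at the barycentre (\cref{def:Isigdel-subdivision}, \cref{IAAst-retraction}). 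You notice this for $2$-additive simplices but then contradict it by asserting that $\sigma$-additive simplices and all cross maps retract to cross maps ``of the same type'': a $\sigma$-additive simplex contains the additive relation $\vec v_k=\pm\vec v_{k-1}\pm\vec v_{k-2}$, and when it is carrying the retracted configuration is not $\sigma$-additive at all — in the paper's treatment it becomes a subdivided cell whose image mixes $\sigma$-additive and \emph{external} $2$-skew-additive simplices, so a $\sigma$-additive cross map only yields a weakly regular map, not a $\sigma$-additive cross map (\cref{lem:IAAst-retraction-extends-to-sigma-additive}, \cref{lem:retraction-sigma2-sigma-additive-external-2-skew-additive-cross-map}). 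Prism cross maps are worse still, since they hinge on the additive vertices $\langle\vec v_1+\vec v_2\rangle$ and $\langle\vec w_1-\vec w_2\rangle$ (\cref{lem:retraction-nice-prism-cross-map}). Your suggestion that the cut-out ball can ``absorb the extra $\vec\badvertex$ term'' via $3$-additive or double-triple simplices does not repair this, because the obstruction is about reproducing the cross-map (weak regularity) structure in the image, not merely about finding some simplex of $\IAArel$ containing the retracted vertices; moreover your bad-simplex set (image contains a vertex of rank $\geq R$) is upward closed, so its maximal elements are maximal simplices of $D^k$ with empty links and boundaries that still carry high-rank vertices, and the proposed replacement $T\ast\Link_{D^k}(\Delta)$ with $\partial T=\partial\Delta$ neither removes those vertices nor produces a map into $\Linkhat^{<R}_{\IAArel}(\badvertex)$.

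The second, independent gap is the one the authors single out in \cref{rem:problematic-simplices}: the retraction does \emph{not} extend over inessential internal $2$-skew-additive simplices, i.e.\ over prisms where the maximum of $\rkfn$ on a triangle is attained at the additive vertex off the $\sigma$-edge path; the key inequality $\omega(\vec v_{12},\vec w_{12})=0$ versus $\omega(\vec v_2,\vec w_{12})=\pm1$ used in \cref{lem:retraction-nice-prism-cross-map} is exactly what breaks. This is why the paper's proof has a preliminary Step 1 (\cref{lem:nice-regular-replacements}) that regularly homotopes $\phi$, relative to $S^{k-1}$, to a map all of whose prisms are essential, before any retraction is applied; afterwards it handles each cross-map type case by case and uses Zeeman's relative simplicial approximation (\cref{zeeman-relative-simplicial-approximation}) on the complement of the cross-map domains, where $\retraction\circ\phi$ is only simplicial up to subdivision. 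Your proposal contains no analogue of the prism-preprocessing step and no mechanism that could replace it, so even with the carrying issue acknowledged the argument would not go through for prism cross maps.
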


We work our way to this result through several subsections.
Recall that $\Irel \subset \IArel \subset \IAAstrel \subset \IAArel$. We start by studying subcomplexes of $\Irel$ in the first subsection. Then we move to $\IArel, \IAAstrel$ and $\IAArel$, respectively. The results in each subsection are extensions of the ones from the previous subsection. A key idea in the arguments here is that forgetting the symplectic form yields inclusion maps
\begin{itemize}
\item $\Irel \hookrightarrow \B_{2n+m}^{m}$,
\item $\IArel \hookrightarrow \BA_{2n+m}^{m}$,
\item $\IAAstrel \hookrightarrow \BAA_{2n+m}^{m}$ and
\item $\IAArel \hookrightarrow \BAA^m_{2n+m}$
\end{itemize}
(compare \cref{rem:forgetting_symplectic_information}). The complexes on the right of these arrows were studied in the context of high-dimensional rational cohomology of the special linear group $\SL{n}{\mbZ}$. This is why, up to technicalities, ``restricting'' along the first three inclusion maps allows us to deduce our results for $\Irel$, $\IArel$ and $\IAAstrel$ from work of Maazen \cite{Maazen79}, Putman \cite{put2009}, Church--Putman \cite{CP} and Brück--Miller--Patzt--Sroka--Wilson \cite{Brueck2022}.
When ``reducing the rank'' in $\IAArel$, we additionally need to ensure that certain regularity properties are preserved. Overcoming this extra difficulty for $\IAArel$ requires a careful study of cross maps and is our main contribution here.
\newline

\subsection{Reducing the rank in \texorpdfstring{$\Irel$}{I}}

In this subsection, we explain how ideas due to Maazen \cite{Maazen79} and Church--Putman \cite[Section 4.1]{CP} generalise to the symplectic setting, and how they can be used to ``reduce the rank'' in $\Irel$. Similar ideas were used by Putman in the proof of \cite[Proposition 6.13.2]{put2009}. We use the notation introduced in \cref{def_rank_and_ranked_complexes} and frequently apply the notions defined in \cref{subsec:simplex-types-in-relative-complexes}. The following construction explains how to ``reduce the rank'' of vertices; it plays a key role in throughout this section.

\begin{construction} \label{def:I-retraction}
	Let $m,n \geq 0$, $R > 0$ and $X^m_n$ be $\IAArel$ or any other complex in \cref{def_linkhat}. Let $\Delta$ be a standard simplex of $X^m_n$ such that some vertex $\badvertex \in \Delta$ satisfies $\rkfn(\badvertex) = R$. Let $v \in \Vr(\Linkhat_{X^m_n}(\Delta))$ be a vertex. Recall that $\bar v \in \{\vec v, - \vec v\}$ is a primitive vector in $v$ satisfying $\rkfn(\bar v) \geq 0$ (see \cref{def_rank_and_ranked_complexes}). Then we define $\retraction(v) \in \Vr(\Linkhat_{X^m_n}^{< R}(\Delta))$ to be the vertex of $\Linkhat_{X^m_n}^{< R}(\Delta)$ given by
	$$\retraction(v) \coloneqq \ll \bar v - a \bar \badvertex \rr$$
	where $a \in \mbZ$ is chosen such that $\rkfn(\bar v - a \bar \badvertex) \in [0, R)$ and determined by the Euclidean algorithm.
\end{construction}

The goal of this subsection is to show that for $X^m_n = \Irel$ the map $\retraction$ between the vertex set of $\Linkhat_{\Irel}(\Delta)$ and $\Linkhat_{\Irel}^{< R}(\Delta)$ is simplicial.

\begin{proposition}\label{I-retraction}
  Let $n \geq 0$, $m \geq 0$ and $R > 0$. Let $\Delta$ be a simplex of $\Irel$ such that some vertex $\badvertex$ of $\Delta$ satisfies $\rkfn(\badvertex) = R$. Then the map $\rho$ defined in \cref{def:I-retraction} is a simplicial retraction
  $$\retraction \colon   \Linkhat_{\Irel}(\Delta) \twoheadrightarrow \Linkhat^{< R}_{\Irel}(\Delta).$$
\end{proposition}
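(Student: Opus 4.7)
The plan is to verify three things: that $\retraction$ is well-defined as a map into $\Linkhat^{< R}_{\Irel}(\Delta)$, that it restricts to the identity on this subcomplex (so it is a retraction), and that it sends simplices to simplices. The central observation driving everything is that if $v_0, \ldots, v_k$ are vertices of a simplex in $\Link_{\Irel}(\Delta)$, then $\Delta \cup \{v_0, \ldots, v_k\}$ is a standard simplex, i.e.~its vertices span an isotropic direct summand of $\mbZ^{2(m+n)}$, and the vectors $\bar v_0, \ldots, \bar v_k$ together with any $\mbZ$-basis of $\langle \Delta\rangle$ containing $\bar \badvertex$ form a $\mbZ$-basis of this summand.

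First I would verify well-definedness on vertices. Given $v \in \Linkhat_{\Irel}(\Delta)$, the observation above implies that $\bar v$ and $\bar \badvertex$ are part of a $\mbZ$-basis of an isotropic summand, so $\bar v - a \bar \badvertex$ is primitive for every $a \in \mbZ$ (it is the image of $\bar v$ under an elementary basis change). The line $\retraction(v)$ lies in the isotropic summand $\langle \Delta \cup \{v\}\rangle$, so it is isotropic to every vertex of $\Delta$ and $\retraction(v) \cup \Delta$ spans an isotropic direct summand; hence $\retraction(v) \in \Link_{\Irel}(\Delta)$. The conditions to lie in $\Linkhat^{<R}_{\Irel}(\Delta)$ follow: isotropy against $\Delta$ holds automatically inside $\Irel$, non-containment $\bar v - a\bar\badvertex \notin \langle \vec e_1, \ldots, \vec e_m, \vec v_0, \ldots, \vec v_k \rangle$ reduces to $\bar v \notin$ this span (since $\bar\badvertex$ lies in $\langle \Delta\rangle$), which is the hypothesis $v \in \Linkhat_{\Irel}(\Delta)$, and finally the Euclidean-algorithm choice of $a$ forces $\rkfn(\retraction(v)) < R$.

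The retraction property is immediate: if $\rkfn(v) < R$, then $\rkfn(\bar v) \in [0, R)$ already, so $a = 0$ is the value selected and $\retraction(v) = v$. For simpliciality, let $\{v_0, \ldots, v_k\}$ be a simplex in $\Linkhat_{\Irel}(\Delta)$. By the key observation, the set $\{\bar v_0, \ldots, \bar v_k\} \cup \{\bar\badvertex, \ldots\}$ extends to a $\mbZ$-basis of the isotropic summand $\langle \Delta \cup \{v_0, \ldots, v_k\}\rangle$. Replacing each $\bar v_i$ by $\bar v_i - a_i \bar\badvertex$ is an invertible $\mbZ$-linear change of basis, so $\{\bar\retraction(v_0), \ldots, \bar\retraction(v_k)\} \cup \{\bar\badvertex, \ldots\}$ is again a basis of the same isotropic summand. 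In particular the lines $\retraction(v_i)$ are pairwise distinct, and $\{\retraction(v_0), \ldots, \retraction(v_k)\} \cup \Delta$ is a standard simplex. Combining this with the first step shows that $\{\retraction(v_0), \ldots, \retraction(v_k)\}$ is a simplex of $\Linkhat^{<R}_{\Irel}(\Delta)$.

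No individual step is a serious obstacle; the whole argument is essentially bookkeeping about elementary row operations applied to a $\mbZ$-basis of an isotropic summand, together with the fact that all the defining conditions of $\Linkhat$ are insensitive to adding multiples of $\bar\badvertex$ (since $\bar\badvertex$ already lies in $\langle \Delta\rangle$ and is isotropic to the relevant vectors).
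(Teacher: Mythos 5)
Your argument is correct, but it takes a different route from the paper. The paper proves this proposition by reduction: it forgets the symplectic form via the inclusion $\Irel \hookrightarrow \B^m_{2n+m}$, quotes Church--Putman's result (\cref{B-retraction}, i.e.\ \cite[Lemma 4.5]{CP}) that the same formula gives a simplicial retraction $\Linkhat_{\B^m_{2n+m}}(\Delta) \twoheadrightarrow \Linkhat^{<R}_{\B^m_{2n+m}}(\Delta)$, and then invokes \cref{retraction-on-non-additive-simplices} -- the observation that since $\bar\badvertex$ lies in the isotropic span of $\Delta$ and everything in the hatted link is orthogonal to it, subtracting multiples of $\bar\badvertex$ does not change any values of $\omega$ -- to see that the retraction restricts to the symplectic subcomplex. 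You instead reprove the whole statement from scratch: primitivity of $\bar v - a\bar\badvertex$, the $\Linkhat^{<R}$ vertex conditions, the identity on rank-$<R$ vertices, and simpliciality via the fact that subtracting multiples of the basis vector $\bar\badvertex$ is an invertible base change of the isotropic summand spanned by the underlying standard simplex; in effect you have merged the content of Church--Putman's lemma with the paper's symplectic-preservation lemma into one self-contained computation. Both are sound; the paper's version is shorter and modular (the preservation lemma is reused verbatim for $\Isigdelrel$, $\IArel$ and $\IAAstrel$), while yours avoids the external citation. One small point of care: the simplex conditions in the relative complex are phrased in terms of the underlying simplex $\{e_1,\dots,e_m\}\cup\Delta\cup\Delta'$, so your base-change argument should explicitly carry the vectors $\vec e_1,\dots,\vec e_m$ along in the basis of the isotropic summand (they are fixed by the operation, so nothing breaks, but they should be mentioned).
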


\cref{I-retraction} is a  symplectic analogue of the following result by Church--Putman.

\begin{lemma}[{\cite[Lemma 4.5]{CP}}] \label{B-retraction}
  Let $n \geq 0$, $m \geq 0$, and $R > 0$. Let $\Delta$ be a simplex of $\B^m_{2n+m}$ such that some vertex $\badvertex$ of $\Delta$ satisfies $\rkfn(\badvertex) = R$. Then the map $\rho$ defined in \cref{def:I-retraction} is a simplicial retraction
  $$\retraction \colon   \Linkhat_{\B^m_{2n+m}}(\Delta) \twoheadrightarrow \Linkhat^{< R}_{\B^m_{2n+m}}(\Delta).$$
\end{lemma}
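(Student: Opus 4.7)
The proof will follow the template of Church--Putman and reduces to three direct checks: that $\retraction$ is well-defined as a map of vertex sets, that it restricts to the identity on $\Linkhat^{<R}_{\B^m_{2n+m}}(\Delta)$, and that it extends to a simplicial map.

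For well-definedness on vertices, given $v \in \Vr(\Linkhat_{\B^m_{2n+m}}(\Delta))$, I will verify that $\ll \bar v - a \bar \badvertex \rr$ belongs to $\Vr(\Linkhat^{<R}_{\B^m_{2n+m}}(\Delta))$, i.e.~that $\bar v - a \bar \badvertex$ is primitive, has rank strictly less than $R$, and does not lie in $\ll \vec e_1, \ldots, \vec e_m, \vec \Delta \rr$. The rank bound is immediate from the Euclidean algorithm: writing the $\vec f_{m+n}$-coordinate of $\bar v$ as $qR + r$ with $0 \leq r < R$, one takes $a = \pm q$, with sign determined by the sign of the $\vec f_{m+n}$-coordinate of $\bar \badvertex$. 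For primitivity, I will use that $v \in \Linkhat_{\B^m_{2n+m}}(\Delta)$ and $\badvertex \in \Delta$ together imply that $\{\vec \badvertex, \vec v\}$ is part of a basis of a direct summand of $\mbZ^{2(m+n)}$ (by \cref{lem_direct_summands_basics}), so the elementary transformation $\vec v \mapsto \vec v - a\vec \badvertex$ is unimodular and the resulting vector is still primitive. For the third condition, if $\bar v - a\bar \badvertex$ lay in $\ll \vec e_1, \ldots, \vec e_m, \vec \Delta \rr$, then adding back $a \bar \badvertex \in \ll \vec \Delta \rr$ would place $\bar v$ itself in this span, contradicting the hypothesis that $v$ is a vertex of $\Linkhat_{\B^m_{2n+m}}(\Delta)$. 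The retraction property $\retraction|_{\Linkhat^{<R}} = \mathrm{id}$ is automatic: if $\rkfn(v) < R$ already, the Euclidean algorithm forces $a = 0$, whence $\retraction(v) = v$.

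For the simplicial property, suppose $\{v_0, \ldots, v_k\}$ is a simplex of $\Linkhat_{\B^m_{2n+m}}(\Delta)$, so that the set $\{\vec e_1, \ldots, \vec e_m\} \cup \{\vec w : w \in \Delta\} \cup \{\vec v_0, \ldots, \vec v_k\}$ spans a direct summand of $\mbZ^{2(m+n)}$. Applying $\retraction$ performs the integer column operations $\vec v_i \mapsto \vec v_i - a_i \vec \badvertex$, and since $\vec \badvertex$ is already one of the chosen generators of this summand (because $\badvertex \in \Delta$), these operations constitute a unimodular change of basis. Hence $\{\vec e_1, \ldots, \vec e_m\} \cup \{\vec w : w \in \Delta\} \cup \{\vec \retraction(v_0), \ldots, \vec \retraction(v_k)\}$ spans the same direct summand. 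Distinctness of the $\retraction(v_i)$ then follows from the same summand condition: an equality $\retraction(v_i) = \retraction(v_j)$ would force $\vec v_i \mp \vec v_j = (a_i \mp a_j)\vec \badvertex$, which is impossible when $\vec v_i$, $\vec v_j$, $\vec \badvertex$ are independent in a direct summand and $v_i \neq v_j$.

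Taken together, these checks show that $\retraction$ is a simplicial map onto $\Linkhat^{<R}_{\B^m_{2n+m}}(\Delta)$ which fixes this subcomplex pointwise, hence a simplicial retraction. The whole argument is a careful but routine verification; there is no real obstacle, and it is exactly the one given in \cite[proof of Lemma 4.5]{CP}. The only conceptual subtlety -- which will reappear and require additional work in the subsequent subsections dealing with $\IArel$, $\IAAstrel$ and $\IAArel$ -- is making sure the elementary operation by $\bar \badvertex$ respects the simplex types present in the richer complexes, but for $\B^m_{2n+m}$ only standard simplices occur (see \cref{table_overview_complexes_definition}) and nothing further need be checked.
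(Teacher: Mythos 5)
Your proof is correct and is exactly the Church--Putman argument that the paper invokes by citation (the paper itself supplies no proof for this lemma, only the reference to \cite[Lemma 4.5]{CP}). The one cosmetic slip is the appeal to \cref{lem_direct_summands_basics}: the fact that $\{\vec\badvertex, \vec v\}$ extends to a basis of a direct summand follows directly from the definition of a standard simplex in $\B^m_{2n+m}$ together with the $\Linkhat$ condition, not from that lemma, but this does not affect the argument.
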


To deduce \cref{I-retraction} from \cref{B-retraction}, we study the effect of \cref{def:I-retraction} on symplectic information. This is the content of the next lemma.

\begin{definition} \label{def:non-additive-simplex}
  Let $n,m \geq 0$ and $R > 0$. Let $\Delta$ be a standard simplex of $\IAArel$ such that some vertex $\badvertex$ of $\Delta$ satisfies $\rkfn(\badvertex) = R$. A simplex $\Delta' \in \Linkhat_{\IAArel}(\Delta)$ is called \emph{non-additive} if it is of type standard, $\sigma$, $\sigma^2$, skew-additive, or skew-$\sigma^2$.
\end{definition}

Observe that in the setting of \cref{def:non-additive-simplex}, $\Delta'$ is non-additive if and only if its image under the inclusion $\Linkhat_{\IAArel}(\Delta) \hookrightarrow \Linkhat_{\BAA^m_{2n+m}}(\Delta)$ is a standard simplex. The next lemma is similar to \cite[Lemma 6.15]{put2009}.

\begin{lemma}
  \label{retraction-on-non-additive-simplices}
  Let $n \geq 0$, $m \geq 0$ and $R > 0$. Let $\Delta$ be a standard simplex of $\IAArel$ such that some vertex $\badvertex$ of $\Delta$ satisfies $\rkfn(\badvertex) = R$. Assume that $\Delta' \in \Linkhat_{\IAArel}(\Delta)$ is a non-additive simplex of type 
  $$\tau \in \{\text{standard}, \sigma, \sigma^2, \text{skew-additive}, \text{skew-}\sigma^2\}.$$
  Then the set of vertices $\retraction(\Delta')$ forms a non-additive simplex of type $\tau$ and of the same dimension as $\Delta'$ in $\IAArel$. Here, $\retraction$ is the map of sets defined in \cref{def:I-retraction}.
\end{lemma}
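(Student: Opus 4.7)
The plan is to bootstrap from the Church--Putman retraction result \cref{B-retraction} by forgetting the symplectic form, then to argue separately that all the symplectic data needed to detect the type $\tau$ is preserved by $\retraction$.

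First I would forget $\omega$. For each non-additive type $\tau \in \{\text{standard}, \sigma, \sigma^2, \text{skew-additive}, \text{skew-}\sigma^2\}$, the rows of \cref{table_forgetting_symplectic_info} show that the underlying simplex $\{e_1,\ldots,e_m\}\cup \Delta\cup \Delta'$ becomes a standard simplex in $\BAA^m_{2n+m}$, i.e.\ its vertices span a direct summand of the expected rank (using \cref{lem_symplectic_pair} on any pair with $\omega = \pm 1$). Thus $\Delta'$ is a standard simplex of $\Linkhat_{\B^m_{2n+m}}(\Delta)$. Applying \cref{B-retraction} with the vertex $\badvertex\in \Delta$ of rank $R$ gives that $\retraction(\Delta')$ is a standard simplex of $\Linkhat^{<R}_{\B^m_{2n+m}}(\Delta)$ of the same dimension as $\Delta'$. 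In particular, $\retraction$ is injective on $\Delta'$, and the image vertices span a direct summand of the correct rank.

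The second ingredient is that $\retraction$ preserves every symplectic pairing that is relevant to the definition of type~$\tau$. Since $v\in \Vr(\Linkhat_{\IAArel}(\Delta))$ implies $\omega(\vec\badvertex,\vec v)=0$ by \cref{def_linkhat}, and since $\omega(\vec\badvertex,\vec\badvertex)=0$, a direct computation yields
\begin{equation*}
\omega\bigl(\overline{\retraction(v)},\overline{\retraction(w)}\bigr)
= \omega(\bar v - a\bar\badvertex,\bar w - b\bar\badvertex)
= \omega(\bar v,\bar w)
\end{equation*}
for all $v,w\in \Vr(\Linkhat_{\IAArel}(\Delta))$. The same cancellation, together with the fact that $\badvertex$ lies in the standard simplex $\Delta$ (so $\omega(\vec\badvertex,\vec e_i)=\omega(\vec\badvertex,\vec v_j)=0$ for $e_i$ and for other vertices $v_j$ of $\Delta$), gives $\omega(\vec e_i,\overline{\retraction(v)})=0$ and $\omega(\vec v_j,\overline{\retraction(v)})=0$. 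So the full matrix of $\omega$-pairings within $\{e_1,\ldots,e_m\}\cup\Delta\cup \Delta'$ is identical to that within $\{e_1,\ldots,e_m\}\cup\Delta\cup \retraction(\Delta')$.

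Finally I would combine the two observations. Each non-additive type $\tau$ is characterised by two pieces of data: (i) the underlying simplex being standard after forgetting $\omega$, and (ii) the $\omega$-pairings between its vertices (and with the vertices of $\Delta$ and the $e_i$) having the prescribed pattern of $0$s and $\pm 1$s. Step one preserves (i) and step two preserves (ii), so $\retraction(\Delta')$ is again a non-additive simplex of type $\tau$ and the same dimension, automatically sitting inside $\Linkhat_{\IAArel}^{<R}(\Delta)$. The main subtlety to double-check is that $\retraction(\Delta')\in \Linkhat_{\IAArel}(\Delta)$ (not merely in the link), but the non-inclusion condition of \cref{def_linkhat} is inherited from \cref{B-retraction}, and the $\omega$-orthogonality to $\Delta$ is exactly the computation above.
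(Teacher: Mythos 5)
Your proposal is correct and follows essentially the same route as the paper: reduce to Church--Putman's retraction (\cref{B-retraction}) after forgetting the symplectic form to get a standard simplex of the same dimension, then observe that all $\omega$-pairings among $\{e_1,\ldots,e_m\}\cup\Delta\cup\Delta'$ are unchanged under $v \mapsto \langle \bar v - a\bar\badvertex\rangle$ because $\badvertex$ is $\omega$-orthogonal to everything in sight. The only cosmetic difference is that you route the "forget $\omega$" step through \cref{table_forgetting_symplectic_info} and $\BAA^m_{2n+m}$, whereas the paper invokes the observation after \cref{def:non-additive-simplex} and the inclusion into $\B^m_{2n+m}$; the substance is identical.
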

\begin{proof}
  By \cref{def:non-additive-simplex} and \autoref{B-retraction}, it follows that $\retraction(\Delta')$ is a standard simplex in $\B^m_{2n+m}$. The proof of \cite[Lemma 4.5]{CP} shows that $\retraction(\Delta')$ has the same dimension as $\Delta'$. Hence, we only need to see that the retraction map preserves the symplectic relations. By the definition of $\Linkhat_{\IAArel}(\Delta)$, it holds that 
  $$\langle \vec v \mid v \in \Delta' \rangle \subset \badvertex^\perp,$$
  where $\badvertex^\perp$ is the symplectic complement of $\badvertex$ in $\mbZ^{2(m+n)}$.
  This implies that
  $$\omega(\bar v, \bar v') = \omega(\bar v - a \bar \badvertex, \bar v' - a' \bar \badvertex)$$
  for all $v, v' \in \Delta'$ and any choice of $a, a' \in \Z$. Again by the definition of $\Linkhat_{\IAArel}(\Delta)$ and since $\Delta$ is a standard simplex of $\IAArel$, it holds that
  $$\omega(\bar u, \bar v - a \bar \badvertex) = 0$$ for any $v \in \Delta$, $u \in \Delta'$ and any $a \in \Z$. These three observations together yield that $\retraction(\Delta')$ is a non-additive simplex in $\IAArel$ of the same type as $\Delta'$.
\end{proof}
  
\begin{proof}[Proof of \autoref{I-retraction}]
  Consider the inclusion $\Irel \hookrightarrow \B^m_{2n+m}$. It restricts to the inclusion
  $$\Linkhat_{\Irel}(\Delta) \hookrightarrow  \Linkhat_{\B^m_{2n+m}}(\Delta).$$
  \autoref{B-retraction} yields a simplicial retraction
  $\retraction\colon  \Linkhat_{\B^m_{2n}}(\Delta) \twoheadrightarrow \Linkhat^{<R}_{\B^m_{2n+m}}(\Delta)$
  defined as in \cref{def:I-retraction}.
  By \autoref{retraction-on-non-additive-simplices}, it restricts to a retraction
  \begin{equation*}
  	\retraction\colon  \Linkhat_{\Irel}(\Delta) \twoheadrightarrow \Linkhat^{<R}_{\Irel}(\Delta).\qedhere
  \end{equation*}
\end{proof}

\subsection{Reducing the rank in \texorpdfstring{$\Isigdelrel$}{I\unichar{"005E}\{\unichar{"03C3}, \unichar{"03B4}\}}}

In this subsection, we explain how ideas due to Church--Putman \cite[Section 4.4]{CP} generalise to the symplectic setting, and how they can be used to ``reduce the rank'' in $\Isigdelrel$. Similar ideas are used by Putman in the proof of \cite[Proposition 6.13.4]{put2009}. We continue to use the notation introduced in \cref{def_rank_and_ranked_complexes} and frequently apply the notions defined in \cref{subsec:simplex-types-in-relative-complexes} to refer to simplices in $\Linkhat_{\Isigdelrel}(\badvertex)$. The goal of this subsection is to prove the following symplectic analogue of Church--Putman \cite[Proposition 4.17]{CP}.

\begin{proposition}\label{Isigdel-retraction} 
  Let $n \geq 1$, $m \geq 0$ and let $R > 0$. Let $\badvertex$ be a vertex of $\Isigdelrel$ such that $\rkfn(\badvertex) = R$. Then there exists a subdivision $\sd(\Linkhat_{\Isigdelrel}(\badvertex))$ of $\Linkhat_{\Isigdelrel}(\badvertex)$ containing $\Linkhat_{\Isigdelrel}^{< R}(\badvertex)$ as a subcomplex and a simplicial retraction
  $$\retraction \colon  \sd(\Linkhat_{\Isigdelrel}(\badvertex)) \twoheadrightarrow \Linkhat^{< R}_{\Isigdelrel}(\badvertex)$$
  that extends the retraction introduced in \cref{I-retraction}.
\end{proposition}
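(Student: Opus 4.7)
The strategy closely mirrors the proof of \cref{I-retraction}, now applying Church--Putman's $\BA$-analogue \cite[Proposition 4.17]{CP} in place of \cref{B-retraction}. Forgetting the symplectic form (see \cref{rem:forgetting_symplectic_information} and \cref{table_forgetting_symplectic_info}) gives an inclusion
$$\Linkhat_{\Isigdelrel}(\badvertex) \hookrightarrow \Linkhat_{\BA^m_{2n+m}}(\badvertex),$$
sending standard and $\sigma$ simplices of $\Isigdelrel$ to standard simplices and 2-additive simplices to 2-additive simplices. Church--Putman produce a subdivision $\sd(\Linkhat_{\BA^m_{2n+m}}(\badvertex))$ extending the vertex-level map $\rho$ of \cref{def:I-retraction} to a simplicial retraction onto $\Linkhat^{<R}_{\BA^m_{2n+m}}(\badvertex)$; their subdivision modifies only those 2-additive simplices on which $\rho$ fails to be simplicial, introducing inside each such simplex a single new vertex of the form $\langle (\bar v_1 - a_1\bar\badvertex) \pm (\bar v_2 - a_2 \bar\badvertex)\rangle$.

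The plan is to restrict both the subdivision and the retraction to $\Linkhat_{\Isigdelrel}(\badvertex)$. Two points need to be verified. First, on the non-additive simplices of $\Isigdelrel$ -- i.e.\ standard and $\sigma$ simplices -- the map $\rho$ is already simplicial and lands in $\Linkhat^{<R}_{\Isigdelrel}(\badvertex)$; this is exactly the content of \cref{retraction-on-non-additive-simplices} applied to the standard simplex $\{\badvertex\}$. Second, for each 2-additive simplex $\Delta' = \{v_0, v_1, v_2, w_1,\ldots,w_k\}$ of $\Linkhat_{\Isigdelrel}(\badvertex)$ with $\vec v_0 = \pm \vec v_1 \pm \vec v_2$, every vertex lies in the isotropic summand $\badvertex^\perp \cap \langle \vec e_1,\ldots,\vec e_m\rangle^\perp$ and all pairs are $\omega$-orthogonal. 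The new vertices and subdivided pieces produced by Church--Putman are spans of $\mbZ$-linear combinations of $\bar v_0, \bar v_1, \bar v_2, \bar w_j$, and $\bar\badvertex$; consequently they automatically lie in $\badvertex^\perp$ and remain pairwise $\omega$-orthogonal to the other vertices. Thus the subdivided pieces and their $\rho$-images form valid standard or 2-additive simplices of $\Isigdelrel$, landing in $\Linkhat^{<R}_{\Isigdelrel}(\badvertex)$, and the retraction obtained extends the one from \cref{I-retraction}.

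The main obstacle I anticipate is compatibility between the Church--Putman subdivisions on 2-additive simplices and the ``unchanged'' behaviour on $\sigma$ simplices along common faces. This is where the precise definition of $\Isigdelrel$ is essential: by \cref{def:IA-simplices}, $\Isigdelrel$ contains no mixed simplices, so a 2-additive face and a $\sigma$ face can never sit inside a common simplex of $\Isigdelrel$. The subdivisions on 2-additive simplices are therefore disjoint (except along purely standard faces) from any $\sigma$ simplices, and the pieces glue together globally to yield the required simplicial retraction $\sd(\Linkhat_{\Isigdelrel}(\badvertex)) \twoheadrightarrow \Linkhat^{<R}_{\Isigdelrel}(\badvertex)$.
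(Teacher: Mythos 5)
Your proposal is correct and follows essentially the same route as the paper: restrict Church--Putman's retraction for $\BA^m_{2n+m}$ (the paper's \cref{BA-retraction}) along the form-forgetting inclusion $\Linkhat_{\Isigdelrel}(\badvertex) \hookrightarrow \Linkhat_{\BA^m_{2n+m}}(\badvertex)$, use \cref{retraction-on-non-additive-simplices} for the standard and $\sigma$ simplices, and observe that for 2-additive simplices all vertices of $\retraction(\sd(\Delta))$ lie in the isotropic summand spanned by $\Delta$, $\vec\badvertex$ and $\vec e_1,\dots,\vec e_m$, so the image lands in $\Linkhat^{<R}_{\Isigdelrel}(\badvertex)$. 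Your description of the image of the new barycentric vertex differs slightly from the paper's $\retraction(t(\Theta)) = \langle \overbar{\retraction(v_i)} - \bar\badvertex\rangle$, but this is cosmetic since your argument only uses that it lies in the isotropic span and has rank less than $R$, and your gluing remark is subsumed by the fact that the subdivision is simply the restriction of the global one on $\Linkhat_{\BA^m_{2n+m}}(\badvertex)$.
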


\begin{construction} \label{def:Isigdel-subdivision}
  The following is based on \cite[p.1022]{CP}. Let $\Delta = \{v_0, \dots, v_k\}$ be an \emph{internal} 2-additive simplex in $\Linkhat_{\Isigdelrel}(\badvertex)$ with augmentation core given by $\Theta = \{v_0, v_1, v_2\}$. We may assume that $\bar v_{0} = \bar v_{1} + \bar v_{2}$. In the setting of \cref{Isigdel-retraction}, $\Delta$ is called \emph{carrying} if
  $$\left\lfloor \frac{\rkfn(\bar v_{0})}{R} \right\rfloor \neq \left\lfloor \frac{\rkfn(\bar v_{1})}{R} \right\rfloor + \left\lfloor \frac{\rkfn(\bar v_{2})}{R} \right\rfloor.$$
  The subdivision $\sd(\Linkhat_{\Isigdelrel}(\badvertex))$ of $\Linkhat_{\Isigdelrel}(\badvertex)$ is obtained by placing a new vertex $t(\Theta)$ at the barycentre of every carrying \emph{minimal} 2-additive simplex $\Theta$. This subdivides $\Theta$ into three simplices $\Theta_1$, $\Theta_2$ and $\Theta_3$ involving $t(\Theta)$. The subdivision is then extended to every carrying 2-additive simplex $\Delta = \Theta \ast \Delta'$ with augmentation core $\Theta$ by subdividing $\Delta$ into three simplices $\Theta_1 \ast \Delta'$, $\Theta_2 \ast \Delta'$ and $\Theta_3 \ast \Delta'$. In particular, only internal 2-additive simplices with carrying augmentation core get subdivided when passing from $\Linkhat_{\Isigdelrel}(\badvertex)$ to $\sd(\Linkhat_{\Isigdelrel}(\badvertex))$.
\end{construction}

\begin{construction} \label{def:Isigdel-retraction}
  On vertices $v \in \Linkhat_{\Isigdelrel}(\badvertex)$, the retraction $\retraction$ in \cref{Isigdel-retraction} agrees with the retraction for $\Irel$ (see \cref{def:I-retraction}). Recall that $\bar v \in \{\vec v, - \vec v\}$ denotes a primitive vector in $v$ satisfying $\rkfn(\bar v) \geq 0$ (see \cref{def_rank_and_ranked_complexes}). Then $\retraction(v) = \ll \bar v - a \bar \badvertex \rr$ where $a \in \mbZ$ is chosen such that $$\rkfn(\bar v - a \bar \badvertex) \in [0, R).$$
  On the new vertices $t(\Theta) \in \sd(\Linkhat_{\Isigdelrel}(\badvertex))$ sitting at the barycentres of   minimal carrying internal 2-additive simplices $\Theta = \{v_0, v_1, v_2\} \in \Linkhat_{\Isigdelrel}(\badvertex)$ (see \cref{def:Isigdel-subdivision}), the vertices $\{v_0, v_1, v_2\}$ are used to define $\retraction(t(\Theta))$ as follows: As in \cref{def:Isigdel-subdivision}, we may assume that $\bar v_{0} = \bar v_{1} + \bar v_{2}$. Furthermore, if $\Theta$ is carrying one has $\rkfn(\bar v_{1}), \rkfn(\bar v_{2}) > 0$ (see \cite[p. 1022, l.11 et seq.]{CP}). Therefore, $v_0$ is the \emph{unique} vertex in $\Theta$ maximising $\rkfn(-)$ among $\{v_0, v_1, v_2\}$. Pick an \emph{arbitrary} index $i \in \{1,2\}$. Then the value of $\retraction$ at $t(\Theta)$ is defined as
  $$\retraction(t(\Theta)) \coloneqq \langle \overbar{\retraction(v_i)} - \bar \badvertex \rangle.$$
  One can check that $0 < \rkfn(\overbar{\retraction(v_i)}) < R$ (see \cite[p.1023, l.26 et seq.]{CP}). This implies that 
  $$\rkfn(\overline{\retraction(t(\Theta))}) = R - \rkfn(\overline{\retraction(v_i)}) < R,$$ 
  i.e.\ that $\retraction(t(\Theta)) \in \Linkhat_{\Isigdelrel}^{< R}(\badvertex)$.
 \end{construction}

\cref{Isigdel-retraction} is a consequence of the following.

\begin{lemma}[{ \cite[Proposition 4.17]{CP}}] \label{BA-retraction}
  Let $n \geq 1$, $m \geq 0$ and let $R > 0$. Let $\badvertex$ be a vertex of $\BA^m_{2n+m}$ such that $\rkfn(\badvertex) = R$. Then there exists a subdivision $\sd(\Linkhat_{\BA^m_{2n+m}}(\badvertex))$ of $\Linkhat_{\BA^m_{2n+m}}(\badvertex)$ containing $\Linkhat_{\BA^m_{2n+m}}^{< R}(\badvertex)$ as a subcomplex and a simplicial retraction
  $$\retraction \colon  \sd(\Linkhat_{\BA^m_{2n+m}}(\badvertex)) \twoheadrightarrow \Linkhat^{< R}_{\BA^m_{2n+m}}(\badvertex)$$
  extending the retraction introduced in \cref{B-retraction}.
\end{lemma}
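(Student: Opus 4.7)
The plan is to build the retraction by extending the vertex map from \cref{B-retraction} across the 2-additive simplices, subdividing where a compatible simplicial extension is blocked. On non-2-additive simplices the extension is immediate because those are standard simplices, so by \cref{B-retraction} the vertex-level map $\retraction(v) = \ll \bar v - a \bar \badvertex \rr$ (with $a$ from the Euclidean algorithm forcing $\rkfn(\bar v - a \bar \badvertex) \in [0, R)$) already sends them simplicially into $\Linkhat^{<R}_{\BA^m_{2n+m}}(\badvertex)$. The delicate issue is a 2-additive simplex $\Delta$ with augmentation core $\Theta = \{v_0, v_1, v_2\}$ where $\bar v_0 = \bar v_1 + \bar v_2$; writing $\bar v_i = (\bar v_i - a_i\bar\badvertex) + a_i\bar\badvertex$ we obtain
\[
\bar v_0 - a_0\bar\badvertex = (\bar v_1 - a_1\bar\badvertex) + (\bar v_2 - a_2\bar\badvertex) + (a_1 + a_2 - a_0)\bar\badvertex,
\]
so the retracted triple is genuinely 2-additive precisely when no carry occurs, i.e.\ when $a_0 = a_1 + a_2$; equivalently, when the floor condition defining ``carrying'' in \cref{def:Isigdel-subdivision} fails.

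For non-carrying $\Theta$, I would directly extend $\retraction$ across $\Delta = \Theta \ast \Delta'$ and check that $\{\retraction(v_0), \retraction(v_1), \retraction(v_2)\}$ is either 2-additive or degenerates to a standard face, and that $\retraction(\Delta')$ is the standard face to which we join it. For carrying $\Theta$, I would follow \cref{def:Isigdel-subdivision} and \cref{def:Isigdel-retraction}: place a barycentric vertex $t(\Theta)$, split $\Theta$ into three 2-faces, and define $\retraction(t(\Theta)) = \ll \overbar{\retraction(v_i)} - \bar \badvertex \rr$ using either $i \in \{1,2\}$. Three checks are then needed:
\begin{enumerate}
\item[(a)] well-definedness: the two choices $i = 1, 2$ give the same line, which I would deduce by a direct calculation from the carry identity $a_0 = a_1 + a_2 \pm 1$ together with $\bar v_0 = \bar v_1 + \bar v_2$;
\item[(b)] that $\rkfn(\retraction(t(\Theta))) < R$, which reduces to showing $\rkfn(\overbar{\retraction(v_i)}) \in (0, R)$ at carrying simplices, and this follows from the strict inequalities in the carrying condition;
\item[(c)] that each of the three sub-$2$-simplices of $\Theta$ involving $t(\Theta)$, and their joins with any standard $\Delta' \in \Link_{\BA^m_{2n+m}}(\Theta)$, map to \emph{bona fide} simplices of $\Linkhat^{<R}_{\BA^m_{2n+m}}(\badvertex)$ — concretely, one of the three is again 2-additive (witnessed by a new summation identity) and the other two are standard.
\end{enumerate}

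The hardest step will be (c): it is a case analysis on the residues of $\rkfn(\bar v_i)$ modulo $R$ and on possible coincidences among $\retraction(v_0), \retraction(v_1), \retraction(v_2), \retraction(t(\Theta))$ (for instance $\retraction(v_1)$ could equal $\retraction(v_0)$, causing apparent degeneracy). One has to verify that no such coincidence actually collapses any of the three new 2-faces to a lower-dimensional or non-simplicial configuration and that the ``new'' 2-additive face really has its augmentation core mapping primitively in the correct way. Once this bookkeeping is completed and the compatibility across different minimal carrying cores is checked (using that the subdivision only touches augmentation cores, so faces shared between two such $\Delta$'s are subdivided consistently), the map $\retraction$ is simplicial on the entire subdivided complex. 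It restricts to the identity on $\Linkhat^{<R}_{\BA^m_{2n+m}}(\badvertex)$ by construction and extends \cref{B-retraction} by design, giving the desired retraction; this mirrors the argument of Church--Putman \cite[Proposition 4.17]{CP}.
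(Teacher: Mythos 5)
The paper does not prove this lemma at all: it is imported verbatim from Church--Putman \cite[Proposition 4.17]{CP}, with \cref{def:Isigdel-subdivision}, \cref{def:Isigdel-retraction} and \cref{def:BA-subdivision-and-retraction} merely recalling the subdivision and the vertex rule. Your plan of reconstructing that argument -- extend the vertex map of \cref{B-retraction}, subdivide carrying internal 2-additive cores at their barycentres, and check simpliciality face by face -- is the right route in outline, but two of your concrete sub-claims are false and one family of simplices is missing from the case analysis.

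Your step (a) fails: the two candidate lines are $\ll \bar v_1 - (a_1+1)\bar{\badvertex} \rr$ and $\ll \bar v_2 - (a_2+1)\bar{\badvertex} \rr$ with $a_i = \lfloor \rkfn(\bar v_i)/R \rfloor$, and their equality would force $\bar v_1 - \bar v_2 \in \langle \bar{\badvertex} \rangle$ or $\bar v_0 = \bar v_1 + \bar v_2 \in \langle \bar{\badvertex} \rangle$, both impossible since $\{e_1,\dots,e_m,\badvertex,v_1,v_2\}$ spans a rank-$(m+3)$ summand and $v_0$ lies in $\Linkhat_{\BA^m_{2n+m}}(\badvertex)$. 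So there is no canonical value of $\retraction(t(\Theta))$; the construction fixes a genuinely arbitrary $i\in\{1,2\}$, and no well-definedness statement of the kind you want to prove is available (nor needed) -- the real dependence on this choice is exactly what the paper later has to handle explicitly (the vertex $v^\dag$ in the proof of \cref{lem:retraction-nice-prism-cross-map}). Your step (c) also predicts the wrong outcome: by \cite[p.~1023--1024, Claim 5]{CP}, quoted in the proof of \cref{lem:IAAst-retraction-extends-to-sigma-additive}, the three faces of the subdivided core map to one \emph{internal 2-additive} simplex and two \emph{$\badvertex$-related 2-additive} simplices such as $\{\ll \overrightarrow{\retraction(v_1)} - \vec{\badvertex} \rr, \retraction(v_1), \retraction(v_2)\}$; none of them is standard, and verifying ``standardness'' of the latter two in isolation would not certify that they are simplices of $\Linkhat^{<R}_{\BA^m_{2n+m}}(\badvertex)$ -- one must recognise the 2-additive augmentation core containing $\badvertex$ itself. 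Finally, you only treat cores $\{v_0,v_1,v_2\}$ with all three vertices in the link; 2-additive simplices whose core meets $\{e_1,\dots,e_m,\badvertex\}$, which appear in the link as edges $\{v_0, \ll \vec v_0 \pm \vec e_i \rr\}$ or $\{v_0, \ll \vec v_0 \pm \vec{\badvertex} \rr\}$, are never subdivided but still require the verifications of \cite[Claims 2--3]{CP}, and these are precisely where degenerations such as $\retraction(\ll \vec v_0 \pm \vec{\badvertex}\rr) = \retraction(v_0)$ occur. Such collapses are harmless for a simplicial map, so the ``no collapse'' check you emphasise is not the relevant verification.
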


\begin{construction}\label{def:BA-subdivision-and-retraction}
  In \cref{BA-retraction}, the subdivision $\sd(\Linkhat_{\BA^m_{2n+m}}(\badvertex))$ of $\Linkhat_{\BA^m_{2n+m}}(\badvertex)$ and the retraction $\retraction$ are defined as discussed in \cref{def:Isigdel-subdivision} and \cref{def:Isigdel-retraction}.
\end{construction}

Combining this result with \autoref{retraction-on-non-additive-simplices} yields \autoref{Isigdel-retraction}.

\begin{proof}[Proof of \autoref{Isigdel-retraction}]
  The inclusion $\Isigdelrel \hookrightarrow \BA^m_{2n+m}$ restricts to an inclusion $\Linkhat_{\Isigdelrel}(\badvertex) \hookrightarrow  \Linkhat_{\BA^m_{2n+m}}(\badvertex)$. By \autoref{BA-retraction}, there is a simplicial retraction
  $$\retraction\colon  \sd(\Linkhat_{\BA^m_{2n+m}}(\badvertex)) \twoheadrightarrow \Linkhat^{< R}_{\BA^m_{2n+m}}(\badvertex).$$
  Let $\sd(\Linkhat_{\Isigdelrel}(\badvertex))$ be the restriction of the subdivision $\sd(\Linkhat_{\BA^m_{2n+m}}(\badvertex))$ of $\Linkhat_{\BA^m_{2n+m}}(\badvertex)$ to the subcomplex $\Linkhat_{\Isigdelrel}(\badvertex)$. Restriction yields a simplicial map
  $$\retraction\colon  \sd(\Linkhat_{\Isigdelrel}(\badvertex)) \to \Linkhat^{< R}_{\BA^m_{2n+m}}(\badvertex).$$
  We claim that the image of each simplex $\Delta$ in $\sd(\Linkhat_{\Isigdelrel}(\badvertex))$ is contained in $\Linkhat^{< R}_{\Isigdelrel}(\badvertex)$. \autoref{retraction-on-non-additive-simplices} implies that if $\Delta \in \Linkhat_{\BA^m_{2n+m}}(\badvertex)$ is a standard or $\sigma$ simplex then so is $\retraction(\Delta)$. If $\Delta$ is a (possibly carrying) $2$-additive simplex, then the image $\retraction(\sd(\Delta))$ consists of standard and 2-additive simplices in $\Linkhat^{< R}_{\BA^m_{2n+m}}(\badvertex)$. By definition (see \cref{def:Isigdel-retraction}), every vertex in $\retraction(\sd(\Delta))$ is contained in the \emph{isotropic} summand $\ll \Delta \cup \{\vec \badvertex, \vec e_1, \dots, \vec e_m\} \rr_{\mbZ}$. In particular, the simplices in $\retraction(\sd(\Delta))$ form standard or $2$-additive simplices in $\Linkhat^{<R}_{\Isigdelrel}(\badvertex)$, and cannot be of $\sigma$-type. It follows that restricting the codomain yields a well-defined map
   $$\retraction\colon  \sd(\Linkhat_{\Isigdelrel}(\badvertex)) \to \Linkhat^{<R}_{\Isigdelrel}(\badvertex).$$
   This concludes the proof of the proposition, because this map is by definition the identity on $\Linkhat^{< R}_{\Isigdelrel}(\badvertex)$ (see \cref{def:Isigdel-retraction}). \qedhere
\end{proof}

\subsection{Reducing the rank in \texorpdfstring{$\IArel$}{IA}}

In this subsection, we explain how one can extend \cref{Isigdel-retraction} to $\IArel$. We continue to use the notation introduced in \cref{def_rank_and_ranked_complexes} and frequently apply the notions defined in \cref{subsec:simplex-types-in-relative-complexes} to refer to simplices in $\Linkhat_{\IArel}(\badvertex)$. The goal of this subsection is to prove the following.

\begin{proposition}\label{IA-retraction} 
  Let $n \geq 1$, $m \geq 0$ and let $R > 0$. Let $\badvertex$ be a vertex of $\IArel$ such that $\rkfn(\badvertex) = R$. Then there exists a subdivision $\sd(\Linkhat_{\IArel}(\badvertex))$ of $\Linkhat_{\IArel}(\badvertex)$ containing $\Linkhat_{\IArel}^{< R}(\badvertex)$ as a subcomplex and a simplicial retraction
  $$\retraction \colon  \sd(\Linkhat_{\IArel}(\badvertex)) \twoheadrightarrow \Linkhat^{< R}_{\IArel}(\badvertex)$$
  extending the retraction introduced in \cref{Isigdel-retraction}.
\end{proposition}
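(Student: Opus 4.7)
The plan is to adapt the strategy of \autoref{Isigdel-retraction}: use the inclusion $\IArel \hookrightarrow \BA^m_{2n+m}$ obtained by forgetting the symplectic form (\cref{rem:forgetting_symplectic_information}), under which mixed simplices become 2-additive and $\sigma$ simplices become standard. Applying \autoref{BA-retraction}, restricting the resulting subdivision to $\Linkhat_{\IArel}(\badvertex)$, and using the same vertex-level formula of \cref{def:Isigdel-retraction} (with a specific choice of index), we obtain a candidate simplicial map $\retraction\colon \sd(\Linkhat_{\IArel}(\badvertex)) \to \Linkhat^{<R}_{\BA^m_{2n+m}}(\badvertex)$ that agrees with the retraction of \autoref{Isigdel-retraction} on the common subcomplex $\sd(\Linkhat_{\Isigdelrel}(\badvertex))$. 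The remaining task is to verify that its image lies in $\Linkhat^{<R}_{\IArel}(\badvertex)$. The underlying reason this can work is that $\retraction$ preserves the symplectic pairing on vertices of $\Linkhat_{\IArel}(\badvertex)$: for any $v, w$ in this link, $\omega(\overbar{\retraction(v)}, \overbar{\retraction(w)}) = \omega(\bar v, \bar w)$, since $\omega(\bar \badvertex, -)$ vanishes there.

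For standard, 2-additive, and $\sigma$ simplices, the arguments of \autoref{Isigdel-retraction} go through unchanged: \autoref{retraction-on-non-additive-simplices} handles the first and third types, and the Church--Putman analysis handles the second. The new case is a mixed simplex $\Delta = \{v_0, v_1, \ldots, v_k\}$ with 2-additive augmentation core $\Theta = \{v_0, v_1, v_2\}$ and $\sigma$ pair $\{v_{k-1}, v_k\}$, where $k \geq 3$. If $\Theta$ is non-carrying, then $\Delta$ is not subdivided, the additive relation on $\{\retraction(v_0), \retraction(v_1), \retraction(v_2)\}$ persists as in the proof of \autoref{Isigdel-retraction}, and by the preservation of $\omega$ above, $\retraction(\Delta)$ is (a face of) a mixed simplex in $\Linkhat^{<R}_{\IArel}(\badvertex)$.

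In the carrying case, $\Delta$ is subdivided into three pieces $\Theta_j \ast \{v_3, \ldots, v_k\}$ containing the new vertex $t(\Theta)$, and we set $\retraction(t(\Theta)) = \langle \overbar{\retraction(v_i)} - \bar \badvertex \rangle$. The key subtlety is the choice of $i \in \{1,2\}$: we must ensure that $\retraction(t(\Theta))$ does not form an unexpected $\sigma$ relation with $\retraction(v_k)$, which by the preservation of $\omega$ translates to the condition $\omega(\bar v_i, \bar v_k) = 0$. Since $v_k$ is isotropic to every $v_j$ with $j \neq k-1$, we need $i \neq k-1$; as $k \geq 3$ forces $k - 1 \geq 2$, the uniform choice $i = 1$ works simultaneously across all mixed simplices sharing the carrying core $\Theta$. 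With this choice, $\omega(\overbar{\retraction(t(\Theta))}, \overbar{\retraction(v_j)}) = \omega(\bar v_1, \bar v_j) = 0$ for every $j \in \{3, \ldots, k\}$; combined with the Church--Putman fact that $\retraction(\Theta_j)$ is a valid (standard or 2-additive) simplex in $\BA^m_{2n+m}$, this yields that each image $\retraction(\Theta_j \ast \{v_3,\ldots,v_k\})$ is a valid standard, $\sigma$, 2-additive, or mixed simplex in $\Linkhat^{<R}_{\IArel}(\badvertex)$. The main obstacle is exactly this compatibility check: guaranteeing that the global choice $i = 1$ respects the $\sigma$ pair structure of every mixed simplex containing $\Theta$, which fortunately reduces to the inequality $k - 1 \neq 1$ and so holds uniformly.
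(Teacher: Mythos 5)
Your proof is correct and follows essentially the same route as the paper: restrict the Church--Putman retraction for $\BA^m_{2n+m}$ (\cref{BA-retraction}) along the form-forgetting inclusion, note that on $\sd(\Linkhat_{\Isigdelrel}(\badvertex))$ it agrees with the retraction of \cref{Isigdel-retraction}, and then check the symplectic type of the image of (possibly subdivided) mixed simplices, splitting into the non-carrying and carrying cases exactly as the paper does, with \cref{retraction-on-non-additive-simplices} handling the $\sigma$ part.

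The one point where you add something --- the claim that the choice of $i\in\{1,2\}$ in $\retraction(t(\Theta))=\langle\overbar{\retraction(v_i)}-\bar\badvertex\rangle$ is ``the main obstacle'' --- is not a real issue, and your justification for it would not suffice if it were. In a mixed simplex of $\IArel$, as these are used throughout the paper (every mixed simplex decomposes as $\Theta\ast\Delta'$ with $\Theta$ a minimal $2$-additive simplex and $\Delta'$ a $\sigma$ simplex whose vertices are isotropic to all of $\Theta$; compare the proof of \cref{lem_isom_link_for_making_regular}), the $\sigma$ pair is disjoint from and isotropic to the entire additive core. A configuration in which the $\sigma$ partner of $v_k$ lies inside the core, such as $\{\langle\vec e_1+\vec e_2\rangle, e_1, e_2, f_2\}$, is what the paper calls a $2$-skew-additive simplex; it belongs to $\IAA$ and is not among the mixed simplices of $\IA$ treated here. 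Consequently $\omega(\bar v_i,\bar v_k)=0$ holds for both $i=1$ and $i=2$, any choice works, and the arbitrary choice already fixed in \cref{def:Isigdel-retraction} extends verbatim --- which is exactly what the paper's proof does. Note moreover that if such degenerate ``mixed'' simplices did occur, your uniform choice $i=1$ would not obviously be well-defined: the labelling of the two non-apex core vertices is made per simplex, and two mixed simplices sharing the same carrying core could have different core vertices as $\sigma$ partners, so no single value of $\retraction(t(\Theta))$ would avoid all of them; the compatibility check would not reduce to the inequality $k-1\neq 1$. Fortunately the concern is vacuous, and with it removed your argument coincides with the paper's.
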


Recall that $\IArel$ is obtained from $\Isigdelrel$ by attaching mixed simplices. For the proof of \cref{IA-retraction}, we hence only need to verify that the retraction for $\Isigdelrel$ (see \cref{Isigdel-retraction}) extends over mixed simplices.

\begin{construction}
	\label{def:IA-subdivision}
	The following is based on \cite[p.1022]{CP}. We explain how the subdivision $\sd(\Linkhat_{\Isigdelrel}(\badvertex))$, described in \cref{def:Isigdel-subdivision}, can be extended over mixed simplices to obtain the subdivision $\sd(\Linkhat_{\IArel}(\badvertex))$ of $\Linkhat_{\IArel}(\badvertex)$ in \cref{Isigdel-retraction}. For this, let $\Delta$ be mixed simplex in $\Linkhat_{\IArel}(\badvertex)$. Then $\Delta = \Theta \ast \Delta'$, where $\Theta$ is a minimal 2-additive simplex and $\Delta'$ is a $\sigma$ simplex. If $\Theta$ is internal minimal 2-additive and carrying in the sense of \cref{def:Isigdel-subdivision}, it is subdivided into three simplices $\Theta_1$, $\Theta_2$ and $\Theta_3$ in $\sd(\Linkhat_{\Isigdelrel}(\badvertex))$ by placing a new vertex $t(\Theta)$ at its barycentre. Hence, to extend  $\sd(\Linkhat_{\Isigdelrel}(\badvertex))$ to $\sd(\Linkhat_{\IArel}(\badvertex))$, we subdivide such \emph{carrying} mixed simplices $\Delta$ into three simplices $\Theta_1 \ast \Delta'$, $\Theta_2 \ast \Delta'$ and $\Theta_3 \ast \Delta'$.
\end{construction}

\begin{construction}
	\label{def:IA-retraction}
	On the vertices $v \in \Linkhat_{\IArel}(\badvertex)$ and the new vertices $t(\Theta) \in \sd(\Linkhat_{\IArel}(\badvertex))$ (see \cref{def:IA-subdivision}) the retraction $\retraction$ in \cref{IA-retraction} is defined exactly as described in \cref{def:Isigdel-retraction}.
\end{construction}

We are now ready to prove \cref{IA-retraction}.

\begin{proof}[Proof of \cref{IA-retraction}]
  Consider the inclusion $\IArel \hookrightarrow \BA^m_{2n+m}.$ It restricts to an inclusion $\Linkhat_{\IArel}(\badvertex) \hookrightarrow  \Linkhat_{\BA^m_{2n+m}}(\badvertex)$. By \autoref{BA-retraction}, there is a simplicial retraction
  $$\retraction\colon  \sd(\Linkhat_{\BA^m_{2n+m}}(\badvertex)) \twoheadrightarrow \Linkhat^{< R}_{\BA^m_{2n+m}}(\badvertex).$$
  In the proof of \cref{Isigdel-retraction}, we explained why $\retraction$ restricts to a retraction 
  $$\retraction\colon  \sd(\Linkhat_{\Isigdelrel}(\badvertex)) \to \Linkhat^{< R}_{\Isigdelrel}(\badvertex).$$
  Hence, it suffices to focus on mixed simplices $\Delta \in \Linkhat_{\IArel}(\badvertex)$. Note that $\Delta$ can be written as $\Delta = \Theta \ast \Delta'$, where $\Theta$ is a minimal 2-additive simplex and $\Delta'$ is a $\sigma$ simplex. In particular, forgetting the symplectic information yields a 2-additive simplex $\Delta \in \Linkhat_{\BA^m_{2n+m}}(\badvertex)$ that might be subdivided when passing to $\sd(\Linkhat_{\BA^m_{2n+m}}(\badvertex))$ (see \cref{def:BA-subdivision-and-retraction}). We know that $\retraction(\sd(\Delta)) \in \Linkhat^{< R}_{\BA_{2n+m}^m}(\badvertex)$, and need to check that $\retraction(\sd(\Delta)) \in \Linkhat^{< R}_{\Isigdelrel}(\badvertex)$.
  
  To see this, we first note that by \cref{retraction-on-non-additive-simplices} the image $\retraction(\Delta')$ of $\Delta'$ is a $\sigma$ simplex of the same dimension as $\Delta'$.
  
  If $\Theta$ is not carrying, then $\retraction(\Delta) = \retraction(\Theta) \ast \retraction(\Delta')$ is either a standard simplex or a $2$-additive with augmentation core $\retraction(\Theta)$ in $\Linkhat^{< R}_{\BA_{2n+m}^m}(\badvertex)$ (see \cite[p.1021-1023, Claim 1-4]{CP}). Therefore, we conclude that $\retraction(\Delta) = \retraction(\Theta) \ast \retraction(\Delta')$ is either a $\sigma$ simplex or a mixed simplex in $\Linkhat_{\IArel}^{<R}(\badvertex)$.
  
  If $\Theta$ is carrying, then $\Theta$ is subdivided into three simplices $\Theta_{1}$, $\Theta_{2}$, $\Theta_{3}$ when passing to $\sd(\Linkhat_{\BA^m_{2n+m}}(v))$ (exactly as described in \cref{def:Isigdel-subdivision}). These three simplices have the property that $\retraction(\Theta_{i})$ is either internal $2$-additive or $\badvertex$-related $2$-additive (compare with \cite[Claim 5, p.1023-1024]{CP}) for $1 \leq i \leq 3$. Therefore, $\retraction(\Theta_{i} \ast \Delta') = \retraction(\Theta_{i}) \ast \retraction(\Delta')$ is a mixed simplex for $1 \leq i \leq 3$. We conclude that the retraction for $\BA^m_n$ restricts to a simplicial retraction
   \begin{equation*}
   	\sd(\Linkhat_{\IArel}(\badvertex)) \twoheadrightarrow \Linkhat^{< R}_{\IArel}(\badvertex). \qedhere
   \end{equation*}
\end{proof}

\subsection{Reducing the rank in \texorpdfstring{$\IAAstrel$}{IAA*}}

In this subsection, we explain how work of Brück--Miller--Patzt--Sroka--Wilson \cite[Section 3]{Brueck2022} generalises to the symplectic setting, and how it can be used to ``reduce the rank'' in $\IAAstrel$. We continue to use the notation introduced in \cref{def_rank_and_ranked_complexes} and frequently apply the notions defined in \cref{subsec:simplex-types-in-relative-complexes} to refer to simplices in $\Linkhat_{\IAAstrel}(\badvertex)$. The goal of this subsection is to prove the following extension of \cref{IA-retraction}.

\begin{proposition} \label{IAAst-retraction}
  Let $n \geq 1$, $m \geq 0$, and let $R > 0$. Let $\badvertex$ be a vertex of $\IAAstrel$ such that $\rkfn(\badvertex) = R$. Then there exists a subdivision $\sd(\Linkhat_{\IAAstrel}(\badvertex))$ of $\Linkhat_{\IAAstrel}(\badvertex)$ containing $\Linkhat_{\IAAstrel}^{< R}(\badvertex)$ as a subcomplex and a simplicial retraction
  $$\retraction \colon  \sd(\Linkhat_{\IAAstrel}(\badvertex)) \twoheadrightarrow \Linkhat^{< R}_{\IAAstrel}(\badvertex)$$
  extending the retraction introduced in \cref{Isigdel-retraction}.
\end{proposition}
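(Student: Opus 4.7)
My plan is to prove \cref{IAAst-retraction} by the same template used for \cref{Isigdel-retraction} and \cref{IA-retraction}: forget the symplectic form to land in the complexes of \cite{Brueck2022}, use a ``$\BAA$-retraction'' analogous to \cref{BA-retraction}, and then check that this retraction respects the extra symplectic constraints imposed by $\IAAstrel$. Concretely, the inclusion $\IAAstrel \hookrightarrow \BAA^m_{2n+m}$ that forgets $\omega$ restricts to
\[ \Linkhat_{\IAAstrel}(\badvertex) \hookrightarrow \Linkhat_{\BAA^m_{2n+m}}(\badvertex), \]
and I will invoke the retraction theorem for $\BAA^m_{2n+m}$ from \cite[Section 3]{Brueck2022}, which provides a subdivision $\sd(\Linkhat_{\BAA^m_{2n+m}}(\badvertex))$ and a simplicial retraction $\retraction$ onto $\Linkhat_{\BAA^m_{2n+m}}^{<R}(\badvertex)$ that extends the $\BA^m_{2n+m}$-retraction of \cref{BA-retraction}. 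I will then define $\sd(\Linkhat_{\IAAstrel}(\badvertex))$ as the restriction of this subdivision to $\Linkhat_{\IAAstrel}(\badvertex)$.

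The core task is to show that $\retraction$ restricted to $\sd(\Linkhat_{\IAAstrel}(\badvertex))$ lands inside $\Linkhat^{<R}_{\IAAstrel}(\badvertex)$ and not merely in $\Linkhat^{<R}_{\BAA^m_{2n+m}}(\badvertex)$. For standard, $\sigma$, $2$-additive, and mixed simplices this is already handled by the proofs of \cref{Isigdel-retraction} and \cref{IA-retraction}. For the new simplex types contributed by $\IAAstrel$, namely $3$-additive, double-triple, and double-double, these are all \emph{internal} simplices of $\IAAstrel$ (their augmentation cores contain no vertex of $\{e_1,\dots,e_m\}$). Consequently every vertex $v$ of such a simplex $\Delta'$ satisfies $\vec v \in \badvertex^\perp$, so for any choice of $a, a' \in \Z$ one has
\[ \omega(\bar v - a \bar \badvertex,\, \bar v' - a' \bar \badvertex) = \omega(\bar v, \bar v') = 0 \]
and analogously $\omega(\vec e_i, \bar v - a\bar\badvertex) = 0$ for $1 \leq i \leq m$. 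This is precisely the argument appearing in \cref{retraction-on-non-additive-simplices}, now applied to the additive vertices built into the augmentation cores of the new simplex types, as well as to the standard ``tail'' vertices.

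From this $\omega$-preservation together with the classification of $\retraction(\sd(\Delta'))$ provided by the $\BAA^m_{2n+m}$-retraction (where each image simplex is standard, $2$-additive, $3$-additive, double-triple, or double-double in $\BAA^m_{2n+m}$), one concludes that each image simplex has all pairwise $\omega$-pairings equal to $0$ and is therefore of the same type in $\IAAstrel$ — no simplex can acquire a $\sigma$-edge. Hence $\retraction$ restricts to a simplicial retraction $\sd(\Linkhat_{\IAAstrel}(\badvertex)) \twoheadrightarrow \Linkhat^{<R}_{\IAAstrel}(\badvertex)$. Since the $\BAA^m_{2n+m}$-retraction extends the $\BA^m_{2n+m}$-retraction by construction, the resulting retraction extends the one of \cref{IA-retraction}, completing the proof.

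The main obstacle I anticipate is the bookkeeping for the three new simplex types: in the $\BAA^m_{2n+m}$-retraction of \cite{Brueck2022}, these simplices are subdivided according to a ``carrying'' condition on each additive core (with double-triple and double-double simplices carrying \emph{two} such cores), so I must verify carefully that on each subsimplex the $\omega$-relations remain vacuous and no spurious $\sigma$-pair is introduced. All of this, however, is controlled by the single observation that internal simplices in $\Linkhat_{\IAAstrel}(\badvertex)$ have their vertices in $\badvertex^\perp$, so the rank-reducing shifts commute with $\omega$.
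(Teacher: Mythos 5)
Your proposal is correct and follows essentially the same route as the paper's proof: restrict the $\BAA^m_{2n+m}$-retraction of \cite{Brueck2022} along the form-forgetting inclusion, reuse \cref{Isigdel-retraction}/\cref{IA-retraction} for the old simplex types, and observe that for $3$-additive, double-triple and double-double simplices every vertex of $\retraction(\sd(\Delta))$ lies in an isotropic span containing $\badvertex$ and $e_1,\dots,e_m$, so no $\sigma$-edge can appear and the image simplices keep their type in $\Linkhat^{<R}_{\IAAstrel}(\badvertex)$. One small inaccuracy: these three simplex types need \emph{not} be internal (the paper notes, e.g.\ before \cref{link_IAAstrel_doubletriple_doubledouble}, that double-triple and double-double simplices can be external), but this does not harm your argument, since the property you actually use --- all vertices lie in $\badvertex^{\perp}$ and in $\langle e_1,\dots,e_m\rangle^{\perp}$ with all pairwise pairings zero --- follows from the definition of $\Linkhat$ and the isotropy of these simplex types, independently of internality.
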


\cref{IAAst-retraction} is a consequence of the following.

\begin{lemma}[{\cite[Proposition 3.39.]{Brueck2022}}] \label{BAA-retraction}
  Let $n \geq 1$, $m \geq 0$ and $R > 0$. Let $\badvertex$ be a vertex of $\BAA_{2n+m}^m$ such that $\rkfn(\badvertex) = R$. Then there exists a subdivision $\sd(\Linkhat_{\BAA_{2n+m}^m}(\badvertex))$ of $\Linkhat_{\BAA_{2n+m}^m}(\badvertex)$ containing $\Linkhat_{\BAA_{2n+m}^m}^{< R}(\badvertex)$ and a simplicial retraction
  $$\retraction \colon  \sd(\Linkhat_{\BAA_{2n+m}^m}(\badvertex)) \twoheadrightarrow \Linkhat_{\BAA_{2n+m}^m}^{< R}(\badvertex)$$
  extending the retraction introduced in \cref{BA-retraction}.
\end{lemma}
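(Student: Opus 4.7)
The plan is to mimic the proof of \cref{IA-retraction}: use the forgetful inclusion $\IAAstrel \hookrightarrow \BAA^m_{2n+m}$, let $\sd(\Linkhat_{\IAAstrel}(\badvertex))$ be the restriction of the subdivision from \cref{BAA-retraction} to the subcomplex $\Linkhat_{\IAAstrel}(\badvertex)$, and verify that the $\BAA$-retraction $\retraction$ sends every simplex of this subdivision into $\Linkhat_{\IAAstrel}^{<R}(\badvertex)$. The simplicial structure, the containment of $\Linkhat_{\IAAstrel}^{<R}(\badvertex)$ as an unsubdivided subcomplex, and the retraction property on the level of abstract simplices then come for free from \cref{BAA-retraction}; only the compatibility with the symplectic form needs new input.

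The key observation is a symplectic preservation lemma. Every vertex $v \in \Linkhat_{\IAAstrel}(\badvertex)$ satisfies $\omega(\vec v, \vec \badvertex) = 0$ and $\omega(\vec v, \vec e_i) = 0$ for $1 \leq i \leq m$, so $\bar v - a \bar \badvertex$ remains in $\badvertex^{\perp} \cap \langle \vec e_1, \dots, \vec e_m\rangle^{\perp}$. Moreover, for any two vertices $v, v'$ of a simplex $\Delta \in \Linkhat_{\IAAstrel}(\badvertex)$ and any integers $a, a'$,
$$\omega(\bar v - a \bar \badvertex, \bar v' - a' \bar \badvertex) = \omega(\bar v, \bar v'),$$
so every isotropy relation and every $\sigma$-pairing among the vertices of $\Delta$ is inherited by $\retraction(\Delta)$. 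The same reasoning extends to the barycentric vertices $t(\Theta) = \langle \overbar{\retraction(v_i)} - \bar \badvertex\rangle$ introduced at carrying minimal $2$-additive faces $\Theta \subseteq \Linkhat_{\IAAstrel}(\badvertex)$, since their primitive vectors again lie in $\badvertex^{\perp} \cap \langle \vec e_1, \dots, \vec e_m\rangle^{\perp}$.

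With this in place, the proof concludes by a case analysis on the simplex types of $\IAAstrel$. Standard and $\sigma$ simplices are handled directly by \cref{retraction-on-non-additive-simplices}; $2$-additive and mixed simplices are settled exactly as in the proof of \cref{IA-retraction}, combined with the symplectic preservation above; and for the three new types inherited from $\BAA$ (3-additive, double-triple, double-double), \cref{BAA-retraction} already produces $\retraction(\sd(\Delta))$ as a union of simplices whose underlying $\BAA$-types lie in $\{\text{standard}, 2\text{-additive}, 3\text{-additive}, \text{double-triple}, \text{double-double}\}$, and the symplectic preservation upgrades each such image to a simplex of the same type in $\IAAstrel^{<R}$. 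The compatibility with \cref{Isigdel-retraction} is automatic, since both retractions are obtained by restricting \cref{BAA-retraction}/\cref{BA-retraction}, which agree on their common domain. The main obstacle will be purely bookkeeping: tracking, for the three $\BAA$-inherited types, that no piece of $\retraction(\sd(\Delta))$ acquires a pairing forbidden in $\IAAstrel$ — and the symplectic preservation argument above shows precisely that this cannot happen.
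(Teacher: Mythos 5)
Your proposal is proving the wrong statement. The lemma you were asked about, \cref{BAA-retraction}, concerns the complex $\BAA_{2n+m}^m$ — a purely linear object over $\mbZ^{2(m+n)}$ with the symplectic form forgotten — and it is a \emph{cited} external result: the paper records it as \cite[Proposition 3.39]{Brueck2022} and offers no proof of it at all, exactly because it is imported as a black box from the work of Brück--Miller--Patzt--Sroka--Wilson on $\SL_n(\mbZ)$. What you have written is instead a reconstruction of the paper's proof of \cref{IAAst-retraction}, the \emph{next} result, which is about $\Linkhat_{\IAAstrel}(\badvertex)$ and genuinely has symplectic content.

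There are two concrete signs of the mismatch. First, every object in your argument — $\Linkhat_{\IAAstrel}(\badvertex)$, $\sigma$ simplices, mixed simplices, the ``symplectic preservation lemma'' showing that $\omega(\bar v - a\bar\badvertex, \bar v' - a'\bar\badvertex) = \omega(\bar v, \bar v')$ — lives in the symplectic world, whereas $\BAA_{2n+m}^m$ has no form, no $\sigma$ edges, and nothing symplectic to preserve; the cases you carefully discuss are vacuous in the $\BAA$ setting. Second, and more fatally, your argument \emph{uses} \cref{BAA-retraction} as its principal input (``let $\sd(\Linkhat_{\IAAstrel}(\badvertex))$ be the restriction of the subdivision from \cref{BAA-retraction}\dots''), so if you tried to read it as a proof of \cref{BAA-retraction} itself the argument would be circular. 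The correct answer to the problem you were posed is simply: this lemma is not proved in the present paper; it is a restatement, adapted to the relative notation, of \cite[Proposition 3.39]{Brueck2022}, and the substance of its proof (the construction of the carrying subdivision and the verification that the Euclidean-algorithm retraction is simplicial on $\BAA$) is carried out there. Your actual argument belongs under \cref{IAAst-retraction}, and for that statement it is essentially the same as the paper's proof — same forgetful inclusion, same restriction of the $\BAA$-subdivision, same non-additive preservation lemma (\cref{retraction-on-non-additive-simplices}), same case split on simplex types.
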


\begin{construction}\label{def:IAAst-subdivision}
	The following is based on \cite[Section 3]{Brueck2022}. In \cref{IAAst-retraction}, the subdivision $\sd(\Linkhat_{\IAAstrel}(\badvertex))$ of $\Linkhat_{\IAAstrel}(\badvertex)$ has the property that only certain ``carrying'' 2-additive, mixed, 3-additive, double-double and double-triple simplices are subdivided.  Carrying 2-additive and mixed simplices are defined exactly as in \cref{def:Isigdel-subdivision} and \cref{def:IA-subdivision}. The relevant properties of the other carrying simplices are discussed in \cref{def:IAAst-retraction} below. Using the inclusion $\Linkhat_{\IAAstrel}(\badvertex) \hookrightarrow  \Linkhat_{\BAA_{2n+m}^m}(\badvertex)$, the subdivision $\sd(\Linkhat_{\IAAstrel}(\badvertex))$ is obtained as a restriction of the subdivision $\sd(\Linkhat_{\BAA_{2n+m}^m}(\badvertex))$ in \cref{BAA-retraction} to $\Linkhat_{\IAAstrel}(\badvertex)$. The subdivision $\sd(\Linkhat_{\IArel}(\badvertex))$ of $\Linkhat_{\IArel}(\badvertex)$ occurring in \cref{IA-retraction} is a subcomplex of $\sd(\Linkhat_{\IAAstrel}(\badvertex))$ (compare with \cite[Proposition 3.39.]{Brueck2022}).
\end{construction}

\begin{construction}
	\label{def:IAAst-retraction}
	The following is based on \cite[Section 3]{Brueck2022}. The restriction of the map $\retraction$ in \cref{IAAst-retraction} to $\sd(\Linkhat_{\IArel}(\badvertex))$ yields the retraction in \cref{IA-retraction} (compare \cite[Proposition 3.39.]{Brueck2022}). Furthermore, if $\Delta$ is a 2-additive, 3-additive, double-double or double-triple simplex, then $\Delta = \Theta \ast \Delta'$, where $\Theta$ is the augmentation core of $\Delta$ and $\Delta'$ is a standard simplex. If $\Theta$ is ``carrying'', then the subdivision $\sd(\Delta)$ of $\Delta$ is of the form $\sd(\Delta) = \sd(\Theta)\ast \Delta'$ for some subdivision of $\sd(\Theta)$ of $\Theta$. In this case, it holds that $\retraction(\sd(\Delta)) = \retraction(\sd(\Theta)) \ast \retraction(\Delta')$ and that every vertex $\retraction(v)$ contained in $\retraction(\sd(\Theta))$ satisfies $\retraction(v) \subset \ll \Theta \cup \{\vec \badvertex, \vec e_1, \dots, \vec e_m\} \rr.$
\end{construction}

Combining \cref{BAA-retraction} with \autoref{retraction-on-non-additive-simplices}, we can prove \cref{IAAst-retraction}.

\begin{proof}[Proof of \autoref{IAAst-retraction}]
  The inclusion $\IAAstrel \hookrightarrow \BAA_{2n+m}^m$ restricts to an inclusion $$\Linkhat_{\IAAstrel}(\badvertex) \hookrightarrow  \Linkhat_{\BAA_{2n+m}^m}(\badvertex).$$ By \autoref{BAA-retraction}, there is a simplicial retraction
  $$\retraction\colon  \sd(\Linkhat_{\BAA_{2n+m}^m}(\badvertex)) \twoheadrightarrow \Linkhat^{<R}_{\BAA_{2n+m}^m}(\badvertex).$$
  Let $\sd(\Linkhat_{\IAAstrel}(\badvertex))$ be the restriction of the subdivision $\sd(\Linkhat_{\BAA_{2n+m}^m}(\badvertex))$ of $\Linkhat_{\BAA_{2n+m}^m}(\badvertex)$ to the subcomplex $\Linkhat_{\IAAstrel}(\badvertex)$. Restricting the domain of $\retraction$ to $\sd(\Linkhat_{\IAAstrel}(\badvertex))$ yields a simplicial map
  $$\retraction\colon  \sd(\Linkhat_{\IAAstrel}(\badvertex)) \to \Linkhat^{<R}_{\BAA_{2n+m}^m}(\badvertex).$$
  We claim that the image $\retraction(\Delta)$ of each simplex $\Delta$ in $\sd(\Linkhat_{\IAAstrel}(\badvertex))$ is contained in $\Linkhat^{<R}_{\IAAstrel}(\badvertex)$. By \autoref{IA-retraction} and because the retraction in \cref{BAA-retraction} is an extension of the one in \cref{BA-retraction} (see \cref{def:IAAst-subdivision} and \cref{def:IAAst-retraction}), this holds for all simplices except possibly 3-additive, double-double and double-triple simplices. If $\Delta$ is a (possibly ``carrying'') simplex of this type in $\Linkhat_{\IAAstrel}(\badvertex)$, then the image $\retraction(\sd(\Delta))$ consists of standard, $2$-additive, $3$-additive, double-double and double-triple simplices in $\Linkhat^{<R}_{\BAA_{2n+m}^m}(\badvertex)$. By definition (see \cref{def:IAAst-retraction}), every vertex in $\retraction(\sd(\Delta))$ is contained in the \emph{isotropic} summand $\ll \Delta \cup \{\vec \badvertex, \vec e_1, \dots, \vec e_m\} \rr$ of $\mbZ^{2(m+n)}$. In particular, the simplices contained in $\retraction(\sd(\Delta))$ form standard, $2$-additive, $3$-additive, double-double or double-triple simplices in $\Linkhat^{<R}_{\IAAstrel}(\badvertex)$. It follows that restricting the codomain yields a well-defined map
  $$\retraction\colon  \sd(\Linkhat_{\IAAstrel}(\badvertex)) \to \Linkhat^{<R}_{\IAAstrel}(\badvertex).$$
  This completes the proof.
\end{proof}

\subsection{Reducing the rank in \texorpdfstring{$\IAArel$}{IAA}}

In this final subsection, we explain how one can ``reduce the rank'' in $\IAArel$ by proving \cref{IAA-retraction}. In contrast to the previous subsections, it is important that we are able to reduce the rank of maps $\phi\colon  M \to \Linkhat_{\IAArel}(\badvertex)$ in such a way that weak regularity properties are preserved. This is used later, in the proof that the complex $\IAArel$ is highly connected, to construct regular maps in $\IAArel$; see the proof of \cref{prop_cut_out_bad_vertices}. Throughout this subsection, we make the standing assumption that
\begin{equation*}
	n\geq 2,\, m\geq 0 \text{ and } R > 0.
	\tag{Standing assumption}
\end{equation*}
In \cref{def_regular_maps}, we explained what weak regularity means for maps with codomain $\IAArel$. The following definition clarifies what we mean by a weakly regular map with codomain $\Linkhat_{\IAArel}(\badvertex)$.

\begin{definition} \label{def:weak-regularity-in-link}
	Let $M$ be a combinatorial manifold, $\badvertex \in \IAArel$ be a vertex and consider a simplicial map $\phi\colon  M \to \Linkhat_{\IAArel}(\badvertex)$. Then $\phi$ is called \emph{weakly regular} if for some (hence any) symplectic isomorphism $\mbZ^{2(m+n)} \to \mbZ^{2(m+n)}$ fixing $e_1, \dots, e_m$ and sending $\badvertex$ to $e_{m+1}$, the induced map
	$$\phi\colon  M \to \Linkhat_{\IAArel}(\badvertex) \cong \IAArel[n-1][m+1]$$
	is weakly regular in the sense of \cref{def_regular_maps}.
\end{definition}

Equivalently, one can adapt the definition of cross maps and regularity in \cref{sec:regular-maps} to maps $\phi\colon  M \to \Linkhat_{\IAArel}(\badvertex)$ as follows:
\begin{enumerate}
	\item In the definition of cross maps (see \cref{def:cross-maps}), replace $\IAArel$ by $\Linkhat_{\IAArel}(\badvertex)$, and allow $\vertexvar \in \{e_1, \dots, e_m, \badvertex\}$ in the part concerning \emph{external 2-skew-additive} cross maps.
	\item In the definition of regular maps (see \cref{def_regular_maps}), replace $\IAArel$ by $\Linkhat_{\IAArel}(\badvertex)$, and allow $\vertexvar \in \{e_1, \dots, e_m, \badvertex\}$  in \cref{wprism-condition-b} of the part concerning \emph{weakly prism-regular}.
\end{enumerate}

\begin{remark}
	\label{rem:problematic-simplices}
	In contrast to \cref{I-retraction}, \cref{Isigdel-retraction} and \cref{IA-retraction}, the authors have not been able to extend the a retraction in \cref{IAAst-retraction} over a subdivision of $\Linkhat_{\IAArel}(\badvertex)$. In particular, we did not find a suitable definition on the following simplices: Let $\Delta = \{v_0, v_1, v_2, v_3\}$ be a minimal internal 2-skew-additive simplex in $\Linkhat_{\IAArel}(\badvertex)$. The 2-additive face $\{v_0, v_1, v_2\}$ of $\Delta$ contains a unique vertex $\hat v \in \{v_0, v_1, v_2\}$ such that $\{\hat v, v_3\}$ is not a $\sigma$ edge. We say that a 2-skew-additive simplex with augmentation core $\Delta$ is \emph{inessential} if $\rkfn(\hat v)$ is the unique maximum of $\rkfn(-)$ on $\{v_0, v_1, v_2\}$.
\end{remark}

The proof of \cref{IAA-retraction} consists of three steps: In Step 1, we replace $\phi$ by a ``nice'' weakly regular map $\phi'$ with $\phi|_{S^{k-1}}=\phi'|_{S^{k-1}}$ . This is to avoid inessential simplices (see \cref{rem:problematic-simplices}), i.e.\ the image of $\phi'$ does not contain any such simplices. In Step 2, we explain how one can extend the retraction for $\IAAst$ (see \cref{IAAst-retraction}) over the image of cross maps that occur in such ``nice'' weakly regular maps $\phi'$. In Step 3, this  and Zeeman's relative simplicial approximation theorem (\cref{zeeman-relative-simplicial-approximation}) are used to construct $\psi$ from $\phi'$.

\subsubsection{Step 1: Essential prisms in weakly regular maps}

In this subsection, we explain how one can replace a weakly regular map $\phi$ into $\Linkhat_{\IAArel}(\badvertex)$ by a ``nice'' weakly regular map $\phi'$, whose image contains no inessential simplices (see \cref{rem:problematic-simplices}). The weak regularity of $\phi$ (see \cref{def:weak-regularity-in-link}) implies that every inessential 2-skew-additive simplex in image of $\phi$ has to be contained in the image of a prism cross map in $\phi$. This leads us to the next definition.

\begin{definition} \label{def:niceprism} Let $\badvertex \in \IAArel$.
  \begin{enumerate}
  \item An internal 2-skew-additive simplex  in $\Linkhat_{\IAArel}(\badvertex)$ with augmentation core given by $\Delta = \{v_0, v_1, v_2, v_3\}$ is called \emph{essential} if there exists a vertex $m^v$ of the 2-additive face $\{v_0, v_1, v_2\}$ of $\Delta$ such that $\{m^v, v_3\}$ is a $\sigma$ edge in $\Delta$ and $\rkfn(m^v)$ is the maximum value $\rkfn(-)$ takes on $\{v_0, v_1, v_2\}$. Otherwise, the simplex is \emph{inessential} (compare with \cref{rem:problematic-simplices}).
  \item Let $\phi\colon  D^k \to \Linkhat_{\IAArel}(\badvertex)$ be weakly regular. We call a pair $(P, \phi|_{P})$, where $P \subset D^k$ is a subcomplex, a \emph{prism in $\phi$} if $\phi|_{\Star_{D^k}(P)}$ is a prism cross map (compare with \cref{def_regular_maps}).
  \item Let $\phi\colon  D^k \to \Linkhat_{\IAArel}(v)$ be a weakly regular map and let $(P, \phi|_P)$ be a prism in $\phi$. We say that $(P, \phi|_P)$ is \emph{essential} if both internal 2-skew-additive simplices in $\phi(P)$ are essential (see \cref{figure_max_path} for an equivalent, pictorial description of this condition). Otherwise, $(P, \phi|_P)$ is called \emph{inessential}.
  \end{enumerate}
\end{definition}

\begin{figure}
\begin{center}
\includegraphics{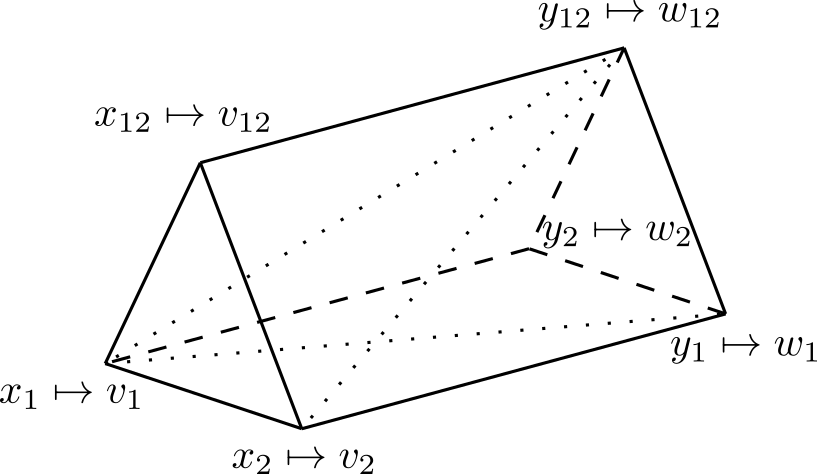}
\end{center}
\caption{Example of a prism $(P,\phi)$ in a weakly regular map, where the dotted lines in $P$ are mapped to $\sigma$ edges. $(P,\phi)$ is essential if the maximum of $\rkfn(-)$ on $\{v_1,v_2,v_{12}\}$ and $\{w_1,w_2,w_{12}\}$ is attained on $\{v_1, v_2\}$ and $\{w_1, w_{12}\}$, respectively. Note that $\{v_1, v_2, w_1, w_{12}\}$ are exactly the vertices of the $\sigma$ edge path in $\phi(P)$.}
\label{figure_max_path}
\end{figure}

A weakly regular map $\phi\colon  D^k \to \Linkhat_{\IAArel}(\badvertex)$ is ``nice'' if every prism in $\phi$ is essential. The following lemma allows us to assume this property.

\begin{lemma} \label{lem:nice-regular-replacements}
  Let $\badvertex \in \IAArel$ be a vertex with $\rkfn(\badvertex) = R > 0$. Given a commutative diagram of simplicial maps
  $$
  \begin{tikzcd}
    S^{k-1} \arrow[r, "\phi|"] \arrow[d, hook] & \Linkhat_{\IAArel}^{<R}(\badvertex) \arrow[d, hook]\\
    D^k \arrow[r, "\phi"] & \Linkhat_{\IAArel}(\badvertex)
  \end{tikzcd}
  $$
  such that $\phi$ is (weakly) regular (see \cref{def_regular_maps}). Then there is a (weakly) regular map $\phi'\colon  D^k \to \Linkhat_{\IAArel}(\badvertex)$ such that $\phi'|_{S^{k-1}} = \phi|_{S^{k-1}}$ and such that every prism in $\phi'$ is essential.
\end{lemma}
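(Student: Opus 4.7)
The plan is to induct on the lexicographically ordered multiset
\[
\mathrm{comp}(\phi) = \bigl\{\rkfn(\hat v_P) \,:\, (P, \phi|_P) \text{ is an inessential prism in }\phi\bigr\},
\]
where $\hat v_P$ is the offending max-rank vertex of $P$ from \cref{rem:problematic-simplices}. If $\mathrm{comp}(\phi)$ is empty, we are done. Otherwise, I would pick an inessential prism $(P, \phi|_P)$ realising the largest entry of $\mathrm{comp}(\phi)$ and perform a weakly regular local modification supported in the interior of $\Star_{D^k}(P)$. By \cref{lem_cross_maps_intersections}, this modification will not affect any other cross map in $\phi$, so the only potentially new inessential prism is the re-filling of the modified region, which by construction will have smaller $\rkfn(\hat v)$.

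The local modification would exploit the rank-$4$ symplectic basis $\{\vec v_1, \vec w_1, \vec v_2, \vec w_2\}$ (compatible with $\vec e_1, \dots, \vec e_m, \vec \badvertex$) that parametrises $\phi|_{\Star_{D^k}(P)}$. Assume, say, that the $v$-face of the prism is inessential, so that $\hat v_P = \langle \vec v_1 + \vec v_2\rangle$ is the unique max-rank vertex in $\{v_1, v_2, \hat v_P\}$. Then $\omega(\vec e_{m+n}, \vec v_1)$ and $\omega(\vec e_{m+n}, \vec v_2)$ share a sign, so
\[
\rkfn(\langle \vec v_1 - \vec v_2\rangle) = \bigl|\rkfn(v_1) - \rkfn(v_2)\bigr| < \max\bigl(\rkfn(v_1), \rkfn(v_2)\bigr) < \rkfn(\hat v_P).
\]
The substitution $\vec v_2 \mapsto -\vec v_2$, $\vec w_2 \mapsto -\vec w_2$ yields a new symplectic basis of the same rank-$4$ summand whose associated prism cross map has diagonals $\langle \vec v_1 - \vec v_2\rangle$ and $\langle \vec w_1 + \vec w_2\rangle$. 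Because the new prism shares the non-additive vertices $v_1, v_2, w_1, w_2$ with the old one, both outer boundaries will be supported on simplices of $\IAAstrel$ on these four vertices (by \cref{lem_boundary_cross_map}); I would then realise the retriangulation of $|\Star_{D^k}(P)|$ via \cref{lem_regular_homotopy_by_balls}, using a thin cobordism ball whose two $k$-ball faces carry the old and new prism cross maps. If the $w$-face is inessential instead, or both are, the analogous swap(s) -- possibly combined with an iterated Euclidean reduction of the underlying basis -- will produce a new prism whose offending vertex, if any, has strictly smaller rank than $\rkfn(\hat v_P)$.

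By construction, the new $v$-face will be essential; and if the new $w$-face is inessential, its offending vertex has rank bounded by $\rkfn(w_1) + \rkfn(w_2) \leq \rkfn(\hat v_P) - 1$ because the original inessentiality forced $\rkfn(w_1), \rkfn(w_2) < \rkfn(\hat v_P)$. Hence each local modification strictly decreases $\mathrm{comp}(\phi)$ in the lexicographic order, and finitely many iterations will produce the desired $\phi'$.

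The main obstacle will be dealing with inessential prisms $(P, \phi|_P)$ whose offending vertex $\hat v_P$ lies on $S^{k-1} = \partial D^k$, since the constraint $\phi'|_{S^{k-1}} = \phi|_{S^{k-1}}$ forbids changing it. In that case one has $\rkfn(\hat v_P) < R$, and I expect that such prisms do not actually arise in the setup of \cref{IAA-retraction}: there $\phi|_{S^{k-1}}$ is built from a prior rank-reduction step, which should inductively guarantee essentiality on the boundary. Should such boundary prisms nevertheless occur, I would enlarge $\Star_{D^k}(P)$ by a collar into the interior of $D^k$ and push $P$ together with its diagonal vertices off $S^{k-1}$ using the combinatorial manifold techniques from \cref{lem_combinatorial_ball_replace_bigger_boundary}, before applying the basis swap above. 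Iterating the local modification finitely many times then yields $\phi'$.
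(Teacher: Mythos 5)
There is a genuine gap, and it lies at the heart of your strategy: you change the \emph{images} of the diagonal vertices (replacing $\langle \vec v_1 + \vec v_2\rangle$, $\langle \vec w_1 - \vec w_2\rangle$ by $\langle \vec v_1 - \vec v_2\rangle$, $\langle \vec w_1 + \vec w_2\rangle$), but this is not a modification supported in the interior of $\Star_{D^k}(P)$. Since $\Star_{D^k}(P) = P_3 \ast C_{k-3}$ and $\partial(P_3 \ast C_{k-3}) = \partial P_3 \ast C_{k-3}$, the vertices $x_{12}, y_{12}$ lie on $\partial P_3$, hence on the boundary of the star, and by \cref{rem_cross_maps_elementary_properties} the image of that boundary contains the 2-additive simplices $\{v_1, v_2, \langle\vec v_1+\vec v_2\rangle\}$ and $\{w_1, w_2, \langle\vec w_1-\vec w_2\rangle\}$, which are shared with the rest of $D^k$. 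So the old and new prism cross maps do \emph{not} have the same outer boundary (your claim that both boundaries are supported on the four vertices $v_1,v_2,w_1,w_2$ is false), and \cref{lem_regular_homotopy_by_balls} cannot be applied as stated. Filling between the old and new diagonals is also obstructed: a set such as $\{v_1, v_2, \langle\vec v_1+\vec v_2\rangle, \langle\vec v_1-\vec v_2\rangle\}$ carries two additive relations on the same pair of vectors and is not a simplex of any type in $\IAArel$. In addition, your termination argument is unjustified: the $v$-side inessentiality says nothing about $\rkfn(w_1), \rkfn(w_2)$ relative to $\rkfn(\hat v_P)$, and after your sign swap the new $w$-triangle $\{w_1, w_2, \langle\vec w_1+\vec w_2\rangle\}$ can become inessential with offending vertex $w_2$ of rank larger than $\rkfn(\hat v_P)$, so your complexity need not decrease. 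Note also that for the $w$-side the offending vertex is $w_2$, not the diagonal, so the "analogous swap" does not address that case.

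The paper's proof avoids all of this by never changing any vertex image: it keeps the six image lines fixed and only re-triangulates the region, replacing the prism $P_3$ by a complex $D$ on the same vertex set which is a union of four $3$-simplices (a new prism $P_3'$ plus one $\sigma^2$ simplex), with $\partial D = \partial P_3$. Because the boundary is literally unchanged, the replacement is interior to the star and regularity is preserved via \cref{lem_cross_maps_intersections}; essentiality is then achieved by routing the $\sigma$ edge path of the new prism through the maximal-rank vertices $m^v, m^w$, with a second flip $P_3' \to P_3''$ in the case $m^v = v_{12}$. This also makes your worry about offending vertices on $S^{k-1}$ moot, since the boundary restriction of the map is untouched. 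If you want to salvage your approach, you would have to replace the direct star swap by a genuine homotopy that also rebuilds a neighbourhood of the old diagonal vertices, and find a different termination measure; as written, both the locality claim and the induction do not go through.
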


\begin{proof}
  Let $(P_3, \phi|_{P_3})$ be a prism in $\phi$ that is inessential. We show how to remove $(P_3, \phi|_{P_3})$ from $\phi$ without introducing a new inessential prism.
  Since $\phi$ is (weakly) regular, $\phi|_{\Star_{D^k}(P_3)}$ is a prism cross map and, using the notation in \cref{def:cross-maps}, we may write $\Star_{D^k}(P_3) = P_3 \ast \Link_{D^k}(P_3) = P_3 \ast C_{k-3}$. \cref{cor_boundary_star} and the fact that $\partial C_{k-3} = \emptyset$ imply that $\Star_{D^k}(P_3)$ is a $k$-ball with boundary given by 
\begin{equation*}
	\partial \Star_{D^k}(P_3) = \partial P_3 \ast C_{k-3}.
\end{equation*} 
As in \cref{def_prism}, we denote the vertices of $P_3$ by  $ x_1, x_2, x_{12}, y_1, y_2, y_{12} $ and their images by $v_1, v_2, v_{12}, w_1, w_2, w_{12}$, where $P_3$ is union of the three 3-simplices 
\begin{equation}
\label{eq_simplices_P_3}
\ls x_1, x_2, x_{12}, y_{12} \rs , \ls x_1, x_2, y_1, y_{12} \rs, \ls x_1, y_1, y_2, y_{12} \rs 
\tag{$P_3$}
\end{equation}
(see \cref{figure_max_path}).
There are three $\sigma$ edges in $\phi(P_3)$ and they form a path connecting the vertices $\ls w_1, v_1,w_{12}, v_2\rs = \ls v_1,v_2, w_1, w_{12} \rs$.

Let $m^v\in \{v_1, v_2, v_{12}\} = \Theta^v$ and $m^w \in \{w_1, w_2, w_{12}\} = \Theta^w$ be two vertices, whose rank $\rkfn(m^v)$ and $\rkfn(m^w)$ is maximal among $\Theta^v$ and $\Theta^w$ respectively. If both $m^v$ and $m^w$ can be chosen to be in $\ls v_1,v_2, w_1, w_{12} \rs$, then $P_3$ is essential. Hence, we can assume that at least one of them is not contained in this set. Without loss of generality\footnote{In order to assume this, we might need to relabel the vertices of $P_3$ using the isomorphism $x_1 \leftrightarrow y_{12}$, $x_2 \leftrightarrow y_1$, $x_{12}\leftrightarrow y_1$.}, assume that this is the case for $m^w$. Then, it holds that
$m^w = w_2$.
We need to consider two cases: Either
\begin{equation*}
	m^v \in \ls v_1, v_2 \rs \text{ or } m^v = v_{12}.
\end{equation*}
We first discuss the case $m^v \in \ls v_1, v_2 \rs$ and comment on the other case at the end of the proof.

Let $D$ denote the simplicial complex with the same vertices as $P_3$ that is the union of the \emph{four} 3-simplices 
\begin{equation}
\label{eq_simplices_P_3_Delta}
	\{x_1, x_2, x_{12}, y_{12}\}, \{x_1, x_2, y_2, y_{12}\}, \{x_2, y_1, y_2, y_{12}\} \text{ and } \{x_1, x_2, y_1, y_2\}
	\tag{$D$}
\end{equation}
(see \cref{figure_splitting_sigma_edge_off_a_prism}).
\begin{figure}
\begin{center}
\includegraphics{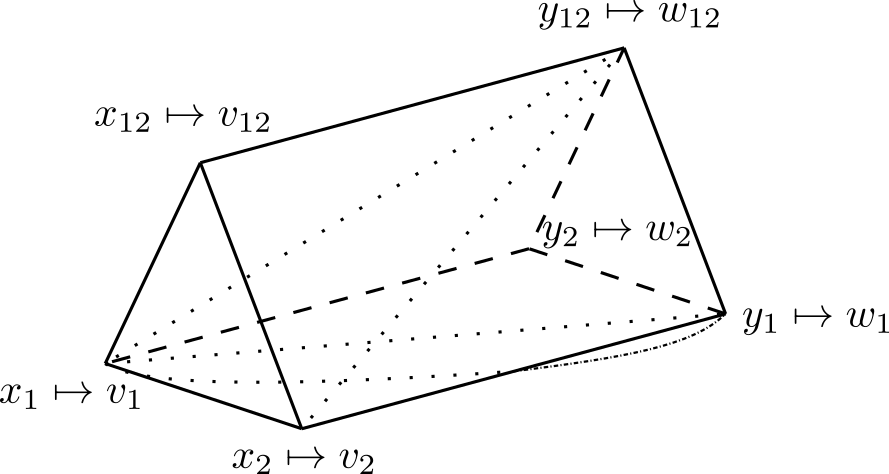}
\end{center}
\caption{The simplices in $D$. The upper prism is $P_3'$. The edge $\ls x_1,y_1 \rs$ is drawn curved at the bottom. It is the only edge of $D$ that is not contained in $P_3'$.}
\label{figure_splitting_sigma_edge_off_a_prism}
\end{figure}

Note that the subcomplex $P_3' \subset D$ that is the union of the first three simplices
\begin{equation}
\label{eq_simplices_P_3prime}
\{x_1, x_2, x_{12}, y_{12}\}, \{x_1, x_2, y_2, y_{12}\}, \{x_2, y_1, y_2, y_{12}\} 
\tag{$P_3'$}
\end{equation}
is again a prism.
The complex $D$ is a 3-ball and we have $\partial D = \partial P_3$.
Consequently,
\begin{equation*}
	C \coloneqq D \ast C_{k-3}
\end{equation*}
is a $k$-ball that has the same vertices as $\Star_{D^k}(P_3)$ and the same boundary 
\begin{equation*}
	\partial C = \partial (D \ast C_{k-3}) = \partial P_3 \ast C_{k-3} = \partial \Star_{D^k}(P_3).
\end{equation*}

As $C$ has the same vertices as $\Star_{D^k}(P_3)$, we can define a map
\begin{equation*}
	\psi\colon  C \to \Linkhat_{\IAArel}(\badvertex)
\end{equation*}
by setting it to be equal to $\phi|_{\Star_{D^k}(P_3)}$ on these vertices.
To see that this actually defines a simplicial map, one checks that $\psi$ sends the four maximal simplices of $D$ (see \cref{eq_simplices_P_3_Delta}) to a 2-skew-additive simplex, a skew-$\sigma^2$ simplex, a 2-skew-additive simplex and a $\sigma^2$-simplex, respectively. The first three of these simplices form the image of the new prism $\psi(P_3')$. It is not hard to see that $\Star_{C}(P_3') = P_3' \ast C_{k-3}$ and that the restriction of $\psi$ to this is a prism cross map. Similarly, the restriction of $\psi$  to $\{x_1, x_2, y_1, y_2\} \ast C_{k-3}$ is a $\sigma^2$ cross map. It follows that $\psi$ is regular.
There is only one prism in $\psi$, namely $(P_3',\psi_{P_3'})$.

Recall that in the first case $m^v \in \ls v_1, v_2 \rs$. It follows that, under this assumption, the prism $(P_3',\psi_{P_3'})$ is essential because the vertices $m^v$ and $m^w = w_2$ are contained in the $\sigma$ edge path in $\psi(P_3')$ (see \cref{figure_splitting_sigma_edge_off_a_prism}).

As $\psi$ agrees with $\phi$ on $\partial \Star_{D^k}(P_3) = \partial C$, we can alter $\phi$ by replacing the $k$-ball $\Star_{D^k}(P_3)$ in $D^k$ with the $k$-ball $C$ and replacing $\phi|_{\Star_{D^k}(P_3)}$ with $\psi$. The result is a new map $D^k\to \Linkhat_{\IAArel}(\badvertex)$. It has one less inessential prism than $\phi$ because the replacement removed $(P_3, \phi_{P_3})$ and only introduced the essential prism $(P_3',\psi_{P_3'})$. Furthermore, the new map is still (weakly) regular because $\psi$ is regular and domains of cross maps can only intersect in their boundaries (\cref{lem_cross_maps_intersections}). It agrees with $\phi$ on $\partial
D^k = S^{k-1}$ because $S^{k-1}$ can intersect $C$ only in its boundary $\partial C = \partial \Star_{D^k}(P_3)$, where $\psi$ agrees with $\phi$.

It remains to discuss the second case $m^v = v_{12}$. Observe that, under this assumption, the two vertices $m^v$ and $m^w = w_2$ are not contained in the $\sigma$ edge path in image of the prism $(P_3',\psi_{P_3'})$ constructed in the first case. To resolve this, we apply the same procedure as above again, now to $(P_3',\psi_{P_3'})$ instead of $(P_3,\phi_{P_3})$: We define $D'$ as the union of the four 3-simplices
\begin{equation}
\label{eq_simplices_Dprime}
	\{x_1, x_2, x_{12}, y_2\}, \{x_2, x_{12}, y_2, y_{12}\}, \{x_2, y_1, y_2, y_{12}\} \text{ and } \{x_1, x_{12}, y_2, y_{12}\}.
	\tag{$D'$}
\end{equation}
The first three of these simplices form a new prism $P_3''$ and $\partial D' = P_3'$. Just as before, we obtain a regular map $\psi'\colon D' \ast C_{k-3} \to \Linkhat_{\IAArel}(\badvertex)$. Note that the prism $(P_3'',\psi'|{P_3''})$ is essential because the vertices $m^v = v_{12}$ and $m^w = w_{2}$ are contained in the $\sigma$ edge path in $\psi'(P_3'')$. The rest of the argument works as above. For an overview of the replacement process, see \cref{figure_P3_and_the_primes}.

\begin{figure}
\begin{center}
\includegraphics[scale=0.75]{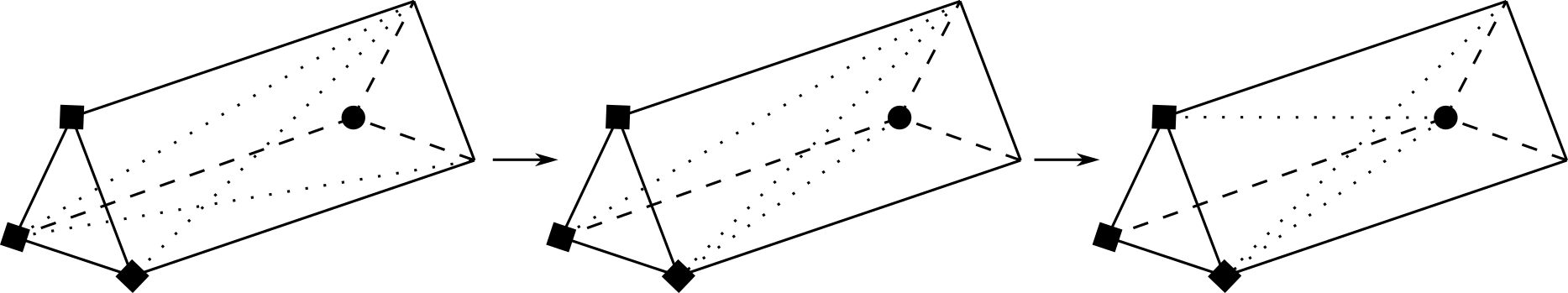}
\end{center}
\caption{The prisms $P_3$, $P_3'$ and $P_3''$ occurring in the proof of \cref{lem:nice-regular-replacements}. The round dot marks the vertex that is mapped to $m^w = w_2$, the squared dots mark the three vertices that can get mapped to $m^v$.}
\label{figure_P3_and_the_primes}
\end{figure}

 We explained how to replace $\phi$ with a map that has one less inessential prism. Iterating this procedure and successively removing all inessential prisms, we obtain the desired map $\phi'$.
\end{proof}

\subsubsection{Step 2: Reducing the rank of cross maps} 

In order to prove \cref{IAA-retraction}, we need to explain how one can reduce the rank of weakly regular maps $\phi\colon  D^k \to \Linkhat_{\IAArel}(\badvertex)$ in which every prism is essential (compare \cref{lem:nice-regular-replacements}). To do this, we show that the retraction for $\IAAstrel$ (see \cref{IAAst-retraction}) extends over all, except inessential, simplices and record the effect of this retraction on cross maps, i.e.\ the building blocks of weak regularity (see \cref{def_regular_maps}). We continue to use the notation introduced in \cref{def_rank_and_ranked_complexes} and frequently apply the notions defined in \cref{subsec:simplex-types-in-relative-complexes} to refer to simplices in $\Linkhat_{\IAArel}(\badvertex)$.

We start by explaining why the retraction for $\IAAstrel$ (see \cref{IAAst-retraction}) extends over all simplices of non-additive type (see \cref{def:non-additive-simplex}). 

\begin{lemma}\label{lem:IAAst-retraction-extends-to-sigma2-skew-additive-skew-sigma2}
  Let $\badvertex \in \IAArel$ be a vertex with $\rkfn(\badvertex) = R > 0$. The composition
  $$\retraction\colon  \sd(\Linkhat_{\IAAstrel}(\badvertex)) \to \Linkhat^{< R}_{\IAAstrel}(\badvertex) \hookrightarrow \Linkhat^{< R}_{\IAArel}(\badvertex) $$
  of the retraction defined in \cref{IAAst-retraction} and the inclusion map extends simplicially over all $\sigma^2$, skew-additive and skew-$\sigma^2$  simplices $\Delta \in \Linkhat_{\IAArel}(\badvertex)$.
\end{lemma}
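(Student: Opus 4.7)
The plan is to reduce this lemma to the already-proved \cref{retraction-on-non-additive-simplices}. First, I would verify directly from \cref{def:IA-simplices} and \cref{def:IAA-simplices} that every face of a $\sigma^2$, skew-additive or skew-$\sigma^2$ simplex has type in $\{\text{standard}, \sigma, \sigma^2, \text{skew-additive}, \text{skew-}\sigma^2\}$. (For example, removing $v_k$ from a skew-additive simplex gives a standard simplex, while removing only $v_0$ or only $v_1$ gives a $\sigma$ simplex; the analogous checks for $\sigma^2$ and skew-$\sigma^2$ are immediate.) Since none of these face-types is of the form 2-additive, mixed, 3-additive, double-triple or double-double, the subdivision construction recalled in \cref{def:IAAst-subdivision} touches none of them. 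Consequently, a simplex $\Delta$ of type $\sigma^2$, skew-additive or skew-$\sigma^2$ in $\Linkhat_{\IAArel}(\badvertex)$ sits unsubdivided inside $\sd(\Linkhat_{\IAAstrel}(\badvertex)) \cup \Delta$, and extending $\retraction$ simplicially over $\Delta$ amounts to sending the vertices of $\Delta$ via the vertex map of \cref{def:I-retraction} (applied with the standard $0$-simplex $\{\badvertex\}$) and checking that the resulting set of vertices spans a simplex of the same dimension in $\Linkhat^{<R}_{\IAArel}(\badvertex)$.

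For this verification I would simply invoke \cref{retraction-on-non-additive-simplices} with the standard simplex $\{\badvertex\}$: that result directly asserts that $\retraction(\Delta)$ is a non-additive simplex of the same type and dimension as $\Delta$ in $\IAArel$. The only residual point is that the image actually lies in $\Linkhat^{<R}_{\IAArel}(\badvertex)$ rather than in $\IAArel$ at large. The rank bound $\rkfn(\retraction(v)) < R$ is built into \cref{def:I-retraction}; isotropy with $\badvertex$ follows from
\[ \omega(\vec \badvertex, \bar v - a \bar \badvertex) = \omega(\vec \badvertex, \bar v) - a\,\omega(\vec \badvertex, \bar \badvertex) = 0, \]
and isotropy with each $\vec e_i$ is analogous; finally $\retraction(v) \notin \langle \vec e_1, \dots, \vec e_m, \vec\badvertex\rangle$, since otherwise $\vec v$ itself would already lie in that submodule, contradicting $v \in \Linkhat_{\IAArel}(\badvertex)$.

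I do not expect any genuine obstacle here: all the substantive work — in particular the preservation of the symplectic relations defining these simplex types — is already encapsulated in \cref{retraction-on-non-additive-simplices}, and the argument above just packages its conclusion together with the elementary observation that the three relevant simplex types have no subdivided faces. The closest thing to a subtlety is making sure one does not inadvertently use the fact that the retraction on $\IAAstrel$ is a simplicial map on \emph{subdivided} simplices; since our three simplex types and all of their faces are left untouched by the subdivision, this never arises.
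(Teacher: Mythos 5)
Your proposal is correct and follows essentially the same route as the paper: both rest on the observation that only 2-additive, mixed, 3-additive, double-triple and double-double simplices get subdivided (so the $\sigma^2$, skew-additive and skew-$\sigma^2$ simplices and all of their faces sit unsubdivided), and then invoke \cref{retraction-on-non-additive-simplices} with the standard simplex $\{\badvertex\}$ to conclude that the vertex-wise retraction of \cref{def:I-retraction} carries each such simplex to a simplex of the same type and dimension in $\Linkhat^{<R}_{\IAArel}(\badvertex)$. The only difference is organisational: the paper extends first over $\sigma^2$ and skew-additive simplices and then over skew-$\sigma^2$ (whose boundary contains skew-additive faces), whereas you handle all three types at once; this changes nothing of substance.
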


\begin{proof}
  Let $\tau \in \{\sigma^2, \text{skew-additive}, \text{skew-}\sigma^2\}$ in the following. Let $\Delta$ be a simplex of type $\tau$ in $\Linkhat_{\IAArel}(\badvertex)$. Then $\Delta = \Theta \ast \Delta'$ for a minimal $\Theta$ of type $\tau$ and a standard simplex $\Delta'$. Recall from \cref{def:IAAst-subdivision} that the only simplices that were subdivided to obtain $\sd(\Linkhat_{\IAAstrel}(\badvertex))$ from $\Linkhat_{\IAAstrel}(\badvertex)$ are 2-additive, mixed, 3-additive, double-double and double-triple simplices.
  \begin{itemize}
  \item Let $\tau \in \{\sigma^2, \text{skew-additive}\}$. It follows that $\partial \Theta \ast \Delta'$ is a subcomplex of the subdivision $\sd(\Linkhat_{\IAAstrel}(\badvertex))$. The retraction on $\partial \Theta \ast \Delta'$ is hence defined as described in \cref{retraction-on-non-additive-simplices}. Note that $\Delta$ is a non-additive simplex in the sense of \cref{def:non-additive-simplex}. Hence, \cref{retraction-on-non-additive-simplices} implies that $\retraction$ extends over $\Delta$ and that $\retraction(\Delta) = \retraction(\Theta) \ast \retraction(\Delta') \in \Linkhat^{< R}_{\IAArel}(\badvertex)$ is again a simplex of type $\tau$ of the same dimension as $\Delta$.
  \item Let $\tau = \text{skew-}\sigma^2$. Then $\partial \Theta \ast \Delta'$ consists of standard, $\sigma$ and skew-additive simplices. The retraction is defined on these simplices by the previous item. Now, $\Delta$ is again a non-additive simplex in the sense of \cref{def:non-additive-simplex}. Hence, \cref{retraction-on-non-additive-simplices} implies that $\retraction$ extends over $\Delta$ and that $\retraction(\Delta) = \retraction(\Theta) \ast \retraction(\Delta') \in \Linkhat^{< R}_{\IAArel}(\badvertex)$ is a simplex of type skew-$\sigma^2$ of the same dimension as $\Delta$. \qedhere
  \end{itemize}
\end{proof}

	The next lemma shows that the retraction extends over external 2-skew-additive simplices.

\begin{lemma}\label{lem:IAAst-retraction-extends-to-external-2-skew-additive}
	Let $\badvertex \in \IAArel$ be a vertex with $\rkfn(\badvertex) = R > 0$. The composition
	$$\retraction\colon  \sd(\Linkhat_{\IAAstrel}(\badvertex)) \to \Linkhat^{< R}_{\IAAstrel}(\badvertex) \hookrightarrow \Linkhat^{< R}_{\IAArel}(\badvertex) $$
	of the retraction defined in \cref{IAAst-retraction} and the inclusion map extends simplicially over all external 2-skew-additive $\Delta = \Theta \ast \Delta' \in \Linkhat_{\IAArel}(\badvertex)$, where $\Theta$ is a minimal 2-skew-additive simplex of the form $\Theta = \{v_0 , \langle \vec v_0 \pm \vec \vertexvar \rangle, v_1\}$ for $\vertexvar \in \{e_1, \dots, e_m, \badvertex\}$, $\omega(\vec v_0, \vec v_1) = \pm 1$, and $\Delta'$ is a standard simplex.
\end{lemma}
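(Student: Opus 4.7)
The plan is to extend $\retraction$ to $\Delta = \Theta \ast \Delta'$ by its action on vertices and verify that the resulting map is simplicial with image in $\Linkhat^{<R}_{\IAArel}(\badvertex)$. On $\Delta'$, which is standard by hypothesis, the retraction is already defined by \cref{IAAst-retraction} and remains a standard simplex (\cref{retraction-on-non-additive-simplices}). On the vertices $v_0, v_1$ of $\Theta$, the formula of \cref{def:I-retraction} gives $\retraction(v_i) = \ll \bar v_i - a_i \bar\badvertex \rr$ with $a_i = \lfloor \rkfn(\bar v_i)/R \rfloor$. The heart of the argument is to compute $\retraction(\langle \vec v_0 \pm \vec\vertexvar \rangle)$ and check that $\retraction(\Theta) \ast \retraction(\Delta')$ is a valid simplex of $\Linkhat^{<R}_{\IAArel}(\badvertex)$; we distinguish according to whether $\vertexvar \in \{e_1, \dots, e_m\}$ or $\vertexvar = \badvertex$.

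In Case 1, $\vertexvar \in \{e_1, \dots, e_m\}$, and since $\omega(\vec e_{m+n}, \vec\vertexvar) = 0$ we have $\rkfn(\vec v_0 \pm \vec\vertexvar) = \rkfn(\vec v_0)$. Running the Euclidean algorithm with the same quotient $a_0$ yields $\retraction(\langle \vec v_0 \pm \vec\vertexvar \rangle) = \ll \bar v_0 - a_0 \bar\badvertex \pm \vec\vertexvar \rr$. The relations $\omega(\bar v_0 - a_0 \bar\badvertex, \bar v_1 - a_1 \bar\badvertex) = \omega(\bar v_0, \bar v_1) = \pm 1$ (using $\omega(\bar\badvertex, \bar v_i) = 0$) and $\omega(\bar v_0 - a_0 \bar\badvertex \pm \vec\vertexvar, \bar v_1 - a_1 \bar\badvertex) = \pm 1$ (using $\omega(\vec\vertexvar, \bar v_1) = \omega(\vec\vertexvar, \bar\badvertex) = 0$) show that $\retraction(\Theta) \ast \retraction(\Delta')$ is again an external 2-skew-additive simplex of the form in the lemma and of the same dimension as $\Delta$.

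In Case 2, $\vertexvar = \badvertex$, and setting $R_0 = \rkfn(\bar v_0)$, a direct Euclidean algorithm computation shows that $\retraction(\langle \vec v_0 + \vec\badvertex \rangle) = \retraction(v_0)$ always, and $\retraction(\langle \vec v_0 - \vec\badvertex \rangle) = \retraction(v_0)$ unless $0 < R_0 < R$. Whenever these two vertices are identified, $\retraction(\Theta)$ collapses to the $\sigma$ edge $\{\retraction(v_0), \retraction(v_1)\}$ (using that $\omega(\bar v_0 - a_0\bar\badvertex, \bar v_1 - a_1\bar\badvertex) = \omega(\bar v_0, \bar v_1) = \pm 1$), so $\retraction(\Delta)$ is a $\sigma$ simplex in $\Linkhat^{<R}_{\IAArel}(\badvertex)$. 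In the remaining subcase (sign $-$ and $0 < R_0 < R$), we have $\retraction(v_0) = v_0$ and $\retraction(\langle \vec v_0 - \vec\badvertex \rangle) = \langle \vec v_0 - \vec\badvertex \rangle$ as distinct lines of ranks $R_0$ and $R - R_0$, both strictly less than $R$; in this case $\retraction(\Theta)$ remains an external 2-skew-additive simplex with $\vertexvar = \badvertex$.

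The main obstacle is confirming simplicial-ness on every face of $\Delta$. The one face of $\Theta$ that lies in $\Linkhat_{\IAAstrel}(\badvertex)$ is the external 2-additive edge $\{v_0, \langle \vec v_0 \pm \vec\vertexvar \rangle\}$, which by \cref{def:Isigdel-subdivision} is not subdivided in passing to $\sd(\Linkhat_{\IAAstrel}(\badvertex))$---only \emph{internal} minimal 2-additive simplices are subdivided there---so no extraneous barycentre vertex needs to be compatibly defined on this edge. The remaining checks then reduce to verifying that the retraction on each face of $\Delta$ contained in $\sd(\Linkhat_{\IAAstrel}(\badvertex))$ coincides with the previously defined $\retraction$, which is immediate from the vertex-wise definition.
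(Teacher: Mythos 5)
Your proof is correct and follows essentially the same route as the paper's: a vertex-wise extension split into the cases $\vertexvar \in \{e_1,\dots,e_m\}$ and $\vertexvar = \badvertex$, concluding that the image is an external 2-skew-additive simplex in the first case and either a $\sigma$ simplex (when the two vertices collapse) or a $\badvertex$-related 2-skew-additive simplex in the second, together with the observation that the external 2-additive and $\sigma$ faces of $\Delta$ are never carrying and hence not subdivided in $\sd(\Linkhat_{\IAAstrel}(\badvertex))$. The only difference is cosmetic: where the paper cites Church--Putman for the vertex-level facts $\retraction(\langle\vec v_0 \pm \vec e_i\rangle)=\langle\overrightarrow{\retraction(v_0)}\pm\vec e_i\rangle$ and the collapse dichotomy for $\vertexvar=\badvertex$, you re-derive them directly from the Euclidean-algorithm definition of $\retraction$.
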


\begin{proof}
	Note that $\partial \Theta \ast \Delta'$ consists of external 2-additive and $\sigma$ simplices. Since neither of these simplices can be carrying in $\Linkhat_{\IAAstrel}(\badvertex)$ (see \cref{def:IAAst-subdivision}), it follows that $\partial \Theta \ast \Delta'$ is contained in $\sd(\Linkhat_{\IAAstrel}(\badvertex))$. Forgetting symplectic information $\Delta = \Theta \ast \Delta' \in \Linkhat_{\BAA_{2n+m}}(\badvertex)$ is an external 2-additive, hence non-carrying, simplex. It follows from \cref{BA-retraction} (compare \cite[p.1021, Claim 1, Claim 2 and Claim 3]{CP}), that $\rho(\Delta) = \rho(\Theta) \ast \rho(\Delta') \in \Linkhat_{\BAA_{2n+m}}^{<R}(\badvertex)$ is a standard or (external) 2-additive simplex with augmentation core $\rho(\Theta)$. We need to check that $\rho(\Delta) = \rho(\Theta) \ast \rho(\Delta')$ also forms a simplex in $\Linkhat^{< R}_{\IAArel}(\badvertex)$.
	
	Since $\Delta'$ is a standard simplex in $\Linkhat_{\IAAstrel}(\badvertex)$, \cref{retraction-on-non-additive-simplices} implies that $\rho(\Delta')$ is a standard simplex of the same dimension in $\Linkhat^{< R}_{\IAArel}(\badvertex)$. We can hence focus on $\rho(\Theta)$.
	
	Firstly, assume that $\vertexvar = \badvertex$. Then either $\retraction(\langle \vec v_0 \pm \vec \badvertex \rangle) = \retraction(v_0)$, or $\retraction(v_0) = v_0$ and $\retraction(\langle \vec v_0 \pm \vec \badvertex \rangle) = \langle \overrightarrow{\retraction(v_0)} \pm \vec \badvertex \rangle$ (see \cite[p.1021-1022, Proof of Claim 3]{CP}). In the first case $\retraction(\Theta) = \{\retraction(v_0), \retraction(v_1)\}$ is a $\sigma$-edge contained in $\Linkhat_{\IAAstrel}^{<R}(\badvertex)$. In the second case, $\retraction(\Theta) = \{\retraction(v_0) = v_0, \retraction(\langle \vec v_0 \pm \vec \badvertex \rangle) = \langle \vec v_0 \pm \vec \badvertex \rangle,  \retraction(v_1)\}$ is a 2-skew-additive of dimension 2 in $ \Linkhat^{< R}_{\IAArel}(\badvertex)$. We conclude that $\rho(\Delta) = \rho(\Theta) \ast \rho(\Delta')$ is either a $\sigma$ simplex or a $\badvertex$-related 2-skew-additive simplex in $ \Linkhat^{< R}_{\IAArel}(\badvertex)$.
	
	Secondly, we observe that if $\vertexvar = e_i$ for some $1 \leq i \leq m$, then $\retraction(\langle \vec v_0 \pm \vec e_i \rangle) = \langle \overrightarrow{\retraction(v_0)} \pm e_i \rangle$. Therefore, $\retraction(\Theta) = \{\retraction(v_0), \retraction(\langle \vec v_0 \pm \vec e_i \rangle) = \langle \overrightarrow{\retraction(v_0)} \pm \vec e_i \rangle,  \retraction(v_1)\}$ and $\rho(\Delta)=\rho(\Theta) \ast \rho(\Delta')$ are external 2-skew-additive simplices in $ \Linkhat^{< R}_{\IAArel}(\badvertex)$.
\end{proof}

Our next goal is to extend the retraction over $\sigma$-additive simplices. To do this, we need to subdivide all ``carrying'' $\sigma$-additive simplices as explained in the following remark.

\begin{construction}
	\label{def:sigma-additive-subdivision-and-retraction}
	Let $\badvertex \in \IAArel$ with $\rkfn(\badvertex) = R > 0$ and $\Delta \in \Linkhat_{\IAArel}(\badvertex)$ $\sigma$-additive simplex. Forgetting symplectic information yields an \emph{internal} 2-additive simplex $\Delta \in \Linkhat_{\BA^m_{2n+m}}(\badvertex)$ which might be carrying in the sense of \cref{def:Isigdel-subdivision}. Define $\sd(\Delta)$ and the retraction $\retraction$ on $\sd(\Delta)$ exactly as for such internal 2-additive simplices. I.e.\ if $\Delta$ is \emph{carrying}, $\sd(\Delta)$ is the union of three simplices (see \cref{def:Isigdel-subdivision}) on which the retraction is defined as in \cref{def:Isigdel-retraction}. Otherwise, $\sd(\Delta) = \Delta$ and the value of $\retraction(\Delta)$ is determined by the values on the vertex set as in \cref{IAAst-retraction}.
\end{construction}

\begin{lemma} \label{lem:IAAst-retraction-extends-to-sigma-additive}
  Let $\badvertex \in \IAArel$ with $\rkfn(\badvertex) = R > 0$. The composition
  $$\retraction\colon  \sd(\Linkhat_{\IAAstrel}(\badvertex)) \to \Linkhat^{< R}_{\IAAstrel}(\badvertex) \hookrightarrow \Linkhat^{< R}_{\IAArel}(\badvertex) $$
of the retraction defined in \cref{IAAst-retraction} and the inclusion map extends simplicially over the subdivision $\sd(\Delta)$ of all $\sigma$-additive simplices $\Delta \in \Linkhat_{\IAArel}(\badvertex)$. Here, $\sd(\Delta)$ and the value of $\rho$ on $\sd(\Delta)$ are defined as in \cref{def:sigma-additive-subdivision-and-retraction}.
\end{lemma}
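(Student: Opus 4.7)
The proof is a case analysis on whether the minimal augmentation core $\Theta$ of $\Delta = \Theta \ast \Delta'$ is \emph{carrying} in the sense of \cref{def:Isigdel-subdivision}, where $\Delta'$ is the (possibly empty) standard complement of $\Theta$ in $\Delta$. My preparation would be to record that for every vertex $v$ of $\Delta \in \Linkhat_{\IAArel}(\badvertex)$ one has $\omega(\bar v, \bar\badvertex) = 0$, so the formula $\rho(v) = \langle \bar v - a\bar\badvertex \rangle$ preserves all pairwise symplectic pairings among vertices of $\Delta$; the same orthogonality check shows that $\rho(t(\Theta)) = \langle \overrightarrow{\rho(v_i)} - \bar\badvertex \rangle$ also lies in $\Linkhat(\badvertex)$, and the rank estimate of \cref{def:Isigdel-retraction} places its rank in $[0, R)$.

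In the non-carrying case, $\sd(\Delta) = \Delta$ and by \cite[Claim~2, p.\,1021]{CP} the additive relation $\vec v_k = \pm\vec v_{k-1}\pm\vec v_{k-2}$ survives under $\rho$. Together with the preservation of the three symplectic pairings $\pm 1$ that define the $\sigma$-additive core, this shows that $\rho(\Theta)$ is a minimal $\sigma$-additive simplex in $\Linkhat_{\IAArel}^{<R}(\badvertex)$; joining with $\rho(\Delta')$ (which remains orthogonal to $\rho(\Theta)$ because $\rho$ preserves $\omega$) produces a $\sigma$-additive simplex.

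In the carrying case, I would write $\Theta = \{A, B, S\}$ with $\bar S = \bar A + \bar B$ and follow \cref{def:Isigdel-retraction} to set $\rho(t) = \langle \overrightarrow{\rho(A)} - \bar\badvertex\rangle$. The subdivision breaks $\Theta$ into three $2$-simplices $\{t,A,S\}$, $\{t,B,S\}$, $\{t,A,B\}$, which I would analyse separately. Using the carrying identity $a_S = a_A + a_B + 1$, a direct computation gives $\overrightarrow{\rho(S)} = \overrightarrow{\rho(t)} + \overrightarrow{\rho(B)}$, so $\rho(\{t,B,S\})$ carries an internal $2$-additive relation; together with the preserved pairings $\pm 1$ (using $\omega(\rho(t),\rho(v)) = \omega(A,v)$), this image is a minimal $\sigma$-additive simplex. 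For the two subsimplices containing $A$, the identity $\rho(A) = \langle \overrightarrow{\rho(t)} + \bar\badvertex\rangle$ forces the additive relation to pass through $\bar\badvertex$, while the single vanishing pairing $\omega(\rho(t),\rho(A)) = \omega(A,A) = 0$ among three $(0, \pm 1, \pm 1)$-values matches exactly the form $\{v_0, v_1, \langle \vec v_0 \pm \vec\vertexvar\rangle\}$ of a minimal external $2$-skew-additive simplex with $\vertexvar = \badvertex$, as in \cref{wprism-condition-b}. Joining these three images with the standard simplex $\rho(\Delta')$ yields one $\sigma$-additive simplex and two external $2$-skew-additive simplices, all of which lie in $\Linkhat_{\IAArel}^{<R}(\badvertex)$, completing the extension.

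The main bookkeeping concern --- and the subtlest step --- is aligning the sign conventions: the ambiguity $\vec S = \pm \vec A \pm \vec B$, the sign of the carrying discrepancy $a_A + a_B - a_S$, and the freedom in choosing $i \in \{1,2\}$ in $\rho(t) = \langle \overrightarrow{\rho(v_i)} - \bar\badvertex \rangle$ all interact. However, since there is effectively only one carrying sign, namely $a_S = a_A + a_B + 1$ (the other possibility is non-carrying), and since the classification as $\sigma$-additive or external $2$-skew-additive depends only on the \emph{pattern} of symplectic pairings and on whether the retracted additive relation is internal or $\bar\badvertex$-shifted, all sign choices ultimately reduce to the case worked out above.
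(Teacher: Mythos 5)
Your proposal is correct and follows essentially the same route as the paper: the same split into the non-carrying and carrying cases, with the non-carrying image identified as a $\sigma$-additive simplex and the carrying subdivision mapping to one $\sigma$-additive simplex plus two $\badvertex$-related external 2-skew-additive simplices via the identity $\overbar{\retraction(v_0)} = \overbar{\retraction(v_1)} + \overbar{\retraction(v_2)} - \bar\badvertex$. The only difference is that you carry out Church--Putman's bookkeeping (their Claims 4 and 5) by direct computation rather than by citation, which does not change the argument.
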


\begin{proof}
  Let $\Delta$ be a $\sigma$-additive simplex in $\Linkhat_{\IAArel}(\badvertex)$. Then $\Delta = \Theta \ast \Delta'$ for a minimal $\sigma$-additive simplex $\Theta$ and a standard simplex $\Delta'$. Note that $\partial \Theta \ast \Delta'$ consists of $\sigma$ and standard simplices. Hence it follows that $\partial \Theta \ast \Delta'$ is contained in the subdivision $\sd(\Linkhat_{\IAAstrel}(\badvertex))$, even in the subcomplex $\sd(\Linkhat_{\Isigdelrel}(\badvertex))$. The retraction for $\IAAstrel$ is an extension of the retraction on $\Isigdelrel$. 
  We recall that the retraction on $\sd(\Linkhat_{\Isigdelrel}(\badvertex))$ was defined using the inclusion 
  $$\Isigdelrel \hookrightarrow \BA^m_{2n+m}.$$
 Forgetting symplectic information, $\Delta$ corresponds to an \emph{internal} 2-additive simplex in $\BA^m_{2n+m}$. We  check that the retraction for $\BA^m_{2n+m}$ (see \cref{BA-retraction}) can be used to extend the retraction for $\IAAstrel$ over $\sigma$-additive simplices: Let $\Theta = \{v_0,v_1,v_2\}$.
  
  First assume that $\Theta$ is not a carrying internal 2-additive simplex in $\Linkhat_{\BA^m_{2n+m}}(\badvertex)$ (in the sense of \cref{def:Isigdel-subdivision}). Then $\retraction(\Delta) = \retraction(\Theta) \ast \retraction(\Delta')$ is an internal 2-additive simplex in $\Linkhat_{\BA^m_{2n+m}}^{< R}(\badvertex)$ with additive core $\retraction(\Theta)$ (see \cite[p.1022, Claim 4]{CP}). Arguing as in \cref{retraction-on-non-additive-simplices} it follows that $\retraction(\Theta)$ forms a $\sigma$-additive simplex in $\Linkhat_{\IAArel}^{< R}(\badvertex)$, because $\Theta$ is a minimal $\sigma$-additive simplex in $\Linkhat_{\IAArel}(\badvertex)$ by assumption, $\retraction(v_i) = \ll \bar v_{i} - a_i \bar \badvertex \rr$ for some $a_i$ and $\omega(\bar \badvertex, \bar v_i) = 0$ for all $i$.
  
  Now assume that $\Theta$ is a carrying internal 2-additive simplex in $\Linkhat_{\BA^m_{2n+m}}(\badvertex)$ (in the sense of \cref{def:Isigdel-subdivision}). Following Church--Putman's construction, we subdivide $\Delta = \Theta \ast \Delta'$ into three simplices $\Theta_i \ast \Delta$ with 
  $$\Theta_i = \{t(\Theta)\} \cup \{v_j \mid 0 \leq j \leq 2 \text{ and } j \neq i \}$$
(compare with \cref{def:Isigdel-subdivision}) and define the retraction on $t(\Theta)$ exactly as in \cref{def:Isigdel-retraction}. Without loss of generality, let us assume that $v_0 = \langle \bar v_1 + \bar v_2\rangle$ is the unique vertex in $\Theta$ that maximises $\rkfn(-)$ and that $\retraction(t(\Theta)) = \langle \overbar{\retraction(v_0)} - \bar \badvertex \rangle$. It follows from \cite[p.1023-1024, Claim 5]{CP} that in $\BA^m_{2n+m}$, these three simplices are mapped to one internal 2-additive simplex $\retraction(\Theta_1) \ast \retraction(\Delta')$ and two $\badvertex$-related 2-additive simplices $\retraction(\Theta_i) \ast \retraction(\Delta')$ for $i = 0$ and $i = 2$. The image of the $\Theta_i$ has the following form:
    \begin{itemize}
    \item $\retraction(\Theta_1) = \{\retraction(v_0), \retraction(t(\Theta)), \retraction(v_2)\} = \{ \retraction(v_0), \ll \overrightarrow{\retraction(v_0)} - \overrightarrow{\retraction(v_2)} \rr, \retraction(v_2)\}$\\
      (compare with $\pi(\beta)$ on \cite[p.1024, Claim 5]{CP}).
    \item $\retraction(\Theta_0) = \{\retraction(t(\Theta)), \retraction(v_1), \retraction(v_2)\} = \{\ll \overrightarrow{\retraction(v_1)} - \vec {\badvertex} \rr, \retraction(v_1),  \retraction(v_2)\}$\\
      (compare with $\pi(\alpha)$ on \cite[p.1024, Claim 5]{CP}).
    \item $\retraction(\Theta_2) = \{\retraction(v_0), \retraction(v_1), \retraction(t(\Theta))\} = \{\retraction(v_0), \retraction(v_1), \ll \overrightarrow{\retraction(v_1)} - \vec {\badvertex} \rr\}$\\
      (compare with $\pi(\gamma)$ on \cite[p.1024, Claim 5]{CP}).
    \end{itemize}
Recall that $\Theta = \{v_0,v_1,v_2\}$ forms a minimal $\sigma$-additive simplex in the complex $\Linkhat_{\IAArel}(\badvertex)$. Hence, $\omega(\vec v_i, \vec v_j) = \pm 1$ for any $0 \leq i \neq j \leq 2$. Taking this symplectic information into account, we conclude that $\retraction(\Theta_1 \ast \Delta) = \retraction(\Theta_1) \ast \retraction(\Delta')$ is $\sigma$-additive, and that $\retraction(\Theta_i \ast \Delta) = \retraction(\Theta_i) \ast \retraction(\Delta')$ for $i = 0$ and $i = 2$ are external 2-skew-additive in $\Linkhat_{\IAArel}^{< R}(\badvertex)$. Hence, $\retraction$ extends over $\sd(\Delta)$. \qedhere
\end{proof}

The three previous lemmas and their proofs have the following consequence.

\begin{corollary}\label{lem:retraction-sigma2-sigma-additive-external-2-skew-additive-cross-map} 
  Let $\badvertex \in \IAArel$ be a vertex with $\rkfn(\badvertex) = R > 0$ and let $\retraction$ denote the extension of the retraction of $\IAAstrel$ over $\sigma^2$, skew-additive, skew-$\sigma^2$, $\sigma$-additive and external 2-skew-additive simplices constructed in \cref{lem:IAAst-retraction-extends-to-sigma2-skew-additive-skew-sigma2}, \cref{lem:IAAst-retraction-extends-to-external-2-skew-additive}, \cref{def:sigma-additive-subdivision-and-retraction} and \cref{lem:IAAst-retraction-extends-to-sigma-additive}. Assume that $\phi$ is a $\dots$
  \begin{enumerate}
  \item $\dots$ $\sigma^2$ cross map $\phi\colon  \Delta^1 \ast \Delta^1 \ast C_{k-2} \to \Linkhat_{\IAArel}(\badvertex)$;
  \item $\dots$ external 2-skew-additive cross map $\phi\colon  \Delta^2  \ast C_{k-1} \to  \Linkhat_{\IAArel}(\badvertex)$; or
  \item $\dots$ $\sigma$-additive cross map $\phi\colon  \Delta^2  \ast C_{k-1} \to \Linkhat_{\IAArel}(\badvertex)$.
  \end{enumerate} 
  Then 
  \begin{enumerate}
  \item $\psi = \retraction \circ \phi\colon  \Delta^1 \ast \Delta^1 \ast C_{k-2} \to \Linkhat_{\IAArel}^{< R}(\badvertex)$ is a $\sigma^2$ cross map;
  \item $\psi = \retraction \circ \phi\colon  \Delta^2  \ast C_{k-1} \to \Linkhat_{\IAArel}^{< R}(\badvertex)$ has image in $\Linkhat_{\IAAstrel}^{<R}(\badvertex)$ or is an external 2-skew-additive cross map; or
  \item $\psi = \retraction \circ \phi\colon  \sd(\Delta^2)  \ast C_{k-1} \to \Linkhat_{\IAArel}^{< R}(\badvertex)$ is a weakly regular map.\\
   Here, $\sd(\Delta^2)$ is the subdivision of the 2-simplex $\Delta^2$ into three simplices obtained by placing the new vertex $t(\Delta^2)$ at the barycentre of $\Delta^2$ if $\phi(\Delta^2)$ is carrying and $\sd(\Delta^2) = \Delta^2$ otherwise (compare with \cref{def:sigma-additive-subdivision-and-retraction}).
  \end{enumerate}
\end{corollary}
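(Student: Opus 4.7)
The plan is to proceed case by case, unpacking each cross-map structure from \cref{def:cross-maps} and applying $\retraction$ vertex by vertex. The common mechanism is the same observation that underpins \cref{retraction-on-non-additive-simplices}: any vertex $v$ in the image of such a cross map lies in $\Linkhat_{\IAArel}(\badvertex)$, so $\omega(\vec v, \vec \badvertex) = 0$, and consequently $\omega(\vec v - a\vec\badvertex,\vec v' - a'\vec\badvertex) = \omega(\vec v, \vec v')$ for any $a, a' \in \mbZ$. Combined with \cref{lem_split_off_symplectic_summand}, this means $\retraction$ transports the symplectic basis $\{\vec v_1, \vec w_1, \dots, \vec v_k, \vec w_k\}$ of the summand encoded by $\phi$ to a symplectic basis $\{\overbar{\retraction(v_1)}, \overbar{\retraction(w_1)}, \dots, \overbar{\retraction(v_k)}, \overbar{\retraction(w_k)}\}$ of another summand, and compatibility with $\{\vec e_1, \dots, \vec e_m\}$ is preserved because $\omega(\vec\badvertex, \vec e_i) = 0$ for $1 \leq i \leq m$. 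This already handles case (1), since the defining data of a $\sigma^2$ cross map is exactly such a basis.

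For case (2), one adds to this an analysis of the apex vertex $\phi(xy_1) = \ll \vec v_1 \pm \vec\vertexvar\rr$, which is exactly the computation carried out in the proof of \cref{lem:IAAst-retraction-extends-to-external-2-skew-additive}. For $\vertexvar \in \ls e_1, \dots, e_m\rs$ one obtains $\retraction(\phi(xy_1)) = \ll \overbar{\retraction(v_1)} \pm \vec\vertexvar\rr$, so $\psi$ is again an external $2$-skew-additive cross map. For $\vertexvar = \badvertex$ there are two subcases: either $\retraction(\phi(xy_1)) = \retraction(v_1)$, in which case two vertices of $\Delta^2$ collapse and a direct check shows that every simplex in the image of $\psi$ is standard, $\sigma$ or $\sigma^2$, hence sits in $\Linkhat_{\IAAstrel}^{<R}(\badvertex)$; or $\retraction(\phi(xy_1)) = \ll \overbar{\retraction(v_1)} \pm \vec\badvertex\rr$, in which case $\psi$ is again an external $2$-skew-additive cross map with $\vertexvar = \badvertex$.

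For case (3), the minimal $\sigma$-additive simplex $\phi(\Delta^2) = \ls v_1, w_1, \ll\vec v_1+\vec w_1\rr\rs$ becomes, after forgetting $\omega$, an internal $2$-additive simplex in $\BA^m_{2n+m}$ which is either carrying or not in the sense of \cref{def:sigma-additive-subdivision-and-retraction}. In the non-carrying subcase $\sd(\Delta^2) = \Delta^2$ and the argument in \cref{lem:IAAst-retraction-extends-to-sigma-additive} shows that $\retraction(\phi(\Delta^2))$ is again $\sigma$-additive, so $\psi$ is a $\sigma$-additive cross map and hence weakly regular. In the carrying subcase, $\sd(\Delta^2)$ is the cone subdivision at the barycentre $t(\Delta^2)$ with three maximal $2$-simplices $\Theta_0, \Theta_1, \Theta_2$; the same lemma identifies $\retraction(\Theta_1)$ as $\sigma$-additive and $\retraction(\Theta_0), \retraction(\Theta_2)$ as external $2$-skew-additive with $\vertexvar = \badvertex$. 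Maximality of the $\Theta_i$ inside $\sd(\Delta^2)$ gives $\Star_{\sd(\Delta^2)\ast C_{k-1}}(\Theta_i) = \Theta_i \ast C_{k-1}$, and case (1) applied to the shared backbone $\{\vec v_2, \vec w_2, \dots, \vec v_k, \vec w_k\}$ ensures that each restriction $\psi|_{\Theta_i \ast C_{k-1}}$ is a cross map of the appropriate type, which verifies the conditions of \cref{def_regular_maps}/\cref{def:weak-regularity-in-link}.

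The main obstacle, modest but real, is the bookkeeping in case (3). Beyond the calculations already encapsulated in \cref{lem:IAAst-retraction-extends-to-sigma-additive}, one must verify that $\psi$ on $\sd(\Delta^2) \ast C_{k-1}$ contains no minimal simplex of type $\sigma^2$, skew-additive, $2$-skew-additive, skew-$\sigma^2$ or $\sigma$-additive outside the three stars $\Theta_i \ast C_{k-1}$, and that the three pieces really do glue to a simplicial map on the common faces. The gluing is immediate from the uniqueness of the vertex values, and the absence of stray minimal simplices follows because every simplex of $\sd(\Delta^2) \ast C_{k-1}$ is contained in some $\Theta_i \ast C_{k-1}$, together with the preservation of $\omega$-values from the opening observation, which rules out accidental new symplectic coincidences among the retracted vertices of $C_{k-1}$.
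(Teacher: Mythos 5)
Your proof assembles exactly the ingredients the paper relies on: the corollary is stated there as an immediate consequence of \cref{lem:IAAst-retraction-extends-to-sigma2-skew-additive-skew-sigma2}, \cref{lem:IAAst-retraction-extends-to-external-2-skew-additive}, \cref{def:sigma-additive-subdivision-and-retraction} and \cref{lem:IAAst-retraction-extends-to-sigma-additive}, and your vertex-wise application of $\retraction$ together with the $\omega$-preservation observation, the apex computation for the external 2-skew-additive case, and the carrying/non-carrying split with the three stars $\Theta_i \ast C_{k-1}$ in case (3) is the same route, so it is correct. One small slip: in the collapsing subcase of (2) you list $\sigma^2$ among the types that ``sit in $\Linkhat_{\IAAstrel}^{<R}(\badvertex)$'', but $\sigma^2$ simplices do not belong to $\IAAstrel$; this is harmless only because no $\sigma^2$ simplex can actually occur there, as no image simplex contains two disjoint symplectic pairs (the $C_{k-1}$ factor contributes none and the collapsed $\Delta^2$ contributes at most one), so the image consists of standard and $\sigma$ simplices and the conclusion stands.
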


\begin{remark}
	Note that a $\sigma$-additive cross map $\phi$ might only yield a \emph{weakly regular} map $\psi = \retraction \circ \phi$. The reason for this is that if the minimal $\sigma$-additive simplex in the image of $\phi$ is carrying, then the image of $\psi = \retraction \circ \phi$ contains two external 2-skew-additive simplices. This has been made explicit at the end of the proof of \cref{lem:IAAst-retraction-extends-to-sigma-additive}.
\end{remark}

The next and final lemma shows that the retraction extends over the image of any prism cross map whose prism is essential (see \cref{def:niceprism}). In light of \cref{lem:nice-regular-replacements}, this suffices for our purposes. We use the following notation, which is illustrated in \cref{figure_P3_diamond}.

\begin{figure}
\begin{center}
\includegraphics{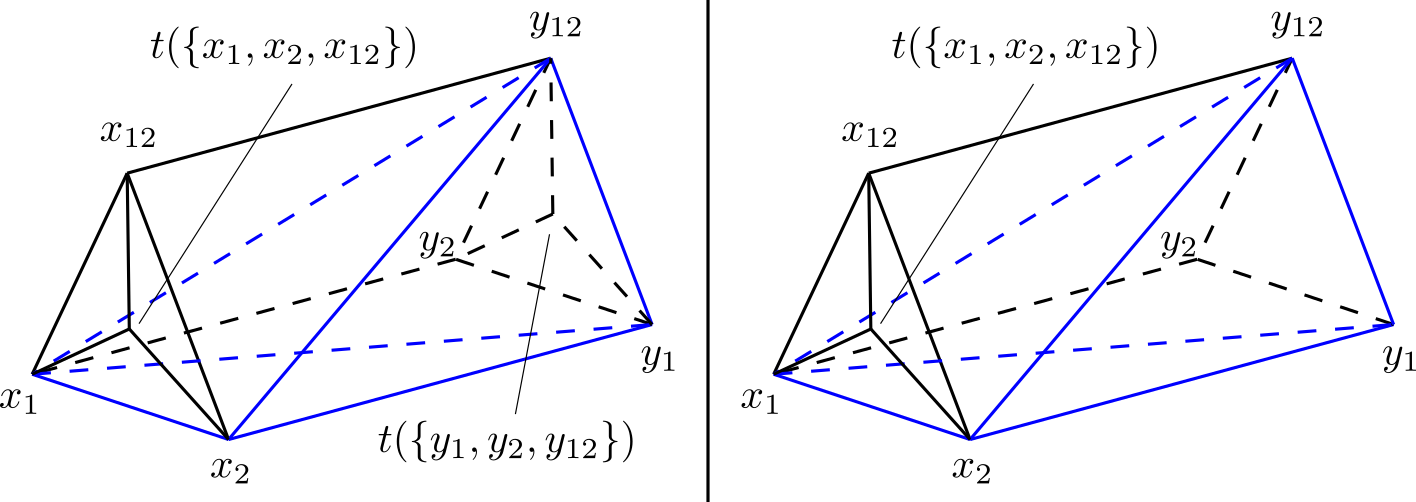}
\end{center}
\caption{The subdivision $\sd(P_3^\diamond)$, on the left for the case where both $\phi(\{x_1, x_2, x_{12}\})$ and $\phi(\{y_1, y_2, y_{12}\})$ are carrying, on the right for the case where only $\phi(\{x_1, x_2, x_{12}\})$ is carrying. Blue lines mark the edges of the only 3-dimensional simplex in $\sd(P_3^\diamond)$.}
\label{figure_P3_diamond}
\end{figure}
\begin{definition} \label{def:prism-diamond}
	Let $P_3$ be the prism on the vertex set $\ls x_1, x_2, x_{12}, y_1, y_2, y_{12} \rs$ introduced in \cref{def_prism}. Recall that $P_3$ is the union of the three 3-simplices $\ls x_1, x_2, x_{12}, y_{12} \rs $, $\ls x_1, x_2, y_1, y_{12} \rs $ and $\ls x_1, y_1, y_2, y_{12} \rs$ (see \cref{figure_prism}).
	\begin{enumerate}
		\item We let $P_3^\diamond$ be the subcomplex of $P_3$ obtained by removing the two simplices $\ls x_1, x_2, x_{12}, y_{12} \rs $ and $\ls x_1, y_1, y_2, y_{12} \rs$ that are mapped to 2-skew-additive simplices by a prism cross map. All other simplices, including all proper faces of $\ls x_1, x_2, x_{12}, y_{12} \rs $ and $\ls x_1, y_1, y_2, y_{12} \rs$, are contained in $P_3^\diamond$.
	\end{enumerate}
	Let $\badvertex \in \IAArel$ and assume that $\phi\colon  P_3 \ast C_{k-2} \to \Linkhat_{\IAArel}(\badvertex)$ is a prism cross map (see \cref{def:cross-maps}). Then $\phi(\{x_1, x_2, x_{12}\})$ and $\phi(\{y_1, y_2, y_{12}\})$ are internal 2-additive simplices in the complex $\Linkhat_{\IAArel}(\badvertex)$ that might be carrying or not.
	\begin{enumerate}
		\setcounter{enumi}{1}
		\item We denote by $\sd(P_3^\diamond)$ the subdivision of $P_3^\diamond$ obtained by placing new vertices $t(\ls x_1, x_2, x_{12} \rs)$ and $t(\{y_1, y_2, y_{12}\})$ at the barycentre of $\ls x_1, x_2, x_{12} \rs$ and $\{y_1, y_2, y_{12}\}$, respectively, depending on whether $\phi(\{x_1, x_2, x_{12}\})$,\\ $\phi(\{y_1, y_2, y_{12}\})$, neither or both are carrying.
	\end{enumerate}
\end{definition}

\begin{lemma} \label{lem:retraction-nice-prism-cross-map}
  Let $\badvertex \in \IAArel$ be a vertex with $\rkfn(\badvertex) = R > 0$ and let $\retraction$ denote the extension of the retraction for $\IAAstrel$ over $\sigma^2$, skew-additive, skew-$\sigma^2$, $\sigma$-additive and external 2-skew-additive simplices constructed in \cref{lem:IAAst-retraction-extends-to-sigma2-skew-additive-skew-sigma2}, \cref{lem:IAAst-retraction-extends-to-external-2-skew-additive} and \cref{lem:IAAst-retraction-extends-to-sigma-additive}.
  Assume that
  $$\phi\colon  P_3 \ast C_{k-2} \to \Linkhat_{\IAArel}(\badvertex)$$
  is a prism cross map such that $(P_3, \phi|_{P_3})$ is essential (in the sense of \cref{def:niceprism}). Then the simplicial map
  $$\psi|_{P_3^\diamond \ast C_{k-2}} = \retraction \circ \phi|_{\sd(P_3^\diamond) \ast C_{k-2}}\colon  \sd(P_3^\diamond) \ast C_{k-2} \to \Linkhat_{\IAArel}^{< R}(\badvertex)$$
  can be extended to a {weakly regular} map
  $$\psi\colon  \sd(P_3) \ast C_{k-2} \to \Linkhat_{\IAArel}^{< R}(\badvertex),$$
  where $\sd(P_3)$ is a subdivision of $P_3$ containing $\sd(P_3^\diamond)$.
\end{lemma}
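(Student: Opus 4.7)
The task is to extend $\psi|_{P_3^\diamond \ast C_{k-2}}$ over the join with $C_{k-2}$ of the two 3-simplices $T^{\mathrm v} = \{x_1, x_2, x_{12}, y_{12}\}$ and $T^{\mathrm w} = \{x_1, y_1, y_2, y_{12}\}$ that were removed when passing from $P_3$ to $P_3^\diamond$. Under $\phi$, these map to the internal 2-skew-additive simplices $\{v_1, v_2, v_{12}, y^*\}$ and $\{v_1, w_1, w_2, y^*\}$ respectively, where $v_{12} = \ll \vec v_1 + \vec v_2\rr$ and $y^* = \ll \vec w_1 - \vec w_2\rr$. The retraction $\retraction$ is not yet defined on such internal simplices (this is exactly the issue flagged in \cref{rem:problematic-simplices}), so we need to construct an extension by hand. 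The first step is to record that, since every $v_i, w_j$ lies in $\bar\badvertex^{\perp}$, the retracted vectors $\bar v_i - a_i \bar\badvertex$ and $\bar w_j - b_j\bar\badvertex$ form a symplectic system isomorphic to $\{\vec v_1, \vec w_1, \dots, \vec v_k, \vec w_k\}$ after augmenting with $\bar\badvertex$ itself, so the symplectic data needed to define cross maps downstream is still available.

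\textbf{Case analysis.} The construction splits on whether the triangles $\phi(\{x_1, x_2, x_{12}\}) = \{v_1, v_2, v_{12}\}$ and $\phi(\{y_1, y_2, y_{12}\}) = \{w_1, w_2, w_{12}\}$ are carrying in the sense of \cref{def:Isigdel-subdivision}. I would handle the two tetrahedra $T^{\mathrm v}$, $T^{\mathrm w}$ independently; by symmetry it suffices to describe $T^{\mathrm v}$. If $\{v_1, v_2, v_{12}\}$ is not carrying, then $\sd(T^{\mathrm v}) = T^{\mathrm v}$, the identity $\overline{\retraction(v_{12})} = \overline{\retraction(v_1)} + \overline{\retraction(v_2)}$ holds, and the set $\{\retraction(v_1), \retraction(v_2), \retraction(v_{12}), \retraction(y^*)\}$ forms a (possibly external) 2-skew-additive or smaller simplex in $\Linkhat^{<R}_{\IAArel}(\badvertex)$; $\psi$ is then defined on $T^{\mathrm v}$ so that $\psi|_{T^{\mathrm v} \ast C_{k-2}}$ is either part of a prism cross map together with the analogous piece on $T^{\mathrm w}$ or factors through $\IAAstrel$. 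If $\{v_1, v_2, v_{12}\}$ is carrying, the triangle is subdivided by a new barycentre $t = t(\{x_1, x_2, x_{12}\})$, producing three subtetrahedra in $\sd(T^{\mathrm v})$ each containing $t$ and $y_{12}$. Essentiality guarantees that the maximum of $\rkfn(-)$ on $\{v_1, v_2, v_{12}\}$ is attained at some $v_i \in \{v_1, v_2\}$ (not uniquely at $v_{12}$); I would use this $v_i$ in \cref{def:Isigdel-retraction} to define $\retraction(t) = \ll \overline{\retraction(v_i)} - \bar\badvertex\rr$, and extend $\psi$ to each subtetrahedron as the join of the (already retracted) boundary triangles with the appropriate vertex, verifying that the resulting simplices are external 2-skew-additive or $\sigma$-additive and hence live inside $\Linkhat_{\IAArel}^{<R}(\badvertex)$.

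\textbf{Verification of weak regularity.} The main technical work, and the step I expect to be the hardest, is checking that the extension assembles into a \emph{weakly regular} map on $\sd(P_3) \ast C_{k-2}$. I would go through each of the new external 2-skew-additive, $\sigma$-additive, and skew-$\sigma^2$ simplices appearing in $\psi(\sd(P_3) \ast C_{k-2})$ and exhibit it as the image of a cross map. When a triangle is carrying, the newly introduced $\badvertex$-involving vertex $\retraction(t)$ together with $\retraction(v_i)$ forms a symplectic pair with $\bar\badvertex$, so the corresponding star can be realised as the image of an external 2-skew-additive cross map in the sense of \cref{def:cross-maps}. When both triangles are non-carrying, the entire image of the retracted prism is the image of a prism cross map, by the symplectic compatibility recorded in the first paragraph. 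The essentiality hypothesis is precisely what is needed to avoid the inessential internal configurations of \cref{rem:problematic-simplices}, ensuring that no simplex outside the explicit cross maps we have built is of a type requiring its own cross map witness. Finally, \cref{lem_cross_maps_intersections} guarantees that the various cross map domains intersect only in their boundaries, so piecing together the extensions on $T^{\mathrm v}$, $T^{\mathrm w}$, and the previously defined $\psi|_{\sd(P_3^\diamond) \ast C_{k-2}}$ yields a well-defined weakly regular map $\psi$ as required.
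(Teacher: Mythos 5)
Your overall architecture matches the paper's: extend over the two tetrahedra removed from $P_3$, split on whether the triangles $\{v_1,v_2,v_{12}\}$ and $\{w_1,w_2,w_{12}\}$ are carrying, and use essentiality to locate the rank-maximising vertex on the $\sigma$-edge path. But there is a genuine gap in the carrying case. You write that you would ``use this $v_i$ in \cref{def:Isigdel-retraction} to define $\retraction(t) = \ll \overline{\retraction(v_i)} - \bar\badvertex\rr$'', where $v_i \in \{v_1,v_2\}$ is the rank maximiser. This is not available to you: the retraction $\retraction$ is the \emph{already constructed} map from \cref{IAAst-retraction} and its extensions, and its value on the barycentre $t(\Theta)$ was fixed in \cref{def:Isigdel-retraction} by an \emph{arbitrary} choice $v^\dag$ among the two \emph{non}-maximal vertices of the carrying triangle (the maximiser is excluded from that choice by construction, since in the notation of \cref{def:Isigdel-retraction} the maximiser is the sum vertex $v_0$). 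So with $m=v_1$ the maximiser and $\bar v_1 = \bar v_2 + \bar v_{12}$, the fixed choice is some $v^\dag \in \{v_2, v_{12}\}$, and you must extend $\retraction\circ\phi$ for whichever choice was made, not redefine it.

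This matters because the direct join extension only works when $v^\dag = v_{12}$: the key isotropy condition is $\omega\bigl(\overline{\retraction(v^\dag)} - \bar\badvertex,\ \overline{\retraction(w_{12})}\bigr) = 0$, which holds for $v^\dag = v_{12}$ (since $\omega(\vec v_{12}, \vec w_{12}) = 0$, this being exactly what essentiality buys via condition (E)) but fails for $v^\dag = v_2$ (since $\omega(\vec v_2, \vec w_{12}) = \pm 1$), so in the latter case the sets you would like to declare simplices are not simplices of $\Linkhat^{<R}_{\IAArel}(\badvertex)$ at all. The paper handles $v^\dag = v_2$ by an extra reduction step, gluing in two double-triple simplices $\Omega_2$ and $\Omega_{12}$ sharing the 3-additive facet $\{\retraction(v_1),\retraction(v_2),\retraction(v_{12})\}$ (via \cite[Lemma 3.40]{Brueck2022}) to pass from $\ll\overline{\retraction(v_2)}-\bar\badvertex\rr$ to $\ll\overline{\retraction(v_{12})}-\bar\badvertex\rr$, and only then performs the join subdivision into three simplices (whose images are two $\badvertex$-related mixed simplices and one 2-skew-additive simplex, not the external 2-skew-additive or $\sigma$-additive simplices you predict). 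Relatedly, your weak-regularity check posits external 2-skew-additive cross maps around the new vertex, whereas in the actual extension the three non-$\IAAstrel$ simplices in the image assemble into a single prism cross map and the image contains no external 2-skew-additive, $\sigma^2$ or $\sigma$-additive simplices, which is what makes weak regularity immediate. Without the $v^\dag = v_2$ obstruction and its resolution, the proof is incomplete.
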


\begin{remark}
	\label{rem:extendingoveressentialsimplices}
	The proof of \cref{lem:retraction-nice-prism-cross-map} actually shows that the retraction $\retraction$ extends over all essential internal 2-skew-additive simplices in $\Linkhat_{\IAArel}(\badvertex)$. I.e.\ the extension over such simplices, constructed in the proof of \cref{lem:retraction-nice-prism-cross-map}, does not rely on the ambient prism cross map.
\end{remark}

\begin{proof}
  Throughout this proof we use the notation introduced in \cref{def:prism-diamond} and, for prism cross maps, in \cref{def:cross-maps}. We need to explain how to extend $\psi|_{P_3^\diamond \ast C_{k-2}}$ over the two simplices $\ls x_1, x_2, x_{12}, y_{12} \rs $ and $\ls x_1, y_1, y_2, y_{12} \rs$ of $P_3$, which are not contained in $\sd(P_3^\diamond)$ and whose image under $\phi$ are essential internal 2-skew-additive simplices. We focus on $\Delta = \ls x_1, x_2, x_{12}, y_{12} \rs $ and write $\Theta = \ls x_1, x_2, x_{12} \rs$ for the face with the property that $\phi(\Theta)$ is an internal 2-additive simplex in $\Linkhat_{\IAArel}(\badvertex)$. The extension over $\ls x_1, y_1, y_2, y_{12} \rs$ is completely analogous.

  There are two cases depending on whether the image of $\Theta$,
  $$\phi(\Theta) = \{v_1, v_2, v_{12} = \ll \vec v_1 \pm \vec v_2 \rr\},$$
   is a carrying 2-additive simplex in $\Linkhat_{\IAArel}(\badvertex)$ (see \cref{def:Isigdel-subdivision}) or not.

First assume that $\phi(\Theta)$ is not carrying. Then it follows from \cref{def:Isigdel-subdivision} that the simplex $\phi(\Theta) \in \Linkhat_{\IAAstrel}(\badvertex)$ is not subdivided when passing from $\Linkhat_{\IAAstrel}(\badvertex)$ to $\sd(\Linkhat_{\IAAstrel}(\badvertex))$. In this case \cite[p. 1022, Claim 4]{CP} implies that $(\retraction \circ \phi)(\Theta)$ is an internal 2-additive simplex. Taking the symplectic information into account (see \cref{def:cross-maps}), we conclude that
    $$(\retraction \circ \phi)(\Delta) = \Bigl\{\retraction(v_1), \retraction(v_2), \retraction(v_{12}) = \ll \overrightarrow{\retraction(v_1)} \pm \overrightarrow{\retraction(v_2)} \rr, \retraction(w_{12})\Bigr\}$$
is an internal 2-skew-additive simplex. Therefore, for any simplex $\Delta' \in C_{k-2}$, the image $(\retraction \circ \phi)(\Delta \ast \Delta')$ is a 2-skew-additive simplex and $\psi|_{P_3^\diamond \ast C_{k-2}}$ extends over $\Delta$.\\

\begin{figure}
\begin{center}
\includegraphics{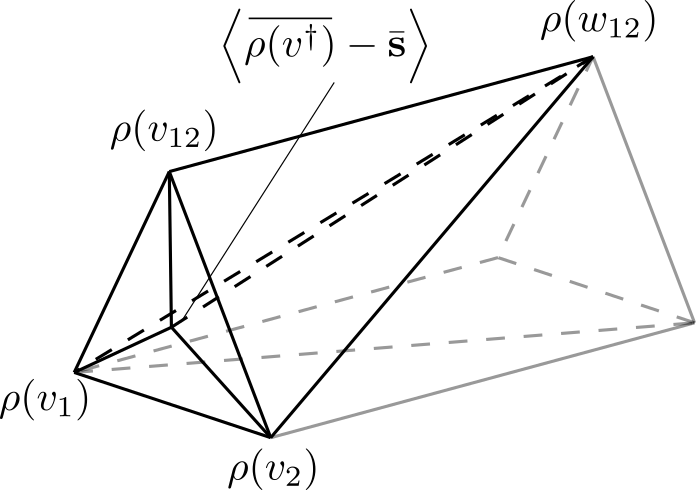}
\end{center}
\caption{The image of $\sd(P_3)$ in the case where $\phi(\Theta)$ is carrying and $v^\dagger = v_{12}$.}
\label{figure_retraction_prism_step2_image}
\end{figure}

Now assume that $\phi(\Theta)$ is carrying. This situation is depicted in \cref{figure_retraction_prism_step2_image}. In this case, it follows that the simplex $\phi(\Theta) \in \Linkhat_{\IAAstrel}(\badvertex)$ is subdivided into three simplices when passing from $\Linkhat_{\IAAstrel}(\badvertex)$ to $\sd(\Linkhat_{\IAAstrel}(\badvertex))$ (see \cref{def:Isigdel-subdivision}).  Recall that the definition of the retraction on the subdivision of a \emph{carrying} simplex depends on which vertex of $\phi(\Theta) = \{v_1, v_2, v_{12}\}$ maximises $\rkfn(-)$ (see \cref{def:Isigdel-retraction}). I.e.\ if $\phi(\Theta)$ is carrying, then there exists a \emph{unique} vertex $m \in \{v_1, v_2, v_{12}\}$ such that 
\begin{equation*}
\rkfn(m) \text{ is maximal in }\{\rkfn(v_1), \rkfn(v_2), \rkfn(v_{12})\}
\end{equation*}
(compare \cite[p. 1022, l.11 et seq.]{CP}).
Since $(P_3, \phi|_{P_3})$ is essential, $\phi(\Delta)$ is an essential internal 2-skew-additive simplex in the sense of \cref{def:niceprism}. We therefore have $m = v_1$ or $m = v_2$ because
\begin{equation}
	\label{eq:essentialconsequence}
	v_{12} \text{ is not contained in a $\sigma$ edge in $\phi(\Delta)$}
	\tag{$E$}
\end{equation}
(compare \cref{figure_max_path}).
We assume that $m = v_1$ (without loss of generality, compare \cref{rem:extendingoveressentialsimplices}).  Then, we have that $\bar v_1 = \bar v_2 + \bar v_{12}$ and $\overbar{\retraction(v_1)} = \overbar{\retraction(v_2)} + \overbar{\retraction(v_{12})} - \bar \badvertex$ (compare \cite[p. 1024, l.6]{CP}). Furthermore, it holds that $\rkfn(\retraction(v_2)), \rkfn(\retraction(v_{12})) > 0$ (compare \cref{def:Isigdel-retraction} and \cite[p. 1023, l.26 et seq.]{CP}). 
Let $v^\dag \in \{v_2, v_{12}\}$ denote the \emph{arbitrary} choice made in the definition of the retraction $\retraction$ (see \cref{def:Isigdel-retraction}). Using \cref{def:Isigdel-retraction}, the image of the new vertex $t(\Theta)$ in $\sd(\Theta)$ under the retraction $\retraction$ is given by $$\retraction(t(\Theta)) = \ll \overbar{\retraction(v^\dag)} - \bar \badvertex \rr.$$
 This implies that the image of the subdivided simplex $\sd(\Theta)$ under the map $\psi|_{\sd(P_3^\diamond)} = \retraction \circ \phi$ is the union of the following three 2-additive simplices:
    \begin{itemize}
      \item $\psi|_{\sd(P_3^\diamond)}(\Theta_1) = \Bigl\{\ll \overbar{\retraction(v^\dag)} - \bar \badvertex \rr, \retraction(v_2), \retraction(v_{12})\Bigr\}$, which is $\badvertex$-related 2-additive;
      \item $\psi|_{\sd(P_3^\diamond)}(\Theta_2) = \Bigl\{\ll \overbar{\retraction(v^\dag)} - \bar \badvertex \rr, \retraction(v_1), \retraction(v_{12}) \Bigr\}$, which is $\badvertex$-related 2-additive if $v^\dag = v_{12}$ and internally 2-additive otherwise;
      \item $\psi|_{\sd(P_3^\diamond)}(\Theta_3) = \Bigl\{\ll \overbar{\retraction(v^\dag)} - \bar \badvertex \rr, \retraction(v_1), \retraction(v_2) \Bigr\}$, which is $\badvertex$-related 2-additive if $v^\dag = v_2$ and internally 2-additive otherwise.
    \end{itemize}
    
    We now reduce the case $v^\dag = v_2$ to the case $v^\dag = v_{12}$, and then explain how to extend $\psi|_{P_3^\diamond \ast C_{k-2}}$.

    \textbf{Reduction to the case $\boldsymbol{v^\dag = v_{12}}$:} Assume that $v^\dag = v_2$. By \cite[Lemma 3.40]{Brueck2022}, we know that
    $$\Omega_2 = \{\retraction(v_2), \retraction(v_{12}), \retraction(v_1)\} \ast \{\ll \overbar{\retraction(v_2)} - \bar \badvertex \rr\}$$
    and
    $$\Omega_{12} = \{\retraction(v_2), \retraction(v_{12}), \retraction(v_1)\} \ast \{\ll \overbar{\retraction(v_{12})} - \bar \badvertex \rr\}$$
    are two double-triple in $\Linkhat_{\IAArel}(\badvertex)$ sharing the 3-additive facet 
    $\{\retraction(v_2), \retraction(v_{12}), \retraction(v_1)\}$
    (see \cref{figure_retraction_prism_step2_reduction_to_v12}).
\begin{figure}
\begin{center}
\includegraphics{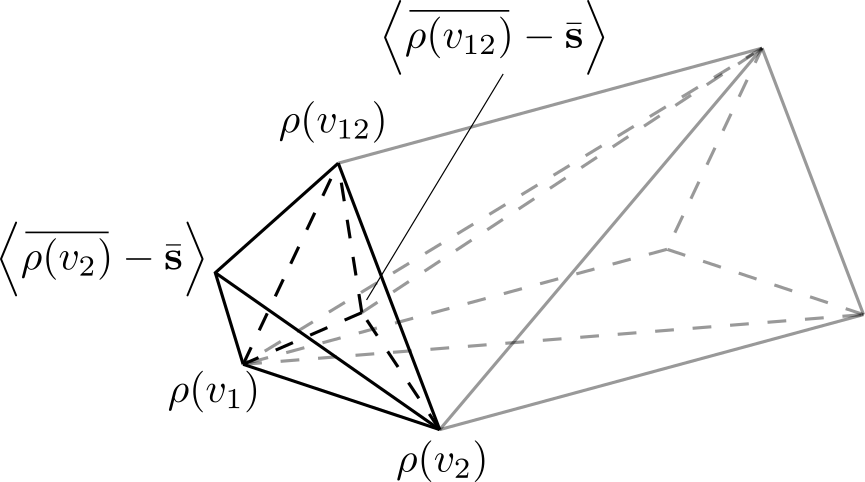}
\end{center}
\caption{Reduction to the case $v^\dag = v_{12}$. Black lines mark the corresponding two double-triple simplices in $\Linkhat_{\IAArel}(\badvertex)$.}
\label{figure_retraction_prism_step2_reduction_to_v12}
\end{figure}
    This implies that $\Omega_2 \ast \Omega$ and $\Omega_{12} \ast \Omega$ are double-triple simplices for every simplex $\Omega \in C_{k-2}$. Therefore, we can extend $\psi|_{P_3^\diamond \ast C_{k-2}}$ over $(\Omega_2  \cup \Omega_{12}) \ast \Omega$ for any simplex $\Omega \in C_{k-2}$. And hence, we may assume that $v^\dag = v_{12}$.

    \textbf{The case $\boldsymbol{v^\dag = v_{12}}$:}
     To see that $\retraction$ extends over $\Delta = \ls x_1, x_2, x_{12}, y_{12} \rs $, we start by subdividing the simplex $\Delta \ast \Omega$ for any $\Omega \in C_{k-2}$ in a way that is compatible with the subdivision $\sd(\Theta)$ of the face $\Theta = \ls x_1, x_2, x_{12}\rs \subset \Delta$ into the simplices $\Theta_1, \Theta_2, \Theta_3$ discussed above. Let $\Omega$ be any simplex in $C_{k-2}$. Then we subdivide $\Delta \ast \Omega$ into the following three simplices (compare \cref{figure_retraction_prism_step2_image}):
    \begin{itemize}
    \item $\Delta_1 = \Theta_1 \ast \{y_{12}\} \ast \Omega$,
    \item $\Delta_2 = \Theta_2 \ast \{y_{12}\} \ast \Omega$ and
    \item $\Delta_3 = \Theta_3 \ast \{y_{12}\} \ast \Omega.$
    \end{itemize}
    Note that $\psi|_{P_3^\diamond \ast C_{k-2}} = \retraction \circ \phi$ is defined on the vertex set of these simplices. We claim that their image in $\Linkhat^{< R}_{\IAArel}(\badvertex)$ always forms a simplex. This is exactly where we use the assumption $v^\dag = v_{12}$, i.e.\ for $v^\dag = v_2$ this claim is wrong. The key point is that $\omega(\vec v_{12}, \vec w_{12}) = 0$ while $\omega(\vec v_2, \vec w_{12}) = \pm 1$ and $\omega(\vec v_1, \vec w_{12}) = \pm 1$ (compare \cref{eq:essentialconsequence} and \cref{figure_max_path}). Computing the image of each simplex, we find:
    \begin{align*}
      \psi(\Delta_1) = \psi(\Theta_1) \ast \psi(\{y_{12}\}) \ast \psi(\Omega) 
      = \Bigl\{\ll \overbar{\retraction(v_{12})} - \bar \badvertex \rr, \retraction(v_2), \retraction(v_{12})\Bigr\} \ast \Bigl\{\retraction(w_{12})\Bigr\} \ast \psi(\Omega),
    \end{align*}
    which is an $\badvertex$-related mixed simplex.
    \begin{align*}
      \psi(\Delta_2) = \psi(\Theta_2) \ast \psi(\{y_{12}\}) \ast \psi(\Omega)
      = \Bigl\{\ll \overline{\retraction(v_{12})} - \bar \badvertex \rr, \retraction(v_1), \retraction(v_{12})\Bigr\} \ast \Bigl\{\retraction(w_{12})\Bigr\} \ast \psi(\Omega),
    \end{align*}
    which is an $\badvertex$-related mixed simplex.
    \begin{align*}
      \psi(\Delta_3) &= \psi(\Theta_3) \ast \psi(\{y_{12}\}) \ast \psi(\Omega)
      = \Bigl\{\ll \overline{\retraction(v_{12})} - \bar \badvertex \rr, \retraction(v_1), \retraction(v_{2})\Bigr\} \ast \Bigl\{\retraction(w_{12})\Bigr\} \ast \psi(\Omega)\\
      &= \Bigl\{\ll \overline{\retraction(v_{1})} - \overline{\retraction(v_{2})} \rr, \retraction(v_1), \retraction(v_{2})\Bigr\} \ast \Bigl\{\retraction(w_{12})\Bigr\} \ast \psi(\Omega),
    \end{align*}
    which is a 2-skew-additive. To check these statements, we used:
    \begin{itemize}
    \item $\omega(\overbar{\retraction(v^\dag)} - \bar \badvertex, \overbar {\retraction(w_{12})}) = 0$ if $v^\dag = v_{12}$. Note that this fails if $v^\dag = v_{2}$.
    \item The identity $\ll \overbar{\retraction(v_{12})} - \bar \badvertex \rr = \ll \overbar{\retraction(v_{1})} - \overbar{\retraction(v_{2})} \rr$, which follows from the fact that $\overbar {\retraction(v_1)} = \overbar{\retraction(v_2)} + \overbar{\retraction(v_{12})} - \bar \badvertex$.
    \end{itemize}
    This completes the proof of the fact that the simplicial map
    $$\psi|_{\sd(P_3^\diamond) \ast C_{k-2}} = \retraction \circ \phi\colon  \sd(P_3^\diamond) \ast C_{k-2} \to \Linkhat_{\IAArel}^{< R}(\badvertex)$$
    can be extended to a simplicial map
    $$\psi\colon  \sd(P_3) \ast C_{k-2} \to \Linkhat_{\IAArel}^{< R}(\badvertex).$$
\begin{figure}
\begin{center}
\includegraphics{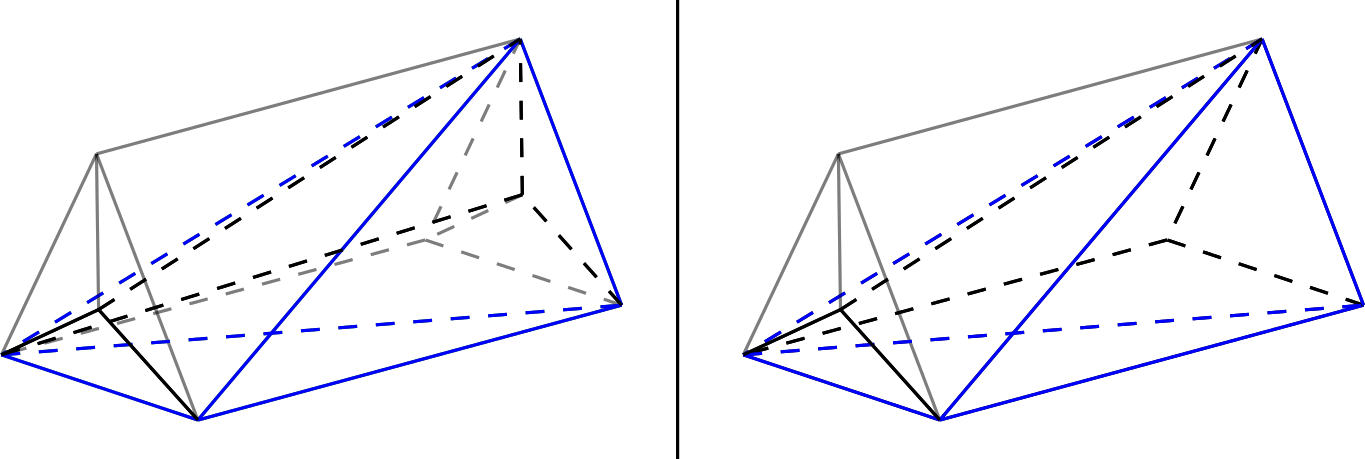}
\end{center}
\caption{The final picture of $\psi(\sd(P_3))$ for the case $v^\dag = v_{12}$. On the left for the case where both $\phi(\{x_1, x_2, x_{12}\})$ and $\phi(\{y_1, y_2, y_{12}\})$ are carrying, on the right for the case where only $\phi(\{x_1, x_2, x_{12}\})$ is carrying. The black and blue lines mark the image of a prism $P$ in $\sd(P_3)$, where the blue lines mark the edges of the skew-$\sigma^2$ simplex that is already contained in $\psi|(\sd(P_3^\diamond))$.}
\label{figure_retraction_prism_step2_reduction_final_situation}
\end{figure}
  It remains to check that the extension $\psi$ is weakly regular. For this, we note that the image of $\psi$ contains exactly two 2-skew-additive simplices and one skew-$\sigma^2$ simplex (which is already contained in the image of $\psi|_{\sd(P_3^\diamond) \ast C_{k-2}}$), see \cref{figure_retraction_prism_step2_reduction_final_situation}. By construction these three simplices are the image of a prism $P$ in $\sd(P_3)$, which has the property that $\Star_{\sd(P_3) \ast C_{k-2}}(P) = P \ast C_{k-2}$ and $\psi|_{P \ast C_{k-2}}$ is a prism cross map. Since the image of $\psi$ does not contain external 2-skew-additive simplices, $\psi$ is prism-regular. Since it also does not contain any $\sigma^2$ or $\sigma$-additive simplices, we conclude that $\psi$ is weakly regular (compare \cref{def_regular_maps}).
\end{proof}

\subsubsection{Step 3: Proof of \texorpdfstring{\cref{IAA-retraction}}{Proposition 6.1}} \label{subsec:proof-of-IAA-retraction}

The proof of \cref{IAA-retraction} relies on Zeeman's relative simplicial approximation theorem (\cref{zeeman-relative-simplicial-approximation}).

\begin{proof}[Proof of \cref{IAA-retraction}]
	We use the notation introduced in the statement of \cref{IAA-retraction} and, invoking Step 1 (i.e.\ \cref{lem:nice-regular-replacements}), may assume that every prism in the weakly regular map
	$$\phi\colon  D^k \to \Linkhat_{\IAArel}(\badvertex)$$
	is essential.
	
	The following construction relies on the fact that the domains of cross maps in $\phi$ can only intersect at their boundary spheres (see \cref{lem_cross_maps_intersections}). \cref{lem_cross_maps_simplex_types} implies that the only cross maps $\Sigma \ast C_{i} \to \Linkhat_{\IAArel}(\badvertex)$ in $\phi$ for which the image of the boundary sphere $\partial (\Sigma \ast C_{i})$ can contain carrying simplices in $\Linkhat_{\IAArel}(\badvertex)$ are prism cross maps. Indeed, let $\phi|_{\Star_{D^k}(P)}\colon  \Star_{D^k}(P) \to \Linkhat_{\IAArel}(\badvertex)$ be a 
	prism cross map in $\phi$. Writing $\Star_{D^k}(P) = P \ast C_{k-3}$, we have $\partial \Star_{D^k}(P) = \partial (P \ast C_{k-3}) = (\partial P) \ast C_{k-3}$, and two simplices in $\partial P$ might get mapped to carrying internal 2-additive simplices by $\phi$.
	
	We start the construction of $\psi$ by removing the interior of the domain of any cross map in $\phi$ from $D^k$. We denote the resulting subcomplex of $D^k$ by $K$. Note that the boundary sphere of the domain of any cross map in $\phi$ is contained in $K$ and that the boundary sphere $\partial D^k = S^{k-1}$ of $D^k$ is also contained in $K$. We denote by $L$ the subcomplex of $K$ obtained as the union of all of these spheres. For each prism cross map in $\phi$, consider the boundary sphere $(\partial P) \ast C_{k-3}$ of its domain. Let $\Theta_x$ and $\Theta_y$ denote the two simplices in $\partial P$ that get mapped to internal 2-additive simplices by $\phi$. Subdivide $\Theta_x$ and $\Theta_y$ by placing a new vertex at their barycentre depending on whether $\phi(\Theta_x)$, $\phi(\Theta_y)$, neither or both are carrying to obtain $\sd(\partial P)$. For each prism cross map in $\phi$, replace $(\partial P) \ast C_{k-3}$ by $\sd(\partial P) \ast C_{k-3}$ in $L$ to obtain $L'$ and let $K'$ be the coarsest subdivision of $K$ containing $L'$.
	
	Next, we use that weak regularity and \cref{lem_cross_maps_simplex_types} imply that the restriction $\phi|_K$ has image in $\Linkhat_{\IAAstrel}(\badvertex)$, i.e.\
	$$\phi|_K\colon  K \to \Linkhat_{\IAAstrel}(\badvertex).$$	
	Since $$|\sd(\Linkhat_{\IAAstrel}(\badvertex))| \cong |\Linkhat_{\IAAstrel}(\badvertex)|,$$ we obtain a continuous map $\phi|_K\colon  |K| \to |\sd(\Linkhat_{\IAAstrel}(\badvertex))|$ that is simplicial except on simplices of $K$ whose image under $\phi$ was carrying in $\Linkhat_{\IAAstrel}(\badvertex)$ (see \cref{def:IAAst-subdivision}). Identifying $K$ with $K'$, we obtain a continuous map
	$$\phi'|_{K'}\colon  |K'| \to |\sd(\Linkhat_{\IAAstrel}(\badvertex))|$$
	that is simplicial on the subcomplex $L'$ of $K'$.	Composing this map with the simplicial retraction $\retraction$ for $\sd(\Linkhat_{\IAAstrel}(\badvertex))$ constructed in \cref{IAAst-retraction} yields a continuous map
	$$\retraction \circ \phi'|_{K'}\colon  |K'| \to |\Linkhat_{\IAAstrel}^{< R}(\badvertex)|$$
	that is simplicial on the subcomplex $L'$ of $K'$. An application of Zeeman's simplicial approximation theorem (\cref{zeeman-relative-simplicial-approximation}) now provides us with a subdivision $K''$ of $K'$ containing $L'$ and a simplicial map $$\psi|_{K''}\colon  K'' \to \Linkhat_{\IAAstrel}^{< R}(\badvertex)$$ that agrees with $\retraction \circ \phi'|_{L'}$ on $L' \subseteq K''$.
	
	Finally, we inspect the domains of cross maps in $\phi$ and use $\psi|_{K''}$ described above, \cref{lem:retraction-sigma2-sigma-additive-external-2-skew-additive-cross-map} and \cref{lem:retraction-nice-prism-cross-map} to construct the weakly regular map 
	$$\psi\colon  D^k_{\new} \to \Linkhat_{\IAArel}^{< R}(\badvertex).$$
	\begin{enumerate}
		\item Consider a $\sigma^2$ cross map $\phi|_{\Star_{D^k}(\Delta)}\colon  \Star_{D^k}(\Delta) \to \Linkhat_{\IAArel}(\badvertex)$ in $\phi$. Then \cref{lem:retraction-sigma2-sigma-additive-external-2-skew-additive-cross-map} implies that $\retraction \circ \phi|_{\Star_{D^k}(\Delta)}$ is a $\sigma^2$ cross map. Since $\partial \Star_{D^k}(\Delta)$ is contained in $L'$ and $\psi|_{K''}$ agrees with $\retraction \circ \phi|_{\Star_{D^k}(\Delta)}$ on $\partial \Star_{D^k}(\Delta)$, it follows that we can glue $\Star_{D^k}(\Delta)$ to $K''$ and extend $\psi|_{K''}$ by $\retraction \circ \phi|_{\Star_{D^k}(\Delta)}$.
		\item Let $\phi|_{\Star_{D^k}(\Delta)}\colon  \Star_{D^k}(\Delta) \to \Linkhat_{\IAArel}(\badvertex)$ be an external 2-skew-additive cross map  in $\phi$. Then \cref{lem:retraction-sigma2-sigma-additive-external-2-skew-additive-cross-map} implies that $\retraction \circ \phi|_{\Star_{D^k}(\Delta)}$ is either contained in $\Linkhat_{\IAAstrel}^{< R}(\badvertex)$ or an external 2-skew-additive cross map. Since $\partial \Star_{D^k}(\Delta)$ is contained in $L'$ by definition and $\psi|_{K''}$ agrees with $\retraction \circ \phi|_{\Star_{D^k}(\Delta)}$ on $\partial \Star_{D^k}(\Delta)$, it follows that we can glue $\Star_{D^k}(\Delta)$ to $K''$ and extend $\psi|_{K''}$ by $\retraction \circ \phi|_{\Star_{D^k}(\Delta)}$.
		\item Consider a $\sigma$-additive cross map $\phi|_{\Star_{D^k}(\Delta)}\colon  \Star_{D^k}(\Delta) \to \Linkhat_{\IAArel}(\badvertex)$ in $\phi$, and write the domain as $\Star_{D^k}(\Delta) = \Delta \ast C_{k-2}$. Note that $\retraction \circ \phi|_{\Star_{D^k}(\Delta)}$ might not be simplicial because $\phi(\Delta)$ could be carrying in the sense of \cref{def:sigma-additive-subdivision-and-retraction}. However, \cref{lem:retraction-sigma2-sigma-additive-external-2-skew-additive-cross-map} implies that $$\retraction \circ \phi|_{\sd(\Delta) \ast C_{k-2}}\colon  \sd(\Delta) \ast C_{k-2} \to \Linkhat_{\IAArel}^{<R}(\badvertex)$$ is a weakly regular. Noting that $$\partial \Star_{D^k}(\Delta) = (\partial \Delta) \ast C_{k-2} = (\partial \sd(\Delta)) \ast C_{k-2} = \partial (\sd(\Delta) \ast C_{k-2}),$$
		that $\partial \Star_{D^k}(\Delta)$ is contained in $L'$ by definition and that $\psi|_{K''}$ agrees with $\retraction \circ \phi|_{\sd(\Delta) \ast C_{k-2}}$ on $\partial \Star_{D^k}(\Delta)$, we can glue $\sd(\Delta) \ast C_{k-2}$ to $K''$ and extend $\psi|_{K''}$ by $\retraction \circ \phi|_{\sd(\Delta) \ast C_{k-2}}$.
		\item Consider a 
		prism cross map $\phi|_{\Star_{D^k}(P)}\colon  \Star_{D^k}(P) \to \Linkhat_{\IAArel}(\badvertex)$ in $\phi$, and write the domain as $\Star_{D^k}(P) = P \ast C_{k-3}$. By assumption $(P, \phi|_P)$ is essential, hence we can invoke \cref{lem:retraction-nice-prism-cross-map} to obtain a weakly regular map $$\psi|_{\sd(P) \ast C_{k-3}}\colon  \sd(P) \ast C_{k-3} \to \Linkhat_{\IAArel}^{<R}(\badvertex).$$
		The boundary $\partial \sd(P)$ of $\sd(P)$ (see \cref{def:prism-diamond}) is exactly the complex $\sd(\partial P)$, i.e.\ the subdivision of $\partial P$ that we used to define $L'$. Hence,
		$$\partial (\sd(P) \ast C_{k-3}) = \sd(\partial P) \ast C_{k-3}$$
		is contained in $L'$ by construction and $\psi|_{K''}$ agrees with $\psi|_{\sd(P) \ast C_{k-3}}$ on $\partial (\sd(P) \ast C_{k-3})$. It follows that we can glue $\sd(P) \ast C_{k-3}$ to $K''$ and extend $\psi|_{K''}$ by $\psi|_{\sd(P) \ast C_{k-3}}$.
	\end{enumerate}

	Using items 1.-4.\ above, we can extend $\psi|$ to a simplicial map 
	$$\psi'\colon  D^k_{\new} \to \Linkhat_{\IAArel}^{< R}(\badvertex).$$
	The domain $D^k_{\new}$ of this map is a combinatorial $k$-ball because it is constructed as a subdivision of $D^k$ (see \cref{rem_subdivision_combinatorial_manifolds}). Furthermore, the map is weakly regular and agrees with $\phi$ on $S^{k-1} \subseteq L'$ by construction. This completes the proof.
\end{proof}

\section{Highly connected subcomplexes}
\label{sec_highly_connected_subcomplexes}

In this section we collect and prove auxiliary connectivity results about Tits buildings and subcomplexes of $\IAA$. We use these later to show that $\IAA$ is highly connected, i.e.~to prove \autoref{thm_connectivity_IAA}.

\subsection{The complexes \texorpdfstring{$\B_n^m$, $\BA_n^m$ and $\BAA_n^m$}{B, BA and BAA}}

In \cref{def:B-and-BA}, \cref{def:BAA}, \cref{def_linkhat} and \cref{rem:about-relative-bbabaa-complexes} we introduced the complexes $\B^m_n$, $\BA^m_n$ and $\BAA^m_n$. These were first defined in \cite{CP,Brueck2022} to study the Steinberg module of $\SL{n}{\mbQ}$: A complex closely related to $\B_n^m$ was used by Church--Farb--Putman \cite{cfp2019} to  construct a generating set, which was first described by Ash--Rudolph in \cite{ar1979}. The complex $\BA_n^m$ was used by Church--Putman \cite{CP} to construct a presentation, which was first described by Bykovski\u{\i} in \cite{byk2003} and for $n = 2$ by Manin in \cite{manin1972}. The complex $\BAA_n^m$ was used by Brück--Miller--Patzt--Sroka--Wilson \cite{Brueck2022} to understand the two-syzygies, the relations among the relations. The following two theorems summarise the main connectivity theorems contained in \cite{CP, Brueck2022}.

\begin{theorem}[{\cite[Theorem 4.2 and Theorem C']{CP}}]
	\label{thm_connectivity_B_BA} Let $n \geq 1$ and $m \geq 0$.
	\begin{enumerate}
		\item $\B_n^m$ is $(n-2)$-connected and Cohen--Macaulay of dimension $n-1$.
		\item $\BA_n^m$ is $(n-1)$-connected. If $m+n \geq 2$, then $\BA_n^m$ is Cohen--Macaulay of dimension $n$. If $m+n \leq 2$, then $\BA_n^m$ is contractible.
	\end{enumerate}
\end{theorem}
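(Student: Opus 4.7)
The plan is simply to reduce the statement to the corresponding results of Church--Putman, since both assertions are attributed to \cite[Theorem 4.2 and Theorem C']{CP}. The work consists in explaining why the complexes $\B^m_n$ and $\BA^m_n$ appearing here are literally the same as the complexes in \cite{CP}, so that those theorems apply verbatim.

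First I would observe that, by \cref{def:B-and-BA} and \cref{def:BAA}, every vertex of $\B(V)$, $\BA(V)$ is a rank-$1$ summand of the \emph{isotropic} summand $V \subseteq \mbZ^{2(m+n)}$. Consequently the symplectic form $\omega$ restricts to zero on $V$, and the defining conditions for standard and 2-additive simplices depend only on the $\mbZ$-module structure of $V$. Choosing a basis identifies $V$ with $\mbZ^{\rk V}$, and under this identification $\B(V)$ and $\BA(V)$ are isomorphic to the complexes $\B_{\rk V}$ and $\BA_{\rk V}$ of Church--Putman.

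Next I would treat the relative version. By \cref{def_linkhat}, $\B^m_n \subseteq \Link_{\B_{m+n}}(\{e_1,\ldots,e_m\})$ and $\BA^m_n \subseteq \Link_{\BA_{m+n}}(\{e_1,\ldots,e_m\})$ consist of those vertices $v$ satisfying (a) $\vec v \notin \langle \vec e_1, \ldots, \vec e_m\rangle$ and (b) $\omega(e_i,v)=0$ for all $i$. As noted in \cref{rem:about-relative-bbabaa-complexes}, condition (b) is automatic because all vertices live in the isotropic summand $V$. Hence $\B^m_n$ and $\BA^m_n$ agree with the complexes of the same name in \cite{CP}.

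With this identification, the first assertion follows from \cite[Theorem 4.2]{CP} and the second from \cite[Theorem C']{CP}. The only potential subtlety to check is the small base cases $m+n\le 2$ for $\BA^m_n$, where contractibility (rather than Cohen--Macaulay-ness) is asserted; but these cases are covered directly by the corresponding small-rank statements in \cite{CP}. No new combinatorial topology is needed here, and there is no genuine obstacle — the content of this lemma lies entirely in verifying that the symplectic-setting definitions collapse to the module-theoretic ones once we restrict to an isotropic summand.
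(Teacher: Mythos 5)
Your proposal is correct and matches what the paper does: the statement is imported directly from Church--Putman, and the only content is the observation (made in the paper via \cref{def:B-and-BA}, \cref{def_linkhat} and \cref{rem:about-relative-bbabaa-complexes}) that on an isotropic summand the symplectic conditions are vacuous, so $\B^m_n$ and $\BA^m_n$ coincide with the complexes of \cite{CP}. Your identification argument is exactly this, so nothing further is needed.
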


\begin{theorem}[{\cite[Theorem 2.10 and Theorem 2.11]{Brueck2022}}] \label{connectivity_BAA} Let $n \geq 1$ and $m \geq 0$.
	The complex $\BAA_n^m$ is $n$-connected. If $m+n \geq 3$, then $\BAA_n^m$ is Cohen--Macaulay of dimension $n+1$. If $m+n \leq 2$, then $\BAA_n^m$ is contractible.
\end{theorem}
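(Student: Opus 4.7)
My plan is to prove $\BAA_n^m$ is $n$-connected by applying the standard link argument (\cref{lem:standard-link-argument}\cref{it_link_argument_Y_to_X}) to the pair $Y = \BA_n^m \subseteq X = \BAA_n^m$. Since $\BA_n^m$ is $(n-1)$-connected by \cref{thm_connectivity_B_BA}, and in fact Cohen--Macaulay of dimension $n$ once $m+n \geq 2$, I want to promote this to $n$-connectivity of $\BAA_n^m$ by using that the new (3-additive, double-triple, and double-double) simplices fill in the missing top homotopy group. To have enough control in the induction, I would run a simultaneous induction on $n$ proving both the $n$-connectivity and the Cohen--Macaulay property, the latter providing stronger connectivity estimates on links needed as input. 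The very small cases $m+n\leq 2$ can be handled by hand: one verifies directly that $\BAA_n^m$ is a cone (or otherwise contractible) by using, say, a suitable $\vec e_i$ as a cone point.

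For the link argument, the bad simplices $B$ would be the minimal 3-additive, minimal double-triple, and minimal double-double simplices in $\BAA_n^m\setminus\BA_n^m$ (equivalently, the augmentation cores of the new simplex types, in the sense of \cref{def_minimal_simplices_additive_core}). Conditions (1) and (2) of \cref{def:standard-link-argument-bad-simplices} follow by direct inspection: (1) a simplex with no bad face is, by definition, of type standard or 2-additive and hence lies in $\BA_n^m$; (2) the union of two minimal bad faces of a common simplex is again a bad simplex of one of the three types, which one checks by combining the defining additive relations on the $\vec v_i$'s.

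The heart of the argument is the computation of $\Link^{\mathrm{good}}_{\BAA_n^m}(\Delta)$ for each type of bad simplex $\Delta$. For a minimal 3-additive simplex $\Delta=\{v_0,v_1,v_2,v_3\}$ with $\vec v_0=\pm\vec v_1\pm\vec v_2\pm\vec v_3$, the module $\langle \vec v_1,\vec v_2,\vec v_3\rangle$ is an isotropic summand of rank $3$ by \cref{lem_direct_summands_basics}, so I expect to identify $\Link^{\mathrm{good}}_{\BAA_n^m}(\Delta)$ with $\BAA$ of rank $n-3$ (relative to $m+3$), obtained by extending $\{\vec e_1,\dots,\vec e_m,\vec v_1,\vec v_2,\vec v_3\}$ to a basis via \cref{lem_extend_to_symplectic_basis}. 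Similarly, a minimal double-triple simplex (dimension $4$, additive rank $3$) and a minimal double-double simplex (dimension $5$, additive rank $4$) should produce good links isomorphic to $\BAA^{m+3}_{n-3}$ and $\BAA^{m+4}_{n-4}$ respectively. By the inductive hypothesis, these are $(n-3)$-, $(n-3)$-, and $(n-4)$-connected, which exactly matches the required bound $(n-\dim(\Delta)-1)$ in \cref{lem:standard-link-argument}.

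The main obstacle will be the careful identification of the good links as genuine copies of $\BAA$-complexes of the reduced rank. The ``good'' condition must precisely exclude exactly those vertices that would create additive relations with the vertices of $\Delta$ (producing a new bad face not contained in $\Delta$), while including every vertex that genuinely extends $\Delta$ to a larger standard or 2-additive simplex; in particular, one needs to verify that a vertex $w$ avoiding the explicit forbidden linear combinations with $\{\vec e_1,\dots,\vec e_m,\vec v_0,\dots,\vec v_k\}$ does produce only bad faces contained in $\Delta$. Once this is in place, the Cohen--Macaulay clause is obtained by the same induction: for an arbitrary simplex $\Theta\in\BAA_n^m$, I would decompose the ambient summand orthogonally with respect to $\langle \Theta\rangle$ and identify $\Link_{\BAA_n^m}(\Theta)$ as a join of smaller $\BAA$- and $\B$-complexes, whose connectivity is already controlled by the induction hypothesis together with \cref{thm_connectivity_B_BA}.
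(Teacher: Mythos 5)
First, note that the paper does not prove this statement at all: it is quoted verbatim from Brück--Miller--Patzt--Sroka--Wilson \cite{Brueck2022} (their Theorems 2.10 and 2.11), where it is the main result of a long and technical argument. So your proposal is not a reconstruction of anything in this paper; it is an attempt at a new, much shorter proof, and it has a genuine gap.

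The gap is in the identification of the good links. For a minimal internal 3-additive simplex $\Delta=\{v_0,v_1,v_2,v_3\}$ with $\vec v_0=\pm\vec v_1\pm\vec v_2\pm\vec v_3$, a simplex $\Theta\ast\Delta$ of $\BAA_n^m$ containing $\Delta$ must itself be 3-additive with core $\Delta$, because the list of simplex types in \cref{def:BAA} contains no type combining a 3-additive core with any further additive relation. Hence $\Link^{\mathrm{good}}_{\BAA_n^m}(\Delta)$ consists of \emph{standard} simplices only: it is (a subcomplex of) a complex of type $\B^{m+3}_{n-3}$, not $\BAA^{m+3}_{n-3}$ as you claim, and the same correction applies to double-triple and double-double cores. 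This is exactly parallel to \cref{link_IAAstrel_doubletriple_doubledouble} and the proof of \cref{lem_connectivity_IAAstar} in the symplectic setting, where such links are identified with $\Irel$-type complexes. Now $\B^{m+3}_{n-3}$ is $(n-5)$-connected and in general not $(n-4)$-connected (it is spherical of dimension $n-4$, with nontrivial top homology), whereas \cref{lem:standard-link-argument} requires the good link of a $3$-dimensional bad simplex to be $(n-\dim(\Delta)-1)=(n-4)$-connected in order to conclude $n$-connectivity of $\BAA_n^m$; external minimal 3-additive simplices (of dimension $1$ or $2$) exhibit the same off-by-one shortfall. So the link argument you propose breaks down precisely at the 3-additive simplices.

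What this naive gluing does prove is that $\BAA_n^m$ is $(n-1)$-connected, i.e. one degree less than claimed -- which is exactly how the present paper uses the analogous argument to get $(n-1)$-connectivity of $\IAAstrel$ from $\IArel$ in \cref{lem_connectivity_IAAstar}. The passage from $(n-1)$- to $n$-connectivity is the hard content of \cite{Brueck2022}: there the extra degree is obtained not by attaching simplices along highly connected links, but by an elaborate analysis of maps of combinatorial spheres into the complex (rank-reducing retractions, normal forms, isolation of bad simplices), analogous to Sections 5--9 of this paper for $\IAA$. Your simultaneous induction with the Cohen--Macaulay property does not repair this, since the induction hypothesis is never strong enough to upgrade the connectivity of the standard-simplex links that actually appear.
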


\subsection{The symplectic Tits building}
\label{sec_building}
In the introduction, we defined the rational Tits building of type $\mathtt{C}_n$ as the order complex of the poset of nonzero isotropic subspaces of $\mbQ^{2n}$.
If $V \subseteq \mbQ^{2n}$ is such a subspace, then by \cref{lem_subspace_summand}, $V\cap \mbZ^{2n}$ is an isotropic summand of $\mbZ^{2n}$. On the other hand, given an isotropic summand $V' \subseteq \mbZ^{2n}$, the tensor product $V' \otimes \mbQ$ gives an isotropic subspace of $\mbQ^{2n}$. This yields isomorphisms between the poset of isotropic subspaces of $\mbQ^{2n}$ and the poset of isotropic summands of $\mbZ^{2n}$. We use the latter as the definition of the Tits building for the remainder of this article.

\begin{definition}
\label{def_building_over_Z}
Let $n\geq 1$. We denote the poset of nonzero isotropic summands of $\mbZ^{2n}$ by $T^\omega_n$ and call it the \emph{symplectic Tits building} (of rank $n$).
\end{definition}

We usually do not distinguish between this poset and its order complex.
The following is a version of the Solomon--Tits Theorem.

\begin{theorem}[{\cite{ST}}]
\label{Solomon-Tits}
The symplectic Tits building $T^\omega_n$ is Cohen--Macaulay of dimension $n-1$. 
\end{theorem}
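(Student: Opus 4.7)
The plan is to verify the three conditions in the definition of Cohen--Macaulay of dimension $n-1$: that $T^\omega_n$ has dimension $n-1$, is $(n-2)$-connected, and that every link $\Link_{T^\omega_n}(\Theta)$ is $(n-\dim(\Theta)-3)$-connected. The dimension claim is immediate from \cref{lem_split_off_symplectic_summand}: an isotropic summand of $\mbZ^{2n}$ has rank at most $n$, so a maximal chain $V_0 \subsetneq V_1 \subsetneq \dots \subsetneq V_{n-1}$ of nonzero isotropic summands has length $n$ and yields an $(n-1)$-simplex, and no longer chain exists. The $(n-2)$-connectedness is exactly the classical Solomon--Tits theorem \cite{ST} applied to the $\mathtt{C}_n$ spherical building over $\mbQ$ associated with $\Sp{2n}{\mbQ}$, whose simplicial realisation (via the poset isomorphism $V \mapsto V \cap \mbZ^{2n}$ mentioned just before \cref{def_building_over_Z}) coincides with $T^\omega_n$.

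For the link condition, I would proceed by induction on $n$. Given a $k$-simplex $\Theta$ of $T^\omega_n$ corresponding to a flag $V_0 \subsetneq V_1 \subsetneq \dots \subsetneq V_k$ of nonzero isotropic summands, an additional vertex $W$ completes this to a larger flag precisely when $W$ fits into one of the $k+2$ ``gaps'' $W \subsetneq V_0$, $V_i \subsetneq W \subsetneq V_{i+1}$, or $V_k \subsetneq W \subseteq V_k^\perp$. Using \cref{lem_direct_summands_basics}, \cref{lem_subspace_summand} and \cref{lem_split_off_symplectic_summand}, the last family is identified with nonzero isotropic summands of the symplectic quotient $V_k^\perp/V_k$ (which has genus $n - \rk(V_k)$), while the intermediate gaps contribute order complexes of the type $\mathtt A$ buildings of proper nonzero subspaces of $V_0$ and of $V_{i+1}/V_i$. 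This produces an isomorphism
\[
  \Link_{T^\omega_n}(\Theta) \;\cong\; T(V_0) \,\ast\, T(V_1/V_0) \,\ast\, \cdots \,\ast\, T(V_k/V_{k-1}) \,\ast\, T^\omega_{n-\rk(V_k)},
\]
where $T(U)$ denotes the type $\mathtt{A}$ Tits building of $U$.

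The induction closes quickly from here. Each type $\mathtt{A}$ factor $T(U)$ is Cohen--Macaulay of dimension $\dim(U)-2$ by the classical Solomon--Tits theorem, and $T^\omega_{n-\rk(V_k)}$ is Cohen--Macaulay of dimension $n-\rk(V_k)-1$ by the inductive hypothesis. The join of Cohen--Macaulay complexes is again Cohen--Macaulay (the dimension and connectivity add appropriately, using the standard fact $\on{conn}(X \ast Y) = \on{conn}(X) + \on{conn}(Y) + 2$, and links in a join are joins of links), and a direct bookkeeping on dimensions gives the required bound $n-\dim(\Theta)-3$ for the connectivity of $\Link_{T^\omega_n}(\Theta)$. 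The main technical point is simply to set up the join decomposition cleanly and to handle the base cases $n=1$ (where $T^\omega_1$ is a nonempty discrete set, hence $(-1)$-connected and of dimension $0$) and the degenerate factors $T(V_0)$ when $\rk(V_0)=1$ (empty, conventionally $(-1)$-connected) in the dimension/connectivity accounting.
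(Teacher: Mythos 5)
Your argument is correct, but there is nothing in the paper to compare it against: the paper states this result with a citation to Solomon--Tits and building theory and gives no proof, since the Cohen--Macaulayness of spherical buildings is classical. What you have written is essentially the standard building-theoretic proof unpacked: the $(n-2)$-connectedness is the Solomon--Tits theorem for the type $\mathtt{C}_n$ building of $\Sp{2n}{\mbQ}$ (transported to the integral poset via the isomorphism the paper records just before \cref{def_building_over_Z}), and the link condition follows from the standard residue decomposition, in which the link of a flag $V_0 \subsetneq \dots \subsetneq V_k$ is the join of the order complexes of the open intervals $(0,V_0), (V_0,V_1), \dots, (V_{k-1},V_k)$ (type $\mathtt{A}$ buildings) with the upper link, identified with $T^\omega(V_k^\perp/V_k)$, a symplectic building of genus $n-\rk(V_k)$; your dimension and connectivity bookkeeping via $\on{conn}(X \ast Y) \geq \on{conn}(X)+\on{conn}(Y)+2$ checks out and yields exactly the required $(n-\dim(\Theta)-3)$-connectedness. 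One small point of hygiene: under the paper's convention that $(-1)$-connected means nonempty, the empty factors (e.g.\ $T(V_0)$ when $\rk(V_0)=1$) should be assigned connectivity $-2$ and dimension $-1$, or simply dropped using $X \ast \emptyset = X$; your parenthetical ``conventionally $(-1)$-connected'' is off by one, but this does not affect the conclusion since the required lower bound is still met.
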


\subsection{\texorpdfstring{$W$}{W}-restricted subcomplexes of \texorpdfstring{$\Irel$, $\Idelrel$ and $\IAAstrel$}{I, I\unichar{"005E}\unichar{"03B4} and IAA*}}
\label{sec_subcomplex_W}

Let $m,n \in \mbN$. In this subsection, we study subcomplexes of $\Irel$, $\Idelrel$ and $\IAAstrel$  whose vertex sets are restricted using the submodule
$$W = W_{m+n} = \ll \vec e_1, \vec f_1 \ldots, \vec e_{m+n-1}, \vec f_{m+n-1}, \vec e_{m+n} \rr \subseteq \mbZ^{2(m+n)}.$$
The results here build on ideas contained in \cite{put2009}, \cite[Chapter 5]{Sroka2021} and \cite{bruecksroka2023}.

\begin{definition}
\label{def_W_complexes}
	Let $X \in \{\I[], \Idel[], \IAAst[]\}$. We define $X_n^m(W)$ to be the full subcomplex of $X^m_n$ on the set of vertices that are contained in $W_{m+n}$.
	
	 Similarly, we define $T^{\omega}_{m+n}(W)$ to be the subposet of the symplectic Tits building $T^{\omega}_{m+n}$ of summands contained in $W_{m+n}$. We let
	 \begin{equation*}
	 	T^{\omega,m}_n(W) = T^{\omega}_{m+n}(W)_{> \ll e_1, \dots, e_m \rr}
\end{equation*}
denote the upper link of the summand $\ll e_1, \dots, e_m \rr$, i.e.~the subposet of all $V\in T^{\omega}_{m+n}(W)$ such that $\ll e_1, \dots, e_m \rr < V$.
\end{definition}

The following proposition concerns the connectivity properties of these $W$-restricted subcomplexes.

\begin{proposition} \label{connectivity_subcomplex_W}
	Let $n \geq 1$ and $m \geq 0$.
	\begin{enumerate}
		
		\item \label{it_irelw_conn} $\Irel(W)$ is Cohen--Macaulay of dimension $n-1$.
		\item \label{it_idelrelw_conn} $\Idelrel(W)$ is $(n-1)$-connected.
		\item \label{it_iaastrelw_conn}$\IAAstrel(W)$ is $n$-connected.
	\end{enumerate}  
\end{proposition}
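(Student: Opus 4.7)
The plan is to prove the three parts in order, using induction on $n$ together with bad simplex arguments modelled on the analogous results for $\B^m_n$, $\BA^m_n$ and $\BAA^m_n$ (\cref{thm_connectivity_B_BA} and \cref{connectivity_BAA}). The three target connectivity bounds mirror those in the special linear case exactly, which suggests that the same three-step strategy should work.

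For part (1), I would show that $\Irel(W)$ is Cohen--Macaulay of dimension $n-1$ by induction on $n$. The base case $n=1$ is immediate: the vertex $e_{m+1}$ is always present, the complex is $0$-dimensional and non-empty. For the inductive step, the key observation is that $W$ is a ``degenerate symplectic module'' whose radical (under $\omega|_W$) is exactly $\langle \vec e_{m+n}\rangle$; modding this radical out recovers the standard symplectic module $\mbZ^{2(m+n-1)}$. Using this, I would compute the link of a standard simplex $\Delta$ in $\Irel(W)$: the symplectic change of basis from the proof of \cref{link_Irel_standard} can be chosen to preserve the submodule $W$ (after possibly conjugating $\Delta$ by an element of $\Sp{2(m+n)}{\mbZ}$ fixing $e_1,\dots,e_m$ and $W$), giving an isomorphism of the form $\Link_{\Irel(W)}(\Delta)\cong \Irel[n-k-1][m+k+1](W')$ where $W'$ is a degenerate symplectic module of the same shape. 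By induction, these links have the correct CM property. The overall $(n-2)$-connectivity will be obtained by comparing with the restricted Tits building $T^{\omega,m}_n(W)$ of \cref{def_W_complexes} via Quillen's fibre lemma: the natural map from $\sd(\Irel(W))$ sending a simplex to the flag of $\mbZ$-spans (with $\langle \vec e_1,\dots,\vec e_m\rangle$ adjoined) has contractible fibres, and the target is Cohen--Macaulay of dimension $n-1$ by a degenerate analogue of Solomon--Tits (\cref{Solomon-Tits}) for $W\otimes\mbQ$.

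Part (2) follows from part (1) by the standard link argument (\cref{lem:standard-link-argument}) applied to the pair $\Irel(W) \subseteq \Idelrel(W)$, with bad simplices those containing a 2-additive augmentation core. The key input is \cref{link_Idelrel_2add}, which identifies the link of a 2-additive simplex with the link of the underlying standard simplex in $\Irel$, so that part (1) directly yields the connectivity of the good links. Since minimal 2-additive simplices have dimension at least $2$, the standard link argument promotes $(n-2)$-connectivity of $\Irel(W)$ to $(n-1)$-connectivity of $\Idelrel(W)$. Part (3) is deduced from part (2) by another application of \cref{lem:standard-link-argument}, now with bad simplices having 3-additive, double-triple, or double-double augmentation cores. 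The links of such simplices are controlled by \cref{link_IAAstrel_doubletriple_doubledouble}, which expresses them again as $\Irel$-type complexes of the appropriate parameters; the required high connectivity of the good links is provided by part (1). The gain of one in the connectivity (from $n-1$ to $n$) comes from the fact that these minimal non-$\Idelrel$ simplices have dimensions at least $3$, $4$, and $5$ respectively, so that the connectivity bounds to be checked on good links are correspondingly more permissive.

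The main obstacle will be establishing the $(n-2)$-connectivity of $\Irel(W)$ itself in part (1). This requires a careful analysis of the degenerate symplectic form on $W$: one needs to verify that the restricted Tits building $T^{\omega,m}_n(W)$ is Cohen--Macaulay of dimension $n-1$, which is a ``degenerate'' version of Solomon--Tits not directly covered by \cref{Solomon-Tits}, and then check that the comparison map from $\Irel(W)$ has sufficiently well-behaved fibres (each being an integral-basis-completion complex that is highly connected by Maazen-type results; cf.\ the proof of \cref{thm_connectivity_B_BA}). Once this technical input is in place, the remainder of the argument is a relatively direct application of standard link-argument technology.
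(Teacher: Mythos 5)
The central gap is the mechanism by which you pass from (1) to (2) and from (2) to (3): the standard link argument cannot raise connectivity. \cref{lem:standard-link-argument} concludes that $X$ is $n$-connected only if the subcomplex $Y$ is already $n$-connected, and in its relative form it shows that the inclusion $Y\hookrightarrow X$ is highly connected, which makes $\pi_{n-1}(Y)\to\pi_{n-1}(X)$ an \emph{isomorphism} rather than killing it. Since $\Irel(W)$ is only $(n-2)$-connected and its $(n-1)$-st homotopy is a large nontrivial (Steinberg-like) group, attaching the $2$-additive simplices along good links that are merely $(n-\dim(\Delta)-1)$-connected would transfer that homotopy isomorphically into $\Idelrel(W)$, not annihilate it; so your claim that the link argument ``promotes'' $(n-2)$-connectivity to $(n-1)$-connectivity is false, and the same objection applies verbatim to your deduction of (3) from (2). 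The dimension counts you invoke as the source of the gain are also wrong in the relative complexes (for $m\geq 1$ external $2$-additive and $3$-additive simplices are edges), and in (3) your bad-simplex set omits the $\sigma$ and mixed simplices of $\IAAstrel(W)$, which do not lie in $\Idelrel(W)$. Finally, your sketch of (1) asserts that the comparison map to $T^{\omega,m}_n(W)$ has contractible fibres; the fibres are $\B$-type complexes, which are only highly connected, and establishing the $(n-1)$-connectivity of $\Idelrel(W)$ (Putman's result, with gaps repaired by Br\"uck--Sroka) is genuinely hard work that your outline does not supply.

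For comparison, the paper does not reprove (1) and (2) at all — they are quoted from Putman and Br\"uck--Sroka — and its proof of (3) gains the extra degree of connectivity by a different route. One forms the intermediate complex $\int_W\BAA^m_n$, the union of the complexes $\BAA^m(V)$ over isotropic summands $V\subseteq W$ in $T^{\omega,m}_n(W)$, and proves it is $n$-connected by mapping its simplex poset to $T^{\omega,m}_n(W)$, which is \emph{contractible} and Cohen--Macaulay (the distinguished isotropic direction $\langle \vec e_{m+n}\rangle$ is what makes the $W$-restricted poset contractible); the poset fibres are $\Simp(\BAA^m(V))$, which are $n$-connected by \cref{connectivity_BAA}, so van der Kallen--Looijenga's criterion applies. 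Thus the gain of one degree comes from contractibility of the restricted building together with the extra connectivity of the $\BAA$ fibres, not from a link argument over $\Idelrel(W)$. Only afterwards are the $\sigma$ (and mixed) simplices attached by a bad-simplex argument whose good links are $\I[]^{\delta,m}(W\cap\langle\Delta\rangle^{\perp})$, and this is the sole place where (2) enters — to preserve, not improve, the $n$-connectivity already established.
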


The connectivity results of \cref{it_irelw_conn} and \cref{it_idelrelw_conn} of \cref{connectivity_subcomplex_W} are essentially \cite[Proposition 6.13 (1) and (3)]{put2009}. Putman informed us that their proof contains some small gaps. These gaps were fixed in \cite[Theorem 3.5, Item 1 and 3]{bruecksroka2023}. The Cohen--Macaulay property of $\Irel(W)$ has been verified in \cite[Lemma 3.14]{bruecksroka2023}. We are hence left with checking \cref{it_iaastrelw_conn} of \cref{connectivity_subcomplex_W}. For this, we apply the same strategy as in \cite[Theorem 3.5, Item 3, and Corollary 3.16]{bruecksroka2023}. We start by defining an intermediate simplicial complex.

\begin{definition}
  The simplicial complex $\int_W\BAA^m_n$ has as vertices the rank-1 summands $v$ of $W$ such that $ \ll \vec e_1, \dots, \vec e_m, \vec v \rr$ is an isotropic rank-$(m+1)$ summand of $\Z^{2(m+n)}$. A collection $\Delta$ of such lines forms a simplex if there exists $V \in T^{\omega,m}(W)$ such that $\Delta$ is a simplex in $\BAA^m(V )$.
\end{definition}

\begin{lemma} \label{lem:intwbaa-connectivity}
  Let $n \geq 1$ and $m \geq 0$. Then $\int_W\BAA^m_n$ is $n$-connected.
\end{lemma}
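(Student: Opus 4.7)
The plan is to follow the strategy used in \cite[Corollary 3.16]{bruecksroka2023} to prove the analogous connectivity statement for $\int_W \BA^m_n$, upgrading the input from $\BA$-type to $\BAA$-type connectivity in order to gain one degree of connectivity in the output.

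First, I would observe that the complex decomposes as a union
\begin{equation*}
\int_W \BAA^m_n \;=\; \bigcup_{V \in T^{\omega,m}(W)} \BAA^m(V),
\end{equation*}
indexed by the poset $T^{\omega,m}(W)$ of \cref{def_W_complexes}. The intersection of any two subcomplexes $\BAA^m(V_1)$ and $\BAA^m(V_2)$ in this cover equals $\BAA^m(V_1 \cap V_2)$ when $V_1 \cap V_2$ is a nonzero isotropic summand containing $\ll e_1,\dots,e_m\rr$, and is empty otherwise. This is because the simplex types of \cref{def:BAA-simplices} depend only on the isotropic summand spanned by the ``standard'' part of the simplex, and each vertex of a simplex in $\BAA^m(V_1) \cap \BAA^m(V_2)$ must lie in $V_1 \cap V_2$.

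Next, I would set up a Quillen-style fiber argument using the natural order-preserving map $\pi$ from the face poset of $\int_W \BAA^m_n$ to $T^{\omega,m}(W)$ that sends a simplex $\Delta$ to the summand $V(\Delta)$ spanned by $\ll e_1,\dots,e_m\rr$ together with the ``underlying standard part'' of $\Delta$ (that is, its standard vertices and the source vertices of its additive vertices, as in \cref{def_minimal_simplices_additive_core} and the discussion in \cref{sec_intro_construction_simplicial_complexes}). The upper fiber $\pi^{-1}(T^{\omega,m}(W)_{\le V})$ is precisely $\BAA^m(V)$, which by \cref{connectivity_BAA} is $(\rk(V)-m)$-connected. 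Combined with the Cohen--Macaulay property of the symplectic Tits building (\cref{Solomon-Tits}) restricted to the subposet $T^{\omega,m}(W)$, this allows one to invoke a homotopy-colimit / Quillen Theorem~A argument: the total complex is $n$-connected provided that for each $V$ of corank $i$ in the maximal isotropic summand of $W$, the fiber $\BAA^m(V)$ contributes connectivity at least $n-i$, which follows because $\BAA^m(V)$ is $(m+n-1-i - m) = (n-1-i)$-connected by \cref{connectivity_BAA} and the poset below $V$ contributes an additional degree.

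The main obstacle will be carefully reconciling the connectivity of the fibers $\BAA^m(V)$ (whose dimensions vary with $\rk(V)$) with the connectivity of the base poset $T^{\omega,m}(W)$, and verifying that the poset $T^{\omega,m}(W)$ (which is an upper link in the restricted symplectic Tits building, not the full building) has the requisite connectivity properties. The corresponding bookkeeping for $\int_W \B^m_n$ and $\int_W \BA^m_n$ was carried out in \cite{bruecksroka2023, put2009}, and the only new ingredient needed here is that $\BAA^m(V)$ is one degree more connected than $\BA^m(V)$, which is exactly what \cref{connectivity_BAA} provides. The resulting one-degree improvement in the connectivity of the total space gives the desired bound.
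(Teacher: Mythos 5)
Your overall strategy coincides with the paper's: both arguments use the poset map $s\colon \Simp(\int_W\BAA^m_n)\to T^{\omega,m}_n(W)$, $\Delta\mapsto \langle \vec e_1,\dots,\vec e_m\rangle+\langle\Delta\rangle$ (your ``underlying standard part'' description produces the same summand, since the additive vertices lie in the span of the standard ones together with the $\vec e_i$), identify the fibre $s_{\leq V}$ with $\Simp(\BAA^m(V))$, feed in \cref{connectivity_BAA}, and conclude via a Quillen-type fibre theorem --- the paper uses \cite[Corollary 2.2]{vanderkallenlooijenga2011sphericalcomplexesattachedtosymplecticlattices} with $t(V)=\dim(V)-m+2$, so the fibres only need to be $(\dim(V)-m)$-connected.

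The one substantive problem is your treatment of the base poset. You invoke ``the Cohen--Macaulay property of the symplectic Tits building (\cref{Solomon-Tits}) restricted to the subposet $T^{\omega,m}(W)$''. Solomon--Tits is neither the right reference nor a strong enough statement: $T^{\omega,m}_n(W)$ has dimension $n-1$, so sphericity would only make it $(n-2)$-connected, and with an $(n-2)$-connected base a fibre theorem can at best conclude that $\int_W\BAA^m_n$ is $(n-2)$-connected --- a highly connected map transports the connectivity of the base, it cannot exceed it. What the paper actually uses is that $T^{\omega,m}_n(W)$ is a \emph{contractible} Cohen--Macaulay poset, cited from \cite[Lemma 3.9]{bruecksroka2023} (morally, $V\mapsto V+\langle \vec e_{m+n}\rangle$ is a conical contraction because every isotropic summand of $W$ is orthogonal to $\vec e_{m+n}$). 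That contractibility is what allows the connectivity of the fibres to propagate to $n$-connectivity of the whole complex; your closing claim that the only new ingredient relative to the $\BA$-case is the extra degree of fibre connectivity is true, but only because the contractible base was already in place in that earlier argument. (Minor slip along the way: for $V$ of corank $i$, $\BAA^m(V)\cong\BAA^m_{n-i}$ is $(n-i)$-connected directly by \cref{connectivity_BAA}; no extra degree ``from the poset below $V$'' is needed or available.) You do flag the base-poset connectivity as the main outstanding verification and point to \cite{bruecksroka2023,put2009}, where it is indeed established, so this is a citation/precision issue rather than a wrong route --- but as written, the justification in your middle paragraph would not deliver the claimed bound.
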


\begin{proof}
There is a poset map
  \begin{align*}
  s\colon  \Simp(\int_W\BAA^m_n) &\longrightarrow T^{\omega, m}_n(W)\\
   \Delta &\longmapsto \langle \vec e_1, \dots, \vec e_m \rangle + \langle \Delta \rangle,
   \end{align*}
  where $\Simp(\int_W\BAA^m_n)$ denotes the simplex poset of $\int_W\BAA^m_n$.
  The target $T^{\omega, m}_n(W)$ of this poset map is a contractible Cohen--Macaulay poset by \cite[Lemma 3.9]{bruecksroka2023}.
Hence, a result by van der Kallen--Looijenga\footnote{For the $n$ in \cite[Corollary 2.2]{vanderkallenlooijenga2011sphericalcomplexesattachedtosymplecticlattices}, choose what is $(n+1)$ in the notation of the present article; define their map $t$ by $t(V)=\dim(V) - m + 2$} {\cite[Corollary 2.2]{vanderkallenlooijenga2011sphericalcomplexesattachedtosymplecticlattices}} implies that it suffices to check that for every  $V \in T^{\omega, m}_n(W)$, the poset fibre $s_{\leq V}$ is  $(\dim(V) - m)$-connected.
This follows from  \autoref{connectivity_BAA} because $s_{\leq V} \cong \Simp(\BAA^m(V))$.
\end{proof}

The following lemma shows that $\IAAstrel(W)$ is $n$-connected, which finishes the proof of \cref{connectivity_subcomplex_W}. For its proof, we use that $\IAAstrel(W)$ is obtained from $\int_W\BAA^m_n$ by attaching $\sigma$ simplices along highly connected links.

\begin{lemma}
\label{IAAW_highly_connected}
Let $n \geq 1$ and $m \geq 0$. Let $S$ a combinatorial $k$-sphere for $k \leq n$ and consider a simplicial map $\phi\colon  S \to \IAAstrel(W)$. Then $\phi$ is weakly regularly nullhomotopic in $\IAAstrel(W)$.
\end{lemma}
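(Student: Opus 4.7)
The plan is to apply the standard link argument (\cref{lem:standard-link-argument}(1)) with $X = \IAAstrel(W)$, the already-$n$-connected subcomplex $Y = \int_W\BAA^m_n$ from \cref{lem:intwbaa-connectivity}, and bad simplices $B$ consisting of all simplices of $\IAAstrel(W)$ that contain a $\sigma$-edge. The first step is to verify that $Y$ really is $X\setminus B$: any standard, 2-additive, 3-additive, double-triple, or double-double simplex $\Delta$ of $\IAAstrel(W)$ spans an isotropic summand $V = \ll\Delta,\vec e_1,\ldots,\vec e_m\rr\subseteq W$ and is thus a simplex of $\BAA^m(V)\subseteq \int_W\BAA^m_n$, while $\sigma$ and mixed simplices are exactly the simplex types of $\IAAstrel(W)$ containing a $\sigma$-edge. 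Both conditions of \cref{def:standard-link-argument-bad-simplices} are then immediate, because ``containing a $\sigma$-edge'' is closed under supersets and under unions of faces.

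The minimal bad simplices are precisely the $\sigma$-edges $\Delta = \{v,w\}$ with $v,w\in W$ and $\omega(\vec v,\vec w)=\pm 1$. For such $\Delta$, every $\Theta\ast \Delta\in \IAAstrel(W)$ is of $\sigma$ or mixed type, which by definition contains $\{v,w\}$ as its \emph{unique} $\sigma$-edge; hence no further bad face is created and $\Link^{\mathrm{good}}_X(\Delta) = \Link_{\IAAstrel(W)}(\Delta)$. The vertices of this link are rank-$1$ summands $u\subseteq W\cap\{v,w\}^\perp$ with $u\notin\ll e_1,\ldots,e_m\rr$ and $\omega(\vec e_i,\vec u)=0$, and the simplices are standard or 2-additive in the perpendicular summand.

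The main technical point is to identify this link with a complex of the form $\Idelrel[n-1][m](\widetilde W)$ so that \cref{connectivity_subcomplex_W}(2) applied at rank $n-1$ gives $(n-2)$-connectivity, matching the required bound $n-\dim(\Delta)-1 = n-2$. Here the decisive observation is that $\vec e_{m+n}$ is $\omega$-orthogonal to every element of $W$, so $\vec e_{m+n}\in W\cap\{v,w\}^\perp$ for any $\sigma$-edge $\{v,w\}$ of $\IAAstrel(W)$. I would then extend $\{\vec v,\vec w,\vec e_1,\ldots,\vec e_m,\vec e_{m+n}\}$ to a symplectic basis of $\mbZ^{2(m+n)}$ via \cref{lem_extend_to_symplectic_basis} and apply a symplectic isomorphism (in the spirit of \cref{lem_isom_link_for_making_regular}) fixing $\vec e_1,\ldots,\vec e_m$ and $\vec e_{m+n}$ while sending $\{v,w\}$ to the last symplectic pair. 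The resulting isomorphism of links carries $W\cap\{v,w\}^\perp$ to a submodule $\widetilde W$ of $\mbZ^{2(m+n-1)}$ with the same structural shape as $W_{m+n-1}$: a genus-$(m+n-2)$ symplectic summand containing $\vec e_1,\ldots,\vec e_m$ together with the isotropic rank-one summand $\ll\vec e_{m+n}\rr$. Thus \cref{connectivity_subcomplex_W}(2) applies and gives the required $(n-2)$-connectivity.

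With the good links suitably connected and $Y$ already $n$-connected, \cref{lem:standard-link-argument}(1) yields that $\IAAstrel(W)$ is $n$-connected. Given $\phi\colon S\to\IAAstrel(W)$ with $k\le n$, the map $|\phi|$ is nullhomotopic, so \cref{lem_simplicial_approximation_combinatorial}(2) supplies a combinatorial $(k+1)$-ball $B$ with $\partial B=S$ and a simplicial extension $\psi\colon B\to \IAAstrel(W)$. Because $\IAAstrel(W)\subseteq \IAAstrel$ contains no $\sigma^2$, skew-additive, 2-skew-additive, skew-$\sigma^2$, or $\sigma$-additive simplex, each hypothesis in \cref{def_regular_maps} is vacuous for $\psi$, so $\psi$ is automatically weakly regular. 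The hard part will be the explicit identification of the good link with $\Idelrel[n-1][m](\widetilde W)$: the chosen symplectic isomorphism need not preserve $W$ setwise, and one must use the fact that $\vec e_{m+n}$ lies in every $\{v,w\}^\perp$ for $\sigma$-edges $\{v,w\}\subseteq W$ to guarantee that the image of the $W$-restriction still has the correct ``$W$-shape'' for \cref{connectivity_subcomplex_W} to apply.
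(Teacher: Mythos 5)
Your overall strategy coincides with the paper's: since $\IAAstrel(W)$ contains none of the simplex types of \cref{def:IAA-simplices}, any simplicial nullhomotopy inside it is automatically weakly regular, so the lemma reduces to showing that $\IAAstrel(W)$ is $n$-connected; this is then done by a standard link argument over the $n$-connected subcomplex $\int_W\BAA^m_n$ of \cref{lem:intwbaa-connectivity}, identifying the links of $\sigma$-edges with a genus-$(n-1)$ complex of type $\Idelrel[n-1](W)$ (using, as you note, that $\vec e_{m+n}$ is $\omega$-orthogonal to all of $W$, so that $W\cap\langle\Delta\rangle^\perp$ again has the ``$W$-shape''), whose $(n-2)$-connectivity is \cref{it_idelrelw_conn} of \cref{connectivity_subcomplex_W}. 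For the link identification the paper does not ask the ambient symplectic isomorphism to preserve $W$; it only asserts an isomorphism of the symplectic module $W\cap\langle\Delta\rangle^\perp$ itself with $\langle \vec e_1,\vec f_1,\dots,\vec e_{m+n-2},\vec f_{m+n-2},\vec e_{m+n}\rangle$ fixing $\vec e_1,\dots,\vec e_m$, which exists because $\vec e_{m+n}$ spans the radical of $\omega|_W$; this disposes of the point you flag as ``the hard part''.

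There is, however, a concrete error in your choice of bad simplices. You declare $B$ to be \emph{all} simplices of $\IAAstrel(W)$ containing a $\sigma$-edge, but then compute good links as if $B$ consisted only of the $\sigma$-edges. With your $B$, for a $\sigma$-edge $\Delta=\{v,w\}$ and any nonempty $\Theta\in\Link_X(\Delta)$, the face $\Delta\cup\{u\}$ of $\Theta\ast\Delta$ (with $u$ a vertex of $\Theta$) contains a $\sigma$-edge, hence lies in $B$, and is not contained in $\Delta$; so by \cref{def:standard-link-argument-good-link} the good link of $\Delta$ is \emph{empty}, not $\Link_{\IAAstrel(W)}(\Delta)$, and the hypothesis of \cref{lem:standard-link-argument} fails (by the paper's convention the empty complex is not even $(-1)$-connected). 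Moreover, \cref{lem:standard-link-argument} requires the good-link condition for every bad simplex, and your write-up never treats the non-minimal elements of your $B$ (the higher-dimensional $\sigma$ and mixed simplices), whose good links under your $B$ are likewise empty. Both problems disappear once you take $B$ to be the set of minimal $\sigma$ simplices, as the paper does: condition (2) of \cref{def:standard-link-argument-bad-simplices} holds because every $\sigma$ or mixed simplex of $\IAAstrel(W)$ contains a unique $\sigma$-edge, your equality $\Link^{\mathrm{good}}_X(\Delta)=\Link_{\IAAstrel(W)}(\Delta)$ becomes correct, and the rest of your argument (the identification of this link, the connectivity bound $n-\dim(\Delta)-1=n-2$, and the final appeal to \cref{lem_simplicial_approximation_combinatorial} plus automatic weak regularity) is then exactly the paper's proof.
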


\begin{proof}
  We start with the following observation: If $\psi\colon  D \to \IAAstrel(W)$ is any nullhomotopy of $\phi$, it follows from \cref{it_simplicial_approximation_combinatorial_ball} of \cref{lem_simplicial_approximation_combinatorial} that we may assume that $D$ is a combinatorial $(k+1)$-ball. Because $\IAAstrel(W)$ does not contain any simplices of the type listed in \autoref{def:IAA-simplices}, it follows that any such nullhomotopy $\psi$ is regular and, in particular, weakly regular. Hence, we only need to prove that $\IAAstrel(W)$ is $n$-connected. 
To see this, we apply the standard link argument as explained in \cref{subsec:standard-link-argument}: By \cref{lem:intwbaa-connectivity}, we know that the subcomplex $X_0 = \int_W \BAA^m_n$ of $X_1 = \IAAstrel(W)$ is $n$-connected. Let $B$ be the set of all minimal $\sigma$ simplices $\Delta = \{v,w\}$ in $X_1$. This set is a set of bad simplices in the sense of \cref{def:standard-link-argument-bad-simplices}. Following \cref{def:standard-link-argument-good-link}, we find that $\Link^{\mathrm{good}}_{X_1}(\Delta) = \Link_{\IAAstrel(W)}(\Delta) = \I[]^{\delta,m}(W \cap \ll \Delta \rr^\perp)$ for $\Delta = \{v, w\} \in B$. Using that there is an isomorphism
\begin{equation*}
	(W \cap \ll \Delta \rr^\perp, \omega|_{W \cap \ll \Delta \rr^\perp}) \cong \left(\langle \vec e_1, \vec f_1, \ldots,\vec e_{m+n-2}, \vec f_{m+n-2}, \vec e_{m+n} \rangle , \omega|_{\langle \vec e_1, \ldots, \vec f_{m+n-2}, \vec e_{m+n} \rangle }\right)
\end{equation*}
that preserves $\vec e_1, \ldots, \vec e_m$, it follows that $$\I[]^{\delta,m}(W \cap \ll \Delta \rr^\perp) \cong \I[]^{\delta,m}(\ll e_1, f_1 \ldots,e_{m+n-2}, f_{m+n-2}, e_{m+n} \rr).$$ 
The complex on the right hand side is $(n-2)$-connected by \cref{it_idelrelw_conn} of \cref{connectivity_subcomplex_W}. Hence, \cref{it_link_argument_Y_to_X} of \cref{lem:standard-link-argument} implies that $X_1$ is $(n-2)$-connected.
\end{proof}

\subsection{Rank-\texorpdfstring{$R$}{R} subcomplex of \texorpdfstring{$\Irel$ and $\Idelrel$}{I and I\unichar{"005E}\unichar{"03B4}}}

Let $R \in \mbN \cup \{\infty\}$. In this subsection, we collect results about the full subcomplex $(\Irel)^{\leq R}$ and $(\Idelrel)^{\leq R}$ of $\Irel$ and $\Idelrel$, respectively, on the set of vertices of rank at most $R$ (see \cref{def_rank_and_ranked_complexes}). 

The first lemma is an easy consequence of \cite[Proposition 6.13 (2)]{put2009} using \cref{link_Irel_standard} or \cite[Proposition 1.2]{vanderkallenlooijenga2011sphericalcomplexesattachedtosymplecticlattices}. An alternative proof can be found in \cite[Lemma 3.15]{bruecksroka2023}.

\begin{lemma}
	\label{lem:Irel-cohen-macaulay}
	Let $n,m \geq 0$. Then $\Irel$ is Cohen--Macaulay of dimension $(n-1)$.
\end{lemma}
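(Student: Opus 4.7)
The plan is to verify the three parts of the Cohen--Macaulay definition stated in \cref{sec_prelims_simplicial_complexes}: $\Irel$ has dimension $n-1$, it is $(n-2)$-connected, and each link satisfies the appropriate connectivity bound. The key reduction is that, via \cref{link_Irel_standard}, all of these conditions follow once $(n-2)$-connectedness is established for every pair $(n,m)$ with $n,m \geq 0$.

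First I would check the dimension. A simplex $\Delta = \{v_0,\ldots,v_k\}$ of $\Irel$ corresponds to a standard simplex $\{e_1,\ldots,e_m\} \cup \Delta$ of $\I[m+n]$, that is, to an isotropic summand of $\mbZ^{2(m+n)}$ of rank $m+k+1$. Since isotropic summands of a rank-$2(m+n)$ symplectic module have rank at most $m+n$, we get $k \leq n-1$; equality is attained by $\{e_{m+1},\ldots,e_{m+n}\}$, so $\dim(\Irel) = n-1$.

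Second, I would invoke the fact that $\Irel$ is $(n-2)$-connected for all $n,m \geq 0$. This is classical: it appears as \cite[Proposition 6.13(2)]{put2009} (with the small gap in Putman's argument addressed in \cite[Theorem 3.5]{bruecksroka2023}) and is also a special case of \cite[Proposition 1.2]{vanderkallenlooijenga2011sphericalcomplexesattachedtosymplecticlattices}. A streamlined proof, parallel to \cref{lem:intwbaa-connectivity} above, would combine the sphericity of the Tits building (\cref{Solomon-Tits}) with the fact that the simplex poset of $\Irel$ admits a poset map to $T^{\omega,m}_n$ whose fibres are themselves simplex posets of lower-rank $\mathcal{I}$-complexes, after which van der Kallen--Looijenga's fibration criterion \cite[Corollary 2.2]{vanderkallenlooijenga2011sphericalcomplexesattachedtosymplecticlattices} yields the desired connectivity by induction on $n$.

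Third, I would handle the link condition. Let $\Delta$ be a simplex of $\Irel$ of dimension $k$. By \cref{link_Irel_standard}, we have an isomorphism
\[
\Link_{\Irel}(\Delta) \cong \Irel[n-(k+1)][m+(k+1)].
\]
Applying the second step with parameters $n' = n-k-1$ and $m' = m+k+1$, the right-hand side is $(n'-2)$-connected, that is, $(n-k-3)$-connected. Since $k = \dim(\Delta)$, this coincides with the bound $(n-1-\dim(\Delta)-2)$-connected required by the Cohen--Macaulay property of dimension $n-1$. Combining the three steps completes the proof.

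The only substantive obstacle is the $(n-2)$-connectedness assertion of the second step, but this is not a new result, only a citation: once it is granted, the rest is a direct unwinding of definitions using the link calculation in \cref{link_Irel_standard}.
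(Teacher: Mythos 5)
Your proposal is correct and takes essentially the same route as the paper, which deduces the lemma as an easy consequence of \cite[Proposition 6.13 (2)]{put2009} together with the link identification of \cref{link_Irel_standard} (with van der Kallen--Looijenga and \cite{bruecksroka2023} cited as alternatives); your dimension count and the unwinding of the link condition are exactly the omitted routine details. One minor aside: in your sketched alternative argument the fibres of the span map over $V \in T^{\omega,m}_n$ are simplex posets of $\B$-type complexes $\B^m(V)$ (partial bases of the isotropic summand $V$), not lower-rank $\mathcal{I}$-complexes, but since your main argument rests on the citation this does not affect correctness.
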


The next lemma is a refinement of the previous one, and an immediate consequence of the proof of \cite[Proposition 6.13 (2)]{put2009}. For the convenience of the reader, we include a short argument using \cref{lem:Irel-cohen-macaulay} and \cref{I-retraction}.

\begin{lemma}
\label{lem_connectivity_Irel}
Let $n,m \geq 0$. For all $R\in \mbN \cup \ls \infty \rs$, the complex $(\Irel)^{\leq R}$ is $(n-2)$-connected.
\end{lemma}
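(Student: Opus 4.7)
The plan is to induct on $R \in \mbN$, with the $R=\infty$ case already handled by \cref{lem:Irel-cohen-macaulay} (note that any map from a finite complex factors through some $(\Irel)^{\leq R}$ with $R < \infty$, so the finite cases suffice anyway). For the base case $R=0$, a vertex $v$ has $\rkfn(v)=0$ if and only if $\omega(\vec e_{m+n}, \vec v)=0$, i.e., $\vec v \in \langle \vec e_1, \vec f_1, \dots, \vec e_{m+n-1}, \vec f_{m+n-1}, \vec e_{m+n}\rangle = W$. Hence $(\Irel)^{\leq 0} = \Irel(W)$, which is $(n-2)$-connected by \cref{it_irelw_conn} of \cref{connectivity_subcomplex_W}.

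For the inductive step, assume $(\Irel)^{\leq R-1}$ is $(n-2)$-connected, and apply the bad simplex argument (\cref{it_link_argument_Y_to_X} of \cref{lem:standard-link-argument}) to the pair $Y = (\Irel)^{\leq R-1} \subseteq X = (\Irel)^{\leq R}$. Take $B$ to be the collection of all nonempty simplices $\Delta$ of $X$ every vertex of which has rank exactly $R$. Both hypotheses of \cref{def:standard-link-argument-bad-simplices} are immediate: if no face of a simplex is in $B$, then it has no rank-$R$ vertex and therefore lies in $Y$; and the union of two subsimplices consisting entirely of rank-$R$ vertices again consists entirely of rank-$R$ vertices. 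Moreover, for $\Delta \in B$,
\[
\Link^{\mathrm{good}}_X(\Delta) \;=\; \Link^{<R}_{\Irel}(\Delta),
\]
since the only vertices we must exclude are those of rank $R$.

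The crucial step is then to show that $\Link^{<R}_{\Irel}(\Delta)$ is $((n-2)-\dim(\Delta)-1) = (n-\dim(\Delta)-3)$-connected for each $\Delta \in B$. Since every simplex of $\Irel$ is standard, the isotropy and primitivity conditions of \cref{def_linkhat} are automatically satisfied for any vertex linking $\Delta$, giving $\Linkhat_{\Irel}(\Delta) = \Link_{\Irel}(\Delta)$. By \cref{link_Irel_standard}, this link is isomorphic to $\Irel[n-\dim(\Delta)-1][m+\dim(\Delta)+1]$, which is Cohen--Macaulay of dimension $n-\dim(\Delta)-2$ by \cref{lem:Irel-cohen-macaulay}, and in particular $(n-\dim(\Delta)-3)$-connected. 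Since $\Delta$ contains at least one vertex of rank $R$, \cref{I-retraction} provides a simplicial retraction
\[
\retraction \colon \Linkhat_{\Irel}(\Delta) \twoheadrightarrow \Linkhat^{<R}_{\Irel}(\Delta).
\]
A simplicial retraction induces split surjections on all homotopy groups, so connectivity is inherited, and $\Link^{<R}_{\Irel}(\Delta)$ is $(n-\dim(\Delta)-3)$-connected, as needed.

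The main obstacle is really only bookkeeping: the retraction produced by \cref{I-retraction} together with the link description of \cref{link_Irel_standard} is doing all the geometric work, and the bad simplex argument packages the inductive step cleanly.
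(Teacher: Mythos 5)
Your proof is correct and follows essentially the same route as the paper: induction on $R$ with base case $(\Irel)^{\leq 0} = \Irel(W)$, the bad simplex argument with bad simplices those whose vertices all have rank equal to the current bound, identification of the good link with $\Link^{<R}_{\Irel}(\Delta)$, and connectivity of that link via the Cohen--Macaulayness of $\Irel$ together with the retraction of \cref{I-retraction}. The only differences are expository (making explicit that $\Linkhat_{\Irel}(\Delta)=\Link_{\Irel}(\Delta)$ and routing the connectivity through \cref{link_Irel_standard}), which the paper leaves implicit.
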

\begin{proof} 
  We perform a standard link argument as explained in \cref{subsec:standard-link-argument}, using induction on $R$ and the retraction for $\Irel$ constructed in \cref{I-retraction}. For $R = 0$, $(\Irel)^{\leq 0} = \Irel(W)$ and the claim follows from \cref{it_irelw_conn} of \cref{connectivity_subcomplex_W}. For $R = \infty$, $(\Irel)^{\leq \infty} = \Irel$  and the claim follows from \cref{lem:Irel-cohen-macaulay}. Assume for the induction that $X_R = (\Irel)^{\leq R}$ is $(n-2)$-connected. 
  Consider the complex $X_{R+1} = (\Irel)^{\leq R+1}$ containing $X_R$ as a subcomplex. Let $B \subset X_{R+1} - X_{R}$ be the set of simplices $\Delta = \{v_0, \dots, v_k\}$ with the property that $\rkfn(v_i) = R+1$ for all $i$. This is a set of bad simplices in the sense of \cref{def:standard-link-argument-bad-simplices}. Following \cref{def:standard-link-argument-good-link}, we find that $\Link_{X_{R+1}}^{\mathrm{good}}(\Delta) = \Link_{\Irel}^{<R+1}(\Delta)$ for $\Delta \in B$. 
  By \cref{lem:Irel-cohen-macaulay}, we know that $\Link_{\Irel}(\Delta)$ is $(n - \dim(\Delta) - 3)$-connected. Hence, it follows from \cref{I-retraction} that $\Link_{X_{R+1}}^{\mathrm{good}}(\Delta) = \Link_{\Irel}^{<R+1}(\Delta)$ is $(n - \dim(\Delta) - 3)$-connected as well. Therefore, \cref{it_link_argument_Y_to_X} of \cref{lem:standard-link-argument} implies that $X_{R+1} = (\Irel)^{\leq R+1}$ is $(n-2)$-connected.
\end{proof}

The next lemma refines \cref{lem:Irel-cohen-macaulay} further and verifies that $(\Irel)^{\leq R}$ is Cohen--Macaulay of dimension $(n-1)$ for all $R\in \mbN \cup \ls \infty \rs$.

\begin{lemma}
\label{lem_connectivity_Isig_link}
Let $n,m \geq 0$. Let $\Delta$ be a simplex of $\Irel$ and $R\in \mbN$ the maximum of $\rkfn(-)$ among all vertices of $\Delta$. Then for all $R'\in \mbN$ with $0< R'\geq R$, the complex $\Link_{\Irel}^{< R'}(\Delta)$ is $(n - \dim(\Delta) - 3)$-connected.

In particular, for all $R'\in \mbN \cup \ls \infty \rs$, the complex $(\Irel)^{\leq R'}$ is Cohen--Macaulay of dimension $(n-1)$.
\end{lemma}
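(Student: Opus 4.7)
The plan is to induct on $R'$, with two complementary base cases anchoring the argument and a standard link argument handling the inductive step. The key observation enabling everything is that for a standard simplex $\Delta \in \Irel$ one has $\Linkhat_{\Irel}(\Delta) = \Link_{\Irel}(\Delta)$---every vertex in the link is automatically isotropic to and linearly independent from $\{e_1,\ldots,e_m\} \cup \Delta$---so that \cref{I-retraction} yields a genuine simplicial retraction $\Link_{\Irel}(\Delta) \twoheadrightarrow \Link_{\Irel}^{<R}(\Delta)$ whenever $R \geq 1$.

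For the base case $R' = \max(R,1)$, I would split into two subcases. If $R \geq 1$, the retraction from \cref{I-retraction} exhibits $\Link_{\Irel}^{<R}(\Delta)$ as a retract of $\Link_{\Irel}(\Delta)$; since retracts inherit connectivity and the source is $(n-\dim(\Delta)-3)$-connected by \cref{lem:Irel-cohen-macaulay}, so is the target. If $R = 0$, then $\Delta$ lies in $\Irel(W)$ and $\Link_{\Irel}^{<1}(\Delta) = \Link_{\Irel(W)}(\Delta)$, which has the required connectivity by the Cohen--Macaulayness of $\Irel(W)$ established in \cref{it_irelw_conn} of \cref{connectivity_subcomplex_W}.

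For the inductive step from $R'-1$ to $R'$, I would set $Y = \Link_{\Irel}^{<R'-1}(\Delta) \subseteq X = \Link_{\Irel}^{<R'}(\Delta)$ and declare $\Theta \in X \setminus Y$ bad when every vertex of $\Theta$ has rank exactly $R'-1$; the axioms of \cref{def:standard-link-argument-bad-simplices} are immediate (a simplex with no bad face has all vertices of rank below $R'-1$ and so lies in $Y$, and unions of bad faces remain bad). The good link $\Link^{\mathrm{good}}_X(\Theta)$ then equals $\Link_{\Irel}^{<R'-1}(\Delta \cup \Theta)$, the full subcomplex of $\Link_{\Irel}(\Delta \cup \Theta)$ on vertices of rank strictly less than $R'-1$. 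Since $R_\Delta \leq R'-1$ in the inductive step and every vertex of $\Theta$ has rank $R'-1$, the maximum rank in $\Delta \cup \Theta$ is exactly $R'-1$, so the base case applied to the larger simplex $\Delta \cup \Theta$ yields $(n-\dim(\Delta)-\dim(\Theta)-4)$-connectivity of $\Link^{\mathrm{good}}_X(\Theta)$. This is precisely what \cref{it_link_argument_Y_to_X} of \cref{lem:standard-link-argument} requires to upgrade the $(n-\dim(\Delta)-3)$-connectivity of $Y$ (inductive hypothesis) to the same conclusion for $X$.

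The ``in particular'' statement then follows: $(\Irel)^{\leq R'}$ has dimension $n-1$ (witnessed by any top-dimensional standard simplex in $\Irel(W) \subseteq (\Irel)^{\leq R'}$) and is $(n-2)$-connected by \cref{lem_connectivity_Irel}, while for any simplex $\Delta \in (\Irel)^{\leq R'}$ its link in $(\Irel)^{\leq R'}$ coincides with $\Link_{\Irel}^{<R'+1}(\Delta)$ when $R'$ is finite and with $\Link_{\Irel}(\Delta)$ when $R' = \infty$; both have the required $(n-\dim(\Delta)-3)$-connectivity by the main statement and by \cref{lem:Irel-cohen-macaulay}, respectively. The only mild obstacle I anticipate is the bookkeeping around the two base cases---\cref{I-retraction} demands a vertex of positive rank, which forces the $R = 0$ case to be routed separately through $\Irel(W)$---but once this is navigated, the argument assembles cleanly from tools already in hand.
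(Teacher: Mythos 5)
Your proof is correct and follows essentially the same route as the paper: induction on $R'$ with the two base cases ($R=0$ handled via the Cohen--Macaulayness of $\Irel(W)$, $R\geq 1$ via the retraction of \cref{I-retraction} combined with \cref{lem:Irel-cohen-macaulay}), the same standard link argument with bad simplices being those whose vertices all have rank exactly $R'-1$, and the same identification of the good link with $\Link^{<R'-1}_{\Irel}(\Delta\ast\Theta)$. Your explicit remark that $\Linkhat_{\Irel}(\Delta)=\Link_{\Irel}(\Delta)$ for standard simplices is a point the paper uses only implicitly, but it does not change the argument.
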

\begin{proof}
We show inductively that for all $R' \geq R$, the complex 
$Y_{R'} \coloneqq \Link_{\Irel}^{< R'}(\Delta)$
is $(n - \dim(\Delta) - 3)$-connected.

We start by considering two cases: If $R = 0$ and $R'=1$, then $\Delta$ is a simplex of $\Irel(W)$ and \cref{it_irelw_conn} of \cref{connectivity_subcomplex_W} implies that the subcomplex $Y_{R'} = \Link_{\Irel(W)}(\Delta)$ is $(n - \dim(\Delta) - 3)$-connected. If on the other hand $R' = R >0$, the retraction for $\Irel$ (see \cref{I-retraction}) and the fact that $\Link_{\Irel}(\Delta)$ is $(n - \dim(\Delta) - 3)$-connected by \cref{lem:Irel-cohen-macaulay} imply that $Y_{R'}$ is $(n - \dim(\Delta) - 3)$-connected.

Now assume that $R' > R$, $R' > 1$ and that $Y_{R'-1}$ is $(n-\dim(\Delta) -3)$-connected. Let $B \subset Y_{R'} - Y_{R'-1}$ be the set of all simplices $\Theta$ in $Y_{R'}$ such that all vertices $w \in \Theta$ satisfy $\rkfn(w) = R'-1$. This is a set of bad simplices in the sense of \cref{def:standard-link-argument-bad-simplices}. Following \cref{def:standard-link-argument-good-link}, we find that 
\begin{equation*}
	\Link^{\mathrm{good}}_{Y_{R'}}(\Theta) = \Link_{Y_{R'}}^{< R'-1}(\Theta) \cong \Link_{\Irel}^{< R'-1}(\Delta \ast \Theta)
\end{equation*}
for $\Theta \in B$. The join $\Delta \ast \Theta$ is a standard simplex of dimension $(\dim(\Delta)+\dim(\Theta)+1)$, therefore \cref{lem:Irel-cohen-macaulay} implies that $\Link_{\Irel}(\Delta \ast \Theta)$ is $(n - (\dim(\Delta) + \dim(\Theta) + 2) - 2)$-connected. As the maximal rank of a vertex in $\Delta \ast \Theta$ is $R'-1$, the retraction for $\Irel$ (see \cref{I-retraction}) implies that $\Link^{\mathrm{good}}_{Y_{R'}}(\Theta)$ is $((n - \dim(\Delta) - 3) - \dim(\Theta) - 1)$-connected. Hence, \cref{it_link_argument_Y_to_X} of \cref{lem:standard-link-argument} implies that $Y_{R'}$ is $(n- \dim(\Delta) - 3)$-connected.

It remains to check why this implies that for all $R'\in \mbN \cup \ls \infty \rs$, the complex $(\Irel)^{\leq R'}$ is Cohen--Macaulay of dimension $(n-1)$. For $R' = \infty$ and $R' = 0$, we already showed Cohen--Macaulayness in \cref{lem:Irel-cohen-macaulay} and \cref{it_irelw_conn} of \cref{connectivity_subcomplex_W}. So let $0< R' \in \mbN$ and $\Delta$ be a simplex in $(\Irel)^{\leq R'}$. Then the maximal rank among all vertices of $\Delta$ is given by some $R\leq R'$.
Hence by the first part of the claim, we have that $\Link_{\Irel}^{< R' +1}(\Delta) = \Link_{(\Irel)^{\leq R'}}(\Delta)$ is $(n - \dim(\Delta) - 3)$-connected.
\end{proof}

Finally, we need a similar result for $\Idelrel$.

\begin{lemma}
\label{lem_connectivity_Idelrel}
Let $n \geq 1$ and $m \geq 0$. For all $R\in \mbN \cup \ls \infty \rs$, the complex $(\Idelrel)^{\leq R}$ is $(n-2)$-connected.
\end{lemma}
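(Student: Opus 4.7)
The plan is to closely mirror the proof of \cref{lem_connectivity_Irel}: perform a standard link argument (\cref{lem:standard-link-argument}) via induction on $R \in \mbN$, handle $R = \infty$ as a colimit, and use an outer induction on $n$ to control the links that arise. The key technical ingredient is that the retraction of \cref{Isigdel-retraction} restricts to a simplicial retraction
\[\retraction\colon \sd(\Linkhat_{\Idelrel}(\badvertex)) \twoheadrightarrow \Linkhat^{< R}_{\Idelrel}(\badvertex),\]
because its proof shows that standard and 2-additive simplices in the domain are sent to standard or 2-additive simplices in $\Linkhat_{\Isigdelrel}(\badvertex)$, never to $\sigma$ simplices.

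For the base of the outer induction, $n = 1$ requires only non-emptiness of $(\Idelrel[1][m])^{\leq R}$, which is immediate. For the inner induction, the case $R = 0$ gives $(\Idelrel)^{\leq 0} = \Idelrel(W)$, which is $(n-1)$-connected by \cref{it_idelrelw_conn} of \cref{connectivity_subcomplex_W}. The case $R = \infty$ follows from the finite cases: every simplex of $\Idelrel$ involves finitely many vertices of bounded rank, so $\Idelrel = \bigcup_R (\Idelrel)^{\leq R}$, and filtered colimits of simplicial inclusions commute with homotopy groups, giving $\pi_k(\Idelrel) = 0$ for $k \leq n-2$ as soon as this holds at each finite stage.

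For the inductive step from $R$ to $R+1$, set $X_R = (\Idelrel)^{\leq R}$ and let $B \subseteq X_{R+1} \setminus X_R$ be the set of simplices all of whose vertices have rank exactly $R+1$. Since single vertices of rank $R+1$ are themselves bad, any simplex with no bad face lies entirely in $X_R$, so $B$ is a valid set of bad simplices and $\Link^{\mathrm{good}}_{X_{R+1}}(\Delta) = \Link^{<R+1}_{\Idelrel}(\Delta)$. Bad simplices are of two types. If $\Delta$ is a 2-additive bad simplex of dimension $k$ with augmentation core $\{v_0, v_1, v_2\}$, \cref{link_Idelrel_2add} gives $\Link_{\Idelrel}(\Delta) = \Link_{\Irel}(\Delta \setminus \{v_0\})$ on both the unrestricted and rank-restricted level; by \cref{lem_connectivity_Isig_link} applied to the $(k-1)$-dimensional simplex $\Delta \setminus \{v_0\}$, the good link is $(n - k - 2)$-connected, exceeding the required $(n-k-3)$-bound. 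If $\Delta$ is a standard bad simplex of dimension $k$, a symplectic isomorphism fixing $e_1, \dots, e_m$ and trivialising $\Delta$ identifies $\Link_{\Idelrel}(\Delta)$ with $\Idelrel[n-k-1][m+k+1]$, which by the outer inductive hypothesis is $(n-k-3)$-connected; applying the restricted retraction above at some vertex of $\Delta$ of rank $R+1$ then deformation-retracts this onto $\Link^{<R+1}_{\Idelrel}(\Delta)$.

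The main obstacle I foresee is keeping track of the rank function under the change of basis used to identify $\Link_{\Idelrel}(\Delta) \cong \Idelrel[n-k-1][m+k+1]$: the isomorphism need not preserve $\vec e_{m+n}$, so the rank cutoff $R+1$ has to be translated into some $R' \in \mbN \cup \ls \infty \rs$ in the smaller ambient space, exactly as is done in the proof of \cref{lem_isom_link_for_making_regular} using \cref{lem_primitive_for_form}. Since the outer inductive hypothesis is asserted uniformly for all $R'$, this is a bookkeeping point rather than a genuine obstacle; the remainder is a routine transcription of \cref{lem_connectivity_Irel} with \cref{link_Idelrel_2add} taking care of the 2-additive bad simplices separately.
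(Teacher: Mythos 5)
Your treatment of the 2-additive bad simplices, and of the cases $R=0$ and $R=\infty$, is fine, but the standard bad simplices hide a genuine gap. First, for a standard simplex $\Delta$ of dimension $k\geq 1$ in $\Idelrel$ the link is \emph{not} isomorphic to $\Idelrel[n-k-1][m+k+1]$: an identification of that shape (as in \cref{lem_link_of_vertex}) only describes $\Linkhat_{\Idelrel}(\Delta)$, whereas $\Link_{\Idelrel}(\Delta)$ also contains lines lying inside $\langle \vec e_1,\dots,\vec e_m,\vec v_0,\dots,\vec v_k\rangle$ that form a 2-additive simplex with $\Delta$, for instance $\langle \vec v_i - \vec v_j\rangle$ for $v_i, v_j \in \Delta$. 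Such a line has rank $0$, so it lies in the good link $\Link^{<R+1}_{\Idelrel}(\Delta)$; hence the complex whose connectivity you actually need is strictly larger than the one your outer induction hypothesis controls, and an extra argument is needed to absorb these vertices. Second, even after replacing $\Link$ by $\Linkhat$, your last step presupposes a rank-reducing retraction $\Linkhat_{\Idelrel}(\Delta)\twoheadrightarrow \Linkhat^{<R+1}_{\Idelrel}(\Delta)$ for a simplex $\Delta$ of positive dimension. The paper only provides this for $\Irel$ (\cref{I-retraction}); the $\Idelrel$-version (\cref{Isigdel-retraction}) is proved for the link of a single vertex and involves subdividing carrying 2-additive simplices, whose new barycentric vertices are sent to lines of the form $\langle \overbar{\retraction(v_i)} - \bar{\badvertex}\rangle$ — one must check that these again lie in the link of all of $\Delta$. ``Applying the retraction at some vertex of $\Delta$'' does not by itself yield a retraction of the link of $\Delta$, so this is more than bookkeeping; it is exactly the technical content your sketch omits.

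For comparison, the paper's proof avoids all of this and is much shorter: there is no induction on $R$ or on $n$. It runs a single standard link argument with $Y=(\Irel)^{\leq R}$, which is already $(n-2)$-connected by \cref{lem_connectivity_Irel}, and takes as bad simplices the \emph{minimal 2-additive} simplices of $(\Idelrel)^{\leq R}$ rather than the simplices of rank $R+1$. For such a bad simplex the good link is $\Link_{\Idelrel}^{\leq R}(\Delta)=\Link_{\Irel}^{\leq R}(\Delta')$ by \cref{link_Idelrel_2add}, and \cref{lem_connectivity_Isig_link} supplies the required connectivity. If you reorganise your argument so that the 2-additive simplices, not the rank-$(R+1)$ simplices, are the bad ones, the problematic standard case disappears and the lemma follows from results already in hand.
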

\begin{proof}
We perform a standard link argument as described in \cref{subsec:standard-link-argument}. Let $X_1 = (\Idelrel)^{\leq R}$ and consider the $(n-2)$-connected subcomplex $X_0 = (\Irel)^{\leq R}$ (see \cref{lem_connectivity_Irel}). Let $B$ be the set of all minimal 2-additive simplices in $X_1$. This set is a set of bad simplices in the sense of \cref{def:standard-link-argument-bad-simplices}. Following \cref{def:standard-link-argument-good-link}, we find that $\Link^{\mathrm{good}}_{X_1}(\Delta) = \Link_{\Idelrel}^{\leq R}(\Delta)$ for $\Delta \in B$. Let $\Delta = \{v_0, \dots, v_k\}$ such that $v_0 = \langle \vec v_1 + \vec v_2 \rangle$ or $v_0 = \langle \vec e_i + \vec v_1 \rangle$ for some $1 \leq i \leq m$. Then by \cref{link_Idelrel_2add}, the simplex $\Delta' = \{v_1 \dots, v_k\}$ is standard and $\Link_{\Idelrel}^{\leq R}(\Delta) = \Link_{\Irel}^{\leq R}(\Delta')$. 
    This complex is $(n - \dim(\Delta') - 3) = (n-\dim(\Delta) -2)$-connected
    by \cref{lem_connectivity_Isig_link}. Hence, \cref{it_link_argument_Y_to_X} of \cref{lem:standard-link-argument} implies that $X_1$ is $(n-2)$-connected.
\end{proof}

\subsection{\texorpdfstring{$\IArel$}{IA} and links of certain vertices}

In this subsection, we collect results about $\IArel$ and links of certain vertices. The following is \cite[Corollary 3.16]{bruecksroka2023} and a consequence of \cite[Proposition 6.11]{put2009}.

\begin{lemma}
\label{lem_conn_IA}
Let $n \geq 1$ and $m \geq 0$. Then $\IArel$ is $(n-1)$-connected.
\end{lemma}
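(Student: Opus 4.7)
The plan is to bootstrap from Putman's theorem that $\Isigdelrel$ is $(n-1)$-connected (proved in \cite[Proposition 6.11]{put2009}, with the gap fixed in \cite[Theorem 3.5]{bruecksroka2023}) by using a standard link argument to account for the mixed simplices that distinguish $\IArel$ from $\Isigdelrel$.

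First, I would apply \cref{lem:standard-link-argument} to the pair $Y = \Isigdelrel$, $X = \IArel$ and take as set of bad simplices $B$ the collection of all mixed simplices of $\IArel$. Both conditions of \cref{def:standard-link-argument-bad-simplices} are straightforward to verify: a simplex with no mixed face is in particular not itself mixed, hence lies in $\Isigdelrel$; and if $\Theta_1, \Theta_2$ are mixed faces of the same $\Delta$, then $\Theta_1 \cup \Theta_2$ still contains the 2-additive augmentation core and the $\sigma$-edge witnessing that $\Theta_1$ is mixed, and so is itself mixed.

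Second, for each mixed $\Delta$ I would identify the good link $\Link^{\mathrm{good}}_{\IArel}(\Delta)$ with a smaller $\Isigdelrel$-type complex. Concretely, if $\Delta = \{v_0, \ldots, v_k\}$ is mixed with $v_0 = \pm \vec v_1 \pm \vec v_2$, standard part $\{v_1, \ldots, v_{k-1}\}$, and $\sigma$ vertex $v_k$, then the submodule $\langle \vec v_1, \ldots, \vec v_{k-1}, \vec v_k \rangle$ is generated by a partial symplectic basis and splits off a symplectic summand by \cref{lem_extend_to_symplectic_basis} and \cref{lem_split_off_symplectic_summand}. Arguing in the spirit of \cref{link_Irel_standard} and \cref{lem_isom_link_for_making_regular}, the symplectic complement of this summand provides an isomorphism between $\Link^{\mathrm{good}}_{\IArel}(\Delta)$ and a complex of the form $\Isigdelrel[n'][m']$ with $n' + (\dim \Delta + 1) \leq n + (\text{constant})$, the connectivity of which follows inductively from Putman's theorem. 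A careful count of ranks shows that the resulting $(n' - 1)$-connectivity meets or exceeds the required $(n - \dim(\Delta) - 2)$-connectivity bound.

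The hard part will be the second step: identifying $\Link^{\mathrm{good}}_{\IArel}(\Delta)$ requires ruling out ``hidden'' mixed simplices of the form $\Omega \ast \Delta'$, where $\Omega$ contains vertices of the link and $\Delta'$ is a proper face of $\Delta$. This amounts to imposing the correct isotropy and linear independence conditions on the link vertices so that they lie in the symplectic complement of the carrier of $\Delta$ and cannot complete any proper face of $\Delta$ into a new mixed simplex. Once these conditions are untangled, the good link matches the $\Isigdelrel$ picture exactly and \cref{lem:standard-link-argument}(1) yields the desired $(n-1)$-connectivity of $\IArel$.
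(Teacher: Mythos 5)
The paper does not prove this lemma itself: it simply cites \cite[Corollary 3.16]{bruecksroka2023}, which in turn rests on \cite[Proposition 6.11]{put2009}. Your overall strategy --- bootstrap from the $(n-1)$-connectivity of $\Isigdelrel$ by attaching the mixed simplices via the standard link argument --- is the natural one and is in the spirit of that reference, but as written it contains a step that fails. If you take $B$ to be \emph{all} mixed simplices, then for any mixed $\Delta$ and any nonempty $\Theta \in \Link_{\IArel}(\Delta)$ the simplex $\Theta \ast \Delta$ contains a $\sigma$ edge and a $2$-additive augmentation core, hence is itself mixed, hence is a bad face of itself that is not contained in $\Delta$. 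So $\Link^{\mathrm{good}}_{\IArel}(\Delta)$ is \emph{empty} for every mixed $\Delta$; the upward-closure you verify for condition (2) of \cref{def:standard-link-argument-bad-simplices} is exactly what trivialises the good links. Since minimal mixed simplices have dimension at most $4$ (and as low as $2$ in the relative setting), the required $(n-\dim(\Delta)-2)$-connectivity already forces non-emptiness once $n$ is moderately large, so the hypothesis of \cref{lem:standard-link-argument} cannot be met with your choice of $B$. The fix is to take $B$ to be the set of \emph{minimal} mixed simplices: inside any simplex of $\IArel$ the $\sigma$ pair, the additive vertex and its additive expression are unique, which gives both axioms of \cref{def:standard-link-argument-bad-simplices} and shows that the good link of a minimal mixed $\Delta$ is its full link.

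Your identification of that good link is also off. Because a simplex of $\IArel$ admits at most one $\sigma$ pair and at most one additive vertex, no vertex of $\Link_{\IArel}(\Delta)$ can form a new $\sigma$ edge or a new additive relation with $\Delta$ or with other link vertices; the link therefore consists of standard simplices only and is isomorphic to a complex of the form $\Irel[n'][m']$ (the genus drops by one for the $\sigma$ pair and the isotropic core vertices are absorbed into the ``$e$''-directions), not to an $\Isigdelrel$-type complex, so \cref{lem_isom_link_for_making_regular} is not the right model here. The needed connectivity then comes from the Cohen--Macaulay property of $\Irel$ (\cref{lem:Irel-cohen-macaulay}, cf.\ \cref{lem_connectivity_Isig_link}): a genus count in each case (internal core, core using one $e_i$, core using two $e_i$'s) gives $(n'-2) \geq n-\dim(\Delta)-2$, which is the bound required to conclude $(n-1)$-connectivity of $\IArel$ from that of $\Isigdelrel$. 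With these two corrections your argument goes through and is essentially the proof carried out in the cited reference.
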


Using the retraction for $\IArel$ (see \cref{IA-retraction}), the previous lemma has the following consequence.

\begin{lemma}
\label{lem_con_linkhat_IA}
\label{lem_con_linkhat_less_IA}
Let $n \geq 1$ and $m \geq 0$.
Let $v \in \IArel$ be a vertex of rank $R = \rkfn(v)>0$.
\begin{enumerate}
\item $\Linkhat_{\IArel}(v)$ is $(n-2)$-connected.
\item $\Linkhat_{\IArel}^<(v)$ is $(n-2)$-connected. 
\end{enumerate}
\end{lemma}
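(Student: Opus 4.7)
The two statements follow rather directly from results already established in the excerpt, so the plan is simply to combine them carefully.

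For the first statement, I would invoke \cref{link_IArel_vertex}, which gives an isomorphism $\Linkhat_{\IArel}(v) \cong \IArel[n-1][m+1]$. Applying \cref{lem_conn_IA} with parameters $n' = n-1 \geq 0$ and $m' = m+1 \geq 0$, we conclude that $\IArel[n-1][m+1]$ is $((n-1)-1)$-connected, i.e.\ $(n-2)$-connected. (Here we need $n \geq 1$ so that $n' \geq 0$; the case $n=1$ just says that the link is $(-1)$-connected, i.e.\ non-empty, which is also easy to verify directly since $v$ has rank $R > 0$ and $m+n \geq 2$.)

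For the second statement, the idea is to exploit the simplicial retraction provided by \cref{IA-retraction}, which yields
\[
\retraction \colon \sd(\Linkhat_{\IArel}(v)) \twoheadrightarrow \Linkhat^{<R}_{\IArel}(v).
\]
Since passing to a subdivision does not change the homotopy type of a simplicial complex, the space $\sd(\Linkhat_{\IArel}(v))$ is $(n-2)$-connected by the first statement. Because a retract of an $(n-2)$-connected space is itself $(n-2)$-connected (a standard consequence of the fact that retractions induce surjections on all homotopy groups), we conclude that $\Linkhat^{<R}_{\IArel}(v)$ is $(n-2)$-connected.

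There is no real obstacle here; the substance of the argument was already carried out in the construction of the retraction \cref{IA-retraction} and in the proof of \cref{lem_conn_IA}. The only points to be careful about are verifying that the parameters in the application of \cref{lem_conn_IA} fall within the allowed range and noting the harmless use of the invariance of connectivity under simplicial subdivision, both of which are entirely routine.
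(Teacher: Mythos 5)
Your proof is correct and follows essentially the same route as the paper: part (1) is exactly the combination of \cref{link_IArel_vertex} with \cref{lem_conn_IA}, and part (2) is exactly the application of the retraction from \cref{IA-retraction}, with the (harmless) use of subdivision-invariance and the fact that a retract of an $(n-2)$-connected complex is $(n-2)$-connected. One caveat: your parenthetical treatment of $n=1$ is not accurate — in that case $\Linkhat_{\IArel[1][m]}(v)\cong \IArel[0][m+1]$ is actually empty, not non-empty — but the paper's own proof is no more careful about this degenerate case (it too would apply \cref{lem_conn_IA} outside its stated range $n\geq 1$), and the lemma is only ever invoked under the standing assumption $n\geq 2$, so this does not affect anything.
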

\begin{proof}
By \cref{link_IArel_vertex}, there is an isomorphism $\Linkhat_{\IArel}(v)\cong \IArel[n-1][m+1]$. This complex is $(n-2)$-connected by \cref{lem_conn_IA}, which shows the first item.
The second item then follows by applying \cref{IA-retraction}. 
\end{proof}

\subsection{Subcomplexes of \texorpdfstring{$\IAArel$}{IAA}}

In this final subsection, we study the subcomplex $\IAAstrel$ of $\IAArel$ and show that the inclusion $\IAAst \hookrightarrow \IAA$ is a highly connected map.

\begin{lemma}
\label{lem_connectivity_IAAstar}
	Let $n \geq 1$ and $m \geq 0$. Then $\IAAstrel$ is $(n-1)$-connected.
\end{lemma}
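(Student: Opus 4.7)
The plan is to apply the standard link argument (\cref{lem:standard-link-argument}\cref{it_link_argument_Y_to_X}) to the pair $\IArel \subseteq \IAAstrel$. Since $\IArel$ is $(n-1)$-connected by \cref{lem_conn_IA}, it suffices to exhibit a set of bad simplices $B \subseteq \IAAstrel \setminus \IArel$ such that for every $\Delta \in B$, the good link $\Link^{\text{good}}_{\IAAstrel}(\Delta)$ is $(n-\dim(\Delta)-2)$-connected.

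I will take $B$ to be the collection of all minimal $3$-additive, double-triple, and double-double simplices, i.e.\ the augmentation cores (in the sense of \cref{def_minimal_simplices_additive_core}) of the three simplex types that appear in $\IAAstrel$ but not in $\IArel$. Condition~(1) of \cref{def:standard-link-argument-bad-simplices} holds because a simplex whose augmentation core is not of one of these three types is of type standard, $2$-additive, $\sigma$, or mixed, and hence lies in $\IArel$. Condition~(2) follows by a short case analysis on how two augmentation cores can simultaneously sit inside a common simplex: the possible overlaps (forced by the rigid defining equations) always combine into a single augmentation core that again belongs to $B$.

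For the good links, \cref{link_IAAstrel_doubletriple_doubledouble} identifies $\Link_{\IAAstrel}(\Delta) \cong \Irel[n-d+1][m+d-1]$ when $\Delta$ is minimal double-triple or double-double ($d = \dim(\Delta)$), which by \cref{lem:Irel-cohen-macaulay} is $(n-d-1)$-connected; this provides a margin of one over the required bound. For a minimal $3$-additive $\Delta = \{v_0,v_1,v_2,v_3\}$ with $d=3$, extending $\{\vec v_1,\vec v_2,\vec v_3,\vec e_1,\dots,\vec e_m\}$ via \cref{lem_extend_to_symplectic_basis} identifies $\Link_{\IAAstrel}(\Delta)$ with a complex built on top of $\Irel[n-3][m+3]$, which is $(n-5)$-connected, exactly meeting the requirement. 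The good link is obtained from the full link by removing those vertices $w$ (and higher simplices $\Theta$) for which $\Theta \ast \Delta$ would acquire a new augmentation core outside $\Delta$; each such forbidden vertex satisfies a rigid integer relation with vertices of $\Delta$, so the forbidden set is covered by finitely many stars of low-dimensional simplices, and excising these via a secondary standard link argument—using the Cohen--Macaulay property of $\Irel$ from \cref{lem_connectivity_Isig_link}—preserves the needed $(n-d-2)$-connectivity.

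The main obstacle is the $3$-additive case, in which the full link has precisely the connectivity required rather than a margin of one. The delicate point is to enumerate the ways in which a single vertex $w \in \Link_{\IAAstrel}(\Delta)$ can enter into a new $3$-additive, double-triple, or double-double relation with vertices of $\Delta$, and to verify in each case that the resulting subcomplex of ``bad'' vertices has codimension large enough that its removal does not drop the connectivity below $(n-5)$.
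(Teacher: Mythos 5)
Your overall strategy — a standard link argument over $Y = \IArel$ with the new augmentation cores as bad simplices — is in spirit the same as the paper's, but you leave the decisive step unproven, and the route you sketch for it does not work as stated. The crux is the good link of a minimal $3$-additive simplex $\Delta$. You describe it as the full link with a "forbidden set" removed and assert that, because the forbidden vertices satisfy rigid integer relations and have "large enough codimension", excising them preserves the required $(n-\dim(\Delta)-2)$-connectivity. Since, as you note yourself, the full link has exactly the required connectivity and no margin, a codimension count is not a valid criterion: deleting even finitely many vertex stars from a complex that is only exactly $(n-\dim(\Delta)-2)$-connected can a priori destroy connectivity in the top relevant degree, and \cref{lem_connectivity_Isig_link} (which concerns rank-truncated links inside $\Irel$) gives no purchase on this. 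So the proposal has a genuine gap precisely at the point you flag as "the delicate point".

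The gap can be closed in two ways. (a) No excision is needed at all: for a minimal $3$-additive $\Delta$, the vertices of $\Link_{\IAAstrel}(\Delta)$ split into the standard extensions $w$ (those for which $\Delta\cup\{w\}$ is again $3$-additive) and finitely many lines satisfying a $2$-additive relation with the core, such as $\langle \vec v_1 + \vec v_2\rangle$ (for which $\Delta\cup\{w\}$ is double-triple). The latter are exactly the vertices excluded by \cref{def:standard-link-argument-good-link}, and a set of admissible vertices cannot jointly create a new bad face, because $\IAAstrel$ has no simplex type combining a $3$-additive core with a further additive relation and the only $\mbZ$-linear relation among the vertices of a $3$-additive simplex is the one defining its core. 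Hence $\Link^{\mathrm{good}}_{\IAAstrel}(\Delta)$ is isomorphic to $\Irel[n-\dim\Delta][m+\dim\Delta]$ on the nose, which is $(n-\dim\Delta-2)$-connected by \cref{lem:Irel-cohen-macaulay}. (b) Alternatively — and this is what the paper does — run the link argument in two stages: first attach only the $3$-additive simplices to $\IArel$; in that intermediate complex the link of a minimal $3$-additive simplex is automatically the standard-extension complex $\Irel[n-\dim\Delta][m+\dim\Delta]$, because double-triple simplices are not yet present, so no good-link analysis is needed. Then attach the double-triple and double-double simplices, whose (good $=$ full) links are $\Irel[n-\dim\Delta+1][m+\dim\Delta-1]$ by \cref{link_IAAstrel_doubletriple_doubledouble} and so carry the margin of one you observed. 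Either repair turns your outline into a proof; without one of them the argument is incomplete.
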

\begin{proof}
  We apply the standard link argument explained in \cref{subsec:standard-link-argument} twice.

Firstly, let $X_1$ be the simplicial complex that is obtained from $\IArel$ by attaching all 3-additive simplices. It has $X_0 = \IArel$ as a subcomplex, which $(n-1)$-connected by \cref{lem_conn_IA}. Let $B$ be the set of minimal 3-additive simplices contained in $X_1$. This is a set of bad simplices in the sense of \cref{def:standard-link-argument-bad-simplices}. Following \cref{def:standard-link-argument-good-link}, we find that $\Link_{X_1}^{\mathrm{good}}(\Delta) = \Link_{X_1}(\Delta)$ for $\Delta \in B$. 
It is not hard to check that for every minimal 3-additive simplex $\Delta$ we have $\Link_{X_1}(\Delta) \cong \Irel[n-\dim(\Delta)][m+\dim(\Delta)]$. This complex is $(n-\dim(\Delta)-2)$-connected by \cref{lem:Irel-cohen-macaulay}. Because $X_0 = \IArel$ is $(n-1)$-connected, \cref{it_link_argument_Y_to_X} of \cref{lem:standard-link-argument} implies that $X_1$ is $(n-1)$-connected as well.

Secondly, let $X_2 = \IAAstrel[n][m]$ and consider its subcomplex $X_1$, which is $(n-1)$-connected by the previous argument. Let $B$ be the set of minimal double-triple and double-double simplices contained in $X_2$. This is a set of bad simplices in the sense of \cref{def:standard-link-argument-bad-simplices}. Following \cref{def:standard-link-argument-good-link}, we find that $\Link_{X_2}^{\mathrm{good}}(\Delta) = \Link_{X_2}(\Delta)$ for $\Delta \in B$. 
For every minimal double-triple or double-double simplex $\Delta$, we have $\Link_{X_2}(\Delta) \cong \Irel[n-(\dim(\Delta) - 1)][m+(\dim (\Delta) - 1)]$ (see \cref{link_IAAstrel_doubletriple_doubledouble}). This complex  is $(n-\dim(\Delta)-1)$-connected by \cref{lem:Irel-cohen-macaulay}. Because $X_1$ is $(n-1)$-connected, \cref{it_link_argument_Y_to_X} of \cref{lem:standard-link-argument} implies that $X_2 = \IAAstrel$ is $(n-1)$-connected as well.
\end{proof}

We now start working towards the proof that the inclusion $\IAAst \hookrightarrow \IAA$ is a highly connected map. For this the following observation is key.

\begin{lemma}
\label{lem_unique_skew_additive_face}
Let $\Delta$ be a skew-additive simplex in $\IAA$ of dimension $k$. Then there is a unique 2-skew-additive simplex in $\IAA$ of dimension $(k+1)$ that has $\Delta$ as a face. 
\end{lemma}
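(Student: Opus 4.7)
The plan is to analyse which codimension-one faces of a $2$-skew-additive simplex are skew-additive, and then reverse-engineer the construction.

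First, I would identify which codimension-one faces of a $2$-skew-additive simplex $\{v_0,v_1,v_2,v_3,\dots,v_{k'}\}$ (with $\vec v_0=\pm\vec v_1\pm\vec v_2$, skew vertex $v_{k'}$ pairing with $v_0$ and $v_1$) are of type skew-additive. Removing $v_0$ or $v_1$ destroys the additive relation and leaves only one vertex paired nontrivially with $v_{k'}$, hence a $\sigma$ simplex; removing $v_{k'}$ gives a $2$-additive simplex; and removing an interior $v_j$ with $3\le j\le k'-1$ gives a $2$-skew-additive simplex of lower dimension. Only removing $v_2$ produces a skew-additive simplex: using $\vec v_0=\pm\vec v_1\pm\vec v_2$ one checks that $\{v_0,v_1,v_3,\dots,v_{k'-1}\}$ spans the same rank-$(k'-1)$ isotropic summand as $\{v_1,v_2,\dots,v_{k'-1}\}$, so it is standard, while $v_{k'}$ pairs with both $v_0$ and $v_1$ by $\pm 1$.

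Second, I would reconstruct the simplex from a given skew-additive $\Delta=\{u_0,u_1,\dots,u_{k-1},u_k\}$ with skew vertex $u_k$ and $\omega(u_k,u_0)=\omega(u_k,u_1)=\pm 1$. By the previous step, any candidate $2$-skew-additive simplex of dimension $k+1$ containing $\Delta$ must be of the form $\Delta\cup\{w\}$ where $w$ plays the role of $v_2$; in particular, $u_k$ must be the skew vertex and $\{u_0,u_1\}$ must play the roles of $\{v_0,v_1\}$. The requirements on $w$ are then: (i) $w\in\{\langle \vec u_0+\vec u_1\rangle,\langle \vec u_0-\vec u_1\rangle\}$, so that one of $u_0,u_1$ can be written as $\pm(\text{other})\pm\vec w$; (ii) $\omega(u_k,w)=0$.

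Third, I would show existence and uniqueness of $w$ satisfying (i) and (ii). Writing $\omega(\vec u_k,\vec u_0)=\epsilon_0$ and $\omega(\vec u_k,\vec u_1)=\epsilon_1$ with $\epsilon_0,\epsilon_1\in\{\pm 1\}$, one has $\omega(\vec u_k,\vec u_0\pm\vec u_1)=\epsilon_0\pm\epsilon_1$, so exactly one of the two candidates from (i) satisfies (ii), yielding a unique $w$. Note that the two possible labellings ($v_0=u_0$ vs.\ $v_0=u_1$) give rise to the same unordered set of lines, hence the same simplex.

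Finally, I would verify that $\Delta'=\Delta\cup\{w\}$ is indeed a $2$-skew-additive simplex of dimension $k+1$: the additive relation and the two $\pm 1$ pairings hold by construction, the remaining pairings $\omega(w,u_i)=0$ for $2\le i\le k-1$ follow from bilinearity and the isotropy of the standard part of $\Delta$, and $\{u_1,w,u_2,\dots,u_{k-1}\}$ spans the same rank-$k$ isotropic summand as $\{u_0,\dots,u_{k-1}\}$ (with $w$ primitive), hence is a standard simplex. The only subtlety is careful sign bookkeeping in the third step, but this is routine.
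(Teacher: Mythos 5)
Your proposal is correct and follows essentially the same route as the paper: both arguments use the intrinsic pairing structure and the linear independence of the vertices of $\Delta$ to force the new vertex to be $\langle \vec u_0 \pm \vec u_1\rangle$, use isotropy with the skew vertex to fix the sign uniquely, and then verify directly that the resulting set is a $(k+1)$-dimensional 2-skew-additive simplex. Your preliminary classification of the codimension-one faces of a 2-skew-additive simplex is just a more systematic packaging of the role-identification the paper carries out directly on the putative simplex $\Delta' = \Delta \cup \{v_{k+1}\}$.
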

\begin{proof}
Let $\Delta$ be skew-additive. We can write $\Delta= \ls v_0, v_1, \ldots, v_k\rs$, where (after choosing appropriate representatives) $\omega(\vec v_0, \vec v_{k})= 1$, $ \omega(\vec v_1, \vec v_{k}) = -1$. 
It is easy to see that $\ls\langle \vec v_{0}+\vec v_{1}\rangle,  v_0, v_1, \ldots, v_k \rs$ is a $(k+1)$-dimensional 2-skew-additive simplex containing $\Delta$.

Now assume that $\Delta' = \Delta \cup \ls v_{k+1} \rs$ is any such 2-skew-additive simplex. We claim that $v_{k+1} = \langle \vec v_{0}+\vec v_{1}\rangle$.

As $\Delta'$ is 2-skew-additive, there are exactly three vertices 
of it that are not isotropic to every other vertex (see \cref{def:IAA-simplices}).
These must be $v_0, v_1, v_k$ using the notation in the first paragraph.
By \cref{def:IAA-simplices}, we also know that one vertex of $\Delta'$ is equal to $\langle \pm \vec v_0 \pm \vec v_1 \rangle$ and isotropic to $v_k$. As the set $\ls \vec v_0, \ldots, \vec v_k \rs$ is linearly independent, this vertex must be $v_{k+1} = \langle \pm \vec v_0 \pm \vec v_1 \rangle$. What is left to show is that indeed $v_{k+1}$ is equal to $\langle \vec v_0 + \vec v_1 \rangle = \langle - \vec v_0 - \vec v_1 \rangle$ and not to $\langle \vec v_0 - \vec v_1 \rangle = \langle -\vec v_0 + \vec v_1 \rangle$. This follows easily from the assumption that $\omega(v_k,v_{k+1}) = 0$.
\end{proof}

\cref{lem_unique_skew_additive_face} has the following two consequences.

\begin{lemma}
\label{lem_deformation_retract_2skew_additive}
Let $n\geq 2$ and $\IAAst \subset X_1\subset \IAA$ be the complex obtained from $\IAAst$ by adding all skew-additive and 2-skew-additive simplices of $\IAA$. Then there is a deformation retraction $X_1 \to \IAAst$.
\end{lemma}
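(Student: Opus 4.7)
The plan is to realise the deformation retraction as an iterated sequence of elementary simplicial collapses, pairing each skew-additive simplex with the canonical $2$-skew-additive extension produced by \cref{lem_unique_skew_additive_face}. For each skew-additive simplex $\Delta$ of dimension $k$ in $X_1$, let $\Delta^{*}$ denote the unique $2$-skew-additive simplex of dimension $k+1$ containing it, and order these pairs $(\Delta, \Delta^{*})$ by decreasing $\dim \Delta^{*}$.

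First, I would verify that $\Delta \mapsto \Delta^{*}$ is a bijection between the skew-additive and $2$-skew-additive simplices of $X_1$. The inverse sends a $2$-skew-additive simplex $\Theta = \{v_0, v_1, v_2, \dots, v_k\}$, with $\vec v_0 = \pm \vec v_1 \pm \vec v_2$ and apex $v_k$ paired to $v_0, v_1$ via $\sigma$-edges, to its unique codim-$1$ skew-additive face $\Theta \setminus \{v_2\}$. This uniqueness is a short case analysis of the codim-$1$ faces of $\Theta$: omitting $v_0$ or $v_1$ yields a $\sigma$ simplex, omitting $v_k$ yields a $2$-additive simplex, and omitting $v_i$ for $i \geq 3$ yields a smaller $2$-skew-additive simplex; only omitting $v_2$ produces a skew-additive simplex.

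Let $X_1^{(d)}$ denote the subcomplex of $X_1$ obtained by removing all pairs $(\Delta', (\Delta')^{*})$ with $\dim (\Delta')^{*} > d$. I would show by downward induction on $d$ that at each stage every $\Delta$ paired with some $\Delta^{*}$ of dimension $d$ is a free face of $\Delta^{*}$ in $X_1^{(d)}$, so that simultaneous collapse of all stage-$d$ pairs is valid. Any proper coface of a skew-additive $\Delta$ in $X_1$ must contain the two $\sigma$-edges $\{v_0, v_k\}$ and $\{v_1, v_k\}$ meeting at the apex $v_k$. By inspection of \cref{def:IA-simplices} and \cref{def:BAA-simplices}, no simplex type in $\IAAst$ admits a vertex lying in two distinct $\sigma$-pairs, so every proper coface of $\Delta$ in $X_1$ is either skew-additive or $2$-skew-additive. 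Inductively, $X_1^{(d)}$ contains no skew-additive simplex of dimension $\geq d$ and no $2$-skew-additive simplex of dimension $> d$, so by \cref{lem_unique_skew_additive_face} the only proper coface of $\Delta$ in $X_1^{(d)}$ is $\Delta^{*}$. Different stage-$d$ pairs do not obstruct one another: for $i \neq j$, the skew-additive $\Delta_i$ cannot be a codim-$1$ face of $\Delta_j^{*}$, since the unique such face is $\Delta_j$.

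The main obstacle is the coface classification underlying the third paragraph: verifying that the two incident $\sigma$-edges at the apex of a skew-additive simplex rule out all $\IAAst$ simplex types as cofaces in $X_1$, and confirming that the only codim-$1$ skew-additive face of any $2$-skew-additive simplex is the one canonically produced by \cref{lem_unique_skew_additive_face}. Once these are in place, iterating the elementary collapses from $d = \max\{\dim \Delta^{*}\}$ down to $d = 3$ exhausts all skew-additive and $2$-skew-additive simplices of $X_1$ and assembles into the desired deformation retraction $X_1 \to \IAAst$.
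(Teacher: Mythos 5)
Your proof is correct and takes essentially the same route as the paper: there, too, each skew-additive simplex of top dimension is paired with its unique $2$-skew-additive coface provided by \cref{lem_unique_skew_additive_face}, these free faces are pushed in (collapsed), and one iterates downward over dimension. Your explicit codimension-one face classification and the bijection $\Delta \mapsto \Delta^{*}$ merely spell out the paper's one-line assertion that skew-additive and $2$-skew-additive simplices are the only simplices of $X_1$ containing a skew-additive face.
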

\begin{proof}
	The skew-additive and 2-skew-additive simplices in $X_1$ are the only simplices that contain a skew-additive face. Let $\Delta$ be a skew-additive simplex of maximal dimension $n$ in $X_1$. \cref{lem_unique_skew_additive_face} implies that there is a \emph{unique} $(n+1)$-dimensional simplex $\Delta'$ that contains $\Delta$ as as face. (I.e.~$\Delta$ is a ``free face'' in $X_1$.) ``Pushing all such free faces $\Delta$ through their $\Delta'$'' gives a deformation retraction from $X_1$ to a complex whose 2-skew-additive faces have dimension at most $n-1$ and whose skew-additive faces have dimension at most $n$. Iterating over the dimension of skew-additive simplices yields the desired deformation retraction $X_1 \to \IAAstrel$.
\end{proof}

\begin{lemma}
\label{lem_inclusion_surjective_pik}
The inclusion $\IAAst \hookrightarrow \IAA$ is $n$-connected.
\end{lemma}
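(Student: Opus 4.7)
The plan is to factor the inclusion as $\IAAst \hookrightarrow X_1 \hookrightarrow \IAA$, where $X_1$ is the intermediate complex obtained from $\IAAst$ by attaching all skew-additive and 2-skew-additive simplices, introduced in \cref{lem_deformation_retract_2skew_additive}. Since that lemma provides a deformation retraction $X_1 \to \IAAst$, the first inclusion is a homotopy equivalence, so everything reduces to showing that the second inclusion $X_1 \hookrightarrow \IAA$ is $n$-connected.

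For the second inclusion, I would apply the standard link argument, \cref{lem:standard-link-argument}\cref{it_link_argument_connected_map}. The simplex types present in $\IAA$ but not in $X_1$ are precisely $\sigma^2$, skew-$\sigma^2$ and $\sigma$-additive, so the natural choice for the set of bad simplices $B$ is the collection of all minimal simplices of one of these three types. Verifying the two axioms of \cref{def:standard-link-argument-bad-simplices} relies on the uniqueness of the augmentation core of a simplex (\cref{def_minimal_simplices_additive_core}): a simplex with no face in $B$ cannot itself be of one of the three types, hence lies in $X_1$, giving condition~(a); and since the three types have distinctive patterns of $\omega$-pairings that mutually exclude one another, any two faces of an ambient simplex that belong to $B$ must both coincide with the (unique) augmentation core, giving condition~(b).

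The final and main step is to compute the good links. By \cref{link_IAArel_sigma2}, for every $\Delta \in B$ one has $\Link_{\IAA}(\Delta) \cong \Irel[n-(\dim(\Delta) - 1)][m]$, a complex consisting only of standard simplices. For any $\Theta$ in this link, the join $\Theta \ast \Delta$ has the same type as $\Delta$ with $\Delta$ as its augmentation core, and by the combinatorial exclusion mentioned above it admits no minimal $\sigma^2$, skew-$\sigma^2$ or $\sigma$-additive face other than $\Delta$ itself; hence $\Link^{\mathrm{good}}_{\IAA}(\Delta) = \Link_{\IAA}(\Delta)$. Invoking \cref{lem:Irel-cohen-macaulay}, this link is $(n-\dim(\Delta)-1)$-connected, which is exactly the bound required by the link argument, and the $n$-connectedness of $X_1 \hookrightarrow \IAA$ follows. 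I expect the main obstacle to be the combinatorial bookkeeping that ensures that minimal simplices of the three types in $B$ do not occur as faces of one another inside a larger simplex; this is not deep but requires carefully going through \cref{def:IAA-simplices} type by type and comparing with the vertex and pairing structures listed in \cref{def:IA-simplices} and \cref{def:BAA-simplices}.
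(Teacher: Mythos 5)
Your proof is correct and essentially the same as the paper's: the paper also factors the inclusion through the intermediate complex $X_1$ using the deformation retraction of \cref{lem_deformation_retract_2skew_additive} and then runs the standard link argument (\cref{lem:standard-link-argument}) with the link identifications of \cref{link_IAArel_sigma2} and the connectivity from \cref{lem:Irel-cohen-macaulay}. The only (harmless) difference is that the paper attaches the $\sigma^2$, skew-$\sigma^2$ and $\sigma$-additive simplices in three successive link arguments, whereas you combine all three types into a single set of bad simplices, which is valid precisely because of the type-by-type check you indicate, namely that no simplex of $\IAA$ contains two distinct minimal faces of these types.
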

\begin{proof}
We show that there is a sequence of complexes 
$\IAAst \subset X_1 \subset X_2 \subset X_3 \subset \IAA$,
such that each inclusion in this sequence is $n$-connected.

Let $X_1$ be obtained from $\IAAst$ by adding all skew-additive and 2-skew-additive simplices of $\IAA$. Then by \cref{lem_deformation_retract_2skew_additive}, there is a deformation retraction $X_1\to \IAAst$, so the inclusion $\IAAst \hookrightarrow X_1$ is $n$-connected.

We now apply the standard link argument explained in \cref{subsec:standard-link-argument} three times.

Firstly, let $X_2$ be the complex that is obtained from $X_1$ by attaching all $\sigma^2$ simplices. Let $B$ be the set of minimal $\sigma^2$ simplices contained in $X_2$. This is a set of bad simplices in the sense of \cref{def:standard-link-argument-bad-simplices}. Following \cref{def:standard-link-argument-good-link}, we find that for all $\Delta\in B$, we have $\Link_{X_2}^{\mathrm{good}}(\Delta) = \Link_{X_2}(\Delta) = \Link_{\IAA}(\Delta)$.
	By \cref{link_IAArel_sigma2}, this complex is isomorphic to $\I[n-\dim(\Delta)+1]$ and hence $(n-\dim(\Delta)-1)$-connected by \cref{lem:Irel-cohen-macaulay}. Hence, $X_1\hookrightarrow X_2$ is $n$-connected by \cref{it_link_argument_connected_map} of \cref{lem:standard-link-argument}.
	
Secondly, let $X_3$ be obtained from $X_2$ by attaching all skew-$\sigma^2$ simplices. Then, a set $B$ of bad simplices in $X_3 \setminus X_2$ is given by all minimal skew-$\sigma^2$ simplices. Using \cref{link_IAArel_skew_sigma2}, we obtain that the map $X_2\hookrightarrow X_3$ is $n$-connected by \cref{it_link_argument_connected_map} of \cref{lem:standard-link-argument}.

Lastly, $\IAA$ is obtained from $X_3$ by attaching all $\sigma$-additive simplices. A set $B$ of bad simplices in $\IAA \setminus X_3$ is given by all minimal $\sigma$-additive simplices. The desired result follows from \cref{link_IAArel_sigma_additive} and \cref{it_link_argument_connected_map} of \cref{lem:standard-link-argument}.
\end{proof}

Note that \cref{lem_connectivity_IAAstar} and \cref{lem_inclusion_surjective_pik} already imply that $\IAA$ is $(n-1)$-connected. In the next section, we improve upon this and show that $\IAA$ is $n$-connected.

\section{Theorem C: A highly connected complex} \label{sec:thm_connectivity_IAA}
We now collected everything that is needed to show that $\IAA$ is $n$-connected. 

By \cref{lem_inclusion_surjective_pik}, we know that the inclusion $\IAAst \hookrightarrow \IAA$ induces a surjection on $\pi_k$ for $k\leq n$. Hence, \autoref{thm_connectivity_IAA} follows if we can show that this map is zero on $\pi_k$.
By \cref{it_simplicial_approximation_combinatorial_sphere} of \cref{lem_simplicial_approximation_combinatorial}, every element of $\pi_k(\IAAst)$ is represented by a map from a combinatorial $k$-sphere to $\IAAst$. Hence, it suffices to show the following result that uses the notions of regularity and weak regularity as defined in \cref{def_regular_maps}:

\begin{theorem}
\label{thm_inclusion_zero_pik_inductive}
Let $n \geq 1$, $m \geq 0$ and $k\leq n$. Let $S$ be a combinatorial $k$-sphere and $\phi\colon  S\to \IAAstrel$ a simplicial map.
\begin{enumerate}
\item If $n = 1$, the map  $\phi$ is weakly regularly nullhomotopic in $\IAArel[n] = \IAArel[1]$.
\item If $n\geq 2$, the map $\phi$ is regularly nullhomotopic in $\IAArel$.
\end{enumerate}
\end{theorem}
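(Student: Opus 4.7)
The plan is to prove the two statements simultaneously by induction on $n$, with an inner induction on the maximal rank $R = \max\{\rkfn(v) : v \in \phi(S)\}$ attained by the image of $\phi$ (see \cref{def_rank_and_ranked_complexes}). The two outputs (``weakly regular'' for $n=1$ and ``regular'' for $n\ge 2$) are aligned with the regularity notions of \cref{def_regular_maps}; the looser condition for $n=1$ is forced because in $\IAArel[1]$ a prism cross map has nowhere to fit.

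\textbf{Base case $n=1$.} Here $\dim(\IAAstrel[1]) \le 1 + m$ in a way that prevents any of the exotic simplex types of \cref{def:IAA-simplices} from appearing with the right dimensions, so $\IAArel[1]$ contains no $\sigma^2$, skew-$\sigma^2$, $\sigma$-additive, or prism-forming simplices outside of degenerate configurations. Thus any simplicial map into $\IAArel[1]$ is automatically weakly regular. The content is therefore merely topological: via the embedding $\Sp{2}{\mbZ} = \SL{2}{\mbZ}$ and \cref{rem:forgetting_symplectic_information}, one identifies $\IAAstrel[1]$ with a subcomplex of $\BAA_{m+2}^m$, and then invokes \cref{connectivity_BAA} (together with \cref{IAAW_highly_connected} for $m$ small) to conclude that $\phi$ is nullhomotopic. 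The nullhomotopy, taken to be simplicial via \cref{lem_simplicial_approximation_combinatorial}, is automatically weakly regular in $\IAArel[1]$.

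\textbf{Inductive step, $n\ge 2$.} Assume the statement for all smaller $n$. Given $\phi\colon S\to \IAAstrel$, by \cref{lem_connectivity_IAAstar} we know $\IAAstrel$ is $(n-1)$-connected, so $|\phi|$ is nullhomotopic and \cref{it_simplicial_approximation_combinatorial_ball} of \cref{lem_simplicial_approximation_combinatorial} yields a simplicial nullhomotopy $\Psi\colon B\to \IAArel$ with $B$ a combinatorial $(k+1)$-ball and $\Psi|_{\partial B} = \phi$. The task is to promote $\Psi$ to a \emph{regular} map, and the rank filtration is the tool. Let $R$ be the maximum rank of a vertex in the image of $\Psi$. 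If $R=0$, then $\Psi$ lands in $\IAAstrel(W)$ (since no rank-$0$ vertex can be the apex of the $\sigma$-additive or skew-* augmentations) and \cref{IAAW_highly_connected} supplies a weakly regular, hence (since its image avoids the relevant simplex types) regular, nullhomotopy.

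\textbf{Rank reduction (induction on $R$).} For $R>0$, we use a standard link / cut-out argument together with the retractions of \cref{sec_retraction}. Let $\badvertex$ range over the finitely many vertices of rank exactly $R$ in the image. Appealing to the ``isolation'' normal form proved in \cref{sec_normal_form_spheres} (the ``isolating rank-$r$ vertices'' proposition foreshadowed in the reader's guide), we may assume, after a regular homotopy rel.\ $\partial B$, that the stars of the preimages of each such $\badvertex$ are disjoint and that the restriction of $\Psi$ to each $\Linkhat_{B}$ of a preimage is \emph{weakly regular} in the sense of \cref{def:weak-regularity-in-link}. For each maximal rank-$R$ vertex $\badvertex$ in the image, we then apply the key retraction \cref{IAA-retraction}: the restriction of $\Psi$ to the preimage of $\Star_{\IAArel}(\badvertex)$ is a weakly regular map of a disk into $\Linkhat_{\IAArel}(\badvertex)$ agreeing on its boundary with a map into $\Linkhat_{\IAArel}^{<R}(\badvertex)$, and \cref{IAA-retraction} produces a new weakly regular ball filling into $\Linkhat_{\IAArel}^{<R}(\badvertex)$ with the same boundary. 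Substituting these new fillings into $\Psi$ (using \cref{lem_combinatorial_ball_replace_same_boundary} and \cref{lem_regular_homotopy_by_balls}) yields a weakly regular map $\Psi'$ whose image has maximal rank strictly less than $R$, and still agrees with $\phi$ on $\partial B$. Induction on $R$ produces a weakly regular nullhomotopy with image in $\IAAstrel(W)$; as noted above, \cref{IAAW_highly_connected} then closes the argument.

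\textbf{Weak regular vs.\ regular.} The preceding produces a \emph{weakly} regular nullhomotopy. To upgrade to a regular one (needed when $n\ge 2$), we apply the procedure of \cref{lem:nice-regular-replacements} locally, which replaces every inessential prism in the image by an essential one without changing the boundary, at the cost of only introducing additional prism cross maps. Since an essential prism in the image of a weakly regular map that is not at the top rank is the only obstruction, the result is a regular nullhomotopy in $\IAArel$, completing the inductive step.

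\textbf{Main obstacle.} The subtle part is the isolation/normal-form step: in order to apply \cref{IAA-retraction} independently at each maximal-rank vertex, we need the $\Star$s of such vertices in $B$ to be disjoint and the restriction of $\Psi$ to each link to be weakly regular. Arranging this while preserving the boundary $\phi$ and not introducing new high-rank vertices is precisely the technical content postponed to \cref{sec_normal_form_spheres}; everything else in this theorem is bookkeeping around the retractions of \cref{sec_retraction} and the auxiliary connectivity results of \cref{sec_highly_connected_subcomplexes}.
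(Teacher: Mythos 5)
There are two genuine gaps. First, your base case is wrong on both counts. The complex $\IAArel[1]$ \emph{does} contain $\sigma$-additive simplices $\ls v_0,v_1,\langle\vec v_0\pm\vec v_1\rangle\rs$ and external 2-skew-additive simplices $\ls v_0,\langle\vec v_0\pm\vec e_i\rangle,v_1\rs$, so a simplicial map into it is not automatically weakly regular (weak regularity here amounts to injectivity on simplices mapping to these types, which must be arranged and preserved). More seriously, the base case is not ``merely topological'': $\IAAstrel[1]$ is in general not simply connected -- for $m=0$ it is exactly the Farey graph $\B_2$ -- and identifying it with a \emph{subcomplex} of $\BAA^m_{m+2}$ gives no connectivity information, since \cref{connectivity_BAA} concerns the ambient complex, not the subcomplex. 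A loop in $\IAAstrel[1]$ can only be filled by genuinely using the $\sigma$-additive and 2-skew-additive simplices of $\IAArel[1]$; the paper's argument homotopes the loop, segment by segment and using 2-skew-additive simplices, into the Farey-graph subcomplex $\baseB\cong\B_2$ and then fills it inside the contractible $\baseBA\cong\BA_2$, all while maintaining local injectivity.

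Second, your inductive step begins by asserting that $|\phi|$ is nullhomotopic because $\IAAstrel$ is $(n-1)$-connected, and then proposes to regularize and rank-reduce that nullhomotopy $\Psi\colon B\to\IAArel$. But the critical case of the theorem is $k=n$, where $(n-1)$-connectivity of $\IAAstrel$ gives nothing: $\pi_n(\IAAstrel)$ may well be nonzero, and the whole content of the theorem is that such classes die in $\IAArel$. Appealing instead to $n$-connectivity of $\IAArel$ would be circular, since \autoref{thm_connectivity_IAA} is deduced from this very theorem (via \cref{lem_inclusion_surjective_pik}). The paper avoids this by never presupposing a nullhomotopy: it performs the rank induction on the sphere map $\phi$ itself, using regular homotopies to isolate bad (maximal-rank) simplices (\cref{prop_isolating-rank-r-vertices}) and then cutting them out (\cref{prop_cut_out_bad_vertices}); only in the case $k=n$, $\dim\Delta=0$ does it invoke the genus-$(n-1)$ inductive hypothesis to weakly regularly fill $\phi|_{\Link_S(x)}$ inside $\Linkhat_{\IAArel}(v)$, push that filling below rank $R$ with \cref{IAA-retraction}, and then perform explicit local surgeries (the $C\mapsto C'$ replacements with new vertices sent to carefully chosen low-rank vectors) to make the resulting homotopy regular. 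Relatedly, your final ``weak-to-regular'' upgrade via \cref{lem:nice-regular-replacements} is not what that lemma provides: it converts inessential prisms into essential ones as a preprocessing step for the retraction, and does not bridge the gap between weakly prism-regular (which allows external 2-skew-additive cross maps) and prism-regular; in the paper, regularity of the $n\ge2$ nullhomotopy comes from the construction in the cut-out step itself.
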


We prove \cref{thm_inclusion_zero_pik_inductive} in this section by an inductive procedure. In \cref{sec_induction_beginning}, we cover the case $n=1$. In \cref{sec_induction_step}, we then show that if every map $S^{k-1} \to \IAAstrel[n-1][m+1]$ is weakly regularly nullhomotopic in $\IAArel[n-1][m+1]$, then every map $S^k \to \IAAstrel$ is regularly nullhomotopic in $\IAArel[n][m]$.

\subsection{Induction beginning: \texorpdfstring{$n = 1$}{n = 1}}
\label{sec_induction_beginning}
We start by proving \cref{thm_inclusion_zero_pik_inductive} for the case $n=1$. 

\begin{lemma}
\label{lem_induction_beginning}
Let $m \geq 0$, $k\in \ls 0,1 \rs$, let $S$ be a combinatorial $k$-sphere and $\phi\colon  S \to \IAAstrel[1]$ a simplicial map. Then $\phi$ is weakly regularly nullhomotopic in $\IAArel[1]$.
\end{lemma}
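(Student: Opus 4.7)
The plan is to handle the cases $k = 0$ and $k = 1$ separately, exploiting the structural degeneracy of $\IAArel[1][m]$ that arises because $n = 1$. The key preliminary observation I would record is that any standard simplex of $\IAArel[1][m]$ has dimension $\leq 0$ (only vertices), because the full simplex $\{e_1,\ldots,e_m\} \cup \Delta$ in $\IAAst[m+1]$ spans an isotropic summand of $\mbZ^{2(m+1)}$, whose maximal isotropic rank is $m+1$. Pushing this rank bound through the other simplex-type definitions in \cref{def:IAA-simplices} shows that $\IAArel[1][m]$ contains no $\sigma^2$-, internal skew-additive-, internal 2-skew-additive-, or internal skew-$\sigma^2$-simplex. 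Consequently, weak regularity of a simplicial map into $\IAArel[1][m]$ only needs to be verified against minimal $\sigma$-additive and minimal \emph{external} 2-skew-additive simplices, both of which are 2-dimensional.

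For $k = 0$, the image of any simplicial map from a combinatorial $1$-ball consists only of vertices and edges, so by the observation above it can contain neither minimal $\sigma$-additive nor minimal external 2-skew-additive simplices. Any simplicial path between two vertices of $\IAArel[1][m]$ is therefore automatically weakly regular. It thus suffices to show that $\IAArel[1][m]$ is path-connected. I would deduce this from the embedding $\IAArel[1][m] \hookrightarrow \BAA^m_{m+2}$ from \cref{rem:forgetting_symplectic_information} together with the $(m+2)$-connectivity of $\BAA^m_{m+2}$ from \cref{connectivity_BAA}: one checks that any $\BAA^m_{m+2}$-edge between two vertices of $\IAArel[1][m]$ is already an edge of some type (standard, 2-additive, or $\sigma$) in $\IAArel[1][m]$, so paths descend from the ambient complex to the subcomplex.

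For $k = 1$, I would first apply \cref{lem_inclusion_surjective_pik} (used in the hat-link of $\{e_1,\ldots,e_m\}$ inside $\IAAst[m+1]$) to obtain a simplicial null-homotopy $\psi\colon D^2 \to \IAArel[1][m]$ extending $\phi$. The task then reduces to modifying $\psi$ so that it is weakly regular. By the initial observation, the only obstruction comes from $2$-simplices $\Delta \subset D^2$ whose image is a minimal $\sigma$-additive or minimal external 2-skew-additive triangle. For each such $\Delta$, the star $\Star_{D^2}(\Delta) = \Delta \ast \Link_{D^2}(\Delta)$ has link a combinatorial $0$-sphere, while the link of $\psi(\Delta)$ in $\IAArel[1][m]$ is highly connected (in the $\sigma$-additive case \cref{link_IAArel_sigma_additive} identifies it with $\Irel[0][m]$, and a parallel computation works for external 2-skew-additive triangles). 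This connectivity lets me replace $\psi|_{\Star_{D^2}(\Delta)}$ by the appropriate cross map from \cref{def:cross-maps} by a local ball-replacement argument in the spirit of \cref{lem_combinatorial_ball_replace_same_boundary} and \cref{lem_regular_homotopy_by_balls}.

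The hard part will be carrying out these local cross-map replacements simultaneously for all bad $2$-simplices without creating new bad simplices: each modification must leave the other minimal $\sigma$-additive and external 2-skew-additive simplices untouched. I would control this by processing bad $2$-simplices one at a time and using \cref{lem_cross_maps_intersections}, which ensures that the domains of distinct cross maps can only intersect in their boundaries. This is a drastically simpler instance of the normal-form manipulations developed in \cref{sec_normal_form_spheres}, and the degenerate combinatorics of $\IAArel[1][m]$ (with only two relevant simplex types to regularise) makes an explicit termination argument feasible.
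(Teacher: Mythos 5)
There is a genuine gap, and it sits exactly at the heart of the $k=1$ case. You propose to "first apply \cref{lem_inclusion_surjective_pik} \dots to obtain a simplicial null-homotopy $\psi\colon D^2 \to \IAArel[1]$ extending $\phi$", but \cref{lem_inclusion_surjective_pik} only says that the inclusion $\IAAst \hookrightarrow \IAA$ is $n$-connected, i.e.\ for $n=1$ it gives a \emph{surjection} $\pi_1(\IAAstrel[1]) \to \pi_1(\IAArel[1])$ (and it is stated for the non-relative complexes at that). Surjectivity is the wrong direction: it does not make the class of $\phi$ die in $\pi_1(\IAArel[1])$, and the simple connectivity of $\IAArel[1]$ is not available at this point -- this lemma is precisely the base case of \cref{thm_inclusion_zero_pik_inductive}, which together with \cref{lem_inclusion_surjective_pik} is what eventually proves \autoref{thm_connectivity_IAA}. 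So assuming the filling exists is essentially circular. The paper's proof spends almost all of its effort constructing that filling by hand: after making $\phi$ locally injective, it cuts the loop into maximal subpaths $\gamma_i$ lying in the layers $\Bv_{v'} \cong \BAA^m_1$ separated by $\sigma$ edges, straightens each $\gamma_i$ into a chain of $2$-additive edges (using $1$-connectivity of $\BAA^m_1$), collapses it vertex by vertex onto the base vertex $v' \in V'$ using the external $2$-skew-additive triangles of \cref{it_2_skew_IAA1} (simplices available in $\IAArel[1]$ but not in $\IAAstrel[1]$), and finally caps the resulting loop in the Farey graph $\baseB \cong \B_2$ inside the contractible $\baseBA \cong \BA_2$, whose extra simplices are the $\sigma$-additive triangles. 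Your proposal contains none of this construction; it only addresses the regularity of a filling whose existence is the actual content of the statement (and, as you and the paper both observe, weak regularity is nearly automatic in this dimension, so regularising is the easy part).

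The $k=0$ case also has a soft spot: you deduce path-connectivity of $\IAArel[1]$ from the embedding into $\BAA^m_{m+2}$ and the connectivity of the latter, arguing that "paths descend" because every ambient edge between two vertices of the subcomplex is an edge of the subcomplex. That edge statement may be true, but it does not give connectivity: $\IAArel[1]$ sits on a strictly smaller vertex set (e.g.\ $f_1$ and lines pairing nontrivially with some $e_i$ are vertices of $\BAA^m_{m+2}$ but not of $\IAArel[1]$), and a path in the ambient complex between two subcomplex vertices can run through such vertices; connectivity of a complex does not pass to full subcomplexes on smaller vertex sets. The paper instead gets connectivity directly from the layer decomposition: each $\Bv_{v'} \cong \BAA^m_1$ is connected, each layer meets the connected base $\baseB \cong \B_2$ in its vertex $v'$, and every vertex lies in some layer.
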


\begin{proof}
We start by describing the complex $\IAAstrel[1] = \Linkhat_{\IAAst[m+1]}(\ls e_1, \ldots, e_m \rs)$, spelling out this case of \cref{def_linkhat}.
Let $V$ denote its set of vertices. Every vertex $v\in V$ is spanned by a vector
\begin{equation}
\label{eq_vertices_induction_beginning}
\vec v = \sum_{i=1}^m a_i \vec e_i +\underbrace{ a_{m+1} \vec e_{m+1} + b_{m+1} \vec f_{m+1}}_{\eqqcolon \vec v'},
\end{equation}
where $a_1, \ldots, a_m\in \mbZ$ and $\vec v' \coloneqq a_{m+1}\vec e_{m+1} + b_{m+1} \vec f_{m+1}$ spans a rank-1 summand of $\langle \vec e_{m+1}, \vec f_{m+1} \rangle \cong \mbZ^2$.
Note that for a pair $v_0, v_1$ of such vertices, we have $\omega(v_0,v_1) = \omega(v'_0,v'_1)$. 
As $\langle \vec e_{m+1}, \vec f_{m+1} \rangle$ defines a genus-1 summand of $(\mbZ^{2(m+1)}, \omega)$, this implies in particular that there are no standard simplices of dimension greater than zero. Following \cref{def:IAAst}, the complex $\IAAstrel[1]$ has dimension two and contains the following simplices of dimension greater than zero:
\begin{enumerate}
\item \label{it_2add_IAA1} For every $v_0\in V$ and $1\leq i \leq m$, there are the 2-additive simplices of the form $\ls v_0,  \langle \vec v_0 \pm \vec e_i\rangle \rs$.
\item \label{it_3add_IAA1} For every $v_0\in V$ and $1\leq i\not=j \leq m$, there are the 3-additive simplices of the form $\ls v_0,  \langle \vec v_0 \pm \vec e_i\pm \vec e_j\rangle \rs$.
\item \label{it_doubletriple_IAA1} For every $v_0\in V$ and $1\leq i\not=j \leq m$, there are the double-triple simplices of the form $\ls v_0, \langle \vec v_0 \pm \vec e_i\rangle, \langle \vec v_0 \pm \vec e_i \pm \vec e_j\rangle \rs$.
\item \label{it_sigma_IAA1}For every pair $v_0, v_1 \in V$ such that $\omega(v_0, v_1)= \pm 1$, there is a $\sigma$ simplex of the form $\ls v_0,  v_1 \rs$.
\end{enumerate}
These are all simplices of $\IAAstrel[1]$, no double-double or mixed simplices occur in this low-rank case.
In addition to these, the complex $\IAArel[1]$ (see \cref{def:IAA}) has the following simplices:
\begin{enumerate}
\setcounter{enumi}{4}
\item \label{it_2_skew_IAA1} For every pair $v_0, v_1 \in V$ such that $\omega(v_0,v_1) = \pm 1$ and every $1\leq i\not=j \leq m$, there are the 2-skew-additive simplices of the form $\ls v_0, \langle \vec v_0 \pm \vec e_i\rangle, v_1 \rs$.

\item \label{it_sigma_add_IAA1} For every pair $v_0, v_1 \in V$ such that $\omega(v_0,v_1) = \pm 1$, there are the $\sigma$-additive simplices of the form $\ls v_0, v_1, \langle \vec v_0 \pm \vec v_1\rangle\rs$.
\end{enumerate}
No skew-additive, $\sigma^2$ or skew-$\sigma^2$ simplices occur in this low-rank case.
Spelling out \cref{def_regular_maps}, this implies that a map $\Psi\colon  B \to \IAArel[1]$ from a combinatorial ball to $\IAArel[1]$ is weakly regular if and only if 
\begin{description}[labelindent=0.5cm]
  \item[(Weakly regular)] \label{it_weakly_regular_low_dim} $\Psi$ is injective on every simplex mapping to a $\sigma$-additive or 2-skew-additive simplex.
\end{description}
This is because the only requirement on a cross map in this low dimension is that it be an isomorphism onto its image.

\begin{figure}
\begin{center}\vspace{40px}
    \begin{picture}(140,180)
    \put(0,0){\includegraphics[scale=.3]{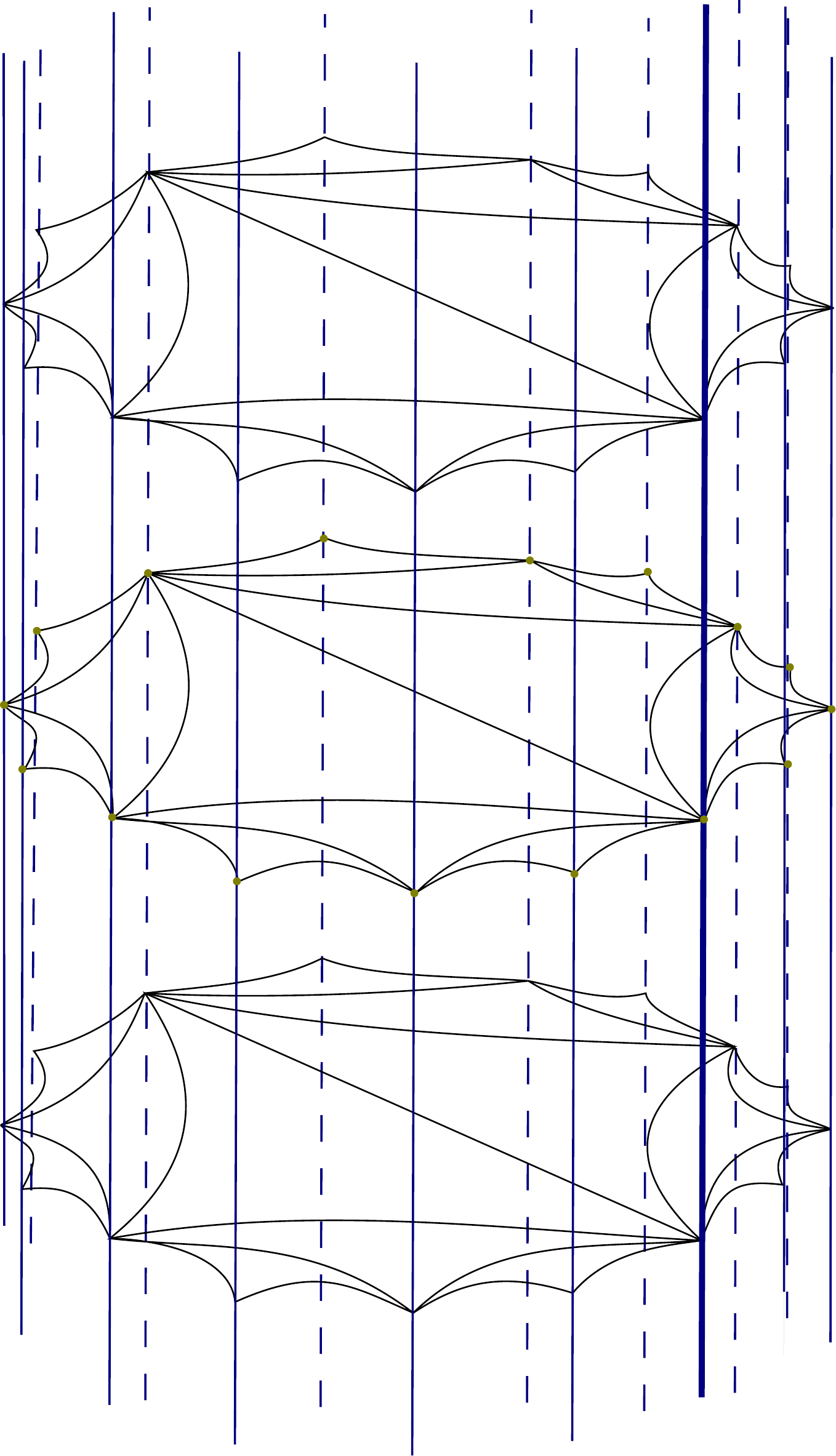}}
    
     \put(100,85){\tiny $v'$}
     \put(120,130){ $V'$}
     \put(120,102){$\bigg\} \B(V') \cong \B_2$}
     \put(120,170){ $\Bv_{v'} \cong \BAA_1^m$}

     \end{picture}
\end{center}
\caption{``Orthogonal'' copies of $\BAA_1^m$ and $\B_2$ in $\IAAstrel[1]$. Not depicted in this schematic are $\sigma$ edges between the different ``$\B_2$ layers''.}
\label{figure_induction_beginning_BA_BAA}
\end{figure}
The complex $\IAAstrel[1]$ contains ``orthogonal'' copies of the complexes $\BAA_1^m$ and $\B_2$, which we describe next (for a schematic overview, see \cref{figure_induction_beginning_BA_BAA}).
Let $V'\coloneqq V\cap \langle \vec e_{m+1}, \vec f_{m+1} \rangle$
 (i.e.~$V'$ is the set of rank-1 direct summands of $\langle \vec e_{m+1}, \vec f_{m+1} \rangle$). For $v'\in V'$, we define $\Bv_{v'}$ to be the full subcomplex of $\IAAstrel[1]$ on the set of vertices
\begin{equation*}
\ls \left\langle \vec v' + \sum_{i=1}^m a_i \vec e_i \right \rangle \,\middle| \, (a_1, \ldots, a_m)\in \mbZ^m \rs \subset V.
\end{equation*}
The simplices in \cref{it_2add_IAA1,it_3add_IAA1,it_doubletriple_IAA1} above imply that $\Bv_{v'}$ is isomorphic to the complex $\BAA_1^m$. This complex is described in detail in \cite[Proof of Lemma 5.10]{Brueck2022}, where it is shown that it is $1$-connected (for the cases $m < 2$, which are not explicitly mentioned in \cite[Proof of Lemma 5.10]{Brueck2022}, this holds as well as is explained in \cite[Paragraph after Theorem 2.10]{Brueck2022}).
Every vertex $v\in V$ is contained in precisely one such complex (namely in $\Bv_{v'}$ with the notation introduced in \cref{eq_vertices_induction_beginning}). The same is true for all simplices of \cref{it_2add_IAA1,it_3add_IAA1,it_doubletriple_IAA1}.

The only simplices of $\IAAstrel[1]$ that are not contained in any such $\Bv_{v'}$ are the $\sigma$ edges described in \cref{it_sigma_IAA1}. 
The full subcomplex of $\IAAstrel[1]$ on the set $V'$, which we write as $\baseB$, is 1-dimensional and only has edges of this type. It is isomorphic to the complex $\B_2$, which in turn is isomorphic to the Farey graph and in particular connected (see \cref{thm_connectivity_B_BA}).
If one adds to $\baseB$ the $\sigma$-additive simplices described in \cref{it_sigma_add_IAA1}, one obtains $\baseBA$, the full subcomplex of $\IAArel[1]$ on the set $V'$. This complex is isomorphic to $\BA_2$ and hence contractible (see \cref{thm_connectivity_B_BA}).

This description in particular implies that $\IAAstrel[1]$ is connected, which proves the case $k=0$ of our claim: Being connected means that we can extend every map $\phi\colon  S^0\to \IAAstrel[1]$ to a map $\Psi\colon  D^1\to \IAAstrel[1]$. As the image of $\Psi$ is contained in $\IAAstrel[1]$, it in particular is \hyperref[it_weakly_regular_low_dim]{weakly regular}, so $\phi$ is weakly regularly nullhomotopic.
\newline

Now let $S$ be a combinatorial $1$-sphere and $\phi\colon S\to \IAAstrel[1]$ a simplicial map. By \cref{lem_regular_homotopic_and_nullhomotopic}, it suffices to show that $\phi$ is weakly regularly homotopic to a map $\newproofphi\colon  \newproofsphere \to \IAAstrel[1]$ that is weakly regular nullhomotopic. (Recall from \cref{rem_regular_homotopies_notation_combinatorial} that in this setting, we always mean that $\newproofsphere$ is a combinatorial $k$-sphere and $\newproofphi$ is simplicial.) 
For this, we can first homotope $\phi$ to a map $\newproofphi\colon  \newproofsphere\to \IAAstrel[1]$, where $\newproofsphere$ is a combinatorial $k$-sphere and $\newproofphi$ is injective on every edge of $\newproofsphere$.\footnote{For a detailed account of how one can construct such a homotopy, see the work of Himes--Miller--Nariman--Putman \cite[Proof of Lemma 4.4]{Himes2022}. The key observation that allows to perform this homotopy here is that every edge $\Delta$ in $\IAAstrel[1]$ has a non-empty link $\Link_{\IAAstrel[1]}(\Delta)\not = \emptyset$. We use very similar arguments for spheres of arbitrary dimensions in \cref{sec_induction_step} and \cref{sec_normal_form_spheres}, where we give more details.} This process only involves simplices in $\IAAstrel[1]$, so defines a homotopy that is \hyperref[it_weakly_regular_low_dim]{weakly regular}. Hence, we can assume that $\phi$ already satisfies this local injectivity property. 
Let $\gamma_1, \ldots, \gamma_l$ be the cyclically indexed sequence of (cyclically) maximal subpaths of $S$ that map via $\phi$ to $\Bv_{v'}$ for some $v'\in V'$. These are separated by $\sigma$ edges in $S$ if $l> 1$. The subpath $\gamma_i$ can consist of a single vertex, namely if two $\sigma$ edges are adjacent in $S$.
If $l=1$, the image of $\phi$ is entirely contained in some $\Bv_{v'}$. This is a $1$-connected subcomplex of $\IAAstrel[1]$, so by \cref{it_simplicial_approximation_combinatorial_ball} of \cref{lem_simplicial_approximation_combinatorial}, there is a combinatorial $2$-ball $B$ with $\partial B = S$ and a map $\Psi\colon  B\to \Bv_{v'} \subset \IAAstrel[1]$ such that $\Psi|_S = \phi$. As the image of such a $\Psi$ is contained in $\IAAstrel[1]$, it is \hyperref[it_weakly_regular_low_dim]{weakly regular}, which means that $\phi$ is weakly regularly nullhomotopic. 
Hence, we can assume that $l>1$.
We now homotope $\phi$ by replacing step by step every $\phi|_{\gamma_i} \subset \Bv_{v'}$ by a map whose image is the single vertex $v'\in V'$, while preserving the other segments $\gamma_j$.

Take any $\gamma_i$ such that $\phi(\gamma_i)$ is not a single vertex that lies in $V'$. (Note that such a path can only occur if $m\geq 1$; otherwise, every $\Bv_{v'}$ is a single vertex.) There is $v'\in V'$ such that $\phi(\gamma_i)\subset  \Bv_{v'}$. This implies that $\phi(\gamma_i)$ gives a 
path in $\Bv_{v'}$ with end points of the form $\alpha = v'+ \sum_{i=1}^m a^{\alpha}_i \vec e_i $ and $\omega = v'+ \sum_{i=1}^m a^{\omega}_i \vec e_i $. 

We start by replacing $\phi|_{\gamma_i}$ by a map whose image is a sequence of 2-additive simplices (see \cref{figure_induction_beginning1}): 
\begin{figure}
\begin{center}\vspace{40px}
    \begin{picture}(400,100)
    \put(0,0){\includegraphics[scale=.3]{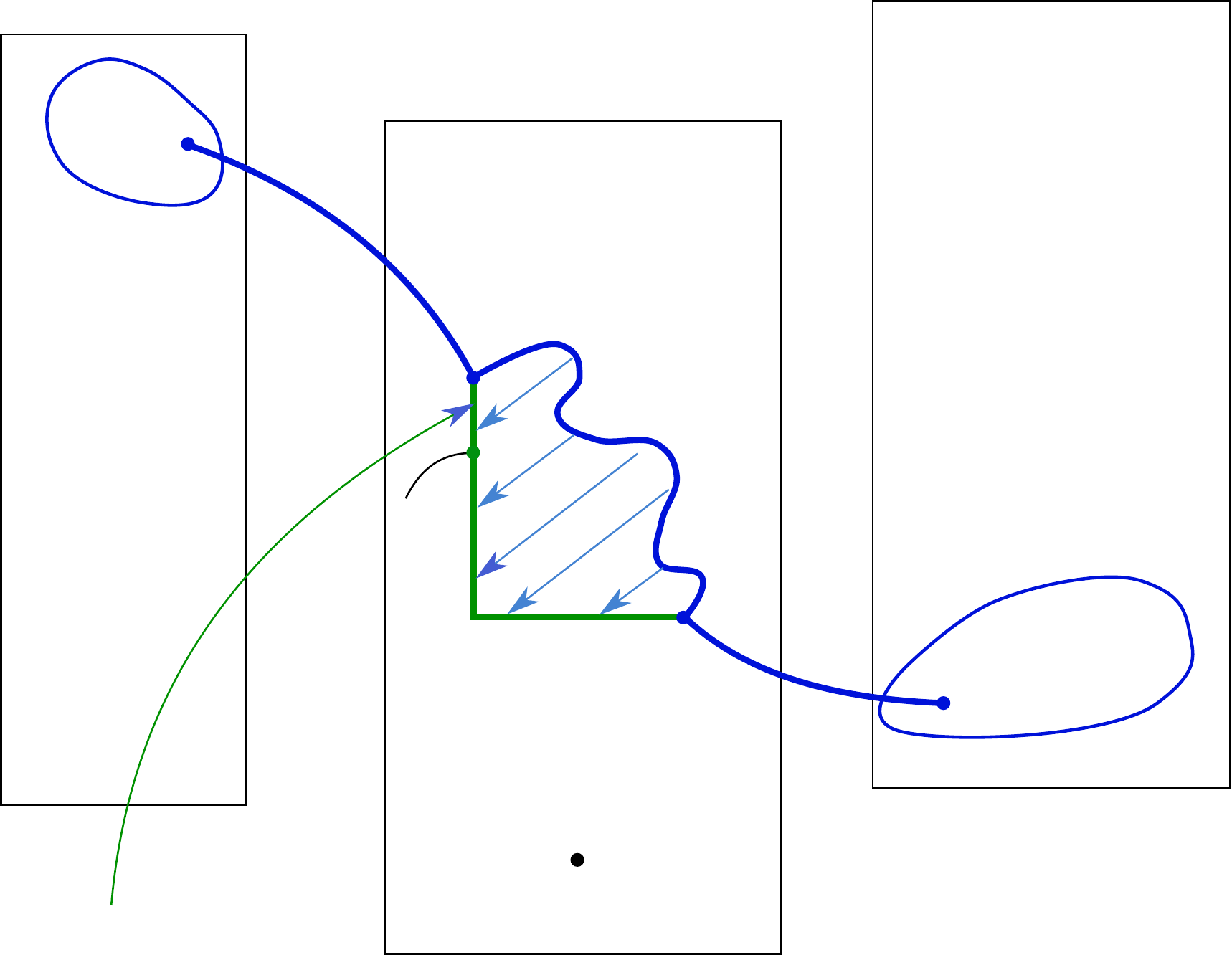}}
    \put(185,0){\includegraphics[scale=.3]{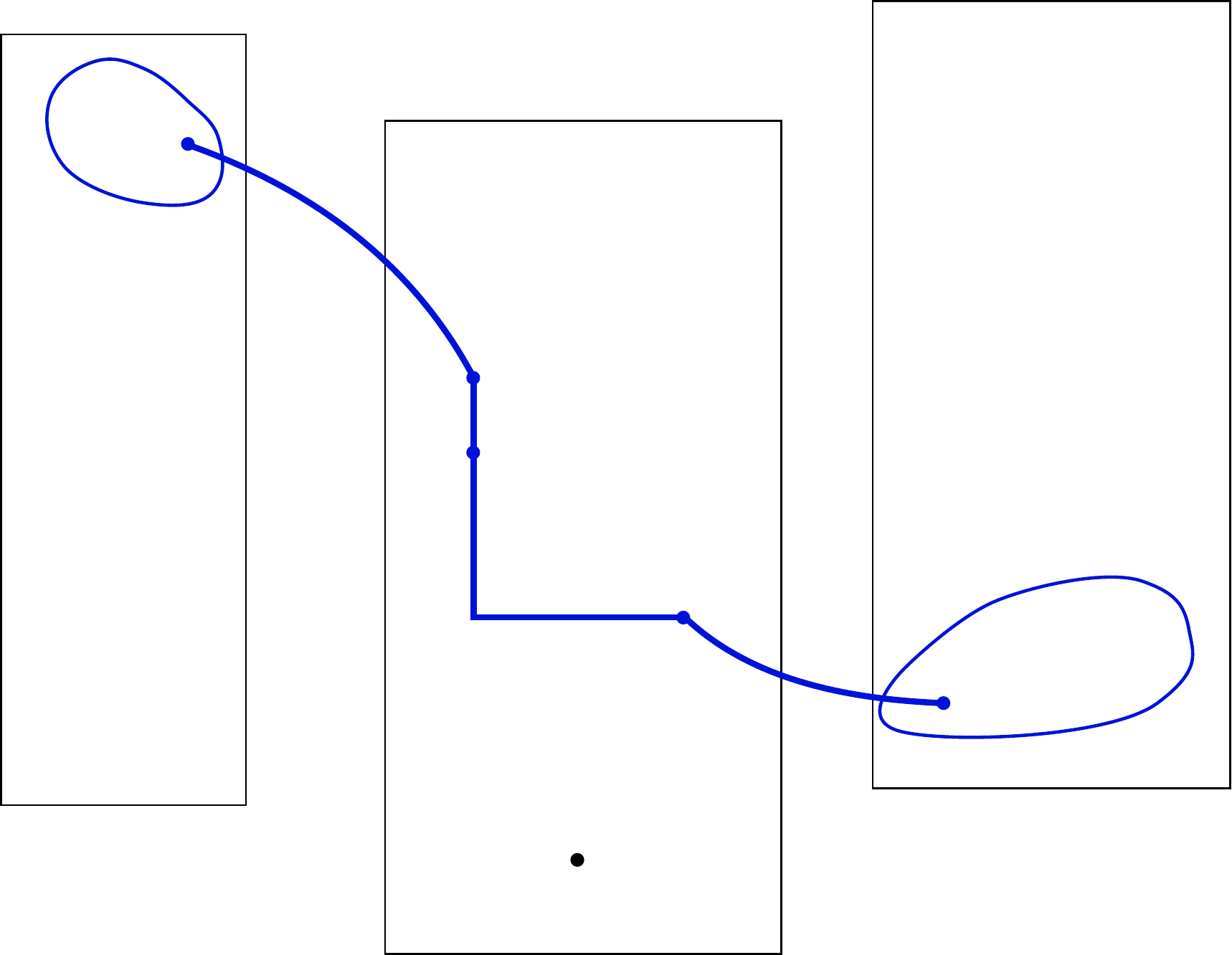}}
    \put(0,0){\tiny 2-additive}
     \put(53,35){\tiny $\newproofphi(\tilde \gamma_i)$}
     \put(73,70){\tiny $\phi(\gamma_i)$}
     \put(56,73){\tiny $\alpha$}
     \put(36,49){\tiny $\alpha\pm e_i$}
     \put(36,93){\tiny $\sigma$}
     \put(98,34){\tiny $\sigma$}
     \put(20,93){\tiny $s$}
     \put(2,85){\tiny $\phi(\gamma_{i-1})$}
     \put(85,40){\tiny $\omega$}
     \put(70,5){\tiny $v'$}
     \put(12,98){\tiny $\ddots$}
     \put(115,50){\tiny $\phi(\gamma_{i+1})$}
     \put(115,30){\tiny $t$}
     \put(125,35){\tiny $\cdots$}
     \put(47,105){\tiny $\B_{v'} \cong \Bv_n^m$}
     
     \put(160,60){ $\rightsquigarrow$}
     
     \put(232,105){\tiny $\B_{v'} \cong \Bv_n^m$}
     \put(238,35){\tiny $\newproofphi(\tilde \gamma_i)$}
     \put(241,73){\tiny $\alpha$}
     \put(220,60){\tiny $\alpha\pm e_i$}
     \put(221,93){\tiny $\sigma$}
     \put(283,34){\tiny $\sigma$}
     \put(205,93){\tiny $s$}
     \put(188,85){\tiny $\phi(\gamma_{i-1})$}
     \put(270,40){\tiny $\omega$}
     \put(255,5){\tiny $v'$}
     \put(197,98){\tiny $\ddots$}
     \put(300,50){\tiny $\phi(\gamma_{i+1})$}
     \put(300,30){\tiny $t$}
     \put(310,35){\tiny $\cdots$}
\end{picture}
\end{center}
\caption{Replacing $\gamma_i$ by $\tilde{\gamma}_i$.}
\label{figure_induction_beginning1}
\end{figure}
Using the simplices described in \cref{it_2add_IAA1}, we can find a 
path in $\Bv_{v'}$ from $\alpha$ to $\omega$ that only consists of 2-additive simplices. Let $\newproofphi|_{\tilde{\gamma}_i}\colon \tilde{\gamma}_i \to \IAAstrel[1]$ be an injective map that identifies a combinatorial 1-ball (i.e.~a path) $\tilde{\gamma}_i$ with this path consisting of 2-additive simplices. As $\Bv_{v'}$ is 1-connected, $\phi|_{\gamma_i}$ is relative to its endpoint homotopic to $\newproofphi|_{\tilde{\gamma}_i}$. By \cref{it_simplicial_approximation_combinatorial_ball} of \cref{lem_simplicial_approximation_combinatorial}, we can realise this homotopy by a map $\Psi\colon  B\to \Bv_{v'}\subset \IAAstrel[1]$ from a combinatorial $2$-ball $B$.
Such a map is \hyperref[it_weakly_regular_low_dim]{weakly regular} because $\im(\Psi)\subset \IAAstrel[1]$. 
Hence by \cref{lem_regular_homotopy_by_balls}, $\phi$ is weakly regularly homotopic to a simplicial map 
\begin{equation*}
	\newproofphi\colon \newproofsphere \to \IAAstrel[1]
\end{equation*}
that is obtained by replacing $\phi|_{\gamma_i}$ by $\Psi|_{\tilde{\gamma}_i} = \newproofphi|_{\tilde{\gamma}_i}$. The resulting map $\newproofphi$ is still injective on every edge of $\newproofsphere$.

We next replace $\phi|_{\tilde{\gamma}_i}$ by a map whose image is the single vertex $\omega$ (see \cref{figure_induction_beginning2}).
\begin{figure}
\begin{center}
\vspace{40px}
    \begin{picture}(400,100)
    \put(0,0){\includegraphics[scale=.3]{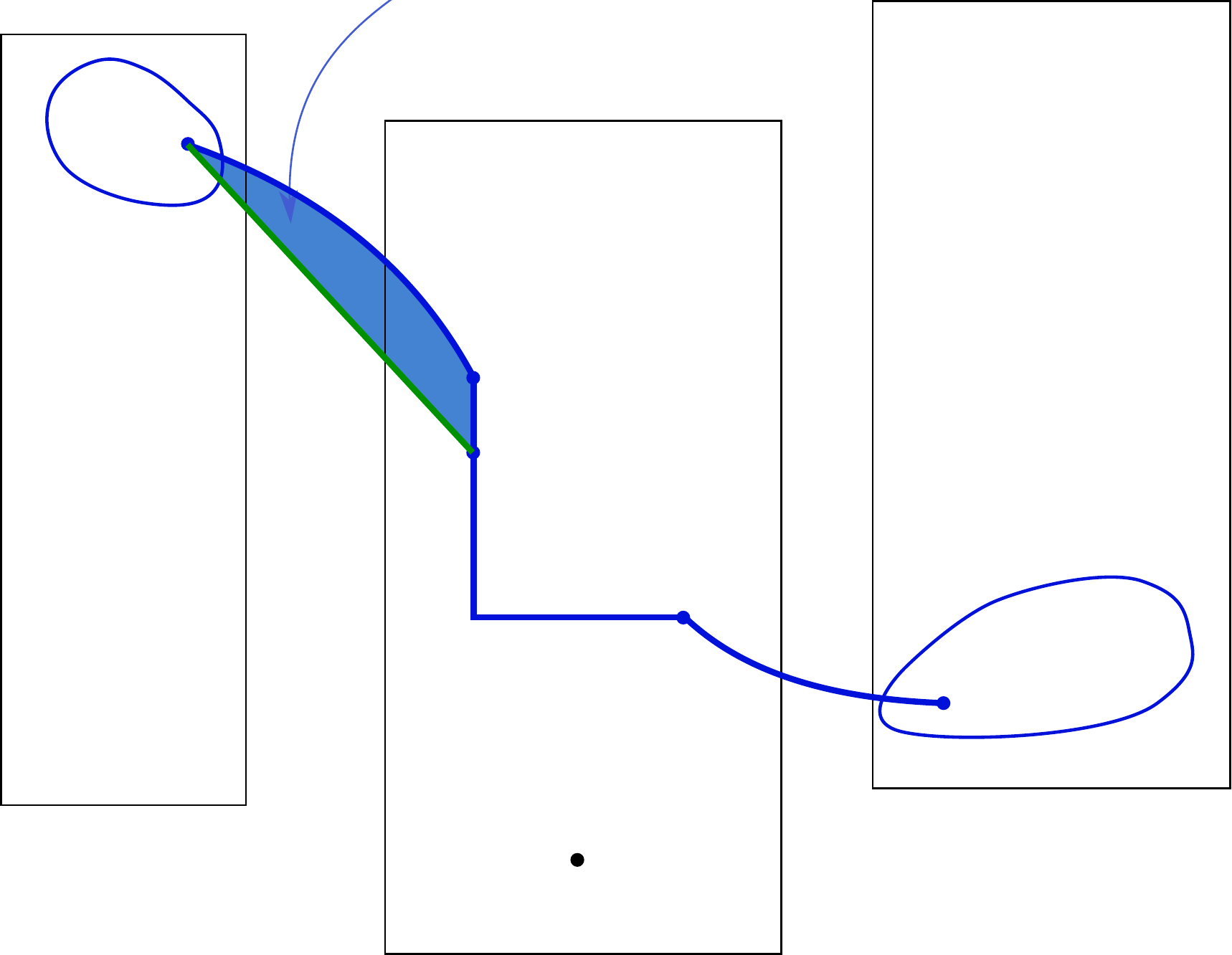}}
    \put(185,0){\includegraphics[scale=.3]{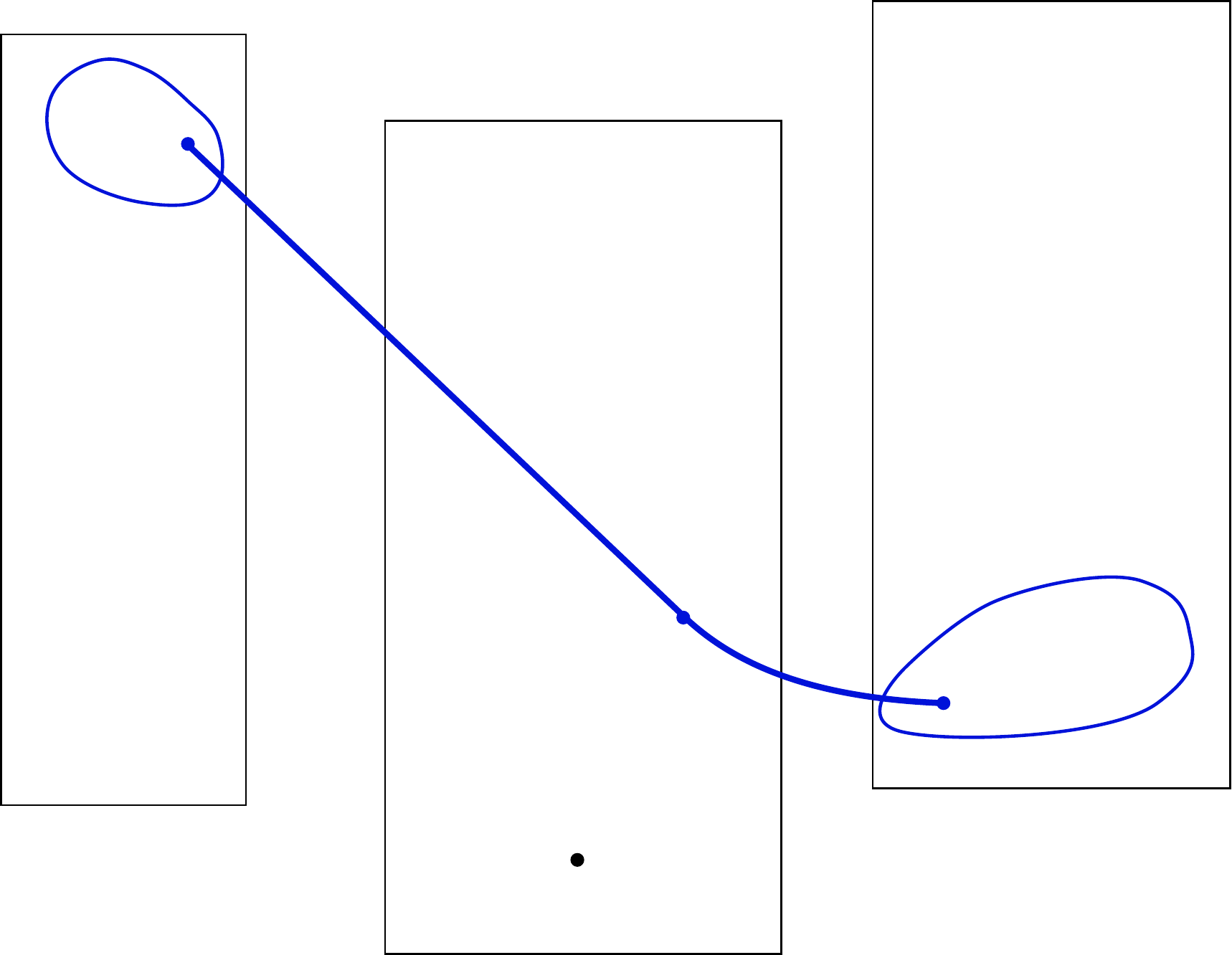}}

     \put(53,35){\tiny $\newproofphi(\tilde \gamma_i)$}
     \put(57,71){\tiny $\alpha$}
     \put(35,60){\tiny $\alpha\pm e_i$}
     \put(36,93){\tiny $\sigma$}
     \put(36,75){\tiny $\sigma$}
     \put(98,34){\tiny $\sigma$}
     \put(18,93){\tiny $s$}
     \put(2,85){\tiny $\phi(\gamma_{i-1})$}
     \put(85,40){\tiny $\omega$}
     \put(70,5){\tiny $v'$}
     \put(12,98){\tiny $\ddots$}
     \put(115,50){\tiny $\phi(\gamma_{i+1})$}
     \put(115,30){\tiny $t$}
     \put(125,35){\tiny $\cdots$}
     \put(47,105){\tiny $\B_{v'} \cong \Bv_n^m$}
     \put(47,120){\tiny 2-skew-additive}
     
     \put(160,60){ $\rightsquigarrow$}
     
     \put(232,105){\tiny $\B_{v'} \cong \Bv_n^m$}
     \put(231,75){\tiny $\sigma$}
     \put(283,34){\tiny $\sigma$}
     \put(205,93){\tiny $s$}
     \put(188,85){\tiny $\phi(\gamma_{i-1})$}
     \put(270,40){\tiny $\omega$}
     \put(255,5){\tiny $v'$}
     \put(197,98){\tiny $\ddots$}
     \put(300,50){\tiny $\phi(\gamma_{i+1})$}
     \put(300,30){\tiny $t$}
     \put(310,35){\tiny $\cdots$}
\end{picture}
\end{center}
\caption{Replacing $\tilde{\gamma}_i$ by $\omega$.}
\label{figure_induction_beginning2}
\end{figure}
For this, we use a homotopy in $\IAArel[1]$ whose image is not contained in $\IAAstrel[1]$:
Assume that $\phi(\tilde{\gamma}_i)$ has more than one vertex and let $s$ be the vertex that precedes $\alpha$ in $\phi(S)$, i.e.~the last vertex of $\phi(\gamma_{i-1})$. By assumption, $\ls s, \alpha \rs$ is a $\sigma$ simplex (this uses the assumption that $l>1$). As all edges in $\phi(\tilde{\gamma}_i)$ are 2-additive, the vertex of $\phi(\tilde{\gamma}_i)$ following $\alpha$ is of the form $\langle \vec \alpha \pm \vec e_i \rangle$ for some $1\leq i \leq m$. Hence, the set $\ls s, \alpha, \langle \vec \alpha \pm \vec e_i \rangle \rs$ is a 2-skew-additive simplex in $\IAArel[1]$, as in \cref{it_2_skew_IAA1}. We will use this to replace the edges $\ls s, \alpha \rs$ and $\ls \alpha, \langle \vec \alpha \pm \vec e_i \rangle \rs$ in the image of $\phi$ by the $\sigma$ edge $\ls s, \langle \vec \alpha \pm \vec e_i \rangle \rs$. Denote by $\ls x,y,z \rs$ the corresponding vertices of $S$,
\begin{equation*}
	\phi(x) = s,\, \phi(y) = \alpha,\, \phi(z) = \langle \vec \alpha \pm \vec e_i \rangle.
\end{equation*}
Define a map $\Psi\colon  B\to \IAAstrel$, where $B$ is the 2-simplex given by $\ls x,y,z \rs$ and $\Psi$ is equal to $\phi$ on each vertex. As 
\begin{equation*}
	\Psi(\ls x,y,z \rs) = \ls s, \alpha, \langle \vec \alpha \pm \vec e_i \rangle \rs
\end{equation*}
is a 2-skew-additive simplex, $\Psi$ is a well-defined and in fact weakly regular map.
Hence by \cref{lem_regular_homotopy_by_balls}, $\phi$ is weakly regularly homotopic to a simplicial map 
\begin{equation*}
	\newproofphi\colon \newproofsphere \to \IAAstrel[1]
\end{equation*}
that is obtained by replacing $\phi|_{\ls x,y \rs, \ls y,z \rs}$ by $\Psi|_{\ls x,z \rs}$. The map $\newproofphi$ is still injective on every edge of $\newproofsphere$.
This removes $\alpha$ from $\phi(\tilde{\gamma}_i)$ without changing the other vertices.
Iterating this, we find a weakly regular homotopy that replaces $\phi|_{\tilde{\gamma}_i}$ by a map that sends a singleton to the vertex $\omega$.

We lastly replace $\omega$ by $v'\in V'$ (see \cref{figure_induction_beginning3}), using a homotopy in $\IAArel[1]$ whose image is not contained in $\IAAstrel[1]$: Again, let $s$ be the last vertex of $\phi(\gamma_{i-1})$ and let $t$ be the first vertex of $\phi(\gamma_{i+1})$. Both $\ls s, \omega \rs $ and $\ls \omega, t \rs$ are $\sigma$ edges (that might be equal to one another). There is a path in $\Bv_{v'}$ from $\omega$ to $v'$ that consists only of 2-additive edges. Similarly to the procedure described in the previous paragraph, we can use this path to successively replace $\omega$ by $v'$, using two 2-skew-additive simplices in every step.
\begin{figure}
\begin{center}
\vspace{40px}
    \begin{picture}(400,100)
    \put(0,0){\includegraphics[scale=.3]{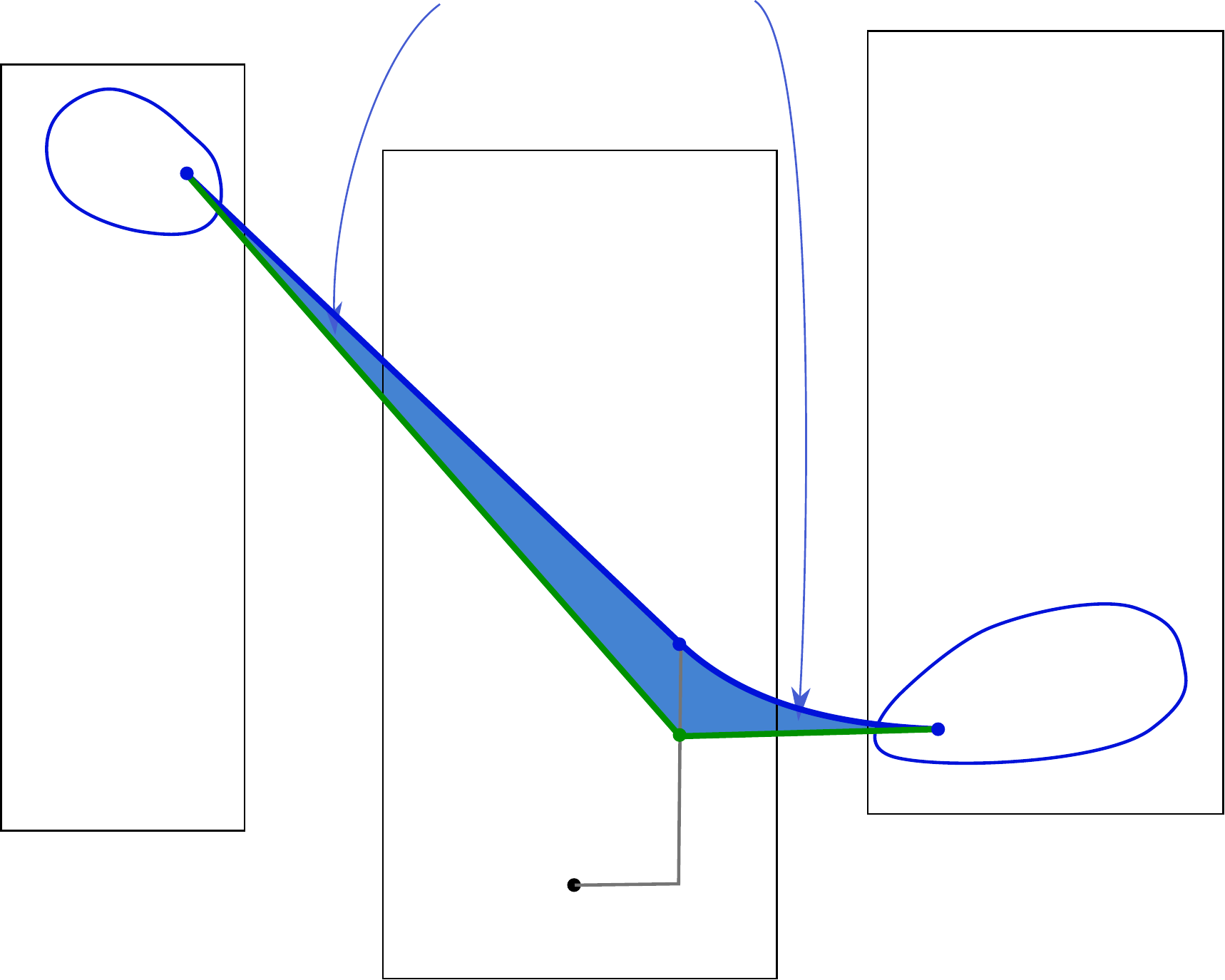}}
    \put(185,0){\includegraphics[scale=.3]{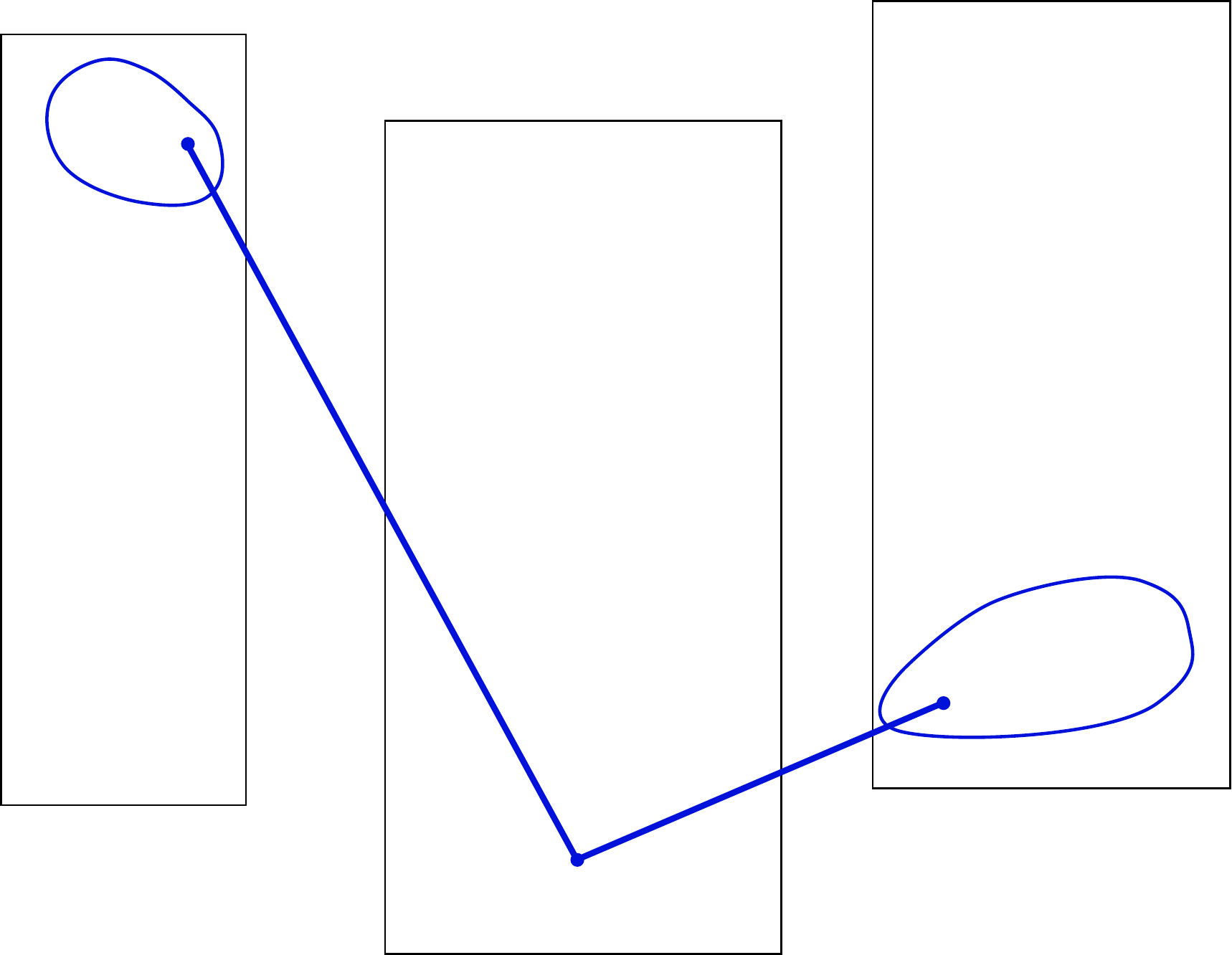}}

     \put(36,86){\tiny $\sigma$}
     \put(98,34){\tiny $\sigma$}
     \put(18,93){\tiny $s$}
     \put(2,85){\tiny $\phi(\gamma_{i-1})$}
     \put(85,40){\tiny $\omega$}
     \put(60,28){\tiny $\omega\pm e_i$}
     \put(70,5){\tiny $v'$}
     \put(12,98){\tiny $\ddots$}
     \put(115,50){\tiny $\phi(\gamma_{i+1})$}
     \put(115,30){\tiny $t$}
     \put(125,35){\tiny $\cdots$}
     \put(47,105){\tiny $\B_{v'} \cong \Bv_n^m$}
     \put(47,120){\tiny 2-skew-additive}
     
     \put(160,60){ $\rightsquigarrow$}
     
     \put(232,105){\tiny $\B_{v'} \cong \Bv_n^m$}
     \put(221,75){\tiny $\sigma$}
     \put(283,28){\tiny $\sigma$}
     \put(205,93){\tiny $s$}
     \put(188,85){\tiny $\phi(\gamma_{i-1})$}
     \put(255,5){\tiny $v'$}
     \put(197,98){\tiny $\ddots$}
     \put(300,50){\tiny $\phi(\gamma_{i+1})$}
     \put(300,30){\tiny $t$}
     \put(310,35){\tiny $\cdots$}
\end{picture}
\end{center}
\caption{Replacing $\omega$ by $v'$.}
\label{figure_induction_beginning3}
\end{figure}

Performing this procedure on all segments $\gamma_j$, we obtain $\newproofphi\colon \newproofsphere \to \IAAstrel[1]$ that is injective on every edge of $\newproofsphere$ and whose image is entirely contained in the subcomplex $\baseB \subseteq \IAAstrel[1]$.
As noted above, this complex is contained in the contractible complex $\baseBA \subseteq \IAArel[1]$. 
By \cref{it_simplicial_approximation_combinatorial_ball} of \cref{lem_simplicial_approximation_combinatorial}, there is a combinatorial $2$-ball $B$ with $\partial B = \newproofsphere$ and a map $\Psi\colon  B\to \baseBA \subset \IAAstrel[1]$ such that $\Psi|_{\newproofsphere} = \newproofphi$. As $\newproofphi$ is injective on every edge of $\newproofsphere$, we can choose $\Psi$ such that it is also injective on every simplex of $B$.\footnote{This follows from the work of Himes--Miller--Nariman--Putman \cite{Himes2022}: By \cref{thm_connectivity_B_BA}, $\baseBA\cong \BA_2$ is Cohen--Macaulay of dimension $2$. So by \cite[Lemma 4.4]{Himes2022}, it has the ``disc local injectivity property up to dimension $1$'' (see \cite[Definition 3.3]{Himes2022}). This means exactly that we can choose $\Psi$ as above to be locally injective.}
From this local injectivity, it follows that $\Psi$ is \hyperref[it_weakly_regular_low_dim]{weakly regular}, which means that $\newproofphi$ is weakly regularly nullhomotopic. 
\end{proof}

\subsection{Induction step}
\label{sec_induction_step}
We now let $n \geq 2$, $m\geq 0$, $ k \leq n$ and assume that either $k=0$ or by induction, any simplicial map $S^{k-1} \to \IAAstrel[n-1][m+1]$ from a combinatorial $(k-1)$-sphere into $\IAAstrel[n-1][m+1]$  is weakly regularly nullhomotopic in $\IAArel[n-1][m+1]$. (Recall that, as noted in \cref{lem:regular-implies-weakly-regular}, every regular map is also weakly regular.)
Let $S$ be a combinatorial $k$-sphere and $\phi\colon  S \to \IAAstrel$ be simplicial. We need to show that $\phi$ is regularly nullhomotopic in $\IAArel$.

Let
\begin{equation*}
R = \max\ls \rkfn(\phi(x)) \,\middle|\, x \text{ vertex of } S \rs,
\end{equation*}
where $\rkfn$ is defined as in \cref{def_rank_and_ranked_complexes}.
If $R=0$, we have $\im(\phi)\in \IAAstrel(W)$, so the claim follows from \cref{IAAW_highly_connected}. Hence by \cref{lem_regular_homotopic_and_nullhomotopic}, it suffices to show that $\phi$ is in $\IAArel$ regularly homotopic to a map $\newlemmaphi\colon  \newlemmasphere \to \IAAstrel$ with this property. (Recall by the convention set up in \cref{rem_regular_homotopies_notation_combinatorial}, $\newlemmasphere$ is a combinatorial $k$-sphere and $\newlemmaphi$ is simplicial.)

For this, assume that $R>0$ and call a simplex $\Delta$ of $S$ \emph{bad} if $\phi(\Delta)=\ls v \rs$ with $\rkfn(v) = R$.
We will explain how one can regularly homotope $\phi$ to a map $\phi'$ with no bad simplices. 
Iterating this procedure, we successively reduce $R$ to $0$, which finishes the induction step.
So from now and throughout \cref{sec_induction_step}, we make the standing assumption that 
\begin{equation}
\label{eq_standing_assumption_nmkR}
n\geq 2,\, m\geq 0, k\leq n \text{ and } R\geq 1.
\tag{\text{Standing assumption}}
\end{equation}

In order to remove bad simplices from $\phi$, we need to assume the following condition on their links.

\begin{definition}
Let $S$ be a combinatorial $k$-sphere, $\phi\colon  S \to \IAAstrel$ a simplicial map and $R = \max\ls \rkfn(\phi(x)) \,\middle|\, x \text{ vertex of } S \rs$. We say that $\phi$ has \hyperref[it_isolation]{Isolation} if it satisfies the following property. 
\begin{description}[labelindent=0.5cm]
  \item[(Isolation)] \label{it_isolation} If $\Delta$ is a bad simplex of $S$, $\phi(\Delta) = \ls v \rs$, and $x\in \Link_{S}(\Delta)$, then 
  \begin{equation*}
  	\phi(x) \in \ls v \rs \cup \Linkhat^{<R}_{\IAAstrel}(v).
  \end{equation*}
\end{description}
\end{definition}

This is an ``isolation'' criterion in the sense that it implies that there cannot be two adjacent vertices in $S$ that map to different vertices of rank $R$.
In fact, we can always assume that our maps have this property:

\begin{proposition}
\label{prop_isolating-rank-r-vertices}
Let $S$ be a combinatorial $k$-sphere and $\phi\colon  S \to (\IAAstrel)^{\leq R}$ a simplicial map. Then $\phi$ is in $\IAArel$ regularly homotopic to a map $\newlemmaphi\colon  \newlemmasphere \to (\IAAstrel)^{\leq R}$ such that $\newlemmaphi$ has \hyperref[it_isolation]{Isolation}.
\end{proposition}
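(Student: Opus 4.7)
The plan is to iteratively eliminate violations of Isolation, where a \emph{violation} is a pair $(\Delta, x)$ with $\Delta$ a bad simplex of $S$ (so $\phi(\Delta) = \{v\}$ for some $v$ of rank $R$) and $x\in\Link_S(\Delta)$ satisfying $\phi(x) \notin \{v\} \cup \Linkhat^{<R}_{\IAAstrel}(v)$. After possibly subdividing $S$, each rank-$R$ vertex $v$ in the image determines a preimage complex $F_v = \phi^{-1}(v) \subseteq S$, and one organises the elimination by processing one $v$ at a time, modifying $\phi$ inside $\Star_S(F_v)$.

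The core step fixes a single $v$ of rank $R$. Note that $\Star_S(F_v) = F_v \ast \Link_S(F_v)$ is a combinatorial ball, $\Link_S(F_v)$ is a combinatorial sphere, and the restriction $\phi|_{\Link_S(F_v)}$ maps into $\Link_{\IAAstrel}(v)$. I would first homotope $\phi|_{\Link_S(F_v)}$ so that its image lies in $\Linkhat_{\IAAstrel}(v)$: violations coming from vertices $x$ with $\vec{\phi(x)} \in \langle\vec e_1,\dots,\vec e_m,\vec v\rangle$ or $\omega(v,\phi(x)) \neq 0$ (that is, $\{v,\phi(x)\}$ a $\sigma$-edge, a $2$-additive edge involving $\vec v$, or higher augmentations of these) are local obstructions and can be removed by replacing offending simplices with simplices obtained from $\sigma^2$-, $\sigma$-additive-, prism-, or external $2$-skew-additive cross maps, exactly as in \cref{lem:nice-regular-replacements} and the induction-beginning argument \cref{lem_induction_beginning}. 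This uses that for each such local configuration, the completing cell lies in $\IAArel$ and is regular, so the replacement is a regular homotopy by \cref{lem_regular_homotopy_by_balls}.

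Next, with $\phi|_{\Link_S(F_v)}$ landing in $\Linkhat_{\IAAstrel}(v)$, I invoke the inductive hypothesis through the isomorphism $\Linkhat_{\IAAstrel}(v)\cong \IAAstrel[n-1][m+1]$ of \cref{lem_link_of_vertex}: the map $\phi|_{\Link_S(F_v)}$ from a combinatorial $(k-\dim F_v -1)$-sphere is weakly regularly nullhomotopic in $\Linkhat_{\IAArel}(v)$. Call the resulting filling $\psi\colon D \to \Linkhat_{\IAArel}(v)$ with $\partial D = \Link_S(F_v)$. Now apply \cref{IAA-retraction} to $\psi$ (using the constant boundary map into $\Linkhat^{<R}_{\IAArel}(v)$, which is available because one can first push the boundary image down using the retraction of \cref{IAAst-retraction} extended over cross-map cells as in \cref{lem:retraction-sigma2-sigma-additive-external-2-skew-additive-cross-map} and \cref{lem:retraction-nice-prism-cross-map}) to obtain a weakly regular $\psi'\colon D_{\new} \to \Linkhat^{<R}_{\IAArel}(v)$ with the same boundary. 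Taking the join $F_v \ast \psi'$ yields a regular map on the ball $F_v \ast D_{\new}$ whose boundary equals $\partial\Star_S(F_v)$ (since $\partial(F_v\ast D_{\new}) = \partial F_v \ast D_{\new} \cup F_v \ast \Link_S(F_v)$, and on the first piece the map still agrees with $\phi$ because of how joins behave with constant factor $v$). Replacing $\phi|_{\Star_S(F_v)}$ by $v \ast \psi'$ via \cref{lem_regular_homotopy_by_balls} is a regular homotopy and, after the replacement, every vertex of $\Link_{S'}(F_v)$ maps into $\Linkhat^{<R}_{\IAAstrel}(v)$, so no violation at $v$ survives.

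The main obstacle is controlling side effects: the replacement changes the link structure at vertices of $\partial F_v$ and could, in principle, create new violations at a different rank-$R$ vertex $v'$ in $\phi(S)$. The standard way around this is to process distinct rank-$R$ vertices in parallel by working only inside $\Linkhat^{<R}_{\IAArel}(v)$, which by definition excludes all rank-$R$ vertices $v'\neq v$, so the new simplices in the modified $\Star_{S'}(F_v)$ cannot map to any rank-$R$ vertex other than $v$; the only rank-$R$ vertex touched is $v$ itself, and adjacency to it is now controlled. Ensuring that the boundary of $D_{\new}$ matches $\partial\Star_S(F_v)$ without new rank-$R$ adjacencies is the delicate point and requires the careful cross-map bookkeeping of \cref{sec:regular-maps} together with the retraction extensions of \cref{sec_retraction}; this is exactly the content that \cref{sec_normal_form_spheres} supplies. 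After handling each rank-$R$ vertex in turn, the total number of violations strictly decreases, so finitely many iterations produce the desired $\newlemmaphi\colon\newlemmasphere\to(\IAAstrel)^{\leq R}$ with Isolation.
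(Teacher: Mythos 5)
There is a genuine gap, and it sits exactly where the real work of this proposition lies. First, a structural error: you treat $F_v=\phi^{-1}(v)$ as if it were a simplex. In general $F_v$ is just a full subcomplex of $S$ (possibly disconnected, of varying dimension), so $\Star_S(F_v)$ is \emph{not} $F_v\ast\Link_S(F_v)$, it need not be a combinatorial ball, $\Link_S(F_v)$ need not be a combinatorial sphere, and the boundary formula $\partial(F_v\ast D_{\new})=\partial F_v\ast D_{\new}\cup F_v\ast\Link_S(F_v)$ you rely on fails; \cref{lem_regular_homotopy_by_balls} is then not applicable. The paper never cuts out a whole preimage; it only ever cuts out a single \emph{maximal bad simplex}, whose star and link do have the required ball/sphere structure --- and that cut-out is the content of \cref{prop_cut_out_bad_vertices}, which takes Isolation as an input. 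Your proposal in effect runs that later argument to prove its own precondition.

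Second, and more seriously, the step ``first homotope $\phi|_{\Link_S(F_v)}$ so that its image lies in $\Linkhat_{\IAAstrel}(v)$'' \emph{is} the proposition, and you dismiss it as removable ``exactly as in'' \cref{lem:nice-regular-replacements} and \cref{lem_induction_beginning}, which address different problems (inessential prisms, respectively the genus-one case) and provide none of the needed moves. The offending configurations include $\sigma$ edges at $v$, 2-additive edges $\langle\vec v\pm\vec e_i\rangle$, and above all \emph{edgy} simplices joining $v$ to a different rank-$R$ vertex $v'$; in the last case $\Link_S(F_v)$ itself hits rank-$R$ vertices other than $v$, so no filling inside $\Linkhat^{<R}_{\IAArel}(v)$ with that boundary can exist, and the retractions of \cref{IAAst-retraction}/\cref{IAA-retraction} cannot help since they are only defined on $\Linkhat(v)$ and cannot remove $\sigma$-adjacency to $v$ or adjacency to $v'$. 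Moreover, a homotopy of the restriction $\phi|_{\Link_S(F_v)}$ cannot be performed in isolation: those simplices are faces of simplices outside $\Star_S(F_v)$, so every modification must be realised as a ball replacement in $S$ with controlled effect on neighbouring stars. Eliminating all of these configurations is precisely what the paper's \cref{sec_normal_form_spheres} does, in a forced order: first $\sigma$-regularity via Putman's theorem (\cref{thm_andy_fourth_conclusion}) and a cut-out, then $\sigma$-smallness via explicit $\sigma$-additive and prism cross-map constructions with Euclidean-algorithm rank bookkeeping (\cref{lem_avoid_edgy_crosspolytopes}--\cref{lem_sigma_small_obtained}), then removal of edgy simplices with 3-additive, standard and 2-additive images using overly augmented simplices and the connectivity of $\Linkhat^{<R}_{\IArel}(v)$ (\cref{lem_avoid_overly_augmented}--\cref{lem_avoid_edgy_2_additive}); only then does Isolation follow. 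Your proposal compresses all of this into an unproved assertion, so as written it does not establish the statement.
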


\cref{prop_isolating-rank-r-vertices} is proved in \cref{sec_normal_form_spheres}. We assume it for now and only show that it has the following consequence.

\begin{lemma}
\label{lem_isolation_links}
Let $S$ be a combinatorial $k$-sphere and $\phi\colon  S \to (\IAAstrel)^{\leq R}$ a simplicial map that has \hyperref[it_isolation]{Isolation}. Let $\Delta$ be a bad simplex of $S$ that is maximal with respect to inclusion among all bad simplices, $\phi(\Delta) = \ls v \rs$. Then 
\begin{equation*}
\phi(\Link_{S}(\Delta)) \subset \Linkhat^{<R}_{\IAAstrel}(v).
\end{equation*}
\end{lemma}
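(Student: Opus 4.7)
The plan is to argue by contradiction using the maximality of $\Delta$. Let $x \in \Link_S(\Delta)$ be arbitrary. Since $\phi$ has \hyperref[it_isolation]{Isolation} and $\Delta$ is a bad simplex with $\phi(\Delta) = \{v\}$, we immediately obtain from the definition of Isolation that
\[
\phi(x) \in \{v\} \cup \Linkhat^{<R}_{\IAAstrel}(v).
\]
It therefore suffices to rule out the possibility that $\phi(x) = v$.

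Suppose for contradiction that $\phi(x) = v$. Because $x \in \Link_S(\Delta)$, the union $\Delta \cup \{x\}$ is a simplex of $S$ that strictly contains $\Delta$. Applying $\phi$, we compute
\[
\phi(\Delta \cup \{x\}) = \phi(\Delta) \cup \{\phi(x)\} = \{v\} \cup \{v\} = \{v\},
\]
so $\phi(\Delta \cup \{x\})$ is the single vertex $v$ of rank $\rkfn(v) = R$. By the definition of a bad simplex, this means $\Delta \cup \{x\}$ is itself bad, contradicting the maximality of $\Delta$ among bad simplices of $S$.

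Hence $\phi(x) \neq v$, and combining this with the inclusion above forces $\phi(x) \in \Linkhat^{<R}_{\IAAstrel}(v)$. Since $x \in \Link_S(\Delta)$ was arbitrary, we conclude $\phi(\Link_S(\Delta)) \subset \Linkhat^{<R}_{\IAAstrel}(v)$, as required. The argument is entirely formal and does not require any deep combinatorial input beyond unpacking the definitions of \hyperref[it_isolation]{Isolation}, bad simplex, and the link in a simplicial complex; so I do not anticipate any real obstacle here.
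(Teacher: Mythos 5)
Your vertex-level argument is correct and coincides with the paper's: Isolation places $\phi(x)$ in $\ls v \rs \cup \Linkhat^{<R}_{\IAAstrel}(v)$ for each vertex $x\in\Link_S(\Delta)$, and maximality of $\Delta$ rules out $\phi(x)=v$, since otherwise $\Delta\cup\ls x\rs$ would be a bad simplex strictly containing $\Delta$. However, the lemma asserts that the image of the entire subcomplex $\Link_S(\Delta)$ — including its positive-dimensional simplices — is contained in $\Linkhat^{<R}_{\IAAstrel}(v)$, and your closing sentence deduces this from the vertex statement alone. That deduction is not automatic: for a general subcomplex of the codomain, knowing that every vertex of a simplex $\phi(\Theta)$ lies in the subcomplex does not force $\phi(\Theta)$ itself to be a simplex of it. This is exactly the point at which the paper's proof does additional (if short) work, so as written there is a gap.

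The missing step is closed as follows. Let $\Theta\in\Link_S(\Delta)$ be any simplex. Since $\phi$ is simplicial and $\Delta\cup\Theta$ is a simplex of $S$, the set $\phi(\Delta\cup\Theta)=\ls v\rs\cup\phi(\Theta)$ is a simplex of $\IAAstrel$; and since, by your maximality argument, no vertex of $\Theta$ maps to $v$, it follows that $\phi(\Theta)$ is a simplex of $\Link_{\IAAstrel}(v)$. Now $\Linkhat^{<R}_{\IAAstrel}(v)$ is a \emph{full} subcomplex of $\Link_{\IAAstrel}(v)$ (see \cref{def_linkhat} and \cref{def_rank_and_ranked_complexes}), so a simplex of $\Link_{\IAAstrel}(v)$ all of whose vertices lie in $\Linkhat^{<R}_{\IAAstrel}(v)$ is itself a simplex of $\Linkhat^{<R}_{\IAAstrel}(v)$. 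Adding this observation makes your proof complete, and it then agrees with the paper's argument (which runs the same ingredients in the order: maximality, then Isolation, then fullness).
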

\begin{proof}
As $\phi$ is simplicial and $\Delta$ is maximal among bad simplices, we have 
\begin{equation*}
	\phi(\Link_{S^k}(\Delta)) \subseteq \Link_{\IAAstrel}(v).
\end{equation*}
By \hyperref[it_isolation]{Isolation}, this implies that every vertex $x$ in $\Link_{S^k}(\Delta)$ is sent to a vertex $\phi(x)$ in $\Linkhat^{<R}_{\IAAstrel}(v)$. But $\Linkhat^{<R}_{\IAAstrel}(v)$ is a full subcomplex of $\Link_{\IAAstrel}(v)$ (see \cref{def_linkhat} and \cref{def_rank_and_ranked_complexes}). Hence, all higher dimensional simplices in $\phi(\Link_{S^k}(\Delta))$ are contained in  $\Linkhat^{<R}_{\IAAstrel}(v)$ as well, which is what we wanted to show.
\end{proof}

The following proposition shows that we can remove all bad simplices from $\phi$. By the  discussion at the beginning of this section, its proof finishes the induction step. \label{need_weaker_connectivity_IAAst}

\begin{proposition}
\label{prop_cut_out_bad_vertices}
Let $S$ be a combinatorial $k$-sphere and $\phi\colon  S \to (\IAAstrel)^{\leq R}$ a simplicial map. Then $\phi$ is in $\IAArel$ regularly homotopic to a map $\phi'\colon  S' \to (\IAAstrel)^{< R}$.
\end{proposition}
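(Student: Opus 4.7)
The strategy is to eliminate bad simplices from $\phi$ one at a time by inductively shrinking their dimension. First, apply \cref{prop_isolating-rank-r-vertices} to regularly homotope $\phi$ in $\IAArel$ to a map (still called $\phi$) satisfying \hyperref[it_isolation]{Isolation} while preserving the rank bound. I would then induct on the lexicographic pair $(d,N)$, where $d$ is the maximal dimension of a bad simplex and $N$ is the number of bad simplices of dimension $d$; the base case $d=-\infty$ is exactly the conclusion.

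For the inductive step, pick a maximal bad simplex $\Delta\subseteq S$ with $\phi(\Delta)=\{v\}$ and $\rkfn(v)=R$. By \cref{lem_isolation_links}, $\phi(\Link_S(\Delta))\subseteq \Linkhat^{<R}_{\IAAstrel}(v)$, and via the isomorphism of \cref{lem_link_of_vertex} this restriction corresponds to a map from a combinatorial $(k-\dim(\Delta)-1)$-sphere into $\IAAstrel[n-1][m+1]$. Since $k-\dim(\Delta)-1\leq n-1$, the inductive hypothesis on $n$ produces a weakly regular nullhomotopy $\Psi_0\colon D\to \Linkhat_{\IAArel}(v)$. Feeding $\Psi_0$ into the IAA-retraction of \cref{IAA-retraction} yields a weakly regular $\psi\colon D'\to \Linkhat^{<R}_{\IAArel}(v)$ with $\partial D'=\Link_S(\Delta)$ and $\psi|_{\partial D'}=\phi|_{\Link_S(\Delta)}$. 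Now form the combinatorial $(k+1)$-ball $B=\Delta*D'$, whose boundary decomposes as $\Star_S(\Delta)\cup(\partial\Delta*D')$ meeting along $\partial\Star_S(\Delta)$. Define $\Psi\colon B\to\IAArel$ by $\Psi(\sigma*\tau)=\phi(\sigma)\cup\psi(\tau)$ for $\sigma\subseteq\Delta$, $\tau\subseteq D'$; this is simplicial because $v$ is isotropic to $\psi(D')$. Applying \cref{lem_regular_homotopy_by_balls} produces a regular homotopy in $\IAArel$ that replaces $\Star_S(\Delta)$ by $\partial\Delta*D'$. In the new sphere the only remaining bad simplices in the replaced region are the faces of $\partial\Delta$, which strictly reduces $d$, so iteration terminates at a map $\phi'\colon S'\to(\IAAstrel)^{<R}$.

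The principal obstacle is verifying that the extension $\Psi$ is in fact regular (\cref{def_regular_maps}), not merely weakly regular. This requires checking, for each minimal simplex of augmented type in the image of $\Psi$, that the star supports an appropriate cross map. Most cross maps of $\psi$ transport cleanly under the cone with $v$: standard, $\sigma$, $2$-additive, $\sigma^2$, and $\sigma$-additive cross maps gain an extra isotropic vertex $v$ without changing their combinatorial type. The delicate case is an external $2$-skew-additive cross map in $\psi$ whose parameter $\vertexvar$ corresponds to $v$ under the identification of \cref{lem_link_of_vertex}; coning such a piece produces an \emph{internal} $2$-skew-additive simplex in $\IAArel$ that must be realised as a face of a prism rather than standing alone (cf.\ \cref{def:niceprism}). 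Handling this case, possibly by first normalising $\psi$ along the lines of \cref{lem:nice-regular-replacements} and absorbing the problematic faces into prism cross maps of $\Psi$, together with ensuring the endpoint lies in $\IAAstrel^{<R}$ rather than $\IAArel^{<R}$ (which leans on $\IAAst\hookrightarrow\IAA$ being highly connected, \cref{lem_inclusion_surjective_pik}), forms the technical heart of the argument.
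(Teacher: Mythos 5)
Your overall skeleton (isolate via \cref{prop_isolating-rank-r-vertices}, remove a maximal bad simplex by filling its link in $\Linkhat^{<R}$ and replacing $\Star_S(\Delta)$ via \cref{lem_regular_homotopy_by_balls}) matches the paper, but the step you flag as the "principal obstacle" contains the genuine gap, and your proposed fixes do not close it. First, the claim that $\sigma^2$- and $\sigma$-additive cross maps of $\psi$ "transport cleanly under the cone with $v$" is false: coning with $v$ preserves the simplex \emph{types}, but it destroys the cross-map structure, because a cross map requires every vertex of the link factor to belong to a symplectic pair in the image (\cref{def:cross-maps}), and $v$ has no symplectic partner there. So $\Psi=\phi|_\Delta\ast\psi$ fails $\sigma^2$-, $\sigma$-additive- and prism-regularity at \emph{every} augmented piece of $\psi$, not only at external $2$-skew-additive pieces with $\vertexvar=v$. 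Second, even granting weak regularity, $\psi$ has image in $\Linkhat^{<R}_{\IAArel}(v)$, so the replaced boundary piece $\partial\Delta\ast D'$ maps into $\IAArel$, not into $(\IAAstrel)^{<R}$; the iteration (and the final conclusion) needs maps into $\IAAstrel$, and \cref{lem_inclusion_surjective_pik} cannot supply this, since it is a statement about absolute homotopy groups of the full complexes and gives no boundary-fixing, rank-bounded, \emph{regular} replacement.

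The paper resolves both problems at once by an explicit surgery that is the real technical heart of the proof: for each subcomplex $\Sigma\subseteq D$ whose image under $\psi$ is a minimal $\sigma$-additive, $\sigma^2$, skew-additive simplex or a prism, the cone $C=x\ast\Star_D(\Sigma)$ is replaced by a larger combinatorial ball $C'$ carrying one or two new vertices that are mapped to vectors $w$ (and $\langle \vec w-\vec w_1\rangle$) chosen via the Euclidean algorithm to complete the relevant symplectic basis with $\rkfn<R$; the restriction $\Psi'|_{C'}$ then \emph{is} a genuine $\sigma$-additive, $\sigma^2$ or prism cross map, and the new outer boundary $E'\subset\partial C'$ lands in $(\IAAstrel)^{<R}$ (this is carried out separately for $n=2$ and $n\ge 3$, using \cref{lem_combinatorial_ball_replace_bigger_boundary} and \cref{lem_cross_maps_intersections}). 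Your proposal contains no analogue of this construction. A further, smaller divergence: the paper invokes the induction hypothesis (and hence the hard surgery) only when $k=n$ and $\dim\Delta=0$; whenever $k<n$ or $\phi|_\Delta$ is non-injective it instead uses that $\IAAstrel[n-1][m+1]$ is $(n-2)$-connected (\cref{lem_connectivity_IAAstar}) together with \cref{IAAst-retraction}, so the filling stays inside $\IAAstrel$, regularity is automatic, and \hyperref[it_isolation]{Isolation} is easily preserved — your uniform use of the induction hypothesis would force the problematic $\IAArel$-valued fillings in every case.
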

A key observation for the proof of \cref{prop_cut_out_bad_vertices} is that the links of vertices in $\IAAstrel$ and $\IAArel$ are similar to complexes that we already studied in the previous step of the induction (\cref{lem_link_of_vertex}).

\begin{proof}[Proof of \cref{prop_cut_out_bad_vertices}]
Our aim is to replace $\phi$ by a map whose image has only vertices of rank less than $R$. Hence, we are done if we can find a homotopic map without any bad simplices. 
After applying \cref{prop_isolating-rank-r-vertices}, we can assume that $\phi$ has \hyperref[it_isolation]{Isolation}.
Let $\Delta$ be a bad simplex that is maximal with respect to inclusion among all bad simplices. We will explain how to remove $\Delta$ from $\phi$ without introducing further bad simplices and such that the resulting map still has \hyperref[it_isolation]{Isolation}. Iterating this procedure will prove the claim.
\newline

Let $\phi(\Delta) = \{v\}$ be the image of the bad simplex $\Delta$. As $\phi$ has \hyperref[it_isolation]{Isolation}, it maps $\Link_{S}(\Delta)$ to $\Linkhat^{<R}_{\IAAstrel}(v)$ by \cref{lem_isolation_links}.
By \cref{lem_link_of_vertex}, there is a commutative diagram
\begin{equation*}
\xymatrix{
	\Linkhat_{\IAAstrel}(v) \ar[r]^{\cong} \ar@{^{(}->}[d] &  \ar@{^{(}->}[d] \IAAstrel[n-1][m+1] \\
	\Linkhat_{\IAArel}( v )  \ar[r]^{\cong} & \IAArel[n-1][m+1].
}
\end{equation*}
As $S$ is a combinatorial $k$-sphere, the complex $\Link_{S}(\Delta)$ is a combinatorial $(k-\dim(\Delta)-1)$-sphere.
\newline

First assume that $k-\dim(\Delta)-1\leq n-2$. This is the case if $k< n$ or $\dim(\Delta)>0$, i.e.~$\phi|_{\Delta}$ is not injective.
By \cref{lem_connectivity_IAAstar}, the complex $\Linkhat_{\IAAstrel}(v)\cong \IAAstrel[n-1][m+1]$ is $(n-2)$-connected. Hence by \cref{it_simplicial_approximation_combinatorial_sphere} of \cref{lem_simplicial_approximation_combinatorial}, there is a combinatorial $(k-\dim(\Delta))$-ball $\tilde{D}$ with $\partial \tilde{D} = \Link_{S}(\Delta)$ and there is a map 
\begin{equation*}
	\tilde{\psi}\colon  \tilde{D} \to \Linkhat_{\IAAstrel}(v)
\end{equation*}
such that $\tilde{\psi}|_{\Link_{S}(\Delta)} = \phi|_{\Link_{S}(\Delta)}$. 
By \cref{IAAst-retraction}, there are then also a combinatorial ball $D$ with $\partial D = \Link_{S}(\Delta)$ and a map 
\begin{equation*}
	\psi \colon  D \to \Linkhat^{<R}_{\IAAstrel}(v)
\end{equation*}
such that $\psi|_{\Link_{S}(\Delta)} = \tilde{\psi}|_{\Link_{S}(\Delta)} =  \phi|_{\Link_{S}(\Delta)}$.
By \cref{lem_boundary_join}, $B\coloneqq \Delta * D$ is a combinatorial $(k+1)$-ball whose boundary can be decomposed as
\begin{equation*}
\partial B = \Star_{S}(\Delta) \cup (\partial \Delta * D).
\end{equation*}
By \cref{cor_boundary_star}, both $\Star_{S}(\Delta)$ and $(\partial \Delta * D)$ are combinatorial $k$-balls and their intersection is given by 
\begin{equation*}
	\partial \Star_{S}(\Delta) = \partial \Delta \ast \Link_S(\Delta) = \partial \Delta * \partial D =  \partial (\partial \Delta * D). 
\end{equation*}

We define a map 
\begin{equation*}
	\Psi\colon  B = \Delta\ast D\to \IAAstrel
\end{equation*}
by letting $\Psi|_{\Delta} = \phi|_{\Delta}$ and $ \Psi|_D = \psi|_D$. As $\Psi$ has image in $\IAAstrel$, it is regular.
Hence by \cref{lem_regular_homotopy_by_balls}, $\phi$ is regularly homotopic to a simplicial map 
\begin{equation*}
	\newproofphi\colon \newproofsphere \to \IAAstrel
\end{equation*}
that is obtained by replacing $\phi|_{\Star_{S}(\Delta)}$ by $\Psi|_{\partial \Delta * D}$.

As $\phi$ and $\Psi$ have image in $(\IAAstrel)^{\leq R}$, so does $\newproofphi$. 
We claim that every bad simplex $\Theta$ of $\newproofphi$ is also a bad simplex of $\phi$. By \cref{lem_replacement_stars_maps}, it suffices to consider $\Theta \subseteq \partial \Delta * D$.
As 
\begin{equation*}
\newproofphi(D) = \psi (D) \subseteq \Linkhat^{<R}_{\IAAstrel}(v),
\end{equation*}
we then have $\Theta\subset \partial \Delta \subseteq \partial\Star_S(\Delta) \subset S$. So $\Theta$ is a bad simplex of $\phi$.
This implies that $\newproofphi$ has one less bad simplex than $\phi$ (namely the simplex $\Delta$ that was removed). 
It also implies that $\newproofphi$ has \hyperref[it_isolation]{Isolation}: If a bad simplex $\Theta$ of $\newproofphi$ is not contained in $\partial \Delta$, then by \cref{lem_replacement_stars_maps}, $\newproofphi|_{\Link_{\newproofsphere}(\Theta)} = \phi|_{\Link_{S}(\Theta)}$, so the condition for \hyperref[it_isolation]{Isolation} is obviously satisfied because $\phi$ has \hyperref[it_isolation]{Isolation}. But if $\Theta\subseteq \partial \Delta$, then 
\begin{equation*}
	\Link_{\newproofsphere}(\Theta)\subseteq \Link_{S}(\Theta) \cup D.
\end{equation*}
As observed above, $\newproofphi(D) \subseteq \Linkhat^{<R}_{\IAAstrel}(v)$, so for every $x \in \Link_{\newproofsphere}(\Theta)$, we have
\begin{equation*}
	\newproofphi (x) \in \phi(\Link_{S}(\Theta)) \cup \newproofphi(D) \subseteq \ls v \rs \cup \Linkhat^{<R}_{\IAAstrel}(v)
\end{equation*}
This is what we need to show for establishing \hyperref[it_isolation]{Isolation}.
\newline

We can therefore assume that $k = n$ and that $\dim(\Delta)=0$, i.e.~$\phi|_{\Delta}$ is injective.
As we assumed $n\geq 2$, this implies that $k>0$ and, as $S$ is a combinatorial $k$-sphere, $\Link_{S}(\Delta)$ is a combinatorial sphere of dimension $k-0-1 = n-1 \geq 1$. To simplify notation, we in what follows identify $\Delta = \ls x \rs$ with the single vertex $x$ it contains.

By \cref{lem_link_of_vertex} and the induction hypothesis, the map
\begin{equation*}
	\phi|_{\Link_{S}(x)}\colon  \Link_{S}(x) \to \Linkhat_{\IAAstrel}(v) \cong \IAAstrel[n-1][m+1]
\end{equation*}
is weakly regularly nullhomotopic in $\Linkhat_{\IAArel}(v) \cong \IAArel[n-1][m+1]$ .
I.e.~there is a combinatorial $n$-ball $\tilde{D}$ such that $\partial \tilde{D} = \Link_{S}(x)$ and there is a weakly regular map 
\begin{equation*}
	\tilde{\psi}\colon  \tilde{D} \to \Linkhat_{\IAArel}(v)
\end{equation*}
that agrees with $\phi|_{\Link_{S}(x)}$ on $\partial \tilde{D}$ (see \cref{def:weak-regularity-in-link} for the definition of weakly regular maps into $\Linkhat_{\IAArel}(v)$).
By \cref{IAA-retraction}, there is also a combinatorial ball $D$ with $\partial D =\partial \tilde{D} = \Link_{S}(x)$ and a weakly regular map
\begin{equation*}
	\psi\colon  D \to \Linkhat^{<R}_{\IAArel}(v)
\end{equation*}
with the property that $\psi|_{\Link_{S}(x)} = \tilde{\psi}|_{\Link_{S}(x)} = \phi|_{\Link_{S}(x)}$.
Define
\begin{equation*}
	B \coloneqq x \ast D.
\end{equation*}

By \cref{lem_boundary_join}, the complex $B$ is a combinatorial $(n+1)$-ball whose boundary can be decomposed as $
\partial B = \Star_{S}(x) \cup D$.
By \cref{cor_boundary_star}, both $\Star_{S}(x)$ and $D$ are combinatorial $k$-balls and their intersection is given by 
\begin{equation*}
	\partial \Star_{S}(x) = \Link_S(\Delta) = \partial D.  
\end{equation*}
As $\psi(D) \subset \Linkhat^{<R}_{\IAArel}(v)$, we can define a simplicial map $\Psi\colon  B\to \IAArel$
by setting $\Psi(x) = \phi(x) = v$ and $ \Psi|_D = \psi|_D$.

If $\Psi$ was regular and mapped $D$ to $\IAAstrel$ -- both of which need not be satisfied a priori -- we could similarly to the situation above use $\Psi$ to remove the bad simplex $\Delta = \ls x \rs$.

In our next step, we find a combinatorial $(n+1)$-ball $B'$ and a new map 
\begin{equation*}
	\Psi'\colon  B' \to \IAArel
\end{equation*}
such that 
\label{induction_step_properties_Dprime}
\begin{itemize}
\item $\partial B' = \Star_{S}(x) \cup D'$, where $D'$ and $\Star_{S}(x)$ are combinatorial $n$-balls and it holds that $\partial D' = \partial \Star_{S}(x) = \Link_{S}(x)$, 
\item $\Psi'$ is regular, 
\item $\Psi'|_{\Star_{S}(x)} = \phi|_{\Star_{S}(x)}$, and 
\item $\Psi'(D') \subset (\IAAstrel)^{<R}$.
\end{itemize}
As described above, we can then use \cref{lem_regular_homotopy_by_balls} to regularly homotope $\phi$ inside $\IAArel$ to another map $\newproofphi\colon  \newproofsphere \to \IAAstrel$, where $\phi|_{\Star_{S}(x)}$ is replaced by $\Psi'|_{D'}$. This procedures removes the bad simplex $\Delta$ and does not introduce new bad simplices because $\Psi'(D') \subset (\IAAstrel)^{<R}$. As $\dim(\Delta) = 0$, no bad simplex other than $\Delta$ intersects $\Star_{S}(\Delta)$. Hence, it is even true that for every bad simplex $\Theta$ of $\newproofphi$, we have $\Link_{S}(\Theta) = \Link_{\newproofsphere}(\Theta)$. It follows that the replacement preserves \hyperref[it_isolation]{Isolation}.
\newline

\begin{figure}
\begin{center}
\includegraphics{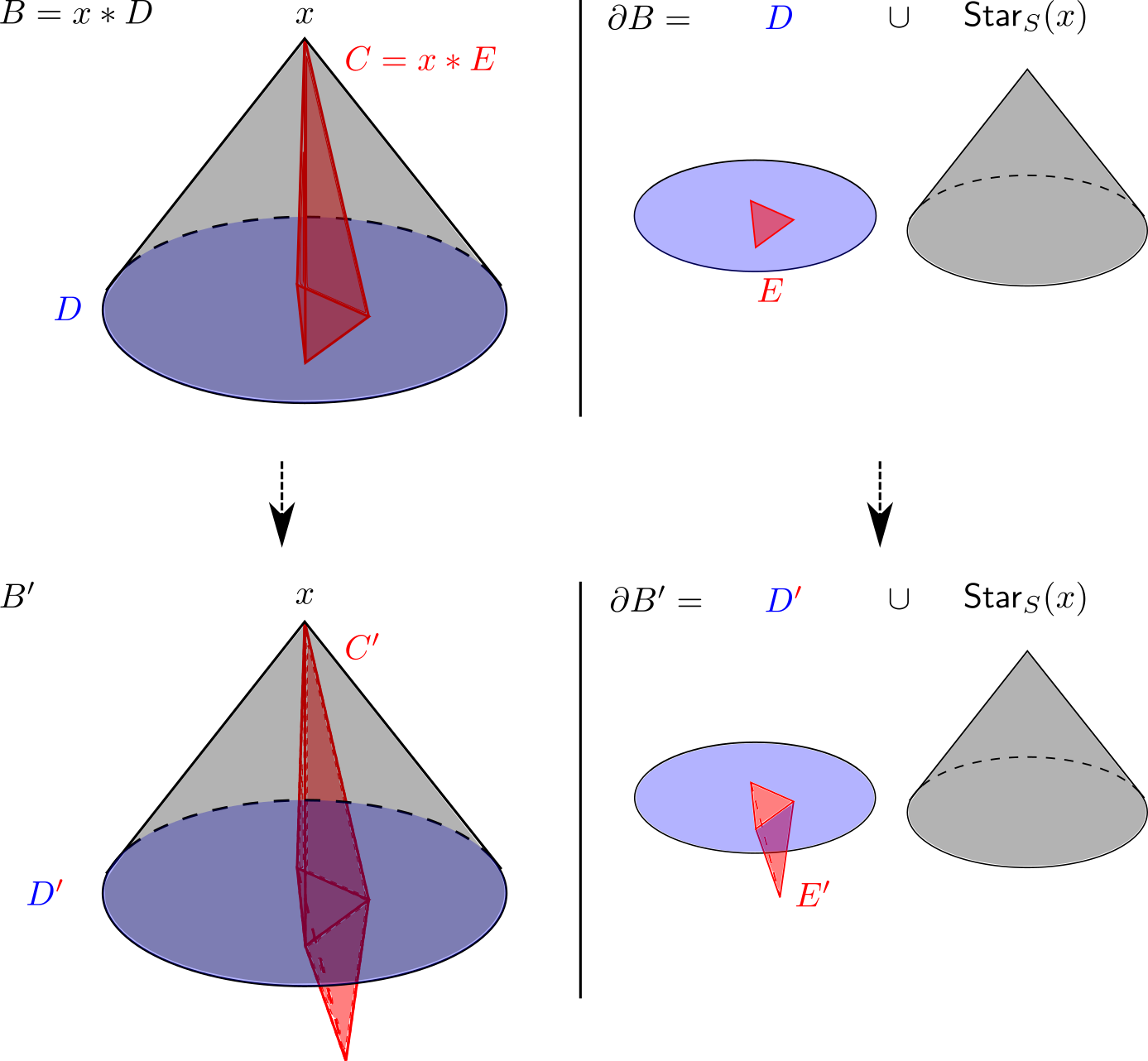}
\end{center}
\caption{An overview of the involved complexes and replacements for obtaining $\Psi'\colon B'\to \IAArel$ from $\Psi\colon B\to \IAArel$.}
\label{figure_induction_step_overview_replacements}
\end{figure}

We start by finding such a map $\Psi'$ for $n=2$. In this case, the domain $B$ of $\Psi$ is a combinatorial 3-ball and we obtain $B'$ by replacing certain $3$-dimensional simplices $C$ in $B$ with combinatorial $3$-balls $C'$. See \cref{figure_induction_step_overview_replacements} for a schematic overview of this process.
All $C$ that we replace are of the form $C = x * E$, where $E$ is a $2$-dimensional simplex in $D$ and $\Psi(E)$ is a simplex of $\IAArel[2] \setminus \IAAstrel[2]$.  We define $\Psi'$ on their replacements $C'$ by sending every vertex  to $(\IAAstrel[2])^{<R}$ and making sure that this defines a simplicial map on every simplex of $C'$. 
We replace a $3$-dimensional simplex $C$ in $\Psi$ whenever it is of one of the following three types:
\begin{enumerate}
\item $\Psi(C)$ is a $\sigma$-additive simplex of the form $\ls v, v_1, w_1, \langle \vec v_1 + \vec w_1 \rangle \rs$, where $\omega(v_1,w_1) = \pm 1$;
\item $\Psi(C)$ is a 2-skew-additive simplex of the form $\ls v, v_1, \langle \vec v_1 + \vec e_i\rangle, w_1 \rs$, where $\omega(v_1,w_1) = \pm 1$ and $1\leq i \leq m$;
\item $\Psi(C)$ is a 2-skew-additive simplex of the form $\ls v, v_1, \langle \vec v_1 + \vec v\rangle, w_1 \rs$, where $\omega(v_1,w_1) = \pm 1$.
\end{enumerate}
The map $\Psi$ is obtained by extending the weakly regular map $\psi$, which has image in a subcomplex of $\Linkhat_{\IAArel[2]}(v) \cong \IAArel[1][m+1]$.
Using the definition of weak regularity (\cref{def_regular_maps}), we see that every simplex $C$ in $B$ such that $\Psi(C)$ is not contained in $\IAAstrel[2]$ is of one of the three types listed above (see \cref{def:IAA-simplices} for the types of simplices in $\IAArel[2] \setminus \IAAstrel[2]$).

Using the notation for $\Psi(C)$ introduced above, we now replace each of these simplices as follows:
\begin{enumerate}
\item \label{it_induction_step_n2_sigma_additive}If the image of $C = x*E$ is $\sigma$-additive of the form $\ls v, v_1, w_1, \langle \vec v_1 + \vec w_1 \rangle \rs$, we define $C'$ by attaching a second 3-dimensional simplex to $C$ along its facet $E$ and name the new vertex $t_C$ as depicted in \cref{fig:induction_step_replacement_sigma_additive}. It is easy to see that $C'$ is a combinatorial $3$-ball.
We extend the map $\Psi'$ to $C'$ by setting $\Psi'(t_C) = w$, where $w$ is chosen such that $\ls \vec v,\vec w,\vec v_1,\vec w_1 \rs $ is a symplectic basis and $\rkfn(w)<R$. To choose such a $w$, first pick any $w'$ such that $\ls v,w',v_1,w_1 \rs $ is a symplectic basis, then take $\vec w = \vec w' + a\vec v$ with appropriate $a \in \Z$ such that $0\leq \rkfn(\vec w) < R$.\footnote{Note that this is the \emph{integral}-valued rank for vectors defined at the beginning of \cref{def_rank_and_ranked_complexes}.} This is possible using the Euclidean algorithm and in particular implies that $\rkfn(w) = |\rkfn(\vec w)| <R$. It is not hard to check that this defines a simplicial map $C'\to \IAArel[2]$, where both maximal simplices of $C'$ get mapped to $\sigma$-additive simplices.
\begin{figure}[h]
\begin{center}
\includegraphics{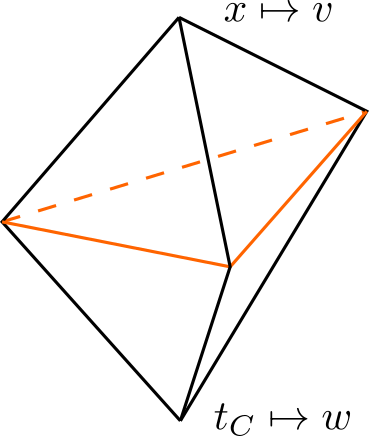}
\end{center}
\caption{A subcomplex $C'$ for the first type $\Psi(C) = \ls v, v_1, w_1, \langle \vec v_1 + \vec w_1 \rangle \rs$. Orange marks edges that are mapped to $\sigma$ simplices.}
    \label{fig:induction_step_replacement_sigma_additive}
\end{figure}

\item If the image of $C = x*E$ is 2-skew-additive of the form $\ls v, v_1, \langle \vec v_1 + \vec e_i\rangle, w_1 \rs$ for some $1\leq i \leq m$, we also attach a second 3-dimensional simplex to $C$ along its facet $E$ and name the new vertex $t_C$. We call the resulting simplicial complex $C''$. Choose a $w$ such that $\ls v,w,v_1,w_1 \rs $ is a symplectic basis and $\rkfn(w)<R$ again. We want to send $t_C$ to $w$ but this would not result in a regular map on $C''$ as it would lead to two 2-skew-additive simplices in the image of $C''$.
We remedy this issue by replacing the interior of $C''$ by three $3$ dimensional simplices that have an edge $\ls x, t_C \rs$ in common as indicated in \autoref{fig:9.5replaceC2}. This simplicial complex shall be $C'$. Again, it is easy to verify that $C'$ is a combinatorial $3$-ball. We can now define $\Psi'(t_C) = w$ as above and in fact this defines a simplicial map $C'\to \IAArel[2]$. It maps the three maximal simplices to two $\sigma^2$ simplices and one mixed simplex.
\begin{figure}[h]
\begin{center}
\includegraphics{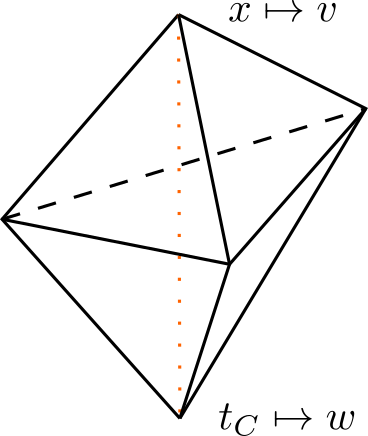}
\end{center}
\caption{A subcomplex $C'$ for the second type $\Psi(C) = \ls v, v_1, \langle \vec v_1 + \vec e_i\rangle, w_1\rs$. Orange marks the unique edge that is mapped to a $\sigma$ simplex.}
    \label{fig:9.5replaceC2}
\end{figure}

\item \label{it_induction_step_n2_2skew_additive}If the image of $C = x*E$ is 2-skew-additive of the form $\ls v, v_1, \langle \vec v_1 + \vec v\rangle, w_1 \rs$, we obtain $C'$ by adding to $C$ two 3-simplices with two new vertices $t_C$ and $s_C$ as depicted in \autoref{fig:9.5replaceC3}. As observed in \cref{lem_prism_combinatorial_manifold}, the prism $C'$ is a combinatorial 3-ball. We define the image of $t_C$ to be $w\in \IAArel[2]$ such that $\ls \vec v, \vec w,\vec v_1,\vec w_1 \rs $ is a symplectic basis with $0\leq \rkfn(\vec w),\rkfn(\vec w_1)<R$ and the image of $s_C$ to be $\langle \vec w - \vec w_1 \rangle \in \IAArel[2]$. This yields a simplicial map $C'\to \IAArel[2]$ whose image is a ``prism'' obtained as the union of two 2-skew-additive simplices and one skew-$\sigma^2$ simplex. Note that as $0\leq \rkfn(\vec w),\rkfn(\vec w_1)<R$, we have $\rkfn(\Psi'(s_C)) = |\rkfn(\vec w - \vec w_1)|<R$.
\begin{figure}[h]
\begin{center}
\includegraphics{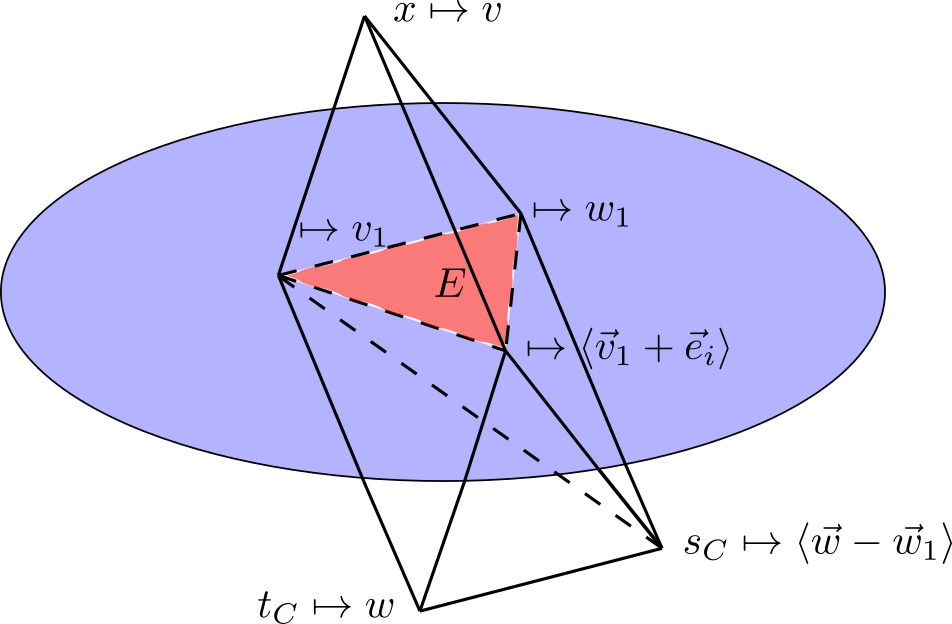}
\end{center}
\caption{A subcomplex $C'$ for the third type $\Psi(C) = \ls v, v_1, \langle \vec v_1 + \vec v\rangle, w_1\rs$.}
    \label{fig:9.5replaceC3}
\end{figure}
\end{enumerate}

Now, as described above and indicated in \cref{figure_induction_step_overview_replacements}, we let $B'$ be the complex obtained from $B$ by replacing the simplices $C$ by the 3-balls $C'$.
On the level of geometric realisations, each such replacement amounts in attaching to $|B|$ another 3-ball (the ``lower part'' of the complexes $C'$ in \cref{fig:induction_step_replacement_sigma_additive}, \cref{fig:9.5replaceC2} or \cref{fig:9.5replaceC3}) along a $2$-ball (the geometric realisation of the simplex $E$). By \cite[Corollary 1.28]{Hudson1969}, the result is again a (topological) $3$-ball. Using \cref{lem_combinatorial_ball_replace_bigger_boundary} and the fact that that the $C'$ are combinatorial $3$-balls, it follows that $B'$ is again a \emph{combinatorial} 3-ball.
Note that all the new $C'$ are chosen such that their boundary is partitioned into $\Star_{\partial C}(x)$ and a combinatorial $2$-ball $E'$. 
It follows that the boundary of $B'$ decomposes into $\Star_{\partial B'}(x) = \Star_{S}(x)$ and a combinatorial $3$-ball $D'$ that is obtained by replacing the simplices $E$ of $D$ by the combinatorial balls $E'$.
That $\Psi'$ is indeed a well-defined simplicial map follows because this is true on each $C'$ discussed above. 
To see that $\Psi'(D') \subseteq (\IAAstrel[2])^{<R}$, it suffices to note that in all of the three cases above, the subcomplex $E'\subset \partial C'$ gets mapped to $(\IAAstrel[2])^{<R}$ -- as mentioned above, all simplices of $D'$ that were already contained in $D$ get mapped to $(\IAAstrel[2])^{<R}$.

Next we need to verify that $\Psi'$ is regular. If $\Sigma$ is a simplex of $B'$ such that $\Psi'(\Sigma)$ is 2-dimensional $\sigma$-additive, then $\Sigma$ must be equal to $E$ for some of the $C$ of the first type described above. In this case, by construction $\Star_{B'}(\Sigma) = C'$ and $\Psi'|_{C'}$ is a $\sigma$-additive cross map.
Similarly, every $\Sigma\subset B'$ mapping to a 3-dimensional $\sigma^2$ simplex is contained in a $C'$ of the second type described above. So for $\sigma^2$-regularity, it suffices to note that $\Psi'|_{C'}$ is injective.
Lastly, every simplex mapping to a skew-additive, 2-skew-additive or skew-$\sigma^2$ simplex of $B'$ is contained in a $C'$ of the third type described above. Here, regularity is fulfilled because $\Psi'|_{C'}$ is a prism cross map.
\newline

We now consider the case $n \geq 3$. Recall that we have a simplicial map $\Psi\colon  B\to \IAArel$, where $B$ is a combinatorial $(n+1)$-ball with $\partial B = D \cup \Star_{S}(x)$, $\psi = \Psi|_D$ and we want to use this to construct a new combinatorial ball $B'$ with a map $\Psi'\colon B'\to \IAArel$ that satisfies the properties listed on \cpageref{induction_step_properties_Dprime}. Similarly to the $n=2$ case, we replace certain combinatorial $(n+1)$-balls $C$ of $B$ with $\partial C = \Star_{\partial C} (x) \cup E$ and $\Psi(E) \not \subseteq \IAAstrel$ by combinatorial balls $C'$ with $\partial C' = \Star_{\partial C'} (x) \cup E'$ and $\Psi'(E') \subseteq (\IAAstrel)^{<R}$ (see \cref{figure_induction_step_overview_replacements} for a schematic overview).

We perform such a replacement whenever there is a subcomplex $\Sigma$ in $B$ such that $\Psi(\Sigma)$ is of one of the following forms
\begin{itemize}
\item a 2-dimensional $\sigma$-additive simplex $\ls v_1, w_1, \langle \vec v_1 + \vec w_1 \rangle \rs$;
\item a 3-dimensional $\sigma^2$ simplex $\ls v_1, w_1, v_2, w_2 \rs$;
\item a 3-dimensional prism with vertex set $\ls v_1, w_1, v_2, w_2, \langle \vec v_1 + \vec v_2 \rangle, \langle \vec w_1 - \vec w_2 \rangle \rs$;
\item a 2-dimensional skew-additive simplex $\ls v_1, \langle \vec v_1 + \vec v\rangle, w_1 \rs$.
\end{itemize}
(In all of the above, the symplectic pairings are as suggested by the notation, i.e.~$\omega(v_1,w_1) = \pm 1$ and $\omega(v_2,w_2) = \pm 1$.)
As $\psi$ maps $x$ to $v$ and $D$ into $\Linkhat_{\IAArel}(v)$, any such $\Sigma$ is necessarily contained in $D$.
Let $\Sigma$ be such a subcomplex, let
\begin{equation*}
E\coloneqq \Star_{D}(\Sigma) \text{ and } C \coloneqq x \ast E \subseteq B.
\end{equation*}
As $\psi\colon  D \to \Linkhat^{<R}_{\IAArel}(v)$ is weakly regular, we have that $\Psi|_{\Star_{D}(\Sigma)} = \psi|_{\Star_{D}(\Sigma)}$ is a cross map of the corresponding type. Let $\ls \vec v_1, \vec w_1, \ldots, \vec v_{n-1}, \vec w_{n-1} \rs$ denote the symplectic basis in its image.

\begin{figure}
\begin{center}
\includegraphics{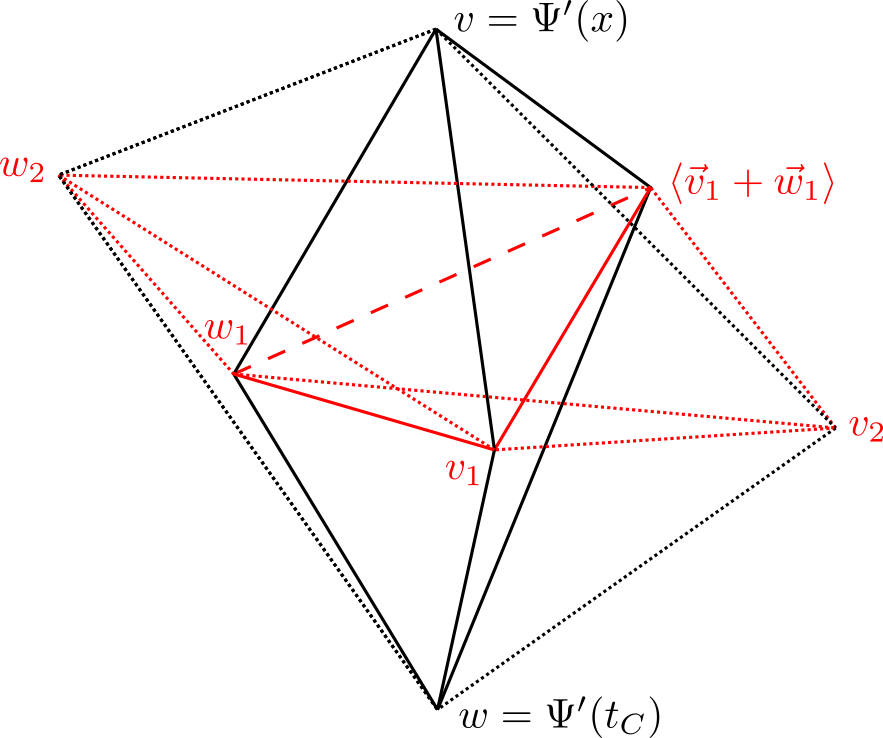}
\end{center}
\caption{The image under $\Psi'$ of the complex $C'$ in the first case for the example of a $\sigma$-additive simplex and $n=3$. $\Psi'|_{C'}$ is an isomorphism, so $C'$ itself has the same form. It is $4$-dimensional, all its edges are depicted in the figure. The red edges are those that are contained in $\Psi'(E)$. The ``upper half'' of the figure, i.e.~the subcomplex spanned by all vertices except $w = \Psi'(t_C)$, is $\Psi'(C) = \Psi(C)$ that we want to replace. It contains in its centre the simplex $\Psi(\Sigma) = \ls v_1, w_1, \langle \vec v_1 + \vec w_1 \rangle \rs$. Note that this is the suspension of the situation described in \cref{fig:induction_step_replacement_sigma_additive}.}
\label{figure_induction_step_ngreater2_sigma_add}
\end{figure}
We distinguish two cases. The first is that $\Psi(\Sigma)$ is a $\sigma$-additive simplex, a $\sigma^2$ simplex or a prism. In this case, we perform a construction similar to the case of $\sigma$-additive simplices for $n=2$ (\cref{it_induction_step_n2_sigma_additive} on \cpageref{it_induction_step_n2_sigma_additive}).
To obtain $C'$ from $C$ in this case, we cone off $E$ by a new vertex $t_C$ (as depicted in \cref{figure_induction_step_ngreater2_sigma_add}). We set $\Psi'(t_C) = w$, where $w\in \IAArel$ is a vertex such that $\ls \vec v, \vec w, \vec v_1, \vec w_1, \ldots, \vec v_{n-1}, \vec w_{n-1} \rs$ is a symplectic basis and $\rkfn(w)<R$. (We obtain such a $w$ just as in the case $n=2$ discussed above.) 
The complex $C'$ is a suspension of $E$ along the points $x$ and $t_C$ and hence a combinatorial ball by \cref{lem_boundary_join}. It is easy to see that $\Psi|_{C'}$ is a cross map of the appropriate type. 
We have $\partial C' = \Star_{\partial C'} (x) \cup E'$, where $E' = t_C \ast \partial E$.
As $\Psi'|_{C'}$ is a cross map, we know by \cref{lem_boundary_cross_map} that $\Psi'(E')$ is contained in $\IAAstrel$. Also by construction, every vertex of $E'$ maps to a line of rank less than $R$, so in fact $\Psi'(E')\subseteq (\IAAstrel)^{<R}$.

The second case is that $\Psi(\Sigma)$ is a skew-additive of the form $\ls v_1, \langle \vec v_1 + \vec v\rangle, w_1 \rs$. 
In other words, $\Psi(x \ast\Sigma)$ is the 2-skew-additive simplex $\ls v, v_1, \langle \vec v_1 + \vec v\rangle, w_1 \rs$.
As $\psi|_{\Star_{D}(\Sigma)}$ is an external 2-skew-additive cross map (see \cref{def:cross-maps}), $\Star_{D}(\Sigma)$ is of the form $\Sigma \ast C_{n-1}$. We extend the 3-simplex $x \ast \Sigma$ as in the corresponding third case for $n=2$ (\cref{it_induction_step_n2_2skew_additive} on page \cpageref{it_induction_step_n2_2skew_additive}): We add two new vertices $t_C$ and $s_C$ as indicated in \cref{fig:9.5replaceC3} and obtain a 3-dimensional prism $\Sigma'$.
We then define
\begin{equation*}
	C' \coloneqq \Sigma' \ast C_{n-1}.
\end{equation*}
Again, this is a combinatorial ball by \cref{lem_boundary_join}.
We set $\Psi'|_{C_{n-1}} = \Psi|_{C_{n-1}}$, let $\Psi'(t_C) = w$ as above and $\Psi'(s_C) = \langle \vec w - \vec w_1 \rangle$, where the sign of $\vec w_1$ is chosen such that $\rkfn(\langle \vec w - \vec w_1 \rangle)<R$ (such a choice exists by the same argument as in \cref{it_induction_step_n2_2skew_additive} on  \cpageref{it_induction_step_n2_2skew_additive}). It is not hard to check that $\Psi'|_{C'}$ is a prism cross map, so as above, it follows that $\Psi'(E')\subseteq (\IAAstrel)^{<R}$. 

Performing these replacements for all\footnote{We can perform these replacements independently of one another as two distinct such subcomplexes $E$ can only intersect in their boundaries by \cref{lem_cross_maps_intersections}. These boundaries are not modified by the replacements.} such $\Sigma$, we obtain a simplicial map $\Psi'\colon B'\to \IAArel$, where $B'$ is a combinatorial\footnote{To see that $B'$ is indeed a combinatorial ball, one again first verifies that $|B'|$ is a topological ball and then uses \cref{lem_combinatorial_ball_replace_bigger_boundary}.} $(n+1)$-ball with $\partial B' =  \Star_{S}(x) \cup D'$ and $\Psi'(D')\subseteq (\IAAstrel)^{<R}$. 
To see that this map is regular, it suffices to note again that every simplex mapping to a $\sigma$-additive, $\sigma^2$, skew-additive, 2-skew-additive or skew-$\sigma^2$ simplex is contained in some of the $C'$ above. Regularity then follows because $\Psi'|_{C'}$ is a cross map. Applying \cref{lem_regular_homotopy_by_balls} as above then finishes the cases $n \geq 3$ and concludes the proof.
\end{proof}

\section{Isolating bad simplices}
\label{sec_normal_form_spheres}
The aim of this section is to prove \cref{prop_isolating-rank-r-vertices}, i.e.~to show that every maximal bad simplex in a map $\phi\colon S^k \to \IAAstrel$ can be isolated.
Throughout this section, we keep the \hyperref[eq_standing_assumption_nmkR]{Standing assumption} from \cref{sec_induction_step}, which give the context in which \cref{prop_isolating-rank-r-vertices} is stated. That is, $n$, $k$, $m$ and $R$ are natural numbers such that $n\geq 2$, $m\geq 0$, $k\leq n$ and $R\geq 1$. 
We keep the convention introduced in \cref{rem_regular_homotopies_notation_combinatorial}.

\subsection{\texorpdfstring{$\sigma$}{\unichar{"03C3}}-regularity}
\label{sec_sigma_regularity}

We need another notion of cross map and associated regularity, similar to the one in \cref{def:cross-maps} and \cref{def_regular_maps}. This is the concept of $\sigma$-regularity that plays an important role in Putman's work \cite{put2009}:

\begin{definition}[{\cite[Definitions 6.16  and 6.17]{put2009}}]
\label{def_sigma_cross_map}
\,

\begin{enumerate}
\item A \emph{$\sigma$ cross map} is a simplicial map $\phi\colon  \Delta^1\ast C_{k-1} \to \IAArel$ with the following property: Let $x_1,y_1 \ldots, x_k,y_k$ be the vertices of $\Delta^1\ast C_{k-1}$. Then there is a symplectic summand of $\mbZ^{2(m+n)}$ with a symplectic basis $\ls \vec v_1, \vec w_1, \ldots, \vec v_k, \vec w_k \rs$ such that $\phi(x_i)= v_i$ and $\phi(y_i) = w_i$ for all $i$. See \cref{figure_cross_polytope_sigma_regular}.
\item Let $M$ be a combinatorial manifold. A simplicial map $\phi\colon  M \to \IAArel$ is called \emph{$\sigma$-regular} if the following holds: If $\Delta$ is simplex of $M$ such that $\phi(\Delta)$ is a minimal (i.e.~1-dimensional) $\sigma$ simplex, then $\phi|_{\Star_M({\Delta})}$ is a $\sigma$ cross map.
\item Let $\phi\colon  S \to\Idelrel$ be a simplicial map from a combinatorial $k$-sphere $S$. We say that $\phi$ is \emph{$\sigma$-regularly nullhomotopic (in $\Isigdelrel$)} if there is a combinatorial ball $B$ with $\partial B = S$ and a regular map $\Psi\colon  B \to \Isigdelrel$ such that $\Psi|_{S} = \phi$.
\end{enumerate}
\end{definition}

\begin{figure}
\begin{center}
\includegraphics{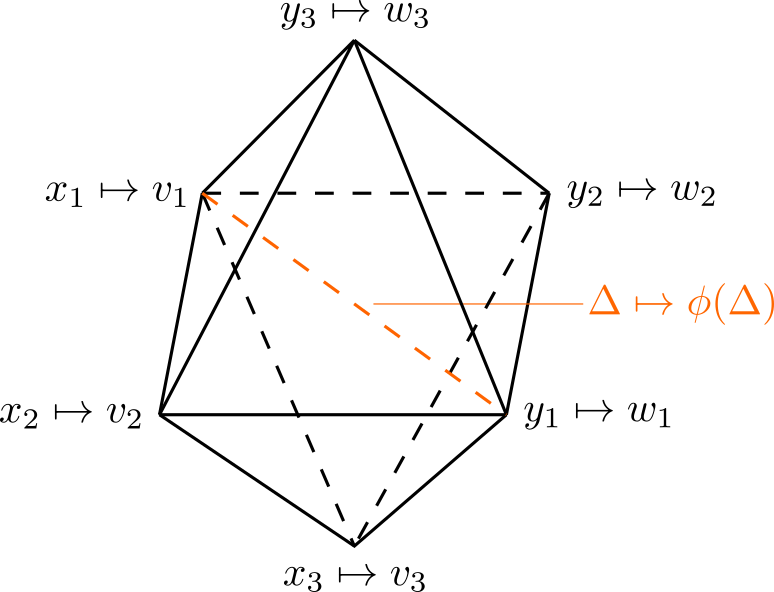}
\end{center}
\caption{The domain $C$ of a $\sigma$ cross map $\phi$ for $k=3$. The orange edge is the unique edge $\Delta$ mapping to a $\sigma$ edge.}
\label{figure_cross_polytope_sigma_regular}
\end{figure}

The notion of a $\sigma$ cross map is closely related to the one of a $\sigma^2$ cross map:
\begin{remark}
\label{ex_sigma_to_sigma2_regular}
Let $\phi\colon  C \to \IAAstrel$ be a $\sigma$ cross map and $D^1$ the 1-ball with its standard simplicial structure consisting of two vertices and one edge. Let $\ls v_{k+1}, w_{k+1} \rs$ be a $\sigma$ simplex such that $\ls \vec v_1, \vec w_1, \ldots, \vec v_k, \vec w_k , \vec v_{k+1}, \vec w_{k+1}\rs$ is a partial symplectic basis of $\mbZ^{m+n}$.
Then we obtain a $\sigma^2$ cross map
\begin{equation*}
	\Phi\colon  C \ast D^1 \to \IAArel
\end{equation*}
by setting $\Phi|_{C} = \phi$ and $\Phi(D^1) = \ls v_{k+1}, w_{k+1} \rs$. 
\end{remark}

We now set up some notation and elementary observations about $\sigma$-regular maps that is used in this section.
Let $\phi\colon S \to \IAArel$ be $\sigma$-regular and $\Delta$ a simplex of $S$ such that $\phi(\Delta)$ is a $\sigma$ edge. Then by definition, the restriction of $\phi$ to $C \coloneqq \Star_{S}(\Delta)$ is a $\sigma$ cross map $C\to \IAAstrel$.
We denote the vertices of $C$ by $x_1, y_1, \ldots, x_k,y_k$ and their images by $v_1, w_1, \ldots, v_k, w_k$, where for all $i$, $\ls v_i,w_i \rs$ is a symplectic pair, i.e.~$\omega(v_i, w_i)=\pm 1$. We always assume that $\ls v_1,w_1 \rs = \phi(\Delta)$ is the (unique) $\sigma$-edge contained in $\phi(C)$. 
Note that $C$ is a $k$-ball that is the union of $2^{k-1}$ simplices of dimension $k$, each of which gets mapped by $\phi$ to 
\begin{equation*}
	\ls v_1, w_1 \rs \cup \ls v_i \mid i\in I\rs \cup \ls w_j \mid j \in J \rs,
\end{equation*} 
for some disjoint, possibly empty, sets $I,J$ such that $I\cup J = \ls 2, \ldots, k \rs$. Its boundary $\partial C$ is given by all those $(k-1)$ simplices whose image does not contain both $v_1$ and $w_1$, i.e.~is of the form
\begin{equation*}
	\ls v_1 \rs \cup \ls v_i \mid i\in I\rs \cup \ls w_j \mid j \in J \rs \text{ or } \ls w_1 \rs \cup \ls v_i \mid i\in I\rs \cup \ls w_j \mid j \in J \rs.
\end{equation*}

As mentioned above, we always assume that the $\sigma$ edge in $\phi(C)$ is given by $\ls v_1, w_1 \rs$. However, as we are only interested in $\phi$ up to regular homotopy, this does not matter too much because there is the following ``flip'' that allows us to exchange $\ls v_1, w_1 \rs$ with any other symplectic pair in $\phi(C)$:
Let $2\leq i \leq k$ and let $C'$ be the simplicial complex obtained from $C$ by ``replacing the edge $\ls x_1, y_1 \rs$ with the edge $\ls x_i, y_i \rs$''. That is, $C'$ is the $k$-ball defined as follows: Let $\Delta_i$ be the 1-simplex with vertices $x_i, y_i$ and 
\begin{equation*}
	C_{k-2} \coloneqq \ast_{j\in\ls 1,\ldots, k \rs \setminus \ls 1,i \rs} \partial \ls x_j, y_j\rs \cong S^{k-3},
\end{equation*}
where $\partial \ls x_j, y_j\rs$ is a copy of $S^0$ with vertices $x_j$, $y_j$.
We define $C'$ as the join
\begin{equation*}
	C' = (\partial \Delta) \ast \Delta_i \ast C_{k-2}.
\end{equation*}
There is an obviously $\sigma$-regular map $C'\to \IAAstrel$ that agrees with $\phi|_C$ on the vertex set of $C$ (which is also the vertex set of $C'$). Furthermore, the following \cref{lem_flip} shows that there is a regular homotopy that allows us to replace $\phi|_C$ by this map, see also \cref{figure_sigma_flip}.
\begin{figure}
\begin{center}
\includegraphics{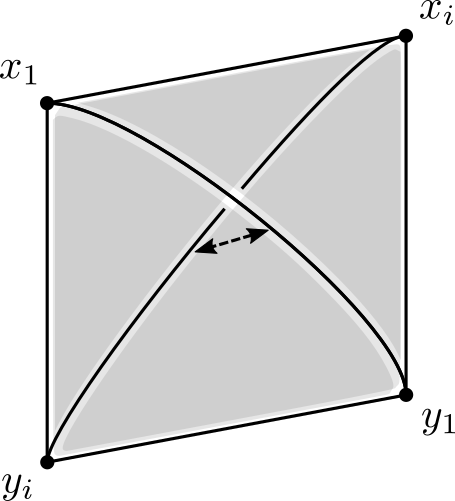}
\end{center}
\caption{A 3-simplex $\ls x_1, y_1, x_i,y_i \rs = \Delta \ast \Delta_i$. The map $\Psi$ in \cref{lem_flip} sends it to a $\sigma^2$ simplex. The arrow indicates the ``flip'' performed by the corresponding homotopy.}
\label{figure_sigma_flip}
\end{figure}
\begin{lemma}
\label{lem_flip}
There are a combinatorial $(k+1)$-ball $B$ with vertices $x_1$, $y_1$,$\ldots$, $x_k$, $y_k$ and a regular map $\Psi\colon  B\to \IAArel$ that agrees with $\phi$ on $x_1, y_1, \ldots, x_k, y_k $ and such that $\partial B  = C \cup C'$.
\end{lemma}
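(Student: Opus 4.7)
The plan is to realise $B$ as the join that ``thickens'' the cross polytope $C$ by replacing its central edge $\Delta = \{x_1,y_1\}$ with the $3$-simplex $\Delta \ast \Delta_i$ on the four vertices $x_1,y_1,x_i,y_i$, and to recognise the resulting map as a $\sigma^2$ cross map in the sense of \cref{def:cross-maps}. Intuitively, this is the same phenomenon as in \cref{ex_sigma_to_sigma2_regular}: joining a $\sigma$ edge to a $\sigma$ cross map turns it into a $\sigma^2$ cross map, and the two ``halves'' of the boundary of the $\sigma^2$ domain are precisely $C$ and $C'$.

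Concretely, I would first rewrite $C$ in a form compatible with $C'$. By $\sigma$-regularity, $C = \Delta \ast \Link_S(\Delta)$ with $\Link_S(\Delta) = \partial\Delta_i \ast C_{k-2}$, where $C_{k-2} = \ast_{j \in \{2,\dots,k\}\setminus\{i\}} \partial\{x_j,y_j\}$; by construction, $C' = \partial\Delta \ast \Delta_i \ast C_{k-2}$. I then define
\[
B \coloneqq (\Delta \ast \Delta_i) \ast C_{k-2}.
\]
Since $\Delta \ast \Delta_i$ is a combinatorial $3$-ball and $C_{k-2}$ is a combinatorial $(k-3)$-sphere by \cref{rem_cross_polytope_combinatorial}, \cref{lem_boundary_join} shows that $B$ is a combinatorial $(k+1)$-ball and, applying the boundary formula from the same lemma,
\[
\partial B \;=\; \partial(\Delta \ast \Delta_i) \ast C_{k-2} \;=\; \bigl((\partial\Delta \ast \Delta_i) \cup (\Delta \ast \partial\Delta_i)\bigr) \ast C_{k-2} \;=\; C' \cup C,
\]
as required.

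Next, I would define $\Psi \colon B \to \IAArel$ by $\Psi(x_j) = v_j$, $\Psi(y_j) = w_j$ and extend simplicially. The only well-definedness check not already ensured by $\phi$ being a $\sigma$ cross map is that the $3$-simplex $\Delta \ast \Delta_i$ has simplex image: but $\{v_1,w_1,v_i,w_i\}$ consists of two disjoint symplectic pairs drawn from a common symplectic basis, so it is a minimal $\sigma^2$ simplex by \cref{def:IAA-simplices}. Every other top-dimensional simplex of $B$ is obtained from $\Delta \ast \Delta_i$ by joining further symplectic pairs from $C_{k-2}$, hence also maps to a $\sigma^2$ simplex. Thus $\Psi$ is exactly a $\sigma^2$ cross map with underlying symplectic basis $\{\vec v_1,\vec w_1,\vec v_i,\vec w_i,\vec v_2,\vec w_2,\dots\}$, where the first two pairs play the role of the two $\Delta^1$ factors in \cref{def:cross-maps}.

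Finally, I would verify regularity of $\Psi$. The only minimal $\sigma^2$ simplex in the image is $\Psi(\Delta \ast \Delta_i)$, and $\Star_B(\Delta \ast \Delta_i) = B$, so the $\sigma^2$-regularity condition in \cref{def_regular_maps} is satisfied tautologically by the cross map $\Psi$ itself. The image of $\Psi$ contains no skew-additive, $2$-skew-additive, skew-$\sigma^2$, or $\sigma$-additive simplices, so prism- and $\sigma$-additive-regularity hold vacuously. I do not expect a serious obstacle: the entire argument is bookkeeping around the boundary decomposition $\partial(\Delta \ast \Delta_i) = (\partial\Delta \ast \Delta_i) \cup (\Delta \ast \partial\Delta_i)$ and the observation that disjoint symplectic pairs assemble into a $\sigma^2$ simplex. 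The only point requiring mild attention is the degenerate case $k=2$, where $C_{k-2} = C_0 = \emptyset$ and $B$ reduces to the tetrahedron $\Delta \ast \Delta_i$; here the boundary decomposition $\partial B = C \cup C'$ still holds and $\Psi|_B$ is a $\sigma^2$ cross map on its minimal domain.
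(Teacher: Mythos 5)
Your proposal is correct and follows essentially the same route as the paper: both define $B = \Delta \ast \Delta_i \ast C_{k-2}$, identify $\Psi$ as a $\sigma^2$ cross map (hence regular), and read off $\partial B = C \cup C'$ from the boundary-of-join formula in \cref{lem_boundary_join}. One cosmetic caveat: the maximal simplices of $C_{k-2}$ contain only one vertex from each remaining pair $\{x_j,y_j\}$, so the image of a top simplex is the minimal $\sigma^2$ core together with one line per pair (not "further symplectic pairs"), which is exactly why each image is a $\sigma^2$ simplex.
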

\begin{proof}
We define $B$ as the join
\begin{equation*}
	B = \Delta \ast \Delta_i \ast C_{k-2}.
\end{equation*}
So $B$ has the same vertices as $C$ and $C'$, but ``an additional edge between $x_i$ and $y_i$''; that is $B \cong D^1 \ast D^1 \ast S^{k-3}$, whereas $C \cong D^1 \ast S^0 \ast S^{k-3}$ and $C' \cong S^0 \ast D^1 \ast S^{k-3}$.
Define $\Psi\colon  B \to \IAArel$ to be the map that agrees with $\phi$ on the vertices of $B$ (which are also the vertices of $C$). Using the description of $\phi(C)$ at the beginning of this subsection, the image $\Psi(B)$ is a union of $2^{k-2}$-many $\sigma^2$ simplices of dimension $(k+1)$.
The map $\Psi$ is a $\sigma^2$ cross map and in particular regular. 
By \cref{lem_boundary_join},
\begin{equation*}
	\partial B = (\partial \Delta \ast \Delta_i \ast C_{k-2}) \cup (\Delta \ast \partial \Delta_i \ast C_{k-2}) = C' \cup C,
\end{equation*}
so we have the claimed decomposition of the boundary of $B$.
\end{proof}

This depicts how regular maps arise as homotopies between $\sigma$-regular maps. With the intuition given in \cref{rem_polyhedral_complexes}, the cross maps $\Psi|_C \to \IAArel$ and $\Psi|_{C'} \to \IAArel$ can be thought of as two polyhedral cells (in the shape of cross polytopes). Together, they form the boundary of the higher-dimensional polyhedral cell $\Psi\colon B \to \IAArel$ that yields a homotopy between them.

\subsection{Establishing \texorpdfstring{$\sigma$}{\unichar{"03C3}}-regularity}
\label{sec_sigma_regularity_obtained}
We keep the \hyperref[eq_standing_assumption_nmkR]{Standing assumption} that $n\geq 2$, $m\geq 0$, $k\leq n$ and $R\geq 1$.
In this first step to proving \cref{prop_isolating-rank-r-vertices}, we show that every map  $\phi\colon  S^k \to \IAAstrel$ is regularly homotopic to a $\sigma$-regular map.
\begin{lemma}
\label{lem_sigma_regular_can_be_assumed}
Let $S$ be a combinatorial $k$-sphere and $\phi\colon  S \to (\IAAstrel)^{\leq R}$ a simplicial map. Then $\phi$ is in $\IAArel$ regularly homotopic to a $\sigma$-regular map $\newlemmaphi\colon \newlemmasphere\to (\IAAstrel)^{\leq R}$.
\end{lemma}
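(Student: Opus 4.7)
The plan is to iteratively eliminate failures of $\sigma$-regularity via local regular homotopies in $\IAArel$, working one bad $\sigma$-edge at a time. Call a $1$-simplex $\Delta$ of $S$ a \emph{bad $\sigma$-edge} if $\phi(\Delta)$ is a minimal $\sigma$ edge of $\IAAstrel$ but $\phi|_{\Star_S(\Delta)}$ is not a $\sigma$ cross map. At each step the aim is not to turn a bad $\sigma$-edge into a good one, but rather to \emph{remove} $\Delta$ from $S$ entirely by replacing its star with a cone whose image contains no $\sigma$ edges. This is possible because every vertex $u$ of $\Link_{\IAAstrel}(\{v,w\})$ is orthogonal to both $v$ and $w$: the simplex $\{v,w,u\}$ can only be of type $\sigma$ in $\IAAstrel$ (it contains the non-orthogonal pair $\{v,w\}$ and has dimension two, ruling out the higher-dimensional types in \cref{def:IAA-simplices}), which forces $u$ isotropic to both.

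Fix a bad $\sigma$-edge $\Delta = \{x,y\}$ of $S$ with $\phi(\Delta) = \{v,w\}$. The link $\Link_S(\Delta)$ is a combinatorial $(k-2)$-sphere whose image lies in $\Link_{\IAAstrel}(\{v,w\})$. By \cref{lem_isom_link_for_making_regular} the rank-bounded link $(\Link_{\IAAstrel}(\{v,w\}))^{\leq R}$ is isomorphic to $(\Idelrel[n-1])^{\leq R'}$ for some $R'$, which is $(n-2)$-connected by \cref{lem_connectivity_Idelrel}. Since $k-2 \leq n-2$, the map $\phi|_{\Link_S(\Delta)}$ is nullhomotopic in this link; so there exist a combinatorial $(k-1)$-ball $D$ with $\partial D = \Link_S(\Delta)$ and a simplicial extension $\psi\colon D \to (\Link_{\IAAstrel}(\{v,w\}))^{\leq R}$ of $\phi|_{\Link_S(\Delta)}$, where the rank bound is enforced using \cref{IAAst-retraction}.

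Form the combinatorial $(k+1)$-ball $B = \Delta \ast D$, whose boundary decomposes as $\partial B = \Star_S(\Delta) \cup (\partial \Delta \ast D)$ with intersection $\partial \Delta \ast \Link_S(\Delta)$, and define $\Psi\colon B \to \IAArel$ as the join of $\phi|_\Delta$ with $\psi$. The image of $\Psi$ lies in $\IAAstrel$ and contains no $\sigma^2$, skew-additive, 2-skew-additive, skew-$\sigma^2$, or $\sigma$-additive simplices, so $\Psi$ is automatically regular in the sense of \cref{def_regular_maps}. Applying \cref{lem_regular_homotopy_by_balls} with $D_1 = \Star_S(\Delta)$ and $D_2 = \partial \Delta \ast D$ yields a regularly homotopic map $\phi'\colon S' \to (\IAAstrel)^{\leq R}$ in which the edge $\Delta$ no longer appears. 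The new edges of $S'$ incident to $x$ or $y$ are of the form $\{x,z\}$ or $\{y,z\}$ for $z$ a vertex of $D$, and map under $\phi'$ to $\{v,\psi(z)\}$ or $\{w,\psi(z)\}$ with $\psi(z)$ orthogonal to both $v$ and $w$ by the first paragraph; these are therefore standard or $2$-additive simplices, never $\sigma$ edges.

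The main obstacle is to guarantee that the iteration terminates without reintroducing bad $\sigma$-edges. The previous paragraph shows no new $\sigma$ edges appear. Moreover, if $\Delta'$ were another bad $\sigma$-edge of $\phi$ meeting $\Star_S(\Delta)$, then it would share a vertex (say $x$) with $\Delta$ and have its other endpoint $z$ in $\Link_S(\Delta)$; then $\phi(z)$ would be orthogonal to $v = \phi(x)$, contradicting that $\{v,\phi(z)\} = \phi(\Delta')$ is a $\sigma$ edge. Hence the other bad $\sigma$-edges of $\phi$ persist unchanged in $\phi'$, and the total count strictly decreases at each step. After finitely many iterations one obtains a $\sigma$-regular simplicial map $\newlemmaphi\colon \newlemmasphere \to (\IAAstrel)^{\leq R}$ regularly homotopic in $\IAArel$ to $\phi$, as required.
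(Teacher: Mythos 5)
Your filling step rests on a connectivity count that is off by one, and this breaks the argument exactly in the case the lemma is really needed. By \cref{lem_isom_link_for_making_regular}, the rank-bounded link of the $\sigma$ edge is isomorphic to $(\Idelrel[n-1])^{\leq R'}$ — the genus drops by one because $\phi(\Delta)$ spans a symplectic summand — and \cref{lem_connectivity_Idelrel} applied with rank parameter $n-1$ gives only $(n-3)$-connectivity, not the $(n-2)$-connectivity you claim. The sphere $\Link_S(\Delta)$ has dimension $k-2$, which equals $n-2$ when $k=n$, so in the top-dimensional case (with $\Delta$ an edge on which $\phi$ is injective) there is in general no filling of $\phi|_{\Link_S(\Delta)}$ inside $(\Link_{\IAAstrel}(\phi(\Delta)))^{\leq R}$, and your cone $B=\Delta\ast D$ cannot be constructed. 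This is precisely why the paper splits into two cases: when $\phi|_\Delta$ is non-injective the link sphere has dimension at most $n-3$ and your coning argument works verbatim, but when $\phi|_\Delta$ is injective one must invoke Putman's result (\cref{thm_andy_fourth_conclusion}) to fill the sphere $\sigma$-regularly in the larger complex $(\Isigdelrel[n-1])^{\leq R'}$, whose $\sigma$ simplices correspond under \cref{lem_isom_link_for_making_regular} to $\sigma^2$ simplices of $\IAArel$; the resulting homotopy genuinely leaves $\IAAstrel$ but is regular (cf.\ \cref{ex_sigma_to_sigma2_regular}), and one checks that the new simplices mapping to $\sigma$ edges inherit $\sigma$ cross map stars from the $\sigma$-regular filling. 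Your plan of removing $\sigma$ edges entirely so that no cross-map condition ever needs to be verified cannot be salvaged by a better connectivity bound: the obstruction is real, since $\Idelrel[n-1]$ is not $(n-2)$-connected.

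Two smaller gaps: first, your assertion that $\phi(\Link_S(\Delta))\subseteq\Link_{\IAAstrel}(\{v,w\})$ fails if some vertex of $\Link_S(\Delta)$ maps into $\{v,w\}$, i.e.\ if a higher-dimensional simplex of $S$ collapses onto the $\sigma$ edge; the paper avoids this by choosing $\Delta$ \emph{maximal} among all simplices whose image is a $\sigma$ edge and whose star is not a $\sigma$ cross map, whereas you only ever consider $1$-simplices. Second, and relatedly, $\sigma$-regularity also fails at such non-injective simplices (a $\sigma$ cross map is an isomorphism onto its image), so these must be removed as well; your procedure never addresses them directly, and their treatment is the (easier) first case of the paper's proof.
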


Our proof of \cref{lem_sigma_regular_can_be_assumed} relies on a result of Putman. 
In \cite[Proposition 6.13.4]{put2009}, he shows that the inclusion $\Idelrel \hookrightarrow \Isigdelrel$ induces a trivial map on $\pi_k$ for all $k\leq n-1$. In fact, his proof shows more, namely:

\begin{theorem}[{\cite[Proof of Proposition 6.13, fourth conclusion]{put2009}}]
\label{thm_andy_fourth_conclusion}
Let $0\leq k \leq n-1$ and $R\in \mbN \cup \ls\infty\rs$. Let $S$ be a combinatorial $k$-sphere and $\phi\colon  S \to (\Idelrel)^{\leq R}$ a simplicial map. Then $\phi$ is $\sigma$-regularly nullhomotopic in $(\Isigdelrel)^{\leq R}$. 
\end{theorem}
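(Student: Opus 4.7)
The plan is to prove the statement by double induction on the pair $(n, R)$, with the inner loop on $R$. For the base case $R = 0$, the image of $\phi$ lies in $\Idelrel(W)$, which is $(n-1)$-connected by \cref{it_idelrelw_conn} of \cref{connectivity_subcomplex_W}. Since $k \leq n-1$, \cref{it_simplicial_approximation_combinatorial_ball} of \cref{lem_simplicial_approximation_combinatorial} provides a nullhomotopy $\Psi\colon B \to \Idelrel(W) \subseteq (\Isigdelrel)^{\leq 0}$ on a combinatorial $(k+1)$-ball $B$; as $\Idelrel(W)$ contains no $\sigma$ simplices, $\Psi$ is automatically $\sigma$-regular.

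For the inductive step, the strategy is to $\sigma$-regularly homotope $\phi$ inside $(\Isigdelrel)^{\leq R}$ to a map $\phi'\colon S' \to (\Idelrel)^{\leq R-1}$; the inner inductive hypothesis then supplies a $\sigma$-regular nullhomotopy of $\phi'$ in $(\Isigdelrel)^{\leq R-1}$, and concatenation yields the desired filling. The reduction proceeds in two stages. First, an isolation normal-form analogous to \cref{prop_isolating-rank-r-vertices} (but considerably simpler, since the image of $\phi$ carries no $\sigma$ simplices to begin with) can be established so that every maximal simplex $\Delta \subseteq S$ with $\phi(\Delta) = \{v\}$ and $\rkfn(v) = R$ satisfies $\phi(\Link_S(\Delta)) \subseteq \Linkhat_{\Isigdelrel}^{<R}(v)$.

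Second, fix such a maximal bad $\Delta$. Extend $\{\vec e_1, \dots, \vec e_m, \vec v\}$ to a symplectic basis via \cref{lem_extend_to_symplectic_basis}; the Euclidean algorithm (as in the construction of \cref{def:I-retraction}) yields a symplectic partner $\vec w$ of $\vec v$ with $\omega(\vec e_{m+n}, \vec w) = 0$, so $\{v, w\}$ is a $\sigma$ edge with $\rkfn(w) = 0 < R$. The link $L = \Link_S(\Delta)$ is a combinatorial $(k - \dim\Delta - 1)$-sphere whose image lies in $\Linkhat_{\Isigdelrel}^{<R}(v)$. By the evident analogue of \cref{lem_link_of_vertex} for $\Isigdelrel$, one has $\Linkhat_{\Isigdelrel}(v) \cong \Isigdelrel[n-1][m+1]$, and tracing the rank filtration through this identification (as in \cref{lem_isom_link_for_making_regular}) realises $\Linkhat_{\Isigdelrel}^{<R}(v)$ as some $(\Isigdelrel[n-1][m+1])^{\leq R'}$. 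Since $\dim L \leq k - 1 \leq n - 2 = (n-1) - 1$, the outer inductive hypothesis at rank $n-1$ supplies a $\sigma$-regular nullhomotopy $\Psi_L\colon B_L \to \Linkhat_{\Isigdelrel}^{<R}(v)$ of $\phi|_L$. The complex $\Star_S(\Delta) \cup (\{w\} \ast B_L)$ is then the boundary of a combinatorial $(k+1)$-ball on which the evident extension of $\phi$ and $\Psi_L$ defines a $\sigma$-regular map whose restriction to the star of the new $\sigma$ edge $\{v, w\}$ is a $\sigma$ cross map. Applying \cref{lem_regular_homotopy_by_balls} replaces $\phi|_{\Star_S(\Delta)}$ by the part containing $w$, which pushes $v$ to a rank-$0$ vertex and eliminates one maximal bad simplex. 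Iterating produces $\phi'\colon S' \to (\Idelrel)^{\leq R-1}$.

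The principal obstacle lies in the bookkeeping around the isolation and replacement procedures: distinct $\sigma$ cross maps introduced during the iteration must interact only along their boundary spheres (compare \cref{lem_cross_maps_intersections}), and the isolation property must be maintained after each replacement so that the next bad simplex remains amenable to the same treatment. This requires carefully ordering the replacements and choosing the partners $\vec w$ coherently in the symplectic complement $\{v\}^\perp$ with respect to the already-extended basis, but introduces no new ideas beyond those developed in Sections \ref{sec:regular-maps}--\ref{sec:thm_connectivity_IAA} and already present, modulo the corrections in \cite{bruecksroka2023}, in Putman's original argument.
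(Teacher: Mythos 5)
The paper does not give its own proof of this statement; it is quoted directly from Putman (with the paper noting separately that Putman's Proposition 6.13 had small gaps, fixed in \cite{bruecksroka2023}). Your overall double-induction outline is a reasonable reconstruction of the framework, and the base case, the isolation step for maps landing in $\Idelrel$ (which carry no $\sigma$ simplices, so that step is genuinely simpler and non-circular), and the translation of links via the analogue of \cref{lem_isom_link_for_making_regular} are all fine.

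The gap is in the final cut-out step. Once $\Delta = \{x\}$ with $\phi(x) = v$ is a maximal bad vertex and $L = \Link_S(x)$ has image in $\Linkhat^{<R}_{\Isigdelrel}(v)$, the filling $\psi_L \colon B_L \to \Linkhat^{<R}_{\Isigdelrel}(v)$ produced by the outer induction is only $\sigma$-regular, not $\sigma$-free. This is unavoidable precisely in the critical case $k = n-1$: there $L$ is an $(n-2)$-sphere, while $\Linkhat^{<R}_{\Idelrel}(v) \cong (\Idelrel[n-1][m+1])^{\leq R'}$ is only $(n-3)$-connected by \cref{lem_connectivity_Idelrel}, so $B_L$ must contain simplices mapping to $\sigma$ edges. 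But then the naive cone $B = \{x\} \ast B_L$ with $\Psi(x) = v$ fails to be $\sigma$-regular: if $\Theta' \subset B_L$ maps to a $\sigma$ edge, then $\Star_B(\Theta') = \{x\} \ast \Star_{B_L}(\Theta')$ has an \emph{odd} number of vertices whose image consists of $v$ together with a symplectic basis orthogonal to $v$, and this can never be the image of a $\sigma$ cross map (\cref{def_sigma_cross_map}). Your attempt to repair this by introducing a partner $\vec w$ does not dimension-match: $\{w\} \ast B_L$ is a $(k+1)$-ball, not a $k$-ball, so $\Star_S(\Delta) \cup (\{w\} \ast B_L)$ is not the boundary of a combinatorial $(k+1)$-ball, and in any case there is no edge in that complex joining the preimage of $v$ to that of $w$. (A minor additional slip: the Euclidean algorithm gives a partner with $\rkfn(\vec w) < R$, not $\rkfn(\vec w) = 0$ in general.) The device the present paper uses to fix exactly this difficulty in the $\IAA$ setting — $\sigma$-additive, $\sigma^2$, and prism cross maps (see the second and third cases in the proof of \cref{prop_cut_out_bad_vertices} and \cref{lem_avoid_edgy_crosspolytopes}) — is unavailable in $\Isigdelrel$, which has no such augmentation types. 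So the claim that the argument introduces ``no new ideas beyond those developed in Sections \ref{sec:regular-maps}--\ref{sec:thm_connectivity_IAA}'' is not correct; a genuinely different construction is required (and is in Putman's original argument), and your sketch does not supply it.
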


From this, we obtain $\sigma$-regularity using a ``cut-out'' argument very similar to the one in the proof \cref{prop_cut_out_bad_vertices}:

\begin{proof}[Proof of \cref{lem_sigma_regular_can_be_assumed}]
Call a simplex $\Delta$ of $S$ \emph{non-regular  in $\phi$}  if $\phi(\Delta)$ is a $\sigma$ edge and $\phi|_{\Star_{S}(\Delta)}$ is not a $\sigma$ cross map.

Assume that there are non-regular simplices in $\phi$ and let $\Delta$ be one that is maximal with respect to inclusion among all non-regular simplices.
We will show how to homotope $\phi$ to a map $\newproofphi$ that has one less non-regular simplex. Iterating this procedure then leads to a map without non-regular simplices, which proves the claim.
As $S$ is a combinatorial $k$-sphere, the link $\Link_{S}(\Delta)$ is a combinatorial $(k-\dim(\Delta)-1)$-sphere.
As $\phi$ is simplicial and $\Delta$ is maximal among non-regular simplices, we have $\phi(\Link_{S}(\Delta))\subseteq (\Link_{\IAAstrel}(\phi(\Delta)))^{\leq R}$. By \cref{lem_isom_link_for_making_regular}, the latter is isomorphic to $(\Idelrel[n-1][m])^{\leq R'}$ for some $R'\in \mbN \cup \ls \infty \rs$.
\newline

We consider two cases: If $\phi|_\Delta$ is not injective, then $\dim(\Delta)\geq 2$, so $\Link_{S}(\Delta)$ is a combinatorial sphere of dimension $ k-\dim(\Delta)-1 \leq n-3$. By \cref{lem_connectivity_Idelrel}, the complex $(\Link_{\IAAstrel}(\phi(\Delta)))^{\leq R}\cong (\Idelrel[n-1][m])^{\leq R'}$ is $(n-3)$-connected. Hence by \cref{lem_simplicial_approximation_combinatorial}, there are a combinatorial $(k-\dim(\Delta))$-ball $D$ with $\partial D = \Link_{S}(\Delta)$ and a map 
\begin{equation*}
	\psi\colon  D \to (\Link_{\IAAstrel}(\phi(\Delta)))^{\leq R}
\end{equation*}
such that $\psi|_{\Link_{S}(\Delta)} = \phi|_{\Link_{S}(\Delta)}$.
By \cref{lem_boundary_join}, the complex $B\coloneqq \Delta * D$ is a combinatorial $(k+1)$-ball  whose boundary can be decomposed as
\begin{equation*}
\partial B = \Star_{S}(\Delta)  \cup (\partial \Delta * D) .
\end{equation*}
By \cref{cor_boundary_star}, both $\Star_{S}(\Delta)$ and $(\partial \Delta * D)$  are combinatorial $k$-balls and their intersection is given by 
\begin{equation*}
	\partial \Star_{S}(\Delta) = \partial \Delta \ast \Link_S(\Delta) = \partial \Delta * \partial D= \partial (\partial \Delta * D). 
\end{equation*}

We define a map 
\begin{equation*}
	\Psi\colon  B\to (\IAAstrel)^{\leq R}
\end{equation*}
by letting $\Psi|_{\Delta} = \phi|_{\Delta}$ and $ \Psi|_D = \psi|_D$. As $\Psi$ has image in $\IAAstrel$, it is regular.
Hence by \cref{lem_regular_homotopy_by_balls}, $\phi$ is regularly homotopic to a simplicial map 
\begin{equation*}
	\newproofphi\colon \newproofsphere \to \IAAstrel
\end{equation*}
that is obtained by replacing $\phi|_{\Star_{S}(\Delta)}$ by 
\begin{equation*}
	\Psi|_{\partial \Delta * D} \colon   \partial \Delta * D \to \phi(\Delta) * (\Link_{\IAAstrel}(\phi(\Delta)))^{\leq R}\subseteq (\IAAstrel)^{\leq R}.
\end{equation*}

As $\phi$ and $\Psi$ have image in $(\IAAstrel)^{\leq R}$, so does $\newproofphi$.
We claim that every non-regular simplex $\Theta$ of $\newproofphi$ is also a non-regular simplex of $\phi$. By \cref{lem_replacement_stars_maps}, it suffices to consider $\Theta \subseteq \partial \Delta * D$.
As
\begin{equation*}
\newproofphi(D) = \psi (D) \subseteq \Linkhat^{<R}_{\IAAstrel}(v),
\end{equation*}
we then have $\Theta\subset \partial \Delta \subseteq \partial\Star_S(\Delta) \subseteq S$. So $\Theta$ is a non-regular simplex of $\phi$.
This implies that $\newproofphi$ has one less non-regular simplex than $\phi$ (namely the simplex $\Delta$ that was removed).
\newline

The more difficult case is the one where $\phi|_\Delta$ is injective. Here, we know that $\Link_{S}(\Delta)$ is a combinatorial sphere of dimension $k-\dim(\Delta)-1 \leq n-2$, so we cannot necessarily do the same replacement.
But now by \cref{thm_andy_fourth_conclusion} and \cref{lem_isom_link_for_making_regular}, the map 
\begin{equation*}
	\Link_{S}(\Delta) \to (\Link_{\IAAstrel}(\phi(\Delta)))^{\leq R} \cong (\Idelrel)^{\leq R'}
\end{equation*}
is $\sigma$-regularly nullhomotopic in $(\Linkhat_{\IAArel}(\phi(\Delta)))^{\leq R} \cong (\Isigdelrel))^{\leq R'}$. I.e.~there are a combinatorial ball $D$ with $\partial D = \Link_{S}(\Delta)$ and a $\sigma$-regular map 
\begin{equation*}
	\psi\colon  D \to (\Linkhat_{\IAArel}(\phi(\Delta)))^{\leq R}
\end{equation*}
such that $\psi|_{\Link_{S}(\Delta)} = \phi|_{\Linkhat_{S}(\Delta)}$.

Just as before, we can extend this to a map 
\begin{equation*}
	\Psi\colon  B = \Delta \ast D \to (\IAArel)^{\leq R}
\end{equation*}
by setting $\Psi|_{\Delta} = \phi|_{\Delta}$ and $\Psi|_D = \psi|_D$.  Again, $B$ is a combinatorial ball with boundary $\partial B = \Star_{S}(\Delta) \cup (\partial \Delta * D)$ and $\Psi$ agrees with $\phi$ on $\Star_{S}(\Delta)$. 
The image of $\Psi$ contains no skew-additive, 2-skew-additive or $\sigma$-additive simplices. The map $\psi$ is $\sigma$-regular and the isomorphisms in \cref{lem_isom_link_for_making_regular} identify $\sigma$ simplices in $(\Isigdelrel)^{\leq R'}$ with $\sigma^2$ simplices in $(\Link_{\IAArel}(\phi(\Delta)))^{\leq R}$. This implies that $\Psi$ is regular (cf.~\cref{ex_sigma_to_sigma2_regular}). 
Furthermore, as $\phi|_{\Delta}$ is injective, we have $\phi(\partial \Delta) \subseteq \partial \phi(\Delta)$. Hence, $\Psi|_{\partial \Delta \ast D}$ has image in  
\begin{equation*}
	\im(\Psi|_{\partial \Delta \ast D}) \subseteq \partial\phi(\Delta) * (\Linkhat_{\IAArel}(\phi(\Delta)))^{\leq R} \subseteq (\IAAstrel)^{\leq R}.
\end{equation*}
Invoking \cref{lem_regular_homotopy_by_balls}, we see that $\phi$ is regularly homotopic to a simplicial map
\begin{equation*}
	\newproofphi\colon  \newproofsphere \to (\IAArel)^{\leq R}
\end{equation*}
that is obtained by replacing $\phi|_{\Star_{S}(\Delta)}$ by 
\begin{equation*}
	\Psi|_{\partial \Delta \ast D}\colon  \partial \Delta * D \to (\IAAstrel)^{\leq R}.
\end{equation*}
We claim that every non-regular simplex $\Theta$ of $\newproofphi$ is also a non-regular simplex of $\phi$. By \cref{lem_replacement_stars_maps}, it again suffices to consider $\Theta \subseteq \partial \Delta \ast D$. We show that every such simplex is regular. By construction, every simplex $\Theta$ of $\partial \Delta \ast D$ that maps to a $\sigma$ edge must be contained in $D$. 
The map $\Psi|_D$ is equal to $\psi$, which is $\sigma$-regular. So $\psi|_{\Star_{D}(\Theta)}$ is a $\sigma$ cross map. Furthermore, as $\Theta \subseteq D$, we have
\begin{equation*}
\Star_{\partial \Delta \ast D}(\Theta) = \partial \Delta \ast \Star_{D}(\Theta)
\end{equation*}
and $\Psi$ maps $\partial \Delta$ to a symplectic pair $\ls v,w \rs$ that extends the partial symplectic basis given by the vertices of $\psi(\Star_{D}(\Theta))$. Hence, $\newproofphi|_{\Star_{\partial \Delta \ast D}(\Theta)}$ is a $\sigma$ cross map as well, so $\Theta$ is a regular simplex. This implies that $\newproofphi$ has one less non-regular simplex than $\phi$ (namely the simplex $\Delta$ that was removed).
\end{proof}

\subsection{Establishing \texorpdfstring{$\sigma$}{\unichar{"03C3}}-smallness}
\label{sec_separating_cross_polytopes}
We keep the \hyperref[eq_standing_assumption_nmkR]{Standing assumption} that $n\geq 2$, $m\geq 0$, $k\leq n$ and $R\geq 1$.
In this section, we show how to regularly homotope a $\sigma$-regular map $\phi\colon  S^{k}\to \IAAstrel$ to a map $\newlemmaphi$ such that in the image of $\newlemmaphi$, no vertex of rank $R$ is contained in a $\sigma$ edge. In fact, we establish the following slightly stronger condition.
\begin{definition}
\label{def_sigma_small}
Let $S$ be a combinatorial $k$-sphere.
A $\sigma$-regular map $\phi\colon  S\to \IAAstrel$ is called \emph{$\sigma$-small} if for every simplex $\Delta$ of $S$ such that $\phi(\Delta)$ is a $\sigma$ edge, the following properties hold:
\begin{enumerate}
\item The image of the $\sigma$ cross map $\phi|_{\Star_{S}(\Delta)}$ contains at most one vertex of rank $R$.
\item For all $x\in \Delta$, we have $\rkfn(\phi(x))<R$.
\end{enumerate}
\end{definition} 
Recall that as $\phi$ is $\sigma$-regular, the simplex $\Delta$ in the above definition is necessarily an edge.

We establish $\sigma$-smallness with three lemmas in this subsection. In each one, we show how to obtain a map $S^k \to \IAAstrel$ such that every $\Delta$ mapping to a $\sigma$ edge has a certain desired property that brings the map closer to being $\sigma$-small. To do so, we each time remove step by step all ``critical'' simplices, i.e.~those $\Delta$ that do not have the desired property. The following is the first of these lemmas.

\begin{lemma}
\label{lem_avoid_edgy_crosspolytopes}
Let $S$ be a combinatorial $k$-sphere and $\phi\colon  S \to (\IAAstrel)^{\leq R}$ a $\sigma$-regular map. Then $\phi$ is in $\IAArel$ regularly homotopic to a $\sigma$-regular map $\newlemmaphi\colon \newlemmasphere\to (\IAAstrel)^{\leq R}$ such that the following property holds.
  \begin{itemize}
  \item If $\Delta\subset \newlemmasphere$ is a simplex such that $\phi'(\Delta)$ is a $\sigma$ edge, then in the image of the $\sigma$ cross map $\newlemmaphi|_{\Star_{\newlemmasphere}(\Delta)}$, every symplectic pair $\ls v_i, w_i \rs$ satisfies $\rkfn(v_i)< R$ or $\rkfn(w_i)< R$.
  \end{itemize}
\end{lemma}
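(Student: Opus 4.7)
The plan is to iteratively remove ``critical'' $\sigma$ edges from $\phi$. Call a $\sigma$ edge $\Delta \subseteq S$ \emph{$r$-bad} if the image of the $\sigma$ cross map $\phi|_{\Star_S(\Delta)}$, with symplectic basis $\ls \vec v_1, \vec w_1, \ldots, \vec v_k, \vec w_k \rs$, contains exactly $r$ pairs $\ls v_i, w_i \rs$ such that $\rkfn(v_i) = R = \rkfn(w_i)$. The conclusion of the lemma is precisely that no $\sigma$ edge of $\newlemmaphi$ is $r$-bad for any $r \geq 1$. Progress will be measured by the lexicographic order on the tuple $(n_k, n_{k-1}, \ldots, n_1) \in \mbN^k$, where $n_r$ counts the $\sigma$ edges of badness at least $r$, and by always selecting an edge of \emph{maximal} badness at each step.

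Pick a $\sigma$ edge $\Delta = \ls x_1, y_1 \rs$ of maximal badness $b \geq 1$, and suppose $\ls v_i, w_i \rs$ is a bad pair in the image of its cross map. If $i \neq 1$, I first apply \cref{lem_flip} as a regular homotopy in $\IAArel$ so that $\ls x_i, y_i \rs$ becomes the $\sigma$ edge of the new cross polytope; this relabels pairs but leaves the image set and badness invariant. After relabelling we have $i = 1$ and $\rkfn(v_1) = R = \rkfn(w_1)$. Choose primitive representatives with $\omega(\vec e_{m+n}, \vec v_1) = \omega(\vec e_{m+n}, \vec w_1) = R$, and set $\vec w_1' \coloneqq \vec w_1 - \vec v_1$. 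Then $\vec w_1'$ is primitive (since $\ls \vec v_1, \vec w_1'\rs$ and $\ls \vec v_1, \vec w_1 \rs$ differ by a unimodular change of basis of $\langle \vec v_1, \vec w_1 \rangle$), $\rkfn(w_1') = 0 < R$, and $\omega(\vec v_1, \vec w_1') = \pm 1 = \omega(\vec w_1, \vec w_1')$.

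Now form the combinatorial $(k+1)$-ball $B = \Delta^2 \ast C_{k-1}$, where $\Delta^2 = \ls x_1, y_1, y_1' \rs$ involves a fresh vertex $y_1'$ and $C_{k-1} \subseteq C$ is the sub-cross-polytope on $\ls x_2, y_2, \ldots, x_k, y_k \rs$. Extend $\phi|_C$ to a simplicial map $\Psi \colon B \to \IAArel$ by $\Psi(y_1') = w_1'$. The image $\Psi(\Delta^2) = \ls v_1, w_1, w_1' \rs$ is a minimal $\sigma$-additive simplex (since $\vec w_1 = \vec v_1 + \vec w_1'$ and all three mutual pairings are $\pm 1$), so $\Psi$ is a $\sigma$-additive cross map and hence regular. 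By \cref{lem_boundary_join}, $\partial B = C \cup D_2$, where $D_2 = (\ls x_1, y_1' \rs \ast C_{k-1}) \cup (\ls y_1, y_1' \rs \ast C_{k-1})$ is a combinatorial $k$-ball with $\partial D_2 = \partial C$. Applying \cref{lem_regular_homotopy_by_balls} yields a regular homotopy in $\IAArel$ that replaces $\phi|_C$ by $\Psi|_{D_2}$, and all values lie in $(\IAAstrel)^{\leq R}$ since $\rkfn(w_1') = 0$ and $\Psi|_{D_2}$ produces only $\sigma$ simplices.

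It remains to check that the resulting map $\newlemmaphi \colon \newlemmasphere \to (\IAAstrel)^{\leq R}$ is $\sigma$-regular and that the lexicographic measure strictly decreased. The edge $\ls x_1, y_1 \rs$ no longer belongs to $\newlemmasphere$, and the only new $\sigma$ edges are $\ls x_1, y_1' \rs$ and $\ls y_1, y_1' \rs$, whose stars in $\newlemmasphere$ equal $\ls x_1, y_1' \rs \ast C_{k-1}$ and $\ls y_1, y_1' \rs \ast C_{k-1}$ respectively by \cref{lem_replacement_stars_maps}; on these, $\newlemmaphi$ is a $\sigma$ cross map by construction, while all other $\sigma$ edges of $\phi$ are untouched. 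Both new cross polytopes have badness exactly $b-1$ (the formerly bad position $1$ now involves $w_1'$ of rank $0$, and the remaining pairs are unchanged), so $n_b$ drops by one and $n_j$ for $j > b$ stays zero, decreasing the lexicographic measure; iteration terminates by well-ordering. The main obstacle I anticipate is precisely this bookkeeping—ensuring that the local modification does not corrupt any other $\sigma$ cross map. This is guaranteed because, by $\sigma$-regularity of $\phi$, the star of every $\sigma$ edge is its own cross polytope and distinct cross polytopes intersect only in their boundaries, so the replacement of a single $C$ really is local.
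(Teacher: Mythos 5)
Your proposal is correct and takes essentially the same route as the paper: use \cref{lem_flip} to move the bad symplectic pair into the $\sigma$-edge position, cone off $\Star_{S}(\Delta)$ by a new vertex mapped to $\langle \vec w_1 - \vec v_1 \rangle$ so that the extension is a $\sigma$-additive cross map, replace via \cref{lem_regular_homotopy_by_balls}, and iterate; your explicit lexicographic measure merely formalises the paper's observation that each new $\sigma$ cross map has one fewer pair with both vertices of rank $R$. The only quibble is that \cref{lem_cross_maps_intersections} concerns the other cross-map types rather than $\sigma$ cross maps, but the locality you need is already supplied by your appeal to \cref{lem_replacement_stars_maps}.
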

\begin{proof}
Let $\Delta$ be a simplex such that $\phi(\Delta)$ is a $\sigma$ edge. We keep the notation set up in \cref{sec_sigma_regularity}, so $C = \Star_{S}(\Delta)$, the vertices in $\phi(C)$ are $v_1, w_1, \ldots, v_k, w_k$ and $\phi(\Delta)=\ls v_1,w_1 \rs$ is the $\sigma$-edge contained in $\phi(C)$. 

Assume that $\Delta$ is critical, i.e.~there is $1\leq i \leq k$ such that $\rkfn(v_i)=R=\rkfn(w_i)$. 
Using \cref{lem_flip} to replace the edge $\ls v_1,w_1 \rs$ by $\ls v_i,w_i \rs$, we can assume that $i=1$. 
Choose representatives $\vec v_1$ and $\vec w_1$ such that $\rkfn(\vec v_1) = R = \rkfn(\vec w_1)$ and define $w_1' \coloneqq \langle \vec w_1 - \vec v_1 \rangle$. This is a vertex in $\IAAstrel$ and $\rkfn(w_1') = 0$. 
Let 
\begin{equation*}
	B\coloneqq t \ast C 
\end{equation*}
be the combinatorial $(k+1)$-ball obtained as a join of $C$ with a new vertex $t$.
Define a map 
\begin{equation*}
	\Psi\colon  B \to \IAArel
\end{equation*}
by letting $\Psi|_C = \phi|_C$ and $h(t) \coloneqq w_1'$.
We claim that $\Psi$ is well-defined and regular: It follows from the description of the simplicial structure of $C$ at the beginning of this subsection that $B = t \ast C $ is a union of $2^{k-1}$ simplices of dimension $(k+1)$, each of which gets mapped to 
\begin{equation*}
	\ls v_1, w_1, w_1' = \langle \vec w_1 - \vec v_1 \rangle \rs \cup \ls v_i \mid i\in I\rs \cup \ls w_j \mid j \in J \rs
\end{equation*} 
for some disjoint sets $I,J$ such that $I\cup J = \ls 2, \ldots, k \rs$.
See \cref{figure_separating_cross_polytope1} for a low-dimensional picture.
\begin{figure}
\begin{center}
\includegraphics{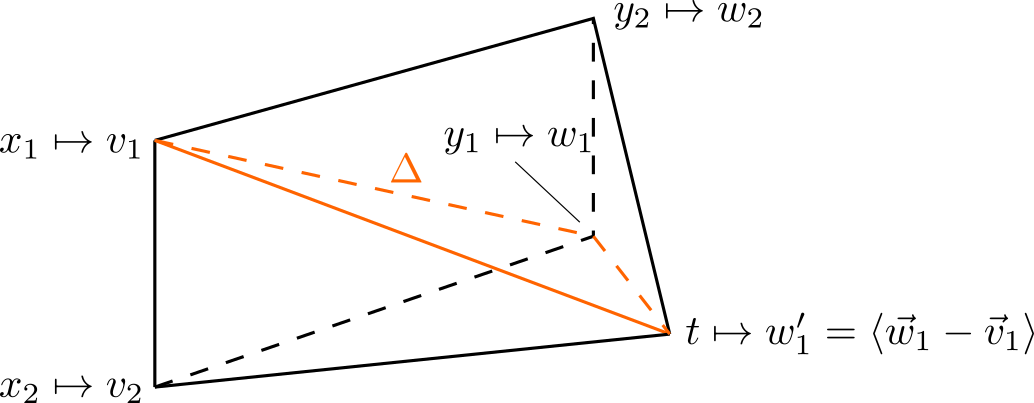}
\end{center}
\caption{The complex $B = t \ast C$ for the $i=1$ case of \cref{lem_avoid_edgy_crosspolytopes}. Here, $k=2$ and orange marks edges that are mapped to $\sigma$ simplices.}
\label{figure_separating_cross_polytope1}
\end{figure}
It is easy to check that these are all $\sigma$-additive simplices. It follows that $\Psi$ has image in $\IAArel$, is a $\sigma$-additive cross map and hence is indeed regular. By \cref{lem_boundary_join}, the boundary of $B$ decomposes as
\begin{equation*}
	\partial B = (t \ast \partial C  ) \cup C
\end{equation*}
Both $t \ast \partial C$ and $C$ are combinatorial $k$-balls and their intersection is given by 
\begin{equation*}
	\partial (t \ast \partial C )  = \partial C. 
\end{equation*}
Using the description of $\partial C$ at the beginning of this subsection, we have $\Psi(t \ast \partial C  ) \subset (\IAAstrel)^{\leq R}$.
Hence by \cref{lem_regular_homotopy_by_balls}, $\phi$ is regularly homotopic to a simplicial map 
\begin{equation*}
	\newproofphi\colon \newproofsphere \to (\IAAstrel)^{\leq R}
\end{equation*}
that is obtained by replacing $\phi|_{\Star_{S}(\Delta)}$ by $\Psi|_{t \ast \partial C}$.

Using the description of $\partial C$ above, one sees that $t \ast \partial C  $ is the union of $\Star_{t \ast \partial C} (\ls x_1, t \rs)$ and $\Star_{t \ast \partial C} (\ls y_1, t \rs)$, where $\phi(x_1)=v_1$ and $\phi(y_1) = w_1$. The restriction of $\Psi$ to each of those is a $\sigma$ cross map and the vertices in their image are $v_1, w_1', v_2, w_2, \ldots, v_k, w_k$ and $w_1, w_1', v_2, w_2, \ldots, v_k, w_k$, respectively. In particular, both of these images contain one less symplectic pair with two vertices of rank $R$ than $\phi(C)$. 

Hence, replacing $\phi$ with $\newproofphi$ removes the critical simplex $\Delta$ from $\phi$ and only produces critical simplices that have less symplectic pairs with two vertices of rank $R$ in the images of their stars.
Iterating this procedure yields a map $\newlemmaphi$ with the desired property.
\end{proof}

\begin{lemma}
\label{lem_avoid_edgy_crosspolytopes2}
Let $S$ be a combinatorial $k$-sphere and $\phi\colon  S \to (\IAAstrel)^{\leq R}$ a $\sigma$-regular map. Then $\phi$ is in $\IAArel$ regularly homotopic to a $\sigma$-regular map $\newlemmaphi\colon \newlemmasphere\to (\IAAstrel)^{\leq R}$ such that the following property holds.
\begin{itemize}
  \item If $\Delta\subset \newlemmasphere$ is a simplex such that $\newlemmaphi(\Delta)$ is a $\sigma$ edge, then in the image of the $\sigma$ cross map $\newlemmaphi|_{\Star_{\newlemmasphere}(\Delta)}$, there is at most one vertex of rank $R$.
  \end{itemize}
\end{lemma}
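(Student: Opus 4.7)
The plan is to iteratively remove ``critical'' simplices, i.e.~simplices $\Delta$ for which $\phi(\Delta)$ is a $\sigma$-edge whose cross polytope image $\phi|_{\Star_S(\Delta)}$ contains two or more rank-$R$ vertices. After first invoking \cref{lem_avoid_edgy_crosspolytopes}, I may assume that no symplectic pair in any $\sigma$-cross-polytope image contains more than one rank-$R$ vertex, so any critical cross polytope has its rank-$R$ vertices distributed over distinct pairs. For such a critical $\Delta$, after applying \cref{lem_flip} if necessary, I arrange that $\phi(\Delta) = \{v_1, w_1\}$ with $\rkfn(v_1) = R$ and that a second pair $\{v_2, w_2\}$ of the cross polytope image contains the rank-$R$ vertex $v_2$; by \cref{lem_avoid_edgy_crosspolytopes}, $\rkfn(w_1), \rkfn(w_2) < R$.

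The core move is a \emph{prism replacement}. After choosing primitive representatives with $\rkfn(\vec v_1) = \rkfn(\vec v_2) = R$, set $v_{12} = \langle \vec v_1 - \vec v_2\rangle$, which has rank $0$. The ``prism partner'' $w_{12}$ is then determined (up to sign) by the condition $\omega(\vec v_{12}, \vec w_{12}) = 0$ forced by the prism structure, and a suitable sign choice on the representatives of $\vec w_1, \vec w_2$ yields $\rkfn(w_{12}) < R$. Build the ball $B = P_3 \ast C_{k-2}$, where $C_{k-2}$ is the link of $\{x_1, y_1, x_2, y_2\}$ in $\Star_S(\Delta)$, and let $\Psi \colon B \to \IAArel$ be the associated prism cross map extending $\phi$, with $\Psi(x_{12}) = v_{12}$ and $\Psi(y_{12}) = w_{12}$. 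By \cref{def:cross-maps} and \cref{def_regular_maps}, $\Psi$ is regular. Decomposing $\partial B = C \cup C_{\new}$, where $C = \Star_S(\Delta)$ corresponds to the ``bottom square'' of the prism and $C_{\new}$ to the remaining top portion joined with $C_{k-2}$, \cref{lem_regular_homotopy_by_balls} then produces a regular homotopy from $\phi$ to a new map $\newproofphi$ in which $\phi|_C$ has been replaced by $\Psi|_{C_{\new}}$.

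A direct analysis of $\partial P_3$ reveals that the only new $\sigma$-edges introduced by the replacement are $\{x_1, y_{12}\}$ and $\{x_2, y_{12}\}$, whose stars in $\newproofsphere$ are $\sigma$-cross polytopes under $\Psi$ on the pairs $\{v_1, w_{12}\}, \{v_{12}, w_2\}, \{v_3, w_3\}, \ldots$ and $\{v_2, w_{12}\}, \{v_{12}, w_1\}, \{v_3, w_3\}, \ldots$ respectively. Hence $\newproofphi$ is $\sigma$-regular and still maps into $(\IAAstrel)^{\leq R}$ by the rank choices. Among the vertices in $\{v_1, w_1, v_2, w_2, v_{12}, w_{12}\}$, only $v_1$ remains of rank $R$ in the first new cross polytope and only $v_2$ in the second, so the maximum number of rank-$R$ vertices in any $\sigma$-cross-polytope image of $\newproofphi$ is strictly less than that of $\phi$. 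Iteration therefore terminates.

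The main obstacle is the sign analysis for $w_{12}$: the constraint $\omega(\vec v_{12}, \vec w_{12}) = 0$ couples the sign choices on $\vec w_1, \vec w_2$ to those on $\vec v_1, \vec v_2$, and the symplectic pairing signs $\omega(\bar v_i, \bar w_i)$ can conspire so that the forced choice yields $\rkfn(w_{12}) = \rkfn(w_1) + \rkfn(w_2)$, which may exceed $R$. In such edge cases, one first performs a preparatory $\sigma$-additive reduction in the spirit of \cref{lem_avoid_edgy_crosspolytopes} on one of the pairs $\{v_i, w_i\}$ to decrease the corresponding $\rkfn(w_i)$, after which the prism move produces the desired reduction. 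Verifying that $\newproofphi$ inherits its image in $(\IAAstrel)^{\leq R}$ and remains $\sigma$-regular at these new edges is a careful case analysis over the simplex types appearing in $\Psi(C_{\new})$.
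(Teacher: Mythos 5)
Your proposal is correct and follows essentially the same route as the paper's proof: after the flip of \cref{lem_flip}, the paper performs exactly your prism replacement with the new vertices $\langle \vec v_1 - \vec v_2 \rangle$ (rank $0$) and $\langle \vec w_1 + \vec w_2 \rangle$, glued in via \cref{lem_regular_homotopy_by_balls}, and the two new $\sigma$ cross maps each contain a single rank-$R$ vertex. Your ``edge case'' is the paper's Step 1, handled there by a $\sigma$-additive cone-off replacing $w_1$ with $w_1' = \langle \vec w_1 + \epsilon \vec v_1 \rangle$ so that $\rkfn(\langle \vec w_1' + \vec w_2 \rangle) < R$ (note the aim is this sum-rank condition rather than literally decreasing $\rkfn(w_1)$), which matches your proposed preparatory reduction.
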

\begin{proof}

Again, let $\Delta$ be a simplex such that $\phi(\Delta)$ is a $\sigma$ edge and keep the notation from \cref{sec_sigma_regularity}. So $C = \Star_{S}(\Delta)$ has vertices $x_1, y_1, \ldots, x_k, y_k$ and their images are $v_1, w_1, \ldots, v_k, w_k$, where $\phi(\Delta)=\ls v_1,w_1 \rs$. 
Using \cref{lem_avoid_edgy_crosspolytopes}, we can assume that $\rkfn(w_i) < R$ for all $i$.
Now assume that $\Delta$ is critical, i.e.~there are $1\leq i \not = j \leq m$ such that $\rkfn(v_i) = \rkfn(v_j) = R$.  
Using \cref{lem_flip} to replace the edge $\ls v_1,w_1 \rs$ by $\ls v_i,w_i \rs$, we can assume that $i=1$ and $j=2$. 
We will show how to remove $C$ from $\phi$ while only creating $\sigma$ cross maps that have less vertices of rank $R$ in their image than $\phi|_C$.

The set $\ls v_1,v_2 \rs$ is a standard edge in $\phi(C)$ both of whose vertices have rank $R$.
Choose representatives $\vec v_1, \vec v_2$ such that $\rkfn(\vec v_1) = R = \rkfn(\vec v_2)$ and $\vec w_1, \vec w_2$ such that $\omega(\vec v_1, \vec w_1) = 1 = \omega(\vec v_2, \vec w_2)$. 
Let 
\begin{equation}
\label{eq_avoid_edgy_crosspolytopes2_def_vi_wi}
	\vec v \coloneqq \vec v_1 - \vec v_2  \text{ and } \vec w \coloneqq \vec w_1+ \vec w_2.
\end{equation}
Both $v$ and $w$ are vertices in $\IAAstrel$ and $\omega(v,w) = 0$. We have $\rkfn(v) = 0$. 

\paragraph*{Step 1:}
We first show that we can assume that 
\begin{equation}
\label{eq_avoid_edgy_crosspolytopes2_sum_w1w2}
	\rkfn(w) = \rkfn(\langle \vec w_1 + \vec w_2\rangle)<R.
\end{equation}
Assume that \cref{eq_avoid_edgy_crosspolytopes2_sum_w1w2} does not hold. Then $1 \leq \rkfn(w_1)$, $\rkfn(w_2) < R$ and
\begin{equation*}
	R \leq \rkfn(\langle \vec w_1 + \vec w_2\rangle) < 2R.
\end{equation*}
As $\rkfn(v_1)= R$, this implies that for some $\epsilon\in \ls -1,1 \rs$,
\begin{equation*}
	\rkfn(\langle \vec w_1 + \vec w_2 + \epsilon \vec v_1\rangle) < R.
\end{equation*}
Define $\vec w_1' \coloneqq \vec w_1 + \epsilon \vec v_1$. We then also have $\rkfn(w_1')<R$. 
Let $B\coloneqq t \ast C$ and define 
\begin{equation*}
	\Psi\colon  B = t \ast C \to \IAArel
\end{equation*}
by setting $\Psi|_C = \phi|_C$ and $\Psi(t) \coloneqq w_1'$. It follows as in the proof of \cref{lem_avoid_edgy_crosspolytopes} that $\Psi$ is a $\sigma$-additive cross map and in particular regular. 

Hence by \cref{lem_regular_homotopy_by_balls}, $\phi$ is regularly homotopic to a simplicial map 
\begin{equation*}
	\newproofphi\colon \newproofsphere \to (\IAAstrel)^{\leq R}
\end{equation*}
that is obtained by replacing $\phi|_C$ by $\Psi|_{t \ast \partial C}$. 

We need to verify that replacing $\phi$ by $\newproofphi$ only creates cross maps that are ``better'' than $\phi|_C$. By \cref{lem_replacement_stars_maps}, it suffices to consider cross maps in $\Psi|_{t \ast \partial C}$.
For this, first note that the image of $\Psi|_{t \ast \partial C}$ contains two $\sigma$ edges, namely $\Psi(\ls x_1,t \rs)= \ls v_1, w_1' \rs$ and $\Psi(\ls y_1,t \rs)= \ls w_1, w_1' \rs$  (see \cref{figure_separating_cross_polytope1}). Hence, there are also two corresponding $\sigma$ cross maps. The vertices in the images of these $\sigma$ cross maps are $v_1, w_1', v_2, w_2, \ldots, v_k, w_k$ and $w_1, w_1', v_2, w_2, \ldots, v_k, w_k$, respectively. In particular, as $\rkfn(w_1')<R$, these still satisfy the property that at least one vertex of every symplectic pair has rank less than $R$ (this is the condition we achieved in \cref{lem_avoid_edgy_crosspolytopes}; it shows that the cross maps are not ``worse'' than $\phi|_C$ in this sense). 
As both $w_1$ and $w_1'$ have rank less than $R$, the $\sigma$ cross map in $\Psi|_{\Star_{t \ast \partial C}}$ that corresponds to the edge $\ls y_1,t \rs$ has less vertices of rank $R$ in its image than $\phi|_C$, so it is ``better'' in this sense.
The other $\sigma$ cross map in $\Psi|_{\Star_{t \ast \partial C}}$ corresponds to the edge $\ls x_1,t \rs$. It  has the same number of rank $R$ vertices in its image as $\phi|_C$. However, the vertices in the image of this map are $v_1, w_1', v_2, w_2, \ldots, v_k, w_k$. Here, we still have $\rkfn(\langle \vec v_1- \vec v_2 \rangle) = 0$, but also 
\begin{equation*}
	\rkfn(\langle \vec w_1'+\vec  w_2\rangle) = \rkfn(\langle \vec w_1 + \vec w_2 + \epsilon \vec v_1\rangle) < R.
\end{equation*}
Hence, this cross map satisfies the condition of \cref{eq_avoid_edgy_crosspolytopes2_sum_w1w2}, which was violated by $\phi|_C$ and it is ``better'' in this sense.

\paragraph{Step 2:}
This allows us to assume that \cref{eq_avoid_edgy_crosspolytopes2_sum_w1w2} holds, i.e.~$\rkfn(w)<R$ for $w = \langle \vec w_1+ \vec w_2 \rangle $ as defined in \cref{eq_avoid_edgy_crosspolytopes2_def_vi_wi}.
Let $t$ and $s$ be new vertices and define a combinatorial 3-ball $B$ that looks as follows: It has six vertices $x_1,x_2, y_1, y_2, t, s $ and is the union of three 3-simplices, namely $\ls x_1, x_2, y_1, t \rs$, $\ls x_1, y_1, y_2, t\rs$ and $\ls y_1, y_2, t, s \rs$, see \cref{figure_separating_cross_polytope2}.
\begin{figure}
\begin{center}\vspace{40px}
    \begin{picture}(240,120)
    \put(0,10){\includegraphics[scale=.8]{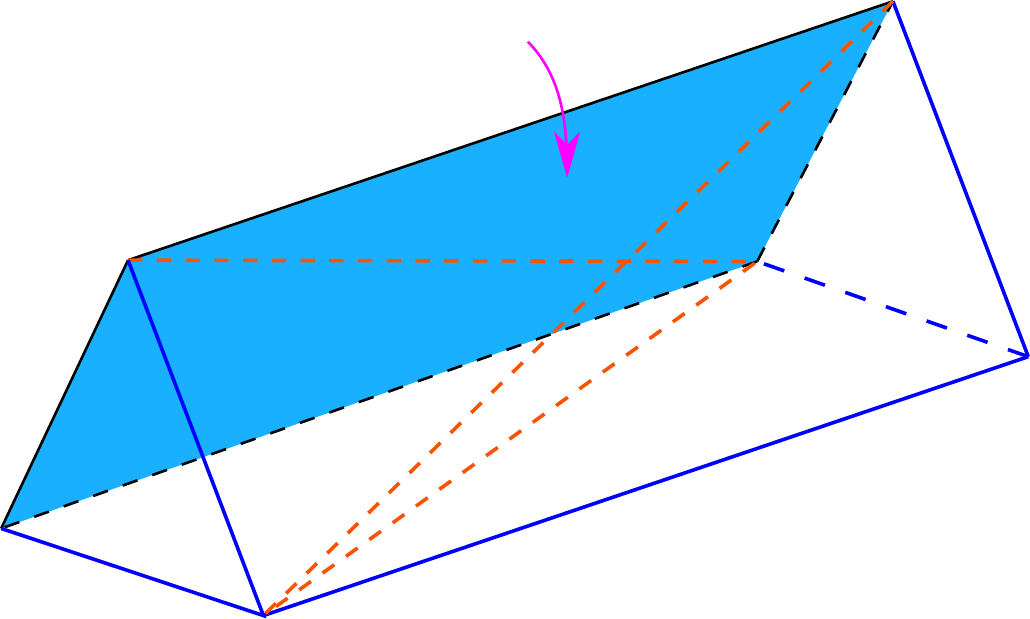}}
    
     \put(-10,97){$x_{1}\mapsto v_1$}
     \put(45,2){$t\mapsto v$}
     \put(-40,30){$x_2\mapsto v_2$}     
    
    \put(200,160){$y_{2}\mapsto w_2$}
     \put(180,95){$y_1\mapsto w_1$}
     \put(245,70){$s\mapsto w$}   
     
    \put(110,150){$D_1$}      
        \put(100,85){$\Delta$}      
    
     \end{picture}
\end{center}
\caption{The complex $B$ used in \cref{lem_avoid_edgy_crosspolytopes2} after assuming that $\rkfn(w)<R$. Orange marks edges that are mapped to $\sigma$ simplices. The subcomplex $D_1$ of the boundary is marked, the 2-simplices not contained in $D_1$ are those that form $D_2$.}
\label{figure_separating_cross_polytope2}
\end{figure}
Let 
\begin{equation*}
	\tilde{B} \coloneqq B \ast C_{k-2}
\end{equation*}
be the combinatorial $(k+1)$-ball that is obtained by joining $B$ with
\begin{equation*}
	C_{k-2} \coloneqq \ast_{i=3}^k \ls x_i, y_i\rs \cong S^{k-3}.
\end{equation*} 
The vertex set of $\tilde{B}$ consists of the vertices of $C = \Star_{S}(\Delta)$ and the newly added vertices $t, s$. 

We define a map 
\begin{equation*}
	\Psi\colon \tilde{B}\to \IAArel
\end{equation*}
on the vertices by letting $\Psi|_{C} = \phi|_C$, $\Psi(t) = v$ and $\Psi(s) = w$ for $v$ and $w$ as defined in \cref{eq_avoid_edgy_crosspolytopes2_def_vi_wi}.
Using the description of $C$ at the beginning of this subsection, the maximal simplices of $\tilde{B}$ are all of the form
\begin{equation*}
\Theta \cup \ls x_i \mid i\in I\rs \cup \ls y_j \mid j \in J \rs,
\end{equation*} 
where $\Theta$ is a maximal simplex of $B$ and $I,J$ are disjoint sets such that $I\cup J = \ls 3, \ldots, k \rs$. The images of these simplices look as follows:
\begin{multline*}
\Psi(\ls x_1,x_2, y_1, t \rs \cup \ls x_i \mid i\in I\rs \cup \ls y_j \mid j \in J \rs) = \\
	\ls v_1,v_2, w_2, v = \langle \vec v_1 - \vec v_2 \rangle \rs \cup \ls v_i \mid i\in I\rs \cup \ls w_j \mid j \in J \rs
\end{multline*}
--- 2-skew-additive simplices;
\begin{multline*}
\Psi(\ls x_1, y_1, y_2, t\rs \cup \ls x_i \mid i\in I\rs \cup \ls y_j \mid j \in J \rs) = \\
	\ls v_1, w_1, w_2, v = \langle \vec v_1 - \vec v_2 \rangle \rs \cup \ls v_i \mid i\in I\rs \cup \ls w_j \mid j \in J \rs
\end{multline*}
--- skew-$\sigma^2$ simplices;
\begin{multline*}
\Psi(\ls y_1, y_2, t, s \rs \cup \ls x_i \mid i\in I\rs \cup \ls y_j \mid j \in J \rs) = \\
	\ls w_1, w_2, v = \langle \vec v_1 - \vec v_2 \rangle, w  = \langle \vec w_1 + \vec w_2 \rangle \rs \cup \ls v_i \mid i\in I\rs \cup \ls w_j \mid j \in J \rs)
\end{multline*}
--- 2-skew-additive simplices.
This shows that the definition we gave for $\Psi$ on the vertices actually defines a simplicial map with image in $\IAArel$.
 It also shows that this map is regular, namely a prism cross map.

The boundary of $B$ is a union of the subcomplex $D_1$ spanned by $\ls x_1, y_1, x_2, y_2 \rs$, which is a union of two 2-dimensional simplices that are both mapped to $\sigma$ simplices, and a subcomplex $D_2$ consisting of six 2-dimensional simplices, four of which are mapped to $\sigma$ simplices and two of which are mapped to 2-additive simplices (see \cref{figure_separating_cross_polytope2}). We have $C = \Star_{S}(\Delta) = D_1 \ast C_{k-2}$, so using \cref{lem_boundary_join}, the boundary of $\tilde{B}$ decomposes as
\begin{equation*}
	\partial \tilde{B} = \partial B \ast C_{k-2}  = D_1 \ast C_{k-2} \cup(D_2 \ast C_{k-2}) =  C \cup (D_2 \ast C_{k-2}).
\end{equation*}
Both $C = D_1 \ast C_{k-2}$ and $D_2 \ast C_{k-2}$ are combinatorial $k$-balls and their intersection is given by
\begin{equation*}
	 \partial C = (\partial D_1) \ast C_{k-2} = (\partial D_2) \ast C_{k-2} = \partial (D_2 \ast C_{k-2}). 
\end{equation*}

Hence by \cref{lem_regular_homotopy_by_balls}, $\phi$ is regularly homotopic to a simplicial map 
\begin{equation*}
	\newproofphi\colon \newproofsphere \to \IAAstrel
\end{equation*}
that is obtained by replacing $\phi|_C$ by $\Psi|_{D_2 \ast C_{k-2}}$. 

From the description of $D_2$ given above, it is easy to verify that $\Psi|_{D_2 \ast C_{k-2}}$ has image in $(\IAAstrel)^{\leq R}$, so the same is true for $\newproofphi$.
We need to verify that $\newproofphi$ is still $\sigma$-regular and replacing $\phi$ by $\newproofphi$ only creates cross maps that are ``better'' than $\phi|_C$. By \cref{lem_replacement_stars_maps}, it suffices to consider cross maps in $\Psi|_{D_2 \ast C_{k-2}}$.
The complex $D_2 \ast C_{k-2}$ contains two simplices mapping to $\sigma$ edges, namely $\ls y_1, t \rs$, which maps to $\ls w_1, v \rs$, and $\ls y_2, t \rs$, which maps to $\ls w_2, v \rs$. The restriction of $\Psi$ to the stars of these simplices in $D_2 \ast C_{k-2}$ is a $\sigma$ cross map. This implies that $\Psi|_{D_2 \ast C_{k-2}}$, and hence $\newproofphi$, is $\sigma$-regular. Furthermore, these maps still satisfy the property of \cref{lem_avoid_edgy_crosspolytopes}, i.e.~at most one element of each symplectic pair in the image has rank $R$. But now all vertices of the $\sigma$ edges $\ls w_1, v \rs$ and $\ls w_2, v \rs$ have rank less than $R$.
This implies that both $\Psi|_{\Star_{D_2 \ast C_{k-2}}(\ls y_1, t  \rs)}$ and $\Psi|_{\Star_{D_2 \ast C_{k-2}}(\ls y_2, t \rs)}$ have less vertices of rank $R$ in their image than $\phi|_C$. 

So replacing $\phi$ by $\newproofphi$, we get closer to obtain the removing all critical simplices. Iterating this procedure yields the desired map $\newlemmaphi$.
\end{proof}

We are now ready to show the final result of this subsection.

\begin{lemma}
\label{lem_sigma_small_obtained}
Let $S$ be a combinatorial $k$-sphere and $\phi\colon  S \to (\IAAstrel)^{\leq R}$ a $\sigma$-regular map. Then $\phi$ is in $\IAArel$ regularly homotopic to a $\sigma$-regular map $\newlemmaphi\colon \newlemmasphere\to (\IAAstrel)^{\leq R}$ such that the following property holds.
\begin{itemize}
  \item $\newlemmaphi$ is $\sigma$-small.
  \end{itemize}
\end{lemma}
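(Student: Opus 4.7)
The plan is to first apply \cref{lem_avoid_edgy_crosspolytopes2} so that the image of every $\sigma$ cross map of $\phi$ contains at most one vertex of rank $R$. Under this hypothesis, only property (2) in \cref{def_sigma_small} can fail, and this happens precisely when some $\sigma$ edge $\Delta \subset S$ has $\phi(\Delta) = \{v_1, w_1\}$ with $\rkfn(v_1) = R$ or $\rkfn(w_1) = R$; I will call such an edge \emph{critical}. By the preceding paragraph, such a rank-$R$ vertex is the unique rank-$R$ vertex in the whole image $\phi(\Star_S(\Delta))$ of the ambient $\sigma$ cross map, so every other symplectic pair $\{v_i, w_i\}$ with $i \geq 2$ in that image has both vertices of rank strictly less than $R$. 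I will then remove critical edges one at a time, each removal decreasing the count by exactly one without creating new critical edges.

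For a critical edge $\Delta$ when $k \geq 2$, I will apply the flip of \cref{lem_flip}. Pick any $i \geq 2$; by the observation above, the pair $\{v_i, w_i\}$ has both vertices of rank $< R$. The flip regularly homotopes $\phi$ through the $\sigma^2$ cross map on $B = \Delta \ast \Delta_i \ast C_{k-2}$, replacing $C = \Star_S(\Delta)$ by a combinatorial $k$-ball $C'$ with the same vertex set in which $\Delta_i = \{x_i, y_i\}$ (rather than $\Delta$) is the distinguished $\sigma$ edge of the new $\sigma$ cross map. The resulting map is $\sigma$-regular; its new $\sigma$ edge maps to $\{v_i, w_i\}$, so property (2) holds there, and it still satisfies the ``at most one rank-$R$ vertex'' property because no new vertices are introduced into the image. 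Since the flip leaves $\partial C$ fixed, no $\sigma$ edge of $\phi$ outside $C$ is disturbed.

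The hard part will be the case $k = 1$, in which $S$ is a circle and $\Star_S(\Delta) = \Delta$ carries only the pair $\{v_1, w_1\}$, so there is no second pair in the image to flip onto. To resolve this, I will exploit $n \geq 2$ to choose $\vec v_2, \vec w_2 \in \mbZ^{2(m+n)}$ with $\rkfn(v_2), \rkfn(w_2) < R$ such that $\{\vec v_1, \vec w_1, \vec v_2, \vec w_2\}$ extends to a symplectic basis of $\mbZ^{2(m+n)}$. I will then form a combinatorial $2$-ball $B$ on vertices $\{x, y, x', y'\}$ (with $\Delta = \{x, y\}$) by gluing the two $2$-simplices $\{x, y, x'\}$ and $\{y, x', y'\}$ along the diagonal $\{y, x'\}$, and define $\Psi\colon B \to \IAArel$ by $\Psi(x) = v_1$, $\Psi(y) = w_1$, $\Psi(x') = v_2$, $\Psi(y') = w_2$. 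Both $2$-simplices map to $\sigma$ simplices, so $\Psi$ is regular in the sense of \cref{def_regular_maps} (the conditions on $\sigma^2$, prism and $\sigma$-additive simplices are vacuous here). Applying \cref{lem_regular_homotopy_by_balls} with $D_1 = \Delta$ and $D_2 = \{x, x'\} \cup \{x', y'\} \cup \{y, y'\}$, the complementary three-edge arc in $\partial B$, replaces $\Delta$ in $S$ by $D_2$; the only $\sigma$ edge among these three new edges is $\{x', y'\}$, which maps to the non-critical pair $\{v_2, w_2\}$. Iterating both operations removes all critical edges and yields the desired $\sigma$-small map $\newlemmaphi$.
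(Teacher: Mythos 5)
Your main mechanism — first apply \cref{lem_avoid_edgy_crosspolytopes2} so that each $\sigma$ cross map has at most one rank-$R$ vertex in its image, then use the flip of \cref{lem_flip} to move a critical $\sigma$ edge onto a pair $\{v_i,w_i\}$, $i\geq 2$, both of whose vertices have rank $<R$ — is exactly the paper's proof, and your treatment of the case $k\geq 2$ is correct. You are also right to notice that the flip needs a \emph{second} symplectic pair inside the image of the cross map, which exists only when $k\geq 2$, so that $k=1$ needs a separate argument (a point the paper's own write-up passes over with its appeal to $n\geq 2$).

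However, your $k=1$ construction contains a genuine gap: the pair $(\vec v_2,\vec w_2)$ you want need not exist. Since you require $\{\vec v_1,\vec w_1,\vec v_2,\vec w_2\}$ to extend to a symplectic basis, the new pair must lie in $\langle \vec v_1,\vec w_1\rangle^{\perp}$, and the rank function can be uniformly large on symplectic pairs in that complement. Concretely, take $m=0$, $n=2$ and the $\sigma$ edge $v_1=\langle \vec f_1+R\vec f_2\rangle$, $w_1=\langle (R+1)\vec e_1-\vec e_2\rangle$: one checks $\omega(\vec v_1,\vec w_1)=-1$, $\rkfn(v_1)=R$, $\rkfn(w_1)=0$, and $\langle\vec v_1,\vec w_1\rangle^{\perp}=\langle \vec u,\vec w\rangle$ with $\vec u=\vec e_2-R\vec e_1$, $\vec w=\vec f_1+(R+1)\vec f_2$, $\omega(\vec u,\vec w)=1$. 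The rank of $\alpha\vec u+\beta\vec w$ is $|\beta|(R+1)$, and for any symplectic pair in $\langle\vec u,\vec w\rangle$ the two $\beta$-coordinates cannot both vanish (their coefficient matrix is unimodular), so every such pair contains a vertex of rank at least $R+1>R$. This edge really occurs under the hypotheses of the lemma, e.g.\ in the $\sigma$-regular triangle through $v_1$, $w_1$ and the rank-$0$ vertex $\langle\vec u\rangle$ in $(\IAAst[2])^{\leq R}$, so the step ``choose $\vec v_2,\vec w_2$ of rank $<R$ completing $\{\vec v_1,\vec w_1\}$'' can fail. (A smaller issue: in the relative complexes the new vertices must in addition be isotropic to $e_1,\dots,e_m$ and avoid their span, which your basis condition does not ensure.) The $k=1$ case can instead be handled by a different move, for instance pushing the critical $\sigma$ edge across the $\sigma$ triangle $\{v_1,w_1,u\}$ for a rank-$<R$ vertex $u$ of the complement (such $u$ always exists, e.g.\ a primitive vector of rank $0$), which deletes the $\sigma$ edge from the sphere rather than relocating it; but as written, your choice of a low-rank completing pair is unjustified and can be impossible.
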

\begin{proof}
We can assume that $\phi$ satisfies the property of \cref{lem_avoid_edgy_crosspolytopes2}, i.e.~the first point in \cref{def_sigma_small}. 
Let $\Delta = \ls x_1, y_1 \rs$ be a simplex such that $\phi(\Delta) = \ls v_1,w_1 \rs$ is a $\sigma$ edge. We keep again the notation from \cref{sec_sigma_regularity} and write $C = \Star_{S}(\Delta)$, denote its vertices by $x_1, y_1, \ldots, x_k, y_k$ and their images by $v_1, w_1, \ldots, v_k, w_k$.

Assume that $\Delta$ is critical, i.e.~its image $\ls v_1,w_1 \rs$ contains (exactly) one vertex of rank $R$.
As $\phi$ satisfies the first point of \cref{def_sigma_small}, both $\rkfn(v_2)$ and $\rkfn(w_2)$ are less than $R$ (this uses the \hyperref[eq_standing_assumption_nmkR]{Standing assumption} that $n\geq 2$).
Now using \cref{lem_flip}, we find a regular homotopy that replaces $\ls v_1, w_1 \rs$ by $\ls v_2, w_2 \rs$. This modifies $\phi|_C$ such that the result is a $\sigma$ cross map $\Psi|_{C'}$ whose image contains the $\sigma$ edge $\ls v_2, w_2 \rs$. As both of these vertices have rank less than $R$, this removes a critical simplex of $\phi$ without creating a new one. As $\phi|_C$ and $\Psi|_{C'}$ agree on the vertex set, this preserves the first point in \cref{def_sigma_small}. Iterating this, we obtain the desired map $\phi'$.
\end{proof}

\subsection{Removing edgy simplices}
\label{sec_edgy_simplices}
We keep the \hyperref[eq_standing_assumption_nmkR]{Standing assumption} that $n\geq 2$, $m\geq 0$, $k\leq n$ and $R\geq 1$.
We now prove \cref{prop_isolating-rank-r-vertices}. This is done in several steps. Much of it is similar to \cite[Section 6]{Brueck2022}. For those parts where the proofs carry over almost verbatim, we provide outlines here and refer the reader to \cite[Section 6]{Brueck2022} for more details.

\begin{definition}
\label{def_edgy_simplices}
Let $S$ be a combinatorial $k$-sphere and $\phi \colon  S  \to \IAAstrel$ a simplicial map.
A simplex $\Delta$ of $S$ is called \emph{edgy} if $\phi(\Delta)=\{ v_0, v_1\}$, $v_0\not = v_1$, is an edge such that $\rkfn(v_0) = \rkfn(v_1) = R$.
\end{definition}
\noindent 
This coincides with the definition of edgy simplices in \cite[Definition 6.1]{Brueck2022}.
A necessary condition to make sure that bad vertices are isolated in the sense of \cref{prop_isolating-rank-r-vertices} is that $\phi \colon  S  \to \IAAstrel$ has no edgy simplices. Assuring that this holds is our main aim before we before we prove   \cref{prop_isolating-rank-r-vertices} at the end of this subsection.

It is not hard to see that if $\Delta$ is edgy, then $\phi(\Delta)=\ls v_0, v_1\rs$ is either a standard simplex, a 2-additive simplex where $\vec v_0 = \vec v_1 \pm \vec e_i$ for some $1 \leq i \leq m$, a 3-additive $\vec v_0 = \vec v_1 \pm \vec e_i \pm \vec e_j$ for some $1 \leq i \not = j \leq m$ or a $\sigma$ simplex where $\omega(v_0,v_1) = \pm 1$ (cf.~\cref{def:IAAst}).

In the previous \cref{sec_separating_cross_polytopes}, we already showed how to remove edgy simplices from $\phi$ whose image is a $\sigma$ simplex.
The remaining three cases all corresponds to simplex types that are also present in the complex $\BAA$ (cf.~\cref{def:BAA}). This allows us to closely follows the arguments in \cite[Section 6, in particular Step 1 on page p.53 et seq.]{Brueck2022}.
Just as in the setting of that article, we need to further control the stars of such  edgy simplices before we can remove these. More precisely, we need to make sure that there are no simplices of the following type:

\begin{definition}
\label{def_overly_augmented}
A simplex $\Delta$ of $S$ is called \emph{overly augmented}, if 
\begin{itemize}
\item $\phi(\Delta)$ is a 3-additive, double-triple or double-double simplex,
\item every vertex of $\phi(\Delta)$ has rank $R$ or is contained in the augmentation core,
\item $\Delta$ contains at least one vertex $x$ such that $\rkfn(\phi(x)) = R$,
\item if $\phi(\Delta)$ is $3$-additive, then for all $v_0 \in \phi(\Delta) $ of rank $\rkfn(v_0) = R$, there does \emph{not} exist $v_1\in \phi(\Delta) $  and $1 \leq i \not= j \leq m$ such that $\vec v_0 = \vec v_1 \pm \vec e_i \pm \vec e_j $.
\end{itemize}
\end{definition}
\noindent This is the analogue of \cite[Definition 6.2]{Brueck2022}.

In order to remove overly augmented simplices from $\phi$, we will apply the following lemma several times:
\begin{lemma}
\label{lem_avoid_overly_augmented}
Let $S$ be a combinatorial $k$-sphere and $\phi\colon  S \to (\IAAstrel)^{\leq R}$ a simplicial map. Then $\phi$ is in $\IAArel$ regularly homotopic to a map $\newlemmaphi\colon \newlemmasphere\to (\IAAstrel)^{\leq R}$  such that the following properties hold.
\begin{enumerate}
\item If $\Delta\subset \newlemmasphere$ is an edgy simplex of $\newlemmaphi$, then $\Delta$ is a simplex of $S$ and $\phi|_\Delta = \newlemmaphi|_\Delta$. In particular, every edgy simplex of $\newlemmaphi$ is also an edgy simplex of $\phi$. 
\item If $\Delta \subset \newlemmasphere$ is a simplex such that $\newlemmaphi(\Delta)$ is a $\sigma$ simplex, then $\Delta$ is a simplex of $S$ and $\phi|_{\Star_{S}(\Delta)} = \newlemmaphi|_{\Star_{\newlemmasphere}(\Delta)}$. In particular, if $\phi$ is $\sigma$-regular or $\sigma$-small, then so is $\newlemmaphi$.
\item The map $\newlemmaphi$ has no overly augmented simplices.
\end{enumerate}
\end{lemma}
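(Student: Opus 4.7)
The plan is to prove this by iteratively removing overly augmented simplices, using a ``cut out'' argument very similar in spirit to the proof of \cref{prop_cut_out_bad_vertices} and to the corresponding argument in \cite[Section 6]{Brueck2022}. Specifically, suppose $\phi$ has at least one overly augmented simplex, and let $\Delta$ be such a simplex that is maximal with respect to inclusion among all overly augmented simplices of $\phi$. I would show how to replace $\phi$ by a regularly homotopic map $\phi''$ with strictly fewer overly augmented simplices, and such that the replacement is supported on $\Star_S(\Delta)$, preserves $\phi$ on $\Delta$ itself, and never introduces any simplex mapping to a $\sigma$ edge. Iterating then yields the desired $\newlemmaphi$.

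To carry out the replacement, note that $\phi(\Delta)$ is $3$-additive, double-triple, or double-double, so by \cref{link_IAAstrel_doubletriple_doubledouble} (and the analogous observation for $3$-additive simplices, cf.\ the proof of \cref{lem_connectivity_IAAstar}), the link $\Link_{\IAAstrel}(\phi(\Delta))$ is isomorphic to a complex of the form $\Irel[n'][m']$, which is $(n-\dim(\Delta)-2)$-connected and even Cohen--Macaulay of this dimension by \cref{lem:Irel-cohen-macaulay}. Moreover, maximality of $\Delta$ together with simpliciality of $\phi$ implies that $\phi(\Link_S(\Delta))$ lies in this link. Using \cref{lem_simplicial_approximation_combinatorial}, I obtain a combinatorial ball $\tilde D$ with $\partial \tilde D = \Link_S(\Delta)$ and a simplicial extension $\tilde\psi\colon \tilde D \to \Link_{\IAAstrel}(\phi(\Delta))$ of $\phi|_{\Link_S(\Delta)}$. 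Applying the retraction of \cref{I-retraction} (transported through the isomorphism of \cref{link_IAAstrel_doubletriple_doubledouble}), I may then further modify $\tilde\psi$ to a map $\psi\colon D \to \Link^{<R}_{\IAAstrel}(\phi(\Delta))$ whose image consists only of standard simplices of rank strictly less than $R$. Setting $B = \Delta \ast D$ and $\Psi|_\Delta = \phi|_\Delta$, $\Psi|_D = \psi$, I obtain a simplicial map $\Psi\colon B \to \IAAstrel$ which is automatically regular (its image lies in $\IAAstrel$), and \cref{lem_regular_homotopy_by_balls} yields a regular homotopy replacing $\phi|_{\Star_S(\Delta)}$ by $\Psi|_{\partial\Delta \ast D}$.

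The three bookkeeping claims are then verified as follows. First, by \cref{lem_replacement_stars_maps}, any edgy simplex or any simplex mapping to a $\sigma$ edge in the new map that is not contained in $\partial \Delta \ast D$ already was one in $\phi$ with identical star, proving items 1 and 2 on that side. On the other side, any simplex contained in $\partial\Delta \ast D$ decomposes as a join of a face of $\partial\Delta$ with a simplex in $D$; since $\psi(D) \subseteq \Link^{<R}_{\IAAstrel}(\phi(\Delta))$ consists of rank-$<R$ standard simplices only, no such join can be edgy (both vertices of an edgy edge would need rank $R$) or can land in a $\sigma$ simplex (none of the new edges pair nontrivially with $\phi(\Delta)$). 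In particular, no new edgy simplex and no new simplex mapping to a $\sigma$ simplex is created, establishing items 1 and 2 completely.

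The main obstacle, and item 3, is to argue that $\Psi|_{\partial \Delta \ast D}$ introduces no \emph{new} overly augmented simplices. The delicate case is a simplex $\Sigma \subset \partial \Delta \ast D$ whose image is $3$-additive, double-triple, or double-double: writing $\Sigma = \Sigma_1 \ast \Sigma_2$ with $\Sigma_1 \subseteq \partial \Delta$ and $\Sigma_2 \subseteq D$, the augmentation core of $\Psi(\Sigma)$ must be contained in $\phi(\Sigma_1) \subseteq \phi(\Delta)$ (since $\psi(D)$ is standard of rank $<R$), while every vertex of $\Psi(\Sigma_2)$ has rank $<R$. Thus the only vertices of $\Psi(\Sigma)$ of rank $R$ lie in $\phi(\Sigma_1) \subseteq \phi(\Delta)$, which forces $\Sigma_1 = \Sigma \cap \partial \Delta$ to itself be an overly augmented simplex of $\phi$, contradicting the maximality of $\Delta$ (since $\Sigma_1$ is a proper face of $\Delta$, the corresponding constraints in \cref{def_overly_augmented} on ranks, augmentation cores and, in the $3$-additive case, the condition on $\vec v_0 = \vec v_1 \pm \vec e_i \pm \vec e_j$ either pass down to $\Sigma_1$ or already failed for $\Delta$). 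The real work will be to check these constraints carefully in each of the three cases; this is entirely parallel to the analysis on \cite[p.~53 et seq.]{Brueck2022}, and is the step I expect to be the most technical, but not conceptually novel.
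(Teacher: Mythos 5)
Your overall strategy (cut out stars of bad simplices and refill inside a suitably connected, rank-reduced complex) is the right one, but two of your key steps do not go through as written. First, the identification of $\Link_{\IAAstrel}(\phi(\Delta))$ with a copy of $\Irel[n'][m']$ is false when $\phi(\Delta)$ is $3$-additive: \cref{link_IAAstrel_doubletriple_doubledouble} only covers double-triple and double-double simplices, and the ``analogous observation'' in the proof of \cref{lem_connectivity_IAAstar} concerns the link in the intermediate complex obtained from $\IArel$ by adding only $3$-additive simplices, not in all of $\IAAstrel$. In $\IAAstrel$ the link of a $3$-additive simplex with $\vec v_0 = \vec w_1+\vec w_2+\vec w_3$ also contains, e.g., the partial-sum lines $\langle \vec w_i+\vec w_j\rangle$ (these form double-triple simplices with $\phi(\Delta)$), and $\phi(\Link_S(\Delta))$ can hit such vertices; so you cannot fill with a map whose image consists ``only of standard simplices'' while keeping the boundary values $\phi|_{\Link_S(\Delta)}$ fixed. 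This is exactly why the paper replaces the full link by the complex $K(\Delta)=J^{<R}\ast\Link^{<R}_{\Irel}(\{v_1,\dots,v_l\})$ in the $3$-additive case (with $J^{<R}\neq\emptyset$ guaranteed by the last condition of \cref{def_overly_augmented}) and by $\Link^{<R}_{\Irel}(\{v_2,\dots,v_l\})$ in the double-triple/double-double case, whose connectivity is supplied by \cref{lem_connectivity_Isig_link} directly in ambient coordinates. Relatedly, your rank-reduction step is unjustified even where the link isomorphism exists: the retraction of \cref{I-retraction} is defined relative to the ambient rank function $\rkfn$ and a rank-$R$ vertex of the linked simplex, and transporting it through the isomorphism of \cref{link_IAAstrel_doubletriple_doubledouble} does not preserve $\rkfn$ (compare how \cref{lem_isom_link_for_making_regular} must track a new bound $R'$), so the resulting map need not land in $\Link^{<R}_{\IAAstrel}(\phi(\Delta))$ with respect to the ambient rank; moreover the rank-$R$ vertex of an overly augmented simplex may lie in the augmentation core rather than the standard part, so \cref{I-retraction} is not directly applicable either.

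Second, your argument for item 3 is logically flawed and the conclusion it aims at is too strong. You choose $\Delta$ maximal with respect to inclusion and then argue that a new overly augmented simplex $\Sigma=\Sigma_1\ast\Sigma_2$ would force the proper face $\Sigma_1\subsetneq\Delta$ to be overly augmented, ``contradicting maximality''---but maximality with respect to inclusion only excludes overly augmented simplices strictly \emph{containing} $\Delta$; a face of $\Delta$ being overly augmented is no contradiction at all. In fact the replacement can genuinely create new overly augmented simplices (their rank-$R$ vertices coming from $\phi(\partial\Delta)$ and their augmentation cores partly from the new filling), which is why the paper does not remove maximal simplices but instead orders the removal by the lexicographic $(a,b,c)$-measure of \cite[Definition 6.2]{Brueck2022}, removing the ``worst'' overly augmented simplices first and verifying, following \cite[Claim 6.5]{Brueck2022}, that the replacement only produces strictly better ones, so that the procedure terminates. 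To repair your proof you would need both the corrected target complex $K(\Delta)$ (with its connectivity input \cref{lem_connectivity_Isig_link}) and such a complexity measure guaranteeing termination; your bookkeeping for items 1 and 2 (no new edgy simplices, stars of $\sigma$ simplices untouched because no $\sigma$ simplex lies in the star of a $3$-additive, double-triple or double-double simplex) is fine and matches the paper.
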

\begin{proof}

The proof works very similarly to the one of \cite[Section 6, Procedure 1]{Brueck2022}. The idea is as follows:
First define a measure of ``badness'' for overly augmented simplices. In \cite[Definition 6.2]{Brueck2022}, this measure is given by three integers $a,b,c$. Then successively remove overly augmented simplices, starting with the ``worst'' ones.

Let $\Delta$ be $(a,b,c)$-over augmented in the sense of \cite[Definition 6.2]{Brueck2022}, with $(a,b,c)$ as large as possible (in the lexicographical order). In order to remove $\Delta$ from $\phi$, one modifies $\phi|_{\Star_{S^k}(\Delta)}$ such that the image of the result is contained in $\phi(\partial \Delta) \ast K(\Delta)$, where $K(\Delta)$ is a certain subcomplex of $\IAAstrel$ whose vertices have better properties than those of $\phi(\Delta)$.

The complex $K(\Delta)$ is defined as follows: If $\phi(\Delta)$ is a double-triple or double-double simplex, we have $\phi(\Delta) = \ls v_0, \ldots, v_l \rs$, where $\ls v_2, \ldots, v_l \rs$ is a standard simplex. Define
\begin{equation*}
	K(\Delta) \coloneqq \Link^{<R}_{\Irel}(\ls v_2, \ldots, v_l \rs).
\end{equation*}
If $\phi(\Delta)$ is 3-additive, we can write $\phi(\Delta) = \ls v_0, \ldots, v_l \rs$, where $\ls v_1, \ldots, v_l \rs$ is a standard simplex and $\vec v_0 = \vec w_1 + \vec w_2 + \vec w_3$ for $w_1, w_2, w_3\in \ls v_1, \ldots, v_l, e_1, \ldots, e_m \rs$. Let $J^<$ be the set of those lines in $\ls \langle \vec w_1 + \vec w_2 \rangle, \langle \vec w_1 +  \vec w_3 \rangle, \langle \vec w_2 + \vec w_3 \rangle \rs$ that have rank less than $R$. This set is non-empty by the last condition in \cref{def_overly_augmented}. Define
\begin{equation*}
	K(\Delta) \coloneqq J^{<R} \ast \Link^{<R}_{\Irel}(\ls v_1, \ldots, v_l \rs).
\end{equation*}

Similarly to \cite[Claim 6.4]{Brueck2022}, one can check that $K(\Delta)$ is a subcomplex of $\Link_{\IAAstrel}(\phi(\Delta))$ and that $\phi(\Link_{S}(\Delta)) \subseteq K(\Delta)$.
Next, one verifies the analogue of \cite[Claim 6.5]{Brueck2022}, namely that $K(\Delta)$ is $(\dim \Link_{S}(\Delta))$-connected. Using that $J^{<R}\not= \emptyset$, this is a consequence of \cref{lem_connectivity_Isig_link}. 

Just as in \cite{Brueck2022}, one can now use $K(\Delta)$ to replace $\phi|_{\Star_{S}(\Delta)}$ by a map with image in $\phi(\partial \Delta) \ast K(\Delta)$ (this is again a cut out argument similar to the proof of \cref{prop_cut_out_bad_vertices}). It follows from \cref{lem_regular_homotopy_by_balls} that this defines a regular homotopy. (In fact, the entire homotopy takes place in $\IAAstrel$).  Following \cite[Claim 6.5]{Brueck2022}, one can verify that the result has less overly augmented simplices than $\phi$ and that no new edgy simplices are introduced in the process.

This replacement takes place on $\Star_{S}(\Delta)$, where $\phi(\Delta)$ is a double-triple, double-double or 3-additive simplex. As no such simplex contains a $\sigma$ simplex in its star, the process does not affect the stars of simplices of $S$ that map to $\sigma$ simplices.
Iterating this procedure removes all overly augmented simplices from $\phi$, which proves the claim.
\end{proof}

We now begin to remove simplices that are edgy in the sense of \cref{def_edgy_simplices}. We start with edgy simplices with 3-additive image, i.e.~simplices $\Delta$ such that $\phi(\Delta) = \ls v_0, v_1 \rs$, where $\vec v_0 = \vec v_1 \pm \vec e_i \pm \vec e_j$ for some $1 \leq i \not = j \leq m$.

\begin{lemma}
\label{lem_avoid_edgy_3_additive}
Let $S$ be a combinatorial $k$-sphere and $\phi\colon  S \to (\IAAstrel)^{\leq R}$ a map that is $\sigma$-regular and $\sigma$-small. Then $\phi$ is in $\IAArel$ regularly homotopic to a map $\newlemmaphi\colon \newlemmasphere\to (\IAAstrel)^{\leq R}$ such that the following properties hold.
  \begin{enumerate}
  \item $\newlemmaphi$ is $\sigma$-regular.
  \item $\newlemmaphi$ is $\sigma$-small.
  \item $\newlemmaphi$ has no edgy simplices with 3-additive image.
  \end{enumerate}
\end{lemma}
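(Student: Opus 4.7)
The plan is to first apply \cref{lem_avoid_overly_augmented} to assume that $\phi$ has no overly augmented simplices, and then to remove edgy simplices with 3-additive image one at a time by a direct ``insertion'' homotopy. Per the second item of \cref{lem_avoid_overly_augmented}, $\sigma$-regularity and $\sigma$-smallness are preserved when applying that lemma, and since neither property can be destroyed by a homotopy that leaves $\Star_S(\Delta)$ unchanged for every $\sigma$-edge-simplex $\Delta$, it will suffice to control the edgy 3-additive simplices in a way that also does not touch the stars of simplices mapping to $\sigma$ edges.

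So suppose $\Delta = \{x_0, x_1\}$ is an edgy simplex of $S$ with 3-additive image $\phi(\Delta) = \{v_0, v_1\}$, where (after choosing signs) $\vec v_0 = \vec v_1 + \vec e_i + \vec e_j$ for some $1 \leq i \neq j \leq m$. Set $w \coloneqq \langle \vec v_1 + \vec e_i \rangle$. Since $\vec v_0 = \vec w + \vec e_j$ and $\vec v_1 = \vec w - \vec e_i$, the triple $\{v_0, v_1, w\}$ is the underlying vertex set of a double-triple simplex in $\IAArel$ (its augmentation core is of double-triple type, with $e_i, e_j$ lying in the forgotten part $\{e_1,\dots,e_m\}$). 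Correspondingly, both $\{v_0, w\}$ and $\{v_1, w\}$ are \emph{2-additive} edges, not 3-additive. The key claim I need here is that the hypotheses (no edgy simplex with 3-additive image is overly augmented in $\phi$, no overly augmented simplices at all, and maximality of $\Delta$) force
\[
\phi(\Link_S(\Delta)) \subseteq \Link_{\IAArel}(\{v_0, v_1, w\}).
\]
This is the main obstacle and requires a case analysis over the simplex types that can contain $\{v_0, v_1\}$ as a face in the image of $\phi$: the only ways to augment $\{v_0, v_1\}$ further in $\IAAstrel$ produce 3-additive, double-triple, or double-double simplices, and the overly-augmented/maximality hypotheses pin down which of these can occur over $\Link_S(\Delta)$; in each allowed case one checks directly that every additional vertex is compatible with $w$.

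Granted this claim, the replacement is straightforward. Introduce a new vertex $t$, let $\Delta^2 = \{x_0, x_1, t\}$, and form the combinatorial $(k+1)$-ball $B \coloneqq \Delta^2 \ast \Link_S(\Delta)$. Define $\Psi \colon B \to \IAArel$ by $\Psi|_{\Delta} = \phi|_{\Delta}$, $\Psi(t) = w$, and $\Psi|_{\Link_S(\Delta)} = \phi|_{\Link_S(\Delta)}$. By the claim, $\Psi$ is a well-defined simplicial map into $(\IAAstrel)^{\leq R}$, and since no simplex of $B$ maps to a $\sigma$-edge, a $\sigma^2$, skew-additive, 2-skew-additive, skew-$\sigma^2$, or $\sigma$-additive simplex, $\Psi$ is vacuously regular. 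By \cref{lem_boundary_join} and \cref{cor_boundary_star}, $\partial B$ decomposes as $\Star_S(\Delta) \cup D$ with $D = (\{x_0, t\} \cup \{x_1, t\}) \ast \Link_S(\Delta)$ meeting $\Star_S(\Delta)$ in $\partial \Star_S(\Delta)$, so \cref{lem_regular_homotopy_by_balls} provides a regular homotopy in $\IAArel$ from $\phi$ to a map $\tilde\phi \colon \tilde S \to (\IAAstrel)^{\leq R}$ obtained by replacing $\phi|_{\Star_S(\Delta)}$ with $\Psi|_{D}$.

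Finally, I verify that $\tilde\phi$ has strictly fewer edgy simplices with 3-additive image than $\phi$. By \cref{lem_replacement_stars_maps}, any simplex outside $D$ has unchanged star, so I only need to examine new simplices in $D$. The only simplices of $D$ not already in $S$ are those containing $t$; since $\Psi(t) = w$, the two edges $\{x_0, t\}$ and $\{x_1, t\}$ map to 2-additive edges, and higher-dimensional new simplices map to double-triple simplices (and in particular contain no 3-additive edgy face). Thus $\Delta$ is eliminated and no new 3-additive edgy simplex is created. The modification also preserves $\sigma$-regularity and $\sigma$-smallness because $\Psi$ never touches a simplex mapping to a $\sigma$ edge. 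Iterating this procedure over (the finite set of) remaining 3-additive edgy simplices yields the desired map $\newlemmaphi$.
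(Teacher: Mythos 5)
Your proposal is correct and follows essentially the same route as the paper: after the overly-augmented cleanup via \cref{lem_avoid_overly_augmented}, you insert a new vertex mapped to $w = \langle \vec v_1 + \vec e_i\rangle$ over the edgy simplex (your ball $\Delta^2 \ast \Link_S(\Delta)$ with the replacement piece $D$ is exactly the paper's barycentric subdivision of $\Star_S(\Delta)$ at the barycentre of $\Delta$), resting on the same key claim that $\phi(\Star_S(\Delta))$ lands in the star of the double-triple simplex $\{v_0, v_1, w\}$, which the paper likewise only asserts by reference to the parallel argument for $\BAA_n$. The one point to add to your ``iterate'' step is that the replacement can create new overly augmented simplices, so \cref{lem_avoid_overly_augmented} must be re-applied before treating the next edgy simplex, as the paper briefly notes.
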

\begin{proof}
The proof of this lemma is entirely parallel to the one of \cite[Section 6, Step 1.1]{Brueck2022}, so we are brief here. By \cref{lem_avoid_overly_augmented}, we can assume that $\phi$ has no overly augmented simplices. Let $\Delta$ be an edgy simplex such that $\phi(\Delta) = \ls v_0, v_1 \rs$ is 3-additive. There are representatives $\vec v_0$ and $\vec v_1$ with $\rkfn(\vec v_0) = R = \rkfn(\vec v_1)$ and $\vec v_0 = \vec v_1 \pm \vec e_i \pm \vec e_j$ for some $1 \leq i \not = j \leq m$. Set $v\coloneqq \langle \vec v_1 \pm \vec e_i \rangle$, such that $\{v_0, v_1, v\}$ is a double-triple simplex.
Using that $\phi$ has no overly augmented simplices, one can verify that $\phi(\Star_{S}(\Delta))$ is contained in $\Star_{\IAAstrel}(\ls v_0, v_1, v \rs)$.

Now alter $S$ on $\Star_{S}(\Delta)$ by a adding a new point $t$ to the barycentre of $\Delta$ and subdividing the simplices in $\Star_{S}(\Delta)$ accordingly (as described in \cite[Section 6, Step 1.1]{Brueck2022}). The result is a combinatorial sphere $\newproofsphere$, see \cref{rem_subdivision_combinatorial_manifolds}. We define a map $\newproofphi$ by sending $t$ to $v$. The fact that $\phi(\Star_{S}(\Delta))\subseteq \Star_{\IAAstrel}(\ls v_0, v_1, v \rs)$ implies that this gives indeed a simplicial map $\newproofphi\colon  \newproofsphere \to \IAAstrel$ that is regularly homotopic in $\IAAstrel$ to $\phi$ by \cref{lem_regular_homotopy_by_balls}. (Again, the homotopy has image in $\IAAstrel$.) This process removes the edgy simplex $\Delta$ without introducing new edgy simplices with 3-additive image. 
It preserves $\sigma$-regularity (\cref{def_regular_maps}) and $\sigma$-smallness (\cref{def_sigma_small}) because no $\sigma$ simplex is contained in the star of the 3-additive simplex $\phi(\Delta)$.
The process might have introduced new overly augmented simplices, but using \cref{lem_avoid_overly_augmented}, these can again be removed without introducing new edgy simplices. Iterating this procedure removes all edgy simplices with 3-additive image from $\phi$.
\end{proof}

\begin{lemma}
\label{lem_avoid_edgy_standard}
Let $S$ be a combinatorial $k$-sphere and $\phi\colon  S \to (\IAAstrel)^{\leq R}$ a map that is $\sigma$-regular and $\sigma$-small. Then $\phi$ is in $\IAArel$ regularly homotopic to a map $\newlemmaphi\colon \newlemmasphere\to (\IAAstrel)^{\leq R}$ such that the following properties hold.
  \begin{enumerate}
  \item $\phi'$ is $\sigma$-regular.
  \item $\phi'$ is $\sigma$-small.
  \item $\phi'$ has no edgy simplices with 3-additive image.
  \item $\phi'$ has no edgy simplices with standard image.
  \end{enumerate}
\end{lemma}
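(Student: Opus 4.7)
The plan is to mimic the proof of \cref{lem_avoid_edgy_3_additive} almost verbatim, but with a carefully chosen auxiliary vertex $v$ of rank $0$ that will play the role that the 3-additive structure played in the previous lemma. Using \cref{lem_avoid_overly_augmented}, we may assume throughout that $\phi$ has no overly augmented simplices. We then remove edgy standard simplices one at a time, iterating until none remain. Fix such a simplex $\Delta \subset S$ with $\phi(\Delta) = \{v_0, v_1\}$ a standard edge, both of whose vertices have rank $R$. Choose primitive representatives $\bar v_0, \bar v_1$ with $\omega(\vec e_{m+n}, \bar v_0) = R = \omega(\vec e_{m+n}, \bar v_1)$, so that $\omega(\vec e_{m+n}, \bar v_0 - \bar v_1) = 0$. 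Write $\bar v_0 - \bar v_1 = d \vec v$ with $\vec v \in \mbZ^{2(m+n)}$ primitive, and set $v \coloneqq \langle \vec v \rangle$, a vertex of rank $0$. By the Euclidean algorithm (applied as in \cref{def:I-retraction}) we may after a preliminary reduction assume $d = 1$, so that $\{v_0, v_1, v\}$ is a $2$-additive simplex in $\IAAstrel$ whose augmentation core is the triple itself.

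The key verification is that $\phi(\Star_S(\Delta)) \subseteq \Star_{\IAAstrel}(\{v_0, v_1, v\})$; once this is established, the argument closes as in \cref{lem_avoid_edgy_3_additive} by barycentrically subdividing $\Star_S(\Delta)$ with a new vertex $t \mapsto v$, applying \cref{lem_regular_homotopy_by_balls} to realise this as a regular homotopy taking place entirely inside $\IAAstrel$, and then re-applying \cref{lem_avoid_overly_augmented} to clean up any overly augmented simplices that appeared. The verification amounts to checking that for every $w \in \phi(\Link_S(\Delta))$ the set $\{v_0, v_1, v, w\}$ is a simplex in $\IAAstrel$. We split into cases on the simplex type of $\phi(\Delta \cup \{\text{preimage of }w\})$ in $\IAAstrel$: the standard and $2$-additive cases follow directly from $v$ lying in $\langle \bar v_0, \bar v_1\rangle_{\mbQ} \cap \mbZ^{2(m+n)}$ together with $\rkfn(v)=0$; the mixed, double-triple and double-double cases use the absence of overly augmented simplices to pin down the augmentation pattern; and the $3$-additive case uses the assumption (carried over from \cref{lem_avoid_edgy_3_additive}) that $\phi$ has no edgy $3$-additive simplices to force the $3$-additive extension to be compatible with $v$.

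The replacement does not reintroduce edgy $3$-additive simplices because every new simplex in the image either lies in $\phi(\partial\Delta)$ (unchanged), contains $v$ (which has rank $0$), or is a face of the $2$-additive simplex $\{v_0, v_1, v\}$; in particular no new edgy edge is created. It preserves $\sigma$-regularity and $\sigma$-smallness because $\phi(\Delta)$ is a standard edge with both endpoints of rank $R$, and by $\sigma$-smallness (\cref{def_sigma_small}) no vertex of $\phi(\Delta)$ lies in a $\sigma$-edge of $\phi$ --- so the modification takes place in a region of $S$ disjoint from the stars of all simplices mapping to $\sigma$-edges, and the conclusions of \cref{lem_avoid_overly_augmented} guarantee the cleanup does not disturb these stars either.

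The main obstacle is the case analysis underpinning the containment $\phi(\Star_S(\Delta)) \subseteq \Star_{\IAAstrel}(\{v_0, v_1, v\})$. Unlike the $3$-additive setting of \cref{lem_avoid_edgy_3_additive}, where the vertex $v$ was dictated by the augmentation structure already present, here $v$ is introduced freshly, and we must rule out several potentially incompatible configurations in $\phi(\Link_S(\Delta))$ --- most delicately, those where a vertex $w$ pairs non-trivially under $\omega$ with the combination $\bar v_0 - \bar v_1$ even though $\omega(w, v_0) = \omega(w, v_1)$ is controlled. Dealing with this may require a preliminary Euclidean-type reduction of $d$ in several sub-steps (each of which decreases a suitable badness measure, in the spirit of \cite[Step 1.3]{Brueck2022}) before the final subdivision can be carried out.
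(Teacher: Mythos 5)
There is a genuine gap in the key verification. Your overall strategy (adjoin the rank-$0$ vertex $v = \langle \vec v_0 - \vec v_1\rangle$, modify $\phi$ on $\Star_S(\Delta)$ by a homotopy inside $\IAAstrel$, clean up with \cref{lem_avoid_overly_augmented}, iterate) is the right one, but your case analysis for the containment $\phi(\Star_S(\Delta)) \subseteq \Star_{\IAAstrel}(\{v_0,v_1,v\})$ cannot be made to work as stated. If some $\tilde\Delta \supseteq \Delta$ had mixed, $3$-additive, double-triple or double-double image, then $\{v\}\cup\phi(\tilde\Delta)$ would carry, respectively, a $\sigma$ edge together with \emph{two} additive relations, or three additive-type relations at once --- and no simplex type of $\IAAstrel$ (\cref{def:IAAst}) accommodates such configurations. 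So these cases cannot be ``handled by pinning down the augmentation pattern'' or by ``forcing compatibility with $v$''; they must be \emph{excluded outright}. Your proposed mechanisms do not exclude them: overly augmented simplices (\cref{def_overly_augmented}) are by definition of type $3$-additive, double-triple or double-double, so their absence says nothing about mixed simplices, and the absence of edgy $3$-additive simplices does not make a $3$-additive image ``compatible'' with $v$. The correct argument is: $3$-additive, double-triple and double-double images of simplices containing $\Delta$ are ruled out by the absence of overly augmented simplices together with the absence of edgy $3$-additive simplices (as in \cite[Section 6, Step 1.2]{Brueck2022}); mixed images are ruled out by $\sigma$-regularity, since the image of a $\sigma$ cross map contains only standard and $\sigma$ simplices (\cref{sec_sigma_regularity}); and $\sigma$ images are ruled out by the first condition of $\sigma$-smallness (\cref{def_sigma_small}), since both $v_0$ and $v_1$ have rank $R$. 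You never invoke $\sigma$-regularity at all and you omit the $\sigma$ case from your list; both are essential, also because the preservation of $\sigma$-regularity and $\sigma$-smallness after the replacement rests precisely on the fact that no $\sigma$ simplices appear in the modified region. Once all problematic types are excluded, every $\phi(\tilde\Delta)$ with $\tilde\Delta\supseteq\Delta$ is standard or $2$-additive, and adjoining $v$ yields $2$-additive, double-triple or double-double simplices, which do lie in $\IAAstrel$; this is how the containment is actually established.

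Two smaller points. First, the ``preliminary Euclidean reduction'' of $d$ is unnecessary: since $\{v_0,v_1\}$ is a standard edge, $\langle \vec v_0, \vec v_1\rangle$ is a rank-$2$ direct summand, so $\vec v_0 - \vec v_1$ is automatically primitive and $\{v_0,v_1,v\}$ is a $2$-additive simplex with no further work. Second, your claim that $\sigma$-smallness makes the modification region ``disjoint from the stars of all simplices mapping to $\sigma$-edges'' is not a direct consequence of \cref{def_sigma_small}; what is true (and what you need) is that no simplex of $\Star_S(\Delta)$ maps to a $\sigma$ or mixed simplex, which again follows only from the exclusion argument above.
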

\begin{proof}
The proof works similarly to that of \cite[Section 6, Step 1.2]{Brueck2022}. However, in contrast to the proofs of \cref{lem_avoid_overly_augmented} and \cref{lem_avoid_edgy_3_additive}, it also uses the assumptions that $\phi$ be $\sigma$-regular (\cref{def_regular_maps}) and $\sigma$-small (\cref{def_sigma_small}). This is why we give a few more details here than in the previous proofs.

Let $\Delta$ be an edgy simplex of $S$ such that $\phi(\Delta) = \ls v_0,v_1 \rs$ is a standard simplex. Choose representatives $\vec v_0, \vec v_1$ such that $\rkfn(\vec v_0) = R = \rkfn(\vec v_1)$ and let $v \coloneqq \langle \vec v_0 - \vec v_1 \rangle$. This is a vertex in $\IAAstrel$ and $\rkfn(v) = 0$. 

If $\tilde{\Delta} \supseteq \Delta$ is a simplex containing $\Delta$, then $\phi(\tilde{\Delta})$ contains the vertices $v_0$ and $v_1$, which have rank $R$. As $\phi$ has no overly augmented simplices and no edgy simplices with 3-additive image, it follows that $\phi(\tilde{\Delta})$ cannot be a 3-additive, double-triple or double-double simplex (see \cite[Section 6, Step 1.2]{Brueck2022}). As $\phi$ is $\sigma$-regular, $\phi(\tilde{\Delta})$ also cannot be a mixed simplex (see the description of $\sigma$-regular maps at the beginning of \cref{sec_separating_cross_polytopes}). Lastly, as $\phi$ is $\sigma$-small, the first condition of \cref{def_sigma_small} implies that $\phi(\tilde{\Delta})$ cannot be a $\sigma$ simplex. Hence $\phi(\Delta)$ has to be a standard or $2$-additive simplex.

Define $B$ to be the combinatorial $(k+1)$-ball 
\begin{equation*}
	B \coloneqq t \ast \Star_{S}(\Delta)
\end{equation*}
that is obtained by coning off $\Star_{S}(\Delta)$ with a new vertex $t$. Define 
\begin{equation*}
	\Psi\colon  B \to (\IAAstrel)^{\leq R}
\end{equation*}
by setting $\Psi|_{\Star_{S}(\Delta)} = \phi|_{\Star_{S}(\Delta)}$ and $\Psi(t) = v$. To see that $\Psi$ gives a well-defined map with image in $\IAAstrel$, note that by the previous paragraph, every $\tilde{\Delta}$ in $ \Star_{S}(\Delta)$ gets mapped to a standard or 2-additive simplex. In either case, it forms a simplex with $v$. These simplices are either 2-additive, double-triple or double-double simplices, so they are contained in $\IAAstrel$.
In particular, $\Psi$ is regular.
By \cref{lem_boundary_join}, the boundary of $B$ decomposes as
\begin{equation*}
	\partial B = \Star_{S}(\Delta) \cup (t \ast \partial \Star_{S}(\Delta))  = \Star_{S}(\Delta) \cup  (t \ast \Link_{S}(\Delta) \ast \partial \Delta ).
\end{equation*}
By \cref{cor_boundary_star}, both $\Star_{S}(\Delta)$ and $t \ast \Link_{S}(\Delta) \ast \partial \Delta$  are combinatorial $k$-balls and their intersection is given by
\begin{equation*}
	\partial \Star_{S}(\Delta) =  \Link_{S}(\Delta) \ast \partial \Delta = \partial (t \ast \Link_{S}(\Delta) \ast \partial \Delta ). 
\end{equation*}
Hence by \cref{lem_regular_homotopy_by_balls}, $\phi$ is regularly homotopic to a simplicial map 
\begin{equation*}
	\newproofphi\colon \newproofsphere \to (\IAAstrel)^{\leq R}
\end{equation*}
that is obtained by replacing $\phi|_{\Star_{S}(\Delta)} = \Psi|_{\Star_{S}(\Delta)}$ by $\Psi|_{(t \ast \Link_{S}(\Delta) \ast \partial \Delta )}$. 
As $\Psi(t \ast \Link_{S}(\Delta) \ast \partial \Delta )$ contains no $\sigma$ simplices, the map $\newproofphi$ is still $\sigma$-regular and $\sigma$-small. As $\Psi(t \ast \Link_{S}(\Delta) \ast \partial \Delta )$ contains no 3-additive simplices and $\phi$ had no edgy simplices with 3-additive image, neither does $\newproofphi$. 
After possibly applying \cref{lem_avoid_overly_augmented}, we can remove further edgy simplices with standard image in order to obtain the map $\newlemmaphi$.
\end{proof}

What remains to be done is to remove edgy simplices with 2-additive image. We do this in the following lemma. In contrast to the previous steps, the removal process here does not preserve $\sigma$ regularity. Hence, the first condition for $\sigma$-smallness in \cref{def_sigma_small} makes no sense for the resulting map. However, the second condition does and it will be preserved by this process.

\begin{lemma}
\label{lem_avoid_edgy_2_additive}
Let $S$ be a combinatorial $k$-sphere and $\phi\colon  S \to (\IAAstrel)^{\leq R}$ a map that is $\sigma$-regular and $\sigma$-small. Then $\phi$ is in $\IAArel$ regularly homotopic to a map $\newlemmaphi\colon \newlemmasphere\to (\IAAstrel)^{\leq R}$ such that the following properties hold.
  \begin{enumerate}
  \item If $\Delta\subset S$ is a simplex such that $\newlemmaphi(\Delta)$ is a $\sigma$ edge, then for all $x\in \Delta$, we have $\rkfn(\phi'(x))<R$.
  \item $\newlemmaphi$ has no edgy simplices with 3-additive image.
  \item $\newlemmaphi$ has no edgy simplices with standard image.
  \item $\newlemmaphi$ has no edgy simplices with 2-additive image.
  \end{enumerate}
In particular, $\phi'$ has no edgy simplices.
\end{lemma}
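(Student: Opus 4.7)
The plan is to remove edgy simplices with 2-additive image one at a time by a cone-off construction of the same shape as in the proof of \cref{lem_avoid_edgy_standard}, but using an orthogonal low-rank vertex in place of the line $\langle \vec v_0-\vec v_1\rangle$ (which in the 2-additive situation equals the inaccessible line $e_i$). First, by applying \cref{lem_avoid_overly_augmented}, \cref{lem_avoid_edgy_3_additive}, and \cref{lem_avoid_edgy_standard} in order, we may assume that $\phi$ is $\sigma$-regular, $\sigma$-small, has no overly augmented simplices, and has no edgy simplices with standard or 3-additive image.

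Let $\Delta=\{x_0,x_1\}$ be an edgy simplex of $\phi$ with $\phi(\Delta)=\{v_0,v_1\}$ 2-additive, with representatives chosen so that $\vec v_0=\vec v_1+\vec e_i$ for some $1\le i\le m$ and $\rkfn(\vec v_0)=R=\rkfn(\vec v_1)$. Mimicking the star analysis in the proof of \cref{lem_avoid_edgy_standard}, one checks that every $\tilde\Delta\in\Star_S(\Delta)$ has $\phi(\tilde\Delta)$ of type 2-additive or mixed with 2-additive augmentation core $\{v_0,v_1,e_i\}$, and in particular that every vertex of $\phi(\tilde\Delta)\setminus\{v_0,v_1\}$ is symplectically orthogonal to both $v_0$ and $v_1$. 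The assumption that there are no overly augmented or edgy 3-additive/standard simplices rules out 3-additive, double-triple, and double-double extensions of $\{v_0,v_1\}$, while $\sigma$-smallness --- which forbids $v_0$ or $v_1$ from lying in the image of a $\sigma$ cross map --- forces any $\sigma$ edge occurring in $\phi(\tilde\Delta)$ to be disjoint from $\{v_0,v_1\}$.

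The key step is to choose a vertex $u\in\IAAstrel$ with $\rkfn(u)<R$ that is symplectically orthogonal to every vertex in the finite set $\phi(\Star_S(\Delta))$ and distinct from all of them. Since the vertices in $\phi(\Star_S(\Delta))$ span a submodule $M\subseteq\mbZ^{2(m+n)}$ whose rank is bounded by a constant depending only on $\dim(\Star_S(\Delta))\le n$, while the ambient symplectic module has rank $2(m+n)\ge 4$ by our standing assumption, such $u$ exists by a dimension count: one first picks any primitive line in $(M+\langle \vec e_1,\dots,\vec e_m\rangle)^\perp$ satisfying the vertex conditions of \cref{def_linkhat} via \cref{lem_primitive_for_form}, and then applies the Euclidean algorithm against $\vec e_{m+n}$ (or a suitable combination) to lower its rank below $R$. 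Setting $B\coloneqq t\ast\Star_S(\Delta)$, which is a combinatorial $(k+1)$-ball by \cref{lem_boundary_join}, and defining $\Psi\colon B\to(\IAAstrel)^{\le R}$ by $\Psi|_{\Star_S(\Delta)}=\phi|_{\Star_S(\Delta)}$ and $\Psi(t)=u$, the analysis of the previous paragraph shows that for every $\tilde\Delta\in\Star_S(\Delta)$ the image $\Psi(t\ast\tilde\Delta)=\phi(\tilde\Delta)\cup\{u\}$ is a 2-additive or mixed simplex of $\IAAstrel$ with the same augmentation core as $\phi(\tilde\Delta)$; hence $\Psi$ is a well-defined regular map with image in $(\IAAstrel)^{\le R}$.

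Applying \cref{lem_regular_homotopy_by_balls} then yields a map $\newproofphi\colon\newproofsphere\to(\IAAstrel)^{\le R}$ obtained by replacing $\phi|_{\Star_S(\Delta)}$ with $\Psi|_{t\ast\partial\Star_S(\Delta)}$. The edge $\Delta$ is no longer present in $\newproofsphere$, so the chosen edgy 2-additive simplex has been removed. Every new edge in $\newproofsphere$ is of the form $\{y,t\}$ for $y\in\partial\Delta\cup\Link_S(\Delta)$, and its image $\{\phi(y),u\}$ is standard or 2-additive (since $u$ is orthogonal to $\phi(y)$) with one endpoint of rank $<R$, so it is neither edgy nor a $\sigma$ edge. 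Hence $\newproofphi$ still satisfies condition~(1), has strictly fewer edgy 2-additive simplices than $\phi$, and inherits from $\phi$ the absence of edgy simplices with standard, 3-additive or $\sigma$ image. Iterating the procedure produces the required $\newlemmaphi$. The main obstacle will be the careful star analysis in the presence of mixed simplices and the verification that the required vertex $u$ exists with all necessary orthogonality, distinctness and low-rank properties; both are technical but proceed in direct analogy with the star analyses in the proofs of \cref{lem_avoid_edgy_3_additive} and \cref{lem_avoid_edgy_standard} and the arguments in \cite[Section~6]{Brueck2022}.
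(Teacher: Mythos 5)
There is a genuine gap in your construction: the cone vertex $u$ you need does not exist in general. For the cone-off $\Psi\colon t\ast\Star_S(\Delta)\to\IAAstrel$ to be simplicial, you need $\phi(\tilde\Delta)\cup\{u\}$ to be a simplex of $\IAAstrel$ for \emph{every} simplex $\tilde\Delta$ of $\Star_S(\Delta)$, and orthogonality plus low rank is not enough for this: every simplex type in $\IAAstrel$ forces the relevant standard part, together with $e_1,\dots,e_m$, to span an \emph{isotropic direct summand}, whose rank is bounded by $m+n$. In the essential case $k=n$, a maximal simplex $\tilde\Delta$ of $\Star_S(\Delta)$ on which $\phi$ is injective can have image a $2$-additive simplex $\{v_0,v_1,w_2,\dots,w_n\}$ with external core $\{v_0,v_1,e_i\}$, whose standard part $\{e_1,\dots,e_m,v_1,w_2,\dots,w_n\}$ already spans an isotropic summand of the maximal rank $m+n$. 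Adjoining any $u$ outside this span would require an isotropic summand of rank $m+n+1$, which is impossible; and since $u$ is orthogonal to everything and not a sum of existing vectors, the union also cannot be mixed, double-triple or double-double. So $\Psi$ is simply not well defined on top-dimensional stars, which is precisely the case the lemma must handle. (This is also why the analogous cone-off works in the standard-image case, \cref{lem_avoid_edgy_standard}: there the new vertex $\langle\vec v_0-\vec v_1\rangle$ lies \emph{inside} the span of the existing vertices, so no rank obstruction arises; in the $2$-additive case that line is $e_i$, which is excluded from the relative complex, and no substitute outside the span can work.)

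The paper's proof takes a different route exactly to avoid this: it is a cut-out argument in the style of \cite[Section 6, Step 1.3]{Brueck2022}. One sets $K(\Delta)=\Linkhat^{<R}_{\IArel}(v_0)$, shows using \cref{lem_compare_linkhatIA_and_linhatIAA}, \cref{equality_link_linkhat} and the absence of overly augmented simplices that $\phi(\Link_S(\Delta))\subseteq K(\Delta)$, and then uses that $K(\Delta)$ is $(n-2)$-connected (\cref{lem_con_linkhat_less_IA}, which rests on \cref{IA-retraction} and the connectivity of $\IArel$) to fill the link sphere $\Link_S(\Delta)$ (of dimension at most $n-2$, since $\dim\Delta\geq 1$) by a ball mapping into $K(\Delta)$; the star is then replaced by a map with image in $\phi(\partial\Delta)\ast K(\Delta)$ via \cref{lem_regular_homotopy_by_balls}. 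No single new vertex is introduced, so the rank obstruction never appears; the price is that the resulting map need no longer be $\sigma$-regular, which the paper accommodates by only retaining the rank condition on $\sigma$ edges (property (1)) through the iteration. If you want to salvage a cone-type argument you would have to replace the single cone point by a filling of the whole link inside a suitable highly connected complex, which is essentially the paper's argument.
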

\begin{proof}
The proof of this lemma works similarly to the one of \cite[Section 6, Step 1.3]{Brueck2022}, we provide an outline in what follows.
We can assume that all conditions of \cref{lem_avoid_edgy_standard} are satisfied and that (using \cref{lem_avoid_overly_augmented}) $\phi$ has no overly augmented simplices. These in particular imply the first property in the statement of this lemma. In what follows, we do not use the assumption that $\phi$ be $\sigma$-regular and $\sigma$-small, which are only true at the first step of our iterative removal procedure.

Let $\Delta$ be a maximal edgy simplex with 2-additive image, i.e.~$\phi(\Delta) = \ls v_0, v_1 \rs$, where $\rkfn(v_0) = \rkfn(v_1)$ and $\vec v_1 = \vec v_0\pm \vec e_i$ for some $1\leq i \leq m$.
The general strategy is the same as that of the proof of \cref{lem_avoid_overly_augmented}: We define a complex $K(\Delta)$ and modify $\phi|_{\Star_{S}(\Delta)}$ such that the result has image in $\phi(\partial \Delta) \ast K(\Delta)$.
Define $K(\Delta) \coloneqq \Linkhat_{\IArel}^{<R}(v_0)$.

Using \cref{lem_compare_linkhatIA_and_linhatIAA}, it is not hard to see that
$\phi(\Link_{S}(\Delta)) \subseteq \Link_{\IAAstrel}^{<R}(\phi(\Delta))$.
(This uses that $\phi$ has no overly augmented simplices and no edgy simplices whose image is standard.)
By \cref{it_eq_link_linkhat_IAA_2add} of \cref{equality_link_linkhat},
we have 
\begin{equation*}
	\Link_{\IAAstrel}^{<R}(\phi(\Delta)) = \Link_{\IAAstrel}^{<R}(\ls v_0, \langle \vec v_0\pm \vec e_i\rangle \rs) = \Linkhat_{\IAAstrel}^{<R}(\phi(\Delta)).
\end{equation*}
By \cref{lem_compare_linkhatIA_and_linhatIAA}, every simplex of $\Linkhat_{\IAAstrel}^{<R}(\phi(\Delta))$ is either contained in 
\begin{equation*}
	\Linkhat_{\IArel}^{<R}(v_0) = K(\Delta)
\end{equation*}
or is of type double-triple. But as $\rkfn(v_0) = R$ and $\phi$ has no overly augmented simplices, there are no double-triple simplices in $\phi(\Link_{S}(\Delta))$ (see \cite[Observation 6.3]{Brueck2022}). Hence, we have 
\begin{equation*}
	\phi(\Link_{S}(\Delta)) \subseteq K(\Delta).
\end{equation*}

The complex $K(\Delta)$ is $(n-2)$-connected by \cref{lem_con_linkhat_less_IA}. So as 
\begin{equation*}
\dim \Link_{S}(\Delta) = k-\dim(\Delta)-1 \leq n - 2,
\end{equation*}
the complex $K(\Delta)$ is $(\dim \Link_{S}(\Delta))$-connected.
By \cref{lem_regular_homotopy_by_balls}, $\phi$ is regularly homotopic to a simplicial map 
\begin{equation*}
	\newproofphi\colon \newproofsphere \to (\IAAstrel)^{\leq R}
\end{equation*}
that is obtained by replacing $\phi|_{\Star_{S}(\Delta)}$ by a map that has image in $\phi(\partial \Delta) \ast K(\Delta)$. It is easy to see that this removes the edgy simplex $\Delta$ without introducing new edgy simplices. Again, the homotopy takes place in $\IAAstrel$.
The complex $\phi(\partial \Delta) \ast K(\Delta)$ can contain $\sigma$ edges. However, by \cref{lem_compare_linkhatIA_and_linhatIAA}, we have 
\begin{equation*}
	K(\Delta)\subseteq \Linkhat_{\IAAstrel}(\Delta),
\end{equation*}
so every such $\sigma$ edge is contained in $K(\Delta)$. As $K(\Delta)\subset (\IAAstrel)^{<R}$ by definition, all vertices of such a $\sigma$ edge have rank less than $R$. Hence, the map $\newproofphi$ still satisfies the first property in the statement of this lemma. It need not be $\sigma$-regular any more though because its restrictions to the stars of the potential new $\sigma$ simplices in $K(\Delta)$ need not be $\sigma$ cross maps.
This allows us to iterate this procedure (after possibly applying \cref{lem_avoid_overly_augmented} again) to obtain a map $\newlemmaphi$ that satisfies all four properties in the statement of the lemma. This implies that $\newlemmaphi$ has no edgy simplices as the image of every edgy simplex is a standard, 2-additive, 3-additive or $\sigma$ simplex.
\end{proof}

We now combine the results of \cref{sec_sigma_regularity_obtained}, \cref{sec_separating_cross_polytopes} and \cref{sec_edgy_simplices} for proving \cref{prop_isolating-rank-r-vertices}.

\begin{proof}[Proof of \cref{prop_isolating-rank-r-vertices}]
Let $S$ be a combinatorial $k$-sphere and 
\begin{equation*}
	\phi\colon  S \to (\IAAstrel)^{\leq R}
\end{equation*}
a simplicial map.
Using \cref{lem_sigma_regular_can_be_assumed}, we can assume that $\phi$ is $\sigma$-regular. 
Using \cref{lem_sigma_small_obtained}, we can then assume that it is also $\sigma$-small. This allows us to apply \cref{lem_avoid_edgy_2_additive} and assume that $\phi$ satisfies all properties of this lemma. In particular, we can assume that $\phi$ has no edgy simplices.

To show that under these conditions, $\phi$ actually has \hyperref[it_isolation]{Isolation}, let $\Delta$ be a bad simplex of $S$ and set $v\coloneqq \phi(\Delta)$. We need to see that for all $x \in \Link_{S}(\Delta)$, we have  
\begin{equation}
\label{eq_isolation_obtained}
	\phi(x)\in \ls v \rs \cup \Linkhat^{<R}_{\IAAstrel}(v).
\end{equation}
As $\phi$ is simplicial, we have 
\begin{equation*}
	\phi(\Link_{S}(\Delta)) \subseteq \Star_{\IAAstrel}(v) = \ls v \rs \cup \Link_{\IAAstrel}(v).
\end{equation*}
Assume that there was $x\in \Link_{S}(\Delta)$ that gets mapped to a vertex of rank equal to $R$. Then, as there are no edgy simplices, we have $\phi(x) = v $. Consequently,
\begin{equation*}
 \phi(\Link_{S}(\Delta)) \subseteq \ls v \rs \cup \Link^{<R}_{\IAAstrel}(v).
\end{equation*}
To show that \cref{eq_isolation_obtained} holds, by \cref{it_eq_link_linkhat_IAA_v} of \cref{equality_link_linkhat}, it only remains to verify that no vertex in $\phi(\Link_{S}(\Delta))$ forms a $\sigma$ simplex with $\phi(\Delta) = v$. But as $\rkfn(v)  = R$, this follows from the first condition of \cref{lem_avoid_edgy_2_additive}.
\end{proof}

\section{Theorem B: A presentation of \texorpdfstring{$\St^\omega_n$}{St\unichar{"005E}\unichar{"1D714}}}
\label{sec:presentation}

Recall that by the Solomon--Tits Theorem (\cref{Solomon-Tits}), the reduced homology of the symplectic Tits building $T^\omega_n$ is concentrated in dimension $n-1$. This  homology group is what we call the \emph{symplectic Steinberg module}
\begin{equation*}
	\St^\omega_n = \Stsymp_n(\mbQ) \coloneqq \widetilde H_{n-1}(T^\omega_n;\mbZ).
\end{equation*}
In this section, we obtain a presentation of this module by proving \autoref{thmB}. The proof is an induction on the genus $n$. We start by giving 
an overview and outline the four steps of the argument, which correspond to the 
four subsections.\medskip

\textbf{Outline of the proof:} \cref{subsec:highly-connected-complex-IAA-zero-one-two} contains topological preparations.  We enlarge the three highly connected, nested simplicial complexes $(\IAA, \IA, \I)$ that we studied in the previous sections to three highly connected, nested simplicial complexes $(\IAA^{(2)}, \IAA^{(1)}, \IAA^{(0)})$. The high connectivity results for the latter are a corollary of the connectivity results for the former. The key difference is that the local structure of the complexes $(\IAA^{(2)}, \IAA^{(1)}, \IAA^{(0)})$ is ``nice''; e.g.\ links of (certain) simplices in $\IAA^{(2)}$ are isomorphic to copies of $\IAA[k]^{(0)}$ with $k < n$. In \cref{subsec:intermediate-exact-sequence}, we use this local structure and the long exact homology sequence of the triple $(\IAA^{(2)}, \IAA^{(1)}, \IAA^{(0)})$ to construct an ``intermediate'' exact sequence relating the symplectic Steinberg module $\St_n^\omega$ to direct sums 
of $\St^{\omega}_k$ with $k < n$. The exactness of this ``intermediate'' sequence is a key ingredient of the induction argument on the genus $n$ described in the final subsection. Indeed, in \cref{subsec:reduction-of-theorem-b-to-an-exact-sequence}, we reduce \autoref{thmB} to checking that a certain sequence
\begin{equation}
	\label{eq:presentation-right-exact-sequence}
	\SigmaAdditiveApartmentModule_n \oplus \SkewApartmentModule_n \xrightarrow{\AdditiveMap_n \oplus \SkewMap_n} \ApartmentModule_n \xrightarrow{\ApartmentMap_n} \St^\omega_n \longrightarrow 0
\end{equation}
of ``(augmented) apartment'' $\Sp{2n}{\mbZ}$-modules is exact. In \cref{subsec:commutative-diagram}, we construct a commutative diagram that relates the ``intermediate'' exact sequence with the sequence in 
\cref{eq:presentation-right-exact-sequence}. The key point is that this diagram also contains a direct sum of copies of \cref{eq:presentation-right-exact-sequence} but for ``smaller'' Steinberg modules 
$\St^{\omega}_k$ with $k < n$. This is where the induction hypothesis is applied in the last step of the argument. In the final \cref{subsec:proof-of-presentation-right-exact-sequence}, 
we use induction on the genus $n$ and a diagram 
chasing argument to show that the sequence in 
\cref{eq:presentation-right-exact-sequence} is exact. Throughout this section, we make the standing assumption that
\begin{equation*}
	n \geq 1.
	\tag{Standing assumption}
\end{equation*}

\subsection{\texorpdfstring{$(\IAA^{(2)}, \IAA^{(1)}, \IAA^{(0)})$}{(IAA\unichar{"005E}(2), IAA\unichar{"005E}(1), IAA\unichar{"005E}(0))} and an intermediate exact sequence}
\label{subsec:highly-connected-complex-IAA-zero-one-two}

This subsection contains topological preparations for the proof of 
\autoref{thmB}. We enlarge the nested complexes $(\IAA, \IA, 
\I)$ to nested complexes $(\IAA^{(2)}, \IAA^{(1)}, 
\IAA^{(0)})$, prove that each is highly connected and describe their local 
structure. From this we construct an ``intermediate'' exact sequence, which is a key ingredient of the proof of \autoref{thmB}.

\subsubsection{The complex \texorpdfstring{$\IAA^{(2)}$}{IAA\unichar{"005E}(2)} and its subcomplexes}
\label{sec_int_cpx}

To define $\IAA^{(2)}$ and its subcomplexes, we introduce some new types of ``mixed'' simplices.

\begin{definition}
\label{def_general_mixed_simplices}
Let 
\begin{equation*}
\VertexSet{n} \coloneqq \ls v \subseteq \mbZ^{2n} \,\middle|\, v \text{ is a rank-1 summand of } \mbZ^{2n}\rs
\end{equation*}
be as in \cref{eq_V_n}. Let $\tau$ and $\tau'$ be simplex types of the form
\begin{align*}
	\tau &\in  \ls \text{standard}, \text{ 2-additive}, \text{ 3-additive}, \text{ double-triple}, \text{ double-double}\rs,\\
	\tau' &\in \ls \sigma, \text{ skew-additive}, \text{ 2-skew-additive}, \,\sigma^2, \text{ skew-}\sigma^2, \,\sigma\text{-additive} \rs,
\end{align*}
where the simplex types are defined as in \cref{def:IA-simplices}, \cref{def:BAA-simplices} and \cref{def:IAA-simplices}.
A subset 
$\Delta = \{v_0, \dots, v_k\} \subset \VertexSet{n}$
of $(k+1)$ lines is called a \emph{$(\tau, \tau')$ simplex} if there is a subset $\Delta'\subset \Delta$ such that $\Delta'$ is a minimal simplex of type $\tau'$ and $\Delta \setminus \Delta'$ is a simplex of type $\tau$ such that every $v \in \Delta \setminus \Delta'$ is contained in $\langle\ls \vec v' \mid v'\in \Delta'\rs \rangle^{\perp}$.
\end{definition}

Note that a mixed simplex as defined in \cref{def:IA-simplices} is a $(\text{2-additive}, \sigma)$ simplex in this notation. A $(\text{standard}, \tau')$ simplex is just the same as a $\tau'$ simplex. (This follows using \cref{lem_split_off_symplectic_summand}.)

\begin{example}
Let $\{ \vec e_1, \ldots, \vec e_n, \vec f_1, \ldots, \vec f_n \}$ be a symplectic basis of $\mbZ^{2n}$ and let $1\leq k \leq n$. Then
\begin{enumerate} 
\item $\ls \langle \vec e_1 + \vec e_2 + \vec e_3 \rangle, e_1, e_2, e_3, \ldots, e_{k-1}, e_k, f_k  \rs$ is a $(\text{3-additive}, \sigma)$ simplex, where the minimal simplex of type $\sigma$ is $ \Delta' = \{ e_k, f_k\}$;
\item $\{ \langle \vec e_1 + \vec e_2 \rangle, e_1, e_2, e_3, \ldots, e_{k-1}, e_k, f_k, \langle \vec e_k + \vec f_k \rangle  \}$ is a $(\text{2-additive}, \sigma\text{-additive})$ simplex, where the minimal $\sigma$-additive simplex is $\Delta' = \{ e_k, f_k, \langle \vec e_k + \vec f_k \rangle\}$.
\end{enumerate}
\end{example}

The nested complexes $(\IAA^{(2)}, \IAA^{(1)}, \IAA^{(0)})$ are now defined in such a way that they allow for more and more symplectic pairs in their simplices. I.e.\ simplices in $\IAA^{(0)}$ do not contain any symplectic pair (such as standard or double-triple simplices), simplices in $\IAA^{(1)}$ contain at most one symplectic pair (such as $\sigma$ or $( \text{double-double},\sigma)$ simplices) and simplices in $\IAA^{(2)}$ contain at most two symplectic pairs (such as $\sigma^2$ but also $\sigma$-additive simplices). For technical reasons, we also define a complex $\IAA^{(1.5)}$ that sits between $\IAA^{(2)}$ and $\IAA^{(1)}$. \cref{fig_intermediate_complexes} illustrates the inclusion relations of these complexes as well as their relation to the complexes $\IAAst$ and $\IAA$ studied in earlier sections.

\begin{definition}
\label{def_intermediate_complexes}
The simplicial complexes $\IAA^{(i)}$ all have vertex set $\VertexSet{n}$.
\begin{enumerate}
\item The simplices of $\IAA^{(0)}$ are all either standard, 2-additive, 3-additive, double-triple or double-double.
\item The simplices of $\IAA^{(1)}$ are all either standard, 2-additive, 3-additive, double-triple, double-double or $(\tau, \tau')$ with
\begin{align*}
	\tau &\in  \ls \text{standard}, \text{ 2-additive}, \text{ 3-additive}, \text{ double-triple}, \text{ double-double}\rs,\\
	\tau' &\in \ls \sigma\rs.
\end{align*}
\item The simplices of $\IAA^{(1.5)}$ are all either standard, 2-additive, 3-additive, double-triple, double-double or $(\tau, \tau')$ with
\begin{align*}
	\tau &\in  \ls \text{standard}, \text{ 2-additive}, \text{ 3-additive}, \text{ double-triple}, \text{ double-double}\rs,\\
	\tau' &\in \ls \sigma, \text{ skew-additive}, \text{ 2-skew-additive}\rs.
\end{align*}
\item The simplices of $\IAA^{(2)}$ are all either standard, 2-additive, 3-additive, double-triple, double-double or $(\tau, \tau')$ with
\begin{align*}
	\tau &\in  \ls \text{standard}, \text{ 2-additive}, \text{ 3-additive}, \text{ double-triple}, \text{ double-double}\rs,\\
	\tau' &\in \ls \sigma, \text{ skew-additive}, \text{ 2-skew-additive}, \,\sigma^2, \text{ skew-}\sigma^2, \,\sigma\text{-additive} \rs.
\end{align*}
\end{enumerate}
Finally, if $\SymplecticSummand \subseteq \mbZ^{2n}$ is a summand of genus $k \leq n$, we let $\IAA[]^{(i)}(\SymplecticSummand)$ denote the full subcomplex of $\IAA^{(i)}$ on the set $\VertexSet{n} \cap \SymplecticSummand$ of rank-1 summands of $\SymplecticSummand$.
\end{definition}

\begin{figure}
\begin{equation*}
\xymatrix{
	\IAA^{(0)} \ar@{^{(}->}[r] &   \IAA^{(1)} \ar@{^{(}->}^<<<<<{\simeq}[r] &    \IAA^{(1.5)} \ar@{^{(}->}[r] &   \IAA^{(2)}\\
	& \IAAst \ar@{^{(}->}[u] \ar@{^{(}->}[rr] & & \IAA \ar@{^{(}->}[u]
}
\end{equation*}
\caption{The complexes $\IAA^{(i)}$ and their relation to the complexes $\IAAst$ and $\IAA$.}
\label{fig_intermediate_complexes}
\end{figure}

The complexes $\IAA^{(1)}$ and $\IAA^{(1.5)}$ are, in fact, homotopy equivalent as the following lemma shows.

\begin{lemma}
\label{lem_hom_eq_1_15}
There is a deformation retraction $\IAA^{(1.5)} \to \IAA^{(1)}$.
\end{lemma}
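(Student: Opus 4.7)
The plan is to exhibit a sequence of elementary simplicial collapses $\IAA^{(1.5)} \searrow \IAA^{(1)}$ by pairing each ``extra'' simplex of $\IAA^{(1.5)}$ with a canonical partner one dimension higher. By \cref{def_intermediate_complexes} the simplices of $\IAA^{(1.5)} \setminus \IAA^{(1)}$ are exactly the $(\tau, \tau')$-simplices with $\tau' \in \{\text{skew-additive}, \text{2-skew-additive}\}$, and the orthogonality condition in \cref{def_general_mixed_simplices} ensures that any such simplex $\Delta$ has a unique augmentation core, either a minimal skew-additive triple or a minimal 2-skew-additive quadruple. I will call these \emph{skew-add-core} and \emph{2-skew-add-core} simplices respectively.

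Next I would set up a canonical pairing. For each skew-add-core $\Delta$, write its core as $\{v_0, v_1, v_2\}$ with $\omega(\vec v_2, \vec v_0) = \omega(\vec v_2, \vec v_1) = \pm 1$ and $\omega(\vec v_0, \vec v_1) = 0$, as in \cref{def:IAA-simplices}(2), and set $v(\Delta) \coloneqq \langle \vec v_0 + \vec v_1 \rangle$. \cref{lem_unique_skew_additive_face} shows that $\{v_0, v_1, v_2, v(\Delta)\}$ is the unique 2-skew-additive simplex containing the core, and one checks $v(\Delta) \notin \Delta$ (otherwise the core of $\Delta$ would already be 2-skew-additive). Since the non-core part $\Delta_0 = \Delta \setminus \{v_0, v_1, v_2\}$ lies in $\langle \vec v_0, \vec v_1, \vec v_2 \rangle^\perp \subseteq \langle \vec v_0, \vec v_1 \rangle^\perp$ and $v(\Delta)$ lies in $\langle \vec v_0, \vec v_1 \rangle$, the augmented simplex $\Delta \cup \{v(\Delta)\}$ is a valid $(\tau, \text{2-skew-additive})$-simplex with the same non-core part as $\Delta$. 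Conversely, every 2-skew-add-core simplex is uniquely of this form, giving a dimension-shifting bijection between skew-add-core and 2-skew-add-core simplices.

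The heart of the argument is to collapse the pairs $(\Delta, \Delta \cup \{v(\Delta)\})$ in decreasing order of $\dim \Delta$, verifying inductively that at each stage $\Delta$ is a free face of $\Delta \cup \{v(\Delta)\}$ in the current complex $K$. Assume all pairs with skew-add-core dimension strictly larger than $d = \dim \Delta$ have already been removed, and let $\Delta''$ be any proper coface of $\Delta$ in $K$; then $\Delta''$ contains $\{v_0, v_1, v_2\}$ and is itself skew-add-core or 2-skew-add-core. A skew-add-core $\Delta''$ would have dimension exceeding $d$ and so would have been collapsed, contradiction. A 2-skew-add-core $\Delta''$ must, by uniqueness of the 2-skew-additive extension, have core $\{v_0, v_1, v_2, v(\Delta)\}$, so $\Delta'' = \Delta \cup \{v(\Delta)\} \cup T$ for some $T$ disjoint from $\Delta \cup \{v(\Delta)\}$. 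If $T \ne \emptyset$, then the face $\Delta \cup T$ is skew-add-core of dimension $d + |T| > d$ (its augmentation core is still $\{v_0, v_1, v_2\}$, since $v(\Delta) \notin \Delta \cup T$), hence was already removed, contradicting $\Delta'' \in K$. So $T = \emptyset$ and $\Delta \cup \{v(\Delta)\}$ is the unique proper coface of $\Delta$ in $K$, making the elementary collapse valid.

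The main obstacle is precisely this free-face verification, and it is handled cleanly by the inductive order: performing the collapses in decreasing order of dimension ensures that by the time we reach a given skew-add-core $\Delta$, every competing higher-dimensional coface has already been eliminated, leaving only the canonical partner $\Delta \cup \{v(\Delta)\}$. Iterating down to dimension zero removes all skew-add-core and 2-skew-add-core simplices from $\IAA^{(1.5)}$ and produces the desired simplicial collapse onto $\IAA^{(1)}$, and hence a deformation retraction.
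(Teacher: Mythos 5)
Your proposal is correct and follows essentially the same route as the paper: the paper's proof reduces to the argument of \cref{lem_deformation_retract_2skew_additive}, pairing each $(\tau,\text{skew-additive})$ simplex with the unique $(\tau,\text{2-skew-additive})$ simplex one dimension higher (via \cref{lem_unique_skew_additive_face}) and pushing these free faces in, in decreasing order of dimension — exactly your collapsing scheme, with your inductive free-face verification simply making the paper's brief sketch explicit. One tiny caveat: the "unique augmentation core" claim is slightly overstated (a simplex containing both the skew-additive triple and the vertex $\langle \vec v_0 \pm \vec v_1\rangle$ formally admits both readings), but since your matching is really governed by whether the simplex contains that additive vertex, which is unambiguous, the argument is unaffected.
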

\begin{proof}
	
The complex $\IAA^{(1.5)}$ is obtained from the complex $\IAA^{(1)}$ by attaching the simplices of type $(\tau, \text{2-additive})$ and $(\tau, \text{2-skew-additive})$. 
The proof now works exactly as that of \cref{lem_deformation_retract_2skew_additive}: 

As in \cref{lem_unique_skew_additive_face}, one can see that for each $k$-dimensional $(\tau, \text{2-additive})$ simplex $\Delta$, there is a unique $(\tau, \text{2-skew-additive})$ simplex of dimension $(k+1)$ that has $\Delta$ as a face. Starting with the top-dimensional simplices, we can use this to push in all $(\tau, \text{2-additive})$ simplices, which removes both the $(\tau, \text{2-additive})$ and the $(\tau, \text{2-skew-additive})$ simplices.
\end{proof}

\subsubsection{The local structure of \texorpdfstring{$(\IAA^{(2)}, \IAA^{(1)}, \IAA^{(0)})$}{(IAA\unichar{"005E}(2), IAA\unichar{"005E}(1), IAA\unichar{"005E}(0))}}
\label{sec_intermediate_complexes_links}

In this short subsection, we describe the local structure of the complexes $\IAA^{(2)}$ and $\IAA^{(1)}$ by relating the link of certain simplices to $\IAA[k]^{(0)}$ for $k < n$.

\begin{lemma}
	\label{lem_links_mixed_sigma}
	Let $\Delta$ be a minimal $\sigma$ simplex in $\IAA^{(1)}$. Then 
	$$\Link_{\IAA^{(1)}}(\Delta) = \IAA[]^{(0)}(\langle \Delta \rangle^\perp) \cong \IAA[n-1]^{(0)}.$$
\end{lemma}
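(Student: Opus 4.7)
My plan is to reduce both assertions to the symplectic linear algebra of the summand $\langle \Delta \rangle$ and a direct bookkeeping check against Definition~10.1.

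First, I would set up the geometry. Since $\Delta$ is a minimal $\sigma$-simplex, it is a $1$-simplex $\Delta = \{v,w\}$ with $\omega(v,w) = \pm 1$. By \cref{lem_symplectic_pair}, $\langle \Delta \rangle$ is a genus-$1$ symplectic summand of $\mbZ^{2n}$, and by \cref{lem_split_off_symplectic_summand} this gives a symplectic decomposition
\[
(\mbZ^{2n},\omega) \;\cong\; \bigl(\langle \Delta \rangle,\omega|_{\langle \Delta \rangle}\bigr) \,\oplus\, \bigl(\langle \Delta \rangle^\perp, \omega|_{\langle \Delta \rangle^\perp}\bigr),
\]
with $\langle \Delta \rangle^\perp$ a symplectic summand of genus $n-1$. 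Choosing any symplectic isomorphism $\langle \Delta \rangle^\perp \xrightarrow{\cong} \mbZ^{2(n-1)}$ then induces the second isomorphism $\IAA[]^{(0)}(\langle \Delta \rangle^\perp) \cong \IAA[n-1]^{(0)}$, since by \cref{def_intermediate_complexes} the complex $\IAA^{(0)}$ depends only on the symplectic module structure. Note also that any rank-$1$ summand of $\langle \Delta \rangle^\perp$ is, by \cref{lem_direct_summands_basics} and transitivity of direct summands, a rank-$1$ summand of $\mbZ^{2n}$, so the vertex sets match.

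Next I would establish the equality $\Link_{\IAA^{(1)}}(\Delta) = \IAA[]^{(0)}(\langle \Delta \rangle^\perp)$ as subcomplexes of $\IAA^{(1)}$ by comparing simplices in both directions. For the inclusion $\subseteq$: let $\Sigma$ be a simplex of $\Link_{\IAA^{(1)}}(\Delta)$, so $\Sigma \cup \Delta$ is a simplex of $\IAA^{(1)}$. Because $\Sigma \cup \Delta$ contains the symplectic pair $\{v,w\}$, it cannot be of any of the types standard, $2$-additive, $3$-additive, double-triple, or double-double (all of which are isotropic). Hence by \cref{def_intermediate_complexes} it must be a $(\tau,\sigma)$-simplex for some
$\tau \in \{\text{standard, 2-additive, 3-additive, double-triple, double-double}\}$. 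By \cref{def_general_mixed_simplices}, the minimal $\sigma$-part $\Delta'$ of $\Sigma \cup \Delta$ is the unique pair of vertices on which $\omega$ does not vanish, because every vertex of the $\tau$-part lies in $\langle \Delta' \rangle^\perp$. Since $\Delta$ itself is such a pair, we conclude $\Delta = \Delta'$. Therefore $\Sigma = (\Sigma \cup \Delta) \setminus \Delta'$ is a simplex of type $\tau$ and every vertex of $\Sigma$ lies in $\langle \Delta \rangle^\perp$, so $\Sigma$ is a simplex of $\IAA[]^{(0)}(\langle \Delta \rangle^\perp)$. For the reverse inclusion $\supseteq$: any simplex $\Sigma$ of $\IAA[]^{(0)}(\langle \Delta \rangle^\perp)$ is of one of the five permitted types $\tau$ with all vertices in $\langle \Delta \rangle^\perp$; then $\Sigma \cup \Delta$ satisfies \cref{def_general_mixed_simplices} with $\Delta' = \Delta$, so it is a $(\tau,\sigma)$-simplex of $\IAA^{(1)}$ and $\Sigma \in \Link_{\IAA^{(1)}}(\Delta)$.

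The proof is essentially a definition-chase, and I do not expect any real difficulty. The only point requiring care is the uniqueness of the minimal $\sigma$-augmentation core inside $\Sigma \cup \Delta$, which I would extract cleanly from the orthogonality condition in \cref{def_general_mixed_simplices} so as to identify $\Delta' = \Delta$ unambiguously. Everything else is a direct translation between the list of simplex types allowed in $\IAA^{(1)}$ and those allowed in $\IAA^{(0)}$.
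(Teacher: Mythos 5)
Your argument is correct and is exactly the paper's proof, which simply asserts that the equality follows from Definitions 10.1–10.2 and that the isomorphism comes from choosing a symplectic identification $\langle \Delta \rangle^\perp \cong \mbZ^{2(n-1)}$; you have merely spelled out the definition-chase (including the correct identification of the $\sigma$-core $\Delta' = \Delta$ via the orthogonality condition). No gaps.
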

\begin{proof}
	Note that $\langle \Delta \rangle^\perp \subseteq \mbZ^{2n}$ is a genus $(n-1)$-summand. The equality on the left immediately follows from the definitions of $\IAA^{(1)}$ and $\IAA[]^{(0)}(\langle \Delta \rangle^\perp)$ (see \cref{def_intermediate_complexes}). An isomorphism on the right is induced by the choice of some identification $\langle \Delta \rangle^\perp \xrightarrow{\cong} \mbZ^{2(n-1)}$.
\end{proof}

Similarly, one checks the following:

\begin{lemma}
	\label{lem_links_mixed_sigma2_sigmaadd}
	Let $\Delta$ be a simplex in $\IAA^{(2)}$.
	\begin{enumerate}
		\item If $\Delta$ is minimal $\sigma^2$ or skew-$\sigma^2$, then $\Link_{\IAA^{(2)}}(\Delta) = \IAA[]^{(0)}(\langle \Delta \rangle^\perp) \cong \IAA[n-2]^{(0)}.$
		\item If $\Delta$ is minimal $\sigma$-additive, then $\Link_{\IAA^{(2)}}(\Delta) =	 \IAA[]^{(0)}(\langle \Delta \rangle^\perp) \cong \IAA[n-1]^{(0)}.$
	\end{enumerate}
\end{lemma}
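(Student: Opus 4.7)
The plan is to adapt the two-sentence argument of \cref{lem_links_mixed_sigma}. First, I would verify that in each of the three cases $\langle \Delta \rangle \subseteq \mbZ^{2n}$ is a symplectic summand of the expected genus. For a minimal $\sigma^2$ simplex $\{v_0, v_1, v_2, v_3\}$ with $\omega(v_2, v_0) = \omega(v_3, v_1) = \pm 1$ and all other pairings vanishing, suitable sign choices turn $\{\vec v_0, \vec v_2, \vec v_1, \vec v_3\}$ into a symplectic basis of $\langle \Delta \rangle$, so \cref{lem_split_off_symplectic_summand} yields a genus-$2$ symplectic summand. For a minimal skew-$\sigma^2$ simplex the Pfaffian of the $4 \times 4$ Gram matrix of $\omega|_{\langle \Delta \rangle}$, with its three non-zero entries prescribed by \cref{def:IAA-simplices}, evaluates to $\pm 1$, so $\langle \Delta \rangle$ is again a genus-$2$ symplectic summand by \cref{lem_split_off_symplectic_summand}. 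For a minimal $\sigma$-additive simplex $\{v_0, v_1, v_2\}$ with $\vec v_2 = \pm \vec v_0 \pm \vec v_1$ and $\omega(\vec v_0, \vec v_1) = \pm 1$, the span $\langle \Delta \rangle = \langle \vec v_0, \vec v_1 \rangle$ has rank $2$ and the pairing shows it to be a genus-$1$ symplectic summand. In each case \cref{lem_split_off_symplectic_summand} then also produces $\langle \Delta \rangle^\perp$ as the complementary symplectic summand, of genus $n-2$ or $n-1$ respectively.

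Second, I would establish the set-theoretic equality $\Link_{\IAA^{(2)}}(\Delta) = \IAA[]^{(0)}(\langle \Delta \rangle^\perp)$. The inclusion ``$\supseteq$'' is immediate from \cref{def_general_mixed_simplices} and \cref{def_intermediate_complexes}: any simplex $\Theta$ of one of the five $\IAA^{(0)}$-types (standard, 2-additive, 3-additive, double-triple, double-double) that lies in $\langle \Delta \rangle^\perp$ joins $\Delta$ to a $(\tau, \tau')$ simplex of $\IAA^{(2)}$, where $\tau$ is the type of $\Theta$ and $\tau'$ is the type of $\Delta$. For the converse, given $\Theta \in \Link_{\IAA^{(2)}}(\Delta)$, I would inspect the list of simplex types allowed in $\IAA^{(2)}$ and argue, using the rigidity of the $\omega$-patterns of $\sigma^2$, skew-$\sigma^2$ and $\sigma$-additive simplices, that $\Delta \cup \Theta$ can only sit in $\IAA^{(2)}$ as a $(\tau, \tau')$ simplex whose second-slot augmentation is exactly $\Delta$. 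By \cref{def_general_mixed_simplices} this forces every vertex of $\Theta$ to lie in $\langle \Delta \rangle^\perp$ and $\Theta$ itself to be of an $\IAA^{(0)}$-type.

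Finally, the stated isomorphism $\IAA[]^{(0)}(\langle \Delta \rangle^\perp) \cong \IAA[n-2]^{(0)}$ (resp.\ $\IAA[n-1]^{(0)}$) is induced by any symplectic identification $\langle \Delta \rangle^\perp \xrightarrow{\cong} \mbZ^{2(n-2)}$ (resp.\ $\mbZ^{2(n-1)}$). The main obstacle I anticipate is the rigidity check in step two: although $\IAA^{(2)}$ admits a large number of simplex types, a vertex $w$ with a non-trivial $\omega$-pairing to $\Delta$ would add a new relation to an already ``maximally augmented'' $\omega$-pattern and would therefore violate every template in the allowed list; this reduces to a finite but short case-by-case inspection. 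Once this is in place the remainder of the proof is direct bookkeeping from the definitions.
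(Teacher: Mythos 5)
Your proposal is correct and follows essentially the same route as the paper, which simply notes (as in the preceding lemma for $\sigma$ simplices) that $\langle \Delta \rangle$ is a symplectic summand of the appropriate genus, that the equality of complexes is immediate from \cref{def_general_mixed_simplices} and \cref{def_intermediate_complexes}, and that the isomorphism is induced by a symplectic identification $\langle \Delta \rangle^\perp \cong \mbZ^{2(n-2)}$ or $\mbZ^{2(n-1)}$. Your extra verifications (Pfaffian computation for the skew-$\sigma^2$ case, the rigidity check that the augmentation core of $\Delta \cup \Theta$ must equal $\Delta$) are exactly the details the paper leaves to the reader with ``similarly, one checks''.
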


\subsubsection{Connectivity properties of \texorpdfstring{$(\IAA^{(2)}, \IAA^{(1)}, \IAA^{(0)})$}{(IAA\unichar{"005E}(2), IAA\unichar{"005E}(1), IAA\unichar{"005E}(0))}}
\label{sec_connectivity_intermediate}

We now use the connectivity results obtained in earlier sections to show that the complexes $\IAA^{(i)}$ are $(n-2+i)$-connected for $i=0,1,2$.

We first note that there is a poset map
$s\colon  \Simp(\IAA^{(0)}) \to T^\omega_n$
given by sending a simplex $\Delta$ to the span $\ll \Delta \rr$ of its vertices.

\begin{proposition}\label{prop:spanmap}
	The span map $s\colon  \Simp(\IAA^{(0)}) \to T^\omega_n$ is $(n+1)$-connected.
\end{proposition}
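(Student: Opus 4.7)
The span map $s$ is well-defined because every simplex of $\IAA^{(0)}$ spans an isotropic summand of $\mbZ^{2n}$: the list of simplex types admitted in $\IAA^{(0)}$ (standard, 2-additive, 3-additive, double-triple, double-double) is exactly the list defining $\BAA$, and in each augmented case the $\mbZ$-span of the simplex coincides with the span of its underlying standard subface, which is isotropic by definition. The plan is to deduce the claimed $(n+1)$-connectedness by applying the poset-theoretic criterion of van der Kallen--Looijenga \cite[Corollary 2.2]{vanderkallenlooijenga2011sphericalcomplexesattachedtosymplecticlattices} to the poset map $s$, exactly in the spirit of the proof of \cref{lem:intwbaa-connectivity}.

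The criterion needs two inputs. The first is Cohen--Macaulayness of the target, which is supplied by the Solomon--Tits theorem (\cref{Solomon-Tits}): $T^\omega_n$ is Cohen--Macaulay of dimension $n-1$. The second is a connectivity estimate on the fibres. Fix $V \in T^\omega_n$ of rank $r$. The fibre $s_{\leq V}$ consists of the simplices of $\IAA^{(0)}$ whose vertex set lies in $V$, and since $V$ is isotropic the $\omega$-based conditions appearing in the definitions of the $\IAA^{(0)}$-simplex types become vacuous on such vertices. Consequently, $s_{\leq V}$ is the simplex poset of the full subcomplex $\IAA[]^{(0)}(V) = \BAA(V) \cong \BAA_r$, which by \cref{connectivity_BAA} is $r$-connected. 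Choosing the rank function $t(V) \coloneqq \rk(V) + 1$ in van der Kallen--Looijenga's criterion, so that the required fibre-connectivity $t(V) - 1 = r$ matches the estimate just obtained, then yields the stated $(n+1)$-connectivity of $s$.

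The only non-trivial aspect of the execution is numerical bookkeeping: as the footnote accompanying \cref{lem:intwbaa-connectivity} reminds us, the indexing used in \cite{vanderkallenlooijenga2011sphericalcomplexesattachedtosymplecticlattices} has to be translated into the conventions of the present article before the criterion applies cleanly. The conceptual content reduces to the identification of the fibres with $\BAA$-complexes, together with the two input connectivity statements recalled above.
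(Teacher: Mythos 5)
Your argument is correct and is essentially the paper's own proof: both apply van der Kallen--Looijenga's Corollary 2.2 to the span map, using the Solomon--Tits Cohen--Macaulayness of $T^\omega_n$ and the identification of the fibre $s_{\leq V}$ with $\Simp(\BAA(V))$, whose $\rk(V)$-connectivity is supplied by the connectivity theorem for $\BAA$. The only difference is cosmetic bookkeeping: the paper's footnote normalises the auxiliary map as $t(V)=\dim(V)+2$ (with fibres required to be $(t(V)-2)$-connected), whereas you take $t(V)=\rk(V)+1$ with $(t(V)-1)$-connected fibres; both demand exactly $\rk(V)$-connected fibres, so nothing of substance changes.
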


\begin{proof}
As in the proof of \cref{lem:intwbaa-connectivity}, this can be shown using {\cite[Corollary 2.2]{vanderkallenlooijenga2011sphericalcomplexesattachedtosymplecticlattices}} by van der Kallen--Looijenga\footnote{For the $n$ in \cite[Corollary 2.2]{vanderkallenlooijenga2011sphericalcomplexesattachedtosymplecticlattices}, choose what is $(n+1)$ in the notation of the present article; define their map $t$ by $t(V)=\dim(V)+ 2$}. 
To apply it in the setting here, one uses that the target $T^{\omega}_n$ is Cohen--Macaulay of dimension $n-1$ (\cref{Solomon-Tits}) and that for $V\in T^{\omega}_n$, the poset fibre  $s_{\leq V}$ is isomorphic to $\Simp(\BAA(V))$. The latter is $\dim(V)$-connected by \autoref{connectivity_BAA}.
\end{proof}

As $T^\omega_n$ is $(n-2)$-connected by the Solomon--Tits Theorem (\cref{Solomon-Tits}), this implies:

\begin{corollary}
\label{lem_connectivity_IAA0}
$\IAA^{(0)}$ is $(n-2)$-connected.
\end{corollary}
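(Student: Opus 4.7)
The statement is essentially immediate from the two preceding ingredients, so the plan is to combine them cleanly. First I would recall that the geometric realisation of the simplex poset $\Simp(\IAA^{(0)})$ is homotopy equivalent to $|\IAA^{(0)}|$ (it is just the barycentric subdivision), so it suffices to bound the connectivity of $\Simp(\IAA^{(0)})$.

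Next I would invoke the Solomon--Tits theorem (\cref{Solomon-Tits}), which tells us that the symplectic Tits building $T^\omega_n$ is Cohen--Macaulay of dimension $n-1$, and in particular $(n-2)$-connected.

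The main step is then to apply \cref{prop:spanmap}, which asserts that the span map
\[
s\colon \Simp(\IAA^{(0)}) \longrightarrow T^\omega_n
\]
is $(n+1)$-connected, i.e.\ induces an isomorphism on $\pi_i$ for $i<n+1$ and a surjection on $\pi_{n+1}$. Combining this with the $(n-2)$-connectivity of the target via the long exact sequence of the pair (or directly via the definition of $k$-connected maps), one reads off that $\pi_i(\Simp(\IAA^{(0)}))$ vanishes for all $i\le n-2$. Hence $\Simp(\IAA^{(0)})$, and therefore $\IAA^{(0)}$, is $(n-2)$-connected.

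There is no real obstacle here: the connectivity of the target (Solomon--Tits) is weaker than the connectivity of the map (\cref{prop:spanmap}), so the conclusion is governed entirely by the target, and this is where the bound $n-2$ comes from.
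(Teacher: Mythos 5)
Your argument is correct and is exactly the paper's proof: the corollary is deduced by combining the $(n+1)$-connectivity of the span map from \cref{prop:spanmap} with the $(n-2)$-connectivity of $T^\omega_n$ given by the Solomon--Tits Theorem (\cref{Solomon-Tits}). The extra remarks about the barycentric subdivision and reading off connectivity from the connected map are just the implicit details of that same deduction.
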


Furthermore, we obtain the following corollary, which relates $\IAA^{(0)}$ to the symplectic Steinberg module and which will be frequently used later.

\begin{corollary}
	\label{lem_homology_IAA0_Steinberg}
	We have $\widetilde{H}_{n-1}(\IAA^{(0)}) \cong \Stsymp_n$.
\end{corollary}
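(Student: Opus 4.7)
The plan is to deduce this corollary almost immediately from \cref{prop:spanmap} together with the Solomon--Tits theorem (\cref{Solomon-Tits}). Recall that for any simplicial complex $X$, the geometric realisation of the simplex poset $\Simp(X)$ is the barycentric subdivision of $X$, and in particular is canonically homeomorphic to $|X|$. Applied to $\IAA^{(0)}$, the span map $s\colon \Simp(\IAA^{(0)}) \to T^\omega_n$ from \cref{prop:spanmap} thus yields a continuous map $\bar s\colon |\IAA^{(0)}| \to |T^\omega_n|$ which is $(n+1)$-connected.

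A standard consequence of $(n+1)$-connectedness is that $\bar s$ induces an isomorphism $H_k(\IAA^{(0)}) \to H_k(T^\omega_n)$ for all $k \leq n$ (and a surjection for $k = n+1$). I would prove this in the usual way: replace $\bar s$ by the inclusion into its mapping cylinder, observe that the resulting pair is $(n+1)$-connected in the sense of relative homotopy, and apply the relative Hurewicz theorem. The requisite simple-connectivity hypothesis is supplied by \cref{lem_connectivity_IAA0} (which gives $(n-2)$-connectedness of $\IAA^{(0)}$) together with the Solomon--Tits theorem (which gives $(n-2)$-connectedness of $T^\omega_n$), so both spaces are simply connected whenever $n \geq 3$; the missing low-genus cases $n \in \{1,2\}$ can be handled by hand, since for $n=1$ both complexes are discrete with vertex set naturally in bijection (each nonzero summand of $\mbZ^2$ is automatically isotropic of rank one), and for $n=2$ the argument reduces to checking $H_1$, where path-connectedness of both spaces and the standard consequence of the map being $3$-connected suffice.

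Applying the homology isomorphism in degree $k = n-1 \leq n$ and using the definition $\Stsymp_n \coloneqq \widetilde H_{n-1}(T^\omega_n;\mbZ)$, we obtain the claimed isomorphism
\[
\widetilde H_{n-1}(\IAA^{(0)}) \;\xrightarrow{\;\cong\;}\; \widetilde H_{n-1}(T^\omega_n) \;=\; \Stsymp_n.
\]
I do not anticipate any serious obstacle: the entire content of the argument is the already-proved high connectivity of $s$ (\cref{prop:spanmap}), and the homological consequence of $(n+1)$-connectedness is classical. The only mild care needed is checking that the low-rank cases of $n$ still give a homology isomorphism in degree $n-1$, which is straightforward.
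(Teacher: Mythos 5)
Your proposal is correct and matches the paper's argument: the paper also deduces the corollary directly from \cref{prop:spanmap} together with $\widetilde{H}_{n-1}(T^\omega_n) = \Stsymp_n$, with the passage from $(n+1)$-connectedness of the span map to the homology isomorphism in degree $n-1$ left implicit as the standard fact you spell out. Your extra care with relative Hurewicz and the low-genus cases is fine but not needed beyond what the paper takes for granted.
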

\begin{proof}
	This follows from \cref{prop:spanmap} because $\widetilde{H}_{n-1}(T^\omega_n) = \Stsymp_n$.
\end{proof}

We now turn our attention to the complex $\IAA^{(1)}$, which can be seen as a 
version of $\IA$ with a nicer local structure (see 
\cref{sec_intermediate_complexes_links}). Recall that $\IA$ is 
$(n-1)$-connected (see \cref{lem_conn_IA}). In \cref{lem_connectivity_IAAstar}, 
we deduced from this that $\IAAst$ is $(n-1)$-connected, using that it is obtained from $\IA$ 
by attaching new simplices along highly connected links. Similarly, 
$\IAA^{(1)}$ is obtained from $\IAAst$ and therefore $\IAA^{(1)}$ is 
$(n-1)$-connected as well. This connectivity argument is formalised in the 
following lemma and corollary.

\begin{lemma}
\label{lem_IAAst_IAA1}
The inclusion $\IAAst \hookrightarrow \IAA^{(1)}$ is $n$-connected.
\end{lemma}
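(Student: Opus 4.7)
The plan is to apply item 2 of \cref{lem:standard-link-argument} iteratively, following the template of the proof of \cref{lem_connectivity_IAAstar}. Comparing \cref{def:IAA} with \cref{def_intermediate_complexes}, the simplices in $\IAA^{(1)} \setminus \IAAst$ are exactly the $(\tau, \sigma)$ simplices for $\tau \in \{\text{3-additive, double-triple, double-double}\}$ (noting that $\sigma = (\text{standard}, \sigma)$ and mixed $= (\text{2-additive}, \sigma)$ are already in $\IAAst$). To keep the bookkeeping manageable, I would introduce intermediate complexes $X_0 = \IAAst \subset X_1 \subset X_2 \subset X_3 = \IAA^{(1)}$, where $X_i$ is obtained from $X_{i-1}$ by attaching all $(\tau_i, \sigma)$ simplices for $\tau_1 = \text{3-additive}$, $\tau_2 = \text{double-triple}$, and $\tau_3 = \text{double-double}$, and show each inclusion $X_{i-1} \hookrightarrow X_i$ is $n$-connected; composing three $n$-connected maps then gives the result.

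For each step, I would take $B_i$ to be the set of \emph{minimal} $(\tau_i, \sigma)$ simplices, each of dimension $5$, $6$, or $7$ respectively and consisting of the $\sigma$-edge $\{w_1, w_2\}$ together with the $\tau_i$-augmentation core. Since the $\tau$-part of any simplex in $\IAA^{(1)}$ is isotropic, such a simplex contains at most one $\sigma$-edge, and the $\tau_i$-augmentation core is determined uniquely by its linear relation structure; this simultaneously verifies condition (2) of \cref{def:standard-link-argument-bad-simplices} and shows that $\Link^{\text{good}}_{X_i}(\Delta) = \Link_{X_i}(\Delta)$. To compute the link, observe that any simplex in $\Link_{X_i}(\Delta)$ must lie in $\langle w_1, w_2\rangle^\perp$ (a genus-$(n-1)$ symplectic summand) and, combined with the $\tau_i$-core, must form a higher-dimensional $\tau_i$-simplex. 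Using \cref{lem_extend_to_symplectic_basis} to extend $\{w_1, w_2\}$ and the standard part of the $\tau_i$-core (of rank $3$, $3$, or $4$) to a symplectic basis, and mimicking the identifications in \cref{link_Irel_standard} and \cref{link_IAAstrel_doubletriple_doubledouble}, the link is isomorphic to $\Irel[n-4][3]$ for $\tau \in \{\text{3-additive, double-triple}\}$ and to $\Irel[n-5][4]$ for $\tau = \text{double-double}$. By \cref{lem:Irel-cohen-macaulay}, these are $(n-6)$-, $(n-6)$-, and $(n-7)$-connected, which exceeds (or equals) the required $(n - \dim\Delta - 1)$-connectivities of $(n-6)$, $(n-7)$, and $(n-8)$.

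The main obstacle I anticipate is the verification that $\Link^{\text{good}} = \Link$, which rests on showing that no link vertex can combine with the $\tau_i$-core of $\Delta$ to produce a \emph{new} minimal $(\tau_i, \sigma)$ face distinct from $\Delta$ itself. The $\sigma$-core of any bad sub-simplex must be $\{w_1, w_2\}$ (the unique $\sigma$-edge in $\Theta \ast \Delta$), so the issue reduces to uniqueness of the $\tau_i$-augmentation core; this in turn follows from the fact that the link condition forces added vertices to be linearly independent from the "standard part" of the $\tau_i$-core, so no new $2$- or $3$-additive relations can appear. This is essentially the same linear-algebra observation used implicitly in \cref{lem_connectivity_IAAstar} when handling the attachment of $3$-additive, double-triple, and double-double simplices to $\IArel$, and working it out carefully in each of the three cases of $\tau_i$ is the only genuinely non-routine part of the proof.
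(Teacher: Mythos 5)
Your proposal is correct and follows essentially the same route as the paper: identify the missing simplices as the $(\tau,\sigma)$ simplices for $\tau\in\{\text{3-additive, double-triple, double-double}\}$, attach them via the standard link argument with minimal $(\tau,\sigma)$ simplices as bad simplices, and identify the (good) links with $\Irel[n-4][3]$ resp.\ $\Irel[n-5][4]$, whose connectivity from \cref{lem:Irel-cohen-macaulay}/\cref{lem_connectivity_Irel} gives the required bounds. The only deviations are cosmetic: you split the attachment into three steps where the paper uses two (handling double-triple and double-double together), and the same dimension counts and link isomorphisms appear in both arguments.
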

\begin{proof}
Comparing \cref{def_intermediate_complexes} to \cref{def:IAAst} and recalling that a mixed simplex has type $(\text{2-additive}, \sigma)$, one sees that the simplices of $\IAA^{(1)}$ that are not contained in $\IAAst$ are exactly those of type $(\tau, \sigma)$, where
$\tau \in  \ls \text{3-additive}, \text{ double-triple}, \text{ double-double}\rs$.
We apply the standard link argument explained in \cref{subsec:standard-link-argument} twice.

Firstly, let $X_1$ be the complex that is obtained from $\IAAst$ by attaching all $(\text{3-additive}, \sigma)$ simplices. It has $X_0 = \IAAst$ as a subcomplex. Let $B$ be the set of minimal $(\text{3-additive}, \sigma)$ simplices contained in $X_1$. This is a set of bad simplices in the sense of \cref{def:standard-link-argument-bad-simplices}. Following \cref{def:standard-link-argument-good-link}, we find that $\Link_{X_1}^{\mathrm{good}}(\Delta) = \Link_{X_1}(\Delta)$ for $\Delta \in B$. One then checks that for any $\Delta \in B$, there is an isomorphism $\Link_{X_1}(\Delta) \cong \Irel[n-4][3]$. This complex is $(n-\dim(\Delta)-1)=(n-6)$-connected by \cref{lem_connectivity_Irel}. Hence, the inclusion $\IAAst \hookrightarrow X_1$ is $n$-connected by \cref{it_link_argument_connected_map} of \cref{lem:standard-link-argument}.

Secondly, let $X_2 = \IAA^{(1)}$. By the previous paragraph, the lemma follows if we show that the inclusion $X_1 \hookrightarrow X_2$ is $n$-connected.
Let $B$ be the set of  minimal $(\text{double-triple}, \sigma)$ and $(\text{double-double}, \sigma)$ simplices contained in $X_2$. This is a set of bad simplices in the sense of \cref{def:standard-link-argument-bad-simplices}. Following \cref{def:standard-link-argument-good-link}, we find that $\Link_{X_2}^{\mathrm{good}}(\Delta) = \Link_{X_2}(\Delta)$ for $\Delta \in B$. For $\Delta \in B$ one then checks: If $\Delta$ is a minimal $(\text{double-triple}, \sigma)$ simplex, then $\Delta$ has dimension $6$ and there is an isomorphism $\Link_{X_2}(\Delta) \cong \Irel[n-4][3]$. If $\Delta$ is a minimal $(\text{double-double}, \sigma)$ simplex, then $\Delta$ has dimension $7$ and there is an isomorphism $\Link_{X_2}(\Delta) \cong \Irel[n-5][4]$. In either case, \cref{lem_connectivity_Irel} implies that $\Link_{X_2}(\Delta)$ is $((n-\dim(\Delta)-1)+1)$-connected. Using \cref{it_link_argument_connected_map} of \cref{lem:standard-link-argument}, it therefore follows that $X_1 \hookrightarrow X_2$ is $n$-connected.
\end{proof}

\begin{corollary}\label{cor_IAA1}
$\IAA^{(1)}$ is $(n-1)$-connected.
\end{corollary}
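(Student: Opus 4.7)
The plan is to deduce this from the immediately preceding \cref{lem_IAAst_IAA1} combined with the already established high connectivity of $\IAAst$. Recall from \cref{lem_connectivity_IAAstar} that $\IAAst$ is $(n-1)$-connected, i.e.\ $\pi_k(\IAAst) = 0$ for all $k \leq n-1$.

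By \cref{lem_IAAst_IAA1}, the inclusion $\IAAst \hookrightarrow \IAA^{(1)}$ is $n$-connected, which by definition means that it induces an isomorphism $\pi_k(\IAAst) \xrightarrow{\cong} \pi_k(\IAA^{(1)})$ for every $k < n$ (and a surjection for $k = n$, which we do not need here). Composing these two facts, for every $k \leq n-1$ we obtain $\pi_k(\IAA^{(1)}) \cong \pi_k(\IAAst) = 0$, which is exactly the statement that $\IAA^{(1)}$ is $(n-1)$-connected.

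There is essentially no obstacle here: all the work has already been done in proving \cref{lem_IAAst_IAA1} (where the standard link argument of \cref{subsec:standard-link-argument} was applied twice along the tower $\IAAst \subset X_1 \subset X_2 = \IAA^{(1)}$ with links of minimal $(\text{3-additive},\sigma)$, $(\text{double-triple},\sigma)$ and $(\text{double-double},\sigma)$ simplices identified via \cref{link_Irel_standard}-style isomorphisms with complexes of the form $\Irel[n-k][k]$, and then shown to be sufficiently connected via \cref{lem_connectivity_Irel}) and in \cref{lem_connectivity_IAAstar} itself. The corollary is simply the formal combination of these two ingredients.
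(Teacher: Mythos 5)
Your proof is correct and matches the paper's argument: the paper deduces the corollary in one line from \cref{lem_IAAst_IAA1} (the inclusion $\IAAst \hookrightarrow \IAA^{(1)}$ is $n$-connected) together with the $(n-1)$-connectivity of $\IAAst$ from \cref{lem_connectivity_IAAstar}, exactly as you do.
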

\begin{proof}
This follows from \cref{lem_IAAst_IAA1} because $\IAAst$ is $(n-1)$-connected by \cref{lem_connectivity_IAAstar}.
\end{proof}

Just as $\IAA^{(1)}$ can be seen as a variant of $\IAAst$, the complex $\IAA^{(2)}$ can be understood as a version of $\IAA$ with a nicer local structure (see \cref{sec_intermediate_complexes_links}) and the same connectivity properties. We close this subsection by proving that $\IAA^{(2)}$ is $n$-connected.

\begin{lemma}
\label{lem_IAA1_IAA2_surjection}
The inclusion $\IAA^{(1)} \hookrightarrow \IAA^{(2)}$ is $n$-connected.
\end{lemma}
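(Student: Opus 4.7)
The plan is to mimic the structure of the proof of \cref{lem_inclusion_surjective_pik}, where the analogous inclusion $\IAAst \hookrightarrow \IAA$ was shown to be $n$-connected by attaching $\sigma^2$-, skew-$\sigma^2$- and $\sigma$-additive-type simplices in three successive standard link arguments. First, I would invoke \cref{lem_hom_eq_1_15}, which provides a deformation retraction $\IAA^{(1.5)} \to \IAA^{(1)}$, to reduce the problem to showing that $\IAA^{(1.5)} \hookrightarrow \IAA^{(2)}$ is $n$-connected. The point of this replacement is that the only simplex types in $\IAA^{(2)}$ not already present in $\IAA^{(1.5)}$ are the $(\tau, \tau')$-simplices with $\tau' \in \{\sigma^2, \text{skew-}\sigma^2, \sigma\text{-additive}\}$, and these may now be attached in stages by the type of their $\tau'$-augmentation core.

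I would then build a filtration $X_0 = \IAA^{(1.5)} \subset X_1 \subset X_2 \subset X_3 = \IAA^{(2)}$ where $X_{i+1}$ is obtained from $X_i$ by attaching all $(\tau, \tau'_i)$-simplices (for any admissible $\tau$), with $\tau'_i$ running through $\sigma^2$, skew-$\sigma^2$ and $\sigma$-additive in turn. A standard link argument (\cref{lem:standard-link-argument}) is then applied at each stage, with bad simplex set $B$ taken to be the set of minimal $\tau'_i$-type simplices. The axioms of \cref{def:standard-link-argument-bad-simplices} are satisfied because $\IAA^{(2)}$ admits at most one $\tau'$-augmentation core per simplex, so every simplex of $X_{i+1}\setminus X_i$ contains a unique minimal $\tau'_i$-face, which automatically forces condition (2) and (1) in that definition.

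The crucial calculation in each stage is that for a minimal $\tau'_i$-simplex $\Delta$, the good link $\Link^{\mathrm{good}}_{X_{i+1}}(\Delta)$ coincides with the full link of $\Delta$ in $X_{i+1}$, and this link is isomorphic to $\IAA[]^{(0)}(\langle \Delta \rangle^\perp)$: any vertex in the link would otherwise produce a $\sigma$-pairing with $\Delta$, forcing $\Delta\cup\Theta$ to carry two disjoint $\tau'$-cores, which $\IAA^{(2)}$ does not allow. By \cref{lem_links_mixed_sigma2_sigmaadd}, this link is isomorphic to $\IAA[n-2]^{(0)}$ for $\tau'_i \in \{\sigma^2, \text{skew-}\sigma^2\}$ and to $\IAA[n-1]^{(0)}$ for $\tau'_i = \sigma\text{-additive}$; by \cref{lem_connectivity_IAA0} these complexes are $(n-4)$- and $(n-3)$-connected respectively. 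This matches exactly the $(n-\dim(\Delta)-1)$-connectivity required by \cref{it_link_argument_connected_map} of \cref{lem:standard-link-argument}, since minimal $\sigma^2$- and skew-$\sigma^2$-simplices are $3$-dimensional and minimal $\sigma$-additive simplices are $2$-dimensional. Composing the three stages then gives that $\IAA^{(1.5)} \hookrightarrow \IAA^{(2)}$, and hence $\IAA^{(1)} \hookrightarrow \IAA^{(2)}$, is $n$-connected.

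I do not expect a genuine obstacle: the argument is a direct adaptation of the proof of \cref{lem_inclusion_surjective_pik}, and the only thing to verify carefully is that the ``at most one $\tau'$-core'' restriction built into $\IAA^{(2)}$ indeed reduces each link computation to the local-structure statements of \cref{lem_links_mixed_sigma2_sigmaadd}. The numerics are tight (the connectivity at each stage matches the requirement exactly), but no slack is needed.
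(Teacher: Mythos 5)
Your proposal is correct and follows essentially the same route as the paper: reduce to $\IAA^{(1.5)} \hookrightarrow \IAA^{(2)}$ via \cref{lem_hom_eq_1_15}, then run the standard link argument with minimal $\sigma^2$, skew-$\sigma^2$ and $\sigma$-additive simplices as bad simplices, identifying each good link with $\IAA[]^{(0)}(\langle\Delta\rangle^\perp)$ via \cref{lem_links_mixed_sigma2_sigmaadd} and using \cref{lem_connectivity_IAA0}. The only difference is that the paper treats all three types of bad simplices in a single application of \cref{lem:standard-link-argument} rather than in a three-stage filtration, which is immaterial since the link computations and numerics are identical.
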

\begin{proof}
By \cref{lem_hom_eq_1_15}, the inclusion $\IAA^{(1)}\hookrightarrow \IAA^{(1.5)}$ is $n$-connected. So it suffices to show that the same is true for the inclusion $\IAA^{(1.5)}\hookrightarrow \IAA^{(2)}$

The simplices of $X_1\coloneqq \IAA^{(2)}$ that are not contained in $X_0\coloneqq \IAA^{(1.5)}$ are exactly those of type $(\tau, \tau')$, where
\begin{equation}
\label{eq_IAA1_IAA2_surjection}
	\tau' \in  \ls \sigma^2, \text{skew-}\sigma^2, \sigma\text{-additive}\rs.
\end{equation}

We apply the standard link argument explained in \cref{subsec:standard-link-argument}.
Let $B$ be the set of minimal $\tau'$ simplices with $\tau'$ as in \cref{eq_IAA1_IAA2_surjection}. This is a set of bad simplices in the sense of \cref{def:standard-link-argument-bad-simplices}. Following \cref{def:standard-link-argument-good-link}, we find that $\Link_{X_1}^{\mathrm{good}}(\Delta) = \Link_{X_1}(\Delta)$ for $\Delta \in B$. 
We need to show that each such link is $(n-\dim(\Delta)-1)$-connected. 
But this follows immediately from  the results in \cref{sec_intermediate_complexes_links}: Minimal $\sigma^2$ and  skew-$\sigma^2$ simplices have dimension $3$ and their link is isomorphic to $\IAA[n-2]^{(0)}$ by \cref{lem_links_mixed_sigma2_sigmaadd}, hence $(n-4) = (n-3-1)$-connected by \cref{lem_connectivity_IAA0}. Similarly, minimal $\sigma$-additive simplices have dimension $2$ and their link is isomorphic to $\IAA[n-1]^{(0)}$ by \cref{lem_links_mixed_sigma2_sigmaadd}, hence $(n-3) = (n-2-1)$-connected by \cref{lem_connectivity_IAA0}
\end{proof}

These results suffice to deduce that $\IAA^{(2)}$ is highly connected as well:

\begin{lemma}
\label{lem_IAA_IAA2_surjection}
The inclusion $\IAA \hookrightarrow \IAA^{(2)}$ is $n$-connected.
\end{lemma}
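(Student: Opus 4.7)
The plan is to deduce this inclusion's $n$-connectivity formally from the connectivity results already established for $\IAAst$, $\IAA^{(1)}$, and $\IAA^{(2)}$, rather than running a fresh standard link argument. This works because all of these complexes already contain $\IAAst$ as a subcomplex, so the inclusion $\IAA \hookrightarrow \IAA^{(2)}$ sits inside a convenient commutative triangle.

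First, I would consider the commutative triangle of inclusions
\begin{equation*}
\xymatrix{
\IAAst \ar@{^{(}->}[r]^{f} \ar@{^{(}->}[dr]_{h} & \IAA \ar@{^{(}->}[d]^{g} \\
& \IAA^{(2)}
}
\end{equation*}
and collect what we already know about $f$ and $h$. The top map $f$ is $n$-connected by \cref{lem_inclusion_surjective_pik} (combined with \cref{lem_connectivity_IAAstar} and \autoref{thm_connectivity_IAA}). The diagonal $h$ factors as $\IAAst \hookrightarrow \IAA^{(1)} \hookrightarrow \IAA^{(2)}$; the first inclusion is $n$-connected by \cref{lem_IAAst_IAA1}, the second by \cref{lem_IAA1_IAA2_surjection}, so $h$ is also $n$-connected.

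The second step is a purely formal two-out-of-three observation for $n$-connected maps: if $f\colon A\to B$ and $h = g\circ f\colon A\to C$ are both $n$-connected, then $g\colon B\to C$ is $n$-connected. Indeed, for $k<n$ both $\pi_k(f)$ and $\pi_k(h)$ are isomorphisms, so $\pi_k(g) = \pi_k(h)\circ \pi_k(f)^{-1}$ is an isomorphism; and for $k=n$, surjectivity of $\pi_n(f)$ means every element of $\pi_n(B)$ has the form $\pi_n(f)(a)$, so given any $c\in \pi_n(C)$, the surjectivity of $\pi_n(h)$ produces $a\in\pi_n(A)$ with $\pi_n(g)(\pi_n(f)(a)) = c$, giving surjectivity of $\pi_n(g)$.

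Applying this observation with $(A,B,C) = (\IAAst, \IAA, \IAA^{(2)})$ yields that $g = (\IAA \hookrightarrow \IAA^{(2)})$ is $n$-connected, which is exactly the claim. There is no real obstacle here: all of the substantive geometric input (the connectivity of $\IAAst$, the $n$-connectivity of each intermediate inclusion) has already been done in the preceding lemmas and in \autoref{thm_connectivity_IAA}; the proof is essentially a diagram chase.
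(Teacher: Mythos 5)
Your proof is correct and is essentially the paper's own argument: the paper likewise combines the $n$-connectivity of $\IAAst \hookrightarrow \IAA$ (\cref{lem_inclusion_surjective_pik}) with that of $\IAAst \hookrightarrow \IAA^{(2)}$ (via \cref{lem_IAAst_IAA1} and \cref{lem_IAA1_IAA2_surjection}) and concludes by the same formal two-out-of-three observation, which you have simply spelled out. The parenthetical appeal to \cref{lem_connectivity_IAAstar} and \autoref{thm_connectivity_IAA} is unnecessary but harmless.
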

\begin{proof}
By \cref{lem_IAAst_IAA1} and \cref{lem_IAA1_IAA2_surjection}, the inclusion $\IAAst \hookrightarrow  \IAA^{(2)}$ is $n$-connected and by \cref{lem_inclusion_surjective_pik}, the inclusion $\IAAst \hookrightarrow  \IAA$ is $n$-connected as well. This implies the claim.
\end{proof}

\begin{corollary}\label{lem_IAA2_highlyconnected}
$\IAA^{(2)}$ is $n$-connected.
\end{corollary}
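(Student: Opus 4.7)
The plan is to deduce this corollary as an immediate consequence of two results already established in the excerpt: the main technical result Theorem \ref{thm_connectivity_IAA}, which states that $\IAA$ is $n$-connected, and Lemma \ref{lem_IAA_IAA2_surjection} just proved, which states that the inclusion $\IAA \hookrightarrow \IAA^{(2)}$ is $n$-connected.

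Concretely, I would argue as follows. By Theorem \ref{thm_connectivity_IAA}, the homotopy groups $\pi_k(\IAA)$ vanish for all $k \leq n$. The inclusion $\iota\colon \IAA \hookrightarrow \IAA^{(2)}$ being $n$-connected means, by definition (see Section \ref{subsec:standard-link-argument}), that the induced map $\pi_k(\iota)\colon \pi_k(\IAA) \to \pi_k(\IAA^{(2)})$ is an isomorphism for $k < n$ and a surjection for $k = n$. Combining these, $\pi_k(\IAA^{(2)})$ is the image of $\pi_k(\IAA) = 0$ for every $k \leq n$, hence trivial. Therefore $\IAA^{(2)}$ is $n$-connected, as desired.

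There is no real obstacle here: the genuine content lies upstream. Theorem \ref{thm_connectivity_IAA} is the main technical achievement of the paper (occupying Sections \ref{sec_definitions}--\ref{sec_normal_form_spheres}), and the $n$-connectedness of the inclusion $\IAA \hookrightarrow \IAA^{(2)}$ was just obtained in Lemma \ref{lem_IAA_IAA2_surjection} via a standard link argument, factoring through $\IAAst \hookrightarrow \IAA^{(1)} \hookrightarrow \IAA^{(1.5)} \hookrightarrow \IAA^{(2)}$ and using the high connectivity of links of minimal $\sigma$, $\sigma^2$, skew-$\sigma^2$, and $\sigma$-additive simplices (which in turn reduce to connectivity of smaller copies of $\IAA[k]^{(0)}$ via Lemmas \ref{lem_links_mixed_sigma} and \ref{lem_links_mixed_sigma2_sigmaadd}). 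With those results in hand, the present corollary is a one-line consequence of the two-out-of-three property for $n$-connectedness applied to the composition $\IAA \hookrightarrow \IAA^{(2)}$.
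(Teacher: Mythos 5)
Your argument is exactly the paper's: the corollary is deduced from \cref{lem_IAA_IAA2_surjection} (the inclusion $\IAA \hookrightarrow \IAA^{(2)}$ is $n$-connected) together with \autoref{thm_connectivity_IAA} ($\IAA$ is $n$-connected), so that $\pi_k(\IAA^{(2)})$ is a quotient of the trivial group for all $k \leq n$. The proposal is correct and matches the paper's proof.
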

\begin{proof}
This follows from \cref{lem_IAA_IAA2_surjection} because $\IAA$ is $n$-connected by \autoref{thm_connectivity_IAA}.
\end{proof}

We remark that combining \cref{lem_links_mixed_sigma} and \cref{lem_links_mixed_sigma2_sigmaadd} with \cref{lem_rel_homology_IAAi_IAAj} and \cref{lem_homology_IAA0_Steinberg} leads to a description of $H_{n}(\IAA^{(1)} , \IAA^{(0)})$ and $H_{n+1}(\IAA^{(2)} , \IAA^{(1)})$ as sums of smaller Steinberg modules $\Stsymp_k$ with $k < n$.

\subsubsection{An intermediate exact sequence}
\label{subsec:intermediate-exact-sequence}

We now use the long exact homology sequence of the triple $(\IAA^{(2)}, \IAA^{(1)}, \IAA^{(0)})$, the connectivity theorems and the observations about the local structure above to construct an ``intermediate'' exact sequence that is akin but not equal to the sequence in \cref{eq:presentation-right-exact-sequence}. We start by identifying one of the relative homology groups with $\St^\omega_n$.

\begin{lemma}
	\label{lem:steinberg-as-relative-homology}
	The following is a sequence of $\Sp{2n}{\mbZ}$-equivariant isomorphisms:
	\[ H_n(\IAA^{(2)}, \IAA^{(0)}) \xrightarrow{\partial_n^{(2,0)}} \widetilde H_{n-1}( \IAA^{(0)}) \xrightarrow{s_{*}}  \widetilde H_{n-1}( T^\omega_n) = \St^\omega_n.\]
\end{lemma}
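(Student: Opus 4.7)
The plan is to identify each of the two arrows as an instance of an isomorphism already established earlier in the excerpt, and then check $\Sp{2n}{\mbZ}$-equivariance separately. Both steps are essentially bookkeeping — there should be no serious obstacle.

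For the first arrow $\partial_n^{(2,0)}$, I would simply write down the relevant portion of the long exact sequence of the pair $(\IAA^{(2)}, \IAA^{(0)})$:
\[
\widetilde H_n(\IAA^{(2)}) \longrightarrow H_n(\IAA^{(2)}, \IAA^{(0)}) \xrightarrow{\partial_n^{(2,0)}} \widetilde H_{n-1}(\IAA^{(0)}) \longrightarrow \widetilde H_{n-1}(\IAA^{(2)}).
\]
By \cref{lem_IAA2_highlyconnected}, the complex $\IAA^{(2)}$ is $n$-connected, so the two outer terms vanish and $\partial_n^{(2,0)}$ is an isomorphism by exactness.

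For the second arrow $s_*$, I would invoke \cref{prop:spanmap}, which states that the span map $s\colon \Simp(\IAA^{(0)}) \to T^\omega_n$ is $(n+1)$-connected. In particular, it induces an isomorphism on $\widetilde H_{n-1}$, which is precisely the content of \cref{lem_homology_IAA0_Steinberg}, and by definition $\widetilde H_{n-1}(T^\omega_n) = \St^\omega_n$.

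It remains to verify that both maps are $\Sp{2n}{\mbZ}$-equivariant. The group $\Sp{2n}{\mbZ}$ acts on $\mbZ^{2n}$ preserving both the direct-summand structure and the form $\omega$; hence it preserves each of the simplex types listed in \cref{def:IA-simplices}, \cref{def:BAA-simplices} and \cref{def:IAA-simplices}, and acts simplicially on the pair $(\IAA^{(2)}, \IAA^{(0)})$ as well as on $T^\omega_n$. The connecting homomorphism $\partial_n^{(2,0)}$ is equivariant by naturality of the long exact sequence of a pair with a group action, and the span map $s$ is equivariant because $\phi(\langle \Delta \rangle) = \langle \phi(\Delta)\rangle$ for every $\phi \in \Sp{2n}{\mbZ}$. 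Combining these observations yields the required sequence of equivariant isomorphisms.
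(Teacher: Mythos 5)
Your proposal is correct and follows the paper's own argument: the connecting morphism is an isomorphism because $\IAA^{(2)}$ is $n$-connected (\cref{lem_IAA2_highlyconnected}), and $s_*$ is an isomorphism via the $(n+1)$-connected span map of \cref{prop:spanmap} (composed, as you implicitly use, with the canonical identification of $\IAA^{(0)}$ with its barycentric subdivision $\Simp(\IAA^{(0)})$), with equivariance following by naturality exactly as you say.
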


\begin{proof}
	The first isomorphism $\partial_n^{(2,0)} = \partial_{(\IAA^{(2)}, \IAA^{(0)})}$ is the connecting morphism in the long exact sequence of the pair $(\IAA^{(2)}, \IAA^{(0)})$. This uses the fact that $\IAA^{(2)}$ is $n$-connected (see \cref{lem_IAA2_highlyconnected}). The second isomorphism $s_{*}$ is induced by the composition of the canonical homeomorphism between $\IAA^{(0)}$ and its barycentric subdivision $\Simp(\IAA^{(0)})$ and the span map from \cref{prop:spanmap} (defined in the paragraph before \cref{prop:spanmap}).
\end{proof}

In the next step, we extract an exact sequence from the long exact sequence of the triple $(\IAA^{(2)}, \IAA^{(1)}, \IAA^{(0)})$.

\begin{lemma}
	\label{lem:right-exact-homology-sequence}
	The following is an exact sequence of $\Sp{2n}{\mbZ}$-modules:
	\[ H_{n+1}(\IAA^{(2)}, \IAA^{(1)}) \xrightarrow{\partial_{n+1}^{(2,1,0)}} H_n(\IAA^{(1)}, \IAA^{(0)}) \xrightarrow{s_{*} \circ \partial_{n}^{(1,0)}} \St^\omega_n \longrightarrow 0.\]
\end{lemma}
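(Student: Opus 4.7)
The plan is to read the claim off the long exact sequence of the triple $(\IAA^{(2)}, \IAA^{(1)}, \IAA^{(0)})$, using the connectivity results of \cref{subsec:highly-connected-complex-IAA-zero-one-two} to trivialise the appropriate pieces and using \cref{lem:steinberg-as-relative-homology} to identify the Steinberg module.

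First I would write down the long exact sequence of the triple, whose relevant segment is
\[ H_{n+1}(\IAA^{(2)}, \IAA^{(1)}) \xrightarrow{\partial_{n+1}^{(2,1,0)}} H_n(\IAA^{(1)}, \IAA^{(0)}) \xrightarrow{i_*} H_n(\IAA^{(2)}, \IAA^{(0)}) \longrightarrow H_n(\IAA^{(2)}, \IAA^{(1)}), \]
where $i_*$ is induced by the inclusion of pairs. The first key step is to show that the rightmost group vanishes. For this, consider the long exact sequence of the pair $(\IAA^{(2)}, \IAA^{(1)})$ in degree $n$:
\[ H_n(\IAA^{(2)}) \longrightarrow H_n(\IAA^{(2)}, \IAA^{(1)}) \longrightarrow H_{n-1}(\IAA^{(1)}). \]
By \cref{lem_IAA2_highlyconnected}, $\IAA^{(2)}$ is $n$-connected, so $H_n(\IAA^{(2)}) = 0$; by \cref{cor_IAA1}, $\IAA^{(1)}$ is $(n-1)$-connected, so $H_{n-1}(\IAA^{(1)}) = 0$. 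Hence $H_n(\IAA^{(2)}, \IAA^{(1)}) = 0$ and the triple sequence above shows that $i_*$ is surjective with kernel exactly $\operatorname{im}(\partial_{n+1}^{(2,1,0)})$.

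The second step is to identify the map $i_*$ with $s_* \circ \partial_n^{(1,0)}$ through the isomorphism of \cref{lem:steinberg-as-relative-homology}. Naturality of the connecting homomorphism applied to the inclusion of pairs $(\IAA^{(1)}, \IAA^{(0)}) \hookrightarrow (\IAA^{(2)}, \IAA^{(0)})$ yields a commutative diagram
\[ \begin{array}{ccc} H_n(\IAA^{(1)}, \IAA^{(0)}) & \xrightarrow{\partial_n^{(1,0)}} & \widetilde H_{n-1}(\IAA^{(0)}) \\ \downarrow\, i_* & & \Vert \\ H_n(\IAA^{(2)}, \IAA^{(0)}) & \xrightarrow{\partial_n^{(2,0)}} & \widetilde H_{n-1}(\IAA^{(0)}) \end{array} \]
so that $\partial_n^{(1,0)} = \partial_n^{(2,0)} \circ i_*$ and therefore $s_* \circ \partial_n^{(1,0)} = (s_* \circ \partial_n^{(2,0)}) \circ i_*$. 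By \cref{lem:steinberg-as-relative-homology}, $s_* \circ \partial_n^{(2,0)}$ is a $\Sp{2n}{\mbZ}$-equivariant isomorphism onto $\St^\omega_n$. Composing with it preserves surjectivity and kernels, so the surjectivity of $i_*$ together with $\ker(i_*) = \operatorname{im}(\partial_{n+1}^{(2,1,0)})$ translate directly into the required exactness at $H_n(\IAA^{(1)}, \IAA^{(0)})$ and surjectivity at $\St^\omega_n$.

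There is no real obstacle here; the argument is essentially a diagram chase. The only thing one has to be careful about is to verify equivariance throughout, which is automatic because all maps involved ($i_*$, the connecting morphisms, the span map $s$) are induced from $\Sp{2n}{\mbZ}$-equivariant simplicial maps, and equivariance of the isomorphism $s_* \circ \partial_n^{(2,0)}$ has already been recorded in \cref{lem:steinberg-as-relative-homology}.
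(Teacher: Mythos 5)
Your proposal is correct and follows essentially the same route as the paper: the long exact sequence of the triple, the vanishing of $H_n(\IAA^{(2)}, \IAA^{(1)})$ from the connectivity of $\IAA^{(2)}$ and $\IAA^{(1)}$, and the identification of the map to $\St^\omega_n$ via \cref{lem:steinberg-as-relative-homology} together with the naturality triangle relating $\partial_n^{(1,0)}$ and $\partial_n^{(2,0)}$. The only difference is that you spell out the vanishing step through the pair sequence, which the paper states more briefly.
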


\begin{proof}
	Consider the long exact sequence of the triple  $(\IAA^{(2)}, \IAA^{(1)}, \IAA^{(0})$
	\begin{multline*}
		\label{eq:long-exact-triple-sequence}  
		H_{n+1}(\IAA^{(2)}, \IAA^{(1)}) \xrightarrow{\partial_{n+1}^{(2,1,0)}} H_n(\IAA^{(1)}, \IAA^{(0)})\\ \longrightarrow H_n(\IAA^{(2)}, \IAA^{(0)}) \longrightarrow H_n(\IAA^{(2)}, \IAA^{(1)}) = 0. 
	\end{multline*}
	We observe that $H_n(\IAA^{(2)}, \IAA^{(1)}) = 0$ because $\IAA^{(2)}$ is $n$-connected and $\IAA^{(1)}$ is $(n-1)$-connected (see \cref{lem_IAA2_highlyconnected} and \cref{cor_IAA1}).  The result then follows by invoking \cref{lem:steinberg-as-relative-homology}, and using that the connecting morphism $\partial_n^{(1,0)}$ of the pair $(\IAA^{(1)}, \IAA^{(0)})$ satisfies that
	\begin{center}
		\begin{tikzcd}
			H_n(\IAA^{(1)}, \IAA^{(0)}) \arrow[r] \arrow[rd, "\partial_n^{(1,0)}", swap] & H_n(\IAA^{(2)}, \IAA^{(0)}) \arrow[d, "\partial_n^{(2,0)}"]\\
			& \widetilde{H}_{n-1}(\IAA^{(0)}).
		\end{tikzcd}
	\end{center}
	commutes.
\end{proof}

Finally, we explain how one can decompose the two remaining relative homology groups in the exact sequence appearing in \cref{lem:right-exact-homology-sequence} into direct sums of ``smaller'' symplectic Steinberg modules $\St_k^\omega$ for $k < n$.

\begin{lemma}
	\label{lem_rel_homology_IAAi_IAAj}
	Fix a total ordering on the set of vertices of $\IAA^{(2)}$ such that every simplex $\Delta$ is oriented. We have the following $\Sp{2n}{\mbZ}$-equivariant isomorphisms:
	\begin{equation*}
		H_{n}(\IAA^{(1)} , \IAA^{(0)}) \cong \bigoplus_{\substack{\Delta = \{v_1,w_1\}\\ \text{$\sigma$ 		simplex}}}\St^\omega(\ll\Delta\rr^\perp)
	\end{equation*}
	and
	\begin{equation*}
		H_{n+1}(\IAA^{(2)} , \IAA^{(1)})
		\cong \bigoplus
		\begin{rcases*}
			\begin{dcases*}
				\bigoplus_{\substack{\Delta = \{v_1,w_1,v_2,w_2\}\\ \text{$\sigma^2$ simplex}}}\St^\omega(\ll\Delta\rr^\perp)\\
				\bigoplus_{\substack{\Delta = \{z_0, z_1, z_2, z_3\}\\ \text{skew-$\sigma^2$ simplex}}} \St^\omega(\ll\Delta\rr^\perp)\\
				\bigoplus_{\substack{\Delta = \{z_0, z_1, z_2\}\\ \text{$\sigma$-additive simplex}}} \St^\omega(\ll\Delta\rr^\perp)
			\end{dcases*}
		\end{rcases*}.
	\end{equation*}
where we use the convention that $\St^{\omega}(\{0\}) \coloneqq \mbZ$.
\end{lemma}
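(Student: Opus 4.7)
The plan is to exhibit chain-level splittings of the two relative chain complexes, indexed by their minimal augmentation cores, and then identify each summand with a smaller Steinberg module through \cref{lem_homology_IAA0_Steinberg} applied to the corresponding symplectic complement.

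First I would treat the pair $(\IAA^{(1)}, \IAA^{(0)})$. By \cref{def_intermediate_complexes}, every simplex $\Sigma \in \IAA^{(1)} \setminus \IAA^{(0)}$ is of type $(\tau,\sigma)$ for some $\tau$ and therefore has a unique minimal $\sigma$-face $\Delta = \Delta(\Sigma)$, its augmentation core (compare \cref{def_minimal_simplices_additive_core}). Writing $\Sigma = \Delta \cup \Theta$, the residue $\Theta$ lies in $\Link_{\IAA^{(1)}}(\Delta) = \IAA[]^{(0)}(\langle \Delta \rangle^\perp)$ by \cref{lem_links_mixed_sigma}. Using the fixed total order on $\Vr(\IAA^{(2)})$ to pin down signs, this bijection lifts to an isomorphism of chain complexes
\[ C_*(\IAA^{(1)}, \IAA^{(0)}) \;\cong\; \bigoplus_{\Delta \text{ min.\ }\sigma} \widetilde C_{*-2}\bigl(\IAA[]^{(0)}(\langle \Delta \rangle^\perp)\bigr), \]
provided the simplicial boundary respects the splitting. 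The only faces of $\Sigma = \Delta \cup \Theta$ that could leave the $\Delta$-summand are of the form $\Sigma \setminus \{v\}$ with $v \in \Delta$; but then $\Sigma \setminus \{v\}$ consists of the other vertex of the $\sigma$-edge $\Delta$ joined with $\Theta \subseteq \langle \Delta \rangle^\perp$, hence is of the same type as $\Theta$ in $\IAA^{(0)}$ and is zero in the relative complex. Passing to homology and applying (the obvious relative version of) \cref{lem_homology_IAA0_Steinberg} to the summand $\langle \Delta \rangle^\perp$ of genus $n-1$ yields $\widetilde H_{n-2}(\IAA[]^{(0)}(\langle \Delta \rangle^\perp)) \cong \St^\omega(\langle \Delta \rangle^\perp)$ (the convention $\St^\omega(\{0\}) = \mbZ$ covers the degenerate case $n = 1$). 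The $\Sp{2n}{\mbZ}$-equivariance is immediate: the group permutes the set of minimal $\sigma$-edges, and an element $\phi$ identifies $\St^\omega(\langle \Delta \rangle^\perp)$ with $\St^\omega(\langle \phi\Delta \rangle^\perp)$ through its restriction.

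For the pair $(\IAA^{(2)}, \IAA^{(1)})$ the same construction fails: removing a vertex from a minimal skew-$\sigma^2$ simplex can produce a skew-additive simplex, which belongs to $\IAA^{(1.5)} \setminus \IAA^{(1)}$ and so does not vanish in the relative complex. The remedy is to work with $\IAA^{(1.5)}$. By \cref{lem_hom_eq_1_15} one has $H_*(\IAA^{(1.5)}, \IAA^{(1)}) = 0$, so the long exact sequence of the triple $(\IAA^{(2)}, \IAA^{(1.5)}, \IAA^{(1)})$ yields a canonical isomorphism $H_*(\IAA^{(2)}, \IAA^{(1)}) \xrightarrow{\cong} H_*(\IAA^{(2)}, \IAA^{(1.5)})$. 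Every $\Sigma \in \IAA^{(2)} \setminus \IAA^{(1.5)}$ now has a unique minimal core $\Delta$ of type $\sigma^2$, skew-$\sigma^2$, or $\sigma$-additive, and \cref{lem_links_mixed_sigma2_sigmaadd} identifies its link with $\IAA[]^{(0)}(\langle \Delta \rangle^\perp)$. A direct inspection of the pairings in \cref{def:IAA-simplices} shows that for any $v \in \Delta$ the face $\Delta \setminus \{v\}$ is either a $\sigma$-simplex or a skew-additive simplex, so $(\Delta \setminus \{v\}) \cup \Theta$ lies in $\IAA^{(1.5)}$. The same chain-level argument therefore produces a splitting of $C_*(\IAA^{(2)}, \IAA^{(1.5)})$ into three families of summands, shifted by $4$ in the $\sigma^2$ and skew-$\sigma^2$ cases and by $3$ in the $\sigma$-additive case. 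Taking homology and invoking \cref{lem_homology_IAA0_Steinberg} for $\IAA[]^{(0)}(\langle \Delta \rangle^\perp) \cong \IAA[n-2]^{(0)}$ in the first two cases and $\IAA[]^{(0)}(\langle \Delta \rangle^\perp) \cong \IAA[n-1]^{(0)}$ in the third gives the second decomposition, again $\Sp{2n}{\mbZ}$-equivariantly.

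The main obstacle, beyond routine sign and orientation bookkeeping, is precisely the mismatch indicated above in the skew-$\sigma^2$ case: without the detour through $\IAA^{(1.5)}$ the naive chain-level splitting of $C_*(\IAA^{(2)}, \IAA^{(1)})$ does not hold, because the relative boundary of a minimal skew-$\sigma^2$ simplex can exit $\IAA^{(1)}$. Once this is repaired via \cref{lem_hom_eq_1_15}, the argument is entirely parallel to the treatment of $(\IAA^{(1)}, \IAA^{(0)})$.
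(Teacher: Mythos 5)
Your proposal is correct and follows essentially the same route as the paper: decompose the relative homology according to the unique minimal core ($\sigma$, $\sigma^2$, skew-$\sigma^2$ or $\sigma$-additive) of each new simplex, pass through $\IAA^{(1.5)}$ via \cref{lem_hom_eq_1_15} to handle the skew-$\sigma^2$ problem, identify the links via \cref{lem_links_mixed_sigma} and \cref{lem_links_mixed_sigma2_sigmaadd}, and conclude with \cref{lem_homology_IAA0_Steinberg}. The only difference is cosmetic: you realise the decomposition as a direct splitting of the relative chain complex, while the paper packages the same decomposition through excision onto stars together with the long exact sequence of the pair and the $\partial\Delta$-suspension isomorphism.
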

\begin{remark}
	\label{rem:module-structure-right-hand-term}
	In \cref{lem_rel_homology_IAAi_IAAj}, the $\Sp{2n}{\mbZ}$-module structure of the sum terms appearing on the right is as follows: We make it precise for the $\Sp{2n}{\mbZ}$-module
	$$\bigoplus_{\substack{\Delta = \{v_1,w_1,v_2,w_2\}\\ \text{$\sigma^2$ simplex}}}\St^\omega(\ll\Delta\rr^\perp),$$
	for the other terms it is defined similarly. Let $\Delta$ be a minimal $\sigma^2$ simplex in $\IAA^{(2)}$ and consider a class $\zeta \in \St^\omega(\ll\Delta\rr^\perp)$. Then, an element $\GroupElement \in \Sp{2n}{\mbZ}$ acts by mapping $\zeta$ to the class $\pm (\phi \cdot \zeta)$ in the summand $\St^\omega(\ll \phi \cdot \Delta\rr^\perp)$ indexed by the $\sigma^2$ simplex $\phi \cdot \Delta$, where the sign is positive if $\phi\colon  \Delta \to \phi \cdot\Delta$ is orientation-preserving with respect to the total ordering of the vertices and negative if it is orientation-reversing.
\end{remark}
\begin{proof}
	For the second isomorphism, we first use the identification
	$$H_{n+1}(\IAA^{(2)} , \IAA^{(1)})\cong H_{n+1}(\IAA^{(2)} , \IAA^{(1.5)})$$
	induced by the deformation retraction in \cref{lem_hom_eq_1_15}. Apart from this extra step, the construction of the two claimed isomorphisms is analogous and can be described simultaneously: Let $k = n$, $i = 1$ and $i' = 0$ or $k = n+1$, $i = 2$ and $i' = 1.5$. Using excision and that 
	$$\Star_{\IAA^{(i)}}(\Delta) \cap \IAA^{(i')} = \partial \Delta \ast \Link_{\IAA^{(i)}}(\Delta)$$
	for $\Delta$ one of the minimal simplices listed above,
	we obtain isomorphisms
	\begin{equation*}
		H_{n}(\IAA^{(1)} , \IAA^{(0)}) \cong \bigoplus_{\substack{\Delta \text{ minimal }\\ \text{$\sigma$ simplex}}} H_{n}( \Star_{\IAA^{(1)}}(\Delta), \partial \Delta \ast \Link_{\IAA^{(1)}}(\Delta))
	\end{equation*}
	and
	\begin{equation*}
			H_{n+1}(\IAA^{(1.5)} , \IAA^{(1)})
			\cong \bigoplus
			\begin{rcases*}
				\begin{dcases*}
					\bigoplus_{\substack{\Delta \text{ minimal }\\ \text{$\sigma^2$ simplex}}} H_{n+1}( \Star_{\IAA^{(2)}}(\Delta), \partial \Delta \ast \Link_{\IAA^{(2)}}(\Delta))\\
					\bigoplus_{\substack{\Delta \text{ minimal }\\ \text{skew-$\sigma^2$ simplex}}} H_{n+1}(\Star_{\IAA^{(2)}}(\Delta), \partial \Delta \ast \Link_{\IAA^{(2)}}(\Delta))\\
					\bigoplus_{\substack{\Delta \text{ minimal }\\ \text{$\sigma$-additive simplex}}} H_{n+1}(\Star_{\IAA^{(2)}}(\Delta), \partial \Delta \ast \Link_{\IAA^{(2)}}(\Delta))
				\end{dcases*}
			\end{rcases*}.
	\end{equation*}
	The terms appearing on the right can then be further simplified: The fact that $\Star_{\IAA^{(i)}}(\Delta)$ is contractible implies that the connecting morphism of the long exact sequence of the pairs $(\Star(\Delta)_{\IAA^{(i)}}, \partial \Delta \ast \Link_{\IAA^{(i)}}(\Delta))$ is an isomorphism in reduced homology, i.e.\
	$$H_{k}(\Star_{\IAA^{(i)}}(\Delta), \partial \Delta \ast \Link_{\IAA^{(i)}}(\Delta)) \cong \widetilde{H}_{k-1}(\partial \Delta \ast \Link_{\IAA^{(i)}}(\Delta)).$$
	We then observe that $\partial \Delta$ is a $(\dim \Delta - 1)$-sphere and use the $\partial \Delta$-suspension isomorphism associated to $\Link_{\IAA^{(i)}}(\Delta)$ to obtain an identification
	\begin{equation*}
		\widetilde{H}_{k-1}(\partial \Delta \ast \Link_{\IAA^{(i)}}(\Delta)) \cong
		\widetilde{H}_{(k-1)-\dim \Delta}(\Link_{\IAA^{(i)}}(\Delta)).
	\end{equation*}
	We note that this isomorphism is determined by the orientation of $\Delta$, i.e.\ it is induced by the cross product with the orientation class $\eta_{\dim \Delta - 1} \in \widetilde{H}_{\dim(\Delta)-1}(\partial \Delta)$ (see e.g.\ \cite[p.276 et seq.]{Hatcher2002}).
	Finally, we use \cref{lem_links_mixed_sigma} to identify $\Link_{\IAA^{(i)}}(\Delta)$ with $\IAA[n-1]^{(0)}$ if $i = 1$ and \cref{lem_links_mixed_sigma2_sigmaadd} to identify $\Link_{\IAA^{(i)}}(\Delta)$ with $\IAA[n-2]^{(0)}$ or $\IAA[n-1]^{(0)}$ (depending on $\Delta$) if $i = 2$. Invoking \cref{lem_homology_IAA0_Steinberg} then yields an isomorphism $\widetilde{H}_{(k-1)-\dim \Delta}(\Link_{\IAA^{(i)}}(\Delta)) \cong \St^\omega(\ll\Delta\rr^\perp)$. \qedhere
\end{proof}

\begin{corollary}
	\label{cor:right-exact-sequence-smaller-steinberg-modules}
	The exact sequence obtained in \cref{lem:right-exact-homology-sequence} identifies with an exact sequence
	\begin{equation*}
		\bigoplus
		\begin{rcases*}
			\begin{dcases*}
				\bigoplus_{\substack{\Delta = \{v_1,w_1,v_2,w_2\}\\ \text{$\sigma^2$ simplex}}}\St^\omega(\ll\Delta\rr^\perp)\\
				\bigoplus_{\substack{\Delta = \{z_0, z_1, z_2, z_3\}\\ \text{skew-$\sigma^2$ simplex}}} \St^\omega(\ll\Delta\rr^\perp)\\
				\bigoplus_{\substack{\Delta = \{z_0, z_1, z_2\}\\ \text{$\sigma$-additive simplex}}} \St^\omega(\ll\Delta\rr^\perp)
			\end{dcases*}
		\end{rcases*}
		\xrightarrow{\partial_{n+1}^{\St^\omega}}
		\bigoplus_{\substack{\Delta = \{v_1,w_1\}\\ \text{$\sigma$ simplex}}}\St^\omega(\ll\Delta\rr^\perp)
		\xrightarrow{\partial_{n}^{\St^\omega}}
		\St^\omega_n
		\to
		0		
	\end{equation*}
	where we use the convention that $\St^{\omega}(\{0\}) \coloneqq \mbZ$.
\end{corollary}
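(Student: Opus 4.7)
The plan is to obtain the corollary as a direct algebraic consequence of combining the short exact sequence from \cref{lem:right-exact-homology-sequence} with the two $\Sp{2n}{\mbZ}$-equivariant identifications established in \cref{lem_rel_homology_IAAi_IAAj}. Since both of those identifications are isomorphisms of $\Sp{2n}{\mbZ}$-modules, applying them to the first two terms of the sequence
\[ H_{n+1}(\IAA^{(2)}, \IAA^{(1)}) \xrightarrow{\partial_{n+1}^{(2,1,0)}} H_n(\IAA^{(1)}, \IAA^{(0)}) \xrightarrow{s_{*} \circ \partial_{n}^{(1,0)}} \St^\omega_n \longrightarrow 0\]
produces an exact sequence of $\Sp{2n}{\mbZ}$-modules of exactly the shape claimed in the statement.

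Concretely, I would proceed as follows. First I would fix once and for all the total ordering on the vertices of $\IAA^{(2)}$ used in \cref{lem_rel_homology_IAAi_IAAj}, so that each minimal $\sigma$, $\sigma^2$, skew-$\sigma^2$, and $\sigma$-additive simplex carries a well-defined orientation. Then I would \emph{define}
\[ \partial_n^{\St^\omega} \coloneqq (s_{*} \circ \partial_{n}^{(1,0)}) \circ \Phi_n^{-1}, \qquad \partial_{n+1}^{\St^\omega} \coloneqq \Phi_n \circ \partial_{n+1}^{(2,1,0)} \circ \Phi_{n+1}^{-1},\]
where $\Phi_n\colon H_n(\IAA^{(1)},\IAA^{(0)})\xrightarrow{\cong}\bigoplus_{\sigma\text{-simplices}}\St^\omega(\ll\Delta\rr^\perp)$ and $\Phi_{n+1}\colon H_{n+1}(\IAA^{(2)},\IAA^{(1)})\xrightarrow{\cong}\bigoplus_{\tau'\text{-simplices}}\St^\omega(\ll\Delta\rr^\perp)$ are the isomorphisms from \cref{lem_rel_homology_IAAi_IAAj}. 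With these definitions, the exactness of the new sequence is immediate from \cref{lem:right-exact-homology-sequence} and the fact that $\Phi_n$, $\Phi_{n+1}$ are isomorphisms.

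The only thing left is to confirm that the resulting sequence is actually a sequence of $\Sp{2n}{\mbZ}$-modules, i.e.\ that $\partial_n^{\St^\omega}$ and $\partial_{n+1}^{\St^\omega}$ are equivariant. This follows because $\partial_n^{(1,0)}$, $\partial_{n+1}^{(2,1,0)}$, and $s_*$ are equivariant (being induced by $\Sp{2n}{\mbZ}$-equivariant simplicial maps), and because the identifications $\Phi_n$, $\Phi_{n+1}$ were already shown in \cref{lem_rel_homology_IAAi_IAAj} to be equivariant, using the $\Sp{2n}{\mbZ}$-module structure on the right-hand sides spelled out in \cref{rem:module-structure-right-hand-term}. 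I do not expect any genuine obstacle here: the entire argument is a bookkeeping exercise, and the real work was already carried out in \cref{lem:right-exact-homology-sequence} and \cref{lem_rel_homology_IAAi_IAAj}. The only mild subtlety is ensuring that the sign conventions coming from the orientation choices on the indexing simplices match up consistently across the two identifications, which is forced by fixing the single total vertex ordering at the start.
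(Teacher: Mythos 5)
Your proposal is correct and is exactly how the paper obtains this corollary: it is stated without further proof as the immediate consequence of transporting the exact sequence of \cref{lem:right-exact-homology-sequence} along the equivariant isomorphisms of \cref{lem_rel_homology_IAAi_IAAj} (with the module structure from \cref{rem:module-structure-right-hand-term} and the fixed vertex ordering handling the orientation signs). No further comment is needed.
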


\subsection{Reduction of Theorem B to checking exactness}
\label{subsec:reduction-of-theorem-b-to-an-exact-sequence}

The goal of this subsection is to define $\Sp{2n}{\mbZ}$-modules $\ApartmentModule_n, \SigmaAdditiveApartmentModule_n$ and $\SkewApartmentModule_n$ as well as maps $\ApartmentMap_n, \AdditiveMap_n$ and $\SkewMap_n$ such that \autoref{thmB} is a consequence of the following statement.

\begin{theorem}\label{thm:stpres}
	For $n\ge 1$, the sequence
	$\SigmaAdditiveApartmentModule_n \oplus \SkewApartmentModule_n \xrightarrow{\AdditiveMap_n \oplus \SkewMap_n} \ApartmentModule_n \xrightarrow{\ApartmentMap_n} \St^\omega_n \longrightarrow 0$
	is exact.
\end{theorem}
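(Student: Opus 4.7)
The plan is to proceed by induction on the genus $n$, using the intermediate exact sequence from \cref{cor:right-exact-sequence-smaller-steinberg-modules} as the topological input. For the base case $n=1$, we have $\Sp{2}{\mbZ} = \SL{2}{\mbZ}$ and the symplectic form on $\mbQ^2$ is degenerate for isotropy (every line is isotropic), so $T^\omega_1$ is the rational Tits building of type $\mathtt A_1$. Here $\St^\omega_1 = \St_1$ and a presentation is essentially the classical Manin--Bykovski\u{\i} presentation; one verifies directly that the proposed apartment modules $\ApartmentModule_1, \SigmaAdditiveApartmentModule_1, \SkewApartmentModule_1$ and the maps $\ApartmentMap_1$, $\AdditiveMap_1 \oplus \SkewMap_1$ realise this.

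For the inductive step, the idea is to compare the intermediate exact sequence
\[
\bigoplus_{\sigma^2, \text{skew-}\sigma^2, \sigma\text{-add.}} \St^\omega(\langle\Delta\rangle^\perp) \xrightarrow{\partial^{\St^\omega}_{n+1}} \bigoplus_{\sigma \text{ simpl.}} \St^\omega(\langle\Delta\rangle^\perp) \xrightarrow{\partial^{\St^\omega}_{n}} \St^\omega_n \to 0
\]
with the sequence $\SigmaAdditiveApartmentModule_n \oplus \SkewApartmentModule_n \to \ApartmentModule_n \to \St^\omega_n \to 0$ of interest. Concretely, one builds a commutative diagram whose bottom row is this latter sequence and whose top row consists of direct sums indexed by minimal $\sigma, \sigma^2, \text{skew-}\sigma^2$ and $\sigma$-additive simplices in $\IAA^{(2)}$ of the analogous apartment-module sequences for smaller genus. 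The vertical maps are induced by stabilisation maps that send each summand indexed by a simplex $\Delta$ into the appropriate module for $\Sp{2n}{\mbZ}$ via the inclusion $\langle\Delta\rangle^\perp \hookrightarrow \mbZ^{2n}$. By the induction hypothesis, each column is exact (or rather, the top row surjects onto the terms below via apartment-class surjections) and the surjection on the right-most column is \cref{lem_homology_IAA0_Steinberg} combined with \cref{lem_rel_homology_IAAi_IAAj}.

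Surjectivity of $\ApartmentMap_n$ then follows from the Solomon--Tits generation result and the surjection $\ApartmentModule_n \twoheadrightarrow \St^\omega_n$ via integral apartment classes, which is already known by Gunnells. To establish $\im(\AdditiveMap_n \oplus \SkewMap_n) = \ker(\ApartmentMap_n)$, one first checks the inclusion $\subseteq$ directly by verifying that the polyhedral cells in \cref{fig_relation_polytopes} -- namely the $2$-simplex filling a $\sigma$-additive relation and the prism encoding the 2-skew-additive / skew-$\sigma^2$ relation -- bound genuine cycles in $T^\omega_n$. The reverse inclusion is the substantive content: given $c \in \ker(\ApartmentMap_n)$, one lifts $c$ through the diagram as follows. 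The element $c$ maps to zero in $\St^\omega_n$, so it represents a class whose image in the bottom-row Steinberg module vanishes; using the exactness of the top intermediate sequence and the inductive surjectivity of apartment modules indexed by the simplices $\Delta$ (with $\dim\langle\Delta\rangle^\perp < n$), we lift $c$ to an element of $\SigmaAdditiveApartmentModule_n \oplus \SkewApartmentModule_n$ and then carry out a diagram chase to show the lift maps to $c$ modulo elements in $\im(\AdditiveMap_n \oplus \SkewMap_n)$.

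The main obstacle I anticipate is the compatibility between the geometric description of the intermediate sequence (which decomposes into summands indexed by minimal $\sigma$, $\sigma^2$, skew-$\sigma^2$ and $\sigma$-additive simplices, each carrying smaller Steinberg modules) and the algebraic description of $\AdditiveMap_n, \SkewMap_n$ in terms of the symmetric-group-like relations (a)--(c) of \autoref{thmB}. Matching these up requires keeping careful track of signs (coming from the ordering convention in \cref{lem_rel_homology_IAAi_IAAj} and \cref{rem:module-structure-right-hand-term}) and of the identifications $\langle\Delta\rangle^\perp \cong \mbZ^{2k}$ used to invoke the induction hypothesis. Once this bookkeeping is set up, the diagram chase is standard homological algebra and yields the exactness claim.
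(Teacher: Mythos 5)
Your proposal follows essentially the same route as the paper's proof: induction on the genus, the intermediate exact sequence extracted from the triple $(\IAA^{(2)},\IAA^{(1)},\IAA^{(0)})$, a commutative diagram comparing it with direct sums of the apartment-module sequences for the summands $\langle\Delta\rangle^\perp$ and with the genus-$n$ sequence, surjectivity of the apartment class map via Gunnells, and a concluding diagram chase; even your base case amounts to the Bykovski\u{\i}/Manin presentation that the paper recovers via the Farey-graph description of $\IAA[1]^{(1)}$. The only caveat is that the compatibility you defer as sign and identification ``bookkeeping'' is exactly where the paper invests its effort -- commutativity is verified by realising relative apartment classes topologically via cross maps and decomposing the boundary spheres of the $\sigma^2$, prism and $\sigma$-additive cells into $\sigma$-indexed pieces -- but this is a matter of execution within the same strategy, not a missing idea.
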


Throughout this section we use the following notation: Let $n \geq 1$ and $\SymplecticSummand$ be a symplectic summand of $(\mbZ^{2n}, \omega)$ of genus $k$. We denote by $\on{Sp}(\SymplecticSummand) \subseteq \Sp{2n}{\mbZ}$ the symplectic automorphisms of $\SymplecticSummand$ and let $\St^\omega(\SymplecticSummand)$ be the symplectic Steinberg module of $\on{Sp}(\SymplecticSummand)$. If $\SymplecticSummand = \{0\}$ we define $\St^\omega(\SymplecticSummand) \coloneqq \mbZ$.\medskip

\noindent \textbf{The apartment module $\ApartmentModule_n$ and the apartment class map $\ApartmentMap: \ApartmentModule_n \to \St_n^\omega$:} A theorem of Gunnells \cite{gun2000} states that $\St_n^\omega$ is a cyclic $\Sp{2n}{\mbZ}$-module, i.e.\ there is an equivariant surjection $\ApartmentMap'_n\colon  \mbZ[\Sp{2n}{\mbZ}] \twoheadrightarrow \St_n^\omega$. The $\Sp{2n}{\mbZ}$-module $\ApartmentModule_n$ appearing in \cref{thm:stpres} is a quotient of $\mbZ[\Sp{2n}{\mbZ}]$ through which Gunnells' apartment class map $\ApartmentMap'_n$ factors, i.e.\ there is a commutative diagram of $\Sp{2n}{\mbZ}$-equivariant maps
\begin{equation}
	\label{eq:apartment-class-map}
\begin{tikzcd}
	\mbZ[\Sp{2n}{\Z}] \arrow[d, two heads] \arrow[dr, two heads, "\ApartmentMap'_n"] & \\
	\ApartmentModule_n \arrow[r, two heads, "\ApartmentMap_n"] & \St_n^\omega.
\end{tikzcd}
\end{equation}
Gunnells' map $\ApartmentMap'_n\colon  \mbZ[\Sp{2n}{\mbZ}] \twoheadrightarrow 
\St_n^\omega$ is defined as follows: A symplectic matrix $M \in \Sp{2n}{\mbZ}$ 
is, by considering the column vectors of $M$, equivalent to the data of an 
ordered symplectic basis $M = (\vec v_1, \vec w_1,\dots, \vec v_n,\vec w_n)$ of 
$\mbZ^{2n}$. We now describe how each such symplectic basis gives rise to a 
unique simplicial embedding 
$$M \colon  S^{n-1} \hookrightarrow T_n^\omega$$
and hence, after fixing a fundamental class $\xi_{n-1} \in 
\widetilde{H}_{n-1}(S^{n-1})$, defines a unique class $\ApartmentMap'_n(M) = 
M_{*}(\xi_{n-1}) \in \St_n^{\omega}$, the \emph{apartment class} of $M$. We 
refer the reader to \cite{bruecksroka2023} for a more detailed account.

\begin{definition}
\label{def:combinatorial-apartment-model}
	Let $\sym{n} \coloneqq \{ 1, \bar{1}, \dots, n, \bar{n}\}$. A nonempty subset $I \subseteq \sym{n}$ is called a \emph{standard subset} if for all $1 \leq a \leq n$ it is true that $\{a,\bar{a}\} \not\subset I$. We denote by $C_n$ the simplicial complex whose vertex set is $\sym{n}$ and whose $k$-simplices are the standard subsets $I \subset \sym{n}$ of size $k+1$.
\end{definition}
Note that the simplicial complex $C_n$ in \cref{def:combinatorial-apartment-model} encodes exactly the simplicial structure of the boundary sphere of the $n$-dimensional cross polytope appearing in \cref{def_prism} and that there is a 
homeomorphism $C_{n+1} \cong C_n \ast \partial\{n+1, 
\overline{n+1}\}$. Hence, $C_n = \ast_{i=1}^{n} S^0$ is a 
simplicial $(n-1)$-sphere. Given any ordered symplectic basis $M = (\vec v_1, 
\vec w_1,\dots, \vec v_n,\vec w_n)$ of $\mbZ^{2n}$, we can label its basis 
vectors from left to right using $\{ 1, \bar{1}, \dots, n, \bar{n}\}$ (i.e.\ 
$\vec M_a = \vec v_a$ and $\vec M_{\bar{a}} = \vec w_a$). Denoting by 
$\Simp(C_n)$ the poset of simplices of $C_n$, we obtain an embedding of posets $M\colon  \Simp(C_n) \hookrightarrow T^\omega_n$ by 
mapping a standard subset $I$ to $M_I = \langle \vec M_z \mid z \in I \rangle$. 
Furthermore, we fix a fundamental class $\xi_{-1} \in \widetilde{H}_{-1}(C_0; \mbZ) = \widetilde{H}_{-1}(\emptyset; \mbZ) = \mbZ$ and define $\xi_{n}$ for $n \geq 0$ as the class in $\widetilde{H}_{n}(C_{n+1}; \mbZ)$ obtained from $\xi_{n-1}$ using the suspension isomorphism $C_{n+1} \cong C_n \ast \partial\{n+1, \overline{n+1}\}$.
This completes our discussion apartment classes and Gunnells' map 
$\ApartmentMap'_n$.

We now turn to the definition of $\ApartmentModule_n$ and  $\ApartmentMap_n$. 
For this we recall that, for $n\geq 1$, the set of all  bijections 
\begin{equation}
	\label{eq:coxeter_details}
	\pi\colon  \sym{n} \to \sym{n}
\end{equation}
with $\pi(\bar a) = \overline {\pi(a)}$ for all $a\in \sym n$  is the group of 
signed permutations. Here $\bar{\bar a} =a$ for $a \in \{1, \dots, n\}$. We denote this group by $\CoxeterGroupTypeB_n$. Let 
$\CoxeterGenerators_n = \ls s_1, \ldots, s_n \rs\subset \CoxeterGroupTypeB_n$ 
denote the subset containing the following permutations: For $1 \leq i < n$, 
$s_i$ swaps $i$ and $(i+1)$ while keeping all other elements fixed, and $s_n$ 
swaps $n$ and $\bar{n}$ while keeping all other elements fixed. Then 
$(\CoxeterGroupTypeB_n,\CoxeterGenerators_n)$ is a 
Coxeter system of type $\mathtt{C}_n = \mathtt{B}_n$ (see \cite{AB:Buildings} or 
\cite{BB:CombinatoricsCoxetergroups} for more details). For $\pi\in 
\CoxeterGroupTypeB_n$, we write $\on{len}(\pi) = \on{len}_\CoxeterGenerators(\pi)$ for the word 
length of $\pi$ with respect to the generating set $\CoxeterGenerators$.\footnote{A combinatorial description of this length function can be found in \cite[Proposition 8.1.1]{BB:CombinatoricsCoxetergroups}.}

\begin{definition}
	\label{def:apartment-module}
	Let $\SymplecticSummand$ be a genus-$k$ symplectic summand of $(\mbZ^{2n}, \omega)$. 
	The symplectic \emph{apartment module} 
	$\ApartmentModule(\SymplecticSummand)$ is the 
	$\on{Sp}(\SymplecticSummand)$-module whose underlying group is free 
	abelian with generators the set of formal symbols $[v_1,w_1, \dots, v_k, w_k]$,
	where
	\begin{itemize}
		\item $(v_1,w_1, \dots, v_k, w_k)$ is a tuple of lines in 
		$\SymplecticSummand$ such that, for 
		some choice of primitive representatives, $(\vec v_1,\vec w_1, \dots, 
		\vec v_k, \vec w_k)$ is a symplectic basis of 
		$\SymplecticSummand$,
	\end{itemize} 
	and where for all $\pi \in \mathcal W_k$, we have
			$$[v_1, v_{\bar{1}}, \dots, v_k, v_{\bar{k}}] = 
			(-1)^{\on{len}(\pi)}\cdot 
			[v_{\pi(1)},v_{\pi(\bar{1})},\dots, v_{\pi(k)}, v_{\pi(\bar{k})}].$$ 
	We write $\ApartmentModule_n = 
	\ApartmentModule(\mbZ^{2n})$ and set $\ApartmentModule(\{0\}) \coloneqq \mbZ$. The $\on{Sp}(\SymplecticSummand)$-action is defined by
	\begin{equation*}
		\GroupElement \cdot [v_1, w_1, \dots, v_k, w_k] = [\GroupElement 
		(v_1),\GroupElement (w_1), \dots, \GroupElement (v_k), \GroupElement 
		(w_k)] \text{ for all } \phi \in \on{Sp}(\SymplecticSummand).
	\end{equation*}
\end{definition}

Note that $\ApartmentModule(\SymplecticSummand)$ is a quotient of 
$\mbZ[\on{Sp}(\SymplecticSummand)]$. The apartment class map on 
$\ApartmentModule(\SymplecticSummand)$ is defined as follows:

\begin{definition}
	\label{def:apartment-map}
	Let $\SymplecticSummand$ be a symplectic summand of $(\mbZ^{2n}, \omega)$ 
	of genus $k$. The symplectic \emph{apartment class map} 
	$\ApartmentMap_\SymplecticSummand\colon  \ApartmentModule(\SymplecticSummand) \to 
	\St(\SymplecticSummand)$ is the unique map such that the following is a 
	commutative diagram of $\on{Sp(\SymplecticSummand)}$-equivariant maps
	\begin{equation*}
		\begin{tikzcd}
			\mbZ[\on{Sp}(\SymplecticSummand)] \arrow[d, two heads] \arrow[dr, two heads, "\ApartmentMap_\SymplecticSummand'"] & \\
			\ApartmentModule(\SymplecticSummand) \arrow[r, two heads, "\ApartmentMap_\SymplecticSummand"] & \St^\omega(\SymplecticSummand)
		\end{tikzcd}
	\end{equation*}
	where $\ApartmentMap_\SymplecticSummand'$ is Gunnells' map that we described above. If $\SymplecticSummand = \mbZ^{2n}$, we write $\ApartmentMap_n = \ApartmentMap_{\mbZ^{2n}}$.
\end{definition}

It follows from \cite[Proposition 3.7.1a and 3.7.1b]{gun2000}\footnote{\cite[Proposition 3.7.1a]{gun2000} contains a misprint. Using the notation of \cite{gun2000}, the corrected statement is: $[v_1, \dots, v_n; v_{\bar n}, \dots, v_{\bar 1}] = \on{sign}(\tau)\cdot [\tau(v_1), \dots, \tau(v_n); \tau(v_{\bar n}), \dots, \tau(v_{\bar 1})].$} that Gunnells' map $\ApartmentMap_\SymplecticSummand$ factors over $\ApartmentModule(\SymplecticSummand)$, i.e.\ that \cref{def:apartment-map} is well-defined. \medskip

\noindent \textbf{Mapping the augmented apartment modules $\SigmaAdditiveApartmentModule_n$ and $\SkewApartmentModule_n$ to $\ApartmentModule_n$:} We now define the two augmented apartment modules $\SigmaAdditiveApartmentModule_n$ and $\SkewApartmentModule_n$, as well as the two maps $\AdditiveMap_n\colon  \SigmaAdditiveApartmentModule_n \to \ApartmentModule_n$ and $\SkewMap_n\colon  \SkewApartmentModule_n \to \ApartmentModule_n$ occurring in \cref{thm:stpres}. We start with the pair $(\SigmaAdditiveApartmentModule_n, \AdditiveMap_n)$, which is related to the $\sigma$-additive simplices appearing in $\IAArel$.

\begin{definition}
	\label{def:additive-apartment-module}
	Let $\SymplecticSummand$ be a genus-$k$ symplectic summand of $(\mbZ^{2n}, \omega)$. The \emph{$\sigma$-additive apartment module} 
	$\SigmaAdditiveApartmentModule(\SymplecticSummand)$ is the 
	$\on{Sp}(\SymplecticSummand)$-module whose underlying group is free 
	abelian with generators the set of formal symbols $[z_0, z_1, z_2] \ast [v_2, w_2, 
	\dots, v_k, w_k]$, where
	\begin{itemize}
		\item $(v_2, w_2, \dots, v_k, w_k)$ is a tuple of 
		lines in $\SymplecticSummand$ such that, for some choice of primitive 
		representatives, $(\vec v_2, \vec w_2, \dots, \vec v_k, \vec w_k)$ is 
		a symplectic basis of a genus-$(k-1)$ summand $\SymplecticSummand_{k-1} 
		\subset \SymplecticSummand$;
		\item $(z_0, z_1, z_2)$ is a tuple of lines in 
		$\SymplecticSummand$ such that $\{z_0, z_1, z_2\}$ is a 
		$\sigma$-additive simplex\footnote{The conditions given in this item are equivalent to saying that for some choice of primitive representatives $\vec z_0, \vec z_1, \vec z_2$, these three vectors span the symplectic complement of $U_{k-1}$ and satisfy $\omega(\vec z_0, \vec z_1) = \omega(\vec z_1, \vec z_2) = \omega(\vec z_0, \vec z_2) = 1$.} in the symplectic complement of 
		$\SymplecticSummand_{k-1}$ in $\SymplecticSummand$;
	\end{itemize}
	and where for all $\tau \in \Sym(\{0,1,2\})$ and $\pi \in 
	\CoxeterGroupTypeB_k$ such that $\pi(1)=1$, we have
	\begin{align*}
	[z_0, z_1, z_2] \ast [v_1, 
		v_{\bar{1}}, \dots, v_k, v_{\bar{k}}] =
	\on{sign}(\tau)(-1)^{\on{len}(\pi)}\cdot 
	[z_{\tau(0)}, z_{\tau(1)}, z_{\tau(2)}] \ast [v_{\pi(2)}, 
	v_{\pi(\bar{2})},\dots, v_{\pi(k)}, 
	v_{\pi(\bar{k})}].
	\end{align*}
	We write $\SigmaAdditiveApartmentModule_n = \SigmaAdditiveApartmentModule(\mbZ^{2n})$ and set $\SigmaAdditiveApartmentModule(\{0\}) \coloneqq \{0\}$. The $\on{Sp}(\SymplecticSummand)$-action is defined by 
	\begin{align*}
	\phi \cdot [z_0, z_1, z_2] \ast [v_2, \dots, w_k] =
	[\phi(z_0), \phi(z_1), \phi(z_2)] \ast [\phi(v_2), \dots, 
 \phi(w_k)] \text{ for all } \phi 
	\in \on{Sp}(\SymplecticSummand).
	\end{align*}
	The $\on{Sp}(\SymplecticSummand)$-equivariant map
	$\AdditiveMap_\SymplecticSummand\colon  \SigmaAdditiveApartmentModule(\SymplecticSummand) \to \ApartmentModule(\SymplecticSummand)$
	is defined by
	\begin{align*}
		\AdditiveMap_\SymplecticSummand([z_0, z_1, z_2] \ast [v_2, \dots, w_k])\coloneqq
		[z_1, z_2, v_2, \dots,  w_k] - [z_1, z_0, v_2,\dots, 
		w_k] - [z_0, z_2, v_2, \dots,   w_k].
	\end{align*}
	If $\SymplecticSummand = \mbZ^{2n}$, we write $\AdditiveMap_n = \AdditiveMap_{\mbZ^{2n}}$.
\end{definition}

\begin{remark}
	In the setting of \cref{def:additive-apartment-module}, an elementary 
	calculation using the symplectic form $\omega$ shows that the terms 
	appearing in the image under $\AdditiveMap_\SymplecticSummand$ of a formal 
	symbol in $\SigmaAdditiveApartmentModule(\SymplecticSummand)$ are indeed 
	formal symbols in $\ApartmentModule(\SymplecticSummand)$. Using the 
	relations between formal symbols in $\ApartmentModule(\SymplecticSummand)$ 
	(see \cref{def:apartment-module}), one then verifies that 
	$\AdditiveMap_\SymplecticSummand$ is well-defined.
\end{remark}

We now introduce $(\SkewApartmentModule_n, \SkewMap_n)$, which is related to the skew-apartment simplices in $\IAArel$.

\begin{definition}
	\label{def:skew-apartment-module}
	Let $\SymplecticSummand$ be a genus-$k$ symplectic summand of $(\mbZ^{2n}, \omega)$. The \emph{skew-apartment module} 
	$\SkewApartmentModule(\SymplecticSummand)$ is the 
	$\on{Sp}(\SymplecticSummand)$-module whose underlying group is free 
	abelian with generators the set of formal symbols $[z_0, z_1, z_2, z_3] \ast [v_3, w_3, 
	\dots, v_k, w_k]$, where
	\begin{itemize}
		\item $(v_3, w_3, \dots, v_k, w_k)$ is a tuple of 
		lines in $\SymplecticSummand$ such that, for some choice of primitive 
		representatives, $(\vec v_3, \vec w_3, \dots, \vec v_k, \vec w_k)$ is 
		a symplectic basis of a summand $\SymplecticSummand_{k-2} 
		\subset \SymplecticSummand$ of genus $k-2$;
		\item $(z_0, z_1, z_2, z_3)$ is a tuple of lines in 
		$\SymplecticSummand$ such that $\{z_0, z_1, z_2, z_3\}$ is a 
		skew-apartment simplex\footnote{The conditions given in this item are equivalent to saying that for some choice of primitive representatives $\vec z_0, \vec z_1, \vec z_2, \vec z_3$, these four vectors span the symplectic complement of $U_{k-2}$ and satisfy $\omega(\vec 		z_0, \vec z_1) = \omega(\vec z_1, \vec z_2) = \omega(\vec z_2, \vec z_3) = 1$.} in the symplectic complement of 
		$\SymplecticSummand_{k-2}$ in $\SymplecticSummand$ and $\omega(\vec 
		z_0, \vec z_1) = \omega(\vec z_1, \vec z_2) = \omega(\vec z_2, \vec 
		z_3) = 1$ for some choice of primitive representatives;
	\end{itemize}
	and where for all $\pi \in \CoxeterGroupTypeB_k$ such that $\pi(1)=1$ and 
		$\pi(2)= 2$, we have
		\begin{align*}
			[z_0, z_1, z_2,z_3] \ast [v_3, v_{\bar{3}}, \dots, v_k, 
		v_{\bar{k}}]
			=&(-1)^{\on{len}(\pi)}\cdot [z_0, z_1, z_2, 
			z_3] \ast [v_{\pi(3)}, v_{\pi(\bar{3})},\dots, v_{\pi(k)},  
			v_{\pi(\bar{k})}]\\
			=&(-1)^{\on{len}(\pi)}\cdot [z_3, z_2, z_1, 
			z_0] \ast [v_{\pi(3)}, v_{\pi(\bar{3})},\dots, v_{\pi(k)}, 	
			v_{\pi(\bar{k})}].
		\end{align*}
	We write $\SkewApartmentModule_n = \SkewApartmentModule(\mbZ^{2n})$ and set $\SkewApartmentModule(\{0\}) \coloneqq \{0\}$. The $\on{Sp}(\SymplecticSummand)$-action is defined by 
	\begin{align*}
		\phi \cdot [z_0, z_1, z_2, z_3] \ast [v_3, \dots, w_k] =
		[\phi(z_0), \phi(z_1), \phi(z_2), \phi(z_3)] \ast [\phi(v_3), 
		\dots,  \phi(w_k)] \text{ for all } \phi 
		\in \on{Sp}(\SymplecticSummand).
	\end{align*}
	The $\on{Sp}(\SymplecticSummand)$-equivariant map
	$\SkewMap_\SymplecticSummand\colon  \SkewApartmentModule(\SymplecticSummand) \to \ApartmentModule(\SymplecticSummand)$
	is defined by
	\begin{multline*}
		\SkewMap_\SymplecticSummand([z_0, z_1, z_2, z_3] \ast [v_3,\dots,w_k])\coloneqq
		[ z_0, z_1, \langle \vec z_0 + \vec z_2 \rangle, z_3, v_3, \dots, 
		w_k] \\
		+ [ z_1, z_2, \langle \vec z_0 + \vec z_2 \rangle, \langle 
		\vec z_1 + \vec z_3 \rangle, v_3,\dots,  w_k]
		+ [z_2, z_3, z_0, \langle \vec z_1 + \vec z_3 \rangle, v_3, 
		\dots, w_k],
	\end{multline*}
	where $(\vec z_0, \vec z_1, \vec z_2, \vec z_3)$ are primitive 
	representatives such that $\omega(\vec z_0, \vec z_1) = \omega(\vec z_1, 
	\vec z_2) = \omega(\vec z_2, \vec z_3) = 1$.
	If $\SymplecticSummand = \mbZ^{2n}$, we write $\SkewMap_n = \SkewMap_{\mbZ^{2n}}$.
\end{definition}

\begin{remark} 
	In the setting of \cref{def:skew-apartment-module}, an elementary 
	calculation using the symplectic form $\omega$ shows that 
	$\SkewMap_\SymplecticSummand$ in the terms in the image of a formal symbol 
	in $\SkewApartmentModule(\SymplecticSummand)$ are indeed in 
	$\ApartmentModule(\SymplecticSummand)$. Using the relations between formal 
	symbols in $\ApartmentModule(\SymplecticSummand)$ (see 
	\cref{def:apartment-module}) one then verifies that 
	$\SkewMap_\SymplecticSummand$ is well-defined.
\end{remark}

We are now ready to prove that \cref{thm:stpres} implies \autoref{thmB}.

\begin{proof}[Reduction of \autoref{thmB} to \cref{thm:stpres}]

	Assuming \autoref{thm:stpres}, $\St^{\omega}_n$ is a quotient of 
	$\ApartmentModule_n$ with two additional relations: 
	The first relation is given by
	\begin{equation}
		\label{eq:relation-sigma-additive}
		0 = [z_1, z_2, v_2, \dots, w_n] - [z_1, z_0, v_2, \dots, w_n]	 - [z_0, z_2, v_2, \dots, w_n],
	\end{equation}
	where $[z_0, z_1, z_2] \ast [v_2, \dots, w_n]$ is a generator of 
	$\SigmaAdditiveApartmentModule_n$, i.e.\ for some choice of primitive 
	vectors, $(\vec z_1, \vec z_2, \vec v_2, \dots, \vec w_n)$ is a 
	symplectic basis of $\mbZ^{2n}$ and $\vec z_0 = \vec z_1 + \vec z_2$ 
	(compare \cref{def:additive-apartment-module} and \cref{def:IAA-simplices}).
	The second relation is given by
	\begin{align}
	\begin{split}
		\label{eq:relation-skew-apartment}
		0 = &[ z_0, z_1, \langle \vec z_0 + \vec z_2 \rangle, z_3, v_3,  
		\dots, w_n] 
		 + [ z_1, z_2, \langle \vec z_0 + \vec z_2 \rangle, \langle 
		 \vec z_1 + \vec z_3 \rangle, v_3,\dots, w_n]\\
		&+ [z_2, z_3, z_0, \langle \vec z_1 + \vec z_3 \rangle, v_3, 
		\dots, w_n],
		\end{split}
	\end{align}
	where $[z_0, z_1, z_2, z_3] \ast [v_3,\dots, w_n]$ is a generator of 
	$\SkewApartmentModule_n$, i.e.\ for some choice of primitive 
	vectors $(\vec z_0, \vec z_1, \vec z_2, \vec z_3, \vec v_3,
	\dots, \vec w_n)$, it holds that 
	\begin{equation}
		\label{eq:translate-to-basis}
		(\vec v_1', \vec w_1', \vec v_2', \vec w_2', \vec v_3', 
		\dots, \vec w_n') \coloneqq (\vec z_0, \vec z_1 + \vec z_3, 
		\vec z_2, \vec z_3, \vec v_3, \dots, \vec w_n)
	\end{equation}
	 is a symplectic basis of $\mbZ^{2n}$ (compare 
	 \cref{def:skew-apartment-module} and \cref{def:IAA-simplices}).
	
	The relation in the definition of $\ApartmentModule_n$ (see 
	\cref{def:apartment-module}) is the first in \autoref{thmB}. The relation 
	imposed by \cref{eq:relation-sigma-additive} directly translates to the 
	second relation claimed in \autoref{thmB}. Finally, the relation imposed by 
	\cref{eq:relation-skew-apartment} can be translated into the third relation 
	in \autoref{thmB}: Using \cref{eq:translate-to-basis}, we obtain
	\begin{align}  
	\begin{split}
		\label{eq:relation-skew-apartment-rewritten}
		0 =& [ v'_1, \langle \vec w'_1-\vec w'_2 \rangle , \langle \vec v'_1 + 
		\vec v'_2 \rangle, w'_2,  v'_3, \dots,	w'_n]
		 + [ \langle \vec w'_1 - \vec w'_2 \rangle, v'_2, \langle \vec v'_1 + 
		 \vec v'_2 \rangle, w'_1, v'_3, \dots, w'_n]\\
		 &+ [ v'_2, w'_2, v'_1,  w'_1, v'_3,  \dots,w'_n].
		 \end{split}
	\end{align}
	\cref{eq:relation-skew-apartment-rewritten} is exactly the third relation  
	of \autoref{thmB}, since the first relation implies
	\begin{equation*} [\vec v'_2, \vec w'_2,\vec v'_1, \vec w'_1,\vec v'_3,\dots, \vec w'_n] = - [ \vec v'_1, \vec w'_1, \vec v'_2, 
	\vec w'_2, \vec v'_3,\dots, \vec w'_n]. \qedhere \end{equation*}
\end{proof}

\subsection{A commutative diagram}
\label{subsec:commutative-diagram}

The goal of this subsection is to explain how the sequences and modules described in the previous subsections are related. This relation is expressed in terms of a commutative diagram and the content of the next proposition.

\begin{proposition}
	\label{prop:commutative-diagram}
	There exist $\Sp{2n}{\mbZ}$-equivariant maps $\pi_{\sigma^2}$, $\pi_{\skewname}$, $\pi_{\addname}$, $\partial_{\sigma^2}$, $\partial_{\skewname}$, $\partial_{\addname}$, $\partial_\sigma$, $\partial_\sigma^{\addname}$ and $\partial_\sigma^{\skewname}$ 
	such that the two sequences occurring in \cref{thm:stpres} and 
	\cref{cor:right-exact-sequence-smaller-steinberg-modules} fit into the 
	commutative diagram depicted in \cref{fig_commdiag}. Furthermore, the maps
	$\pi_{\sigma^2} \oplus \pi_{\skewname} \oplus \pi_{\addname}$ and	$\partial_\sigma$ are surjective.
\end{proposition}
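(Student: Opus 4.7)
The plan is to exhibit all nine maps explicitly on generators, then verify commutativity square by square by direct computation. By Gunnells's theorem, each Steinberg module $\St^\omega(U)$ is generated as an abelian group by integral symplectic apartment classes $[v_1,w_1,\dots,v_k,w_k]$, which makes this tractable: all three rows and both vertical columns will be describable uniformly by ``joining the indexing simplex $\Delta$ with an apartment of $\ll\Delta\rr^\perp$''.

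I begin with $\partial_\sigma$: for a minimal $\sigma$-simplex $\Delta = \{v_1,w_1\}$ of $\IAA^{(1)}$ and an apartment class $\zeta = [v_2,w_2,\dots,v_n,w_n] \in \St^\omega(\ll\Delta\rr^\perp)$, set
$$\partial_\sigma(\zeta) \coloneqq [v_1,w_1,v_2,w_2,\dots,v_n,w_n] \in \ApartmentModule_n$$
and extend $\Sp{2n}{\mbZ}$-equivariantly. The maps $\pi_{\addname}$ and $\pi_{\skewname}$ are defined analogously by prepending the symbol of a minimal $\sigma$-additive $\Delta = \{z_0,z_1,z_2\}$ (resp.\ skew-$\sigma^2$ $\Delta = \{z_0,z_1,z_2,z_3\}$) of $\IAA^{(2)}$ to an apartment class of $\ll\Delta\rr^\perp$, producing a generator of $\SigmaAdditiveApartmentModule_n$ (resp.\ $\SkewApartmentModule_n$). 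The $\sigma^2$ case is more subtle: for $\Delta = \{v_1,w_1,v_2,w_2\}$, direct prepending does not yield a generator of either augmented module, so $\pi_{\sigma^2}$ sends $\zeta \in \St^\omega(\ll\Delta\rr^\perp)$ to $[v_1,\langle\vec w_1 - \vec w_2\rangle,v_2,w_2] \ast \zeta \in \SkewApartmentModule_n$, using the elementary check that this tuple satisfies the defining constraints $\omega(\vec z_0,\vec z_1) = \omega(\vec z_1,\vec z_2) = \omega(\vec z_2,\vec z_3) = 1$ of \cref{def:skew-apartment-module}. The remaining maps are derived: $\partial_{\sigma^2}, \partial_{\skewname}, \partial_{\addname}$ are the restrictions of $\partial^{\St^\omega}_{n+1}$ to the three summands of its domain, and $\partial_\sigma^{\addname} \coloneqq \AdditiveMap_n \circ \pi_{\addname}$, $\partial_\sigma^{\skewname} \coloneqq \SkewMap_n \circ \pi_{\skewname}$, recording how commutativity of the left square decomposes type by type.

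Commutativity of the right square is a chain-level calculation: the relative cycle $\Delta \ast z$ representing $\zeta$ has absolute boundary $(v_1 - w_1) \ast z$, which corresponds under the span map of \cref{lem:steinberg-as-relative-homology} to the apartment class $[v_1,w_1,v_2,\dots,w_n] \in \St^\omega_n$, matching $\ApartmentMap_n(\partial_\sigma(\zeta))$. Commutativity of the left square is checked on generators. For $\zeta \in \St^\omega(\ll\Delta\rr^\perp)$ with $\Delta = \{z_0,z_1,z_2\}$ of $\sigma$-additive type, the chain-level description of $\partial^{\St^\omega}_{n+1}$ from \cref{lem_rel_homology_IAAi_IAAj} produces a signed sum of copies of $\zeta$ in the three $\St^\omega$-summands indexed by the $\sigma$-edges $\{z_1,z_2\}, \{z_0,z_1\}, \{z_0,z_2\}$ of $\partial\Delta$; applying $\partial_\sigma$ prepends each to the apartment of $\ll\Delta\rr^\perp$, yielding exactly the three terms $[z_1,z_2,v_2,\dots,w_n] - [z_1,z_0,v_2,\dots,w_n] - [z_0,z_2,v_2,\dots,w_n] = \AdditiveMap_n(\pi_{\addname}(\zeta))$, with signs supplied by the cross-product structure of the suspension isomorphism. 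Analogous computations dispatch the skew-$\sigma^2$ and $\sigma^2$ cases. Surjectivity of $\partial_\sigma$ is immediate since every generator of $\ApartmentModule_n$ is the image of an apartment class under prepending some $\sigma$-edge; surjectivity of $\pi_{\sigma^2} \oplus \pi_{\skewname} \oplus \pi_{\addname}$ follows by the same argument applied to generators of $\SigmaAdditiveApartmentModule_n$ and $\SkewApartmentModule_n$.

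The main technical obstacle will be justifying that these formulas, defined a priori only on apartment-class generators, descend to $\Sp{2n}{\mbZ}$-equivariant homomorphisms out of the Steinberg modules themselves — i.e., that the Steinberg relations in each $\St^\omega(\ll\Delta\rr^\perp)$ are sent to zero in $\ApartmentModule_n$ (resp.\ in $\SigmaAdditiveApartmentModule_n \oplus \SkewApartmentModule_n$). I will address this by working at the chain level, identifying each $\St^\omega(\ll\Delta\rr^\perp)$ with the top reduced homology of $\IAA[k]^{(0)}(\ll\Delta\rr^\perp)$ via \cref{lem_homology_IAA0_Steinberg} and interpreting the ``join with $\Delta$'' formulas as cellular chain operations. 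Well-definedness on homology then reduces to the compatibility of the join with the cellular boundary, together with the naturality of the excision and suspension isomorphisms used in \cref{lem_rel_homology_IAAi_IAAj}; more conceptually, each of the nine maps can be realized as the canonical map between long-exact-sequence terms of the triple $(\IAA^{(2)},\IAA^{(1)},\IAA^{(0)})$, at which level the diagram commutes by naturality.
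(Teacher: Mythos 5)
Your construction starts from a misreading of the diagram, and the resulting gap is fatal rather than technical. In \cref{fig_commdiag} the middle row consists of the apartment modules $\ApartmentModule(\ll\Delta\rr^\perp)$, and all nine dotted maps ($\partial_{\sigma^2},\partial_{\skewname},\partial_{\addname},\partial_\sigma,\pi_{\sigma^2},\pi_{\skewname},\pi_{\addname},\partial_\sigma^{\addname},\partial_\sigma^{\skewname}$) have apartment-type modules -- free abelian groups on formal symbols modulo only the signed-permutation relations of \cref{def:apartment-module} -- as their sources and targets; only the bottom row involves Steinberg modules. You instead define $\partial_\sigma$ and the $\pi$'s on apartment \emph{classes} in $\St^\omega(\ll\Delta\rr^\perp)$, and your closing paragraph asks whether the Steinberg relations are sent to zero in $\ApartmentModule_n$. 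They are not: the relation $[v_2,w_2,\dots] - [v_2,\langle\vec v_2+\vec w_2\rangle,\dots] - [\langle\vec v_2+\vec w_2\rangle,w_2,\dots]=0$ holds in $\St^\omega(\ll\Delta\rr^\perp)$, but after prepending $(v_1,w_1)$ it becomes a sum of three distinct formal symbols in $\ApartmentModule_n$, i.e.\ a nonzero element of the image of $\AdditiveMap_n$ -- the nontriviality of exactly such elements is the content of \autoref{thmB}. So the homomorphism $\St^\omega(\ll\Delta\rr^\perp)\to\ApartmentModule_n$ you posit does not exist, and the proposed repair fails as well: $\ApartmentModule_n$, $\SigmaAdditiveApartmentModule_n$ and $\SkewApartmentModule_n$ are not homology groups of the triple $(\IAA^{(2)},\IAA^{(1)},\IAA^{(0)})$, so none of the dotted maps can be ``realized as canonical maps between long-exact-sequence terms''; that description applies only to the bottom row of the diagram.

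The paper avoids this entirely by defining all dotted maps by explicit formulas on formal symbols (\cref{def:partial-sigma2-skew-add}, \cref{def:partial-sigma}, \cref{def:pi-sigma2-skew-add}), where well-definedness is the easy check against the signed-permutation relations; in particular $\pi_{\sigma^2}$ is taken to be $0$, consistently with the fact that swapping two symplectic pairs costs a sign so that $\partial_\sigma\circ\partial_{\sigma^2}$ vanishes on the $\sigma^2$-summands -- your nonzero candidate $[v_1,\langle\vec w_1-\vec w_2\rangle,v_2,w_2]\ast\zeta$ would have to lie in the kernel of $\SkewMap_n$, which it does not. Your derived definitions $\partial_\sigma^{\addname}\coloneqq\AdditiveMap_n\circ\pi_{\addname}$ and $\partial_\sigma^{\skewname}\coloneqq\SkewMap_n\circ\pi_{\skewname}$ also have the wrong (co)domains: in the diagram these maps go from the $\sigma$-edge-indexed sum of $\SigmaAdditiveApartmentModule(\ll\Delta\rr^\perp)\oplus\SkewApartmentModule(\ll\Delta\rr^\perp)$ to $\SigmaAdditiveApartmentModule_n\oplus\SkewApartmentModule_n$, not out of the column indexed by $\sigma$-additive and skew-$\sigma^2$ simplices. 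Finally, the genuinely hard part of the proposition is not the definitions but the commutativity of the two lower squares (\cref{lem:commutativity-three}, \cref{lem:commutativity-four}): one must compare the combinatorial $\partial$'s with the topological connecting maps of \cref{cor:right-exact-sequence-smaller-steinberg-modules}, which means unravelling the excision and suspension identifications of \cref{lem_rel_homology_IAAi_IAAj} and decomposing the boundary spheres of the minimal $\sigma^2$, skew-$\sigma^2$ (prism) and $\sigma$-additive cells into the balls indexed by their $\sigma$-edges; your one-line appeal to ``signs supplied by the cross-product structure'' elides precisely this computation.
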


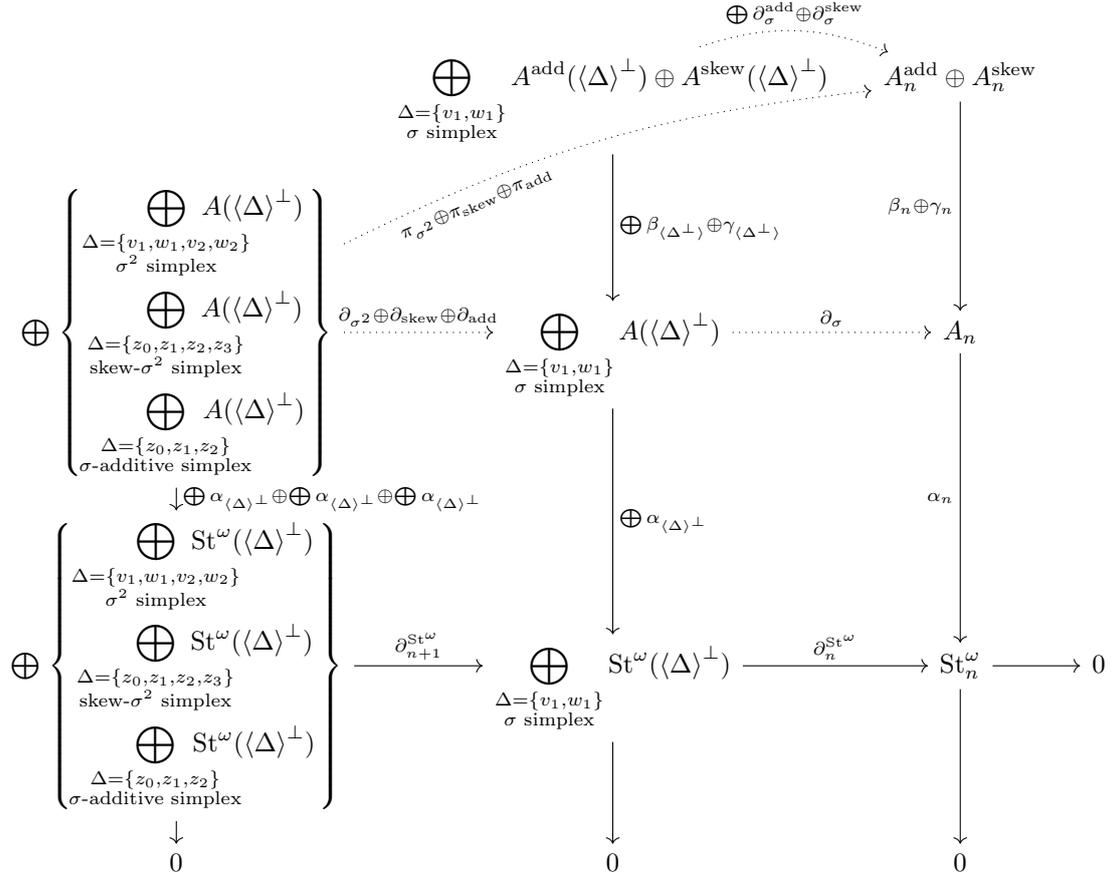
\begin{figure}
	\adjustbox{center}{
		\begin{tikzcd}[column sep=small, row sep=small]		
			& \displaystyle\bigoplus_{\substack{\Delta = \{v_1,w_1\}\\ \text{$\sigma$ simplex}}} \SigmaAdditiveApartmentModule(\ll\Delta\rr^\perp) \oplus \SkewApartmentModule(\ll\Delta\rr^\perp)\arrow[d, "\bigoplus \AdditiveMap_{\langle \Delta^\perp \rangle} \oplus \SkewMap_{\langle \Delta^\perp \rangle}"] \arrow[r, "\bigoplus \partial_\sigma^{\addname} \oplus \partial_\sigma^{\skewname}", dotted, bend left = 20]& \SigmaAdditiveApartmentModule_n \oplus \SkewApartmentModule_n\arrow[d, "\AdditiveMap_n \oplus \SkewMap_n", swap]&\\
			\bigoplus
			\begin{rcases*}
				\begin{dcases*}
					\bigoplus_{\substack{\Delta = \{v_1,w_1,v_2,w_2\}\\ \text{$\sigma^2$ simplex}}} \hspace{-20pt}\ApartmentModule(\ll\Delta\rr^\perp)\\
					\hspace{2pt}\bigoplus_{\substack{\Delta = \{z_0, z_1, z_2, z_3\}\\ \text{skew-$\sigma^2$ simplex}}} \hspace{-18pt}\ApartmentModule(\ll\Delta\rr^\perp)\\
					\hspace{-1pt}\bigoplus_{\substack{\Delta = \{z_0, z_1, z_2\}\\ \text{$\sigma$-additive simplex}}} \hspace{-20pt}\ApartmentModule(\ll\Delta\rr^\perp)
				\end{dcases*}
			\end{rcases*}
			\arrow[r, "\partial_{\sigma^2} \oplus \partial_{\skewname} \oplus \partial_{\addname}", dotted]\arrow[d, "\bigoplus \ApartmentMap_{\langle \Delta \rangle^\perp} \oplus \bigoplus \ApartmentMap_{\langle \Delta \rangle^\perp} \oplus \bigoplus \ApartmentMap_{\langle \Delta \rangle^\perp}"] \arrow[rru, "\pi_{\sigma^2} \oplus \pi_{\skewname} \oplus \pi_{\addname}" {anchor=center, rotate=23, xshift=-12ex, yshift=-1ex}, dotted, bend left = 10, swap] &\displaystyle\bigoplus_{\substack{\Delta = \{v_1,w_1\}\\ \text{$\sigma$ simplex}}}\ApartmentModule(\ll\Delta\rr^\perp)\arrow[d, "\bigoplus \ApartmentMap_{\langle \Delta \rangle^\perp}"]\arrow[r, "\partial_{\sigma}", dotted] & \ApartmentModule_n\arrow[d, "\ApartmentMap_n", swap]&\\
			\bigoplus
			\begin{rcases*}
				\begin{dcases*}
					\bigoplus_{\substack{\Delta = \{v_1,w_1,v_2,w_2\}\\ \text{$\sigma^2$ simplex}}} \hspace{-20pt}\St^\omega(\ll\Delta\rr^\perp)\\
					\hspace{2pt}\bigoplus_{\substack{\Delta = \{z_0, z_1, z_2, z_3\}\\ \text{skew-$\sigma^2$ simplex}}} \hspace{-18pt}\St^\omega(\ll\Delta\rr^\perp)\\
					\hspace{-1pt}\bigoplus_{\substack{\Delta = \{z_0, z_1, z_2\}\\ \text{$\sigma$-additive simplex}}} \hspace{-20pt}\St^\omega(\ll\Delta\rr^\perp)
				\end{dcases*}
			\end{rcases*}
			\arrow[r, "\partial_{n+1}^{\St^\omega}"] \arrow[d]
			& \displaystyle\bigoplus_{\substack{\Delta = \{v_1,w_1\}\\ 
			\text{$\sigma$ simplex}}}\St^\omega(\ll\Delta\rr^\perp) \arrow[r, 
			"\partial_{n}^{\St^\omega}"] \arrow[d] & \St^\omega_n  
			\arrow[d] \arrow[r]& 0\\ 
			0 & 0 & 0 &
		\end{tikzcd}
	}
\caption{Commutative diagram}
\label{fig_commdiag}
\end{figure}

For the proof of \cref{prop:commutative-diagram} and throughout this subsection, we fix a total ordering on the vertex set of $\IAA^{(2)}$. We write $z < z'$ if in this ordering the vertex $z$ is smaller than the vertex $z'$ of $\IAA^{(2)}$. The $\Sp{2n}{\mbZ}$-module structure of the sum terms appearing in \cref{fig_commdiag} is defined analogous to \cref{rem:module-structure-right-hand-term}. Keeping this in mind, we now define the equivariant maps appearing \cref{fig_commdiag}.

\begin{definition}
	\label{def:partial-sigma2-skew-add}
	Let $\Delta \in \IAA^{(2)}$ be a minimal simplex of type $\tau \in \{ \sigma^2, \text{skew-}\sigma^2, \sigma\text{-additive}\}.$
	Then the value of
	$$\partial_{\tau}\colon  \ApartmentModule(\langle \Delta \rangle^\perp) \to 
	\bigoplus_{\substack{\Delta' = \{v_1,w_1\}\\ \text{$\sigma$ 
	simplex}}}\ApartmentModule(\ll\Delta'\rr^\perp)$$
	at a formal symbol 
	$[v_k, \dots, w_n] \in 
	\ApartmentModule(\langle \Delta \rangle^\perp),$
	where $k = 2$ if $\tau = \sigma\text{-additive}$ and $k = 3$ otherwise,
	is defined as follows.
	\begin{itemize}[leftmargin=*]
		\item If $\Delta = \{v_1, w_1, v_2, w_2\}$ is a $\sigma^2$ simplex such that $\omega(\vec v_1, \vec w_1) = \omega(\vec v_2, \vec w_2) = 1$, $v_1 < w_1$ and $v_2 < w_2$, then
		\begin{equation*}
			\partial_{\sigma^2}([ v_3, \dots, w_n])  = 
			[v_1, w_1, v_3,\dots, w_n] \oplus [v_2, w_2, v_3, \dots, w_n]
		\end{equation*}
		 in $\ApartmentModule(\langle v_2, w_2 \rangle^\perp) \oplus 
		 \ApartmentModule(\langle v_1, w_1 \rangle^\perp)$ indexed by the $\sigma$ simplices $ \{v_2, w_2\}$ and $\{v_1, w_1\}$, respectively.
		
		\item If $\Delta = \{z_0, z_1, z_2, z_3\}$ is 
		a skew-$\sigma^2$ simplex such that $\omega(\vec 
		z_0, \vec z_1) = \omega(\vec z_1, \vec z_2) = \omega(\vec z_2, \vec 
		z_3) = 1$,
		let 
\begin{eqnarray*}
\ls v_0, w_0 \rs &= \{\langle \vec z_0 + \vec z_2 \rangle, z_3 \}, &\ls v_1, w_1 \rs = \{\langle \vec z_0 + \vec z_2 \rangle, \langle \vec z_1 + \vec z_3 \rangle \}, \\
\ls v_2, w_2 \rs &= \{z_0 ,\langle \vec z_1 + \vec z_3 \rangle\}, &\text{where } v_i<w_i \text{ for }0\leq i \leq 2.
\end{eqnarray*}		
		Define
		\begin{equation*}
			\partial_{\skewname}([v_3, \dots, w_n])  = [v_0, w_0, v_3, \dots, w_n] \oplus
			[v_1, w_1, v_3, \dots, w_n] \oplus [v_2, w_2, v_3, \dots, w_n]
		\end{equation*}
		in $\ApartmentModule(\langle z_0, z_1  \rangle^\perp) \oplus 
		\ApartmentModule(\langle  z_1, z_2  \rangle^\perp) \oplus 
		\ApartmentModule(\langle  z_2, z_3 \rangle^\perp)$  indexed by the $\sigma$ simplices $\{ z_0, z_1 \}$, $ \{ z_1, z_2 \}$ and $ \{ z_2, z_3 \}$, respectively.
		\item If $\Delta = \{z_0, z_1, z_2\}$ is a 
		$\sigma$-additive simplex such that $z_0 < z_1 <z_2$, then
		\begin{equation*}
			\partial_{\addname}([ v_2,  \dots,   w_n])  = [ v_2, 
			\dots,  v_n,  w_n] \oplus
			-[ v_2,  w_2, \dots,   w_n] \oplus [ v_2, \dots,  w_n]
		\end{equation*} in $\ApartmentModule(\langle  z_1, z_2 \rangle^\perp) \oplus \ApartmentModule(\langle  z_0, z_2 \rangle^\perp) \oplus \ApartmentModule(\langle  z_0, z_1 \rangle^\perp)$ indexed by the $\sigma$ simplices $ \{ z_1, z_2 \}$, $ \{ z_0, z_2 \}$ and $\{ z_0, z_1 \}$, respectively.
	\end{itemize}
\end{definition}

\begin{definition}
	\label{def:partial-sigma}
	Let $\Delta = \{v_1, w_1\}$ be a minimal $\sigma$ simplex in $\IAA^{(2)}$ such that $v_1 < w_1$.
	\begin{itemize}[leftmargin=*]
		\item The map $\partial_{\sigma}\colon  \ApartmentModule(\langle \Delta \rangle^\perp) \to \ApartmentModule_n$ is defined by
		$$\partial_{\sigma}([ v_2,  \dots,  w_n])  = [ v_1,  w_1, v_2, \dots,  w_n].$$
		\item The map $\partial_{\sigma}^{\addname}\colon  \SigmaAdditiveApartmentModule(\langle \Delta \rangle^\perp) \to \SigmaAdditiveApartmentModule_n$ is defined by
		\begin{equation*}
			\partial_{\sigma}([z_0, z_1, z_2] \ast [v_3, 
		\dots, w_n])  =-[z_0, z_1, z_2] \ast [ v_1,  w_1, v_3,\dots,  w_n]
		\end{equation*}
		\item The map $\partial_{\sigma}^{\skewname}\colon  \SkewApartmentModule(\langle \Delta \rangle^\perp) \to \SkewApartmentModule_n$ is defined by
		\begin{equation*}
			\partial_{\sigma}([z_0, z_1, z_2, z_3] \ast [v_4,\dots, w_n])  =
			[z_0, z_1, z_2, z_3] \ast [v_1, w_1, v_4, \dots, w_n].
		\end{equation*}
	\end{itemize}
\end{definition}

\begin{definition}
	\label{def:pi-sigma2-skew-add}
	Let $\Delta \in \IAA^{(2)}$ be a minimal simplex.
	\begin{itemize}[leftmargin=*]
		\item If $\Delta$ is a $\sigma^2$ simplex, the value of 
		$$\pi_{\sigma^2} = 0\colon  \ApartmentModule(\langle \Delta \rangle^\perp) \to \SigmaAdditiveApartmentModule_n \oplus \SkewApartmentModule_n$$
		at all formal symbols $[ v_3, \dots, w_n] \in 
		\ApartmentModule(\langle \Delta \rangle^\perp)$ is zero.
		\item If $\Delta = \{z_0, z_1, z_2, z_3\}$ is a skew-$\sigma^2$ 
		simplex such that $\omega(\vec 
		z_0, \vec z_1) = \omega(\vec z_1, \vec z_2) = \omega(\vec z_2, \vec 
		z_3) = 1$, then the map $\pi_{\skewname}\colon  \ApartmentModule(\langle \Delta \rangle^\perp) \to \SkewApartmentModule_n \hookrightarrow \SigmaAdditiveApartmentModule_n \oplus \SkewApartmentModule_n$ is defined by
		$$\pi_{\skewname}([ v_3,  \dots,  w_n])  = [z_0, z_1, z_2, z_3] 
		\ast [v_3, \dots,  w_n].$$
		\item If $\Delta = \{z_0, z_1, z_2\}$ is a $\sigma$-additive simplex such that $z_0 < z_1 <z_2$, 
		then the map $\pi_{\addname}\colon  \ApartmentModule(\langle \Delta \rangle^\perp) \to \SigmaAdditiveApartmentModule_n \hookrightarrow\SigmaAdditiveApartmentModule_n \oplus \SkewApartmentModule_n$ is defined by
		$$\pi_{\addname}([ v_2,  \dots,  w_n])  = [z_0, z_1, z_2] \ast 
		[v_2,  \dots,  w_n].$$
	\end{itemize}
\end{definition}

\subsubsection{The diagram commutes and surjectivity properties}

This subsection contains the proof of \cref{prop:commutative-diagram}, which is split into several lemmas. We first note that the claimed surjectivies hold because by definition, every generator of $\ApartmentModule_n$ and $\SigmaAdditiveApartmentModule_n$ is in the image of $\pi_{\sigma^2} \oplus \pi_{\skewname} \oplus \pi_{\addname}$ and $\partial_\sigma$, respectively. Hence, we have:

\begin{lemma}
	\label{lem:pi-partial-sigma-surjective}
	The maps $\pi_{\sigma^2} \oplus \pi_{\skewname} \oplus \pi_{\addname}$ and $\partial_\sigma$ are surjective.
\end{lemma}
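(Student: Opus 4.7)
The lemma is essentially immediate from unpacking the definitions of the target modules and the maps. The plan is to verify that every generator of the target is hit by inspection.

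For the surjectivity of $\partial_\sigma$, any generator of $\ApartmentModule_n$ has the form $[v_1, w_1, v_2, \ldots, v_n, w_n]$, where $(\vec v_1, \vec w_1, \ldots, \vec v_n, \vec w_n)$ is a symplectic basis of $\mbZ^{2n}$. In particular, $\omega(\vec v_1, \vec w_1) = 1$, so $\Delta = \{v_1, w_1\}$ is a minimal $\sigma$ simplex in $\IAA^{(2)}$, and (using the symmetry relation in $\ApartmentModule_n$ to swap $v_1$ and $w_1$ if needed) I may assume $v_1 < w_1$ in our fixed total ordering. Since $(\vec v_2, \vec w_2, \ldots, \vec v_n, \vec w_n)$ is a symplectic basis of $\langle \Delta \rangle^\perp$, the symbol $[v_2, \ldots, w_n]$ is a generator of $\ApartmentModule(\langle \Delta \rangle^\perp)$, and by \cref{def:partial-sigma} it is mapped by $\partial_\sigma$ to the given generator.

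For the surjectivity of $\pi_{\sigma^2} \oplus \pi_{\skewname} \oplus \pi_{\addname}$, I will treat generators of $\SigmaAdditiveApartmentModule_n$ and $\SkewApartmentModule_n$ separately. A generator of $\SigmaAdditiveApartmentModule_n$ has the form $[z_0, z_1, z_2] \ast [v_2, \ldots, w_n]$, where $\{z_0, z_1, z_2\}$ is a $\sigma$-additive simplex in the symplectic complement of $\langle v_2, w_2, \ldots, v_n, w_n \rangle$. Using the relation $\on{sign}(\tau) \cdot [z_{\tau(0)}, z_{\tau(1)}, z_{\tau(2)}] \ast \cdots$ in $\SigmaAdditiveApartmentModule_n$ (\cref{def:additive-apartment-module}), I may permute the $z_i$ so that $z_0 < z_1 < z_2$ in our fixed ordering, possibly at the cost of a sign. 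Then by \cref{def:pi-sigma2-skew-add}, $\pi_{\addname}$ sends the element $\pm [v_2, \ldots, w_n]$ in the summand of the index $\Delta = \{z_0, z_1, z_2\}$ to the given generator.

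Similarly, a generator of $\SkewApartmentModule_n$ has the form $[z_0, z_1, z_2, z_3] \ast [v_3, \ldots, w_n]$ with $\omega(\vec z_0, \vec z_1) = \omega(\vec z_1, \vec z_2) = \omega(\vec z_2, \vec z_3) = 1$ for some choice of representatives, so $\Delta = \{z_0, z_1, z_2, z_3\}$ is a minimal skew-$\sigma^2$ simplex, and $\pi_{\skewname}$ maps $[v_3, \ldots, w_n]$ in the $\Delta$-summand to the given generator. Hence the direct sum $\pi_{\sigma^2} \oplus \pi_{\skewname} \oplus \pi_{\addname}$ hits every generator of $\SigmaAdditiveApartmentModule_n \oplus \SkewApartmentModule_n$. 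There is no real obstacle here — the only small point to be careful about is matching the ordering and sign conventions of the generators on both sides, which is handled by the explicit symmetry relations built into the definitions of $\SigmaAdditiveApartmentModule_n$ and $\SkewApartmentModule_n$.
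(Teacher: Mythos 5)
Your proposal is correct and matches the paper's argument, which simply observes that by definition every generator of $\ApartmentModule_n$, $\SigmaAdditiveApartmentModule_n$ and $\SkewApartmentModule_n$ is visibly in the image of the respective map; you spell out the same observation, with the minor (and correctly handled) bookkeeping of using the built-in symmetry relations to match the vertex-ordering conventions in \cref{def:partial-sigma} and \cref{def:pi-sigma2-skew-add}.
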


We now discuss the commutativity of the diagram depicted in \cref{fig_commdiag}.

\begin{lemma}
	\label{lem:commutativity-one}
	\label{lem:commutativity-two}
	The following hold.
	\begin{enumerate}
	\item $(\AdditiveMap_n \oplus \SkewMap_n) \circ (\pi_{\sigma^2} \oplus \pi_{\skewname} \oplus \pi_{\addname}) = \partial_\sigma \circ (\partial_{\sigma^2} \oplus \partial_{\skewname} \oplus \partial_{\addname})$.
	\item $(\AdditiveMap_n \oplus \SkewMap_n) \circ (\partial^{\addname}_{\sigma} \oplus \partial^{\skewname}_{\sigma}) = \partial_\sigma \circ (\AdditiveMap_{\langle \Delta \rangle^\perp} \oplus \SkewMap_{\langle \Delta \rangle^\perp})$.
	\end{enumerate}
\end{lemma}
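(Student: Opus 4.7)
The plan is to check both equalities on generating formal symbols, exploiting the fact that all the maps involved are $\Sp{2n}{\mbZ}$-equivariant and the source modules are free abelian with explicit generators. The verification then splits into cases according to the type of minimal simplex $\Delta$ indexing the summand on which one evaluates.

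For part~(1) I would treat the three summand types separately. If $\Delta = \{v_1,w_1,v_2,w_2\}$ is a $\sigma^2$ simplex then $\pi_{\sigma^2}=0$, so the left-hand side is zero; the right-hand side, by \cref{def:partial-sigma2-skew-add,def:partial-sigma}, becomes $[v_2,w_2,v_1,w_1,v_3,\dots,w_n]+[v_1,w_1,v_2,w_2,v_3,\dots,w_n]$ inside $\ApartmentModule_n$, and the Weyl-group relation (see \cref{def:apartment-module}) for the element $s_1\in \CoxeterGroupTypeB_n$, which has length one, forces these two symbols to be negatives of one another. If $\Delta=\{z_0,z_1,z_2\}$ is $\sigma$-additive (with $z_0<z_1<z_2$) then the left-hand side is $\AdditiveMap_n([z_0,z_1,z_2]\ast[v_2,\dots,w_n])$, which expands via \cref{def:additive-apartment-module} to $[z_1,z_2,v_2,\dots,w_n]-[z_1,z_0,v_2,\dots,w_n]-[z_0,z_2,v_2,\dots,w_n]$, while the right-hand side expands to $[z_1,z_2,v_2,\dots,w_n]-[z_0,z_2,v_2,\dots,w_n]+[z_0,z_1,v_2,\dots,w_n]$, and again the $s_1$-relation identifies $[z_1,z_0,\dots]$ with $-[z_0,z_1,\dots]$. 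The skew-$\sigma^2$ case is handled by the same bookkeeping: on the left one applies $\SkewMap_n$ to obtain the three explicit symbols in \cref{def:skew-apartment-module}, and on the right $\partial_\sigma\circ\partial_{\skewname}$ produces the same three symbols with prepended $\sigma$-pair, so (after normalising the orderings of the various pairs $\{\langle\vec z_0+\vec z_2\rangle,z_3\}$, $\{\langle\vec z_0+\vec z_2\rangle,\langle\vec z_1+\vec z_3\rangle\}$, $\{z_0,\langle\vec z_1+\vec z_3\rangle\}$ via at most a length-one element of $\CoxeterGroupTypeB_n$) they match term by term.

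For part~(2) the roles are reversed: fix a $\sigma$ edge $\Delta=\{v_1,w_1\}$ (with $v_1<w_1$) and a generator of $\SigmaAdditiveApartmentModule(\langle\Delta\rangle^\perp)$ or $\SkewApartmentModule(\langle\Delta\rangle^\perp)$. For the $\sigma$-additive summand, both sides prepend $[v_1,w_1,\ldots]$ to the three-term expansion of $\AdditiveMap_{\langle\Delta\rangle^\perp}$; the definition $\partial^{\addname}_\sigma([z_0,z_1,z_2]\ast[\,\cdot\,])=-[z_0,z_1,z_2]\ast[v_1,w_1,\,\cdot\,]$ has been set up precisely so that the sign absorbed into $\partial_\sigma$ (namely the $s_1$-sign incurred when moving $(v_1,w_1)$ past the initial $(v_2,w_2)$ pair in the $\SigmaAdditiveApartmentModule_n$-relation) is cancelled. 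The skew-apartment summand is analogous: applying $\partial_\sigma$ to the three terms in $\SkewMap_{\langle\Delta\rangle^\perp}([v_3,\dots,w_n])$ gives the same result as applying $\SkewMap_n$ to $[z_0,z_1,z_2,z_3]\ast[v_1,w_1,v_3,\dots,w_n]=\partial^{\skewname}_\sigma([z_0,z_1,z_2,z_3]\ast[v_3,\dots,w_n])$.

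The main obstacle is purely bookkeeping: keeping the orderings on the vertices $v_i,w_i$ of $\sigma$-pairs consistent with the total order on $\Vr(\IAA^{(2)})$ (as fixed at the start of \cref{subsec:commutative-diagram}), and tracking the $\pm$-signs produced by the Weyl-group relations in $\ApartmentModule_n$, $\SigmaAdditiveApartmentModule_n$ and $\SkewApartmentModule_n$. Once one observes that every reordering needed in the case analysis corresponds to a single simple reflection $s_i\in\CoxeterGenerators_n$ (hence contributes a single sign $-1$), the two sides of each equality coincide, which completes the proof.
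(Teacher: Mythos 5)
Your proposal is correct and takes essentially the same route as the paper, whose proof is deliberately terse: it likewise just evaluates both composites on generating formal symbols and invokes the Weyl-group relation in $\ApartmentModule_n$ to match the resulting terms, leaving the sign bookkeeping to the reader, which you carry out case by case. One small caveat: your closing claim that every needed reordering is a \emph{single simple reflection} is not literally accurate -- the swap within a pair, giving $[z_1,z_0,\dots]=-[z_0,z_1,\dots]$, is an odd-length reflection that is in general not of length one, and in part (2) for the skew summand the pair $(v_1,w_1)$ must be moved past \emph{two} pairs, an even-length element contributing $+1$ (which is exactly why $\partial_\sigma^{\skewname}$ carries no sign while $\partial_\sigma^{\addname}$ does); your individual case computations are consistent with this, so only that summary sentence should be corrected.
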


Checking the validity of this lemma is elementary; one simply picks 
a generator, evaluates the map on the left and right side using the definitions 
above and uses the first relation in the apartment module $\ApartmentModule_n$ 
(see \cref{def:apartment-module}) to see that the two values agree. We leave the 
details to the reader, and now focus on the remaining two commuting squares for 
which the argument is more involved. 

\begin{lemma}
	\label{lem:commutativity-three}
	It holds that 
	$$\left(\bigoplus \ApartmentMap_{\langle \Delta \rangle^\perp}\right) \circ 
	(\partial_{\sigma^2} 
	\oplus \partial_{\skewname} \oplus \partial_{\addname}) = \partial_{n+1}^{\St^\omega} 
	\circ \left(\bigoplus \ApartmentMap_{\langle \Delta \rangle^\perp} \oplus 
	\bigoplus \ApartmentMap_{\langle \Delta \rangle^\perp} \oplus \bigoplus 
	\ApartmentMap_{\langle \Delta \rangle^\perp}\right).$$
\end{lemma}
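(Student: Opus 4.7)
The plan is to verify the identity on generators and then extend by linearity. Fix a minimal simplex $\Delta \in \IAA^{(2)}$ of type $\sigma^2$, skew-$\sigma^2$ or $\sigma$-additive, and a generator $[v_k, \dots, w_n]$ of $\ApartmentModule(\langle \Delta \rangle^\perp)$ (with $k=2$ in the $\sigma$-additive case and $k=3$ otherwise). Unwinding the proof of \cref{lem_rel_homology_IAAi_IAAj}, the class $\ApartmentMap_{\langle \Delta \rangle^\perp}([v_k, \dots, w_n])$ corresponds to an apartment sphere $S$ in $\IAA[]^{(0)}(\langle \Delta \rangle^\perp) \cong \Link_{\IAA^{(2)}}(\Delta)$, and the associated class in $H_{n+1}(\IAA^{(2)}, \IAA^{(1)})$ is represented, via excision and \cref{lem_hom_eq_1_15}, by the oriented chain $\Delta \ast S$ in the excision summand $H_{n+1}(\Star_{\IAA^{(2)}}(\Delta), \partial \Delta \ast \Link_{\IAA^{(2)}}(\Delta))$.

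I would then compute $\partial_{n+1}^{\St^\omega}$ on this class in two steps. First, the triple connecting map sends $[\Delta \ast S]$ to $[\partial(\Delta \ast S)] = [\partial \Delta \ast S]$ in $H_n(\IAA^{(1.5)}, \IAA^{(0)})$, since $\partial S = 0$. Next, I would transfer this class to $H_n(\IAA^{(1)}, \IAA^{(0)})$ using a chain-level version of the deformation retraction of \cref{lem_hom_eq_1_15}: each skew-additive codimension-one face of $\Delta$ (which can occur only in the skew-$\sigma^2$ case) is chain-homotopic in $\IAA^{(1.5)}$ to the sum of the other three faces of its unique containing $2$-skew-additive simplex (\cref{lem_unique_skew_additive_face}); these are a $2$-additive face and two $\sigma$ faces. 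When joined with $S$, the $2$-additive parts lie in $\IAA^{(0)}$ and drop out. After this replacement $\partial \Delta \ast S$ becomes a sum of terms of the form $\Delta' \ast S$ where each $\Delta'$ is a $2$-simplex containing a minimal $\sigma$ simplex, and I would regroup the contributions according to the minimal $\sigma$ subsimplex $\{v_1, w_1\}$ they contain. By the excision identification in the proof of \cref{lem_rel_homology_IAAi_IAAj}, each group contributes a class in $\St^\omega(\langle v_1, w_1 \rangle^\perp)$ obtained by collapsing the two remaining vertices of $\Delta'$ across $\partial\{v_1, w_1\}$ via the suspension isomorphism, which turns the grouped difference $[x] \ast S - [x'] \ast S$ into the apartment class $[x, x', v_{k'}, \dots, w_n]$ for a suitable $k'$.

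It then remains to match these contributions with the combinatorial formulas of \cref{def:partial-sigma2-skew-add}. For $\Delta = \{v_1, w_1, v_2, w_2\}$ of type $\sigma^2$, the four codim-one faces split cleanly into two pairs indexed by $\{v_1, w_1\}$ and $\{v_2, w_2\}$, and each pair produces the apartment class $[v_i, w_i, v_3, \dots, w_n]$ for $i \in \{1,2\}$, matching $\partial_{\sigma^2}$. For $\Delta = \{z_0, z_1, z_2\}$ of type $\sigma$-additive all three codim-one faces are $\sigma$ simplices, and joining with $S$ and applying the suspension yields the alternating three-term sum of $\partial_{\addname}$. For the skew-$\sigma^2$ case $\Delta = \{z_0, z_1, z_2, z_3\}$, after the chain-level retraction on the skew-additive faces $\{z_0, z_1, z_2\}$ and $\{z_1, z_2, z_3\}$, each of the three $\sigma$ pairs $\{z_0, z_1\}$, $\{z_1, z_2\}$, $\{z_2, z_3\}$ acquires exactly one contribution; the auxiliary vertices $\langle \vec z_0 + \vec z_2 \rangle$ and $\langle \vec z_1 + \vec z_3 \rangle$ appearing in $\partial_{\skewname}$ are precisely the new vertices of the two $2$-skew-additive simplices introduced by the retraction. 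The principal obstacle will be meticulous sign bookkeeping across all three isomorphisms involved: the simplicial boundary, the suspension isomorphism $\widetilde H_{n-1}(\partial\{v_1, w_1\} \ast L) \cong \widetilde H_{n-2}(L)$ used in \cref{lem_rel_homology_IAAi_IAAj} (which depends on a choice of orientation class of $\partial\{v_1,w_1\}$), and the symmetry relation $[v_1, w_1, \dots] = -[w_1, v_1, \dots]$ coming from $(-1)^{\on{len}(s_1)}$ in \cref{def:apartment-module}. The formulas in \cref{def:partial-sigma2-skew-add} are engineered so that these signs conspire correctly.
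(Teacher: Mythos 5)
Your proposal is correct and follows essentially the same route as the paper: both sides are represented by explicit relative cycles ($\Delta \ast S$, resp.\ $\{v_1,w_1\}\ast[\dots]$) via the excision/suspension identifications of \cref{lem_rel_homology_IAAi_IAAj}, the connecting morphism is computed as a topological boundary, the $2$-additive pieces are discarded because they land in $\IAA^{(0)}$, and the remaining faces are regrouped by minimal $\sigma$ edges exactly as in \cref{def:partial-sigma2-skew-add}. The only difference is presentational: where you push the two skew-additive faces of a skew-$\sigma^2$ simplex across their unique $2$-skew-additive cofaces (\cref{lem_unique_skew_additive_face}) by a chain homotopy in $\IAA^{(1.5)}$, the paper packages this same step into the prism cross map and the class $\eta_2 \in \widetilde H_2(\partial P_3)$ of \cref{cor:relative-apartment-map-geometric}, and it handles the sign bookkeeping you defer by decomposing the boundary sphere into combinatorial balls (three, resp.\ two or three, pieces of the form $\Delta^1 \ast C_{n-1}$ plus the two $2$-additive triangles in the prism case) rather than by a chain-level computation.
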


\begin{lemma}
	\label{lem:commutativity-four}
	It holds that 
	$\ApartmentMap_n \circ \partial_{\sigma} = \partial_{n}^{\St^\omega} \circ 
	\left(\bigoplus \ApartmentMap_{\langle \Delta \rangle^\perp}\right).$
\end{lemma}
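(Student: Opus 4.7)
The strategy is to unravel all three maps on the level of chains and to verify the equality after evaluation on a generator $[v_2,\ldots,w_n]\in\ApartmentModule(\langle\Delta\rangle^\perp)$, where $\Delta=\{v_1,w_1\}$ is a minimal $\sigma$ simplex of $\IAA^{(2)}$.

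First, I would trace through the left-hand side: by \cref{def:partial-sigma}, $\partial_\sigma([v_2,\ldots,w_n]) = [v_1,w_1,v_2,\ldots,w_n]\in\ApartmentModule_n$, and by \cref{def:apartment-map}, its image under $\ApartmentMap_n$ is the apartment class of the symplectic basis $(\vec v_1,\vec w_1,\vec v_2,\ldots,\vec w_n)$ in $\St^\omega_n$. Under the isomorphism $s_*\colon\widetilde H_{n-1}(\IAA^{(0)})\cong\St^\omega_n$ of \cref{lem_homology_IAA0_Steinberg}, this class is represented by the simplicial cycle $\alpha_n\coloneqq (w_1-v_1)*A$ in $\IAA^{(0)}$, where $A$ is the simplicial cycle on $\{v_2,w_2,\ldots,v_n,w_n\}$ representing the apartment class $[v_2,\ldots,w_n]$ in $\widetilde H_{n-2}(\IAA^{(0)}(\langle\Delta\rangle^\perp))$. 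This uses the identification of Gunnells' map $\ApartmentMap'_n$ with the suspension structure $C_n\cong\partial\{1,\bar 1\}*C_{n-1}$ on the apartment model from \cref{def:combinatorial-apartment-model}.

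Next, I would trace the right-hand side. By \cref{def:apartment-map}, $\ApartmentMap_{\langle\Delta\rangle^\perp}([v_2,\ldots,w_n])\in\St^\omega(\langle\Delta\rangle^\perp)$ corresponds, through the isomorphism $s_*$, to the cycle $A$ in $\IAA^{(0)}(\langle\Delta\rangle^\perp)\cong\IAA^{(0)}_{n-1}$. By the explicit description in the proof of \cref{lem_rel_homology_IAAi_IAAj}, the inverse of the summand isomorphism $H_n(\IAA^{(1)},\IAA^{(0)})\cong\bigoplus\St^\omega(\langle\Delta\rangle^\perp)$ sends $A$ to the relative $n$-chain $\Delta*A$ in $\Star_{\IAA^{(1)}}(\Delta)$, where the orientation of $\Delta$ is fixed by the chosen fundamental class $\eta_0\in\widetilde H_0(\partial\Delta)$ used in the $\partial\Delta$-suspension isomorphism (this is where signs must be checked carefully). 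Applying the connecting homomorphism $\partial^{(1,0)}_n$ then yields the boundary $\partial(\Delta*A) = \partial\Delta*A - \Delta*\partial A = (w_1-v_1)*A = \alpha_n$, since $A$ is a cycle. Finally $s_*$ sends $\alpha_n$ to the apartment class in $\St^\omega_n$.

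Thus both sides equal $s_*(\alpha_n)\in\St^\omega_n$, and the square commutes. The main obstacle I anticipate is bookkeeping of signs: the orientations on $\Delta$, on $\partial\Delta$, and on the small apartment $A$ must be chosen compatibly with the fundamental classes $\xi_{n-1}$ and $\xi_{n-2}$ fixed in \cref{def:combinatorial-apartment-model}, and with the suspension convention used in \cref{lem_rel_homology_IAAi_IAAj}. Provided these are set up consistently across the three modules, the computation above is direct; otherwise one picks up a global sign which must be absorbed into the definition of $\partial_\sigma$ in \cref{def:partial-sigma}.
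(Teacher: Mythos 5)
Your proposal is correct and follows essentially the same route as the paper: evaluate on a generator, represent the right-hand side by the explicit relative chain $\Delta \ast A$ supported in $\Star_{\IAA^{(1)}}(\Delta)$ (this is exactly the content of the paper's geometric description of the relative apartment classes via the embedding $M^{\Delta^\perp}$ and the fundamental class $\eta_0 \ast \xi_{n-2}$), compute the connecting morphism as the simplicial boundary $(w_1-v_1)\ast A$, and identify the result with Gunnells' apartment class through the span map $s_*$, using that the composite of the apartment embedding with $s$ is the map $M$ defining $\ApartmentMap_n$. The orientation bookkeeping you flag is handled in the paper by the fixed total ordering of vertices (with $v_1 < w_1$) and the suspension conventions, so no extra global sign arises.
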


To prove these two lemmas, we need to unravel the identifications made in the construction of the exact sequence in \cref{cor:right-exact-sequence-smaller-steinberg-modules}, which appears in the bottom row of the diagram in \cref{fig_commdiag}. We use the following construction.\medskip

\textbf{Construction:} Let $\Delta \in \IAA$ be a minimal simplex of type 
$\sigma^2$, skew-$\sigma^2$, $\sigma$-additive or $\sigma$ and let 
$M^{\Delta^\perp} = (\vec v_k, \vec w_k, \dots, \vec v_n, \vec w_n)$ be an 
ordered symplectic basis of $\langle \Delta \rangle^\perp \subseteq \mbZ^{2n}$, 
where $k = 2$ if $\Delta$ is $\sigma$-additive or of type $\sigma$, and $k = 3$ 
otherwise. Using the notation introduced in 
\cref{def:combinatorial-apartment-model}, we can label the basis vectors in 
$M^{\Delta^\perp}$ by $\{ 1, \bar{1}, \dots, n-k+1, \overbar{n-k+1}\}$ from 
left to right (i.e.\ $\vec M_i^{\Delta^\perp} = \vec v_{i+k-1}$ and 
$\vec M_{\bar{i}}^{\Delta^\perp} = \vec w_{i+k-1}$).  Therefore this data determines 
a simplicial embedding
$$M^{\Delta^\perp}\colon  \Delta \ast C_{n-k+1} \hookrightarrow \IAA$$
mapping $\Delta$ to $\Delta$, $i$ to $\vec M^{\Delta^\perp}_i$ and $\bar i$ to 
$\vec M_{\bar{i}}^{\Delta^\perp}$. Note that $\Delta \ast C_{n-k+1}$ is a combinatorial $(n+1)$- or $n$-ball 
with boundary sphere $\partial \Delta \ast C_{n-k+1}$. Taking the 
simplex types into account, we hence obtain a pair of simplicial 
embeddings
$$\left(M^{\Delta^\perp}, \partial M^{\Delta^\perp}\right)\colon  (\Delta \ast C_{n-k+1}, \partial \Delta \ast C_{n-k+1}) \hookrightarrow 
(\IAA^{(2)}, \IAA^{(1)})$$
if $\Delta$ is of type $\sigma^2$, skew-$\sigma^2$ or $\sigma$-additive and
$$\left(M^{\Delta^\perp}, \partial M^{\Delta^\perp}\right)\colon  (\Delta \ast C_{n-k+1}, \partial \Delta \ast C_{n-k+1}) \hookrightarrow 
(\IAA^{(1)}, \IAA^{(0)})$$
if $\Delta$ is of type $\sigma$. Now, $M^{\Delta^\perp}$ is a cross map (see \cref{def:cross-maps} and \cref{def_sigma_cross_map}) except if $\Delta$ is a skew-$\sigma^2$ simplex (since we did not introduce a notion of cross map for this type). However, if $\Delta = \{z_0, z_1, z_2, z_3\}$ is a skew-$\sigma^2$ simplex and $\omega(\vec 
z_0, \vec z_1) = \omega(\vec z_1, \vec z_2) = \omega(\vec z_2, \vec 
z_3) = 1$, then \cref{lem_unique_skew_additive_face} implies that there is a unique prism $P$ 
in $\IAA$ containing $\Delta$. The two additional vertices in $P$ are $\langle 
\vec z_0 + \vec z_2 \rangle$ and $\langle \vec z_1 + \vec z_3 
\rangle$. Therefore the map $M^{\Delta^{\perp}}$ above 
extends to a unique prism-regular map
$$M^{\Delta^\perp}\colon  P \ast C_{n-k+1} \hookrightarrow \IAA$$
and we obtain a pair of embeddings
$$\left(M^{\Delta^\perp}, \partial M^{\Delta^\perp}\right)\colon  (P \ast C_{n-k+1}, \partial P \ast C_{n-k+1}) \hookrightarrow 
(\IAA^{(2)}, \IAA^{(1)}).$$

Let $d = \dim(\Delta)$ and let $\eta_{d-1} \in \widetilde{H}_{d-1}(\partial \Delta)$ denote the fundamental class associated to the ordering of the vertex set of $\Delta$. Since the $\partial \Delta$-suspension isomorphism is induced by taking the cross product with $\eta_{d - 1}$ (see e.g.\ \cite[p.276 et seq.]{Hatcher2002}), we obtain a unique fundamental class $\eta_{d-1} \ast \xi_{n-1} \in \widetilde{H}_{d+n-1}(\partial \Delta \ast C_n; \mbZ)$ with the property that for all $n \geq 0$ it holds that
\begin{align}
	\label{eq:generators-for-construction}
	\begin{split}
	\widetilde{H}_{d+n-1}(\partial \Delta \ast C_n; \mbZ)\stackrel{\cong}{\leftarrow} & \,\widetilde{H}_{n-1}(C_n; \mbZ) \\
	\eta_{d-1} \ast \xi_{n-1} \rotatebox[origin=c]{180}{$\mapsto$}& \, \xi_{n-1}
	\end{split}
\end{align}
(compare with \cref{def:combinatorial-apartment-model} et seq.). If $\Delta$ is a minimal skew-$\sigma^2$ simplex, there is a unique class $\eta_2 \in \widetilde{H}_2(\partial P_3; \mbZ)$ that is homologous to $\eta_2 \in \widetilde{H}_2(\partial \Delta; \mbZ)$ in $P_3^\diamond$ (see \cref{def:prism-diamond} for the definition of $P_3^\diamond$, and \cref{lem_deformation_retract_2skew_additive} for a closely related argument). Hence, we obtain a fundamental class $\eta_{2} \ast \xi_{n-1} \in \widetilde{H}_{n+2}(\partial P_3 \ast C_n; \mbZ)$ exactly as above. Finally, we note that if $\Delta = \{v_1, w_1\}$ is a minimal $\sigma$ simplex and $v_1 < w_1$, there is an identification $\partial \Delta \ast C_{n} \cong C_{n+1}$ mapping
\begin{align*}
v_1&\text{ to }1, &w_1&\text{ to }\overline 1,& i&\text{ to }i+1, &\overline i &\text{ to }\overline{i+1}
\end{align*}
$i \in \{1, \dots, n\}$.
Under this identification, it holds that $\eta_{0} \ast \xi_{n-1} \mapsto \xi_{n}$.

To relate the maps above to the apartment class maps (see \cref{def:apartment-map}) in \cref{fig_commdiag}, we furthermore use that for any $n \geq 1$, the long exact sequence of the pairs $(\Delta \ast C_{n-k+1}, \partial \Delta \ast C_{n-k+1})$ and $(P \ast C_{n-k+1}, \partial P \ast C_{n-k+1})$ yields identifications and relative homology classes
\begin{align*}
	\widetilde{H}_{d+n-k}(\partial \Delta \ast C_{n-k+1}; \mbZ) &\cong H_{d+n-k+1}(\Delta \ast C_{n-k+1}, \partial \Delta \ast C_{n-k+1}; \mbZ)\\
	\widetilde{H}_{n}(\partial P \ast C_{n-k+1}; \mbZ) &\cong H_{n+1}(P \ast C_{n-k+1}, \partial P \ast C_{n-k+1}; \mbZ)\\
	\eta_{d-1} \ast \xi_{n-k} &\mapsto (0, \eta_{d-1} \ast \xi_{n-k}).
\end{align*}

From this construction, we obtain the following ``topological'' description of the image of the ``relative'' apartment class maps occurring in \cref{fig_commdiag}.

\begin{corollary}
	\label{cor:relative-apartment-map-geometric}
	The following correspondence holds under the identifications in 
	\cref{lem_rel_homology_IAAi_IAAj}:
	\begin{itemize}[leftmargin=*]
		\item If $\Delta$ is of type $\sigma^2$, then $\ApartmentMap_{\langle \Delta \rangle^\perp}([ v_3, 
		\dots,  w_n]) \in \St^\omega(\langle \Delta \rangle^\perp)$
		identifies with
		\begin{align*}
			\left(\Delta \ast [ v_3,  \dots,  w_n], \partial \Delta \ast [ v_3,   \dots, w_n]\right) \coloneqq
			 \left(M^{\Delta^\perp}, \partial M^{\Delta^\perp}\right)_*(0, \eta_2 \ast \xi_{n-3}) \in H_{n+1}(\IAA^{(2)}, \IAA^{(1)}).
		\end{align*}
		\item If $\Delta$ is of type skew-$\sigma^2$, then $\ApartmentMap_{\langle \Delta \rangle^\perp}([ v_3, \dots,   w_n]) \in \St^\omega(\langle \Delta \rangle^\perp)$ identifies with
		\begin{align*}
			\left(P \ast [ v_3,  \dots,  w_n], \partial P \ast 
			[ v_3,  \dots,   w_n]\right) \coloneqq
			& \left(M^{\Delta^\perp}, \partial  M^{\Delta^\perp}\right)_*(0, \eta_2 \ast \xi_{n-3}) \in H_{n+1}(\IAA^{(2)}, \IAA^{(1)}).
		\end{align*}
		\item If $\Delta$ is of type $\sigma$-additive, then
		$\ApartmentMap_{\langle \Delta \rangle^\perp}([ v_2,
		\dots,  w_n]) \in \St^\omega(\langle \Delta \rangle^\perp)$ 
		identifies with
		\begin{align*}
			\left(\Delta \ast [ v_2,  \dots,  w_n], \partial 
			\Delta \ast [ v_2,  
			\dots, w_n]\right) \coloneqq
			 \left(M^{\Delta^\perp}, \partial M^{\Delta^\perp}\right)_*(0, \eta_1 \ast \xi_{n-2}) \in H_{n+1}(\IAA^{(2)}, \IAA^{(1)}).
		\end{align*}
	\end{itemize}
	Finally, under the identifications made in	
	\cref{lem_rel_homology_IAAi_IAAj} and assuming that 
	$\Delta$ is of type $\sigma$, the class $\ApartmentMap_{\langle \Delta 
		\rangle^\perp}([ v_2,  \dots,   w_n]) \in 
	\St^\omega(\langle \Delta \rangle^\perp)$ identifies with
	\begin{align*}
		\left(\Delta \ast [ v_2, \dots, w_n], \partial 
		\Delta \ast [ v_2, \dots,  w_n]\right) \coloneqq
		 \left(M^{\Delta^\perp}, \partial M^{\Delta^\perp}\right)_*(0, \eta_0 \ast \xi_{n-2}) \in H_{n}(\IAA^{(1)}, \IAA^{(0)}).
	\end{align*}
\end{corollary}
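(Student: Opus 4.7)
The plan is to systematically unwind, for each simplex type $\tau$, the chain of isomorphisms that produce the identification in \cref{lem_rel_homology_IAAi_IAAj}, and verify that the apartment class associated to the symplectic basis $M^{\Delta^\perp}$ of $\langle \Delta \rangle^\perp$ is sent to the expected class. Recall that this chain consists of (i) excision to isolate $\Star_{\IAA^{(i)}}(\Delta)$ for minimal $\Delta$ (with an intermediate replacement of $\IAA^{(1)}$ by $\IAA^{(1.5)}$ via \cref{lem_hom_eq_1_15} in the top relative group), (ii) the connecting isomorphism of the pair $(\Star_{\IAA^{(i)}}(\Delta),\partial \Delta \ast \Link_{\IAA^{(i)}}(\Delta))$, (iii) the $\partial \Delta$-suspension isomorphism (given by the cross product with $\eta_{d-1}$), (iv) the link isomorphism from \cref{lem_links_mixed_sigma,lem_links_mixed_sigma2_sigmaadd}, and finally (v) the identification $\widetilde H_{*}(\IAA[k]^{(0)}) \cong \St^\omega_k$ from \cref{lem_homology_IAA0_Steinberg}, which itself goes via the span map $s$.

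First I would handle the cases $\tau \in \{\sigma^2, \sigma\text{-additive}, \sigma\}$, which are entirely parallel. The apartment class $\ApartmentMap_{\langle \Delta \rangle^\perp}([v_k,\dots,w_n])$ is by definition $s_\ast M_\ast^{\Delta^\perp}(\xi_{n-k})$ where $M^{\Delta^\perp}\colon C_{n-k+1} \hookrightarrow T^\omega(\langle \Delta \rangle^\perp)$; lifting through the span map this is $M^{\Delta^\perp}_\ast(\xi_{n-k}) \in \widetilde H_{n-k}(\IAA^{(0)}(\langle \Delta \rangle^\perp))$. Transporting this class along the link isomorphism gives the same class viewed inside $\Link_{\IAA^{(i)}}(\Delta)$. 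Applying the suspension isomorphism, i.e.\ taking the cross product with $\eta_{d-1} \in \widetilde H_{d-1}(\partial \Delta)$, yields the class $M^{\Delta^\perp}_\ast(\eta_{d-1} \ast \xi_{n-k}) \in \widetilde H_{d+n-k}(\partial \Delta \ast \Link_{\IAA^{(i)}}(\Delta))$ by the defining property \cref{eq:generators-for-construction}. The inverse connecting morphism then sends this to the relative class $(M^{\Delta^\perp},\partial M^{\Delta^\perp})_\ast(0,\eta_{d-1}\ast \xi_{n-k})$, which is exactly $(\Delta \ast [v_k,\dots,w_n],\partial \Delta \ast [v_k,\dots,w_n])$. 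The excision inclusion then identifies this with the class in $H_\ast(\IAA^{(i)},\IAA^{(i')})$.

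The main obstacle is the skew-$\sigma^2$ case, because the statement replaces $\Delta$ by the unique prism $P \supset \Delta$ from \cref{lem_unique_skew_additive_face}. Here the correct argument goes through $\IAA^{(1.5)}$: after applying the deformation retraction $\IAA^{(1.5)} \to \IAA^{(1)}$ of \cref{lem_hom_eq_1_15}, the excision decomposition of $H_{n+1}(\IAA^{(2)},\IAA^{(1.5)})$ is naturally indexed by \emph{prisms} rather than by skew-$\sigma^2$ simplices, because the two $2$-skew-additive faces of $P$ are the simplices that get pushed across their unique $2$-additive facets by the retraction. Concretely, one checks that $\Star_{\IAA^{(2)}}(P) \cap \IAA^{(1.5)} = \partial P \ast \Link_{\IAA^{(2)}}(P)$ and that $\Link_{\IAA^{(2)}}(P) = \Link_{\IAA^{(2)}}(\Delta)$, so excision provides a direct summand indexed by $P$ whose homology is computed by the pair $(P \ast C_{n-3}, \partial P \ast C_{n-3})$. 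The fundamental class $\eta_2 \in \widetilde H_2(\partial P)$ was chosen in the construction to agree with the one on $\partial \Delta$ after pushing across $P_3^\diamond$, so applying steps (ii)--(v) to the extended prism-regular embedding $M^{\Delta^\perp}\colon P \ast C_{n-3} \hookrightarrow \IAA^{(2)}$ yields the desired formula $(P\ast [v_3,\dots,w_n],\partial P \ast [v_3,\dots,w_n])$.

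Finally, the case $\tau = \sigma$ is almost identical to the first group, but the target is $H_n(\IAA^{(1)},\IAA^{(0)})$ rather than $H_{n+1}(\IAA^{(2)},\IAA^{(1)})$, and $d = 1$ so $\eta_0 \ast \xi_{n-2}$ corresponds under $\partial \Delta \ast C_{n-1} \cong C_n$ (via $v_1 \mapsto 1$, $w_1 \mapsto \bar 1$, $i \mapsto i+1$, $\bar i \mapsto \overline{i+1}$) to $\xi_{n-1}$. This matches Gunnells' apartment class for the enlarged symplectic basis $(\vec v_1,\vec w_1,\vec v_2,\vec w_2,\dots,\vec v_n,\vec w_n)$ of $\mbZ^{2n}$, confirming the formula and completing the proof.
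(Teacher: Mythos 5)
Your overall strategy is the same as the paper's: the corollary is read off from the construction preceding it by unwinding, for each simplex type, the chain of identifications in \cref{lem_rel_homology_IAAi_IAAj} (excision, the connecting morphism of the pair, the suspension by $\eta_{\dim\Delta-1}$, the link isomorphisms of \cref{lem_links_mixed_sigma,lem_links_mixed_sigma2_sigmaadd}, and the span map from \cref{lem_homology_IAA0_Steinberg}), and your treatment of the $\sigma^2$, $\sigma$-additive and $\sigma$ cases does exactly this and is correct (the closing remark about the enlarged basis and $C_n$ is not needed for the $\sigma$ case of the corollary itself; it is only used later, in the proof of \cref{lem:commutativity-four}).

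In the skew-$\sigma^2$ case, however, the ``concrete check'' you rely on is false. The intersection $\Star_{\IAA^{(2)}}(P)\cap\IAA^{(1.5)}$ is \emph{not} $\partial P\ast\Link_{\IAA^{(2)}}(P)$: each of the two $2$-skew-additive top simplices of $P$, joined with a simplex of $\Link_{\IAA^{(2)}}(P)$, is of type $(\text{standard},\text{2-skew-additive})$ and hence lies in $\IAA^{(1.5)}$ but not in $\partial P\ast\Link_{\IAA^{(2)}}(P)$; the correct intersection is $P'\ast\Link_{\IAA^{(2)}}(P)$, where $P'$ is $P$ with only the skew-$\sigma^2$ top cell removed. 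Consequently excision for $(\IAA^{(2)},\IAA^{(1.5)})$ does not produce a summand computed by the pair $(P\ast C_{n-2},\partial P\ast C_{n-2})$; the summand is the one indexed by the skew-$\sigma^2$ simplex $\Delta$ and computed from $(\Star_{\IAA^{(2)}}(\Delta),\partial\Delta\ast\Link_{\IAA^{(2)}}(\Delta))$, exactly as in \cref{lem_rel_homology_IAAi_IAAj}. What your argument still needs, and what the compatibility of the two classes $\eta_2$ is for, is a comparison of the $\Delta$-based relative cycle with the prism-based one inside $H_{n+1}(\IAA^{(2)},\IAA^{(1)})$: at chain level their difference is carried by the two $2$-skew-additive simplices of $P$ joined with the image of $C_{n-2}$, hence is a relative cycle of the pair $(\IAA^{(1.5)},\IAA^{(1)})$, whose relative homology vanishes by \cref{lem_hom_eq_1_15}; the compatible choice of $\eta_2\in\widetilde H_2(\partial P_3)$ with $\eta_2\in\widetilde H_2(\partial\Delta)$ (the role of \cref{def:prism-diamond} in the construction) guarantees that this difference is indeed a relative cycle, with matching signs. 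With this repair your argument closes the skew-$\sigma^2$ case and agrees with the paper.
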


With these observations and definitions in place, we are now ready to prove 
\cref{lem:commutativity-three} and \cref{lem:commutativity-four}.

\begin{proof}[Proof of \cref{lem:commutativity-three} for summands indexed by 
$\sigma^2$ simplices]
Let $\Delta = \{v_1, w_1, v_2, w_2\}$ be a minimal $\sigma^2$ simplex in $\IAA^{(2)}$ such that $\omega(\vec v_1, \vec w_1) = \omega(\vec v_2, \vec w_2) = 1$, $v_1 < w_1$ and $v_2 < w_2$. Let $[ v_3,  \dots, 
 w_n] \in \ApartmentModule(\langle \Delta \rangle^\perp)$ denote a formal 
symbol. We claim that
\begin{equation}
	\label{eq:sigmasquare}
	\left(\left(\ApartmentMap_{\langle v_2,w_2 \rangle^\perp} \oplus \ApartmentMap_{\langle v_1, 
	w_1 \rangle^\perp}\right) \circ \partial_{\sigma^2}\right) ([ 
	v_3, \dots,  w_n]) = \left(\partial_{n+1}^{\St^\omega} 
	\circ \ApartmentMap_{\langle v_1,w_1,v_2,w_2 \rangle^\perp}\right)([ v_3,  \dots,   w_n]).
\end{equation}
Using the definition of $\partial_{\sigma^2}$ (see 
\cref{def:partial-sigma2-skew-add}) and the last part of 
\cref{cor:relative-apartment-map-geometric}, it follows that, under the 
identifications made in the 
\cref{cor:right-exact-sequence-smaller-steinberg-modules}, the
left side term is equal to the following element in $H_n(\IAA^{(1)}, 
\IAA^{(0)})$:
\begin{multline}
	\label{eq:sigmasquareleftterm}
	\left(\{v_2,w_2\} \ast [ v_1,  w_1,  v_3,  \dots,  
	 w_n], \partial \{v_2,w_2\} \ast [ v_1,  w_1,  
	v_3, \dots,  w_n]\right) \\
	+ \left(\{v_1,w_1\} \ast [ v_2,  w_2,  
	v_3,  \dots,   w_n], \partial \{v_1,w_1\} 
	\ast [ v_2,  w_2,  v_3, \dots,   w_n]\right).
\end{multline}
Using the definition of the connection morphism $\partial_{n+1}^{(2,1,0)}$ 
and the first part of \cref{cor:relative-apartment-map-geometric}, it 
similarly follows that the 
right hand term is equal to
\begin{equation}
	\label{eq:sigmasquarerightterm}
	 \left(\partial \{v_1,w_1,v_2,w_2\} \ast  [ v_3, \dots, 
	  w_n], \emptyset\right) =
	  \left(\partial M^{\Delta^\perp}, \emptyset\right)_*(\eta_2 \ast \xi_{n-3}, \emptyset) \in H_{n+1}(\IAA^{(1)}, \IAA^{(0)}).
\end{equation}
Recall that the domain of $\partial M^{\Delta^\perp}$ is the combinatorial $(n+1)$-sphere $\partial \Delta \ast C_{n-2}$. We decompose the sphere $\partial \Delta = \partial \{v_1,w_1,v_2,w_2\}$ into two combinatorial 2-balls of the form $\Delta^1 \ast S^0$ given by $\{v_2,w_2\} \ast \partial 
\{v_1,w_1\}$ and $\{v_1,w_1\} \ast \partial \{v_2,w_2\}$, as illustrated in 
\cref{figure_homology_sigma2}. This induces a decomposition of $\partial \Delta \ast C_{n-2}$ into two combinatorial $(n+1)$-balls of the form $(\Delta^1 \ast S^0) \ast C_{n-2} \cong \Delta^1 \ast C_{n-1}$. This allows us to express the homology class in \cref{eq:sigmasquarerightterm} as a sum of two terms which are equal to \cref{eq:sigmasquareleftterm}. We conclude the left and right hand term of \cref{eq:sigmasquare} agree.
\begin{figure}
\begin{center}
\includegraphics{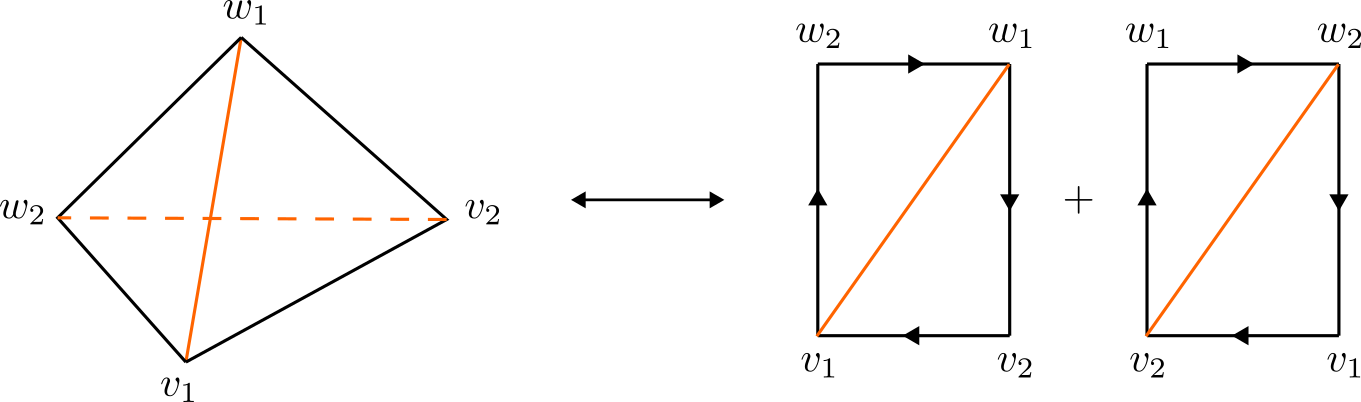}
\end{center}
\caption{Decomposing the boundary sphere $\partial \Delta$ of a $\sigma^2$ simplex $\Delta$ into two balls $\Delta^1 \ast S^0$. The induced orientation is indicated by arrows. Orange marks $\sigma$ edges.}
\label{figure_homology_sigma2}
\end{figure}
\end{proof}

\begin{proof}[Proof of \cref{lem:commutativity-three} for summands indexed by 
skew-$\sigma^2$ simplices] 
Consider a minimal skew-$\sigma^2$ simplex $\Delta = \{z_0, z_1, z_2, z_3\} \in \IAA^{(2)}$ such that $\omega(\vec z_0, \vec z_1) = \omega(\vec z_1, \vec z_2) = \omega(\vec z_2, \vec z_3) = 1$. Let 
\begin{eqnarray*}
	\ls v_0, w_0 \rs &= \{\langle \vec z_0 + \vec z_2 \rangle, z_3 \}, &\ls v_1, w_1 \rs = \{\langle \vec z_0 + \vec z_2 \rangle, \langle \vec z_1 + \vec z_3 \rangle \}, \\
	\ls v_2, w_2 \rs &= \{z_0 ,\langle \vec z_1 + \vec z_3 \rangle\}, &\text{where } v_i<w_i \text{ for }0\leq i \leq 2,
\end{eqnarray*}
and let $[v_3,  w_3, \dots, v_n,  w_n] \in \ApartmentModule(\langle \Delta \rangle^\perp)$ be a formal symbol. We claim that
\begin{multline}
	\label{eq:skewsigma}
	\left(\left(\ApartmentMap_{\langle z_0,z_1 \rangle^\perp} \oplus \ApartmentMap_{\langle z_1, z_2 \rangle^\perp} \oplus \ApartmentMap_{\langle z_2, z_3 \rangle^\perp}\right) \circ \partial_{\skewname}\right) ([v_3, \dots, w_n]) =\\
	\left(\partial_{n+1}^{\St^\omega} \circ \ApartmentMap_{\langle z_0,z_1,z_2,z_3 \rangle^\perp}\right)([v_3, \dots, w_n]).
\end{multline}
Using the definition of $\partial_{\skewname}$ (see \cref{def:partial-sigma2-skew-add}) and the last part of \cref{cor:relative-apartment-map-geometric}, it follows that, under the identifications made in the \cref{cor:right-exact-sequence-smaller-steinberg-modules}, the left side term is equal to the following element in $H_n(\IAA^{(1)}, \IAA^{(0)})$:
\begin{multline}
	\label{eq:skewsigmaleftterm}
	\bigl(\{z_0,z_1\} \ast [v_0,  w_0, v_3, \dots, w_n], \partial \{z_0,z_1\} \ast [v_0,  w_0, v_3, \dots, w_n]\bigr)\\
	+ \bigl(\{z_1,z_2\} \ast [v_1, w_1, v_3,\dots,  w_n], \partial \{z_1,z_2\} \ast [v_1, w_1, v_3, \dots, w_n]\bigr)\\
	+ \bigl(\{z_2,z_3\} \ast [v_2, w_2, v_3, \dots,  w_n], \partial \{z_2,z_3\}\ast [v_2, w_2, v_3,\dots,  w_n]\bigr).
\end{multline}
Using the definition of the connection morphism $\partial_{n+1}^{(2,1,0)}$ and using the second part of \cref{cor:relative-apartment-map-geometric}, it similarly follows that the right hand term is equal to
\begin{equation}
	\label{eq:skewsigmarightterm}
	\left(\partial P \ast  [ v_3,  \dots,  w_n], \emptyset\right) =
	\left(\partial M^{\Delta^\perp}, \emptyset\right)_*(\eta_2 \ast \xi_{n-3}, \emptyset) \in H_{n+1}(\IAA^{(1)}, \IAA^{(0)}).
\end{equation}
Recall that the domain of $\partial M^{\Delta^\perp}$ is the combinatorial $(n+1)$-sphere $\partial P \ast C_{n-2}$. We decompose $\partial P$ into five combinatorial 2-balls; three of the form $\Delta^1 \ast S^0$, given by 
\begin{equation*}
\{z_0, z_1\} \ast \partial \{\langle \vec z_0 + \vec z_1 \rangle, z_3\}, \{z_1, z_2\} \ast \partial \{\langle \vec z_0 + \vec z_1 \rangle, \langle \vec z_1 + \vec z_3 \rangle\}, \{z_2, z_3\} \ast \partial \{z_0, \langle \vec z_1 + \vec z_3 \rangle\},
\end{equation*}
and two 2-simplices $\Delta^2$, given by 
\begin{equation*}
\{z_0, z_1, \langle \vec z_0 + \vec z_1 \rangle \}, \{z_1, z_3, \langle \vec z_1 + \vec z_3 \rangle\}.
\end{equation*}
This induces a decomposition of $\partial P \ast C_{n-2}$ into five combinatorial $(n+1)$-balls; three are of the form $(\Delta^1 \ast S^0) \ast C_{n-2} \cong \Delta^1 \ast C_{n-1}$ and two are of the form $\Delta^2 \ast C_{n-2}$. Here, we have that $S^0 = \partial \{v_i, w_i\}$ for $i \in \{0,1,2\}$ and we use the identification $(\Delta^1 \ast S^0) \ast C_{n-2} \cong \Delta^1 \ast C_{n-1}$ mapping 
\begin{align*}
\Delta^1 &\text{ to } \Delta^1, &v_i&\text{ to }1, &w_i&\text{ to }\overline 1,& j&\text{ to }j+1, &\overline j &\text{ to }\overline{j+1}
\end{align*}
for $j \in \{1, \dots, n-2\}$. This allows us to express the homology class in \cref{eq:skewsigmarightterm} as a sum of five terms. The first three correspond to the balls $\Delta^1 \ast C_{n-1}$ and are exactly the terms in \cref{eq:skewsigmaleftterm}. The other two correspond to the balls $\Delta^2 \ast C_{n-2}$ and are zero, since the image of the restriction of $\partial M^{\Delta^\perp}$ to these balls is entirely contained in $\IAA^{(0)}$. We conclude that the left and right hand term of \cref{eq:skewsigma} agree.
\begin{figure}
\begin{center}
\includegraphics{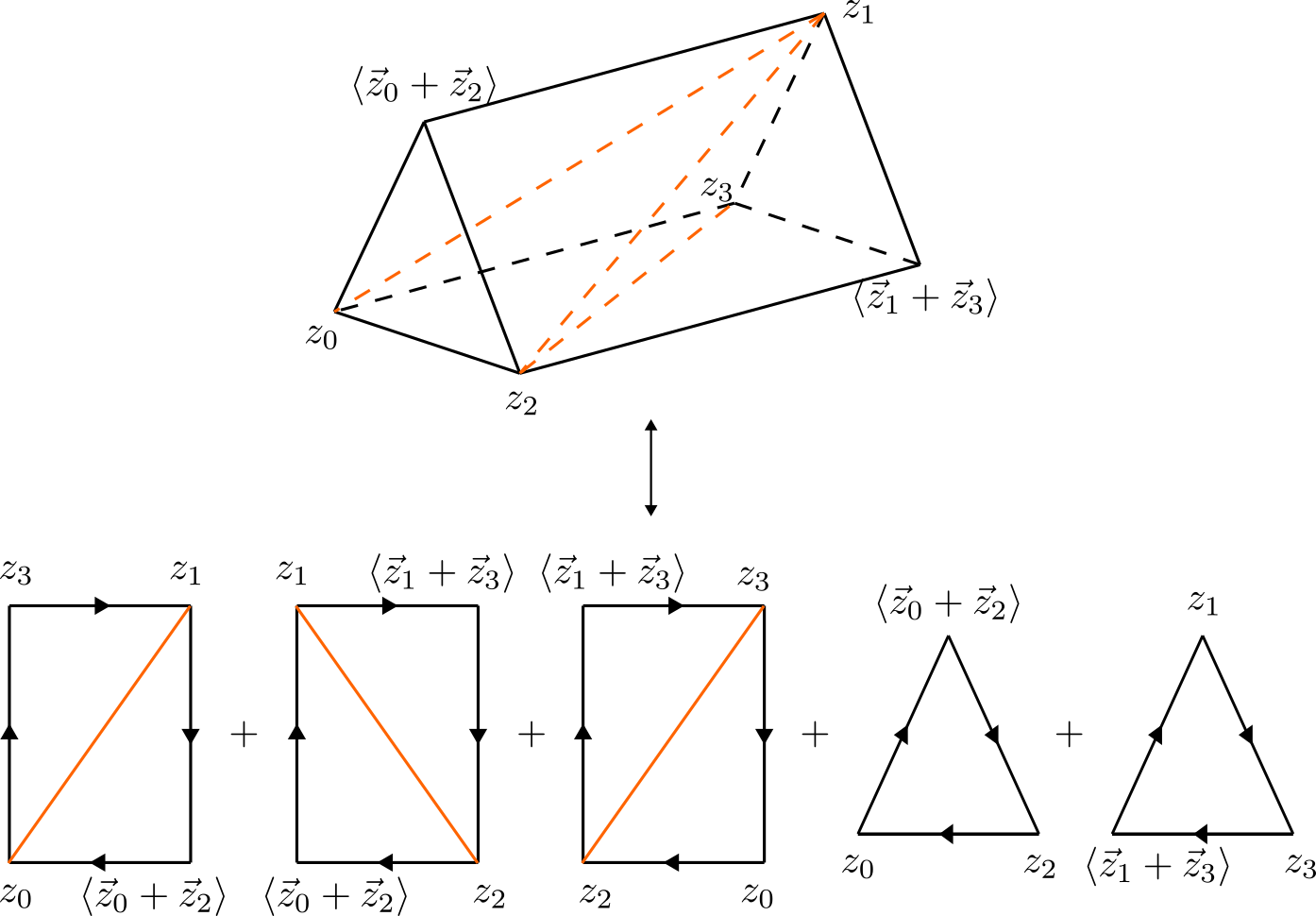}
\end{center}
\caption{Decomposing the $2$-sphere $\partial P$ into three balls $\Delta^1 \ast S^0$ and two triangles. The induced orientation is indicated by arrows. Orange marks $\sigma$ edges.}
\label{figure_homology_skew_sigma2}
\end{figure}
\end{proof}

\begin{proof}[Proof of \cref{lem:commutativity-three} for summands indexed by	$\sigma$-additive simplices]
	Let $\Delta = \{z_0, z_1, z_2\}$ be a minimal $\sigma$-additive simplex in $\IAA^{(2)}$ such that $z_0 < z_1 <z_2$. Let $[ v_2, \dots,  w_n] \in \ApartmentModule(\langle \Delta \rangle^\perp)$. We claim that
	\begin{equation}
		\label{eq:sigmaadd}
		\left((\ApartmentMap_{\langle  z_1, z_2  \rangle^\perp} \oplus \ApartmentMap_{\langle  z_0, z_2  \rangle^\perp} \oplus \ApartmentMap_{\langle  z_0, z_1  \rangle^\perp}) \circ 
		\partial_{\addname}\right) ([v_2, \dots, w_n]) = 
		\left(\partial_{n+1}^{\St^\omega} \circ \ApartmentMap_{\langle z_0,z_1,z_2 \rangle ^\perp}\right)([ v_2, \dots,  w_n]).
	\end{equation}
	Using the definition of $\partial_{\addname}$ (see 
	\cref{def:partial-sigma2-skew-add}) and the last part of 
	\cref{cor:relative-apartment-map-geometric}, it follows that, under the 
	identifications made in the \cref{cor:right-exact-sequence-smaller-steinberg-modules}, the
	left side term is equal to the following element in $H_n(\IAA^{(1)}, \IAA^{(0)})$:
	\begin{multline}
		\label{eq:sigmaadditiveleftterm}
		\left(\{ z_1, z_2 \} \ast [v_2, \dots,  w_n], \partial \{z_1,z_2\} \ast [v_2,   \dots,  w_n]\right)
		- \left(\{z_0, z_2\} \ast [ v_2, \dots,  w_n], \partial \{z_0, z_2\}\ast [ v_2, \dots,  w_n]\right)\\
		+ \left(\{z_0,z_1\} \ast [ v_2, \dots,  w_n], \partial \{z_0,z_1\}\ast [ v_2, \dots,  w_n]\right).
	\end{multline}
	Using the definition of the connection morphism $\partial_{n+1}^{(2,1,0)}$ 
	and the third part of \cref{cor:relative-apartment-map-geometric}, 
	it similarly follows that the 
	right hand term is equal to
	\begin{equation}
		\label{eq:sigmaadditiverightterm}
		\left(\partial \{z_0,z_1,z_2\} \ast  [ v_3, \dots, 
		 w_n], \emptyset\right) =
		 \left(\partial M^{\Delta^\perp}, \emptyset\right)_*(\eta_1 \ast \xi_{n-2}, \emptyset) \in H_{n+1}(\IAA^{(1)}, \IAA^{(0)}).
	\end{equation}
	Recall that the domain of $\partial M^{\Delta^\perp}$ is the combinatorial $(n+1)$-sphere $\partial \Delta \ast C_{n-1}$. We decompose $\partial \Delta = \partial \{z_0,z_1,z_2\}$ into three 1-simplices $\Delta^1$ (given by $\{z_1, z_2\}$, $\{z_0, z_2\}$ and $\{z_0, 
	z_1\}$), as illustrated in \cref{figure_homology_sigma_add}. This induces a decomposition of $\partial \Delta \ast C_{n-1}$ into three combinatorial $(n+1)$-balls of the form $\Delta^1 \ast C_{n-1}$. This allows us to express the homology class \cref{eq:sigmaadditiverightterm} as a sum 
	of three terms which are equal to \cref{eq:skewsigmaleftterm}. We conclude 
	that the left and right hand term of \cref{eq:sigmaadd} agree.
\begin{figure}
\begin{center}
\includegraphics{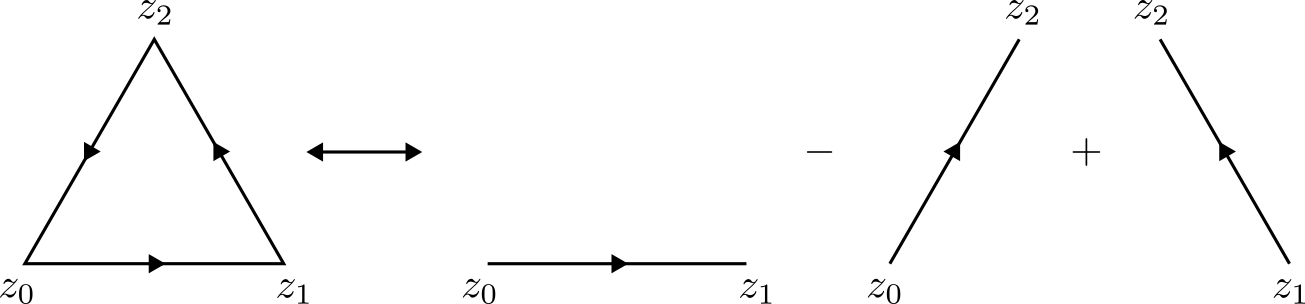}
\end{center}
\caption{Decomposing the boundary sphere $\partial \Delta$ of a $\sigma$-additive simplex $\Delta$ into three 1-simplices $\Delta^1$. The orientations are indicated by arrows.}
\label{figure_homology_sigma_add}
\end{figure}
\end{proof}

\begin{proof}[Proof of \cref{lem:commutativity-four}]
	Consider a minimal $\sigma$ simplex $\Delta = \{v_1, w_1\} \in \IAA^{(2)}$ with $v_1 < w_1$. Let $[ v_2,  w_2, \dots,  v_n,  w_n] \in \ApartmentModule(\langle \Delta \rangle^\perp)$. We claim that
	\begin{equation}
		\label{eq:sigma}
		(\ApartmentMap_n \circ \partial_{\sigma})([ v_2, \dots, 
		  w_n]) =
		(\partial_{n}^{\St^\omega} \circ \ApartmentMap_{\langle v_1, 
		w_1 \rangle^\perp})([ v_2, \dots, w_n]).
	\end{equation}
	Using the definition of $\partial_{\sigma}$ (see 
	\cref{def:partial-sigma}) and the definition of the apartment class map 
	(see \cref{def:apartment-map}), it follows that the
	left side term is equal to the following apartment class in $\St^\omega_n$:
	$$\ApartmentMap_n([v_1, w_2, v_2, w_2, \dots, v_n, w_n]) = M_* (\xi_{n-1}) \in \widetilde{H}_{n-1}(T^\omega_n; \mbZ) =\St^\omega_n,$$
	where $M \colon  \Simp(C_n) \to T^\omega_n$ is the poset map constructed after \cref{eq:apartment-class-map} for the symplectic basis $M = (\vec v_1, \vec w_1, \vec v_2, \vec w_2, \dots, \vec v_n, \vec w_n)$.
	
	Using the last part of \cref{cor:relative-apartment-map-geometric} and that, under the identifications made in \cref{cor:right-exact-sequence-smaller-steinberg-modules}, the map 
	$\partial^{\St^\omega}_n$ identifies with the composition of the connecting 
	morphism $\partial_{n}^{(1,0)}$ and the map $s_{*}$, the right hand term can be computed as follows: We start by observing that
	\begin{multline*}
		\partial_{n}^{(1,0)}\left(\{v_1, w_2\} \ast [v_2, \dots,  
		w_n], \partial \{v_1, w_2\} \ast [v_2, \dots, w_n]\right)
		= \partial \{v_1, w_2\} \ast [v_2, \dots, w_n]\\
		= \partial M^{\Delta^\perp}_*(\eta_0 \ast \xi_{n-2}) \in \widetilde{H}_{n-1}(\IAA^{(0)}).
	\end{multline*}
	The domain of $\partial M^{\Delta^\perp}$ is the combinatorial sphere $\partial \Delta \ast C_{n-1} = \partial \{v_1, w_1\} \ast C_{n-1}$. Using the isomorphism $\partial \{v_1, w_1\} \ast C_{n-1} \cong C_n$ described after \cref{eq:generators-for-construction} and the resulting identification of $\eta_0 \ast \xi_{n-2}$ and $\xi_{n-1}$, we may assume
	$$\partial M^{\Delta^\perp} \colon  C_n \to \IAA^{(0)}$$
	and write 
	\begin{equation*}
		\partial_{n}^{(1,0)}\left(\{v_1, w_2\} \ast [v_2, \dots,  
		w_n], \partial \{v_1, w_2\} \ast [v_2, \dots, w_n]\right)
		= \partial M^{\Delta^\perp}_*(\xi_{n-1}) \in \widetilde{H}_{n-1}(\IAA^{(0)}).
	\end{equation*}
	Now recall from the proof of \cref{lem:steinberg-as-relative-homology} that $$s_{*}\colon  \widetilde{H}_{n-1}(\IAA^{(0)}) \cong \widetilde{H}_{n-1}(\Simp(\IAA^{(0)})) \to \widetilde{H}_{n-1}(T^\omega_n; \mbZ) = \St^\omega_n$$
	is obtained by first passing to the barycentric subdivision and then applying the span map in \cref{prop:spanmap}. It follows that
	\begin{equation*}
		s_*\left(\partial M^{\Delta^\perp}_*(\xi_{n-1})\right) = \left(s \circ \Simp\left(\partial M^{\Delta^\perp}\right)\right)_*(\xi_{n-1}) \in \widetilde{H}_{n-1}(T^\omega_n; \mbZ) = \St^\omega_n,
	\end{equation*}
	where $\Simp(\partial M^{\Delta^\perp})\colon  \Simp(C_{n}) \to \Simp(\IAA^{(0)})$ is the map that $\partial M^{\Delta^\perp}$ induces between the simplex posets of $C_{n}$ and $\IAA^{(0)}$.
	It then suffices to note that, under the previous identification, the map $$s \circ \Simp\left(\partial M^{\Delta^\perp}\right)\colon  \Simp(C_n) \to T^\omega_n$$
	is exactly the map 
	$$M\colon  \Simp(C_n) \to T^\omega_n$$
	used to define the apartment class map $\ApartmentMap_n$ (compare \cref{eq:apartment-class-map} et seq.).	We conclude that 
	\begin{equation*} s_*\left(\partial M^{\Delta^\perp}_*(\xi_{n-1})\right) = M_* (\xi_{n-1}).\qedhere \end{equation*}
\end{proof}

\subsection{A diagram chase and the proof of \texorpdfstring{\cref{thm:stpres}}{Theorem 10.20}}
\label{subsec:proof-of-presentation-right-exact-sequence}

Using the results in the previous subsections, we are now ready to prove 
\cref{thm:stpres}. The induction argument is largely formal and relies on the 
following diagram chasing lemma whose proof we leave to the reader.

\begin{lemma}
	\label{lem:diagram-chasing}
	Assume we are given a commutative diagram of abelian groups
	\begin{center}
		\begin{tikzcd}
			& A_{1,2} \arrow[d] \arrow[r, bend left = 10]& A_{1,3} \arrow[d]&\\
			A_{2,1} \arrow[r]\arrow[d] \arrow[rru, bend left = 10, "\pi" {xshift = -4ex}, swap, sloped] & A_{2,2}\arrow[d] \arrow[r, "\partial_\sigma"] & A_{2,3}\arrow[d]&\\
			A_{3,1} \arrow[r] \arrow[d]	& A_{3,2} \arrow[r] \arrow[d] & A_{3,3} \arrow[d] \arrow[r]& 0\\
			0 & 0 & 0 &
		\end{tikzcd}
	\end{center}
	such that the first two columns $A_{*,1}$ and $A_{*, 2}$ are exact, the third row $A_{3,*}$ is exact, and $\pi$ as well as $\partial_\sigma$ are surjective maps. Then the third column $A_{*,3}$ is exact.
\end{lemma}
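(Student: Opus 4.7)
The plan is to prove the three exactness claims for the third column $A_{1,3} \to A_{2,3} \to A_{3,3} \to 0$ by a standard diagram chase, using all of the hypotheses: exactness of the first two columns, exactness of the third row, surjectivity of $\partial_\sigma$, surjectivity of $\pi$, and commutativity of the whole diagram (in particular, the identity $m \circ \pi = \partial_\sigma \circ l$, where $l\colon A_{2,1} \to A_{2,2}$ is the horizontal middle-row map and $m\colon A_{1,3} \to A_{2,3}$ is the vertical third-column map).

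First I would verify surjectivity of $A_{2,3} \to A_{3,3}$. Given $c \in A_{3,3}$, surjectivity of $A_{3,2} \to A_{3,3}$ (from row-three exactness) lifts it to some $b \in A_{3,2}$, and surjectivity of $A_{2,2} \to A_{3,2}$ (from column-two exactness) lifts $b$ further to some $a \in A_{2,2}$. Commutativity of the lower right square then forces $\partial_\sigma(a)$ to map to $c$. Next, for the inclusion $\mathrm{im}(A_{1,3} \to A_{2,3}) \subseteq \ker(A_{2,3} \to A_{3,3})$, I would take $x \in A_{1,3}$, use surjectivity of $\pi$ to write $x = \pi(y)$ with $y \in A_{2,1}$, and then chase: $n(m(x)) = n(m(\pi(y))) = n(\partial_\sigma(l(y))) = j(g(l(y))) = j(i(k(y)))$, which vanishes by exactness of row three at $A_{3,2}$.

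The main content is the reverse inclusion $\ker(A_{2,3} \to A_{3,3}) \subseteq \mathrm{im}(A_{1,3} \to A_{2,3})$. Here I would take $u \in A_{2,3}$ with $n(u) = 0$, use surjectivity of $\partial_\sigma$ to write $u = \partial_\sigma(v)$, then observe $j(g(v)) = 0$ so $g(v) = i(w)$ for some $w \in A_{3,1}$ by row-three exactness at $A_{3,2}$. Surjectivity of $k\colon A_{2,1} \to A_{3,1}$ (column-one exactness) gives $w = k(z)$, and then $g(v - l(z)) = 0$ by commutativity. Column-two exactness at $A_{2,2}$ then yields $t \in A_{1,2}$ with $v - l(z) = f(t)$, and applying $\partial_\sigma$ together with the relations $\partial_\sigma \circ f = m \circ h$ and $\partial_\sigma \circ l = m \circ \pi$ gives $u = m(\pi(z) + h(t))$, exhibiting $u$ as an element of the image of $m$.

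The main obstacle, such as it is, will be bookkeeping: correctly identifying which commutative square or triangle justifies each rewriting step, and in particular making disciplined use of the two nonstandard surjectivity hypotheses on $\pi$ and $\partial_\sigma$, which play the role that an extra exact column or row would otherwise supply. Since no nontrivial input from the geometry of the earlier sections is required, once the commutativity relations are spelled out the argument is purely formal and can be presented compactly.
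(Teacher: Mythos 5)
Your proof is correct and complete; it is exactly the standard diagram chase the paper had in mind when it wrote ``whose proof we leave to the reader.'' All three verifications (surjectivity of $n$, $\operatorname{im}(m)\subseteq\ker(n)$ via surjectivity of $\pi$, and $\ker(n)\subseteq\operatorname{im}(m)$ via surjectivity of $\partial_\sigma$ and exactness of the first two columns and the third row) are carried out correctly, and the commutativity relations $m\circ h=\partial_\sigma\circ f$, $m\circ\pi=\partial_\sigma\circ l$, $g\circ l=i\circ k$, $n\circ\partial_\sigma=j\circ g$ are invoked at the right places.
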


The proof of \cref{thm:stpres} is now by induction on the genus $n$ of the symplectic module $\mbZ[\Sp{2n}{\mbZ}]$. \medskip

\noindent \textbf{Induction beginning:}  For $n = 1$, we consider the group $\Sp2\mbZ 
=\SL2\mbZ$. The results for $\Sp2\mbZ$ presented hence here are closely 
related the work of Church--Putman; in particular to \cite[Theorem B]{CP}, 
which was originally proved by Bykovski\u{\i} \cite{byk2003} and generalises work of Manin \cite{manin1972} for $n = 1$.

\begin{claim}
	\label{claim:induction-beginning}
	The sequence 
	$ \SigmaAdditiveApartmentModule_1 \oplus \SkewApartmentModule_1 \xrightarrow{\AdditiveMap_1 \oplus \SkewMap_1} \ApartmentModule_1 \xrightarrow{\ApartmentMap_1} \St^\omega_1 \longrightarrow 0$
	is exact.
\end{claim}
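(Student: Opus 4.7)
The plan is to identify the $n=1$ case of Claim 11.1 with the classical Manin--Bykovski\u{\i} presentation of the Steinberg module of $\SL_{2}(\mbZ)$, which by \cite[Theorem B]{CP} is known to be exact. Three small identifications are needed, and none of them requires any new homological input.

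First, I would set up the basic identifications for $n = 1$. Since $\omega$ on $\mbZ^2$ agrees (up to sign) with the determinant form, we have $\Sp{2}{\mbZ} = \SL{2}{\mbZ}$, and every nonzero line in $\mbQ^2$ is automatically $\omega$-isotropic, so $T^\omega_1$ coincides with the rational Tits building of $\SL_2$ and $\St^\omega_1 \cong \St_2(\mbQ)$ as $\SL{2}{\mbZ}$-modules. I would also observe that $\SkewApartmentModule_1 = 0$: by \cref{def:skew-apartment-module} any generator would require four lines spanning a symplectic summand of genus $2$ inside $\mbZ^2$, which is impossible since $\mbZ^2$ has genus $1$. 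So the sequence in Claim 11.1 simplifies to
\[ \SigmaAdditiveApartmentModule_1 \xrightarrow{\AdditiveMap_1} \ApartmentModule_1 \xrightarrow{\ApartmentMap_1} \St^\omega_1 \longrightarrow 0. \]

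Next, I would match $\ApartmentModule_1$ and $\SigmaAdditiveApartmentModule_1$ with the modules appearing in \cite[Theorem B]{CP} for $n = 2$. By \cref{def:apartment-module}, a generator of $\ApartmentModule_1$ is a symbol $[v_1, v_{\bar 1}]$ with $(\vec v_1, \vec v_{\bar 1})$ a symplectic basis of $\mbZ^2$; since $\omega(\vec v_1, \vec v_{\bar 1}) = \pm 1$ is equivalent to $\det(\vec v_1 \mid \vec v_{\bar 1}) = \pm 1$, these are exactly the ordered $\mbZ$-bases of $\mbZ^2$, i.e.\ the generators of Bykovski\u{\i}'s presentation. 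The Weyl group $\CoxeterGroupTypeB_1 = \mbZ/2$ has a single nontrivial element of length $1$, yielding the skew-symmetry $[v_1, v_{\bar 1}] = -[v_{\bar 1}, v_1]$, which is relation (a) of \cite[Theorem B]{CP} for $n = 2$. A generator of $\SigmaAdditiveApartmentModule_1$ is, by \cref{def:additive-apartment-module}, a symbol $[z_0, z_1, z_2] \ast []$ with $\{z_0, z_1, z_2\}$ a $\sigma$-additive simplex in $\mbZ^2$, which up to the $\Sym(\{0,1,2\})$ symmetry is the data of a basis $(\vec z_1, \vec z_2)$ together with the mediant line $z_0 = \langle \vec z_1 + \vec z_2 \rangle$. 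The map $\AdditiveMap_1$ sends this symbol to
\[ [z_1, z_2] - [z_1, z_0] - [z_0, z_2], \]
which is the image of Bykovski\u{\i}'s additivity relation and coincides with relation (b) of \cite[Theorem B]{CP} applied to the ordered basis $(\vec z_1, \vec z_2)$; note that the third relation (c) of \cite[Theorem B]{CP} is vacuous for $n = 1$ since it would require indices $2$ and $\bar 2$.

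With these identifications in place, the sequence in Claim 11.1 is isomorphic term-by-term and map-by-map to the right-exact sequence encoding Bykovski\u{\i}'s presentation of $\St_2(\mbQ)$, whose exactness is precisely \cite[Theorem B]{CP} (see also \cite{byk2003, manin1972}). The only real work in the argument is the careful translation of the Weyl-group signs and the additivity relation between the symplectic and the $\SL_2$ conventions; there is no further geometric content to supply.
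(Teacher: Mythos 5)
Your proposal is correct, but it proves the claim by a genuinely different route than the paper. The paper keeps the base case inside its own machinery: it specialises the commutative diagram of \cref{prop:commutative-diagram} to $n=1$, using that $\IAA[1]^{(2)} \cong \BA_2$ is contractible, $\IAA[1]^{(1)} \cong \B_2$ is the Farey graph and $\IAA[1]^{(0)} \cong T^\omega_1$, that $\SkewApartmentModule_1 = 0$ and that $\langle\Delta\rangle^\perp = \{0\}$ for every $\sigma$ and $\sigma$-additive simplex; it then checks that the two left-hand columns are exact and that $\pi_{\addname}$ and $\partial_\sigma$ are surjective, and concludes from the exact bottom row (\cref{cor:right-exact-sequence-smaller-steinberg-modules}) via the diagram chase of \cref{lem:diagram-chasing}. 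You instead translate the whole sequence into the Manin--Bykovski\u{\i} presentation of $\St_2(\mbQ)$ through $\Sp{2}{\mbZ} = \SL{2}{\mbZ}$ and quote \cite[Theorem B]{CP}; this reduction is sound, and your identifications of $\ApartmentModule_1$, of $\SigmaAdditiveApartmentModule_1$, and of the image of $\AdditiveMap_1$ with the antisymmetry and additivity relators are correct. What your route buys is brevity and the avoidance of any diagram chase; what the paper's route buys is uniformity with the induction step and self-containedness within its framework (the topological content behind \cite[Theorem B]{CP} --- essentially $\B_2 \subset \BA_2$ --- is the same input the paper feeds into its bottom row, as it remarks just before the claim). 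Two small repairs: the reference to ``relation (c) of \cite[Theorem B]{CP}'' is off, since Bykovski\u{\i}'s presentation has only the permutation and additivity relations; the relation involving the indices $2,\bar 2$ is relation (c) of the present paper's \autoref{thmB}, whose absence for $n=1$ is exactly the vanishing $\SkewApartmentModule_1 = 0$ that you already established. And you should add a line verifying that $\ApartmentMap_1$, which is defined through Gunnells' map, agrees under the identification $\St^\omega_1 \cong \St_2(\mbQ)$ with the integral apartment class map of \cite{CP} (both send $[v,w]$ to $\pm([v]-[w])$ in $\widetilde H_0$ of the discrete set of lines), since exactness at $\ApartmentModule_1$ is a statement about the kernel of that specific map.
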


\begin{proof}
	We start by noting that $\IAA[1]^{(2)} = \IAA[1]$ and $\IAA[1]^{(1)} = \IAAst[1]$. Hence, the following are a special case ($m = 0$) of the description of the complexes $\IAArel[1][m]$ and $\IAAstrel[1][m]$ given in the proof of \cref{lem_induction_beginning}:
	\begin{itemize}
		\item $\IAA[1]^{(2)}= \IAA[1]$ only 	contains simplices of type standard of 
		dimension $0$, $\sigma$ of dimension $1$ and $\sigma$-additive of 
		dimension $2$. The complex is isomorphic to $\BA_2$, a 
		contractible 2-dimensional simplicial complex.
		\item $\IAA[1]^{(1)} = \IAAst[1]$ is the subcomplex of $\IAA[1]^{(2)}$ consisting 
		of all simplices of type standard and $\sigma$, i.e.\ the 
		$1$-skeleton of $\IAA[1]^{(2)}$. The complex is isomorphic to the 
		subcomplex $\B_2$ of 
		$\BA_2$, the connected simplicial graph known as the Farey 
		graph.
		\item $\IAA[1]^{(0)}$ is the subcomplex of $\IAA[1]^{(2)}$ consisting 
		of all standard simplices, i.e.\ the $0$-skeleton of 
		$\IAA[1]^{(2)}$. The complex is isomorphic to the Tits building $T^\omega_1$ (which is a 
		discrete set in this case).
	\end{itemize}
	Using this description of $(\IAA[1]^{(2)}, \IAA[1]^{(1)}, \IAA[1]^{(0)})$, 
	the fact that $\langle \Delta \rangle^\perp = \{0\}$ if $\Delta$ is a 
	$\sigma$-additive or $\sigma$ simplex in $\IAA[1]^{(2)}$, the convention 
	that $\St^\omega(\{0\}) = \ApartmentModule(\{0\})  = \mbZ$, that $\SkewApartmentModule(\{0\}) = \SigmaAdditiveApartmentModule(\{0\}) = 0$  and noting that $\SkewApartmentModule_1 = 0$, it follows that the commutative diagram constructed in \cref{prop:commutative-diagram} has the shape depicted in \cref{fig_commdiag_base_case} for $n = 1$.
	\begin{figure}
	\begin{center}
		\begin{tikzcd}
			& 0 \arrow[d] \arrow[r, dotted] & \SigmaAdditiveApartmentModule_1\arrow[d, "\AdditiveMap_1", swap]&\\
			\displaystyle \bigoplus_{\substack{\Delta = \{ z_0, z_1, z_2\}\\ 
			\text{$\sigma$-additive simplex}}}  \mbZ  \arrow[r, 
			"\partial_{\addname}", dotted] \arrow[d, "\ApartmentMap_{\langle \Delta 
			\rangle^\perp}"] \arrow[rru, "\pi_{\addname}" {anchor=center, rotate=15, 
			xshift=-5ex, yshift=-1ex}, bend left = 5, dotted, swap] 
			&\displaystyle\bigoplus_{\substack{\Delta = \{v_1,w_1\}\\ 
			\text{$\sigma$ simplex}}} \mbZ \arrow[d, "\ApartmentMap_{\langle 
			\Delta \rangle^\perp}"]\arrow[r, "\partial_{\sigma}", dotted] & 
			\ApartmentModule_1\arrow[d, "\ApartmentMap_1", swap] &\\
			\displaystyle \bigoplus_{\substack{\Delta = \{ z_0, z_1, z_2\}\\ 
			\text{$\sigma$-additive simplex}}} \mbZ \arrow[r, 
			"\partial_{n+1}^{\St^\omega}"] \arrow[d] & 
			\displaystyle\bigoplus_{\substack{\Delta = \{v_1, w_1\}\\ 
			\text{$\sigma$ simplex}}} \mbZ \arrow[r, 
			"\partial_{n}^{\St^\omega}"] \arrow[d] & \St^\omega_1 \arrow[r] 
			\arrow[d] & 0\\
			0 & 0 & 0 &
		\end{tikzcd}
	\end{center}
	\caption{Commutative diagram for $n = 1$}
	\label{fig_commdiag_base_case}
	\end{figure}
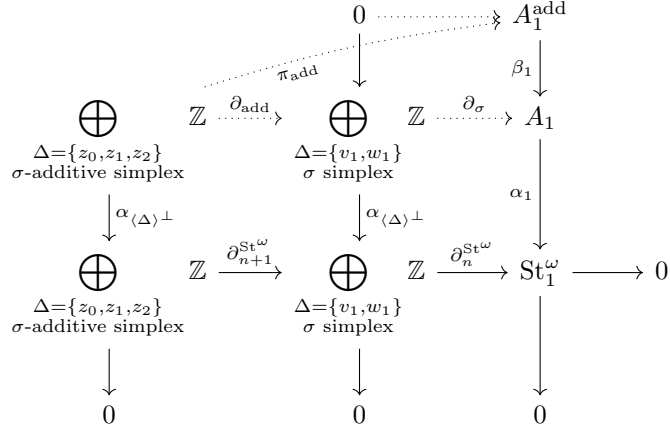
	It is easy to check that the first two columns are 
	exact. By \cref{cor:right-exact-sequence-smaller-steinberg-modules} the bottom row is exact and by \cref{lem:pi-partial-sigma-surjective}, $\pi_{\addname}$ and 
	$\partial_{\sigma}$ are surjections. Therefore, 
	\cref{lem:diagram-chasing} implies the claim.
\end{proof}

\noindent \textbf{Induction hypothesis:} Assume that for any symplectic 
submodule 
$\SymplecticSummand \subsetneq \mbZ^{2n}$ of genus less than $n$, the sequence
\[ \SigmaAdditiveApartmentModule(\SymplecticSummand) \oplus 
\SkewApartmentModule(\SymplecticSummand) 
\xrightarrow{\AdditiveMap_{\SymplecticSummand} \oplus 
\SkewMap_{\SymplecticSummand}} \ApartmentModule(\SymplecticSummand) 
\xrightarrow{\ApartmentMap_{\SymplecticSummand}} \St^\omega(\SymplecticSummand) 
\longrightarrow 
0\]
is exact.\medskip

\noindent \textbf{Induction step:} The proof of induction step is now 
completely formal.

\begin{claim}
	For $n > 1$, the sequence $\SigmaAdditiveApartmentModule_n \oplus \SkewApartmentModule_n \xrightarrow{\AdditiveMap_n \oplus \SkewMap_n} \ApartmentModule_n \xrightarrow{\ApartmentMap_n} \St^\omega_n \longrightarrow 0$ is exact.	
\end{claim}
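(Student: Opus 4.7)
My plan is that the induction step is purely formal and consists of applying the diagram chasing lemma \cref{lem:diagram-chasing} to the commutative diagram from \cref{prop:commutative-diagram} (depicted in \cref{fig_commdiag}). The strategy is to verify the three hypotheses of \cref{lem:diagram-chasing} and then read off its conclusion as exactness of the third column of \cref{fig_commdiag}, which is precisely the sequence $\SigmaAdditiveApartmentModule_n \oplus \SkewApartmentModule_n \to \ApartmentModule_n \to \St^\omega_n \to 0$ whose exactness we want to establish.

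For the exactness of the first two columns, I would observe that every minimal simplex $\Delta \in \IAA^{(2)}$ indexing a summand in these columns has a symplectic complement $\langle\Delta\rangle^\perp$ of genus strictly less than $n$: namely genus $n-2$ for minimal $\sigma^2$ and skew-$\sigma^2$ simplices, and genus $n-1$ for minimal $\sigma$-additive and $\sigma$ simplices. Hence the induction hypothesis applies and gives, for each such $\langle\Delta\rangle^\perp$, exactness of
$$\SigmaAdditiveApartmentModule(\langle\Delta\rangle^\perp) \oplus \SkewApartmentModule(\langle\Delta\rangle^\perp) \xrightarrow{\AdditiveMap_{\langle\Delta\rangle^\perp} \oplus \SkewMap_{\langle\Delta\rangle^\perp}} \ApartmentModule(\langle\Delta\rangle^\perp) \xrightarrow{\ApartmentMap_{\langle\Delta\rangle^\perp}} \St^\omega(\langle\Delta\rangle^\perp) \longrightarrow 0.$$
Since direct sums preserve exactness, taking direct sums over the appropriate indexing sets of $\sigma^2$, skew-$\sigma^2$, $\sigma$-additive, and $\sigma$ simplices yields exactness of the first two columns of \cref{fig_commdiag}.

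For the remaining hypotheses, the bottom row is exact by \cref{cor:right-exact-sequence-smaller-steinberg-modules}, and the surjectivity of $\pi_{\sigma^2} \oplus \pi_{\skewname} \oplus \pi_{\addname}$ and of $\partial_\sigma$ is part of the statement of \cref{prop:commutative-diagram} (established in \cref{lem:pi-partial-sigma-surjective}). An application of \cref{lem:diagram-chasing} then delivers exactness of the third column, completing the induction step. There is no genuine obstacle here; the conceptual work has already been done in constructing the diagram of \cref{prop:commutative-diagram}, in the identification \cref{lem_rel_homology_IAAi_IAAj} of the relative homology groups with sums of smaller Steinberg modules, and ultimately in the high-connectivity result \autoref{thm_connectivity_IAA} for $\IAA$ that powered the connectivity of the triple $(\IAA^{(2)}, \IAA^{(1)}, \IAA^{(0)})$.
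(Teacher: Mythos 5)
Your argument is correct and is essentially the paper's own proof of the induction step: exactness of the first two columns of the diagram in \cref{fig_commdiag} from the induction hypothesis (applied to the genus-$(n-1)$ and genus-$(n-2)$ complements $\langle\Delta\rangle^\perp$), exactness of the bottom row from \cref{cor:right-exact-sequence-smaller-steinberg-modules}, surjectivity of $\pi_{\sigma^2}\oplus\pi_{\skewname}\oplus\pi_{\addname}$ and $\partial_\sigma$ from \cref{lem:pi-partial-sigma-surjective}, and then \cref{lem:diagram-chasing}. The only difference is that you spell out the genus count justifying the use of the induction hypothesis, which the paper leaves implicit.
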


\begin{proof}
For $n > 1$, it follows from the induction hypothesis that the first 
two columns of the diagram in 
\cref{fig_commdiag} are exact. \cref{cor:right-exact-sequence-smaller-steinberg-modules} says that the bottom row is exact. 
Furthermore, by \cref{lem:pi-partial-sigma-surjective}, $\pi_{\sigma^2} \oplus \pi_{\skewname} 
\oplus \pi_{\addname}$ and $\partial_{\sigma}$ 
are surjections. Therefore, \cref{lem:diagram-chasing} 
implies 
the claim.
\end{proof}

\section{Theorem A: A vanishing theorem}
\label{sec_cohomology_vanishing}
The presentation of $\St^\omega_n$ we obtained in \cref{sec:presentation} now lets us prove 
\autoref{thmA}, which states that the rational cohomology of $\Sp{2n}{\mbZ}$ vanishes in degree $n^2-1$. By Borel--Serre duality, this is equivalent to $H_1(\Sp{2n}\Z; \St^\omega_n(\Q)\otimes\mbQ)$ being trivial (see \cref{eq_Borel_Serre}).
In order to show the latter, we want to use \cref{thm:stpres} and prove that rationally, the three 
modules $\ApartmentModule_n$, $\SigmaAdditiveApartmentModule_n$, and 
$\SkewApartmentModule_n$ (see \cref{def:apartment-module}, \cref{def:additive-apartment-module} and \cref{def:skew-apartment-module}) are flat and have trivial coinvariants.

As before, $\{\vec e_1, \vec f_1, \ldots, \vec e_n, \vec f_n\}$ denotes the symplectic standard basis of $\mbZ^{2n}$.
\begin{lemma}\label{lem:Aflatness}
\label{lem:AAflatness}
\label{lem:Askewflatness}
The $\mbQ[\Sp{2n}{\mbZ}]$-modules $\ApartmentModule_n\otimes \mbQ$, $\SigmaAdditiveApartmentModule_n\otimes \mbQ$ and $\SkewApartmentModule_n\otimes \mbQ$  are flat.
\end{lemma}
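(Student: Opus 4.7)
The plan is to realise each of the three modules, after tensoring with $\mbQ$, as a finite direct sum of induced modules of the form $\Ind_{H}^{\Sp{2n}{\mbZ}} V$, where $H \leq \Sp{2n}{\mbZ}$ is a \emph{finite} subgroup and $V$ is a finite-dimensional $\mbQ[H]$-module. Once this is done, the conclusion is formal: by Maschke's theorem, every finite-dimensional $\mbQ[H]$-module is projective, and induction sends projectives to projectives, so each summand is projective (hence flat) as a $\mbQ[\Sp{2n}{\mbZ}]$-module.

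To carry this out, I first analyse the orbit structure of the generating symbols under $\Sp{2n}{\mbZ}$. For $\ApartmentModule_n$, there is a single orbit: $\Sp{2n}{\mbZ}$ acts transitively on ordered symplectic bases of $\mbZ^{2n}$, and each generator corresponds to such a basis up to the Weyl group relation in \cref{def:apartment-module}. For $\SigmaAdditiveApartmentModule_n$ and $\SkewApartmentModule_n$, the generators carry additional ``augmentation data'' (a $\sigma$-additive resp.\ skew-$\sigma^2$ simplex in the symplectic complement of a partial symplectic basis), but still form a single $\Sp{2n}{\mbZ}$-orbit by the transitivity of the symplectic group on symplectic bases and on the associated augmentation configurations.

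Next, I identify the stabilizers. Fix the generator $[e_1, f_1, \dots, e_n, f_n] \in \ApartmentModule_n$. Its setwise stabilizer in $\Sp{2n}{\mbZ}$ is contained in the stabilizer of the apartment $\{e_1, f_1, \dots, e_n, f_n\}$ viewed as an unordered set of lines; a direct check using the symplectic condition $\omega(\phi(\vec e_i), \phi(\vec f_j)) = \delta_{ij}$ shows that this apartment stabilizer is a finite extension of the Weyl group $\CoxeterGroupTypeB_n$ by $(\mbZ/2)^n$, of order $4^n n!$. The sign relations of $\ApartmentModule_n$ define a one-dimensional character $\epsilon$ of this finite stabilizer $H$, yielding
\[ \ApartmentModule_n \otimes \mbQ \cong \Ind_H^{\Sp{2n}{\mbZ}} \mbQ_\epsilon. \]
The analogous analysis for $\SigmaAdditiveApartmentModule_n$ and $\SkewApartmentModule_n$ gives stabilizers $H'$, $H''$ that are subgroups of apartment stabilizers (the augmentation data carves out a subgroup of the full apartment stabilizer), hence also finite; the sign relations of \cref{def:additive-apartment-module} and \cref{def:skew-apartment-module} again provide one-dimensional characters and realise these modules as induced modules from finite subgroups.

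The only mildly technical point will be to check that the sign characters $\epsilon$ are well-defined on the relevant finite stabilizers, i.e.\ that the signed Weyl group length function and the sign choices coming from lifting lines to primitive vectors together assemble into a \emph{homomorphism} $H \to \{\pm 1\}$. This is a direct verification from the defining relations in \cref{def:apartment-module}, \cref{def:additive-apartment-module}, and \cref{def:skew-apartment-module}, together with the fact that the Weyl group length parity is multiplicative. With these identifications in place, flatness over $\mbQ[\Sp{2n}{\mbZ}]$ is immediate from Maschke and the standard fact that induction preserves projectivity.
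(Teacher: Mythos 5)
Your argument is correct and is essentially the paper's own proof: one observes each module is cyclic (a single $\Sp{2n}{\mbZ}$-orbit of generators), identifies it rationally with $\Ind_H^{\Sp{2n}{\mbZ}}$ of a one-dimensional sign representation of the finite stabilizer $H$ of the generator up to sign, and concludes by Maschke plus the fact that induction preserves projectivity (the paper cites Church--Putman, Lemma 3.2, for this last step). The only slight imprecision is that for $\SigmaAdditiveApartmentModule_n$ and $\SkewApartmentModule_n$ the stabilizers are not literally subgroups of apartment stabilizers (the augmentation line, e.g.\ $\langle \vec e_1+\vec f_1\rangle$, is not an apartment vertex), but they are still finite because they preserve a finite spanning set of lines, so the argument goes through unchanged.
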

\begin{proof}
The proof is similar to that of \cite[Lemma 3.2]{CP}. We first note that $\ApartmentModule_n\otimes \mbQ$ is a cyclic $\mbQ[\Sp{2n}{\mbZ}]$-module, generated by $[e_1,f_1, \dots, e_n, f_n]$.

Let $H$ be the subgroup of $\Sp{2n}{\mbZ}$ that sends $[e_1,f_1, \dots, e_n, f_n] \text{ to } \pm [e_1,f_1, \dots, e_n, f_n]$
and let $M$ be the $\mbQ[H]$-module whose underlying vector space is $\mbQ$ and where $H$ acts by $\pm 1$, depending on its action on $[e_1,f_1, \dots, e_n, f_n]$. It is not hard to see that $H$ is finite\footnote{This can be read off from the presentation of $\ApartmentModule_n$, \cref{def:apartment-module}, using the facts that an element in $\Sp{2n}{\mbZ}$ is uniquely determined by its its image of a basis of $\mbZ^{2n}$ and that every line in $\mbZ^{2n}$ contains at most two elements.}.
This implies that $M$ is a projective $\mbQ[H]$-module.
Furthermore, we have
\begin{equation*}
	\ApartmentModule_n\otimes \mbQ \cong \Ind_{H}^{\Sp{2n}{\mbZ}} M,
\end{equation*}
so the claim follows just as in \cite[Lemma 3.2]{CP}.

The proofs for  $\SigmaAdditiveApartmentModule_n\otimes \mbQ$ and $\SkewApartmentModule_n\otimes \mbQ$ are the same after verifying that these modules are cyclically generated by
$[\langle \vec e_1 + \vec f_1\rangle, e_1, f_1] \ast [e_2, f_2, \dots, e_n, f_n].
$
and $[e_1, f_1, \langle \vec e_2 - \vec e_1 \rangle , f_2] \ast [e_3, f_3, \dots, e_n, f_n]$, respectively.
\end{proof}

\begin{lemma}\label{lem:Acoinv}
For $n\ge1$, the $\Sp{2n}\mbZ$-coinvariants of $\ApartmentModule_n\otimes \mbQ$ vanish.
\end{lemma}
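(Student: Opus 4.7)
The plan is to exhibit a single element of $\Sp{2n}{\mbZ}$ which acts by $-1$ on a cyclic generator of $\ApartmentModule_n \otimes \mbQ$; after that, coinvariants collapse for free.

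First I would recall the observation from the proof of \cref{lem:Aflatness} that $\ApartmentModule_n \otimes \mbQ$ is a \emph{cyclic} $\mbQ[\Sp{2n}{\mbZ}]$-module generated by $x_0 \coloneqq [e_1, f_1, \dots, e_n, f_n]$ (this uses transitivity of $\Sp{2n}{\mbZ}$ on symplectic bases of $\mbZ^{2n}$). Consequently the $\Sp{2n}{\mbZ}$-coinvariants $(\ApartmentModule_n \otimes \mbQ)_{\Sp{2n}{\mbZ}}$ are generated, as a $\mbQ$-vector space, by the class $[x_0]$. So it suffices to show $[x_0] = 0$.

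For $n \geq 2$, the plan is to take $\phi \in \Sp{2n}{\mbZ}$ to be the symplectic automorphism that swaps the first two symplectic pairs, i.e.\ $\phi(\vec e_1) = \vec e_2$, $\phi(\vec f_1) = \vec f_2$, $\phi(\vec e_2) = \vec e_1$, $\phi(\vec f_2) = \vec f_1$, and fixes $\vec e_i, \vec f_i$ for $i \geq 3$. A direct check on pairings shows $\phi \in \Sp{2n}{\mbZ}$, and its action on the labelling set $\sym{n}$ is precisely the simple reflection $s_1 \in \CoxeterGroupTypeB_n$, of length $1$. By the defining relation of $\ApartmentModule_n$ (first relation in \autoref{thmB}),
$$\phi \cdot x_0 = [e_2, f_2, e_1, f_1, e_3, f_3, \dots, e_n, f_n] = (-1)^{\on{len}(s_1)} x_0 = -x_0.$$
For $n=1$, the same conclusion is obtained with $\phi = \begin{pmatrix} 0 & -1 \\ 1 & 0 \end{pmatrix} \in \Sp{2}{\mbZ}=\SL{2}{\mbZ}$, which sends $\vec e_1 \mapsto \vec f_1$, $\vec f_1 \mapsto -\vec e_1$ and thus realises the unique nontrivial element of $\CoxeterGroupTypeB_1$ (of length $1$), giving $\phi \cdot [e_1, f_1] = [f_1, e_1] = -[e_1, f_1]$.

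In either case, passing to coinvariants kills the $\phi$-action, yielding $[x_0] = [\phi \cdot x_0] = -[x_0]$, hence $2[x_0] = 0$ in $(\ApartmentModule_n \otimes \mbQ)_{\Sp{2n}{\mbZ}}$. Since we are working with rational coefficients, this forces $[x_0] = 0$, and by the first paragraph the whole coinvariant space vanishes. The only potential obstacle is verifying that the candidate $\phi$ really lies in $\Sp{2n}{\mbZ}$ and really induces an odd-length signed permutation of the labels; both are quick, explicit computations.
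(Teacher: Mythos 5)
Your proof is correct and follows essentially the same argument as the paper: exhibit a single element of $\Sp{2n}{\mbZ}$ that acts by $-1$ on the cyclic generator $[e_1,f_1,\dots,e_n,f_n]$, then conclude over $\mbQ$ that the coinvariants vanish. The only (immaterial) difference is the choice of element for $n\ge 2$ — you use the swap of the first two symplectic pairs, realising the length-one reflection $s_1$, while the paper uses $\vec e_1\mapsto \vec f_1$, $\vec f_1\mapsto -\vec e_1$, an odd-length signed permutation; for $n=1$ your element coincides with the paper's.
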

\begin{proof}
There is an element $\phi$ of $\Sp{2n}\mbZ$ defined by $\phi(\vec e_1)  = \vec f_1$, $\phi(\vec f_1) = -\vec e_1$, and $\phi(\vec e_i) = \vec e_i$ , $\phi(\vec f_i) = \vec f_i$ for $i\ge 2$. Using the relations in $\ApartmentModule_n$, we have
\begin{equation*}
	\phi([e_1,f_1, \dots, e_n,f_n]) = [f_1,e_1, \dots, e_n,f_n]  = -[e_1,f_1, \dots, e_n,f_n].
\end{equation*} 
Hence, in the coinvariants $(\ApartmentModule_n\otimes \mbQ)\otimes_{\Sp{2n}{\mbZ}} \mbQ$, we have 
\begin{equation*}
	[e_1,f_1, \dots, e_n,f_n]\otimes q = - [e_1,f_1, \dots, e_n,f_n]\otimes q \text{ for all } q\in \mbQ.
\end{equation*}
This implies that $[e_1,f_1, \dots, e_n,f_n]\otimes q$ is trivial. As noted in the proof of \cref{lem:Aflatness}, the module $\ApartmentModule_n\otimes \mbQ$ is generated by $[e_1,f_1, \dots, e_n,f_n]$, so the claim follows.
\end{proof}

\begin{lemma}\label{lem:AAcoinv}
For $n\ge2$, the $\Sp{2n}\mbZ$-coinvariants of $\SigmaAdditiveApartmentModule_n \otimes \mbQ$ vanish.
\end{lemma}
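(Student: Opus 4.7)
The plan is to mimic \cref{lem:Acoinv} by exhibiting a single symplectic automorphism that acts on a cyclic generator by $-1$. From the proof of \cref{lem:AAflatness} we know that $\SigmaAdditiveApartmentModule_n \otimes \mbQ$ is cyclic as a $\mbQ[\Sp{2n}{\mbZ}]$-module, generated by
\[
g \coloneqq [\langle \vec e_1 + \vec f_1 \rangle, e_1, f_1] \ast [e_2, f_2, \dots, e_n, f_n].
\]
Once we exhibit $\phi \in \Sp{2n}{\mbZ}$ with $\phi \cdot g = -g$, we will have $g \otimes q = -g \otimes q$ in the rational coinvariants for every $q \in \mbQ$, hence $g \otimes q = 0$, and cyclicity then forces all coinvariants of $\SigmaAdditiveApartmentModule_n \otimes \mbQ$ to vanish.

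I would look for $\phi$ of the form $\phi = \mathrm{id}_{\langle \vec e_1, \vec f_1\rangle} \oplus \phi_2$ with $\phi_2 \in \Sp{2(n-1)}{\mbZ}$ acting on the complementary symplectic summand $\langle \vec e_2, \vec f_2, \dots, \vec e_n, \vec f_n\rangle$. With $\phi_1 = \mathrm{id}$ the first bracket of $g$ is fixed pointwise, so the minus sign must come entirely from the action of $\phi_2$ on the second bracket. Concretely, I would set $\phi_2(\vec e_2) = \vec f_2$, $\phi_2(\vec f_2) = -\vec e_2$, and $\phi_2(\vec e_i) = \vec e_i$, $\phi_2(\vec f_i) = \vec f_i$ for $i \geq 3$; this is a well-defined symplectic automorphism, and its construction requires the hypothesis $n \geq 2$ to supply a second hyperbolic summand.

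Under this $\phi$ the generator becomes
\[
\phi \cdot g = [\langle \vec e_1 + \vec f_1 \rangle, e_1, f_1] \ast [f_2, e_2, e_3, f_3, \dots, e_n, f_n],
\]
and the second bracket is the re-indexing of $[e_2, f_2, \dots, e_n, f_n]$ by the signed permutation $\pi \in \CoxeterGroupTypeB_n$ that fixes $1$, swaps $2 \leftrightarrow \bar{2}$, and fixes $3, \bar{3}, \dots, n, \bar{n}$. The second relation of \cref{def:additive-apartment-module} then yields $\phi \cdot g = (-1)^{\on{len}(\pi)}\, g$, so it remains to verify that $\on{len}(\pi)$ is odd. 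This follows because the sign character $w \mapsto (-1)^{\on{len}(w)}$ is a group homomorphism $\CoxeterGroupTypeB_n \to \{\pm 1\}$ sending each Coxeter generator to $-1$, while $\pi$ admits the explicit factorisation
\[
\pi = s_2 s_3 \cdots s_{n-1}\, s_n\, s_{n-1} \cdots s_3 s_2,
\]
a word of odd length $2n - 3$; equivalently, $\pi$ is realised as the signed permutation matrix $\mathrm{diag}(1,-1,1,\dots,1)$, whose determinant equals $-1$.

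I do not expect any serious obstacle here: the argument is almost formal once cyclicity from \cref{lem:AAflatness} is in place. The only point worth flagging is that the hypothesis $n \geq 2$ is genuinely needed — the stabiliser in $\Sp{2}{\mbZ} = \SL{2}{\mbZ}$ of the unordered triple of lines $\{\langle \vec e_1 + \vec f_1\rangle, e_1, f_1\}$ realises only the alternating group $A_3$ (no transpositions), so no sign-reversing symmetry of the first bracket exists in the rank-one case, and it is precisely the second hyperbolic summand that unlocks the argument.
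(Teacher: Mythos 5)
Your proof is correct and follows essentially the same route as the paper: the same cyclic generator $[\langle \vec e_1 + \vec f_1\rangle, e_1, f_1] \ast [e_2, f_2, \dots, e_n, f_n]$ and the same automorphism fixing the first hyperbolic summand and rotating $(\vec e_2, \vec f_2)$, yielding $\phi \cdot g = -g$ and hence vanishing coinvariants by cyclicity. Your explicit verification that the signed permutation swapping $2 \leftrightarrow \bar 2$ has odd length is a detail the paper leaves implicit, but the argument is the same.
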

\begin{proof}
Let $\Delta = [\langle \vec e_1 + \vec f_1\rangle, e_1, f_1] \ast [e_2, f_2, \dots, e_n, f_n]$
be the generator of $\SigmaAdditiveApartmentModule_n \otimes \mbQ$ from the proof of \cref{lem:AAflatness} and let $\phi \in \Sp{2n}{\mbZ}$ be defined by $\phi(\vec e_2) = \vec f_2$, $\phi(\vec f_2) = -\vec e_2$ and $\phi(\vec e_i) = \vec e_i$, $\phi(\vec e_i) = \vec f_i$ for $i\neq 2$. Using the relations in $\SigmaAdditiveApartmentModule_n$, we have
\begin{equation*}
	\phi(\Delta) = [\langle \vec e_1 + \vec f_1\rangle, e_1, f_1] \ast [f_2, e_2, \dots, e_n, f_n] = -\Delta.
\end{equation*}
As in the proof of \cref{lem:AAcoinv}, this implies that $(\SigmaAdditiveApartmentModule_n\otimes \mbQ)\otimes_{\Sp{2n}{\mbZ}} \mbQ = 0.$
\end{proof}

\begin{lemma}\label{lem:Askewcoinv}
For $n\ge3$, the $\Sp{2n}\mbZ$-coinvariants of $\SkewApartmentModule_n \otimes \mbQ$ vanish.
\end{lemma}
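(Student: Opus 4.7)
The plan is to mirror the strategies used in the proofs of \cref{lem:Acoinv} and \cref{lem:AAcoinv}. By the proof of \cref{lem:Askewflatness}, the $\mbQ[\Sp{2n}{\mbZ}]$-module $\SkewApartmentModule_n\otimes\mbQ$ is cyclically generated by
\[ \Delta = [e_1, f_1, \langle \vec e_2 - \vec e_1 \rangle, f_2] \ast [e_3, f_3, \dots, e_n, f_n]. \]
It therefore suffices to exhibit an element $\phi \in \Sp{2n}{\mbZ}$ such that $\phi(\Delta) = -\Delta$: once this is established, the coinvariant class of $\Delta \otimes q$ satisfies $\Delta \otimes q = -\Delta \otimes q$ for all $q \in \mbQ$, hence vanishes rationally, and cyclicity of $\Delta$ does the rest.

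The natural candidate, for $n \geq 3$, is the symplectic involution $\phi$ defined by $\phi(\vec e_3) = \vec f_3$, $\phi(\vec f_3) = -\vec e_3$, and $\phi(\vec e_i) = \vec e_i$, $\phi(\vec f_i) = \vec f_i$ for $i \neq 3$. This choice requires an index $i \geq 3$, which is precisely where the hypothesis $n \geq 3$ enters: the indices $1,2$ are consumed by the skew-$\sigma^2$ core $[e_1, f_1, \langle \vec e_2 - \vec e_1 \rangle, f_2]$, which $\phi$ must fix in order for the action to leave the first factor of $\Delta$ unchanged.

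Because $\phi$ fixes $\vec e_1, \vec f_1, \vec e_2, \vec f_2$, we have
\[ \phi(\Delta) = [e_1, f_1, \langle \vec e_2 - \vec e_1 \rangle, f_2] \ast [f_3, e_3, e_4, f_4, \dots, e_n, f_n]. \]
Swapping the roles of $e_3$ and $f_3$ in the second factor corresponds via the relations in \cref{def:skew-apartment-module} to the signed permutation $\pi \in \CoxeterGroupTypeB_n$ interchanging $3 \leftrightarrow \bar 3$ and fixing all other elements of $\sym n$. This $\pi$ visibly satisfies $\pi(1)=1$ and $\pi(2)=2$, so the relation applies and gives $\phi(\Delta) = (-1)^{\on{len}_{\CoxeterGenerators}(\pi)} \Delta$. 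The key (and only) computation is thus the parity of $\on{len}_{\CoxeterGenerators}(\pi)$.

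For this, I would use the standard reduced expression $\pi = s_3 s_4 \cdots s_{n-1} s_n s_{n-1} \cdots s_3$; a quick check on the action on $\sym n$ confirms both that this is $\pi$ and that it is reduced, yielding $\on{len}_{\CoxeterGenerators}(\pi) = 2n-5$. (Alternatively, one can count positive roots of type $\mathtt{C}_n$ sent by $\pi$ to negative ones: the roots $e_3 - e_j$ and $e_3 + e_j$ for $j > 3$ contribute $2(n-3)$, and the short root $e_3$ contributes $1$.) Since $2n-5$ is odd for every $n \geq 3$, we conclude $\phi(\Delta) = -\Delta$, and the lemma follows. I do not anticipate any genuine obstacle here — the argument is entirely parallel to \cref{lem:Acoinv,lem:AAcoinv} once the length parity is pinned down.
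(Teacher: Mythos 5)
Your proof is correct and takes essentially the same route as the paper: the same cyclic generator $\Delta$, the same symplectic involution $\phi$ acting on $\vec e_3,\vec f_3$, and the same sign argument via the relations in $\SkewApartmentModule_n$ (your explicit length count $\on{len}(\pi)=2n-5$, odd, just makes precise the parity the paper leaves implicit). No changes needed.
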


\begin{proof}
Let $\Delta = [e_1, f_1, \langle \vec e_2 - \vec e_1 \rangle , f_2] \ast [e_3, f_3, \dots, e_n, f_n]$
be the generator of $\SkewApartmentModule_n \otimes \mbQ$ from the proof of \cref{lem:Askewflatness} and let $\phi \in \Sp{2n}{\mbZ}$ be defined by $\phi(\vec e_3) = \vec f_3$, $\phi(\vec f_3) = -\vec e_3$ and $\phi(\vec e_i) = \vec e_i$, $\phi(\vec e_i) = \vec f_i$ for $i\neq 3$. Using the relations in $\SkewApartmentModule_n$, we have
\begin{equation*}
	\phi(\Delta) = [e_1, f_1, \langle \vec e_2 - \vec e_1 \rangle , f_2] \ast [f_3, e_3, \dots, e_n, f_n] = -\Delta.
\end{equation*}
As in the proof of \cref{lem:AAcoinv}, this implies that $(\SkewApartmentModule_n\otimes \mbQ)\otimes_{\Sp{2n}{\mbZ}} \mbQ = 0$.
\end{proof}

We conclude by proving \autoref{thmA}, which states that $H^{n^2-i}(\Sp{2n}{\mbZ};\mbQ) = 0$ for $i\le 1$ and $n\ge2$.

\begin{proof}[Proof of \autoref{thmA}]
For $n = 2$, this follows from work of Igusa \cite{igusa1962}, see also Lee--Weintraub \cite[Corollary 5.2.3 et seq.]{LW}. As commented after \autoref{thmA}, our methods apply for $n\ge 3$ (and recover known results for $n\in \{3,4 \}$).

Let $n\geq 3$. Using Borel--Serre duality, we see that
\[ H^{n^2-i}(\Sp{2n}{\mbZ};\mbQ)  \cong H_{i}(\Sp{2n}{\mbZ}; \St^\omega_n\otimes \mbQ) .\]
The latter can be computed using a flat resolution of $\St^\omega_n\otimes \mbQ$. From \cref{thm:stpres}, we get a partial resolution that is flat by \cref{lem:Aflatness}, \cref{lem:AAflatness} and \cref{lem:Askewflatness} and hence can be extended to a flat resolution. Taking coinvariants of this flat resolution yields a chain complex whose homology is 
\[H_*(\Sp{2n}{\mbZ}; \St^\omega_n \otimes \mbQ).\] 
The theorem follows from \cref{lem:Acoinv}, \cref{lem:AAcoinv}, and \cref{lem:Askewcoinv}.
\end{proof}

\appendix
\section{Overview of different complexes}
\begin{table}[h]
\begin{center}
\begin{tabular}{l|l|l}
Complex & Defined in & Connectivity  \\
\hline
$T^\omega_n$ & \cref{def_building_over_Z} & $n-2$ (\cref{Solomon-Tits}) \\
$\I$, $\Irel$ 	& \cref{def:I-and-IA}, \cref{def_linkhat} & $n-2$ (\cref{lem:Irel-cohen-macaulay}) \\
$\Idel$, $\Idelrel$ 	& \cref{def:I-and-IA}, \cref{def_linkhat} & $n-2$ (\cref{lem_connectivity_Idelrel}) \\
$\Isigdel$, $\Isigdelrel$ 	& \cref{def:I-and-IA}, \cref{def_linkhat} & $n-1$ \cite[Prop.~6.11]{put2009}\\
$\IA$, $\IArel$ 	& \cref{def:I-and-IA}, \cref{def_linkhat} & $n-1$ (\cref{lem_conn_IA}) \\
$\IAAst$, $\IAAstrel$ 	& \cref{def:IAAst}, \cref{def_linkhat} & $n-1$ (\cref{lem_connectivity_IAAstar})\\
$\IAA$, $\IAArel$ 	& \cref{def:IAA}, \cref{def_linkhat}  & $n$ (\autoref{thm_connectivity_IAA})\\
$\B_n$ 	& \cref{def:B-and-BA} & $n-2$ (\cref{thm_connectivity_B_BA})\\
$\BA_n$ 	& \cref{def:B-and-BA} & $n-1$ (\cref{thm_connectivity_B_BA}) \\
$\BAA_n$ 	& \cref{def:BAA} & $n$ (\cref{connectivity_BAA}) \\
$\Irel(W)$ 	& \cref{def_W_complexes} & $n-2$ (\cref{connectivity_subcomplex_W}) \\
$\Idelrel(W)$ 	& \cref{def_W_complexes} & $n-1$ (\cref{connectivity_subcomplex_W})\\
$\IAAstrel(W)$ 	& \cref{def_W_complexes} & $n$ (\cref{connectivity_subcomplex_W})\\
$\IAA^{(0)}$ 	& \cref{def_intermediate_complexes} & $n-2$ (\cref{lem_connectivity_IAA0}) \\
$\IAA^{(1)}$ 	& \cref{def_intermediate_complexes} & $n-1$ (\cref{cor_IAA1}) \\
$\IAA^{(1.5)}$ 	& \cref{def_intermediate_complexes} & $n-1$ (\cref{lem_hom_eq_1_15}) \\
$\IAA^{(2)}$ 	& \cref{def_intermediate_complexes} & $n$ (\cref{lem_IAA2_highlyconnected})
\end{tabular}
\end{center}
\caption{An overview of complexes appearing in this article. The last column shows the largest $k$ such that the corresponding complex is $k$-connected.}
\label{table_overview_all_complexes}
\end{table}

\addcontentsline{toc}{section}{References}
\emergencystretch=2em
\printbibliography

\bigskip
  \footnotesize
\noindent Benjamin Br\"uck\\ \textsc{Department of Mathematics \\
ETH Z\"urich\\
Rämistrasse 101\\
8092 Z\"urich, Switzerland}\\
\href{mailto:benjamin.brueck@math.ethz.ch}{benjamin.brueck@math.ethz.ch}
\vspace{0.5cm}\\
\noindent Peter Patzt\\ \textsc{Department of Mathematics \\
University of Oklahoma\\
601 Elm Avenue\\
Norman, OK-73019, USA}\\
  \href{mailto:ppatzt@ou.edu}{ppatzt@ou.edu}
\vspace{0.5cm}\\
\noindent Robin J. Sroka\\ \textsc{Department of Mathematics \& Statistics \\
McMaster University\\
1280 Main Street West\\
Hamilton, ON L8S 4K1, Canada}\\
\href{mailto:srokar@mcmaster.ca}{srokar@mcmaster.ca}
\end{document}